
\documentclass[12pt]{article}
\parindent=15pt
\topmargin -.3in
\hoffset=-.2in
\setlength{\textwidth}{6in}
\textheight 8.5in

\usepackage{amsmath,amssymb,amsfonts,amsthm}
\usepackage{makeidx}

\makeindex
\begin{document}

 \newcommand {\ID}{\mathbb{D}}  
 \newcommand {\IN}{\mathbb{N}}  
 \newcommand {\IR}{\mathbb{R}}   
 \newcommand {\IQ}{\mathbb{Q}}   
 \newcommand {\IRR}{\mathbb{R}} 
 \newcommand {\IIQ}{\mathbb{Q}} 
 \newcommand {\IC}{\mathbb{C}}
 \newcommand {\ZZ}{\mathbb{Z}}

\newcommand {\rtoo}{\IR_+^2}
\newcommand {\TT}{T \!\!\!T}
\newcommand {\B}{\mbox{${\cal B}$}}
\newcommand {\cC}{\mbox{${\cal C}$}}
\newcommand {\D}{\mbox{${\cal D}$}}

\newcommand {\E}{\mbox{${\cal E}$}}
\newcommand {\F}{\mbox{${\cal F}$}}
\newcommand {\G}{\mbox{${\cal G}$}}
\newcommand {\cH}{\mbox{${\cal H}$}}
\newcommand {\IH}{\mbox{${\mathcal H}$}}
\newcommand {\cI}{\mbox{${\cal I}$}}
\newcommand {\cL}{\mbox{${\cal L}$}}
\newcommand {\N}{\mbox{${\cal N}$}}
\newcommand {\cO}{\mbox{${\mathcal O}$}}
\newcommand {\cP}{\mbox{${\cal P}$}}
\renewcommand {\P}{{\mathbb P}} 
\newcommand {\IP}{\mbox{${\mathcal P}$}}
\newcommand{\cR}{\mathcal{R}} 
\newcommand {\R}{\mbox{${\mathcal R}$}}
\newcommand {\cS}{\mbox{${\cal S}$}}
\newcommand {\w}{\mbox{${\omega}$}}
\newcommand {\ep}{\varepsilon}
\newcommand{\half}{\frac{1}{2}}
\newcommand{\ti}[1]{\tilde{#1}}
\newcommand{\ul}{\underline}

\newcommand{\ulsigma}{\underline{\sigma}}
\newcommand{\ultau}{\underline{\tau}}
\newcommand{\ulT}{\underline{T}}
\newcommand{\ulS}{\underline{S}}
\newcommand{\ulu}{\underline{u}}
\newcommand{\ulinfty}{\underline{\mbox{$\infty$}}}
\newcommand{\qbar}{\bar{q}}
\newcommand{\tbar}{\bar{t}}
\newcommand{\wbar}{\bar{w}}
\newcommand{\vbar}{\bar{v}}
\newcommand{\xbar}{\bar{x}}
\newcommand{\Xbar}{\bar{X}}
\newcommand{\ybar}{\bar{y}}
\newcommand{\Ybar}{\bar{Y}}

\newtheorem{stat}{Statement}[section]
\newtheorem{examp}[stat]{Example}
\newtheorem{assump}{Assumption}[section]
\newtheorem{decth}[stat]{Theorem}
\newtheorem{prop}[stat]{Proposition}
\newtheorem{cor}[stat]{Corollary}
\newtheorem{thm}[stat]{Theorem}
\newtheorem{lemma}[stat]{Lemma}
\newtheorem{remark}[stat]{Remark}
\newtheorem{def1}{Definition}[section]

\numberwithin{equation}{section}

\begin{center}
{\bf \large Hausdorff dimension of the boundary of bubbles
\vskip 12pt
of additive Brownian motion and of the Brownian sheet}
\vskip .5in
{\large Robert C. Dalang\footnote[1]{Institut
de Math\'ematiques, Ecole Polytechnique F\'ed\'erale,
1015 Lausanne, Switzerland. 

\noindent robert.dalang@epfl.ch, thomas.mountford@epfl.ch
\vskip 10pt

\noindent The research of each author is
partially supported by
the Swiss National Foundation for Scientific Research.
\vskip 10pt

\noindent MSC 2010 subject classifications. Primary 60G60;
Secondary 60G17, 60G15.
\vskip 10pt
\noindent Key words and phrases: Brownian sheet, Brownian bubble, excursions, level sets.
} and
T.~Mountford$^1$
}
\vskip 12pt
Ecole Polytechnique F\'ed\'erale de Lausanne
\end{center}
\vskip .5 truein

\begin{center}{\em
   Dedicated to John B.~Walsh, whose work initiated the authors to the Brownian sheet
}
\end{center}
\vskip .5 truein

\begin{abstract} We first consider the additive Brownian motion process $(X(s_1,s_2),\ (s_1,s_2) \in \IR^2)$ defined by $X(s_1,s_2) = Z_1(s_1) - Z_2 (s_2)$, where $Z_1$ and $Z_2 $ are two independent (two-sided) Brownian motions. We show that with probability one, the Hausdorff dimension of the boundary of any connected component of the random set $\{(s_1,s_2)\in \IR^2: X(s_1,s_2) >0\}$ is equal to
$$
  \frac{1}{4}\left(1 + \sqrt{13 + 4 \sqrt{5}}\right) \simeq 1.421\, . 
$$
Then the same result is shown to hold when $X$ is replaced by a standard Brownian sheet indexed by the nonnegative quadrant.
\end{abstract}
\vskip 1 truein
\eject
\vskip 1 truein

\tableofcontents

\eject

\begin{section}{Introduction}

  In this paper, we consider two closely related stochastic processes: a standard additive Brownian motion (ABM) $\ti X=(\ti X(s_1,s_2),\ (s_1,s_2) \in \IR^2)$, defined by
$$
  \ti X(s_1,s_2) = \ti{Z}_1(s_1) \ - \ \ti{Z}_2(s_2), \qquad (s_1 , s_2 ) \in \IR^2,
$$
where the $\ti{Z}_i $ are standard independent (two-sided) Brownian motions, and a standard Brownian sheet\index{$W$} indexed by the nonnegative quadrant:
$$
   W=(W(s_1,s_2),\ (s_1,  s_2) \in [0, \infty [^2). 
$$
We recall that this is a mean-zero Gaussian process with continuous sample paths and covariance 
$$
   E(W(s_1,s_2) W(t_1,t_2)) = \min(s_1,t_1) \min(s_2,t_2).
$$
The additive Brownian motion is a process of interest in its own right (see for instance \cite{gabor} and the references therein), but it also demands attention due to the fact that, locally, its behaviour is very close to that of the Brownian sheet. In fact,  arguments dealing with the Brownian sheet (e.g. \cite {DM2,KhShi,M}) often carry over immediately to give analogous results for additive Brownian motion. Indeed, typically, arguments for the sheet, while conceptually the same as for additive Brownian motion, have an extra layer of technicalities (in the case of the results of the present paper, the extra technicalities will be extensive). However, the Brownian sheet does exhibit behaviors that are different from those of ABM (for instance the existence of ``points of increase along lines" established in \cite{DM1}, or the results of \cite{M2} concerning quasi-everywhere upper functions), so one cannot simply expect that results established for ABM will necessarily carry over to the Brownian sheet. 

   In this paper, we are interested in the connected components of the random open set $\{ s\in\IR^2 : \ti X(s) \ne q \}$ (respectively $\{s\in\rtoo : W(s) \ne q \}$) for some fixed level $q\in \IR$.  Such a component is called a $q$-{\em bubble}, or simply a {\em bubble} if $q$ is fixed. By analogy with ordinary Brownian motion, these are excursion sets of $\ti X$ (resp.~$W$) away from the level $q$. An {\em upwards} (respectively {\em downwards}) $q$-bubble is defined with ``$\neq$" replaced by ``$>$" (respectively ``$<$").
	
	The level set of $W$ at level $q$ is the random closed set $\{ s\in\rtoo : W(s) = q \}$. It is known since \cite{adler,E,R} that the Hausdorff dimension of this set is $\frac{3}{2}$ a.s. We refer the reader to \cite[Appendix C]{davar} or \cite{landkof}, for instance, for all required information about Hausdorff dimension. In \cite{kendall}, it was observed that typical points on a level set of the Brownian sheet are disconnected from the rest of the level set, even though the level set has non-degenerate connected components. Beginning in the early 1990's, substantial efforts were made to understand the structure of bubbles and level sets.

   In \cite{DW0}, the distribution and size of bubbles in the neighborhood of certain points on the boundary of a bubble were analyzed. The paper \cite{DW} describes bubbles of additive Brownian motion and gives a formula for the expected area of a bubble given its height. In \cite{DM96}, the authors showed that a Jordan curve contained in a level set of the Brownian sheet must be nowhere differentiable, which indicates that connected components of the level set must be highly irregular. Whether or not the level sets of the Brownian sheet actually do contain a Jordan curve remains an open problem. However, for additive Brownian motion, this question was resolved affirmatively in \cite{DM01}. The Hausdorff dimension of this Jordan curve in a level set of an ABM has not yet been determined but is conjectured in \cite{gabor} to be $(\sqrt{17} +1)/4$.
	
	In \cite{M}, T.~Mountford showed that there exist points in $\rtoo$ which are on the boundary of both a positive and a negative bubble of the Brownian sheet, a situation which does not arise for standard Brownian motion. He also showed that the Hausdorff dimension of the boundary of any bubble is at least $1.25$ and is strictly smaller than $\frac{3}{2}$.
	
	The first result of \cite{M} was improved in \cite{DM97}, where it was shown that there exist monotone curves along which the Brownian sheet has a point of increase at a given level $q$. Several refinements of this were given in \cite{DM1}. Finally, the authors showed in \cite{DM2} that, for ABM and for the Brownian sheet, given the level set at level $q$, distinct excursions away from $q$ are {\em not} independent. An overview of these results can be found in \cite{Dalang} and the references therein. Other properties of the Brownian sheet can be found in \cite[Chapter 12]{davar}.
 
   In recent years, there has been much interest in level lines (contours) of the Gaussian random field known as the two dimensional Gaussian Free Field GFF (see \cite{SS2009} for the discrete version of this random field). In particular, \cite{SS2013} shows that the level lines of a GFF correspond to a chordal Schramm-Loewner evolution SLE$_4$ (see also \cite{dubedat}). The Hausdorff dimension of such a curve is known to be $\frac{3}{2}$ (see \cite{beffara}), which happens to be the same dimension as that of the level sets of the Brownian sheet. 
However, the issues that we discuss here for ABM and the Brownian sheet do not seem to have been yet discussed for the GFF. There has also been much interest in level sets of smooth Gaussian random fields: see \cite{AT,AW}, for instance.

   The main objective of this paper is to improve the second result of \cite{M}, namely, to determine, for ABM and for the Brownian sheet, the exact value of the Hausdorff dimension of the boundary of a bubble. 

   Central to our methodology is an algorithm which was introduced in \cite{DW} under the name ``Algorithm A" and which applies to additive Brownian motion $\ti X$. This algorithm, which we term the ``DW-algorithm", constructs, assuming that $\ti X(0,0) >0$, a path in $\IR^2$ along which $\ti X$ is positive, with one extremity at $(0,0)$ and the other at the highest point on the excursion over the bubble that contains the origin. This algorithm was used in \cite{DW} to determine the expected area of a bubble of ABM given the height of the excursion over this bubble.
   
   Here, we analyze this algorithm carefully in order to compute {\em exact} gambler's ruin probabilities for ABM: given $0 < x_0 < 1$, we calculate in Theorem \ref{prop1} an exact and explicit formula for the probability, given that $\ti X(0,0) = x_0$, that ``there exists a path in $\IR^2$ starting at the origin along which $\ti X$ hits level 1 before level 0," {\em with no a priori restriction on the path.} In particular, for $x$ near $0$, it turns out that this probability is of order $x_0^{\lambda_1}$, where
$$
   \lambda_ 1 = \half\left(5 - \sqrt{13 + 4\sqrt{5}}\right) \simeq 0.157764\, .
$$
This implies the following result.

\begin{thm}
There exists a constant $c_1$ such that for all $x \geq 1$ and for a standard ABM $\ti X$ such that $\ti X(0,0) = 1$, the probability that ``there exists a path starting at the origin with $\ti X \geq x$ at the other extremity of the path" is equal to $c_1 x^{-\lambda _ 1 } +o( x ^ {- \lambda _1 })$.
\label{thm1}
\end{thm}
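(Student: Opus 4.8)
The plan is to obtain Theorem~\ref{thm1} from Theorem~\ref{prop1} by combining Brownian scaling with an examination of the leading term of the explicit gambler's ruin formula. Throughout, ``path starting at the origin with $\ti X\ge x$ at the other extremity'' is understood, as in the DW-algorithm construction, to mean a path along which $\ti X$ stays positive; equivalently, since $\ti X$ is continuous, it is a path from the origin along which $\ti X$ reaches level $x$ before level $0$.

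First I would reformulate the event. Let $B$ be the connected component of $\{s\in\IR^2:\ti X(s)>0\}$ containing the origin (a.s.\ bounded, so that $M:=\sup_{s\in B}\ti X(s)<\infty$ a.s.). A path from the origin along which $\ti X$ reaches a value $\ge x$ before reaching $0$ must remain in $B$, and by the intermediate value theorem it attains the value $x$; conversely, if $M\ge x$ then connectedness of $B$ (or the DW-algorithm path) produces such a path. Hence, conditionally on $\ti X(0,0)=1$, the event in Theorem~\ref{thm1} is exactly $\{M\ge x\}$, and the task is to show
$$
\P\bigl(M\ge x \mid \ti X(0,0)=1\bigr)=c_1 x^{-\lambda_1}+o(x^{-\lambda_1}),\qquad x\to\infty .
$$

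Next comes the scaling step. For any $c>0$ the process $(c^{-1}\ti X(c^2s),\ s\in\IR^2)$ is again a standard ABM, because each $\ti Z_i$ is a Brownian motion; under this transformation the bubble of the origin is rescaled by the factor $c^{-2}$ and its height $M$ by $c^{-1}$. Taking $c=x$, it follows that, conditionally on $\ti X(0,0)=1$, the event $\{M\ge x\}$ has the same probability as $\{M\ge 1\}$ computed conditionally on $\ti X(0,0)=x^{-1}$. Writing $h(x_0):=\P(M\ge 1\mid\ti X(0,0)=x_0)$ for $0<x_0<1$ — which, by the reformulation above, is precisely the probability evaluated in Theorem~\ref{prop1} — we get $\P(M\ge x\mid\ti X(0,0)=1)=h(x^{-1})$ for every $x>1$; the remaining case $x=1$ is trivial (probability one), consistent with $h(x_0)\to1$ as $x_0\uparrow1$.

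It then remains to read off the behaviour of $h(x_0)$ as $x_0\downarrow0$ from the closed-form expression supplied by Theorem~\ref{prop1}: the exponent $\lambda_1=\tfrac12(5-\sqrt{13+4\sqrt5})$ should be the smallest exponent occurring in that expression, so that $h(x_0)=c_1x_0^{\lambda_1}+o(x_0^{\lambda_1})$ with $c_1$ the associated coefficient, and substituting $x_0=x^{-1}$ finishes the proof. Within this argument, the steps requiring care are the scaling-covariance of the bubble and its height, and — the bulk of the work — the asymptotic expansion of the formula in Theorem~\ref{prop1}, in particular verifying that $c_1\neq0$ (no cancellation in the leading coefficient) and that the error term is genuinely $o(x_0^{\lambda_1})$. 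The genuinely hard part of the whole program, namely the exact analysis of the DW-algorithm that produces Theorem~\ref{prop1}, is being taken as given here.
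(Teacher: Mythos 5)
Your proof is correct and matches the paper's (implicit) argument: the paper presents Theorem~\ref{thm1} as an immediate consequence of Theorem~\ref{prop1} via the same Brownian scaling $\ti X\mapsto c^{-1}\ti X(c^2\,\cdot)$ and the observation that $\lambda_1$ is the smallest exponent in the explicit formula \eqref{formE}, with nonzero leading coefficient $\alpha_1\simeq0.938911$, giving $c_1=\alpha_1$. The identification of the event with $\{M\ge x\}$ for the bubble height $M$ is precisely what Proposition~\ref{prop2}(b) establishes, so your reformulation is the right one.
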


   In Section \ref{sec3}, we build on the result of Theorem \ref{prop1} to determine {\em escape probabilities} of a standard ABM such that $\ti X(0,0) = x_0$, for $0 < x_0 < 1$: we obtain sharp estimates on the probability that the bubble containing the origin extends at least $a$ units away from the origin. We show in particular in Theorem \ref{thm3} that if $x_0 / \sqrt{a} \leq 1$, then this probability is of order $(x_0/\sqrt{a})^{\lambda_1}$.
   
   Sections \ref{sec4} to \ref{sec7} study the Hausdorff dimension of the boundary of $q$-bubbles of ABM. In Section \ref{sec4}, we show in Proposition \ref{thm4} that $(3 - \lambda_1)/2$ is an upper bound for this Hausdorff dimension. The proof of this result uses a covering argument and is a fairly straightforward consequence of the results of Section \ref{sec3}, and, in particular, of Proposition \ref{univariateup}.

   The objective of Sections \ref{sec5}--\ref{sec7} is to establish the corresponding lower bound. This is done via a so-called ``second-moment argument." This requires two important ingredients. The first is an upper bound on a two-point escape probability, that is, the probability that the bubbles containing two distinct points $s$ and $t$ both have a diameter of order $1$. Of course, if $s$ and $t$ are far apart, then the two bubbles are essentially independent, and so the main objective is to understand how this probability behaves for $s$ near $t$. The required upper bound is obtained in Proposition \ref{lembivariate}. The second ingredient is a lower bound on escape probabilities with the additional constraint that as one moves away from the origin along the path constructed by the DW-algorithm, the value of the ABM grows quickly enough. This is done in Proposition \ref{rdprop21}. Proposition \ref{lowerlem1} contains the key ingredients for the second-moment argument, which is implemented in Section \ref{sec7}, culminating in the following result (in which the lower bound comes from Theorem \ref{thm7.3} and the upper bound from Proposition \ref{thm4}). This result was announced in \cite{Dalang}.
   
\begin{thm} Fix $q \in \IR$. For standard additive Brownian motion, with probability one, the Hausdorff dimension of the boundary of every $q$-bubble is equal to 
 $$
   \frac{3 - \lambda_1}{2} =  \frac{1}{4}\left(1 + \sqrt{13 + 4 \sqrt{5}}\right) \simeq 1.421.
$$
\label{thm2}
\end{thm}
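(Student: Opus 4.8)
The statement is exactly the combination of Proposition~\ref{thm4} (upper bound) and Theorem~\ref{thm7.3} (lower bound), so I will indicate how I would establish each. \textbf{Reductions first:} since $\ti X$ has stationary increments it suffices, after a translation, to bound $\dim\partial B$ for every bubble $B$ meeting the fixed square $[0,1]^2$ and then take a countable union over integer translates; replacing $\ti X$ by $\ti X-q$ we may take $q=0$. As $\{\ti X\ne 0\}$ is open it has only countably many connected components, so ``every bubble'' is a countable family. For the upper bound one further fixes $\delta>0$ and argues only about bubbles of diameter $\ge\delta$, taking $\delta=1/k$ at the end; this loses nothing, since $\partial B\subseteq\bigcup_{\mathrm{diam}\,B'\ge\delta}\partial B'$ as soon as $\delta<\mathrm{diam}\,B$.

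\textbf{Upper bound (Proposition~\ref{thm4}).} Cover $[0,1]^2$ by the $4^n$ dyadic squares $Q$ of side $2^{-n}$. If $Q$ meets $\partial B$ for a bubble $B$ of diameter $\ge\delta$ then, writing $s_Q$ for a corner of $Q$, two things must occur: (i) $\ti X$ comes within $O(2^{-n/2}\sqrt{\log 2^{n}})$ of $0$ on a fixed dilate of $Q$ — the ``level set'' constraint, of probability $O(2^{-n/2})$, reflecting that the zero set of $\ti X$ is $3/2$-dimensional; and (ii), conditionally on (i), the bubble through the near-zero point extends distance $\ge\delta/2$, an escape event of conditional probability $O\bigl((2^{-n/2}/\sqrt\delta)^{\lambda_1}\bigr)$ by Theorem~\ref{thm3} and Proposition~\ref{univariateup}. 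Decoupling (i) and (ii) using the quantitative one-point machinery of Section~\ref{sec3}, the expected number of such $Q$ is $O\bigl(4^n\cdot 2^{-n/2}\cdot 2^{-n\lambda_1/2}\bigr)=O\bigl(2^{n(3-\lambda_1)/2}\bigr)$, so $\sum_Q(\mathrm{diam}\,Q)^{\alpha}\to 0$ in expectation whenever $\alpha>(3-\lambda_1)/2$; Borel--Cantelli then gives $\dim\partial B\le(3-\lambda_1)/2$ a.s., for every bubble.

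\textbf{Lower bound (Theorem~\ref{thm7.3}).} This is a second-moment (Frostman) argument. Fix a rational $r$ with $\ti X(r)\ne 0$, let $B_r$ be the bubble containing $r$, fix $\alpha<(3-\lambda_1)/2$, and on the side-$2^{-n}$ dyadic squares $Q$ define a random measure $\mu_n$ by $\mu_n(Q)=p_n^{-1}\,2^{-2n}\,\mathbf 1_{A_n(Q)}$, where $A_n(Q)$ is the event that $Q$ meets $\partial B_r$ \emph{and} the DW-algorithm path reaching $Q$ has grown at the rate prescribed by Proposition~\ref{rdprop21}, and $p_n=P(A_n(Q))\asymp 2^{-n(1+\lambda_1)/2}$. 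Two inputs are needed, both extracted from the analysis of the DW-algorithm and packaged in Proposition~\ref{lowerlem1}: a lower bound on the first moment $E[\mu_n([0,1]^2)]\ge c>0$ (the constrained lower escape estimate, Proposition~\ref{rdprop21}), and an upper bound on the $\alpha$-energy $E\bigl[\iint|s-t|^{-\alpha}\,\mu_n(ds)\,\mu_n(dt)\bigr]\le C(\alpha)<\infty$ uniform in $n$. The latter reduces, for $|s-t|\asymp\rho$, to the two-point escape probability, and the content of Proposition~\ref{lembivariate} is precisely that $P\bigl(A_n(Q_s)\cap A_n(Q_t)\bigr)\lesssim p_n^{2}\,\rho^{-(1+\lambda_1)/2}$, which makes $\alpha=(3-\lambda_1)/2$ the exact convergence threshold — this is where the exact value of $\lambda_1$ from Theorem~\ref{prop1} enters. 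Weak compactness produces a subsequential limit $\mu$ carried by $\partial B_r$ with positive mass and finite $\alpha$-energy on an event of probability bounded below, so $\dim\partial B_r\ge\alpha$ there; letting $\alpha\uparrow(3-\lambda_1)/2$, upgrading ``positive probability'' to ``almost surely'' and ``$B_r$'' to ``every bubble'' by a zero--one / scaling argument (localizing the construction near an arbitrary boundary point, exploiting near-independence of $\ti X$ across disjoint regions, and unioning over rational $r$, which meets every bubble), one obtains $\dim\partial B\ge(3-\lambda_1)/2$ a.s.\ for every $q$-bubble $B$. Together with the upper bound this is the theorem.

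\textbf{Main obstacle.} The crux is the lower bound, and within it the bivariate estimate of Proposition~\ref{lembivariate}: one must show that the probability of a single bubble of diameter $\asymp 1$ touching the level set near two points at distance $\rho$ degrades, as $\rho\to0$, at exactly the rate $\rho^{-(1+\lambda_1)/2}$ relative to the square of the one-point probability — neither faster nor slower — which requires a simultaneous analysis of two coupled runs of the DW-algorithm and is the reason the constrained escape event of Proposition~\ref{rdprop21} must be introduced. A secondary technical difficulty, carried out without any clean two-parameter Markov property, is the passage from a positive-probability lower bound on the dimension to the assertion that it holds almost surely for the boundary of every bubble.
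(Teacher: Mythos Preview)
Your proposal is correct and follows essentially the same route as the paper: the upper bound via a covering argument driven by the one-point estimate of Proposition~\ref{univariateup}, and the lower bound via a second-moment/Frostman argument built on Propositions~\ref{rdprop21}, \ref{lembivariate} and \ref{lowerlem1}, then upgraded to an almost-sure statement by a zero--one law and a countable union over rational basepoints. The one place where the paper is more specific than your sketch is the zero--one step (Theorem~\ref{thm7.3}): rather than a vague ``near-independence across disjoint regions,'' the paper constructs explicit stopping points $S^\varepsilon=(T_1,\tau_\varepsilon)$ approaching the boundary of $\cC_r(q)$ so that the increment process $Y^\varepsilon=X^{r,(S_1^\varepsilon,S_2^\varepsilon,0,0)}$ is a \emph{fresh standard} ABM, and then invokes a germ $\sigma$-field zero--one law for $X^{r,(S_1^\varepsilon,0,0,0)}$ --- so for ABM there \emph{is} a clean strong Markov property at four-parameter stopping points, contrary to your closing remark, and that is exactly what makes this step work.
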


   Sections \ref{sec8} to \ref{sec11} deal with the analogue of Theorem \ref{thm2} for the Brownian sheet. The underlying idea is that in the neighborhood of a point, the Brownian sheet is the sum of a standard ABM and an error term (see \cite{DW0}), and appropriate arguments and estimates are needed to control the contribution of this error term. A key difficulty is that adding a small quantity to a process can substantially change the size of some bubbles.
   
   Section \ref{sec8} establishes in Proposition \ref{bs_ubprop1} that $(3-\lambda_1)/2$ is an upper bound for the Hausdorff dimension of bubbles of the Brownian sheet. This uses a first of four variants on the DW-algorithm, which we call the $\delta$-DW-algorithm, where $\delta >0$ is a parameter. Indeed, when the DW-algorithm for a standard ABM associated with the Brownian sheet terminates, it has constructed a rectangle on which the ABM is negative. However, the ABM may be ``barely negative", and so the bubble for the Brownian sheet may extend outside of this rectangle, due to the contribution of the error term. In order to show that this is unlikely, the $\delta$-DW-algorithm continuous on, provided the ABM becomes again sufficiently positive rather quickly, before going sufficiently negative (see Section \ref{sec8}). We show in Section \ref{sec8} that the gambler's ruin and escape probabilities are essentially the same for the DW-algorithm and for the $\delta$-DW-algorithm (Lemma \ref{bs_ub_lem4}), and that it is unlikely that the error term will have the effect that the bubble for the ABM and the bubble of the Brownian sheet are of significantly different size. The proof of Proposition \ref{bs_ubprop1} consists in making all these statements precise.
   
   The objective of Sections \ref{sec9}--\ref{sec11} is to establish that $(3-\lambda_1)/2$ is also a lower bound for the Hausdorff dimension of bubbles of the Brownian sheet. As for ABM, this is done via a second-moment argument. With the two-point escape probability in mind, and since the expression for the standard ABM that approximates the Brownian sheet is not so simple (see \eqref{08_08_14_1}), we prefer to approximate the Brownian sheet using an ABM that is {\em not standard,} that is, the two Brownian motions $\ti{Z}_1$ and $\ti{Z}_1$ are {\em not} independent (see \eqref{rd9.11a}). However, this (non standard) ABM can  be well-approximated by a standard ABM, as we show in Section \ref{sec10} by using a second variant on the DW-algorithm. This relies on a result established in  Section \ref{sec9}, which states that if two ABM's are close together and if one of them behaves in a ``typical way", then the other also behaves in this typical way. This statement, which is a sort of continuity property of the DW-algorithm on a subset of ``typical" paths, is made precise in Proposition \ref{rdprop37}, and we show in Proposition \ref{rdprop38} that this ``typical" behavior does indeed occur with high probability. Finally, Section \ref{sec10} also addresses the issue of the lower bound on escape probabilities for the Brownian sheet, by comparing the behavior of the Brownian sheet with that of a non-standard ABM, and the behavior of the latter with that of a standard ABM.
   
   In Section \ref{sec11}, we establish the necessary upper bound on a two-point escape probability. This requires approximating the Brownian sheet simultaneously in the neighborhood of two points $s$ and $t$ by two standard ABM's. The construction of the two standard ABM's builds on the ideas developed in Section \ref{sec10} and uses a third variant of the DW-algorithm. It is also necessary to obtain an upper bound on the probability that the bubble containing $s$ and the bubble containing $t$ both correspond to sufficiently high excursions. Because of the correlations in certain overlapping rectangular increments of the Brownian sheet, this requires a fourth and final variant on the DW-algorithm, that we call the ``boosted DW-algorithm." Gambler's ruin probabilities and escape probabilities for this boosted DW-algorithm are analyzed in Lemma \ref{08.lem5}. Proposition \ref{rdprop42} contains the needed upper bound on the two-point escape probability. In Section \ref{sec12}, we extend the results of Sections \ref{sec10} and \ref{sec11} to a family of processes that are obtained from certain scaled increments of the Brownian sheet and are themselves almost Brownian sheets, though not ``standard Brownian sheets." This leads to a proof of the following theorem (which was conjectured in \cite{DM0}).
 
\begin{thm} Fix $q \in \IR$. For the Brownian sheet, with probability one, the Hausdorff dimension of the boundary of every $q$-bubble is equal to $(3-\lambda_1 )/2$.
\label{thm3a1}
\end{thm}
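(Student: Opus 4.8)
The plan is to establish separately that, with probability one, $\dim_H\partial B\le(3-\lambda_1)/2$ and $\dim_H\partial B\ge(3-\lambda_1)/2$ for every $q$-bubble $B$ of the Brownian sheet $W$. Since $\partial B\subseteq\{s\in\rtoo:W(s)=q\}$ and the assertion is local, for the upper bound it suffices to bound the Hausdorff dimension of the single closed set $\bigcup\{\partial B:\operatorname{diam}(B)\ge\ep_0\}$ on a fixed compact rectangle $R$ bounded away from the axes and then take a countable union over $\ep_0\downarrow 0$ and an exhaustion by such rectangles; for the lower bound it suffices to treat one bubble near one boundary point at one scale, the general case following by scaling and a union argument (Section \ref{sec12}). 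The structural input that makes the ABM theory relevant is that on $R$, near any point $s_0$, the sheet decomposes as in \cite{DW0} into a standard ABM plus a remainder that is negligible at small scales; the price to pay is control of this remainder, which is delicate precisely because ``the bubble of a process'' is not a continuous functional of the process.

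For the upper bound (Proposition \ref{bs_ubprop1}, mirroring Proposition \ref{thm4} for ABM) I would use a dyadic covering. If a square $Q$ of side $2^{-n}$ meets the boundary of some bubble of diameter $\ge\ep_0$, then there is a point $s'$ within $C2^{-n}$ of $Q$ with $0<|W(s')-q|\lesssim 2^{-n/2}$ whose bubble reaches distance $\gtrsim\ep_0$. Since $W(s')$ has bounded density on $R$, the first requirement costs $\asymp 2^{-n/2}$, and, conditionally, the escape estimate of Theorem \ref{thm3} costs $\asymp(2^{-n/2})^{\lambda_1}=2^{-n\lambda_1/2}$; hence this probability is $\lesssim 2^{-n(1+\lambda_1)/2}$, the expected number of such squares is $\lesssim 2^{2n}\cdot 2^{-n(1+\lambda_1)/2}=2^{n(3-\lambda_1)/2}$, and a Markov-plus-Fatou argument kills the $\gamma$-dimensional Hausdorff measure of $\bigcup\{\partial B:\operatorname{diam}(B)\ge\ep_0\}$ for every $\gamma>(3-\lambda_1)/2$. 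The only genuinely new point over ABM is that the escape estimate must be applied to the approximating ABM while ruling out that the ABM bubble is only ``barely negative'' so that the $W$-bubble spills past the rectangle produced by the algorithm; this is the purpose of the $\delta$-DW-algorithm, whose escape and gambler's-ruin probabilities agree with those of the plain DW-algorithm up to constants (Lemma \ref{bs_ub_lem4}).

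For the lower bound I would run a second-moment argument (mirroring Theorem \ref{thm7.3} for ABM). Fix $\alpha<(3-\lambda_1)/2$, work on the positive-probability event that some bubble $B$ of diameter $\asymp 1$ meets a fixed sub-rectangle, and fix $s_0\in\partial B$. For each dyadic square $Q$ of side $2^{-n}$ near $s_0$, let $G_n(Q)$ be the event that at a reference point of $Q$ the DW-algorithm path of the approximating ABM escapes a region of size $\asymp\ep_0$ while the ABM stays positive and grows at a prescribed rate as one moves away — the growth constraint guaranteeing that the ensuing excursion is genuinely large and that $Q$ lies within $C2^{-n}$ of $\partial B$; the one-point lower bound $P(G_n(Q))\gtrsim 2^{-n(1+\lambda_1)/2}$ comes from Proposition \ref{rdprop21} and its Brownian-sheet analogue in Section \ref{sec10}. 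With $\mu_n(dx):=\sum_Q\mathbf{1}_{G_n(Q)}\,P(G_n(Q))^{-1}\,\mathbf{1}_Q(x)\,dx$ one has $E[\mu_n(A)]=|A|$, and the heart of the matter is the two-point bound: for $Q\ne Q'$ with $r:=|Q-Q'|\asymp 2^{-k}$, $k\le n$,
$$
  \frac{P(G_n(Q)\cap G_n(Q'))}{P(G_n(Q))\,P(G_n(Q'))}\;\lesssim\;r^{-(1+\lambda_1)/2},
$$
which I would obtain by decomposing at the scale $r$ where the two escapes decouple — below scale $r$ the two bubbles escape essentially independently, and from scale $r$ on a single joint event forces both bubbles out to scale $\asymp\ep_0$, the latter bounded by the two-point escape probability of Proposition \ref{rdprop42}. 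Consequently
$$
  E\!\left[\iint_{A\times A}\frac{\mu_n(ds)\,\mu_n(dt)}{|s-t|^{\alpha}}\right]\;\lesssim\;\iint_{A\times A}\frac{ds\,dt}{|s-t|^{\alpha+(1+\lambda_1)/2}}\;<\;\infty
$$
for all $\alpha<(3-\lambda_1)/2$, uniformly in $n$. Passing to a weak limit point $\mu$ of $(\mu_n)$, a Paley--Zygmund argument gives that, with positive probability, $\mu$ is nontrivial, carried by $\partial B$, and of finite $\alpha$-energy, whence $\dim_H\partial B\ge\alpha$ by Frostman's lemma. Scaling (Section \ref{sec12}) applied to the rescaled increments of $W$ — themselves almost Brownian sheets — together with the near-ABM structure of $W$ at small scales and a Borel--Cantelli argument over scales and a countable set of base points, would then upgrade this to: almost surely, the boundary of every $q$-bubble has dimension $\ge(3-\lambda_1)/2$. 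Combined with the upper bound, this yields the theorem.

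The hard part will be the transfer from ABM to $W$ throughout the lower bound, since a naive coupling fails — adding the small remainder can merge or split bubbles. I would handle this by (i) isolating a class of ``typical'' ABM paths on which the DW-algorithm is stable under small perturbations (Proposition \ref{rdprop37}) and showing typical behavior is overwhelmingly likely (Proposition \ref{rdprop38}); (ii) approximating $W$ near a point first by a \emph{non-standard} ABM and then by a standard ABM through a further DW-algorithm variant (Section \ref{sec10}); and (iii), for the two-point estimate, building two standard ABM's simultaneously near the two points with yet another variant, together with a ``boosted'' DW-algorithm that absorbs the correlations among the overlapping rectangular increments of $W$ feeding both constructions (Lemma \ref{08.lem5}, Proposition \ref{rdprop42}). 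I expect step (iii) — the simultaneous two-point approximation, the control of the correlations between the two nearby ABM's, and the proof of the two-point escape bound with the sharp exponent $(1+\lambda_1)/2$ — to be the most demanding, and the place where the Brownian-sheet case genuinely departs from ABM rather than merely adding ``an extra layer of technicalities.''
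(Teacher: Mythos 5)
Your outline reproduces the paper's proof at every structural joint: the dyadic covering and the $\delta$-DW-algorithm for the upper bound (Proposition \ref{bs_ubprop1}), and for the lower bound the second-moment/energy method driven by robustness of the DW-algorithm (Propositions \ref{rdprop37}, \ref{rdprop38}), the non-standard ABM approximation, the boosted DW-algorithm and the two-point estimate of Proposition \ref{rdprop42}, followed by scaling and a $0$--$1$ law in Section \ref{sec12}. You also correctly flag the $\delta$-DW-algorithm as the device needed to prevent an almost-but-not-quite negative ABM boundary from letting the sheet bubble spill past the explored rectangle.

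The one genuine error is your displayed two-point bound
$$
\frac{P(G_n(Q)\cap G_n(Q'))}{P(G_n(Q))\,P(G_n(Q'))} \leq C\, r^{-(1+\lambda_1)/2},
\qquad r = |Q-Q'|.
$$
This isotropic form is false unless $Q$ and $Q'$ are roughly in a diagonal relationship. What Proposition \ref{rdprop42} actually gives, for $(r,t) \in \ID_n(k,\ell)$ with smaller coordinate gap $\approx 2^{2(k-n)}$ and larger one $\approx 2^{2(\ell-n)}$ (so $|r-t|\approx 2^{2(\ell-n)}$ and $1 \le k \le \ell \le n$), together with the one-point lower bound of Proposition \ref{rdprop41}, is
$$
\frac{P(A(t,n,k_0,v)\cap A(r,n,k_0,v))}{P(A(t,n,k_0,v))\,P(A(r,n,k_0,v))} \leq C\, 2^{(n-\ell)(1+\lambda_1)}\cdot 2^{2\lambda_1(\ell-k)},
$$
which exceeds your claimed $|r-t|^{-(1+\lambda_1)/2} = 2^{(n-\ell)(1+\lambda_1)}$ by the aspect-ratio factor $2^{2\lambda_1(\ell-k)} \ge 1$, strictly so whenever $k<\ell$. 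Consequently the energy bound you write down does not follow from the inequality you wrote, because that inequality is not true. The correct route — what Lemma \ref{lem12.3} carries out — is to sum the anisotropic bound of Proposition \ref{rdprop42} against the dyadic multiplicities $\operatorname{card}\,\ID_n(k,\ell) \leq \frac{1}{4} 2^{4n+2\ell+2k}$ over all $1\leq k\leq \ell\leq n$; the excess $2^{2\lambda_1(\ell-k)}$ is then absorbed because the number of near-axis pairs is correspondingly small, and the double sum still converges for every $\alpha < (3-\lambda_1)/2$. With this repair, and keeping track of the $\varepsilon$ loss in the exponent coming from Lemma \ref{bs_ub_lem4} (which you also silently drop in the upper bound), the rest of your blueprint matches the paper.
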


   This theorem is the main result of this paper.  Its proof is given in the first part of Section \ref{sec12}. 
	
   
\end{section}
\eject

\begin{section}{Gambler's ruin probabilities for additive Brownian motion}\label{sec2}

   A {\em standard additive Brownian motion process} $\ti{X}= (\ti{X}(s_1,s_2),\ (s_1,s_2) \in \IR^2)$\index{$\ti{X}$} is given by
\begin{equation}\label{abm}
   \ti{X}(s_1,s_2) = \ti{Z}_1(s_1) - \ti{Z}_2 (s_2)
\end{equation} 
(note the minus sign), where $\ti{Z}_1$\index{$\ti{Z}_1$}, $\ti{Z}_2$\index{$\ti{Z}_2$} are two independent (two-sided) standard Brownian motions, defined on a probability space $(\Omega, \F,P)$, such that $\ti{Z}_1(0) = \ti{Z}_2(0) =0$. The processes $\ti{Z}_1$ and $\ti{Z}_2$ are the {\em components} of $\ti{X}$. An {\em additive Brownian motion}, without the qualification that it is {\em standard}, will refer to the case where the two Brownian motions may be correlated.

    When considering local behavior around 
the origin, it is sometimes convenient to regard a standard ABM as derived from four independent standard Brownian motions $B_i$\index{$B_1$}\index{$B_2$}\index{$B_3$}\index{$B_4$}, where
$$
   \ti{Z}_1(u) = \left\{\begin{array}{ll}
                  B_1 (u),&  \mbox{if } u \geq 0, \\
                  B_3 (-u),& \mbox{if } u \leq 0,
           \end{array}\right.
  \qquad
   \ti{Z}_2(v)  = \left\{\begin{array}{ll} 
                  B_2 (v),&  \mbox{if } v \geq 0, \\
                  B_4 (-v),& \mbox{if } v \leq 0.
           \end{array}\right.
$$

   For small $x_0 > 0$, we are interested in estimating the probability $\E(x_0)$ of the event ``there exists a path in $\IR^2$ starting at the origin along which $x_0 + \ti X$ hits level 1 before level 0," with no a priori restriction on the path. Formally, for a continuous path $\Gamma: \IR_+ \to
\IR^2$, set\index{$\tau^\Gamma$}
$$
   \tau^\Gamma = \inf\{u >0: x_0 + \ti X(\Gamma(u)) \in \{0,1\}\}.
$$
Then\index{$\E(x_0)$}
$$
   \E(x_0) = P\{\exists \Gamma: \Gamma(0) = (0,0) \mbox{ and }
   x_0 + \ti X(\Gamma(\tau^\Gamma)) = 1\}.
$$

   In the classical case where $\ti X$ is replaced by
(two-sided) Brownian motion, then there are essentially only two possible paths and this
probability is $1 - (1-x)^2 \sim 2x$ for $x$ near $0$. For the additive Brownian motion process
$\ti X$, there are uncountably many possible paths, so one expects that this escape
probability will be of order substantially larger than $x$, say of order
$x^\lambda$, for some $0 < \lambda <1$. The following result confirms this
intuition, and in addition, gives an {\em exact and explicit} formula for $\E(x)$.

\begin{thm} There are positive real numbers $\lambda_1 < \lambda_2 < \lambda_3 < \lambda_4$ and
constants $\alpha_i$, 
$i = 1,\dots,4,$ such that for $0 < x < 1$,
\begin{equation}\label{formE}
   \E(x) = \sum_{i=1}^4 \alpha_i\, x^{\lambda_i} .
\end{equation}
In fact, 
$$
   \{\lambda_1,\lambda_2,\lambda_3,\lambda_4\} = \left\{\half\left(5 \pm \sqrt{13 \pm
   4\sqrt{5}}\,\right)\right\},
$$
so $\lambda_1 = \half\left(5 - \sqrt{13 + 4\sqrt{5}}\right) \simeq 0.157764$\index{$\lambda_1$},
$\lambda_2 \simeq 1.49306$, $\lambda_3 \simeq 3.50694$, and $\lambda_4 \simeq 4.84224$.
Further, $\alpha_1 \simeq 0.938911$ and $\alpha_2$, $\alpha_3$ and $\alpha_4$ 
are given explicitly in (\ref{explalpha}) 
and numerically in \eqref{remnum}. In particular,
$$
   \lim_{x \downarrow 0} \frac{\E(x)}{\alpha_1 x^{\lambda_1}} = 1.
$$
\label{prop1}
\end{thm}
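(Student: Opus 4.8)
The plan is to express $\E(x)$ in terms of the DW-algorithm and then to read off the answer from a linear functional equation. Run the DW-algorithm (``Algorithm~A'' of \cite{DW}) on the shifted process $x+\ti X$, which has value $x\in(0,1)$ at the origin: it produces a path from $(0,0)$ on which $x+\ti X>0$, whose other extremity is the point where $x+\ti X$ attains its maximum $A_\infty$ over the bubble (connected component of $\{x+\ti X>0\}$) containing the origin; here $A_\infty<\infty$ a.s., as is implicit in the area computation of \cite{DW}. Since this bubble is open and connected, $x+\ti X$ is continuous and strictly positive on it and equals $x<1$ at the origin, the intermediate value theorem shows that, almost surely, the escape event coincides with $\{A_\infty\ge1\}$: if $A_\infty\ge1$, truncating the DW-algorithm's path at the first parameter where its value hits $1$ gives an admissible path; conversely an escaping path exhibits a point of the bubble with value $1$. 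Hence $\E(x)=P(A_\infty\ge1\mid\text{origin value}=x)$, and by the scaling invariance of standard ABM (space by $c$, time by $c^2$) this equals $P_1(A_\infty\ge 1/x)$, where $P_1$ refers to origin value $1$. The theorem is therefore equivalent to showing $P_1(A_\infty\ge y)=\sum_{i=1}^4\alpha_i\,y^{-\lambda_i}$ for $y>1$.

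Next I would make Algorithm~A precise as a Markov chain. The algorithm proceeds in successive rounds, alternately extending the current path in the two coordinate directions (equivalently, successively using the four independent Brownian motions $B_1,\dots,B_4$); the relevant state between rounds is a short vector recording the current value of $x+\ti X$ at the path's endpoint together with the ``buffers'' still available in the inactive coordinate(s). One round advances this vector according to classical joint laws of Brownian functionals — first-passage times of a Brownian motion out of an interval, the running maximum and minimum up to such a time, and the location of that maximum — all of which are explicit; by Brownian scaling, conditionally on the current state the next state is distributed as the current state multiplied by an independent random scaling whose law does not depend on the state, and the chain terminates a.s.\ at $A_\infty$. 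The first task is to identify the minimal such state and to compute the one-step transition kernel in closed form.

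Let $h$ of the state denote the probability that $A_\infty\ge1$ given that state. Since $\{A_\infty\ge1\}$ is determined at termination, $h$ is harmonic for the chain, i.e.\ it solves a linear equation $h=Kh+(\text{contribution of the transitions that reach value }1)$. The degree-zero homogeneity coming from Brownian scaling reduces $h$ to a function of the ratios of the state's coordinates, and the functional equation becomes a renewal-type equation on $(0,1)$ for $\E$. Taking Mellin transforms (equivalently, inserting $x\mapsto x^\lambda$ into the homogeneous equation) one finds that its power solutions are exactly those with $\lambda$ a root of an indicial equation which, after evaluating the relevant Mellin transforms of the Brownian functionals, is the quartic
\begin{equation*}
   \lambda^4-10\lambda^3+31\lambda^2-30\lambda+4=\bigl(\lambda^2-5\lambda+3+\sqrt5\bigr)\bigl(\lambda^2-5\lambda+3-\sqrt5\bigr)=0,
\end{equation*}
whose four roots are $\tfrac12\bigl(5\pm\sqrt{13\pm4\sqrt5}\bigr)$, giving distinct $0<\lambda_1<\lambda_2<\lambda_3<\lambda_4$ with the stated numerical values (and no logarithmic terms, the roots being simple). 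A renewal/Wiener--Hopf argument, using that $\E$ is bounded, continuous on $(0,1)$ with $\E(0+)=0$ and that the transition kernel decays suitably, then shows that $\E$ is exactly a linear combination $\sum_{i=1}^4\alpha_i\,x^{\lambda_i}$.

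Finally, the coefficients are pinned down by four linear conditions: continuity forces $\E(1^-)=1$, hence $\sum_i\alpha_i=1$, and three further conditions come from matching the ansatz with the functional equation near the absorbing value $1$ (e.g.\ prescribed values of $\E$ and of a few of its derivatives at $x=1$); solving the resulting $4\times4$ system gives the explicit $\alpha_i$, in particular $\alpha_1\simeq0.938911$. As $\lambda_1$ is the smallest exponent, $\E(x)/(\alpha_1 x^{\lambda_1})\to1$ as $x\downarrow0$, which gives Theorem~\ref{thm1} with $c_1=\alpha_1$. The main difficulty is the bookkeeping of Algorithm~A as a Markov chain with an explicit transition kernel — choosing the right state variables and computing the joint laws of the Brownian functionals involved, since this is what produces the quartic — together with the rigorous passage from ``the power solutions of the functional equation have these exponents'' to ``$\E$ is exactly their combination'' and the determination of the $\alpha_i$; by contrast, verifying that $\tfrac12(5\pm\sqrt{13\pm4\sqrt5})$ solve the quartic, and the numerics, are routine.
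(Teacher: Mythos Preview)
Your outline matches the paper's strategy at the highest level---reduce $\E(x)$ to the probability that the DW-algorithm's running maximum reaches $1$, exploit Brownian scaling, and extract power exponents---and the quartic you write down is correct. But the concrete execution in the paper differs from your sketch at precisely the points you leave vague, and those differences matter.

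The Markov state is not ``a short vector recording the current value together with buffers''; it is the pair $\Theta_n=(H_{n-1},H_n)$ of the last two maxima (Proposition~\ref{noSTOP}), because the one-step law of $H_n$ depends on both $H_{n-1}$ and $H_{n-2}$ (formula~\eqref{d2}). Consequently there is no single-variable renewal equation for $\E$; the paper instead writes the absorption probability $\alpha(x,y)$ (hit the diagonal before level $1$) as a two-variable integral equation~\eqref{eqalpha}, introduces $\varphi_1(y)=\int_y^1\alpha(y,z)z^{-2}\,dz$ and $\varphi_2(y)=\int_y^1\alpha(y,z)z^{-3}\,dz$, and after three differentiations and the substitution $\psi_i(\ln y)=y^i\varphi_i(y)$ arrives at a constant-coefficient linear system $\mu'=A\mu+b$ of dimension \emph{six}, not four. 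The characteristic polynomial of $A$ has your four real roots together with the complex pair $\tfrac12(5\pm i\sqrt7)$; the complex eigenvalues drop out only because the specific initial data $d$ at $y=1$ happens to lie in the span of the four real eigenvectors ($c_5=c_6=0$, verified explicitly). A second, separate cancellation is that the integer-power terms $x,x^2,x^3,x^4$---which arise from the particular solution $\nu$ and from integrating $\alpha(x,H_1)$ against the density of $H_1$---all vanish ($\beta_1=\cdots=\beta_4=0$ in~\eqref{explbeta}). Only after both cancellations does one obtain the pure four-term form~\eqref{formE}.

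Your Mellin/renewal route might in principle bypass the six-dimensional system and land directly on the quartic, but you would have to explain how a genuinely two-dimensional state collapses to a one-dimensional renewal equation, and why no integer powers appear. As written, ``a renewal/Wiener--Hopf argument'' and ``four linear conditions at $x=1$'' are placeholders for the two nontrivial cancellations that the paper establishes by explicit computation. The determination of the $\alpha_i$ is likewise not a matter of matching $\E(1^-)=1$ and derivatives: it comes from the explicit formula~\eqref{explalpha}, itself the output of integrating the known $\alpha(x,y)$ against the known density of $H_1$.
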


   The difficulty in this theorem is that {\em a priori}, there are many
possible paths to consider. However, this can be addressed using the
DW-algorithm, introduced as Algorithm A in \cite{DW}. This algorithm constructs a specific path $\Gamma^*$ with the remarkable property that either $x_0 + \ti X(\Gamma^*(\tau^{\Gamma^*})) = 1$, or there is no path with this property, that is, for all paths $\Gamma$,
either $\tau^\Gamma = \infty$ or $x_0 + \ti X(\Gamma(\tau^\Gamma)) = 0$. So after
recalling the DW-algorithm (stated in a form suitable for our purposes) 
and some of its properties, our proof of Theorem \ref{prop1} will analyze the probability that $x_0 + \ti X(\Gamma^*(\tau^{\Gamma^*})) = 1$. This algorithm, and four variants that will be described later, will play a fundamental role throughout this paper.

   For two points $(s_1,s_2)$ and $(t_1,t_2)$ in $\IR^2$, we denote by 
$\langle (s_1,s_2), (t_1,t_2)\rangle$ the straight line segment that connects
these two points.
\vskip 12pt

\noindent{\em The DW-algorithm ``started at $r$ with value $x_0$"} 
\vskip 12pt

  Fix $r = (r_1,r_2)\in \IR^2$ and $x_0 >0$. Let\index{$X^r$}
$$
   X^{r}(t) = \ti X(t) - \ti X(r).
$$
This is the increment process from $r$.

   Set $T^0_1 = r_1$, $T^0_2 = r_2$, $U_0 = U_0' = r_1$, $V_0 = V_0' = r_2$, $H_0 = x_0$ and
$\Gamma^{*,x_0,r}_0 = \{r\}$. The algorithm proceeds in stages, beginning with 
$n = 1$.
\vskip 12pt

   {\em Stage $2n-1$.} Let\index{$H_{2n-1}$}
\begin{eqnarray*}
   U_n &=& \sup\{u < T^{n-1}_1: x_0 + X^{r}(u,T^{n-1}_2) = 0\}, \\
   U_n' &=& \inf\{u > T^{n-1}_1: x_0 + X^{r}(u,T^{n-1}_2) = 0\}, \\
   H_{2n-1} &=& x_0 + \sup_{U_n < u < U_n'} X^{r}(u,T^{n-1}_2),
\end{eqnarray*}
and let $T^n_1$ be the unique time point in $[U_n,U_n']$\index{$U_n$}\index{$U_n'$} such that
$$
   x_0 + X^{r}(T^n_1, T^{n-1}_2) = H_{2n-1}.
$$
If $H_{2n-1} = H_{2n-2}$ (or, equivalently, $T^n_1 = T^{n-1}_1$), then STOP.
Otherwise, set
$$
  \Gamma^{*,x_0,r}_{2n-1} = \Gamma^{*,x_0,r}_{2n-2} \cup
   \langle(T^{n-1}_1,T^{n-1}_2),(T^n_1,T^{n-1}_2)\rangle,
$$
and
$$
   R^{x_0,r}_{2n-1} = [U_n,U_n']\times[V_{n-1},V_{n-1}'],
$$
and proceed to Stage $2n$.
\vskip 12pt

   {\em Stage $2n$.} Let \index{$H_{2n}$}
\begin{eqnarray*}
   V_n &=& \sup\{v < T^{n-1}_2: x_0 + X^{r}(T^{n}_1, v) = 0\}, \\
   V_n' &=& \inf\{v > T^{n-1}_2: x_0 + X^{r}(T^{n}_1, v) = 0\}, \\
   H_{2n} &=& x_0 + \sup_{V_n < v < V_n'} X^{r}(T^{n}_1, v),
\end{eqnarray*}
and let $T^n_2$ be the unique time point in $[V_n,V_n']$\index{$V_n$}\index{$V_n'$} such that
$$
   x_0 + X^{r}(T^n_1, T^{n}_2) = H_{2n}.
$$
If $H_{2n} = H_{2n-1}$ (or, equivalently, $T^n_2 = T^{n-1}_2$), then STOP.
Otherwise, set
$$
   \Gamma^{*,x_0,r}_{2n} = \Gamma^{*,x_0,r}_{2n-1} \cup
   \langle(T^{n}_1,T^{n-1}_2),(T^n_1,T^{n}_2)\rangle,
$$
and
$$
   R^{x_0,r}_{2n} = [U_n,U_n']\times[V_{n},V_{n}'],
$$
and proceed to Stage $2n+1$.
\vskip 12pt

   In the case where $r=(0,0)$, we omit the superscript $r$ from the notation
above, and we do the same with $x_0$ if its value is clear from the context. 
The reader can check that with $r = (0,0)$, this reproduces exactly 
Algorithm A of \cite{DW}. The DW-algorithm terminates after a finite (random) 
number $N^{x_0,r}$\index{$N^{x_0,r}$} of stages (see the proof of
Proposition 2.2 in \cite{DW}).  The rectangle $R^{x_0,r}_n$\index{$R^{x_0,r}_n$} represents the 
region of $\IR^2$ that the algorithm has explored up to stage $n$. Clearly, 
$R^{x_0,r}_n \subset R^{x_0,r}_{n+1}$. 

   The DW-algorithm constructs the path $\Gamma^*_N$,\index{$\Gamma^*_N$} which is a finite union of horizontal and vertical segments. At the points $(T^n_1, T^{n-1}_2)$ (resp.~$(T^n_1, T^{n}_2)$),\index{$(T^n_1, T^{n}_2)$} the process $x_0 + X^r$ restricted to this path achieve a new maximum value $H_{2n-1}$ (resp.~$H_{2n}$) and $\Gamma^*_N$ changes direction, so we call these points {\em corners} of $\Gamma^*_N$.

   The conclusion of the following proposition explains why this algorithm is 
relevant for the computation of $\E(x)$.
 
\begin{prop} Let $r = (0,0)$. Fix $x_0 >0$ and let $\Gamma^*$ be the path constructed by the DW-algorithm. Then:

  (a) $x_0 + X \leq 0$ on $\partial R^{x_0,r}_{N^{x_0,r}}$;
  
  (b) A.s., either $x_0 + X(\Gamma^*(\tau^{\Gamma^*})) = 1$, or  
for all paths $\Gamma$ with $\Gamma(0) = (0,0)$, either 
$\tau^\Gamma = \infty$ or $\tau^\Gamma < \infty$ and 
$x_0 + X(\Gamma(\tau^\Gamma)) = 0$;

   (c) If there is a path $\Gamma$ with $\Gamma(0) = (0,0)$ and $\Gamma(1) \not\in [-a,a]^2$ along which $x_0 + X$ is positive, then $R^{x_0,r}_{N^{x_0,r}} \not\subset [-a,a]^2$.
\label{prop2}
\end{prop}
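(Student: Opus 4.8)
The plan is to analyze the three assertions in order, exploiting the explicit construction of the DW-algorithm and the fact that it terminates after finitely many stages.

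For part (a), I would argue that the terminal rectangle $R := R^{x_0,r}_{N^{x_0,r}}$ has the property that $x_0 + X$ vanishes on a dense subset of each of its four edges, hence (by continuity) $x_0 + X \le 0$ on all of $\partial R$. The key observation is that the algorithm STOPs precisely when a new maximum cannot be strictly improved in the relevant direction, which forces the last horizontal (resp.\ vertical) edge to coincide with the previous one and the running maximum $H_{2n-1}$ (resp.\ $H_{2n}$) to equal $H_{2n-2}$ (resp.\ $H_{2n-1}$). One then checks, using the definitions of $U_n, U_n', V_n, V_n'$, that at each stage the process $x_0 + X^r$ equals $0$ at the endpoints of the newly explored interval, and that when the algorithm terminates these zero sets on the boundary have no gaps on which $x_0+X$ could be positive: any such gap would permit the algorithm to continue. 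This is essentially the content of Proposition 2.2 of \cite{DW}, restated, and I would cite that proof for the details.

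For part (b), suppose $x_0 + X(\Gamma^*(\tau^{\Gamma^*})) \ne 1$. Then by the STOP condition the path $\Gamma^*$ never reaches level $1$, so the terminal running maximum $H := H_{N^{x_0,r}}$ satisfies $x_0 \le H < 1$ (with $H < 1$ on the event under consideration), and moreover the connected component of $\{ s : x_0 + X(s) > 0\}$ containing the origin is contained in $R$ by part (a): indeed $x_0 + X \le 0$ on $\partial R$ means no continuous path from the origin along which $x_0 + X$ stays positive can exit $R$. Now take any path $\Gamma$ with $\Gamma(0) = (0,0)$ and $\tau^\Gamma < \infty$. Up to time $\tau^\Gamma$ the path $\Gamma$ stays in the closed set $\{x_0 + X \ge 0\}$; if it left the component of the origin in $\{x_0 + X > 0\}$ it would have to cross a point where $x_0 + X = 0$, but then $\tau^\Gamma$ would be attained there with value $0$, not $1$. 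So either $\Gamma$ stays inside $R$ until $\tau^\Gamma$, where the supremum of $x_0+X$ over $R$ is the terminal running maximum $H<1$, whence $x_0+X(\Gamma(\tau^\Gamma)) \ne 1$ and thus must be $0$; or $\Gamma$ exits the origin's component first, again forcing $x_0+X(\Gamma(\tau^\Gamma)) = 0$. The almost-sure qualifier is needed because, for instance, the supremum of $X^r$ over a rectangle being attained at a unique point, and the zero level set being regular, are a.s.\ statements about Brownian motion; I would invoke standard properties of Brownian local maxima and of the zero set of Brownian motion to dispose of the null sets.

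For part (c), I argue by contradiction: if $R := R^{x_0,r}_{N^{x_0,r}} \subset [-a,a]^2$, then by part (a) $x_0 + X \le 0$ on $\partial[-a,a]^2 \cap \partial R$-relevant boundary, and more to the point, the connected component of $\{x_0 + X > 0\}$ containing the origin is contained in $R \subset [-a,a]^2$; but the hypothesis gives a path $\Gamma$ from the origin with $x_0 + X > 0$ along it and $\Gamma(1) \notin [-a,a]^2$, and such a path cannot exit $[-a,a]^2$ without leaving the component of the origin (which, being contained in $R$, cannot reach $\partial[-a,a]^2$ from inside without hitting a zero of $x_0+X$), a contradiction. The main obstacle here, and the step I expect to require the most care, is making rigorous the claim in parts (b) and (c) that part (a) confines the origin's positivity component to $R$: one must rule out the degenerate possibility that $x_0 + X$ touches $0$ but does not change sign on $\partial R$, and that the component ``leaks'' through such a point — this is exactly where the a.s.\ regularity of the Brownian zero set (no isolated zeros, and the surrounding sign structure) enters, and it is the technical heart of why the statement is only almost sure.
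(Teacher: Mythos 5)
Your plan for parts (b) and (c) is essentially correct and tracks the paper's proof, but part (a) has a genuine gap, and the worry you flag at the end about ``leakage'' is not actually the technical heart of the argument.

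In part (a), the claim that ``$x_0 + X$ vanishes on a dense subset of each of its four edges, hence by continuity $x_0 + X \le 0$ on all of $\partial R$'' is wrong as a matter of fact and as a matter of logic. As a matter of fact: on the interior of each boundary segment, $x_0 + X$ is \emph{strictly negative}, with equality to $0$ only at the four isolated points $(U_n,T_2^n)$, $(U_n',T_2^n)$, $(T_1^n,V_n)$, $(T_1^n,V_n')$; it does not vanish densely. As a matter of logic: if a continuous function vanished on a dense subset of an edge, it would vanish identically there, which is not what you want to conclude. More seriously, your heuristic ``any such gap would permit the algorithm to continue'' does not produce a proof, because the DW-algorithm only examines $X$ along the two line segments through $(T_1^n,T_2^n)$, not on the entire boundary of the rectangle; knowing that the algorithm STOPs tells you nothing directly about $x_0+X$ on the rest of $\partial R$. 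The missing ingredient is the additivity identity $\Delta_R X = 0$ for any rectangle $R$ (a structural fact about ABM coming from $X(s_1,s_2)=\ti Z_1(s_1)-\ti Z_2(s_2)$). This identity lets one write, for $U_n \le s_1 < T_1^n$,
\[
   X(s_1,V_n) = X(T_1^n,V_n) + X(s_1,T_2^n) - X(T_1^n,T_2^n),
\]
and then plug in $X(T_1^n,V_n) = -x_0$ and $X(s_1,T_2^n) < X(T_1^n,T_2^n)$ to get $x_0 + X(s_1,V_n) < 0$. Without this algebraic step there is no way to propagate the information from the two lines the algorithm actually explores to the full boundary of $R$.

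For parts (b) and (c), your overall structure is the right one. But the concern you raise about the degenerate possibility that ``$x_0 + X$ touches $0$ but does not change sign on $\partial R$, and that the component `leaks' through such a point'' is a red herring and is not where the a.s.\ qualifier comes from. Once $x_0 + X \le 0$ on $\partial R$ is established, the confinement $E \subset \operatorname{int}(R)$ of the open connected component $E$ of $\{x_0 + X > 0\}$ containing the origin is a purely topological consequence: $E$ is open, connected, contains an interior point of $R$, and is disjoint from $\partial R$ (since $x_0+X>0$ on $E$ while $x_0+X\le 0$ on $\partial R$), so $E$ lies entirely in the interior of $R$. No regularity of the zero set of Brownian motion is needed for this. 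The a.s.\ qualifier arises from the use of (a), which in turn relies on the DW-algorithm terminating after finitely many stages, together with a.s.\ uniqueness of the maximizing point at each stage.
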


\proof Let $E$ be the component of $\{(s_1,s_2)\} \in \IR^2: x_0 + X(s_1,s_2) >0\}$
that contains $(0,0)$. As mentioned above, the DW-algorithm stops a.s., after a finite number $N^{x_0,r}$ of stages. Suppose without loss
of generality that this occurs at an even stage $2n = N^{x_0,r}$. 

   (a) As explained in \cite[Section 2]{DW},
\begin{equation}\label{supX}
   X(T^n_1,T^n_2) = \sup_{s\in E} X(s),
\end{equation}
$x_0 + X$ is positive on the union of the two segments $]U_n,U_n'[\, \times \{T^n_2\}$
and $\{T^n_1\} \times\, ]V_n,V_n'[$, and
\begin{equation}\label{rectincr}
   X(U_n,T^n_2) = X(U'_n,T^n_2) = X(T^n_1,V_n) = X(T^n_1,V_n') = -x_0.
\end{equation}
In addition, $x_0 + X \leq 0$ on $\partial R_{2n}$. Indeed, for any rectangle $R =
[s_1,s_2]\times[t_1,t_2]$, set
\begin{equation}\label{defdelta}
   \Delta_R X = X(t_1,t_2) - X(t_1,s_2) - X(s_1,t_2) + X(s_1,s_2),
\end{equation}
and notice from (\ref{abm}) that $\Delta_R X = 0$ for any rectangle $R$. For
$U_n \leq s_1 < T^n_1$,
$$
   X(T^n_1,T^n_2) - X(T^n_1,V_n) - X(s_1,T^n_2) + X(s_1,V_n) = 0.
$$
By (\ref{supX}) and (\ref{rectincr}), we conclude that
$$
   X(s_1,V_n) = X(T^n_1,V_n) + X(s_1,T^n_2) - X(T^n_1,T^n_2) < -x_0.
$$
Therefore, $x_0 + X <0$ on the segment $[U_n,T^n_1[\, \times \{V_n\}$, and one checks
similarly that $x_0 + X\leq 0$ on the remainder of $\partial R_{2n}$, with equality only
at the four points that appear in (\ref{rectincr}). 

  (b) It follows in particular from (a) that $E \subset R_{2n}$, a.s. When $E \subset R_{2n}$,
two cases are possible: either $x_0 + X(T^n_1,T^n_2) \geq 1$, in which case $x_0 + X(\Gamma^*(\tau^{\Gamma^*})) = 1$, or $x_0 + X(T^n_1,T^n_2) < 1$, in which case
\begin{equation}\label{sup}
   x_0 + \sup_{s \in E} X(s) < 1.
\end{equation}
If $\Gamma$ is a path with $\Gamma(0) = (0,0)$, if $\Gamma$ does not exit $E$,
then $\tau^\Gamma = \infty$, while if $\Gamma$ does exit $E$, then $x_0 + X=0$ at any
point on $\Gamma \cap \partial E$, so by (\ref{sup}), along $\Gamma$, $x_0 + X$ hits 0
before 1 and $x_0 + X(\Gamma(\tau^\Gamma)) = 0$. This completes the proof of (b).

   (c) We have noted above that $x_0 + X \leq 0$ on $\partial R_{2n}$, where $2n = N^{x_0,r}$. If $R^{x_0,r}_{2n}$ were contained in $[-a,a]^2$, then there could not exist a path $\Gamma$ with the properties stated in (c). Therefore, (c) holds.
\hfill $\Box$
\vskip 16pt

\begin{remark} We notice from the proof of Proposition \ref{prop2} that when the DW-algorithm stops at an even stage $2n$, then the connected component of $\{s \in \IR^2: x_0 + X(s) > 0\}$ that contains $(T^n_1,T^n_2)$ contains the two segments $]U_n,U_n'[\, \times \{ T^n_2\}$ and $\{T^n_1\} \times\, ]V_n,V_n'[$, and is contained in the rectangle $[U_n,U_n']\times [V_n,V_n']$. Further, $(T^n_1,T^n_2)$ is the highest point of the excursion of $x_0 + X$ over this component.
\end{remark}

\noindent{\em Some further properties of the DW-algorithm}
\vskip 12pt

   We first introduce some terminology associated with the algorithm. For 
$r \in \IR^2$ fixed, we define a one-to-one correspondence $\cR_r: \IR_+^4 \to
\{$rectangles of$\ \IR^2\}$ by\index{$\cR_r(\ul{u})$}\index{$\ul{u}$}
$$
   \cR_r(\ul{u}) = [r_1 - u_3,r_1+u_1]\times[r_2 - u_4,r_2+u_2] \qquad
    \mbox{if } \ul{u} = (u_1,u_2,u_3,u_4).
$$
In this way, each $\ul{u}\in \IR_+^4$ represents a rectangle around 
$r \in \IR^2$. We set $\Vert \ul{u} \Vert = |u_1| + \cdots + |u_4|$. When $u_1 =
u_2 = u_3 = u_4 = a$, we write $\cR_r(a)$ instead of $\cR_r(a,a,a,a)$.\index{$\cR_r(a)$}

   Define a $4$-parameter filtration $(\F^r_{\ul{u}},\ \ul{u} \in \IR_+^4)$\index{$\F^r_{\ul{u}}$} by
$$
    \F^r_{\ul{u}} = \sigma\{X^r(s),\ s \in \cR_r(\ul{u})\}.
$$
It should be noted that $\F^r_{\ul{u}}$ is generated by increments:
typically, for $s \in \cR_r(\ul{u})$, $\ti X(s)$ is not measurable with respect to $\F^{r} _{\ul{u}}$, though
this {\em is} the case when $r = (0,0)$.

  For the general theory of multiparameter processes, and in particular, notions such as stopping points, we refer to \cite{meyer}. In particular, $\ul{u} \leq \ul{v}$ means $u_i \leq v_i$, $i=1,\dots,4$, while $\ul{u} \wedge \ul{v}= (u_1 \wedge v_1,\dots,u_4 \wedge v_4)$ denotes the coordinate-wise minimum of $\ul{u}$ and $\ul{v}$.

   In particular, if $\ul{T} = (\ul{T}_1,\ul{T}_2,\ul{T}_3,\ul{T}_4)$ is a 
stopping point relative to $(\F^r_{\ul{u}})$, then
\begin{equation}\label{startp6}
   (B^{r,\ulT}_1(u_1) = X^r(r_1+ \ul{T}_1 + u_1, r_2), \ u_1 \geq 0),
\end{equation}
$$
   (B^{r,\ulT}_2(u_2) = X^r(r_1,r_2 + \ul{T}_2 + u_2),\ u_2 \geq 0), 
$$
$$
   (B^{r,\ulT}_3(u_3) = X^r(r_1- \ul{T}_3 - u_3, r_2), \ u_3 \geq 0),
$$
$$
   (B^{r,\ulT}_4(u_4) = X^r(r_1,r_2 + \ul{T}_4 - u_4),\ u_4 \geq 0),
$$
are such that $(B^{r,\ulT}_i(u_i) - B^{r,\ulT}_i(0),\ u_i \geq 0)$,\index{$B^{r,\ulT}_i$} $i = 1,\dots,4$, are four independent standard Brownian motions, independent of $\F^r_{\ul{T}}$. These processes can be used to define two two-sided Brownian motions $\tilde Z_1^{r,\ulT}$ and $\tilde Z_2^{r,\ulT}$\index{$\tilde Z_i^{r,\ulT}$} by
$$
   \tilde{Z}_1^{r,\ulT}(u) = \left\{\begin{array}{ll}
        B_1^{r,\ulT}(u) - B^{r,\ulT}_1(0), & \mbox{ if } u \geq 0,\\
        B_3^{r,\ulT}(-u) - B^{r,\ulT}_3(0), & \mbox{ if } u < 0,
           \end{array}\right. 
$$
$$  
   \tilde{Z}_2^{r,\ulT}(u) = \left\{\begin{array}{ll}
         B^{r,\ulT}_2(u) - B^{r,\ulT}_2(0), & \mbox{ if } u \geq 0,\\
         B_4^{r,\ulT}(-u) - B_4^{r,\ulT}(0), & \mbox{ if } u < 0,
\end{array}\right.
$$
and an additive Brownian motion $X^{r,\ulT}(s_1, s_2) = 
\tilde{Z}_1^{r,\ulT}(s_1) + \tilde{Z}_2^{r,\ulT}(s_2)$. Then $X^{r,\ulT}$\index{$X^{r,\ulT}$} is a standard ABM that is independent of ${\F}^r_{\ulT}$.

   For $n \geq 1$, we set $\ul{\tau}_{(n)}^{x_0,r} = \cR_r^{-1}(R_n^{x_0,r})$.\index{$\ul{\tau}_{(n)}^{x_0,r}$} 
In particular, $\ul{\tau}_{(N)}^{x_0,r} = 
\cR_r^{-1}(R_{N}^{x_0,r})$. This represents the rectangle in 
$\IR^2$ that is explored by the DW-algorithm up to termination. Clearly, 
$\ul{\tau}_{(N)}^{x_0,r}$ is a stopping point
relative to $(\F^r_{\ul{u}})$, and $\F^r_{\ul{\tau}_{(N)}^{x_0,r}}$ 
contains information about the increments of $X$ restricted to 
$R^r_{N^{x_0,r}}$. 
\vskip 12pt

\noindent{\em The probability of stopping at a given stage}
\vskip 12pt

   For the remainder of this section, we set $r = (0,0)$, and use the notations
above without the superscript $r$. For $n \geq 1$ and $i = 1,\dots,4$, set $\ul{\tau}_{(n)} = \ul{\tau}_{(n)}^{x_0,r}$ and
$$
   Z^n_i(u_i) = Z_i^{r,\ultau_{(n)}}(u_i) - Z_i^{r,\ultau_{(n)}}(0).
$$
Then $(Z^n_1,Z^n_2,Z^n_3,Z^n_4)$ is independent of $\F_{\ul{\tau}_{(n)}}$, and is a 
standard 4-dimensional Brownian motion started at the origin.

   We now evaluate the probability that the DW-algorithm started at $r=(0,0)$ with value $x_0$ does not STOP at a given stage, that is, the conditional probability that $H_n > H_{n-1}$ given $\F_{\ul{\tau}_{(n-1)}}$. We use $P_{x_0}$ to denote probabilities for the DW-algorithm started with value $x_0$.

\begin{prop} Fix $x_0 >0$. For $n \geq 2$, $H_n$ is conditionally independent of
$\F_{\ul{\tau}_{(n-1)}}$ given $H_{n-1}$ and $H_{n-2}$, and
\begin{equation}\label{d1}
   P_{x_0}\{H_n > H_{n-1} \mid H_{n-1},\ 
      H_{n-2}\} = 1 - \left(\frac{H_{n-2}}{H_{n-1}}\right)^2,
\end{equation}
and for $z \geq y > x >0$,
\begin{equation}\label{d2}
  P_{x_0}\{H_n > z \mid H_{n-1} = y,\ H_{n-2} = x\} = 
     \left(\frac{2y}{z} - \frac{y^2}{z^2}\right)
     \left(1 - \frac{x}{y}\right)^2 + \frac{2x}{z}\left(1 - \frac{x}{y}\right).
\end{equation}
\label{noSTOP}
\end{prop}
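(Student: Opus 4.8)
The plan is to analyze one stage of the DW-algorithm, combining the vanishing of rectangular increments of $\ti X$ (see (\ref{defdelta})) with the strong Markov property at the stopping point $\ultau_{(n-1)}$. By the symmetry between odd and even stages I treat $n=2m$, the search along the vertical line through $T^m_1$; the odd case is identical with the two coordinates interchanged. Condition throughout on $\F_{\ultau_{(n-1)}}$; this $\sigma$-field determines $H_{n-1}$, $H_{n-2}$, the corner $(T^m_1,T^{m-1}_2)$, the explored rectangle $R_{n-1}$ together with its vertical extent $[V_{m-1},V_{m-1}']$, and the trace on $R_{n-1}$ of the increment field $X^r$. The argument has two steps: (i) identify the process $f(v):=x_0+X^r(T^m_1,v)$ on $[V_{m-1},V_{m-1}']$ by a deterministic shift; (ii) extend $f$ beyond that interval by independent fresh Brownian motions and read off $H_n$.

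For step (i), the identity $\Delta_R X=0$ for every rectangle $R$ gives $f(v)=\bigl(x_0+X^r(T^{m-1}_1,v)\bigr)+\delta$ for all $v\in\IR$, where
\begin{equation*}
   \delta:=X^r(T^m_1,T^{m-1}_2)-X^r(T^{m-1}_1,T^{m-1}_2)=H_{n-1}-H_{n-2}.
\end{equation*}
On $[V_{m-1},V_{m-1}']$ the function $v\mapsto x_0+X^r(T^{m-1}_1,v)$ is precisely the closed excursion produced at stage $n-2$: it is $\ge 0$, attains its maximum $H_{n-2}$ at the interior point $T^{m-1}_2$, and vanishes at $V_{m-1}$ and $V_{m-1}'$. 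Adding $\delta$, the process $f$ is, on $[V_{m-1},V_{m-1}']$, strictly positive, equal to $H_{n-1}$ at $T^{m-1}_2$ (its maximum over that interval), and equal to $\delta$ at $V_{m-1}$ and at $V_{m-1}'$.

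For step (ii), by the strong Markov property at $\ultau_{(n-1)}$ in the form recalled around (\ref{startp6}), above $V_{m-1}'$ and below $V_{m-1}$ the process $f$ equals $\delta$ plus a standard Brownian motion, the two Brownian motions being independent of each other and of $\F_{\ultau_{(n-1)}}$ (they are built from the increments of $\ti Z_2$ outside $R_{n-1}$). Hence the excursion of $f$ above $0$ straddling $T^{m-1}_2$ runs from $V_m$ to $V_m'$, where on each side the relevant Brownian motion started at $\delta$ first hits $0$; taking $\sup f$ over this excursion and splitting it over $[V_{m-1},V_{m-1}']$ and the two fresh pieces gives $H_n=\max(H_{n-1},M^+,M^-)$, with $M^+,M^-$ i.i.d., each equal in law to the maximum of a Brownian motion started at $\delta$ and run until it hits $0$, and both independent of $\F_{\ultau_{(n-1)}}$; a standard gambler's ruin computation gives $P\{M^\pm\ge t\}=\min(1,\delta/t)$. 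This representation of $H_n$ as a fixed Borel function of $(H_{n-1},H_{n-2})$ and of the pair $(M^+,M^-)$, the latter independent of $\F_{\ultau_{(n-1)}}$, is the asserted conditional independence. For the explicit formulas, on $\{H_{n-1}=y,\ H_{n-2}=x\}$ one has $\delta=y-x$, and $z\ge y>x>0$ forces $z>\delta$, so
\begin{equation*}
   P_{x_0}\{H_n>z\mid H_{n-1}=y,\ H_{n-2}=x\}=P\{\max(M^+,M^-)>z\}=1-\Bigl(1-\tfrac{\delta}{z}\Bigr)^2 ;
\end{equation*}
substituting $\delta=y-x$ and expanding turns the right-hand side into the expression in (\ref{d2}), and the choice $z=y$ gives $1-(x/y)^2$, which is (\ref{d1}).

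The step I expect to be delicate is not the probability but the stopping-point bookkeeping behind step (ii): one must verify that $\ultau_{(n-1)}$ is a stopping point of $(\F^r_{\ul{u}})$, that $\F_{\ultau_{(n-1)}}$ really carries $H_{n-1}$, $H_{n-2}$, $T^m_1$, $V_{m-1}$, $V_{m-1}'$ and the trace of $f$ on $[V_{m-1},V_{m-1}']$, and that the two one-sided extensions of $f$ are the correct, mutually independent functionals of the post-$\ultau_{(n-1)}$ increment field and are independent of $\F_{\ultau_{(n-1)}}$ — all of this resting on the multiparameter strong Markov apparatus recalled after (\ref{startp6}). The case $n=2$ needs a small modification, since $R_1$ is degenerate in the second coordinate and the ``stage-$0$ excursion'' reduces to the single point $\{r_2\}$, on which $f$ equals $H_1$; there step (ii) is applied directly to $f$ with this single known value.
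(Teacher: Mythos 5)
Your proof follows the paper's route: condition on $\F_{\ultau_{(n-1)}}$, use the vanishing of rectangular increments to recognize $f$ on the old excursion interval as the old excursion shifted by $\delta=H_{n-1}-H_{n-2}$ (this is \eqref{starrd1} in the paper), and extend by two fresh independent Brownian motions started at $\delta$. The only genuine difference is in the computation of \eqref{d2}: the paper partitions $\{H_n>z\}$ according to which side first reaches level $H_{n-1}$ before $0$ (the events $F_1$, $F_2$), which produces the factored form of \eqref{d2} directly; you compute $P\{\max(M^+,M^-)>z\}=1-(1-\delta/z)^2$ and then expand, which is a little shorter and gives exactly the form that differentiates to \eqref{fZn}. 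Both are fine. One phrasing slip: $M^\pm$ are not independent of $\F_{\ultau_{(n-1)}}$, since their conditional law depends on $\delta$, which is $\F_{\ultau_{(n-1)}}$-measurable; as the paper does, the conditional-independence claim should rest on the underlying fresh Brownian \emph{paths} being independent of $\F_{\ultau_{(n-1)}}$, with $H_n$ then a fixed Borel functional of $(H_{n-1},H_{n-2})$ and those paths.

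The point that needs more care is $n=2$, which you correctly flag as degenerate but dispose of too quickly. For $n=2$ the previous ``excursion'' is the singleton $\{r_2\}$ at which $f$ takes the value $H_1$, not $\delta=H_1-x_0$; so ``applying step (ii) directly with this single known value'' means the fresh Brownian motions start at $H_1$, and the same gambler's-ruin computation then yields $P\{H_2>z\mid H_1=y\}=1-(1-y/z)^2$, which is \eqref{d2} with $x$ replaced by $0$ rather than by $x=x_0>0$. The modification you propose therefore does \emph{not} reproduce \eqref{d1}--\eqref{d2} for $n=2$ as stated, and this is a discrepancy worth flagging explicitly rather than folding into a one-clause aside. (The paper's own reduction, ``without loss of generality $n$ is odd,'' covers $n\geq 3$ and, by coordinate symmetry, even $n\geq 4$, but not $n=2$, so the gap is present there as well.)
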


\proof We assume without loss of generality that $n$ is odd, so that $n$
equals $2m+1$. Fix $u \in [U_m,U_m']$, and let $R$ be the rectangle with corners
$(T^m_1,T^{m-1}_2)$ and $(u,T^m_2)$. As $\Delta_R X = 0$, we see that
$$
   X(u,T^m_2) - X(T^m_1,T^m_2) - X(u,T^{m-1}_2) + X(T^m_1,T^{m-1}_2) = 0,
$$
that is,
\begin{equation}\label{H2n}
   x_0 + X(u,T^m_2) = H_{2m} - (H_{2m-1} - x_0 - X(u,T^{m-1}_2)) \leq H_{2m}
\end{equation}
because $x_0 + X(u,T^{m-1}_2) \leq H_{2m-1}$. Therefore,
$$
   x_0 + \sup_{u \in [U_m,U_m']} X(u,T^m_2) \leq H_{2m}.
$$
Notice also that for $u \in\, ]U_m,U_m'[$,
\begin{equation}\label{X_0}
   x_0 + X(u,T^m_2) = H_{2m} - H_{2m-1} + x_0 + X(u,T^{m-1}_2) >0,
\end{equation}
because $H_{2m} > H_{2m-1}$ and $x_0 + X(u,T^{m-1}_2) >0$ by definition of $U_m$ and
$U_m'$. Finally, note, as in (\ref{H2n}), that 
\begin{equation}\label{starrd1}
   x_0 + X(U_m,T^m_2) = x_0 + X(U'_m,T^m_2) = H_{2m} - H_{2m-1},  
\end{equation}
because $x_0 + X(U_m,T^{m-1}_2) = x_0 + X(U_m',T^{m-1}_2) = 0$.

  It follows from (\ref{H2n}) and (\ref{X_0}) that the DW-algorithm does not STOP at
stage $2m+1 = 2(m+1)-1$ if and only if $T^{m+1}_1 \in [U_{m+1}, U_{m+1}'] \setminus
[U_m,U_m']$. Finally, note that for $u >0$, 
\begin{eqnarray*}
   x_0 + X(U_m'+u,T^m_2) &=& x_0 + X(U_m', T^m_2) + (X(U_m'+u,T^m_2) - X(U_m', T^m_2)) \\
      &=& H_{2m} - H_{2m-1} + Z^{2m}_1(u),
\end{eqnarray*}
and
\begin{eqnarray*}
   x_0 + X(U_m - u,T^m_2) &=& x_0 + X(U_m, T^m_2) + (X(U_m - u,T^m_2) - X(U_m, T^m_2)) \\
      &=& H_{2m} - H_{2m-1} + Z^{2m}_3(u),
\end{eqnarray*}
Let 
$$
   Y_1(u) = H_{2m} - H_{2m-1} + Z^{2m}_1(u),\qquad 
   Y_2(u) = H_{2m} - H_{2m-1} + Z^{2m}_3(u),
$$
and for $i = 1,2$, set
$$
   \tau_i = \inf\{u \geq 0: Y_i(u) = 0\}.
$$
Then
$$
   H_{2m+1} = \max\left(H_{2m},\ \sup_{0 \leq u \leq \tau_1} Y_1(u),\ 
               \sup_{0 \leq u \leq \tau_2} Y_2(u)\right).
$$
Because $H_{2m}$ and $H_{2m-1}$ are $\F_{\ul{\tau}_{(2m)}}$-measurable, and 
$Z^{2m}_1$ and $Z^{2m}_3$ are independent of $\F_{\ul{\tau}_{(2m)}}$, it 
follows that $H_{2m+1}$ is conditionally independent of 
$\F_{\ul{\tau}_{(2m)}}$ given $H_{2m}$ and $H_{2m-1}$.

   Since $Z^{2m}_1$ and $Z^{2m}_3$ are independent standard Brownian motions,
using the gambler's ruin probabilities for Brownian motion, we see that
\begin{eqnarray*}
   P_{x_0}\{H_{2m+1} > H_{2m} \mid H_{2m},\ H_{2m-1}\} &=&
        P\{\max\left(\sup_{0 \leq u \leq \tau_1} Y_1(u),\ 
               \sup_{0 \leq u \leq \tau_2} Y_2(u)\right) >H_{2m}\} \\
     &=& 1 - \left(\frac{H_{2m-1}}{H_{2m}}\right)^2.
\end{eqnarray*}
This proves (\ref{d1}).

   In order to prove (\ref{d2}), let
$$
   Y_1(u) = y - x + Z^{2m}_1(u),\qquad Y_2(u) = y - x + Z^{2m}_3(u),
$$
set
$$
  \sigma_i = \inf\{u \geq 0: Y_i(u) = y\},
$$
and consider the two independent events
$$
   F_1 = \{\sigma_1 < \tau_1\}, \qquad F_2 = \{\sigma_2 < \tau_2\}.
$$
Notice that $\{H_{2m+1} > H_{2m}\} = F_1 \cup F_2$. In addition,
$$
   P(\{H_{2m+1} > z\} \cap F_1 \cap F_2^c \mid H_{2m} = y,\ H_{2m-1} = x)
      = P(\{ \sup_{\sigma_1 \leq u \leq \tau_1} Y_1(u) > z\} \cap F_1\cap
      F^c_2).
$$
By the independence of $\sigma(Y_1,\ F_1)$ and $F_2$, and by the strong Markov
property, this is equal to
\begin{equation}\label{F1F2c}
   \frac{y}{z}\, \frac{x}{y} \, \left(1 - \frac{x}{y}\right) =
      \frac{x}{z} \left(1 - \frac{x}{y}\right).
\end{equation}
Similarly,
\begin{equation}\label{F1cF2}
   P(\{H_{2m+1} > z\} \cap F^c_1 \cap F_2 \mid H_{2m} = y,\ H_{2m-1} = x\} =
      \frac{x}{z} \left(1 - \frac{x}{y}\right).
\end{equation}
Finally,
\begin{eqnarray*}
 && P(\{H_{2m+1} > z\} \cap F_1 \cap F_2 \mid H_{2m} = y,\ H_{2m-1} = x)\\
 &&\qquad = P(\{\max(\sup_{\sigma_1 \leq u \leq \tau_1} Y_1(u),
           \sup_{\sigma_2 \leq u \leq \tau_2} Y_2(u)) > z\} \cap F_1\cap F_2).
\end{eqnarray*}
Let $\ti B_1(\cdot)$ and $\ti B_2(\cdot)$ be independent Brownian
motions starting at $y$. By the strong Markov property for the process $((Y_1(\cdot), Y_2(\cdot))$ at the
stopping point $(\sigma_1,\sigma_2)$, this probability is equal to
\begin{eqnarray}\nonumber
 &&  P\{\ti B_1(\cdot) \mbox{ hits } z \mbox{ before } 0 \mbox{ or }
        \ti B_2(\cdot) \mbox{ hits } z \mbox{ before } 0\} P(F_1)P(F_2) \\
  \nonumber 
 &&\qquad = \left(1 - \left(1 - \frac{y}{z}\right)^2\right) \left(1 -
  \frac{x}{y}\right)^2 \\
  &&\qquad = \left(\frac{2y}{z} - \frac{y^2}{z^2}\right) \left(1 -
  \frac{x}{y}\right)^2.
  \label{F1F2}
\end{eqnarray}
 Adding up (\ref{F1F2c}), (\ref{F1cF2}) and (\ref{F1F2})
establishes (\ref{d2}).
\hfill $\Box$
\vskip 16pt

\noindent{\em A Markov chain}
\vskip 12pt

   For $n \geq 1$, set $\Theta_n = (H_{n-1}, H_n)$.\index{$\Theta_n$} By Proposition \ref{noSTOP},
$(\Theta_n,\ n \geq 1)$ is a discrete time Markov chain with state space $\cS =
\{(x,y): 0 < x \leq y\}$. Given that $\Theta_n = (x,y)$, with $x \leq y$, 
$\Theta_{n+1} = (y, Z_{n+1})$, where $Z_{n+1}$ is a random variable such that
for $x \leq y < z$,
\begin{equation}\label{PZn}
   P\{Z_{n+1} \geq z \mid \Theta_n = (x,y)\} = 
     \left(\frac{2y}{z} - \frac{y^2}{z^2}\right)\left(1 - \frac{x}{y}\right)^2 + 
         \frac{2x}{z}\left(1 - \frac{x}{y}\right),
\end{equation}
while for $x \leq y = z$,
$$
   P\{Z_{n+1} = y \mid \Theta_n = (x,y)\} = \left(\frac{x}{y}\right)^2.
$$
Let 
$$
   D = \{(x,y)\in \cS: x = y\} \quad\mbox{and}\quad 
   S = \{(x,y)\in \cS: y \geq 1\}.
$$
For $0< x < y < 1$, set
$$
   \alpha(x,y) = P\{\mbox{the chain } (\Theta_n)\mbox{ visits } D 
      \mbox{ before } S \mid \Theta_1 = (x,y)\}.
$$
We are going to compute $\alpha(x,y)$ explicitly, and this will lead to the
formula for $\E(x)$ in Theorem \ref{thm1}.

   Consider the matrix and column vector
\begin{equation}\label{Ad}
A = \left( \begin{array}{rrrrrr}
0 & 1 & 0 & 0 & 0 & 0\\
0 & 0 & 1 & 0 & 0 & 0\\
0 & -9 & 6 & 4 & 0 & 0\\
0 & 0 & 0 & 0 & 1 & 0\\
0 & 0 & 0 & 0 & 0 & 1\\
-8 & 2 & 0 & 28 & -26 & 9
\end{array}\right),
\qquad d = \left( \begin{array}{r}
-1\\
-1\\
-3\\
-\half \\
-1\\
-4
\end{array}\right).
\end{equation}
It is not difficult to determine (by hand or using Mathematica, for instance) that $A$ has
six eigenvalues, the first four of which are real and are the numbers
$\lambda_1,\dots,\lambda_4$ defined in Theorem \ref{thm1}, and two complex
eigenvalues $\lambda_5 = \half(5 + i \sqrt{7})$ and $\lambda_6 = \half( 5 - i
\sqrt{7})$. Let $v_1,\dots,v_6$ be the associated eigenvectors, which we
normalize by setting the last entry of each equal to 1, and $V$ the
matrix whose columns are $v_1,\dots,v_6$. The $j$-the entry of $v_k$ will be
denoted $v_{j,k}$. 

   Let $c$ be the column vector with entries $c_1,\dots,c_6$ that is the solution
of the linear system
\begin{equation}\label{Vcd}
   V \cdot c = d.
\end{equation}
We will show below that $c_5 = c_6 = 0$.

\begin{prop} For $0 < x \leq y < 1$,
\begin{equation}\label{formalpha}
   \alpha(x, y) = \left(\frac{x}{y}\right)^2 
             + 2 (y - x) \varphi_1(y) - 2(y-x)^2 \varphi_2(y),
\end{equation}
where
\begin{equation}\label{formphi}
 \varphi_1(y) = \frac{1}{y} + \sum_{k=1}^4 c_k\, y^{\lambda_k -1}\, v_{1,k}, 
 \qquad 
 \varphi_2(y) = \frac{1}{2 y^2} + \sum_{k=1}^4 c_k\, y^{\lambda_k -2}\, v_{4,k}.
\end{equation}
\label{propalpha}
\end{prop}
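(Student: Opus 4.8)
The plan is to set up a boundary value problem for $\alpha(x,y)$ using the one-step Markov property of $(\Theta_n)$, reduce it to a scalar functional equation along the ``vertical'' direction, and then solve that equation by the ansatz suggested by the matrix $A$ in \eqref{Ad}. First I would write down the harmonicity relation: for $0 < x \le y < 1$, conditioning on $\Theta_2 = (y, Z)$ with $Z$ distributed according to \eqref{PZn} gives
\begin{equation*}
   \alpha(x,y) = \left(\frac{x}{y}\right)^2 \cdot \mathbf{1}_{\{y<1\}}\cdot \alpha(y,y)\big|_{\text{(absorbed in }D)} + \int_{\{y < z < 1\}} \alpha(y,z)\, P\{Z \in dz \mid \Theta_1=(x,y)\},
\end{equation*}
together with the boundary conditions $\alpha(x,x) = 1$ on $D$ and $\alpha(x,y) = 0$ once $y \ge 1$ (the latter because the chain is then in $S$). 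Since by \eqref{PZn} the dependence of the transition law on the first coordinate $x$ enters only through the two terms $(1-x/y)^2$ and $(1-x/y)$, I expect that substituting the ansatz $\alpha(x,y) = (x/y)^2 + 2(y-x)\varphi_1(y) - 2(y-x)^2\varphi_2(y)$ and matching the coefficients of $1$, $x$, $x^2$ on both sides will collapse the functional equation in two variables into a closed system for the two one-variable functions $\varphi_1, \varphi_2$.

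The second step is to turn that system into a linear ODE (or difference-free integral equation) for $(\varphi_1,\varphi_2)$. Carrying out the $z$-integration against the density coming from \eqref{PZn} produces, for each power $y^{\lambda-1}$ or $y^{\lambda-2}$ fed in on the right, an algebraic multiplier that is a rational function of $\lambda$; requiring $y^{\lambda_k-1}v_{1,k}$ and $y^{\lambda_k-2}v_{4,k}$ to be eigen-solutions forces $\lambda_k$ to be an eigenvalue of the companion-type matrix $A$ and $(v_{1,k},\ldots)$ to be the corresponding eigenvector. This is exactly why $A$ and $d$ were written down in \eqref{Ad}: the particular (non-homogeneous) part of the solution, coming from the $\mathbf{1}_{\{y<1\}}$ absorption contribution and from the explicit $(x/y)^2$ piece, is captured by the vector $d$, and the inhomogeneous solution is fixed by $V\cdot c = d$ in \eqref{Vcd}. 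The explicit terms $1/y$ in $\varphi_1$ and $1/(2y^2)$ in $\varphi_2$ are precisely this particular solution.

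Third, I would pin down the coefficients and show $c_5 = c_6 = 0$. The boundary condition at $y \uparrow 1$, namely $\alpha(x,1) = 0$ for all $x < 1$ — which, plugged into the ansatz, forces $\varphi_1(1) = 1$ and a companion relation involving $\varphi_2(1)$ — gives enough linear constraints to determine $c$; one then checks by the structure of the eigenvectors $v_5,v_6$ (the complex pair $\lambda_{5,6} = \tfrac12(5 \pm i\sqrt7)$) that the unique solution has vanishing components along them. Equivalently, $c_5 = c_6 = 0$ is exactly the statement that the solution has no oscillatory-in-$\log y$ part, which must hold because $\alpha$ is a genuine (bounded, monotone) probability; I would verify this either by a direct computation from \eqref{Vcd} or by invoking boundedness of $\alpha(x,y)$ as $y \downarrow 0$ to rule out the terms with $\operatorname{Re}\lambda_k > $ the allowed growth. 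Finally, one must confirm that the candidate $\alpha$ genuinely solves the probabilistic problem — uniqueness of bounded solutions to the harmonicity relation with the stated boundary data — which follows from a standard optional-stopping / martingale argument using that the chain hits $D \cup S$ in finitely many steps a.s. (this finiteness is inherited from the a.s.\ termination of the DW-algorithm, Proposition 2.2 of \cite{DW}).

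The main obstacle I anticipate is the bookkeeping in step two: verifying that the ansatz really does decouple the two-variable equation into the advertised $6\times 6$ linear-algebraic structure, i.e.\ that after integrating \eqref{PZn} every term reorganizes so that the matrix is exactly $A$ and the inhomogeneity is exactly $d$. This is a finite but delicate calculation — one has to track the contributions of the two $x$-dependent factors in \eqref{PZn}, the absorption term at $D$, and the particular solution $1/y$, $1/(2y^2)$ simultaneously — and getting the constants right (the entries $-9, 6, 4, 28, -26, 9, -8, 2$ of $A$ and the entries of $d$) is where essentially all the work lies. Once that algebraic reduction is in place, the eigenvalue/eigenvector computation and the determination of $c$ are routine, and the identification $\lambda_1 = \tfrac12(5 - \sqrt{13 + 4\sqrt5})$ drops out of the characteristic polynomial of $A$.
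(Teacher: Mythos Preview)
Your overall strategy matches the paper's: use the one-step Markov relation to reduce $\alpha(x,y)$ to a pair of one-variable functions $\varphi_1,\varphi_2$, then identify these via the linear algebra attached to $A$. Two points, however, need correction, and one key mechanism is missing.

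First, the paper does not treat \eqref{formalpha} as an ansatz. It \emph{defines}
\[
   \varphi_1(y)=\int_y^1 \frac{\alpha(y,z)}{z^2}\,dz,\qquad
   \varphi_2(y)=\int_y^1 \frac{\alpha(y,z)}{z^3}\,dz,
\]
and then \eqref{formalpha} is an immediate consequence of the Markov relation $\alpha(x,y)=(x/y)^2+\int_y^1 f(x,y;z)\alpha(y,z)\,dz$ together with the explicit density $f(x,y;z)=2(y-x)/z^2-2(y-x)^2/z^3$. Substituting \eqref{formalpha} back into these definitions gives coupled \emph{integral} equations for $\varphi_1,\varphi_2$; the matrix $A$ does not emerge from ``feeding in powers'' into these integrals. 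Instead the paper differentiates the integral equations three times in $y$, obtaining coupled third-order ODEs, and then makes the substitution $\varphi_1(y)=y^{-1}\psi_1(\ln y)$, $\varphi_2(y)=y^{-2}\psi_2(\ln y)$ to get a constant-coefficient first-order system $\mu'=A\mu+b$ for $\mu=(\psi_1,\psi_1',\psi_1'',\psi_2,\psi_2',\psi_2'')$. This is the bookkeeping you anticipate being delicate, and you should expect three differentiations plus a log-change-of-variable, not a direct eigen-ansatz in the integral equation.

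Second, your boundary condition at $y\uparrow 1$ is wrong. From state $(x,y)$ with $y$ near $1$, the chain is absorbed in $D$ with probability $(x/y)^2$ and otherwise jumps to $(y,Z)$ with $Z>y$, which for $y\to 1$ lands in $S$. Hence $\alpha(x,1^-)=x^2$, not $0$, and correspondingly $\varphi_1(1)=\varphi_2(1)=0$ (immediate from the integral definitions), not $\varphi_1(1)=1$. The six initial conditions that determine $c$ via $Vc=d$ are these two values together with $\varphi_1'(1),\varphi_1''(1),\varphi_2'(1),\varphi_2''(1)$, read off from the once- and twice-differentiated integral equations.

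Third, your boundedness argument for $c_5=c_6=0$ will not work as stated: $\operatorname{Re}\lambda_{5,6}=5/2$ sits strictly between $\lambda_2$ and $\lambda_3$, so the complex-eigenvalue contributions to $\varphi_1,\varphi_2$ are \emph{less} singular as $y\downarrow 0$ than the $\lambda_1$ and $\lambda_2$ terms already present, and boundedness of $\alpha$ does not exclude them. The paper simply verifies $c_5=c_6=0$ by direct computation from $Vc=d$ (numerically and, in principle, exactly). Your alternative of a direct computation is the right one; the probabilistic shortcut is not available here.
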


\proof Let $f(x,y;z)$ be the conditional density of $Z_{n+1}$ given $\Theta_n =
(x,y)$ (this density is defined for $x < y < z$). Differentiate the right-hand
side of (\ref{PZn}) to find, after simplification and regrouping of terms, that
\begin{equation}\label{fZn}
f(x, y; z) = \frac{2(y-x)}{z^2} - \frac{2(x-y)^2}{z^3}.
\end{equation}
By the Markov property of $\Theta_n$, the absorption probability $\alpha(x,y)$
satisfies the relation
\begin{equation}\label{eqalpha}
  \alpha(x, y) = \left(\frac{x}{y}\right)^2 + \int_y^1 dz\, f(x, y; z)\, 
                   \alpha(y, z),
\end{equation}
for all $0 < x < y < 1$. Indeed, at the first step, either the chain visits $D$, 
or it moves to a state $(y,z)$ with $y < z <1$ and then is later on absorbed in
$D$, which occurs with probability $\alpha(y,z)$.

   We now analyze the integral equation (\ref{eqalpha}). Set
\begin{equation}\label{eqphi12}
    \varphi_1(y) = \int_y^1 dz\, \frac{\alpha(y,z)}{z^2}, 
    \qquad \varphi_2(y) = \int_y^1 dz\, \frac{\alpha(y, z)}{z^3}.
\end{equation}
By (\ref{fZn}), (\ref{eqalpha}) can be written
\begin{equation}\label{neqalpha}
 \alpha(x, y) = \left(\frac{x}{y}\right)^2 
             + 2 (y - x) \varphi_1(y) - 2(y-x)^2 \varphi_2(y).
\end{equation}
Substitute (\ref{neqalpha}) into (\ref{eqphi12}) to get the two equations
$$
\left\lbrace 
   \begin{array}{l}
      \varphi_1(y) = \displaystyle \int_y^1 \frac{dz}{z^2} 
     \left[\left(\frac{y}{z}\right)^2 + 2(z-y) \varphi_1(z) - 2(z-y)^2 
     \varphi_2(z)\right],\\
      \\
     \varphi_2(y) = \displaystyle \int_y^1 \frac{dz}{z^3} 
     \left[\left(\frac{y}{z}\right)^2 
      + 2(z-y)  \varphi_1(z) - 2(z-y)^2 \varphi_2(z)\right].
   \end{array} \right.
$$
Rearrange according to powers of $y$ to get:
\begin{eqnarray}\nonumber
   \varphi_1(y) &=& \int_y^1 \frac{dz}{z^2}\, 
        [ 2z \varphi_1(z) - 2z^2 \varphi_2(z)] \\ \label{pphi1}
     &&\qquad +\, y \int_y^1 \frac{dz}{z^2}\, [-2 \varphi_1(z) + 4z \varphi_2(z)] 
      +\, y^2 \int_y^1 \frac{dz}{z^2} \left[ \frac{1}{z^2} - 2 \varphi_2
      (z)\right],\\ \nonumber
  \varphi_2 (y) &=& \int_y^1 \frac{dz}{z^3} \,
    [ 2z \varphi_1(z) - 2 z^2 \varphi_2(z)] \\
    &&\qquad + y \int_y^1 \frac{dz}{z^3}\, [-2 \varphi_1(z) + 4 z \varphi_2(z)] 
       +\, y^2 \int_y^1 \frac{dz}{z^3}\, \left[ \frac{1}{z^2} - 2 \varphi_2
       (z)\right].\quad \label{pphi2}
\end{eqnarray}
Differentiate (\ref{pphi1}) and (\ref{pphi2}), to get after simplification:
\begin{equation}\label{1diff}
  \begin{array}{rcl}
   \varphi_1^\prime(y) &=& \displaystyle{- \frac{1}{y^2} + \int_y^1 \frac{dz}{z^2}
        (-2 \varphi_1(z) + 4 z \varphi_2(z)) 
	+ 2y \int_y^1 \frac{dz}{z^2}\left(\frac{1}{z^2} - 2 \varphi_2(z)\right),}
	\\
	& & \\
   \varphi_2^\prime(y) &=& \displaystyle{- \frac{1}{y^3} + \int_y^1 \frac{dz}{z^3} (-2
   \varphi_1(z) + 4 z \varphi_2(z)) + 2y \int_y^1 \frac{dz}{z^3}
   \left(\frac{1}{z^2} - 2 \varphi_2(z)\right).}
  \end{array}
\end{equation}
Differentiate again, to get
\begin{equation}\label{2diff}
  \begin{array}{rcl}
   \varphi_1^{\prime \prime}(y) &=& \displaystyle{\frac{2 \varphi_1(y)}{y^2} + 2 \int_y^1
   \frac{dz}{z^2} \left(\frac{1}{z^2} - 2 \varphi_2(z)\right),}\\
    & & \\
   \varphi_2^{\prime\prime}(y) &=& \displaystyle{\frac{1}{y^4} + \frac{2 \varphi_1(y)}{y^3} +
   2 \int_y^1 \frac{dz}{z^3} \left(\frac{1}{z^2} - 2 \varphi_2(z)\right),}
  \end{array}
\end{equation}
and differentiate a third time, to get after simplification:
\begin{equation}\label{3diff}
 \left\{
  \begin{array}{rcl}
 \varphi_1^{\prime \prime \prime}(y) &=& \frac{-4}{y^3}\, \varphi_1(y) 
       + \frac{2}{y^2}\, \varphi^\prime_1(y) 
       - \frac{2}{y^4} + 4 \frac{\varphi_2(y)}{y^2}\,,\\
       && \\
 \varphi_2^{\prime \prime \prime}(y) &=& \frac{-6}{y^5} 
       - \frac{6}{y^4}\, \varphi_1(y) + \frac{2}{y^3}\, \varphi^\prime_1 (y) 
       + \frac{4}{y^3}\, \varphi_2(y).
  \end{array}\right.
\end{equation}
Let $\psi_1(\cdot)$ and $\psi_2(\cdot)$ be the two functions defined by
$$
   \varphi_1(y) = \frac{1}{y}\, \psi_1(\ln y), \qquad 
   \varphi_2(y) = \frac{1}{y^2}\, \psi_2(\ln y).
$$
The two equations in (\ref{3diff}) translate into the following two 
equations for $\psi_1$ and $\psi_2$:
\begin{equation}
\left\{
  \begin{array}{rcl}
   \psi_1^{\prime \prime \prime}(z) &=& -9 \psi_1^\prime(z) 
       + 6 \psi_1^{\prime \prime}(z) + 4 \psi_2(z) - 2,\\ 
   \psi_2^{\prime \prime \prime}(z) &=& -8 \psi_1(z) 
      + 2 \psi_1^\prime(z) 
         + 28 \psi_2(z) - 26 \psi_2^\prime(z) + 9 \psi_2^{\prime \prime}(z) - 6.
   \end{array}\right.
\end{equation}
This third order system of ordinary differential equations translates into the
first order system of six differential equations and six unknowns
\begin{equation}\label{diffsyst}
\mu^\prime(z) = A \cdot \mu(z) + b,
\end{equation}
where $A$ is defined in (\ref{Ad}),
$$
   b = \left(\begin{array}{r}
             0\\
             0\\
             -2\\
             0\\
             0\\
             -6
        \end{array}\right),\qquad\mbox{and}\qquad 
  \mu (z) = \left( 
        \begin{array}{r}
             \psi_1(z)\\
             \psi_1^\prime(z)\\
             \psi_1^{\prime \prime}(z)\\
             \psi_2(z)\\
             \psi_2^\prime(z)\\
             \psi_2^{\prime \prime}(z)
        \end{array}\right).
$$
The initial condition for (\ref{diffsyst}) is $\mu(0)^T = (0,-1,-3,0,1,-4)$. 
Indeed, we see from (\ref{eqphi12}) that $\varphi_1(1) = 0 = \varphi_1(2)$, and 
from (\ref{1diff}) and (\ref{2diff}), we find that
$$
   \varphi_1^\prime(1) = -1, \qquad \varphi_1^{\prime \prime}(1) = 0, 
    \qquad 
   \varphi_2^\prime(1) = -1, \qquad \varphi_2^{\prime \prime}(1) = 1,
$$
which translates into
\begin{eqnarray*}
   \psi_1(0) = 0, \qquad \psi^\prime_1(0) = -1, 
      \qquad \psi^{\prime \prime}_1(0) = -3,\\
   \psi_2(0) = 0, \qquad \psi_2^\prime(0) = -1, 
      \qquad \psi_2^{\prime \prime}(0) = -4.
\end{eqnarray*}
The solution of (\ref{diffsyst}) is of the form
\begin{equation}\label{muform}
   \mu (z) = \nu + \sum^6_{k=1} c_k\, e^{\lambda_k z} v_k,
\end{equation}
where $\nu^T = (\nu_1,\dots,\nu_6)$ and $c_1, \ldots, c_6$ are determined as follows. 
Given $\mu(z)$ as in (\ref{muform}),
$$
   \mu^\prime(z) = \sum_{k=1}^6 c_k\, e^{\lambda_k z} (\lambda_k v_k) 
   = A\cdot\left(\sum_{k=1}^6 c_k e^{\lambda_k z} v_k \right),
$$
while
$$
   A\cdot \mu(z) + b = A\cdot \nu + A \cdot 
   \left(\sum_{k=1}^6 c_k e^{\lambda_k z} v_k \right) + b.
$$
Therefore, $\mu(z)$ will solve (\ref{diffsyst}) if and only if
$$
   A\cdot \nu + b = 0.
$$
This determines $\nu$. One immediately checks that
$$
   \nu^T = (1,0,0,\half,0,0).
$$
The $c_i$ are now determined from the initial conditions and (\ref{muform}), by
solving the linear system
$$
   \sum_{i=1}^6 c_i\, v_i = \mu(0) - \nu,
$$
which reduces to (\ref{Vcd}), since $\mu(0) - \nu = d$.

  The functions $\psi_1(z)$ and $\psi_2(z)$ are given by rows 1 and 4 of 
$\mu(z)$:
$$
   \psi_1(z) = \nu_1 + \sum_{k=1}^6 c_k e^{\lambda_k z} v_{1,k}\qquad
   \psi_2(z) = \nu_4 + \sum_{k=1}^6 c_k e^{\lambda_k z} v_{4,k},
$$
so $\varphi_1(y)$ and $\varphi_2(y)$ are given by:
$$
   \varphi_1(y) = \frac{\nu_1}{y} 
      + \sum_{k=1}^6 c_k y^{\lambda_k -1} v_{1,k},\qquad 
   \varphi_2(y) = \frac{\nu_4}{y^2} 
      + \sum_{k=1}^6 c_k y^{\lambda_k -2} v_{4,k}.
$$
By (\ref{neqalpha}), this establishes (\ref{formalpha}) and (\ref{formphi}),
because $\nu_1 = 1$ and $\nu_4 = \half$, except that the summations in these
expressions have six terms instead of the four indicated in (\ref{formphi}). We will see below that the two omitted terms are in fact equal to zero.

\vskip 12pt

\noindent{\em Numerical values}
\vskip 12pt

   By hand, or using computer software such as Mathematica, one easily obtains
the exact values of the eigenvectors $v_1,\dots,v_6$, and it is then
straightforward to check that these are indeed eigenvalues and eigenvectors. For
instance
$$
   v_1 = \left(\begin{array}{c}
      4(3+\sqrt{5})(5 + \sqrt{13+4 \sqrt{5}})/(x_1^2 x_2) \\
      32 /(x_1^2 x_2) \\
      -16/(x_1 x_2) \\
      4 / x_1^2 \\
      -2 / x_1 \\
      1
    \end{array} \right)
$$
where
$$
   x_1 = - 5 + \sqrt{13+4 \sqrt{5}}\qquad\mbox{and}\qquad
   x_2 = 7 + 2 \sqrt{5} + \sqrt{13+4 \sqrt{5}}.
$$
A numerical approximation of the first four columns of $V$ is
\begin{equation}\label{numV}
   \left(\begin{array}{rrrr}
      126.09952 & 0.52922 & 0.36088 &  0.010381 \\
       19.89401 & 0.79016 & 1.26557 &  0.050266 \\
        3.13856 & 1.17975 & 4.43828 &  0.243402  \\
       40.17745 & 0.44859 & 0.08131 &  0.042649 \\
        6.33857 & 0.66977 & 0.28515 &  0.206516 \\
	1.00000 & 1.00000 & 1.00000 &  1.000000
    \end{array} \right)
\end{equation}
One easily checks that this matrix multiplied on the right by
\begin{equation}\label{numc}
\left(\begin{array}{c}
     c_1 \\
     c_2 \\
     c_3 \\
     c_4
    \end{array} \right)
    =
    \left(\begin{array}{c}
     -0.00546374 \\
     -0.230522 \\
     -0.427837 \\
     -3.33618
    \end{array} \right)
\end{equation}
equals $d$. Now (\ref{Vcd}) has a unique solution because $\{v_1,\dots,v_6\}$ are
linearly independent since the six eigenvalues of $A$ are distinct. This shows
that $c_5 = c_6 = 0$. The fact that this is indeed an equality and not an
approximate equality can be tediously checked by hand using the exact values of
$v_1,\dots,v_4$, or by using Mathematica. This proves Proposition 
\ref{propalpha}.
\hfill $\Box$
\vskip 16pt

\noindent{\sc Proof of Theorem \ref{prop1}.} By Proposition \ref{prop2},
$$
   \E(x) = P_x\{x+ X(\Gamma^*(\tau^{\Gamma^*})) = 1\},
$$
and $\{x+ X(\Gamma^*(\tau^{\Gamma^*})) = 1\}$ occurs if and only if the sequence
$(H_n)$ exceeds 1 before the DW-algorithm STOPS, that is, before two consecutive
values of this sequence coincide. By definition of the Markov chain $(\Theta_n)$
and the absorption probability $\alpha(x,y)$, this implies
$$
   \E(x) = 1 - E_x\left(\alpha(\Theta_1)\, 1_{\{H_1 < 1\}}\right) = 1 - E_x\left(\alpha(x,H_1)\, 1_{\{H_1 < 1\}}\right).
$$
Let $U_1$ and $U_1'$ be defined as in Stage 1 of the DW-algorithm. Then
$$
   U_1 = - \inf\{u \geq 0: B_3(u) = -x\}, \qquad
   U_1' = \inf\{u \geq 0: B_1(u) = -x\},
$$
and therefore, under $P_x$,
$$
   H_1 = x + \max\left(\sup_{U_1 \leq u \leq 0} B_3(-u),\ 
                      \sup_{0 \leq u \leq U_1'} B_1(u)
                 \right).
$$
Let $\ti{B}_1$ be a standard Brownian motion starting at 0. Then for $y \geq x$,
\begin{eqnarray*}
   P_x\{H_1 \leq y\} &=& \left(P\{\ti{B}_1 \mbox{ hits } -x \mbox{ before }
              y-x \} \right)^2 \\
	&=& \left(\frac{y-x}{y}\right)^2 = \left(1 - \frac{x}{y}\right)^2.
\end{eqnarray*}
The density of $H_1$ under $P_x$ is therefore
$$
   2 \left(1 - \frac{x}{y}\right)\, \frac{x}{y^2},
$$
and so
$$
   E_x\left(\alpha(x,H_1)\, 1_{\{H_1 < 1\}}\right) = 2 \int_x^1 dy\, \left(1 - \frac{x}{y}\right)\, 
           \frac{x}{y^2}\, \alpha(x,y).
$$
Replace $\alpha(x,y)$ by its value as expressed in (\ref{formalpha}) and 
(\ref{formphi}), then
multiply out the factors in the integrand and reorder according to powers of
$y$, to see that this equals
\begin{eqnarray*}
    2x \int_x^1 dy\,\left(\frac{1}{y^2} - \frac{x}{y^3}\right. &+&
                   2 \sum_{k=1}^4 c_k\,(v_{1,k} - v_{4,k})\, y^{\lambda_k-2} \\
    \qquad &-& x  \sum_{k=1}^4 c_k\,(4 v_{1,k}- 6 v_{4,k})\, y^{\lambda_k-3} \\
    \qquad &-& x^2  \sum_{k=1}^4 c_k\,(6v_{4,k} - 2 v_{1,k})\, y^{\lambda_k-4} \\
    \qquad &+& 2 x^3  \left. \sum_{k=1}^4 c_k\, v_{4,k}\, y^{\lambda_k-5}\right).
\end{eqnarray*}
The antiderivative of the integrand is now trivially computed and one finds that
$\E(x)$ is equal to
\begin{eqnarray*}
   2x - x^2 \!\!&+&\! 4 \sum_{k=1}^4 c_k\, (v_{1,k} - v_{4,k}) \frac{x^{\lambda_k}}{\lambda_k -1}
     - 2 \sum_{k=1}^4 c_k\, (4 v_{1,k} - 6v_{4,k}) 
                  \frac{x^{\lambda_k}}{\lambda_k -2}\\
   && -2 \sum_{k=1}^4 c_k\, (6v_{4,k} - 2 v_{1,k}) 
                  \frac{x^{\lambda_k}}{\lambda_k -3}
       +4 \sum_{k=1}^4 c_k\, v_{4,k} 
                  \frac{x^{\lambda_k}}{\lambda_k -4} \\
   && - 4x \sum_{k=1}^4 c_k \frac{v_{1,k} - v_{4,k}}{\lambda_k-1} 
      + 2 x^2 \sum_{k=1}^4 c_k \frac{4 v_{1,k} - 6 v_{4,k}}{\lambda_k-2} \\
   && + 2 x^3 \sum_{k=1}^4 c_k \frac{6 v_{4,k} - 2 v_{1,k}}{\lambda_k-3}
      - 4 x^4 \sum_{k=1}^4 c_k \frac{v_{4,k}}{\lambda_k-4}.
\end{eqnarray*}
This has the form 
$$
   \sum_{i=1}^4 \alpha_i\, x^{\lambda_i} + \sum_{i=1}^4 \beta_i\, x^{i},
$$
with, for $k = 1,\dots,4$,
\begin{equation}\label{explalpha}
   \alpha_k = c_k\left(4 \frac{v_{1,k}- v_{4,k}}{\lambda_k - 1}
      - 4 \frac{2 v_{1,k} - 3 v_{4,k}}{\lambda_k - 2}
      -4 \frac{3v_{4,k}- v_{1,k}}{\lambda_k - 3}
      +4 \frac{v_{4,k}}{\lambda_k - 4}\right),
\end{equation}
and
\begin{equation}\label{explbeta}
  \begin{array}{ll}
   \beta_1 = 2 -4 \displaystyle{\sum_{k=1}^4}\, c_k \frac{v_{1,k} - v_{4,k}}{\lambda_k-1}, &
   \beta_2 = -1 + 4 \displaystyle{\sum_{k=1}^4}\, c_k 
            \frac{2 v_{1,k} - 3 v_{4,k}}{\lambda_k-2},\\
   & \\
   \beta_3 = 4 \displaystyle{\sum_{k=1}^4}\, c_k \frac{3 v_{4,k} - v_{1,k}}{\lambda_k-3}, &
   \beta_4 = -4 \displaystyle{\sum_{k=1}^4}\, c_k \frac{v_{4,k}}{\lambda_k-4}.
  \end{array}
\end{equation}
If we substitute in the numerical approximations of 
$v_{1,k}$, $v_{4,k}$ and $c_k$ from (\ref{numV}) and (\ref{numc}), we find
\begin{equation}\label{remnum}
\begin{array}{llll}
   \alpha_1 \simeq 0.938911, & \alpha_2 \simeq 0.037177, & \alpha_3 \simeq 0.239215, 
   & \alpha_4 \simeq -0.215302, \\
   \beta_1 \simeq 0.00000, & \beta_2 \simeq 0.00000, & \beta_3 \simeq 0.00000, 
  & \beta_4 \simeq 0.00000.
\end{array}
\end{equation}
The fact that the $\beta_i$ are exactly equal to $0$ can be checked tediously by hand or by using Mathematica.
This yields the formula in (\ref{formE}) and completes the proof of Theorem \ref{prop1}.
\hfill $\Box$
\vskip 16pt


\end{section}
\eject

\begin{section}{Escape probabilities for additive Brownian motion}\label{sec3}
In the previous section, we computed probabilities of the type ``there is a path
starting at $(0,0)$ along which a standard ABM started at $x>0$ reaches level $M >0$ before $0$." In this section,
we shall estimate related probabilities of the type ``there is a path starting 
at $(0,0)$ and ending at least $a$ units away from $(0,0)$ along which a standard ABM started at $x>0$ is 
positive." This will be useful in the next section, because of measurability issues: the former probabilities may depend on values of the ABM at points that are arbitrarily far away from the origin, whereas the latter only depend on the behavior of the ABM in a ball around the origin. Our first application of this result will be Proposition \ref{univariateup}, which is a key ingredient in the proof of the first half of Theorem \ref{thm2} (see Section \ref{sec4}).

   We begin by introducing some additional terminology related to the DW-algorithm. We say that the DW-algorithm started at $r$ with value $x_0$ {\em reaches level $M$ during the stage $n$} if
$H_{n-1} < M \leq H_n$, or equivalently, $\sup_{R_{n-1}} X^r < M \leq
\sup_{R_n} X^r$. We define the {\em point $\ul{\tau}^{x_0,r,M}= (\ul{\tau}^{x_0,r,M}_1,
\ul{\tau}^{x_0,r,M}_2,\ul{\tau}^{x_0,r,M}_3,\ul{\tau}^{x_0,r,M}_4)$\index{$\ul{\tau}^{x_0,r,M}$} at which the
DW-algorithm reaches level $M$} as follows: if the algorithm reaches level $M$
during an odd stage $2n+1$, then $\ul{\tau}^{x_0,r,M}_2 = V_n' - r_2$,
$\ul{\tau}^{x_0,r,M}_4 = r_2 - V_n$,
\begin{eqnarray*}
   \ul{\tau}^{x_0,r,M}_1 &=& -r_1 + \inf\{u > U_n': x_0 + X^r(u,T^n_2) \in \{0,M\}\},\\
   \ul{\tau}^{x_0,r,M}_3 &=& r_1 - \sup\{u < U_n: x_0 + X^r(u,T^n_2) \in\{0,M\}\},
\end{eqnarray*}
while if the algorithm reaches $M$ during an even stage $2n$, then 
$\ul{\tau}^{x_0,r,M}_1 = U_n' - r_1$, $\ul{\tau}^{x_0,r,M}_3 = r_1 - U_n$,
\begin{eqnarray*}
   \ul{\tau}^{x_0,r,M}_2 &=& -r_2 + \inf\{v > V_{n-1}': x_0 + X^r(T^n_1,v) \in
                          \{0,M\}\},\\
   \ul{\tau}^{x_0,r,M}_4 &=& r_2 - \sup\{v < V_{n-1}: x_0 + X^r(T^n_1) \in\{0,M\}\}.
\end{eqnarray*}
If the algorithm never reaches level $M$, then $\ul{\tau}^{x_0,r,M}$ is taken to be 
$\ulinfty = (\infty, \infty, \infty, \infty)$.\index{$\ulinfty$} 
We note here that $\ul{\tau}^{x_0,r,M}$ is a stopping point and if, for instance 
the algorithm reaches level $M$ during stage  $2n+1$, then at least one (and 
possibly both) of $x_0 + X^{r}(r_1+ \ul{\tau}^{x_0,r,M}_1, T^n_2)$ and 
$x_0 + X^{r}(r_1-\ul{\tau}^{x_0,r,M}_3, T^n_2)$ is equal to $M$.  

   If $r \in R= [a_1,b_1]\times[a_2,b_2]$, we say that the DW-algorithm started at $r$ with value $x_0$ {\em
escapes} $R$ if $\ul{\tau}_{(N)}^{x_0,r} \not\leq (b_1-r_1,b_2-r_2,r_1-a_1,r_2-a_2)$, or
equivalently, $\cR_r(\ul{\tau}_{(N)}^{x_0,r}) \not\subset R$. We let 
$\ul{\sigma}^{x_0,r,R}$\index{$\ul{\sigma}^{x_0,r,R}$}
represent {\em the portion of $\IR^2$ explored up to escaping $R$,} that is, if for some
$n \geq 1$, $R^{x_0,r}_n \subset R$ but $R^{x_0,r}_{n+1} \not\subset R$, then 
$\ul{\sigma}^{x_0,r,R} = \cR_r^{-1}(R \cap R^{x_0,r}_{n+1})$, and if 
$\cR_r(\ul{\tau}_{(N)}^{x_0,r}) \subset R$, then $\ul{\sigma}^{x_0,r,R} = 
\ul{\tau}_{(N)}^{x_0,r}$. We note that $\ul{\sigma}^{x_0,r,R} < \ulinfty$ 
always holds.

   Finally, we say that the DW-algorithm {\em reaches level $M$ before escaping
$R$} if $\ul{\tau}^{x_0,r,M} \leq \ul{\sigma}^{x_0,r,R}$ (recall the definition of $\leq$ introduced just before \eqref{startp6}).

   As in the previous section, if $r=(0,0)$, then we omit the sub/superscript $r$ from the notations
introduced above, as well as $x_0$ if it is determined from the context.

   The next theorem, which is based on Theorem \ref{prop1} and concerns the probability that the DW-algorithm escapes a given square, is the key ingredient in the proof of the upper bound on the Hausdorff dimension of the boundaries of bubbles.
  
\begin{thm} Consider the DW-algorithm started at $(0,0)$ with value $x_0$. We use $P_{x_0}$ to refer to probabilities for this algorithm. There exist constants $c$ and $C$ such that for all $x_0 >0$ and all $a \geq x_0^2$,
$$
  c \left(\frac{x_0}{\sqrt{a}}\right)^{\lambda _1} \leq
  P_{x_0}\left\{\cR(\ultau_{(N)}) \not\subset [-a,a]^2 \right\} \leq
   C \left(\frac{x_0}{\sqrt{a}}\right)^{\lambda _1}.
$$
\label{thm3}
\end{thm}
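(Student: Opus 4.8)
The plan is to exploit the scaling structure of ABM together with Theorem \ref{prop1} to reduce the escape probability to the gambler's ruin probability $\E(\cdot)$ at a suitable level, and then control the discrepancy between ``escaping a square" and ``reaching a fixed high level" by a chaining/iteration argument over dyadic scales. The starting observation is Brownian scaling: if $\ti X$ is a standard ABM and $\lambda>0$, then $\lambda^{-1}\ti X(\lambda^2 s_1,\lambda^2 s_2)$ is again a standard ABM, and this scaling sends the square $[-a,a]^2$ to $[-a/\lambda^2,a/\lambda^2]^2$ and the value $x_0$ to $x_0/\lambda$. Taking $\lambda=\sqrt a$, the event $\{\cR(\ultau_{(N)})\not\subset[-a,a]^2\}$ under $P_{x_0}$ has the same probability as $\{\cR(\ultau_{(N)})\not\subset[-1,1]^2\}$ under $P_{x_0/\sqrt a}$. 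So it suffices to prove that there are constants $c,C$ with
$$
  c\, b^{\lambda_1} \leq P_b\{\cR(\ultau_{(N)})\not\subset[-1,1]^2\} \leq C\, b^{\lambda_1}
  \qquad\text{for all } 0<b\leq 1,
$$
writing $b=x_0/\sqrt a$ (the hypothesis $a\ge x_0^2$ is exactly $b\le1$).

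For the \emph{upper bound}, the idea is that if the DW-algorithm started at the origin with value $b$ escapes the unit square, then in particular the path $\Gamma^*$ it constructs leaves $[-1,1]^2$ while $b+X$ stays positive on it; applying Proposition \ref{prop2}(c) in the contrapositive form, this forces $R_{N}\not\subset[-1,1]^2$, which (by the geometry of the rectangles $R_n$ and the fact that $b+X\le0$ on $\partial R_N$) can be compared with the event that $b+X$ reaches some level of order $1$ along a path from the origin before returning to $0$ — because to escape a unit square the running maximum $H_n$ of the algorithm must, with overwhelming probability relative to $b^{\lambda_1}$, have exceeded a fixed constant $\kappa$. More precisely, one bounds $P_b\{\cR(\ultau_{(N)})\not\subset[-1,1]^2\}\le P_b\{\sup_{[-1,1]^2}(b+X)\ge\kappa\}+P_b\{\text{escape while }\sup H_n<\kappa\}$; the first term is $\le \E_\kappa(b)\le C b^{\lambda_1}$ by Theorem \ref{prop1} (monotonicity in the level plus scaling of $\E$), and the second term is handled by noting that an algorithm whose maxima never reach $\kappa$ explores, at each stage, an interval governed by a Brownian excursion of height $<\kappa$ and can only escape the unit square through a Brownian gambler's-ruin path of span $\ge1$ with value bounded by $\kappa$, which has probability $\le C'b\le C'b^{\lambda_1}$ by one-dimensional estimates, summed over the (geometrically controlled) number of stages. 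Collecting terms gives the upper bound.

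For the \emph{lower bound}, it is enough to produce \emph{one} cheap way to escape: condition on the first corner of the algorithm being at horizontal distance $\ge1$ from the origin, which by the explicit Stage-1 description happens when one of the two Brownian motions $B_1,B_3$ emanating from the origin reaches its maximum over its excursion above $-b$ at a point more than distance $1$ away — but more simply, one can run the first two stages, observe that with probability bounded below by a constant the algorithm after stage $2$ has $H_2$ of order $1$ and a rectangle $R_2$ already of diameter of order $1$, and that the event $\{H_1\ge\kappa\}$ has probability comparable to $\E_\kappa(b)\asymp b^{\lambda_1}$ (again Theorem \ref{prop1} and scaling); on $\{H_1\ge\kappa\}$ the algorithm has with positive conditional probability (a pure Brownian estimate, uniform in everything) already explored a rectangle not contained in $[-1,1]^2$, or else escapes within $O(1)$ further stages. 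Thus $P_b\{\cR(\ultau_{(N)})\not\subset[-1,1]^2\}\ge c\, P_b\{H_1\ge\kappa\}\ge c'\,b^{\lambda_1}$.

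The main obstacle I expect is the upper bound on the ``escape while the running maxima stay below $\kappa$" contribution: one must rule out that the algorithm escapes the unit square by many small steps none of which raises $H_n$ past the constant $\kappa$, and this requires a quantitative version of the fact that a DW-algorithm confined to low excursion heights makes geometrically growing progress only slowly — concretely, a uniform (in $b$) bound on the probability that the explored rectangle reaches diameter $1$ before $H_n$ exceeds $\kappa$, which should follow by comparing $H_n$ to a random walk with the transition law \eqref{PZn} (absorbed at level $\kappa$) and using that the spatial extent added at stage $n$ is, given $H_{n-1},H_n<\kappa$, a Brownian passage time of order at most $\kappa^2$. Making this comparison airtight, and matching constants so that the error is genuinely $o(b^{\lambda_1})$ rather than merely $O(b)$, is where the real work lies; everything else is scaling plus the already-proven exact formula for $\E$.
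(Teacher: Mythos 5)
Your scaling reduction at the outset is correct, and both directions are organized around the right idea (couple escaping a square with reaching a level of order $\sqrt a$). But there are two concrete problems.

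\emph{Lower bound.} You write that ``the event $\{H_1\ge\kappa\}$ has probability comparable to $\E_\kappa(b)\asymp b^{\lambda_1}$,'' and this is a genuine computational error, not a gap in exposition. From the explicit Stage-1 law used in the proof of Theorem~\ref{prop1}, $P_b\{H_1\le y\}=(1-b/y)^2$, so $P_b\{H_1\ge\kappa\}=2b/\kappa-b^2/\kappa^2\asymp b$, which is far smaller than $b^{\lambda_1}$ since $\lambda_1\approx 0.158<1$. The event you actually need is $\{\ultau^{\kappa}<\ulinfty\}$, i.e.\ that the \emph{entire} DW-algorithm (not just Stage $1$) reaches level $\kappa$, which indeed has probability $\asymp b^{\lambda_1}$ by Theorem~\ref{prop1}. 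This is exactly what the paper's lower bound conditions on: it applies the strong Markov property at the stopping point $\ultau^{\sqrt a}$, partitions into the six configurations in which level $\sqrt a$ can be attained, and shows a uniform-in-$a$ lower bound on the conditional escape probability. Once you replace $\{H_1\ge\kappa\}$ by $\{\ultau^\kappa<\ulinfty\}$ and invoke the strong Markov property at that stopping point, your sketch becomes essentially the paper's argument.

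\emph{Upper bound.} Your decomposition into $\{\sup_n H_n\ge\kappa\}$ (probability $O(b^{\lambda_1})$ by Theorem~\ref{prop1}) plus $\{\text{escape while }\sup_n H_n<\kappa\}$ is a legitimate starting point, but the bound you assert for the second term, $\le C'b$, is the entire content of the estimate and is left as a hand-wave. The issue you correctly identify---``rule out that the algorithm escapes the unit square by many small steps none of which raises $H_n$ past $\kappa$''---is precisely what the paper's Lemmas~\ref{rdlem9} and \ref{rdlem10} are designed to control: Lemma~\ref{rdlem9} gives a geometric tail on the number of stages $\nu_\ell$ spent between levels $2^\ell$ and $2^{\ell+1}$, and Lemma~\ref{rdlem10} gives an exponential tail on the spatial extent added during those stages. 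The paper then chains these over dyadic levels (the decomposition $F\subset F_{m,0}\cup\cdots\cup F_{m,m}\cup G_{m,m}$ in \eqref{rd35a}), rather than using a single cutoff $\kappa$. Your single-level decomposition cannot be closed without proving a version of those two lemmas anyway, and your sketch of ``Brownian gambler's-ruin path of span $\ge 1$ with value bounded by $\kappa$'' does not account for the fact that the algorithm may escape through the cumulative effect of arbitrarily many small stages. In short: the upper bound's missing step is not a matter of ``matching constants''---it is the substantive core of the theorem's proof.
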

 
   Theorem \ref{thm3} is natural in view of the scaling property of Brownian
motion and the fact that, starting from level 1, say, it is to be expected that 
escaping the square 
$[-a,a]^2$ without hitting $0$ has about the same probability as reaching
level $\sqrt a$ without hitting $0$. 
\vskip 12pt

\noindent{\em Proof of Theorem \ref{thm3} (lower bound)}. In view of the scaling
property of Brownian motion, it suffices to consider the case $x_0 = 1$. We
begin with the lower bound. Clearly,
$$
   P_1\{\cR(\ul{\tau}_{(N)}) \not\subset [-a,a]^2\} \geq 
      \sum_{i=1}^6 P_1(\{\cR(\ul{\tau}_{(N)}) \not\subset [-a,a]^2,\
      \ul{\tau}^{\sqrt{a}} < \infty\} \cap F_i),
$$
where $F_1,\dots,F_6$ represent the six possible configurations which we
describe informally as follows: level $\sqrt{a}$ is reached during an odd stage,
and $x_0 + X$ equals $\sqrt{a}$ at the upper right corner only, or at the upper left corner only, or at both
of these corners, or, level $\sqrt{a}$ is reached during
an even stage, etc. Let $F_1$ be the event ``level $\sqrt{a}$ is reached during 
an odd stage, $x_0 + X(\ul{\tau}_1^{\sqrt{a}},\ul{\tau}_2^{\sqrt{a}}) = \sqrt{a}$ and
$x_0 + X(-\ul{\tau}_3^{\sqrt{a}},\ul{\tau}_2^{\sqrt{a}}) < \sqrt{a}$." Then
\begin{eqnarray*}
 && \{\cR(\ul{\tau}_{(N)}) \not\subset [-a,a]^2,\ \ul{\tau}^{\sqrt{a}} < \infty\} 
    \cap F_1  \\
 &&\qquad\qquad\qquad \supset \{\ul{\tau}^{\sqrt{a}} < \infty\} \cap F_1 \cap
       \{x_0 + X(\ul{\tau}^{\sqrt{a}}_1 + u, \ul{\tau}^{\sqrt{a}}_2) >0,
       \ 0 \leq u \leq a\}.
\end{eqnarray*}
Because $\ul{\tau}^{\sqrt{a}}$ is a stopping point, we can use the strong Markov
property of additive Brownian motion at this point to conclude that for
$i=1,\dots,6$,
$$
 P_1(\cR(\ul{\tau}_{(N)}) \not\subset [-a,a]^2,\ \ul{\tau}^{\sqrt{a}} < \infty\} 
    \cap F_i) \geq c P_1(\{\ul{\tau}^{\sqrt{a}} < \infty\} \cap F_i),
$$
where $c = P_{\sqrt{a}}\{B_1(u) >0,\ 0 \leq u \leq a\} > 0$ and does not
depend on $a$. Summing over $i=1,\dots,6$, we find that
$$
    P_1\{\cR(\ul{\tau}_{(N)}) \not\subset [-a,a]^2\} 
        \geq c P_1\{\ul{\tau}^{\sqrt{a}} < \infty\}
	= c P_{a^{-1/2}}\{\ul{\tau}^1 < \infty\} = c a^{-\lambda_1/2}
$$
by Theorem \ref{prop1}, which proves the desired lower bound.
\hfill $\Box$
\vskip 16pt

   We now turn to the upper bound, again in the case $x_0 = 1$. We need two
lemmas.

\begin{lemma} For $\ell \geq 0$, let $\nu_\ell$\index{$\nu_\ell$} be the (random) number of stages
needed by the DW-algorithm to pass from level $2^\ell$ to $2^{\ell+1}$ before terminating, that is,
$$
   \nu_\ell = 1 + \sum_{n \geq 1} 1_{\{2^\ell < H_n < 2^{\ell+1},\ N > n\}}.
$$
For all $\ell, n \geq 1$,
$$
   P_1\{\nu_\ell \geq n \mid \F_{\ul{\tau}^{2^\ell}}\} \leq \left(\frac{3}{4}\right)^{n-2}, \qquad
   a.s.\mbox{ on } \{\ul{\tau}^{2^\ell} < \ul{\infty}\}.
$$
\label{rdlem9}
\end{lemma}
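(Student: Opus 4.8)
The plan is to reduce the claimed bound to a one-step estimate for the Markov chain $\Theta_n=(H_{n-1},H_n)$ of Proposition~\ref{noSTOP} and then iterate it. Fix $\ell\ge 1$ and work under $P_1$. On $\{\ul{\tau}^{2^\ell}<\ulinfty\}$ I would let $k=k(\ell)\ge 1$ denote the (random) stage during which the algorithm reaches level $2^\ell$, so that $H_{k-1}<2^\ell\le H_k$, and in fact $H_k>2^\ell$ a.s. The first thing to check is that $\cR(\ul{\tau}^{2^\ell})\subseteq R_k=\cR(\ul{\tau}_{(k)})$: the exploration carried out to reach level $2^\ell$ merely extends, along the (horizontal or vertical, according to the parity of $k$) segment already explored during stage $k$, to the first hitting of $\{0,2^\ell\}$ by $x_0+X$, and since level $2^\ell$ is actually attained on that segment this extension stays inside it. Consequently $\F_{\ul{\tau}^{2^\ell}}\subseteq\F_{\ul{\tau}_{(k)}}$, and by the tower property it suffices to prove
$$
   P_1\{\nu_\ell\ge n\mid\F_{\ul{\tau}_{(k)}}\}\le\left(\frac{3}{4}\right)^{n-2}\quad\text{a.s. on }\{\ul{\tau}^{2^\ell}<\ulinfty\},\ \text{for all }n\ge 1.
$$
The reason for enlarging the conditioning $\sigma$-field is that $H_{k-1},H_k$, hence $\Theta_k$, are $\F_{\ul{\tau}_{(k)}}$-measurable, whereas $\F_{\ul{\tau}^{2^\ell}}$ records only that $H_k\ge 2^\ell$.

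I would then rewrite $\{\nu_\ell\ge n\}$ using the monotonicity of $(H_n)$. On $\{H_k\ge 2^{\ell+1}\}$ no stage lies in the band $(2^\ell,2^{\ell+1})$, so $\nu_\ell=1$; on $\{2^\ell\le H_k<2^{\ell+1}\}$ stage $k$ lies in the band and is non-terminating (since $H_k>2^\ell>H_{k-1}$), so $\nu_\ell\ge 2$; and for $n\ge 3$ one has globally
$$
   \{\nu_\ell\ge n\}=\{H_k<H_{k+1}<\cdots<H_{k+n-2}<2^{\ell+1}\},
$$
because $H_{k+j}\ge H_k\ge 2^\ell$ automatically, ``not terminated through stage $k+j$'' is equivalent to $H_k<H_{k+1}<\cdots<H_{k+j}$, and ``still inside the band at stage $k+j$'' to $H_{k+j}<2^{\ell+1}$. (For $n\le 2$ the asserted bound is $\ge 1$, so it is trivial.)

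The core estimate, which is routine, is that for every state $(x,y)$ with $2^\ell\le y<2^{\ell+1}$ and $0<x\le y$,
$$
   P_1\{\,y<H_{j+1}<2^{\ell+1}\mid\Theta_j=(x,y)\,\}\le\frac{3}{4}.
$$
Combining the transition law \eqref{PZn} with the atom $P\{H_{j+1}=y\mid\Theta_j=(x,y)\}=(x/y)^2$, the left-hand side works out to $(1-t)\bigl[(1-t)+2ts-(1+t)s^2\bigr]$ with $t=y/2^{\ell+1}\in[\tfrac12,1)$ and $s=x/y\in(0,1]$; the bracket is concave in $s$ with maximum value $1/(1+t)$, so the whole expression is at most $(1-t)/(1+t)\le\tfrac13<\tfrac34$. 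Finally, I would invoke Proposition~\ref{noSTOP} (on the event $\{k=j\}$, which lies in $\F_{\ul{\tau}_{(j)}}$) to conclude that, conditionally on $\F_{\ul{\tau}_{(k)}}$, the sequence $(\Theta_{k+i})_{i\ge 0}$ is a Markov chain with transition kernel \eqref{PZn} started from $\Theta_k$; since every intermediate state in the displayed event has its second coordinate in $(2^\ell,2^{\ell+1})$, applying the core estimate to each of the $n-2$ successive transitions yields $P_1\{\nu_\ell\ge n\mid\F_{\ul{\tau}_{(k)}}\}\le(3/4)^{n-2}$ for $n\ge 2$, which together with the first-paragraph reduction finishes the proof.

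I expect the main obstacle to be the measure-theoretic bookkeeping of the first paragraph: because $\ul{\tau}^{2^\ell}$ records only that level $2^\ell$ has been attained, not the exact value $H_k$, one must carefully verify the rectangle containment $\cR(\ul{\tau}^{2^\ell})\subseteq R_k$ and pass to $\F_{\ul{\tau}_{(k)}}$ before exploiting the Markov structure. Everything else — the set-theoretic identification of $\{\nu_\ell\ge n\}$ and the one-step probability bound — is elementary.
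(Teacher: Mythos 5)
Your proof is correct, and it takes a genuinely different route from the paper's. The paper's proof is a probabilistic coupling argument: at each stage $m > N_0$ with $H_m$ in the band $(2^\ell, 2^{\ell+1})$, it introduces the two independent Brownian motions for the next stage (started at $H_m - H_{m-1}$), splits into the cases $H_m - H_{m-1} \leq 2^{\ell-1}$ and $H_m - H_{m-1} \geq 2^{\ell-1}$, and shows that in the first case the algorithm STOPs at stage $m+1$ with conditional probability $\geq \tfrac14$ while in the second case it reaches $2^{\ell+1}$ at stage $m+1$ with conditional probability $\geq 1 - (\tfrac34)^2 \geq \tfrac14$; either way the ``leave the band'' probability is $\geq \tfrac14$, hence the geometric tail with ratio $\tfrac34$. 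You instead work directly with the explicit transition kernel \eqref{PZn} of the Markov chain $\Theta_n$, compute the exact one-step probability of remaining strictly inside the band, and optimize it analytically. Your algebra is correct: with $s = x/y$ and $t = y/2^{\ell+1}$,
$$
P\{y < H_{j+1} < 2^{\ell+1} \mid \Theta_j=(x,y)\}
  = (1-t)\bigl[(1-t)+2ts-(1+t)s^2\bigr]
  \leq \frac{1-t}{1+t} \leq \frac13,
$$
where the last inequality uses $t\in[\tfrac12,1)$. This is a sharper one-step bound than the paper's $\tfrac34$ (though both yield the stated lemma), and it is cleaner in that it avoids introducing the auxiliary Brownian motions and the case split on $H_m - H_{m-1}$; the price is more delicate algebra and some extra care with the conditioning (conditioning on $\F_{\ul{\tau}_{(k)}}$ for the random stage $k$ and applying the strong Markov property there), which you correctly flag and which does go through. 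Your set-theoretic identification of $\{\nu_\ell \geq n\}$, the rectangle containment $\cR(\ul{\tau}^{2^\ell}) \subseteq R_k$, and the reduction via the tower property to the larger conditioning $\sigma$-field are all sound.
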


\proof Suppose level $2^\ell$ is reached during stage $N_0$, that is, $H_{N_0
-1} < 2^\ell \leq H_{N_0}$. Notice that $H_m - H_{m-1} < 2^\ell$, for any $m > N_0$ such that $2^\ell \leq
H_{N_0} \leq H_{m-1} < H_m < 2^{\ell +1}$.

   Consider now an even stage $2m > N_0$ such that $2^\ell < H_{2m} < 2^{\ell
+1}$. Let
$$
   W_1(u) = x_0 + X(U_m' +u, T^m_2), \qquad W_2(u) = x_0 + X(U_m - u,T^m_2).
$$
Given $H_{2m} - H_{2m-1}$, these two processes are conditionally independent
Brownian motions started at $H_{2m} - H_{2m-1}$, which are conditionally
independent of $\F_{\ul{\tau}_{(2m)}}$. Observe that the event $F_m = \{N =
2m+1\}$ contains the event ``$W_1$ and $W_2$ hit $0$ before $H_{2m}$," so on 
$\{H_{2m} - H_{2m-1} \leq 2^{\ell - 1},\ N_0 < 2m \leq N\}$, $P_1(F_m \mid 
\F_{\ul{\tau}_{(2m)}}) \geq (1/2)^2 = 1/4$. On the other hand, the event 
$G_m = \{N>2m+1,\ H_{2m+1} \geq 2^{\ell +1}\}$ contains the event ``$W_1$ or
$W_2$ hits $2^{\ell +1}$ before $0$," so on $\{H_{2m} - H_{2m-1} \geq 
2^{\ell - 1},\ N_0 < 2m \leq N\}$, $P_1(G_m \mid \F_{\ul{\tau}_{2m}}) \geq 1 - (3/4)^2 = 7/16 \geq
1/4$. Therefore, on $\{N_0 < 2m \leq N,\ H_{2m} < 2^{\ell +1}\}$,
\begin{eqnarray*}
  && P_1(\{N = 2m+1\}\cup\{N > 2m+1,\ H_{2m+1} \geq 2^{\ell +1} \} \mid 
       \F_{\ul{\tau}_{(2m)}}) \\
  && \qquad\geq P(F_m \mid \F_{\ul{\tau}_{(2m)}}) 
         1_{\{H_{2m} - H_{2m-1} \leq 2^{\ell - 1}\}}
	  + P(G_m \mid \F_{\ul{\tau}_{(2m)}}) 
	   1_{\{H_{2m} - H_{2m-1} \geq 2^{\ell - 1}\}} \\
  &&\qquad\geq \frac{1}{4}.
\end{eqnarray*}
The same inequality holds for odd stages, and so for $m$ even or odd,
$$
   P_1\{N > m+1,\ H_{m+1} < 2^{\ell+1} \mid \F_{\ul{\tau}_{(m)}} \} 
   \leq \frac{3}{4} \qquad \mbox{on } \{N_0 < m \leq N,\ H_m < 2^{\ell+1} \}.
$$
In words, given that level $2^\ell$ has been reached but level $2^{\ell +1}$ has
not been reached at stage $m$, the probability that at stage $m+1$, the 
DW-algorithm has not stopped and level $2^{\ell +1}$ is not reached, is bounded 
above by $3/4$.
Therefore, conditional tail probabilities for $\nu_\ell$ are bounded above by
those of a geometric random variable with success probability $1/4$, which
proves the lemma.
\hfill $\Box$
\vskip 16pt

\begin{lemma} There exist $c>0$ and $C < \infty$ such that for all $\ell \in
\IN$ and $x \geq 1$,
$$
   P_1\{\Vert \ul{\tau}^{2^{\ell+1}} \wedge \ul{\tau}_{(N)} -
   \ul{\tau}^{2^\ell}\Vert \geq x2^{2\ell} \mid \F_{\ul{\tau}^{2^\ell}} \} 
       \leq C e^{-cx} \qquad\mbox{on } \{\ul{\tau}^{2^\ell} < \infty \}.
$$
\label{rdlem10}
\end{lemma}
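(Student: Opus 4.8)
The plan is to condition on $\F_{\ul{\tau}^{2^\ell}}$ on the event $\{\ul{\tau}^{2^\ell}<\infty\}$, invoke the strong Markov property of additive Brownian motion at the stopping point $\ul{\tau}^{2^\ell}$ so that the continuation of the DW-algorithm is driven by a fresh standard ABM independent of $\F_{\ul{\tau}^{2^\ell}}$, and then write the increment $\ul{\tau}^{2^{\ell+1}}\wedge\ul{\tau}_{(N)}-\ul{\tau}^{2^\ell}$ as a sum, over the successive stages carried out after level $2^\ell$ is reached, of the amounts by which each stage enlarges the explored rectangle in one coordinate direction. The two ingredients will be: (i) each such enlargement is dominated by a sum of at most two exit times of one-dimensional Brownian motions from intervals of length at most $2^{\ell+1}$, and the two exit times in a given stage are conditionally independent; and (ii) by Lemma~\ref{rdlem9}, the number of stages involved is at most $1+\nu_\ell$, and $\nu_\ell$ has geometric conditional tails. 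Since the exit time of Brownian motion from an interval of length $L$ satisfies $E[e^{\lambda\tau}]\le K$ uniformly in the starting point, for $\lambda$ of order $L^{-2}$ and a universal $K\ge1$, combining (i) and (ii) through a Laplace-transform argument will give the desired exponential bound on the scale $2^{2\ell}$.

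For ingredient (i), suppose level $2^\ell$ is first reached during stage $N_0$, so $H_{N_0}\ge 2^\ell$, and recall from the proof of Lemma~\ref{rdlem9} that $H_n-H_{n-1}<2^{\ell+1}-2^\ell=2^\ell$ for every $n>N_0$ with $H_n<2^{\ell+1}$. Consider a stage of this latter kind that is completed before $\ul{\tau}^{2^{\ell+1}}\wedge\ul{\tau}_{(N)}$ is passed, say an odd stage $2m+1$ (the even case being symmetric, with the vertical direction in place of the horizontal one). In the notation of the proof of Proposition~\ref{noSTOP}, the explored rectangle is enlarged horizontally by two pieces of lengths $\tau_1$ and $\tau_2$, the first hitting times of $0$ of the processes $Y_i(u)=(H_{2m}-H_{2m-1})+Z^{2m}_i(u)$, $i=1,3$; and since $\sup_{0\le u\le\tau_i}Y_i(u)\le H_{2m+1}<2^{\ell+1}$, each $\tau_i$ is the exit time from $(0,2^{\ell+1})$ of a standard Brownian motion started at $H_{2m}-H_{2m-1}\in(0,2^{\ell+1})$, and the two are conditionally independent because $Z^{2m}_1$ and $Z^{2m}_3$ come from independent increments of the fresh ABM. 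The same reasoning, with at most two confined exit times per step, covers the partial completion of stage $N_0$ after $\ul{\tau}^{2^\ell}$ (the relevant process is run from height $2^\ell$ until it hits $0$ or $2^{\ell+1}$), the stage during which level $2^{\ell+1}$ is reached (by the definition of $\ul{\tau}^{2^{\ell+1}}$ the processes are run only until they hit $0$ or $2^{\ell+1}$), and the stage at which the algorithm terminates (there the relevant maxima do not increase, so the driving processes again stay below $2^{\ell+1}$). Hence, letting $\xi_j\ge0$ be the size increment of the explored rectangle at the $j$-th step after $\ul{\tau}^{2^\ell}$, truncated at $\ul{\tau}^{2^{\ell+1}}\wedge\ul{\tau}_{(N)}$ and set to $0$ once that point is passed, and letting $\mathcal{A}_j$ be generated by $\F_{\ul{\tau}^{2^\ell}}$ together with the first $j$ steps, one gets $E[e^{\theta\xi_j}\mid\mathcal{A}_{j-1}]\le K$ on $\{\ul{\tau}^{2^\ell}<\infty\}$, with $K\ge1$ universal and $\theta=\theta_1\,2^{-2\ell}$, $\theta_1>0$ universal.

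To conclude, put $S=\sum_{j\ge0}\xi_j=\Vert\ul{\tau}^{2^{\ell+1}}\wedge\ul{\tau}_{(N)}-\ul{\tau}^{2^\ell}\Vert$ and let $\kappa$ be the number of nonzero $\xi_j$, so $\kappa\le 1+\nu_\ell$ and, by Lemma~\ref{rdlem9}, $P_1\{\kappa>J\mid\F_{\ul{\tau}^{2^\ell}}\}\le(3/4)^{J-2}$ on $\{\ul{\tau}^{2^\ell}<\infty\}$ for every integer $J$. Iterating $E[e^{\theta\xi_j}\mid\mathcal{A}_{j-1}]\le K$ over $j=0,\dots,J$ gives $E[e^{\theta S}\,1_{\{\kappa\le J\}}\mid\F_{\ul{\tau}^{2^\ell}}]\le K^{J+1}$, so by Markov's inequality, on $\{\ul{\tau}^{2^\ell}<\infty\}$,
$$
   P_1\{S>x\,2^{2\ell}\mid\F_{\ul{\tau}^{2^\ell}}\}\ \le\ e^{-\theta_1 x}\,K^{J+1}+\Big(\tfrac34\Big)^{J-2}.
$$
Choosing $J=\lceil\ep x\rceil$ with $\ep>0$ small enough (depending only on $\theta_1$ and $K$) that $\theta_1-\ep\ln K>0$ makes both terms decay like $e^{-cx}$, uniformly in $\ell$; for $x<1$ the stated bound is trivial after enlarging $C$. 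I expect the main obstacle to be the bookkeeping in ingredient (i): one must check, uniformly over the intermediate stages, the partial stage $N_0$, the stage reaching $2^{\ell+1}$, and the terminating stage, that not having yet passed $\ul{\tau}^{2^{\ell+1}}\wedge\ul{\tau}_{(N)}$ is precisely what confines the driving one-dimensional Brownian motions to an interval of length $\le 2^{\ell+1}$, and to set up the filtration so that the moment-generating-function telescoping is valid despite the random number of summands.
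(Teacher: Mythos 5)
Your proposal is correct and follows essentially the same two-part strategy as the paper: use Lemma~\ref{rdlem9} to give geometric conditional tails on the number of stages between levels $2^{\ell}$ and $2^{\ell+1}$, and combine this with the fact that each stage's contribution to the explored rectangle has exponential moments on the scale $2^{2\ell}$. The only differences are cosmetic: where you dominate each stage's increment by an exit time from a confining interval of length $2^{\ell+1}$ and run an iterated moment-generating-function argument, the paper dominates it by the drawdown time $\kappa=\inf\{u:B_u\le\sup_{v\le u}B_v-2\}$ (after scaling) and then invokes Cram\'er's theorem for i.i.d.\ copies of $\kappa$ on the event $\{\nu_\ell<x/K\}$ — both are correct realizations of the same Chernoff-type computation.
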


\proof Let $B$ be a standard Brownian motion, and set $\kappa = \inf\{u \geq 0:
B_u \leq \sup_{v \leq u} B_v - 2\}$. We shall show below that for all $n \in
\IN_*$, $\ell \in \IN$ and $x >0$,
\begin{equation}\label{rd35}
 P_1\{ ((\ul{\tau}^{2^{\ell+1}}_1 \wedge U'_{n+1}) - 
 (\ul{\tau}^{2^{\ell}}_1 \vee U'_n) )_+ \geq x2^{2\ell} 
    \mid \F_{\ul{\tau}^{2^{\ell}} \vee \ul{\tau}_{(n)}}\} 
    \leq P\{\kappa > x\}.
\end{equation}
on $\{\ul{\tau}^{2^{\ell}} < \infty,\ N > n \}$. Assuming this for the moment, we now prove the lemma. 

   It suffices to show that on $\{\ul{\tau}^{2^{\ell}} < \infty \}$, for 
$i=1,\dots,4$, $P(G(i,\ell,x) \mid \F_{\ul{\tau}^{2^{\ell}}}) \leq C e^{-cx}$, 
where
$$
   G(i,\ell,x) = \{\ul{\tau}^{2^{\ell+1}}_i \wedge (\ul{\tau}_{(N)})_i - 
 \ul{\tau}^{2^{\ell}}_i \geq \frac{x}{4} 2^{2\ell} \}.
$$
By L\'evy's theorem \cite[Chap.VI, Theorem (2.3)]{RY}, $E(\kappa) < \infty$. Fix $K < \infty$ such that $E(\kappa) < K$, and let $\nu_\ell$ be as in Lemma
\ref{rdlem9}. Then $G(i,\ell,x)$ is contained in
$$
 \left\{\nu_\ell \geq \frac{x}{K}\right\} \cup \left\{\nu_\ell < \frac{x}{K},\ 
      \sum_n (\ul{\tau}^{2^{\ell+1}}_i 
    \wedge (\ul{\tau}_{(N)})_i \wedge U'_{n+1} - \ul{\tau}^{2^{\ell}}_i \vee
    U'_n)_+ \geq \frac{x}{4} 2^{2\ell}\right\}.
$$
By Lemma \ref{rdlem9}, the probability of the first event is $\leq (3/4)^{-2+x/K}$
for all $\ell$, and by (\ref{rd35}), the probability of the second event is bounded
above by
$$
 P\left\{\sum_{i=1}^{x/K} \kappa_i \geq x\right\} 
    = P\left\{\frac{1}{x/K}\sum_{i=1}^{x/K} (\kappa_i -
 E(\kappa)) \geq K (1-\frac{E(\kappa)}{K}) \right\},
$$
where the $\kappa_i$ are i.i.d.~copies of $\kappa$. Because $1-E(\kappa)/K >0$ and $\kappa$ has some positive exponential moments,
Cramer's theorem \cite[Section 2.2]{DZ} applied to $\kappa$ yields the desired exponential bound.

   We now prove (\ref{rd35}). The event on the left-hand side occurs only if
$\ul{\tau}^{2^\ell}_1 \vee U_n' < \ul{\tau}^{2^{\ell +1}}_1 \wedge U_{n+1}'$,
and in this case, setting $Z(u) = x_0 + X(\ul{\tau}^{2^\ell}_1 \vee U_n' +u, T^n_2)$, 
this is the event ``it takes $Z$ at
least $x2^\ell$ units of time to hit $\{0,2^{\ell+1}\}$." Since $Z$ starts at a
value in $]0,2^{\ell+1}[$, this amount of time is bounded above by $\kappa(\ell)
= \inf\{u \geq 0: Z(u) \leq \sup_{v \leq u} Z(v) - 2^{\ell+1} \}$. Since $Z(u) -
Z(0)$ is independent of $\F_{\ul{\tau}^{2^\ell} \vee \ul{\tau}_{(n)}}$, $\kappa(\ell)$ is too, and $\kappa(\ell)$ and
$2^{2\ell}\kappa$ are identically distributed. This proves (\ref{rd35}) and
completes the proof of the lemma.
\hfill $\Box$
\vskip 16pt

\noindent{\em Proof of Theorem \ref{thm3} (upper bound).} Since the upper bound
is a fixed power of $a$, it suffices to prove it for $a$ of the form $a = 2^{2m}$
for all large integers $m$.

   For $j \in \IN$, set $R^{(j)} = [-2^j,2^j]^2$, and let $F = \{R_{N}
\not\subset R^{(2m)}\}$, which is the event ``the DW-algorithm started at
$(0,0)$ with value $x_0$ escapes the rectangle $[-2^{2m},2^{2m}]^2$." Let
$$
 F_{m,0} = \{R_{N} \not\subset R^{(2m)},\ \cR(\ul{\tau}^{2^m}) \subset R^{(2m)}\},
$$
for $j = 1,\dots,m-1$, let
$$
 F_{m,j} = \{R_{N} \not\subset R^{(2m-j+1)},\ \cR(\ul{\tau}^{2^{m-j+1}} \wedge
 \ul{\tau}_{(N)}) \not\subset R^{(2m-j+1)},\  \cR(\ul{\tau}^{2^{m-j}}) 
    \subset R^{(2m-j)}\},
$$
and for $j = 0,\dots,m$, let
$$
 G_{m,j} = \{R_{N} \not\subset R^{(2m-j)},\ \cR(\ul{\tau}^{2^{m-j}} \wedge
 \ul{\tau}_{(N)}) \not\subset R^{(2m-j)} \}.
$$
Clearly, $F \subset F_{m,0} \cup G_{m,0}$, and for $j =0,\dots,m-1$, one easily
checks that $G_{m,j} \subset F_{m,j+1}\cup G_{m,j+1}$, and therefore,
$$
   F \subset F_{m,0} \cup F_{m,1} \cup \cdots \cup F_{m,m} \cup G_{m,m}.
$$
Note that $F_{m,0} \subset \{\ul{\tau}^{2^m} < \infty\}$, $G_{m,m} \subset
\{\Vert \ul{\tau}^1 \wedge \ul{\tau}_{(N)} \Vert \geq 2^m\}$, and for $j =
1,\dots,m$,
$$
   F_{m,j} \subset \{\ul{\tau}^{2^{m-j}} < \infty,\ \Vert \ul{\tau}^{2^{m-j+1}}
       \wedge \ul{\tau}_{(N)} - \ul{\tau}^{2^{m-j}} \Vert \geq 2^{2m-j}\}.
$$
Therefore, by Theorem \ref{prop1}, Lemma \ref{rdlem10} and the Markov property
at $\ultau^{2^{m-j}}$,
\begin{equation}\label{rd35a}
   P_1(F) \leq (2^m)^{-\lambda_1} + \sum_{j=1}^m (2^{m-j})^{-\lambda_1}
         C \exp(-c 2^j) + C \exp(-c2^m).
\end{equation}
Clearly, there is $K < \infty$, not depending on $m$, such that the right-hand
side is $\leq K (2^m)^{-\lambda_1}$. This completes the proof of Theorem
\ref{thm3}.
\hfill $\Box$
\vskip 16pt

   The next result is the key ingredient to our proof of the upper bound on the Hausdorff dimension of boundaries of bubbles.

\begin{prop} Fix $q \in \IR$. For $t \in [2,3]^2$ and $\ep >0$, set $F(t,\ep) = F_1(t,\ep) \cap F_2(t,\ep)$, where $F_1(t,\ep) = \{ |\ti X(t) - q| \leq 2\ep\}$ and $F_2(t,\ep) =
\{\cR_t(\ultau^{2\ep,t}_{(N)}) \not\subset \cR_t(\half)\} \cup \{\ultau^{2\ep,t,1} \leq
\ulsigma^{2\ep,t,\cR_t(\half)}\}$ (this is the event ``the DW-algorithm started
at $t$ with value $2\ep$ escapes $\cR_t(\half)$ or reaches level $1$ within 
this rectangle"). There is $C < \infty$ such that for all $t \in [2,3]^2$ and all
sufficiently small $\ep >0$, 
$$
   P(F(t,\ep)) \leq C\, \ep^{1+\lambda_1}.
$$
\label{univariateup}
\end{prop}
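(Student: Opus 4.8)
The plan is to decompose the event $F(t,\ep)$ into two pieces — the event $F_1(t,\ep)$ that $\ti X(t)$ is close to $q$, and the event $F_2(t,\ep)$ that, conditionally on this, the DW-algorithm started at $t$ with value $2\ep$ behaves like a "successful escape." The first factor will contribute a power $\ep^1$, coming from the fact that the value of a Brownian motion increment lands in an interval of length $4\ep$, and the second factor will contribute the power $\ep^{\lambda_1}$, coming from Theorem \ref{thm3} and Proposition \ref{prop2}(c). The key point making this work is measurability: by the remarks in Section \ref{sec3}, the event $F_2(t,\ep)$ depends only on the increments of $\ti X$ inside a ball of fixed radius around $t$ (more precisely, on the increment process $X^t$ restricted to $\cR_t(\half)$, together with the behavior of the DW-algorithm up to the first time it escapes $\cR_t(\half)$ or reaches level $1$), whereas $F_1(t,\ep) = \{|\ti X(t)-q|\le 2\ep\}$ is, modulo a translation, an event about the value $\ti X(t)$ alone.

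First I would make this independence precise. Write $\ti X(t) = \ti Z_1(t_1) - \ti Z_2(t_2)$; then $F_1(t,\ep)$ is measurable with respect to $\sigma(\ti Z_1(t_1), \ti Z_2(t_2))$, and $P(F_1(t,\ep)) \le C\ep$ uniformly for $t \in [2,3]^2$, since $\ti X(t)$ is Gaussian with variance $t_1+t_2 \in [4,6]$ and thus has a bounded density. On the other hand, the increment process $X^t(s) = \ti X(s) - \ti X(t)$ is independent of $(\ti Z_1(t_1), \ti Z_2(t_2))$ — this is the standard fact that Brownian increments after (and symmetrically, before, using the four-Brownian-motion decomposition from Section \ref{sec2}) a fixed time are independent of the position at that time. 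The DW-algorithm started at $t$ with value $2\ep$ is defined entirely in terms of $X^t$, so $F_2(t,\ep) \in \sigma(X^t(s): s \in \cR_t(\half))$ (one checks that whenever the algorithm escapes $\cR_t(\half)$ or reaches level $1$ within it, only increments inside $\cR_t(\half)$ are consulted, since by definition the relevant stopping point is $\le \ulsigma^{2\ep,t,\cR_t(\half)}$). Hence $F_1(t,\ep)$ and $F_2(t,\ep)$ are independent, and $P(F(t,\ep)) = P(F_1(t,\ep))\, P(F_2(t,\ep))$.

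Next I would bound $P(F_2(t,\ep))$. By translation invariance of additive Brownian motion (the increment process $X^t$ has the law of a standard ABM based at the origin), $P(F_2(t,\ep))$ equals the probability that the DW-algorithm started at $(0,0)$ with value $2\ep$ either escapes $\cR_0(\half) = [-\half,\half]^2$ or reaches level $1$ within that rectangle. Both of these force the algorithm's explored rectangle $\cR(\ultau_{(N)})$ to be ``large'' in the sense of Theorem \ref{thm3}: if the algorithm reaches level $1$ before escaping $[-\half,\half]^2$, then since it started at value $2\ep$ and $\ti X(0,0)$ for the increment process is $0$, Proposition \ref{noSTOP} and the Markov chain analysis give that reaching a high level is of the same order as escaping a large square; more directly, by Proposition \ref{prop2}(b)–(c) and the scaling used in the proof of Theorem \ref{thm3}, the probability of reaching level $1$ starting from $2\ep$ is $P_{2\ep}\{\ultau^1 < \infty\} = \Theta((2\ep)^{\lambda_1})$, and the probability of escaping $[-\half,\half]^2$ starting from value $2\ep$ is, by Theorem \ref{thm3} with $x_0 = 2\ep$ and $a = \half$ (legitimate once $\ep$ is small enough that $\half \ge (2\ep)^2$), at most $C((2\ep)/\sqrt{1/2})^{\lambda_1} = C' \ep^{\lambda_1}$. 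Combining, $P(F_2(t,\ep)) \le C\, \ep^{\lambda_1}$ uniformly in $t \in [2,3]^2$ for all small $\ep$.

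Multiplying the two bounds gives $P(F(t,\ep)) \le C\, \ep^{1+\lambda_1}$, as claimed. The main obstacle I anticipate is not any hard estimate but the careful bookkeeping of the measurability claim: one must verify that on $F_2(t,\ep)$ the DW-algorithm genuinely only inspects increments of $X^t$ within $\cR_t(\half)$ (this is exactly the purpose of the definition of $\ulsigma^{2\ep,t,\cR_t(\half)}$ and the stopping-point property of $\ultau^{2\ep,t,1}$ and $\ulsigma^{2\ep,t,R}$ established in Section \ref{sec3}), and that the ``escape-or-reach-level-$1$'' event is indeed of order $\ep^{\lambda_1}$ — here one should be slightly careful that, a priori, reaching level $1$ within the small rectangle could be cheaper than the unrestricted escape probability, but since both are bounded above by $P_{2\ep}\{\ultau^1 < \infty\} + P_{2\ep}\{\cR(\ultau_{(N)}) \not\subset [-\half,\half]^2\}$, the union bound and Theorems \ref{prop1} and \ref{thm3} close the argument.
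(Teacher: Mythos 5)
Your claimed independence of $F_1(t,\ep)$ and $F_2(t,\ep)$ is false, and this is a genuine gap. You assert that the increment process $X^t(s) = \ti X(s) - \ti X(t)$, $s \in \cR_t(\half)$, is independent of $(\ti Z_1(t_1), \ti Z_2(t_2))$, invoking the ``four-Brownian-motion decomposition'' and the heuristic that increments ``after (and symmetrically, before) a fixed time are independent of the position at that time.'' That decomposition splits the two-sided Brownian motions at their base point $0$, not at an interior time $t_1 \in [2,3]$. For $s_1 < t_1$ (take $s_1 = t_1 - \half$) one has $E\bigl[(\ti Z_1(s_1) - \ti Z_1(t_1))\, \ti Z_1(t_1)\bigr] = s_1 - t_1 = -\half \ne 0$, so the ``leftward'' increment is negatively correlated with $\ti Z_1(t_1)$, not independent of it. Hence $\sigma(X^t(s): s \in \cR_t(\half))$ is not independent of $\sigma(\ti Z_1(t_1), \ti Z_2(t_2))$, and the factorization $P(F) = P(F_1)P(F_2)$ does not hold.

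The paper repairs exactly this point. Instead of independence of $F_1$ and $F_2$, it introduces $\G = \F^{(5/2,5/2)}_{(1,1,1,1)}$, the $\sigma$-field of increments of $\ti X$ relative to $(5/2,5/2)$ over the box $[3/2,7/2]^2$. Since $\cR_t(\half) \subset [3/2,7/2]^2$ for $t \in [2,3]^2$, one has $F_2(t,\ep) \in \F^t_{(1/2,1/2,1/2,1/2)} \subset \G$. One then writes $\ti X(t) = \ti X(3/2,3/2) + Y$ with $Y$ $\G$-measurable and $\ti X(3/2,3/2) \sim N(0,3)$ independent of $\G$; the reason for independence is precisely that $(3/2,3/2)$ is the lower-left \emph{corner} of $[3/2,7/2]^2$, so every increment generating $\G$ lies to the future of time $3/2$ in each coordinate. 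This yields
$$
P(F_1\cap F_2) \le E\bigl(1_{F_2}\,P\{|Y + \ti X(\tfrac32,\tfrac32) - q| \le 2\ep \mid \G\}\bigr) \le P(F_2)\,\sup_{y\in\IR} P\{|y + \ti X(\tfrac32,\tfrac32)| \le 2\ep\},
$$
which together with your (correct) bound $P(F_2) \le C\ep^{\lambda_1}$ from Theorems \ref{prop1} and \ref{thm3} gives the result. Your estimate of $P(F_2)$ and your understanding of what makes it $\F^t_{(1/2,1/2,1/2,1/2)}$-measurable are both fine; the only missing idea is to replace the false independence by this conditional decomposition at a corner point outside $[2,3]^2$.
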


\proof Let $\G = \F^{(5/2,5/2)}_{(1,1,1,1)}$. Note that 
$F_2(t,\ep) \in \F^t_{(\half,\half,\half,\half)} \subset \G$, and $\ti X(t) = \ti X(\frac{3}{2},\frac{3}{2}) +Y$, 
where $Y = \ti X(t) - \ti X(\frac{5}{2},\frac{5}{2}) - (\ti X(\frac{3}{2},\frac{3}{2}) - \ti X(\frac{5}{2},\frac{5}{2}))$ is $\G$-measurable. Since $\ti X(\frac{3}{2},\frac{3}{2})$ 
is independent of $\G$,
\begin{eqnarray*}
   P(F_1(t,\ep) \cap F_2(t,\ep)) &\leq& 
      E(1_{F_2(t,\ep)} P\{|Y + \ti X(\frac{3}{2},\frac{3}{2}) - q| \leq 2\ep \mid \G\}) \\
  &\leq& P(F_2(t,\ep)) \sup_{y \in \IR} P\{|y + \ti X(\frac{3}{2},\frac{3}{2})| \leq 2\ep\}.
\end{eqnarray*}
Using the fact that $\ti X(\frac{3}{2},\frac{3}{2})$ is $N(0,3)$, we conclude from Theorems \ref{prop1} and \ref{thm3}  that the last right-hand side is $\leq C\ep^{\lambda_1}\ep = C \ep^{1+\lambda_1}$, where $C$ does not depend on
$t$ or $\ep$.
\hfill $\Box$
\vskip 16pt

   We conclude this section with a property that will be needed in Section \ref{sec8}.

\begin{prop} There exist $K < \infty$ and $c > 0$ such that for $r=(0,0)$, $x \geq 1$ and $y \geq 1,$ 
\begin{equation}\label{bs_ubeq4}
   P_1\{{\cal{R}}(\underline{\tau}^{1,r,y} \wedge \underline{\tau}^{1,r}_{(N)}) \not\subset [-x^2, x^2]^2\} \leq K \ x^{-\lambda_1} e^{-cx/y} 
\end{equation}
(note that the event on the left-hand side is ``the DW-algorithm started at $r$ with value $1$ escapes $[r_1 -x^2, r_1 + x^2] \times [r_2 -x^2, r_2 + x^2]$ before reaching level $y$'').
\label{bs_ubprop6}
\end{prop}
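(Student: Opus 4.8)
The plan is to mimic the proof of the upper bound in Theorem~\ref{thm3}, but to carry along, at each dyadic scale, a factor coming from the constraint that level $y$ has not yet been reached. The point is that the right-hand side of \eqref{bs_ubeq4} is a product of the ``geometric'' escape factor $x^{-\lambda_1}$ (which is what Theorem~\ref{thm3} gives when we ignore the level-$y$ constraint) and an extra exponential penalty $e^{-cx/y}$, which must account for the fact that, if $y \geq x$, the constraint is essentially vacuous (and $e^{-cx/y}\geq e^{-c}$ is harmless), whereas if $y\ll x$, reaching distance $x^2$ from the origin without the sequence $(H_n)$ ever exceeding $y$ is exponentially unlikely in $x/y$.

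First I would reduce, by the scaling property of Brownian motion, to the case where $x^2 = 2^{2m}$ and $y = 2^k$ with $k \leq m$ integers (the general case follows since both sides are, up to constants, controlled by the nearest dyadic values; the event ``escape $[-x^2,x^2]^2$ before reaching level $y$'' is monotone in $x$ and in $y$). On this dyadic grid I would decompose the escape event exactly as in the proof of Theorem~\ref{thm3}: writing $R^{(j)} = [-2^j, 2^j]^2$, the event that the DW-algorithm escapes $R^{(2m)}$ \emph{before reaching level $2^k$} is contained in a union of scale-by-scale pieces $F_{m,j}$, $G_{m,j}$, for $j = 0,\dots,m$, where now each piece additionally carries the event $\{\ultau^{2^{m-j}} \leq \ultau^{2^k}\}$, i.e. that level $2^{m-j}$ is reached before level $2^k$. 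For the \emph{top} scales $j$ with $m - j > k$ (equivalently $2^{m-j} > y$), the piece is automatically empty, since reaching level $2^{m-j}$ before level $2^k$ is impossible when $2^{m-j} > 2^k$. So only the scales with $m - j \leq k$ contribute: there the geometric cost of having reached level $2^{m-j}$ starting from $1$ is, by Theorem~\ref{prop1} applied to the reciprocal, of order $(2^{m-j})^{-\lambda_1}$, and the spatial cost of then moving a further $2^{2m-j}$ units (from the rectangle $\cR(\ultau^{2^{m-j}})$, which has side-length of order $2^{2(m-j)} \leq 2^{2k}$) is, by Lemma~\ref{rdlem10} and the strong Markov property at $\ultau^{2^{m-j}}$, of order $C\exp(-c\,2^{2m-j}/2^{2(m-j)}) = C\exp(-c\,2^{j})$. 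Summing the product $(2^{m-j})^{-\lambda_1} C e^{-c2^j}$ over $j$ with $j \geq m - k$ gives a bound dominated by its first term, namely $C\,(2^k)^{-\lambda_1} e^{-c\,2^{m-k}}$, which — since $2^m = x$ and $2^k = y$, so $2^{m-k} = x/y$ — is exactly of the form $K\,x^{-\lambda_1}$ times $(x/y)^{\lambda_1} e^{-c x/y}$; absorbing the polynomial factor $(x/y)^{\lambda_1}$ into the exponential (at the cost of shrinking $c$) yields $K\, x^{-\lambda_1} e^{-cx/y}$, as desired. One also has to check that the ``boundary'' pieces $F_{m,j}$ with $m - j = k$ and the leftover $G_{m, m}$-type term are of the same order or smaller, which is immediate since there $2^j \geq 2^{m-k} = x/y$ and Lemma~\ref{rdlem10} again gives $C\exp(-c\, x/y)$ with the right prefactor.

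The main obstacle is bookkeeping rather than any new idea: one must set up the scale decomposition so that the level-$y$ constraint is correctly inherited by each piece $F_{m,j}$, $G_{m,j}$ (in particular one must insert the event $\{\ultau^{2^{m-j}}\le\ultau^{2^k}\}$ at every stage and verify the inclusion $G_{m,j}\subset F_{m,j+1}\cup G_{m,j+1}$ still holds with this extra constraint present), and one must be careful that the estimates of Lemma~\ref{rdlem10} — which are stated for the increments $\Vert\ultau^{2^{\ell+1}}\wedge\ultau_{(N)} - \ultau^{2^\ell}\Vert$ — are applied at scales $2^\ell = 2^{m-j}$ with the conditioning $\F_{\ultau^{2^{m-j}}}$, on the event $\{\ultau^{2^{m-j}} < \infty\}\cap\{\ultau^{2^{m-j}}\le\ultau^{2^k}\}$, which is $\F_{\ultau^{2^{m-j}}}$-measurable. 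Once the decomposition is correctly in place, the summation is the routine geometric-series argument already carried out in \eqref{rd35a}, with the single modification that the sum now starts at $j = m - k$ instead of $j = 1$, which is precisely what produces the exponential factor $e^{-cx/y}$.
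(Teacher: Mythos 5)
Your argument is correct and does give the stated bound, but it takes a genuinely different route from the paper's proof. You re-use the dyadic scale-by-scale decomposition from the proof of the upper bound in Theorem~\ref{thm3} (the events $F_{m,j}$, $G_{m,j}$ together with Theorem~\ref{prop1} and Lemma~\ref{rdlem10}), intersect each piece with the level-$y$ constraint, and observe that the constraint annihilates every piece whose geometric factor is larger than $y^{-\lambda_1}$, so that the surviving sum is dominated by $y^{-\lambda_1}\exp(-c\,2^{m-k})$; rewriting $y^{-\lambda_1}=x^{-\lambda_1}(x/y)^{\lambda_1}$ and absorbing the polynomial into the exponential yields the claim. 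The paper instead rescales by the factor $x/y$ (so the algorithm starts at level $x/y$ and must escape $[-(kx)^2,(kx)^2]^2$ before reaching level $x$, with $k=[x/y]$), then runs a \emph{multiplicative} annular argument: each passage through a fresh annulus of width $\geq x^2$ without reaching level $x$ costs a fixed factor $c_0<1$ (by Brownian scaling this $c_0$ does not depend on $x$), so a straightforward iteration gives $c_0^{k-2}$, and a single invocation of Theorem~\ref{thm3} supplies the remaining factor. The upshot is the same estimate, but the machinery differs: your proof leans a second time on the exponential increment bound of Lemma~\ref{rdlem10} and inherits all its dyadic bookkeeping (monotone interpolation off the dyadic grid, the boundary piece at $j=m-k$, the leftover $G_{m,m}$-type term), whereas the paper's rescaled annular decomposition replaces Lemma~\ref{rdlem10} by the elementary observation that a single annular escape has probability bounded away from $1$, and the geometric factor then comes for free. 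Your approach is sound; the paper's is somewhat cleaner in that the cases $y\geq x$ and $y<x$ separate at the outset and no off-grid interpolation is needed.
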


\proof If $y \geq x \geq 1,$ then by Theorem \ref{thm3},
\begin{eqnarray*}
   P_1\{{\cal{R}}(\underline{\tau}^{1,r,y} \wedge \underline{\tau}^{1,r}_{(N)}) 
       \not\subset [-x^2,x^2]\} 
   &\leq& P_1\{{\cal{R}}(\underline{\tau}^{1,r}_{(N)}) \not\subset [-x^2, x^2]\}\\
   &\leq& K \ x^{-\lambda_1}\\
   &\leq& K \ e^c x^{-\lambda_1} e^{-cx/y},
\end{eqnarray*}
so it suffices to consider the case $x > y \geq 1.$ In this case, by the scaling property of Brownian motion, the probability on the left-hand side of $(\ref{bs_ubeq4})$ is equal to
\begin{eqnarray*}
  \lefteqn{
   P_{x/y} \{ {\cal{R}}(\underline{\tau}^{x/y,r,x} \wedge \underline{\tau}^{x/y,r}_{(N)})
      \not\subset [-(x/y)^2 x^2, (x/y)^2 x^2]^2\}
      }\\
   &\leq& P_{x/y} \{{\cal{R}}(\underline{\tau}^{x/y,r,x} \wedge
    \underline{\tau}^{x/y,r}_{(N)} ) \not\subset [-(kx)^2, (kx)^2]^2\},
\end{eqnarray*}
where $k=[x/y] \geq 1$. For $\ell = 1, \ldots, k,$ set $\underline{\sigma}_\ell = \underline{\sigma}^{x/y,r, {\cal{R}}_r((\ell x)^2)}$. On $\{\underline{\tau}^{x/y,r,x} \wedge \underline{\tau}^{x/y,r}_{(N)} \geq \underline{\sigma}_{k-1}\},$
$$
   P_{x/y} \{{\cal{R}}(\underline{\tau}^{x/y,r,x} \wedge \underline{\tau}_{(N)}^{x/y,r})
    \not\subset {\cal{R}}((kx)^2) \mid {\cal{F}}_{\underline{\sigma}_{k-1}}\}
$$
is bounded above by the probability that the DW-algorithm started at $r$ with some value in $[0,x]$ escapes ${\cal{R}}((kx)^2) \setminus {\cal{R}}((k-1)^2x^2)$ before hitting level $x$. This is bounded above by
$$
   c_0 = \sup_{z \in [0,x]} P_z\{\cR(\ul{\tau}^{z,r,x}) \not\subset [-x^2,x^2]^2 \}.
$$
The probability on the right-hand side does not depend on $x$, so we put $x=1$ here, and it is a continuous function of $z$, which attains its maximum at some $z_0 \in [0,1]$. Therefore,
$$
   c_0 = P_{z_0}\{\cR(\ul{\tau}^{z_0,r,1}) \not\subset [-1,1]^2 \} < 1.
$$
Repeating this argument for $\ell = k-2, \ldots, 2,$ we see that the left-hand side of $(\ref{bs_ubeq4})$ is bounded above by
$$
   P_{x/y} \{{\cal{R}}(\underline{\tau}^{x/y,r,x} \wedge \underline{\tau}^{x/y,r}_{(N)})
    \not\subset [-x^2, x^2]^2\} \cdot c_0^{k-2}  
   \leq P_{x/y}\{{\cal{R}}(\underline{\tau}^{x/y,r}_{(N)}) \not\subset [-x^2,x^2]\} c_0^{k-2}.
$$
By Theorem \ref{thm3}, this is
$$
   \leq C \left(\frac{x/y}{x}\right)^{\lambda_1} c_0^{k-2} 
     \leq C \ x^{-\lambda_1} (x/y)^{\lambda_1} e^{-\tilde{c}[x/y]},
$$
for an appropriate of $\tilde{c} > 0.$ By replacing $\tilde{c}$ by a smaller constant $c > 0$ and increasing the constant $C$, this is bounded above by $K x^{-\lambda_1} e^{-c x/y}$ for some universal constant $K$.
\hfill $\Box$
\vskip 16pt

\end{section}
\eject

\begin{section}{ABM: Upper bound on the Hausdorff dimension}\label{sec4}
In this section, we shall use Proposition \ref{univariateup} to derive part of Theorem \ref{thm2} (the upper 
bound). 

\begin{prop} Fix $q \in \IR$. A.s., the Hausdorff dimension of the boundary of any $q$-bubble of the standard ABM $\ti X$ is $\leq (3 - \lambda_1)/2$.
\label{thm4}
\end{prop}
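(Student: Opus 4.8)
The plan is to prove this upper bound via a covering argument, using Proposition \ref{univariateup} to estimate the expected number of small dyadic squares that can meet the boundary of a $q$-bubble. The key observation is that if $t$ is a point on the boundary of a $q$-bubble, then $\ti X(t) = q$ (by continuity of $\ti X$), and the bubble itself, being a connected component of $\{\ti X \ne q\}$, must extend some fixed distance away from $t$ — otherwise it would be a bounded open set whose closure still lies in the domain and the boundary would be a level curve enclosing it, but more to the point, the component extends to ``macroscopic'' size (say, at least $\frac{1}{2}$ in diameter after restricting attention to a fixed bounded window such as $[2,3]^2$). So near a boundary point $t$, at scale $\ep$, both of the following happen: $\ti X$ is within $2\ep$ of $q$ somewhere in the $\ep$-square around $t$, \emph{and} the DW-algorithm started at (a well-chosen point near) $t$ with value $\sim 2\ep$ either escapes a fixed-size rectangle or reaches level $1$ inside it — precisely the event $F(t,\ep)$ of Proposition \ref{univariateup}.

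First I would fix a bounded window, say $[2,3]^2$ (by stationarity and countable covering of $\IR^2$ it suffices to bound the dimension of the boundary intersected with such a window), fix a level $q$, and for each $n$ partition $[2,3]^2$ into $\cO(2^{2n})$ dyadic squares $Q$ of side $2^{-n}$. For each such $Q$, I would show that the event ``$\partial(\text{some }q\text{-bubble})$ meets $Q$'' is contained in an event of the form $F(t_Q, \ep_n)$ for a corner (or center) $t_Q$ of $Q$ and $\ep_n \asymp 2^{-n}$ — this is the geometric heart of the argument, and it requires: (i) continuity of $\ti X$ to get $\ti X$ close to $q$ near $Q$; (ii) the fact that a $q$-bubble intersecting $Q$ that is not ``macroscopic'' would have to be entirely contained in a small rectangle, contradicting either its connectedness to far-away points or, in the relevant formulation, forcing the DW-algorithm for $2\ep + (\ti X - \ti X(t_Q))$ to either reach level $1$ or escape the rectangle $\cR_{t_Q}(\half)$; and (iii) a slight shift of base point and value ($x_0 = 2\ep_n$ rather than exactly $|\ti X(t_Q)-q|$) to make the DW-algorithm applicable with a deterministic starting value, absorbing the error from the continuity estimate. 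Then Proposition \ref{univariateup} gives $P(F(t_Q,\ep_n)) \leq C\, \ep_n^{1+\lambda_1} \leq C'\, 2^{-n(1+\lambda_1)}$.

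Next I would run the standard Borel--Cantelli / expected-covering-number estimate: the expected number of squares $Q$ of side $2^{-n}$ that meet the boundary of some $q$-bubble is at most $\cO(2^{2n}) \cdot C'\, 2^{-n(1+\lambda_1)} = \cO(2^{n(1 - \lambda_1)})$. For any $s > (3-\lambda_1)/2$, the $s$-dimensional Hausdorff pre-measure at scale $2^{-n}$ of the boundary, restricted to squares that meet it, has expectation at most $\cO(2^{n(1-\lambda_1)}) \cdot (2^{-n}\sqrt 2)^s$; writing $s = (3-\lambda_1)/2 + \eta$ with $\eta > 0$, the exponent is $n(1-\lambda_1) - ns = -n(\frac{1-\lambda_1}{2} + \eta) < 0$, so these expectations are summable in $n$, hence by Fatou (or Borel--Cantelli applied to a geometric subsequence and monotonicity) the $s$-Hausdorff measure of the union of boundaries of all $q$-bubbles in the window is a.s.\ finite, in fact zero. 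Letting $\eta \downarrow 0$ along a countable sequence and covering $\IR^2$ by countably many windows gives that a.s.\ every $q$-bubble has boundary of Hausdorff dimension $\leq (3-\lambda_1)/2$.

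The main obstacle is step (i)--(iii) of the second paragraph: converting the purely topological statement ``$Q$ meets $\partial(\text{a }q\text{-bubble})$'' into the DW-algorithmic event $F(t_Q,\ep_n)$ with a \emph{deterministic} initial value, uniformly over $Q$. The subtlety is that the DW-algorithm is defined with a fixed starting value $x_0$, whereas here the natural value $|\ti X(t_Q) - q|$ is random and could be much smaller than $2^{-n}$; one handles this by noting that the relevant escape/reach-level-$1$ event is monotone in a suitable sense under replacing the true increment process by $2\ep_n + (\ti X - \ti X(t_Q))$, and that if $Q$ meets a boundary point at all then within the neighboring square $\ti X$ oscillates enough to realize the hypotheses — but making this monotonicity and the choice of $\ep_n \asymp 2^{-n}$ precise, so that $F_2(t_Q,\ep_n)$ genuinely occurs whenever a $q$-bubble of diameter $\geq$ some fixed constant meets $Q$, is where the care is needed. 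Once that reduction is in place, the rest is the routine covering estimate above.
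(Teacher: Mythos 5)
Your overall plan---a covering argument on a fixed window driven by Proposition~\ref{univariateup}---is the same as the paper's, but there is a concrete scaling error that would invalidate the exponent count if carried through literally.

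You pair squares $Q$ of side $2^{-n}$ with $\ep_n\asymp 2^{-n}$, and claim each square's hit probability is $\lesssim 2^{-n(1+\lambda_1)}$. For the inclusion ``$Q$ meets the boundary of a macroscopic bubble $\Rightarrow F(t_Q,\ep_n)$, up to a negligible event'' to work, the negligible event is ``$\ti X$ oscillates by more than $2\ep_n$ over $Q$,'' and you need that event to have small probability. But by Brownian scaling the oscillation of $\ti X$ over a square of side $\delta$ is of order $\sqrt{\delta}$. With side $2^{-n}$ and $\ep_n\asymp 2^{-n}$ the typical oscillation is $\sim 2^{-n/2} \gg \ep_n$, so the probability of the ``negligible'' event tends to $1$, not $0$: the inclusion fails. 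A quick sanity check exposes the problem: taking your claimed hit probability at face value, the expected pre-measure is $\cO\left(2^{2n}\cdot 2^{-n(1+\lambda_1)}\cdot 2^{-ns}\right) = \cO\left(2^{n(1-\lambda_1-s)}\right)$, which vanishes for any $s > 1-\lambda_1 \approx 0.84$. That would prove $\dim \le 1-\lambda_1$, contradicting the matching lower bound $(3-\lambda_1)/2 \approx 1.42$ of Theorem~\ref{thm7.3}.

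The fix is to pair scales correctly: either squares of side $2^{-n}$ with $\ep_n$ of order (poly in $n$) $\cdot\, 2^{-n/2}$, giving hit probability $\sim 2^{-n(1+\lambda_1)/2}$ and pre-measure exponent $n\left(2-\tfrac{1+\lambda_1}{2}-s\right)=n\left(\tfrac{3-\lambda_1}{2}-s\right)$; or (as the paper does, see \eqref{rdsquares}) squares of side $2^{-2n}$ with $\ep = n2^{-n}$, giving $2^{4n}$ squares, hit probability $\lesssim (n2^{-n})^{1+\lambda_1}$, and pre-measure exponent $n(3-\lambda_1-2\alpha)$. Either way the critical exponent is exactly $(3-\lambda_1)/2$, and the polynomial-in-$n$ factors are harmless. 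Also note (Remark~\ref{rem4.2}) the geometric subtlety you wave at but don't resolve: a boundary point $t$ need not be the extremity of any curve along which $\ti X > q$; one only has $t_k\to t$ with curves $\Gamma_k$ approaching $t$. The paper accommodates this by only requiring $\ti X > q - n2^{-n}$ along the relevant path, which still triggers the escape event $B_n(i,j)$ via Proposition~\ref{prop2}(c). Your draft would need a similar relaxation when verifying that $F_2(t_Q,\ep_n)$ occurs.
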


\proof By elementary scaling considerations, it will suffice to show 
that $\dim(H_1 \cap [1, \infty )^2 ) \leq (3 - \lambda_1)/2$, where $H_v$ is the set of points which are in the boundary of some upwards $q$-bubble of diameter at least $v$. 

\begin{remark} For $t \in H_v$, there is not necessarily a curve $\Gamma$ with one extremity at $t$ and the other about one unit away from $t$ with $\ti X(s) > q$ for $s \in \Gamma \setminus \{t\}$ (the reader can easily construct simple planar domains with this property). However, $t \in H_v$ if and only if, for all $k\geq 1$, there is $t_k$ such that $\vert t_k - t \vert < 1/k$ and a curve $\Gamma_k$ with one extremity at $t_k$ and the other $1-2/k$ units away from $t$, with $\ti X(s) > q$ for all $s \in \Gamma$.
\label{rem4.2}
\end{remark}

   For $r \in \ZZ^2$, set $D_r = [r_1, r_1+1] \times [r_2, r_2 + 1]$.  Since the Hausdorff dimension of $H_1 \cap [1, \infty )^2$ is equal
to $\sup_{r \geq (1,1)} \dim(H_1 \cap D_r)$, it will be sufficient to show that for all $r \geq (1,1)$, $\dim(H_1 \cap D_{r}) \leq (3- \lambda_1)/2$.  We treat the case $r = (1,1) $ explicitly and leave it to the reader to check that the argument carries over to all other $r$.

  It will suffice to show that for any $\alpha > (3- \lambda_1)/2$,
$\dim(H_1 \cap D_{(1,1)}) \leq \alpha$ a.s.  Fix such an $\alpha$.

   The Hausdorff dimension of a set $E$ is bounded above by $\alpha$ if, for 
every $\delta > 0$, there is a covering $\{I_i,\ i \geq 1\}$ of $E$ by
squares $I_i$ such that $|I_i| \leq \delta$
and $\sum_i |I_i|^\alpha < \delta $, where $|I|$ denotes the diameter of set 
$I$ (see e.g. \cite{F}).  Thus, by Fatou's lemma, it suffices to find a sequence of
random coverings $\{I^n_i,\ i \geq 1\}$ of $H_1 \cap D_{(1,1)}$ such that a.s.,

\indent (a) $\sup _{i} |I^n_i| \ \rightarrow \ 0 $ a.s. as $n \to \infty$,

\indent (b) $E \left( \sum _i |I^n_i|^ \alpha  \right) \ \rightarrow \ 0$ as 
$n \to \infty$.

\noindent For this, we divide the square $D_{(1,1)}=[1,2]^2$ into the union of the squares
\begin{equation}\label{rdsquares}
   D^n_{i,j} = [1+ i2^{-2n}, 1 + (i+1)2^{-2n}] \times 
    [1+ j2^{-2n}, 1 + (j+1)2^{-2n}],
\end{equation}
with $0 \leq i,j\leq 2^{2n}-1$. We will simply take as 
random covering the collection of $D^n_{i,j}$ which intersect the set
$H_1$. Note that for this collection of random coverings,
condition (a) above is automatically satisfied, 
and 
\begin{eqnarray*} 
  E \left( \sum_{i,j} |D^n_{i,j}|^\alpha 1_{\{D^n_{i,j} \cap H_1 \ne \emptyset\}} \right) &\leq&
   \sum_{i,j=0}^{2^{2n}-1} (\sqrt{2}\, 2^{-2n})^{\alpha} 
           P\{D^n_{i,j} \cap H_1 \ne \emptyset\} \\
    &\leq& C2^{4n} 2^{-2n \alpha } \sup_{i,j} 
                P\{D^n_{i,j} \cap H_1  \ne \emptyset\}.
\end{eqnarray*}
Therefore, to show (b) above, it suffices to show that 
\begin{equation}\label{aeq1}
    \lim_{n\to \infty} 2^{(4-2 \alpha)n} \sup_{i,j} P\{D^n_{i,j} \cap
         H_1  \ne \emptyset\} = 0.
\end{equation}

   To this end, for given $n,i,j$, set $r = r_{i,j,n} = (1+ i2^{-2n} , 1+ j2^{-2n})$, $X^r(s) = \ti X(s) - \ti X(r_{i,j,n})$, and
$$
   F_n(i,j) = \left\{\sup_{s \in D^n_{i,j}} |X^r(s)| < n 2^{-n} \right\}.
$$
By the reflection principle applied to the individual Brownian motion components of $X^r$ \cite{IM}, we easily see that $P\{F_n(i,j)^c\} \leq 2 e^{-n^2/8}$ for $n$ sufficiently large. On the other hand, the key
observation is that
\begin{equation}\label{key1}
   \{D^n_{i,j} \cap H_1  \ne \emptyset\} \cap F_n(i,j) \subset
   \{ |\ti X(r_{i,j,n}) - q| < n2^{-n} \} \cap B_n(i,j),
\end{equation}
where $B_n(i,j)$ is the event ``the DW-algorithm started at $r_{i,j,n}$ with value $x_0 = 2n2^{-n}$ escapes the square $\cR_{r_{i,j,n}}(\half)$." Indeed, on $\{D^n_{i,j} \cap H_1  \ne \emptyset\} \cap F_n(i,j)$, there is $t \in D^n_{i,j} \cap H_1$, so by the definitions of $H_1$ and $F_n(i,j)$, $|\ti X(r_{i,j,n}) - q| < n2^{-n}$ and there is a path $\Gamma$, with one extremity at $t$ and the other at least one-half unit away from $t$, along which $\ti X > q - n 2^{-n}$ (by Remark \ref{rem4.2}, we cannot guarantee that $\ti X > q$ along $\Gamma$). Therefore, on $\{D^n_{i,j} \cap H_1  \ne \emptyset\} \cap F_n(i,j)$, for $s$ on the segment with extremities $r_{i,j,n}$ and $t$, $X^r(s) > -n 2^{-n}$;  for $s \in \Gamma$, $X^r(s) = \ti X(s) - \ti X(r) > q - n 2^{-n} - (q + n 2^{-n}) = -2 n 2^{-n}$. It follows that
$B_n(i,j)$ occurs by Proposition \ref{prop2}(c).

   By (\ref{key1}), we see that
$$
   P\{D^n_{i,j} \cap H_1  \ne \emptyset\} \leq P(\{ |\ti X(r_{i,j,n}) -q| < n2^{-n} \} \cap B_n(i,j)) + P( F_n(i,j)^c)
$$
and hence for $n$ large, by Proposition \ref{univariateup}, we obtain
$$
   2^{(4-2\alpha)n} \sup_{i,j} P\{D^n_{i,j} \cap H_1  \ne \emptyset\} 
    \leq 2^{(4-2\alpha)n} \left(C(n2^{-n})^{1+\lambda_1}
     +  2 e^{-n^2/8} \right).
$$
Because $\alpha > (3-\lambda_1)/2$, this proves (\ref{aeq1}) and completes the
proof of Proposition \ref{thm4}. 
\hfill $ \Box $

\end{section}
\eject

\begin{section}{ABM: Conditional and bivariate escape probabilities, upper bounds}\label{sec5}

   In order to prove the remaining part of Theorem \ref{thm2} (the lower bound), we will have to consider two DW-algorithms run simultaneously from two starting points.  This demands an understanding of the algorithm conditional on some information about the Brownian motions' behaviors elsewhere. The following result, which is the first objective of this section, is a step in this direction. It will lead to Proposition \ref{lembivariate}, which is an essential upper bound needed in the proof of Theorem \ref{thm2}. 
   
   We will use the notation $X(t) = X^{(0,0)}(t) = \ti X(t)$.
   
\begin{prop} For $s_1 < s_2$, set $V = \sup_{s_1 < u < s_2}
(X(u, 0) - X(s_1, 0))$. Consider the DW-algorithm started at $(0,0)$ with value $1$. There is $C < \infty$ such that for all 
$\ell \geq 1$ and $1 < s_1 < s_2 < 2s_1$,
\begin{eqnarray*}
   &&P_1(\{ \ultau^{2^\ell} < \infty\} \cup \{ \cR(\ultau_{(N)}) \not\subset 
     [-2^{2\ell}, 2^{2\ell}]^2\} \mid X(u,0)-X(s_1,0),\ s_1 \leq u \leq s_2) \\
     &&\qquad \qquad \leq C\,2^{-\ell \lambda_1} 
	   \left(1 + \frac{V}{\sqrt{s_1}}\right)^{\lambda_1}.
\end{eqnarray*}
\label{prop1lem16}
\end{prop}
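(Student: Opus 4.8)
The plan is to condition on the process $(X(u,0)-X(s_1,0),\ s_1\le u\le s_2)$ and to show that, conditionally, the DW-algorithm started at $(0,0)$ with value $1$ is dominated in the relevant sense by a DW-algorithm started with value $1+ V/\sqrt{s_1}$ (up to constants), after which Theorem~\ref{thm3} and Theorem~\ref{prop1} give the bound. The first observation is that the event in question is ``the bubble of $1+X$ containing the origin either reaches height $2^\ell$ or escapes $[-2^{2\ell},2^{2\ell}]^2$''; by Proposition~\ref{prop2}(a) and (c) this is controlled by the terminal rectangle and the terminal height of the DW-algorithm, which in turn are measurable with respect to increments of $X$. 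The conditioning sigma-field $\sigma(X(u,0)-X(s_1,0),\ s_1\le u\le s_2)$ involves only increments of $\ti Z_1$ along a bounded horizontal segment lying to the right of the origin; the key point is that $X$ restricted to a neighbourhood of the origin is \emph{independent} of this sigma-field except through the single random variable $\ti Z_1(s_1)-\ti Z_1(0)$ (and through nothing at all if one is willing to lose a further harmless factor). Since $s_1>1$, the segment $[0,s_1]$ is traversed by Brownian motion, so $|\ti Z_1(s_1)-\ti Z_1(0)|$ has Gaussian tails on scale $\sqrt{s_1}$.

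The main step is then a monotonicity/coupling argument. Run the DW-algorithm started at $(0,0)$ with value $1$, but in the first horizontal stage, when the algorithm explores $\{u: 1+X(u,0)=0\}$ to the right of $0$, decompose this into the part of the path inside $[0,s_1]$ and a possible overshoot past $s_1$. Conditionally on $(X(u,0)-X(s_1,0),\ s_1\le u\le s_2)$, the value $1+X(s_1,0)=1+(\ti Z_1(s_1)-\ti Z_1(0))$ is a single Gaussian random variable $G$, independent of the conditioning data; and on the event that the algorithm's exploration reaches $s_1$, the supremum of $1+X$ over the explored region to the right of $s_1$ is at most $G + V$ by definition of $V$ (the increments $X(u,0)-X(s_1,0)$ for $s_1\le u\le s_2$ are exactly what defines $V$, and if the algorithm explores past $s_2$ the later increments are fresh and independent). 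Thus the height $H_1$ of the first stage, conditionally, is stochastically dominated by $\max$ of a stage-$1$ height for a standalone DW-algorithm started with value $\max(1, G)$ (the part of the path in $[0,s_1]$ being governed by a fresh Brownian motion, independent of the conditioning) together with an additive contribution bounded by $G+V$ on the event of reaching $s_1$. In all cases one gets that, conditionally on the data and on the value of $G$, the subsequent evolution of the algorithm is a DW-algorithm with initial value at most $C(1+|G|+V)$ for a universal constant $C$, because the Markov chain $(\Theta_n)$ from Proposition~\ref{noSTOP} and the scaling of $X^{r,\ulT}$ depend on the past only through the current height, which at this point is $\le C(1+|G|+V)$. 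Here the quantity $V$ is already normalised so that $V/\sqrt{s_1}$ is the natural dimensionless variable.

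Granting this domination, apply Theorem~\ref{thm3} (with the roles scaled: starting value $x_0 = C(1+|G|+V)$, square half-width $2^{2\ell}$, so $x_0/\sqrt{a}\asymp (1+|G|+V)\,2^{-\ell}$) together with the exact tail $\E(x)\asymp x^{\lambda_1}$ from Theorem~\ref{prop1} for the ``reaches level $2^\ell$'' alternative, to obtain that the conditional probability is at most
$$
   C\,2^{-\ell\lambda_1}\,\bigl(1+|G|+V\bigr)^{\lambda_1}.
$$
Finally take conditional expectation over $G$ only (it is independent of the conditioning sigma-field and Gaussian with variance $s_1$): since $\lambda_1<1$ and $|G|/\sqrt{s_1}$ has bounded moments of every order, $E\bigl[(1+|G|+V)^{\lambda_1}\mid \text{data}\bigr]\le C\bigl(1+V/\sqrt{s_1}\bigr)^{\lambda_1}\sqrt{s_1}^{\lambda_1}$ — wait, one must be careful with the $\sqrt{s_1}$ factor; the cleaner route is to note $(1+|G|+V)^{\lambda_1}\le (1+V)^{\lambda_1}(1+|G|)^{\lambda_1}$ and that in the regime $s_1>1$ the starting value $1$ already carries the right scale, so one rescales by $\sqrt{s_1}$ at the outset: replace $X$ by $X(\sqrt{s_1}\,\cdot)/\sqrt{s_1}$, reducing to $s_1$ of order $1$, in which case $V/\sqrt{s_1}$ is the genuine normalised overshoot and $G$ has bounded variance, giving $E[(1+|G|+V)^{\lambda_1}\mid\text{data}]\le C(1+V/\sqrt{s_1})^{\lambda_1}$ directly. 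This yields the claimed bound.

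The step I expect to be the main obstacle is making the coupling/domination of the conditional DW-algorithm by a standalone DW-algorithm with value $C(1+|G|+V)$ fully rigorous: one has to check that conditioning on increments of $X$ along the horizontal segment $[s_1,s_2]\times\{0\}$ does not spoil the strong-Markov/stopping-point structure (Section~\ref{sec2}) that makes the algorithm's future depend on the past only through the current height, and one has to handle carefully the bookkeeping of \emph{which} parts of the explored path lie in the conditioned region versus in fresh independent randomness (the overshoot past $s_2$, the vertical stages, etc.). The constant $C<\infty$ absorbing the event $\{F_n(i,j)^c\}$-type losses and the factor $P_{\sqrt a}\{B_1(u)>0,0\le u\le a\}$-type losses is harmless, exactly as in the proof of Proposition~\ref{univariateup}.
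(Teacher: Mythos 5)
There is a genuine gap: your argument, as set up, produces an extra unbounded factor of $s_1^{\lambda_1/2}$ that the claimed bound does not tolerate, and the rescaling you propose does not remove it.

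Concretely: your coupling (even granting its validity) gives a conditional bound of the form $P(\text{escape}\mid\G, G)\le C\,2^{-\ell\lambda_1}\,(1+|G|+V)^{\lambda_1}$ with $G$ a Gaussian of variance $\asymp s_1$ independent of $\G$. Integrating out $G$ then yields $E[(1+|G|+V)^{\lambda_1}]\asymp s_1^{\lambda_1/2}+V^{\lambda_1}$, which when $V\lesssim\sqrt{s_1}$ is larger than the target $(1+V/\sqrt{s_1})^{\lambda_1}$ by a factor $s_1^{\lambda_1/2}$ — and $s_1$ can be as large as $2^{2\ell}$, wiping out the $2^{-\ell\lambda_1}$. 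Your rescaling $X\mapsto X(\sqrt{s_1}\,\cdot)/\sqrt{s_1}$ does normalise the variance of $G$, but it simultaneously rescales the escape square to half-width $2^{2\ell}/s_1$ and the algorithm's starting value to $1/\sqrt{s_1}$, so that when you apply Theorem~\ref{thm3} in rescaled coordinates the factor $\sqrt{s_1}$ reappears in $\bigl(x_0/\sqrt{a}\bigr)^{\lambda_1}$ and you are back where you started. The crucial ingredient you are missing is the event $F(s_1)=\{\cR(\ultau_{(N)})\not\subset[-s_1,s_1]^2\}$: the paper decomposes $P_1(A\mid\G)=P_1(A\cap F(s_1)\mid\G)+P_1(A\cap F(s_1)^c\mid\G)$, notes that the second term is independent of $\G$, and for the first term uses that $F(s_1)$ is independent of $\G$ with $P_1(F(s_1))\le C\,s_1^{-\lambda_1/2}$ (Theorem~\ref{thm3}). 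It is precisely this $s_1^{-\lambda_1/2}$ that cancels the $s_1^{\lambda_1/2}$ coming from the tails of the max over the explored square, via the conditional exponential tail in Lemma~\ref{lem3lem16}. Your argument never isolates this event and so never obtains the compensating factor.

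Two secondary points. First, the relevant quantity is not the value $G=1+X(s_1,0)$ at a single point but the running maximum $Y=\sup_{[-s_1,s_1]^2}X$ at the time the algorithm first pokes into the conditioned region; these differ, and the correct bound on the conditioned escape probability is through $1+Y+V$ via the stopping-point machinery of Lemma~\ref{lem5.2prime} and Lemma~\ref{lem2lem16} applied at $\ulT=(s_2,s_1,s_1,s_1)$, not through $|G|+V$. Second, your proposal does not treat the regime $s_1\ge 2^{2(\ell-1)}$ separately; in that case the square $[-2^{2(\ell-1)},2^{2(\ell-1)}]^2$ is contained in $[-s_1,s_1]^2$, so the relevant event is entirely independent of $\G$ and one gets the sharper bound $C\,2^{-\ell\lambda_1}$ without any $V$-dependence — a distinction that the paper handles cleanly and that your single uniform argument would miss.
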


\begin{remark} If $s_1 $ and $s_2$ are random (but nonetheless the condition $1 < s_1 < s_2 < 2s_1$ holds a.s.) but such that $(X(s_1-u, 0) - X(s_1,0),\ u \geq 0)$ and $(X(s_2+u, 0) - X(s_2,0),\ u \geq 0)$ are independent of $ \sigma(X(s_1,0)-X(u,0),\ 
s_1 \leq u \leq s_2)$, then naturally, the conclusion of Proposition \ref{prop1lem16} will still hold.  Also, if the conditioning $\sigma$-field is augmented by independent information, then the bound remains valid.
\label{remprop1lem16}
\end{remark}

   In order to establish this proposition, we need the following four lemmas.
   
\begin{lemma} Let $\ulT$ be a stopping point relative to $(\F_{\ulu})$ and set 
$$
   Y = x_0 + \max_{s \in \cR(\ulT)} \vert X(s)\vert.
$$ 
Let $X^{\ulT}$ be the standard ABM associated with $\ulT$ as below (\ref{startp6}) (here, $r = (0,0)$ so it is omitted from the notation). For $a >0$, let $F^{\ulT}(a)$ be the event ``the DW-algorithm for $X^{\ulT}$, started at $(0,0)$ with value $Y$, reaches level $a$." Then
$$
   \{Y < a\} \cap \{ \ultau^{x_0,a} < \ulinfty \} \subset F^{\ulT}(a)
$$
(recall that $\{ \ultau^{x_0,a} < \ulinfty \}$ is the event ``the DW-algorithm for $X$, started at $(0,0)$ with value $x_0$, reaches level $a$).
\label{lem5.2prime}
\end{lemma}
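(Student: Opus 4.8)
The plan is to recognize $\{\ultau^{x_0,a}<\ulinfty\}$ as the existence of a path from $(0,0)$ along which $x_0+X$ reaches level $a$ while staying positive, to transport that path by a fixed deterministic map onto a path witnessing that $Y+X^{\ulT}$ reaches level $a$ while staying positive, and then to read off $F^{\ulT}(a)$ from Proposition \ref{prop2} applied to the standard ABM $X^{\ulT}$.

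\textbf{Step 1: the transport map.} Write $\cR(\ulT)=[-\ulT_3,\ulT_1]\times[-\ulT_4,\ulT_2]$ and decompose $\IR^2\setminus\mathrm{int}\,\cR(\ulT)$ into the four ``corner'' regions (e.g.\ $\{s_1\ge \ulT_1,\ s_2\ge \ulT_2\}$) and the four ``side'' regions (e.g.\ $\{-\ulT_3\le s_1\le \ulT_1,\ s_2\ge \ulT_2\}$). On each of these eight pieces let $\Phi$ be the translation carrying it onto the corresponding closed quadrant, resp.\ half-axis, of $\IR^2$, and sending the nearest corner, resp.\ nearest point of the relevant side, of $\cR(\ulT)$ to $(0,0)$. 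One checks that the prescriptions agree on overlaps, so $\Phi\colon\IR^2\setminus\mathrm{int}\,\cR(\ulT)\to\IR^2$ is a well-defined continuous surjection collapsing $\partial\cR(\ulT)$ to $(0,0)$. Unwinding the construction of $X^{\ulT}$ from \eqref{startp6} and the lines below it, one obtains the pathwise identity
$$
   X(s)=X(\pi(s))+X^{\ulT}(\Phi(s)),\qquad s\notin\mathrm{int}\,\cR(\ulT),
$$
where $\pi(s)\in\cR(\ulT)$ is the corner (resp.\ point of the side) used in defining $\Phi$ on the piece containing $s$. Since $|X(\pi(s))|\le\max_{\cR(\ulT)}|X|=Y-x_0$, this yields
$$
   x_0+X(s)\ \le\ Y+X^{\ulT}(\Phi(s))\qquad\text{for all }s\notin\mathrm{int}\,\cR(\ulT),
$$
and it is here that the absolute value in the definition of $Y$ is essential.

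\textbf{Step 2: transporting the path.} Assume $Y<a$ and $\ultau^{x_0,a}<\ulinfty$. By the definition of $\ultau^{x_0,a}$ together with Proposition \ref{prop2} and the Remark following it (applied to $X$), the event $\{\ultau^{x_0,a}<\ulinfty\}$ coincides with the event that the component $E$ of $\{x_0+X>0\}$ containing $(0,0)$ satisfies $\sup_E(x_0+X)\ge a$, the supremum being attained at some $p^*\in E$ (the last corner of the DW-algorithm). Since $x_0+X(p^*)\ge a>Y\ge x_0+\sup_{\cR(\ulT)}X$, we have $p^*\notin\cR(\ulT)$; choose a continuous $\gamma\colon[0,1]\to E$ with $\gamma(0)=(0,0)$, $\gamma(1)=p^*$, which exists because $E$ is open and connected. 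The set $\{t:\gamma(t)\in\partial\cR(\ulT)\}$ is closed, nonempty and avoids $1$; let $t_q$ be its maximum. Then $\gamma([t_q,1])\subset\IR^2\setminus\mathrm{int}\,\cR(\ulT)$ and $\Phi(\gamma(t_q))=(0,0)$, so $\tilde\gamma:=\Phi\circ\gamma|_{[t_q,1]}$ is a continuous path in $\IR^2$ from $(0,0)$ to $\Phi(p^*)$. Along $\tilde\gamma$ the inequality of Step 1 gives $Y+X^{\ulT}\ge x_0+X(\gamma(\cdot))>0$, while $Y+X^{\ulT}(\Phi(p^*))\ge x_0+X(p^*)\ge a$. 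Since $Y+X^{\ulT}(0,0)=Y>0$, the connected set $\tilde\gamma$ lies in the component $E'$ of $\{Y+X^{\ulT}>0\}$ containing $(0,0)$, so $\sup_{E'}(Y+X^{\ulT})\ge a$; because $Y<a$ and $X^{\ulT}$ is a standard ABM, Proposition \ref{prop2} and the Remark following it, now applied to $X^{\ulT}$, identify this with $F^{\ulT}(a)$. This proves the inclusion.

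\textbf{Main obstacle.} The delicate point is the bookkeeping in Step 1: verifying that the eight piecewise translations patch into a single continuous map collapsing $\partial\cR(\ulT)$ to a point, and above all checking the increment identity with the correct signs in each of the eight regions by tracing through the definition \eqref{startp6} of the four one-sided Brownian motions and of the two-sided processes underlying $X^{\ulT}$. The degenerate cases in which $\cR(\ulT)$ has empty interior (some $\ulT_i=0$) are immediate, since some regions are then empty and, when $\ulT=\underline0$, one has $X^{\ulT}=X$ and $Y=x_0$, so the statement is trivial.
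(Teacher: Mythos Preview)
Your proof is correct and takes a genuinely different route from the paper's. The paper argues by contrapositive: assuming $F^{\ulT}(a)$ fails, the DW-algorithm for $X^{\ulT}$ with value $Y$ terminates having explored some rectangle $R^{\ulT}=[U_1,V_1]\times[U_2,V_2]$ on which $\sup(Y+X^{\ulT})<a$ and on whose boundary $Y+X^{\ulT}\le 0$; one then checks by direct computation on subrectangles that $x_0+X<a$ on the enlarged rectangle $[-\ulT_3+U_1,\ulT_1+V_1]\times[-\ulT_4+U_2,\ulT_2+V_2]$ and $x_0+X\le 0$ on its boundary, trapping the bubble of $x_0+X$ and preventing $\ultau^{x_0,a}<\ulinfty$. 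Your argument is the direct implication: you build a continuous retraction $\Phi$ collapsing $\cR(\ulT)$ to a point, establish the pointwise inequality $x_0+X(s)\le Y+X^{\ulT}(\Phi(s))$ outside $\mathrm{int}\,\cR(\ulT)$, and then push forward a positive path for $x_0+X$ reaching level $a$ to a positive path for $Y+X^{\ulT}$ reaching level $a$. The paper's approach is shorter and entirely elementary (no path construction, no topology beyond rectangles), while yours makes the geometric mechanism more transparent and yields the comparison inequality as a reusable pointwise statement; both rest on the same additive identity for $X^{\ulT}$ coming from \eqref{startp6}.
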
 

\proof Let $R^{\ulT} = [U_1, V_1] \times[U_2, V_2]$ be 
the rectangle explored by the DW-algorithm for $X^{\ulT}$ (started at $(0,0)$ with value $Y$) up to termination. If this algorithm 
does not reach level $a$, then
\begin{equation}\label{abmXT}
   \sup_{s \in R^{\ulT}} (Y + X^{\ulT}(s_1, s_2)) < a \qquad\mbox{ and
   }\qquad Y + X^{\ulT} \leq 0 \mbox{ on } \partial R^{\ulT}.
\end{equation}
But this implies that $x_0 + X < a$ on $[-\ulT_3 + U_1, \ulT_1 + V_1] 
\times [-\ulT_4 + U_2, \ulT_2 + V_2]$ and $x_0 + X  \leq 0$ on the boundary of this rectangle. Indeed, we check the first inequality, since the second is checked analogously.
Consider without loss of generality the subrectangle $[0, \ulT_1 + V_1] \times [0,\ulT_2 + V_2]$.
This rectangle itself subdivides naturally into four subrectangles: $[0, \ulT_1 ] \times [0,\ulT_2 ]$,
 $[\ulT_1, \ulT_1  + V_1] \times [0,\ulT_2]$, $[0, \ulT_1 ] \times [\ulT_2 ,\ulT_2 + V_2]$ and 
 $[\ulT_1, \ulT_1 + V_1] \times [\ulT_2,\ulT_2 + V_2]$.  On the first subrectangle, $x_0 + X < a$ simply because the first constraint in \eqref{abmXT} implies that $Y<a$.  The cases of the second and third subrectangles are essentially the same so we just consider the second subrectangle: here for $(s,t) \in [\ulT_1, \ulT_1  + V_1] \times [0,\ulT_2]$, by \eqref{abmXT},
$$
   x_0 + X(s,t) = X^{\underline{T}}(s-\underline T_1, 0) +x_0 + X(\underline T_1,t) < (a -Y)+Y = a.
$$  
For the fourth subrectangle $(s,t) \in [\ulT_1, \ulT_1  + V_1] \times [\ulT_2,\ulT_2+V_2]$, we have similarly
$$
   x_0 + X(s,t) = X^{\underline{T}}(s-\underline T_1, t- \underline T_2) +x_0 + X(\underline T_1,\underline T_2) < (a -Y)+Y = a.
$$
This proves the lemma.
\hfill $\Box$
\vskip 16pt

\begin{lemma} Let $\ulT$ be a stopping point relative to 
$(\F_{\ulu})$ and let $Y$ be as in Lemma \ref{lem5.2prime}, with $x_0 = 1$.
There is 
$C < \infty$ such that for all $\ell \geq 0$, 
\begin{equation}\label{prop16lem2eq1}
   P_1(\ultau^{2^\ell} < \ulinfty \mid \F_{\ulT}) 
      \leq C\, (Y 2^{-\ell})^{\lambda_1},
\end{equation}
and for $\ell \geq 1$, on $\{\vert \ulT \vert \leq 2^{2(\ell-1)}\}$,
\begin{equation}\label{prop16lem2eq2}
   P_1(\cR(\ultau_{(N)}) \not\subset [-2^{2\ell},2^{2\ell}]^2 \mid {\F}_{\ulT})
    \leq C\, (Y 2^{-\ell})^{\lambda_1}
\end{equation}
(here, $\ultau^{2^\ell}$ and $\ultau_{(N)}$ are associated with the DW-algorithm for $X$ started at $(0,0)$ with value $1$).
\label{lem2lem16}
\end{lemma}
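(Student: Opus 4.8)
The idea is to reduce both bounds to Theorem~\ref{thm3} via the strong Markov property at the stopping point $\ulT$, using Lemma~\ref{lem5.2prime} to replace the original DW-algorithm by the DW-algorithm for the post-$\ulT$ standard ABM $X^{\ulT}$ started with value $Y$. Since $X^{\ulT}$ is independent of $\F_{\ulT}$ and $Y$ is $\F_{\ulT}$-measurable, conditionally on $\F_{\ulT}$ we are looking at a DW-algorithm started with a \emph{known} value $Y$, and Theorem~\ref{thm3} (together with Theorem~\ref{prop1}) applies directly with $x_0$ replaced by $Y$.

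For \eqref{prop16lem2eq1}: by Lemma~\ref{lem5.2prime}, on $\{Y < 2^\ell\}$ the event $\{\ultau^{1,2^\ell} < \ulinfty\}$ is contained in $F^{\ulT}(2^\ell)$, the event that the DW-algorithm for $X^{\ulT}$ started with value $Y$ reaches level $2^\ell$. By Theorem~\ref{prop1} (or its scaled form), for $y < a$,
$$
   P\{\text{DW-algo started with value } y \text{ reaches level } a\} = P_{y/a}\{\ultau^1 < \infty\} \leq C (y/a)^{\lambda_1}.
$$
Applying this conditionally on $\F_{\ulT}$ with $y = Y$, $a = 2^\ell$, we get $P_1(\ultau^{2^\ell} < \ulinfty \mid \F_{\ulT}) \leq C(Y 2^{-\ell})^{\lambda_1}$ on $\{Y < 2^\ell\}$; on the complementary event $\{Y \geq 2^\ell\}$ the right-hand side exceeds $C$, so after possibly enlarging $C$ the bound is trivially valid there too. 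This gives \eqref{prop16lem2eq1} for all $\ell \geq 0$.

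For \eqref{prop16lem2eq2}: the point is that on $\{|\ulT| \leq 2^{2(\ell-1)}\}$, the rectangle $\cR(\ulT)$ is contained in $[-2^{2(\ell-1)}, 2^{2(\ell-1)}]^2$, so if the DW-algorithm for $X$ (started at the origin with value $1$) has its explored rectangle $\cR(\ultau_{(N)})$ not contained in $[-2^{2\ell}, 2^{2\ell}]^2$, then the continuation after $\ulT$ — which is exactly the DW-algorithm for $X^{\ulT}$ started with value $Y$ — must explore a rectangle extending at least $2^{2\ell} - 2^{2(\ell-1)} = 3\cdot 2^{2\ell-2} \geq 2^{2(\ell-1)}$ units away from its starting corner. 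Hence the conditional event is contained in the event that the DW-algorithm for $X^{\ulT}$ with value $Y$ escapes a square of half-side $2^{2(\ell-1)}$ centered at that corner. Since $X^{\ulT}$ is independent of $\F_{\ulT}$ and $Y$ is $\F_{\ulT}$-measurable, Theorem~\ref{thm3} applies with $x_0 = Y$, $a = 2^{2(\ell-1)}$ (valid provided $a \geq Y^2$, i.e.\ $Y \leq 2^{\ell-1}$; outside this range the claimed bound again exceeds $C$ and is trivial after enlarging $C$), giving $P_1(\cdots \mid \F_{\ulT}) \leq C(Y/2^{\ell-1})^{\lambda_1} = C' (Y 2^{-\ell})^{\lambda_1}$. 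A small care is needed with the precise geometry of which corner of $\cR(\ulT)$ the continuation starts from and how a coordinate-wise escape bound translates to the symmetric square, but this only costs a constant factor; one handles the four sides of $\partial[-2^{2\ell},2^{2\ell}]^2$ separately and uses monotonicity.

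**Main obstacle.** The delicate point is the geometric bookkeeping in the second bound: the DW-algorithm for $X$ does not literally "restart" at $\ulT$ as a DW-algorithm for $X^{\ulT}$ with value $Y$ in a way that directly matches explored rectangles, because $\ulT$ need not be a corner reached by the algorithm and the algorithm's exploration is recorded in the four-coordinate parametrization $\cR_r(\ulu)$. Lemma~\ref{lem5.2prime} is precisely the tool that bridges this: it says that non-escape for the $X^{\ulT}$-algorithm forces a containment statement for the $X$-algorithm on an enlarged rectangle. One must invoke it (or its "escape" analogue, obtained by the same argument applied to the rectangle of a given size rather than to level $a$) and then check that the enlargement by $|\ulT| \leq 2^{2(\ell-1)}$ is absorbed into the factor $2$ between $2^{2\ell}$ and $2^{2(\ell-1)}$. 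The value $Y$, which bounds $1 + \max_{\cR(\ulT)}|X|$, is what appears in the final estimate, and no sharper control on it is needed here — its size is controlled elsewhere in the proof of Proposition~\ref{prop1lem16}.
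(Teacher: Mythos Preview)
Your proposal is correct and follows essentially the same route as the paper: split on $\{Y \geq 2^\ell\}$ versus $\{Y < 2^\ell\}$, use Lemma~\ref{lem5.2prime} (and its escape analogue, proved by the same argument) to dominate the original DW-algorithm by the DW-algorithm for $X^{\ulT}$ started with value $Y$, and then apply Theorem~\ref{prop1} for \eqref{prop16lem2eq1} and Theorem~\ref{thm3} for \eqref{prop16lem2eq2}. The only cosmetic difference is that the paper takes the post-$\ulT$ escape square to have half-side $2^{2\ell}-|\ulT|$ rather than your $2^{2(\ell-1)}$, which gives the same bound up to constants.
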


\proof We first prove (\ref{prop16lem2eq1}). On $\{Y \geq 2^\ell\}$, the inequality (\ref{prop16lem2eq1}) is satisfied with $C = 1$, so we focus on the event $\{Y < 2^\ell\}$. 

   Let $X^{\ulT}$ be the standard ABM defined in Lemma \ref{lem5.2prime}. Apply the DW-algorithm started at $(0,0)$ with value $Y$ to the ABM $X^{\ulT}$, and let $F^{\ulT}$ be the event that this 
algorithm reaches level $2^\ell$. By Theorem \ref{prop1}, $P_Y(F^{\ulT}) \leq 
C(Y 2^{-\ell})^{\lambda_1}$. 

   By Lemma \ref{lem5.2prime}, on $\{Y <2^\ell\}$,
$$
   P_1(\ultau^{2^\ell} < \infty \mid \F_{\ulT}) 
     \leq P_Y (F^{\ulT}) \leq C(Y 2^{-\ell})^{\lambda_1}.
$$
This proves \eqref{prop16lem2eq1}.

   We now check that on $\{\vert \ulT \vert \leq 2^{2(\ell-1)}\}$, \eqref{prop16lem2eq2} holds.
For this, we proceed as above. On $\{Y \geq 2^\ell\}$, it suffices to set $C = 1$. On $\{Y < 2^\ell\}$, we again apply the DW-algorithm started at (0,0) with 
value $Y$ to $X^{\ulT}$, and let $\tilde{F}^{\ulT}$ be the event that this 
algorithm escapes $[-(2^{2 \ell} - \vert \ulT \vert), 2^{2\ell} 
- \vert \ulT \vert]^2$. Then 
$$
   P_1(\cR(\ultau_{(N)}) \not\subset [-2^{2\ell}, 2^{2\ell}]^2 
      \mid {\F}_{\ulT}) \leq P_Y(\ti F^T).
$$
By Theorem \ref{thm3}, on $\{Y < 2^\ell\} \cap \{ \vert T \vert \leq 2^{2(\ell-1)}\} $, 
$$
   P_Y(\tilde{F}^{\underline{T}}) \leq C(Y(2^{2\ell}- \vert T \vert)^{-1/2})^{\lambda_1} \leq \tilde{C} (Y 2^{-\ell})^{\lambda_1}.
$$
This proves \eqref{prop16lem2eq2}.
\hfill $\Box$
\vskip 16pt

\begin{lemma} For $a > 1$, for the DW-algorithm started at $(0,0)$ with value $1$, 
$$
   \sup_{t \in \cR(\ultau^a \wedge \ultau_{(N)})} (-X(t)) \leq 1+a.
$$
\label{lem5.3prime}
\end{lemma}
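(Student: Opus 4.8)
\medskip
\noindent\emph{Proof proposal.} The plan is to reduce this to a single application of the rectangle-increment identity $\Delta_R X=0$ for additive Brownian motion (see \eqref{defdelta} and the line following it). Throughout, $x_0=1$ and $r=(0,0)$.

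First I would identify the relevant rectangle. Let $n^\diamond$ be the (unique) stage during which the DW-algorithm reaches level $a$, if it ever does, and set $n^\diamond=N$ otherwise; in either case $H_{n^\diamond-1}<a$, in the first case by the definition of reaching level $a$ at stage $n^\diamond$, and in the second because $H_{n^\diamond-1}=H_{N-1}\le H_N<a$. I claim $\cR(\ultau^{a}\wedge\ultau_{(N)})\subseteq R_{n^\diamond}$. If level $a$ is never reached then $\ultau^{a}=\ulinfty$, so $\ultau^{a}\wedge\ultau_{(N)}=\ultau_{(N)}$ and $\cR(\ultau^{a}\wedge\ultau_{(N)})=R_N=R_{n^\diamond}$. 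If level $a$ is reached at stage $n^\diamond$, the definition of $\ultau^{x_0,r,a}$ shows that $\cR(\ultau^{a})$ has exactly the extent of $R_{n^\diamond}$ in the direction transverse to the line explored at stage $n^\diamond$, while along that line it is obtained from the corresponding side of $R_{n^\diamond-1}$ by extending outward only until $x_0+X$ next meets $\{0,a\}$; since, restricted to that line, $x_0+X$ is strictly positive on the positivity interval of stage $n^\diamond$ and vanishes at its two endpoints, this extension cannot leave that interval, which is precisely the extent of $R_{n^\diamond}$ in that direction, and so $\cR(\ultau^{a})\subseteq R_{n^\diamond}$. As $\ultau^{a}\wedge\ultau_{(N)}\le\ultau^{a}$ and $\cR_r$ is order-preserving, the claim follows.

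It then suffices to show $x_0+X\ge -H_{n^\diamond-1}$ on $R_{n^\diamond}$, and this is where $\Delta_R X=0$ does the work. Let $c=(c_1,c_2)$ be the corner of the path reached at the end of stage $n^\diamond-1$ (with the conventions $c=(0,0)$ and $H_0=x_0$ when $n^\diamond=1$). By the construction of the algorithm $x_0+X(c)=H_{n^\diamond-1}$, and, directly from the definitions of the $U$'s and $V$'s as nearest zeros, $R_{n^\diamond}=I_1\times I_2$, where $I_1$ is an interval containing $c_1$ on which $x_0+X(\cdot,c_2)\ge 0$ and $I_2$ is an interval containing $c_2$ on which $x_0+X(c_1,\cdot)\ge 0$. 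Applying $\Delta_R X=0$ to the rectangle with corners $(u,v)$ and $c$, one gets, for every $(u,v)\in R_{n^\diamond}$,
\[
   x_0+X(u,v)=\bigl(x_0+X(u,c_2)\bigr)+\bigl(x_0+X(c_1,v)\bigr)-\bigl(x_0+X(c_1,c_2)\bigr)\ge -H_{n^\diamond-1},
\]
since $u\in I_1$, $v\in I_2$ and $x_0+X(c)=H_{n^\diamond-1}$. Combining with $H_{n^\diamond-1}<a$ and $x_0=1$ yields $-X(t)=x_0-(x_0+X(t))\le x_0+H_{n^\diamond-1}<1+a$ for all $t\in R_{n^\diamond}$, and in particular for $t\in\cR(\ultau^{a}\wedge\ultau_{(N)})$; this is slightly stronger than the assertion.

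The analytic content is thus just the displayed use of $\Delta_R X=0$, which needs nothing beyond the algorithm's definition. I expect the only genuinely delicate point to be the bookkeeping in the first step, namely reconciling the index conventions in the definition of $\ultau^{x_0,r,M}$ with those of the rectangles $R_n$ so as to confirm that $\cR(\ultau^{a}\wedge\ultau_{(N)})$ indeed sits inside $R_{n^\diamond}$ rather than a later rectangle, together with the degenerate case $n^\diamond=1$.
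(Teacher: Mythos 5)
Your proof is correct and is essentially the same argument the paper gives, just reorganized.  The paper fixes the stage $2m-1$ at which $\ulT=\ultau^a\wedge\ultau_{(N)}$ ``occurs,'' derives the two positivity inequalities $\tilde Z_1(u)\ge \tilde Z_2(T_2^{m-1})-1$ and $\tilde Z_2(v)\le 1+\tilde Z_1(T_1^{m-1})$ from the algorithm's definition, and then adds them; since $X=\tilde Z_1-\tilde Z_2$, those two inequalities together with the value $x_0+X(T_1^{m-1},T_2^{m-1})=H_{n^\diamond-1}$ are exactly the three terms of your identity $\Delta_R X=0$ with one corner at $c$.  Where you differ is only in bookkeeping: the paper treats the cases $\ultau^a<\ulinfty$ and $\ultau^a=\ulinfty$ separately (using $x_0+X(\ulT_1,T_2^{m-1})=a$ in the first case and $H_{n^\diamond-1}<a$ in the second), whereas you observe that $H_{n^\diamond-1}<a$ holds in both cases and so a single computation with the corner $c$ suffices, yielding the slightly stronger strict bound $-X(t)<1+a$ throughout.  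Your reduction step $\cR(\ultau^a\wedge\ultau_{(N)})\subset R_{n^\diamond}$ is also correct and matches the implicit use of the rectangle $R_{2m-1}$ in the paper.
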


\proof Let $\ulT = \ultau^a \wedge \ultau_{(N)}$ and assume that $\ulT$ occurs during an odd stage $2m-1$ in the interval $[U_{m-1}^\prime,U_m^\prime]$. We distinguish two cases.
\vskip 12pt

\noindent{\em Case 1.} $\ultau^a < \ulinfty$. In this case, $1+\tilde Z_1(\ulT_1) - \tilde Z_2(T_2^{m-1}) = a$. By definition of $\ultau^a$, for all $u \in [- \underline T _3, \underline T_1]$,
$$
   1+ \tilde Z_1(u) - \tilde Z_2(T_2^{m-1}) \geq 0,
$$ 
that is,
\begin{equation}\label{e5.4}
   \tilde Z_1(u) \geq \tilde Z_2(T_2^{m-1})  -1 .
\end{equation}
Similarly, for all $v \in [- \underline T _4, \underline T_2]$,
$$
    1+ \tilde Z_1(T^{m-1}_1) - \tilde Z_2(v) \geq 0,
$$
and since we are in Case 1, $\tilde Z_1(\ulT_1) \geq \tilde Z_1(T^{m-1}_1)$, so 
\begin{equation}\label{e5.6}
   1+\tilde Z_1(\ulT_1) \geq \tilde Z_2(v).
\end{equation}
Therefore, for such $u,v$, by \eqref{e5.6}, \eqref{e5.4} and since we are in Case 1,
$$
   -(\tilde Z_1(u) - \tilde Z_2(v)) = \tilde Z_2(v) - \tilde Z_1(u) \leq  1+\tilde Z_1(\ulT_1) + 1 - \tilde Z_2(T_2^{m-1}) = 1+a.
$$
\vskip 12pt

\noindent{\em Case 2.} $\ultau^a = \ulinfty$. In this case,
\begin{equation}\label{e5.7}
    1+ \tilde Z_1(T^{m-1}_1) - \tilde Z_2(T_2^{m-1})  < a,
\end{equation}
\eqref{e5.4} still holds, and we still have
\begin{equation}\label{e5.8}
   1+ \tilde Z_1(T^{m-1}_1) \geq \ti Z_2(v),
\end{equation}
so by \eqref{e5.8}, \eqref{e5.4} and \eqref{e5.7},
$$
   \tilde Z_2(v) - \tilde Z_1(u) \leq 1 + \tilde Z_1(T^{m-1}_1) + 1 - \tilde Z_2(T_2^{m-1}) < 1+a.
$$
The lemma is proved.
\hfill $\Box$
\vskip 16pt 

\begin{lemma} Consider the DW-algorithm started at $(0,0)$ with value 1. There are $c > 0$ and $C < \infty$ such that, for all $s_1 > 1$ 
and $x > 0$,
$$
   P_1(1+ \max_{t \in [-s_1, s_1]^2} \vert X(t) \vert \geq x \sqrt{s_1} 
      \mid \cR(\ultau_{(N)}) \not\subset [-s_1, s_1]^2) \leq C e^{-cx}.
$$
\label{lem3lem16}
\end{lemma}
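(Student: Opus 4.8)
For $x$ below a fixed threshold the asserted conditional probability is at most $1\le Ce^{-cx}$ after enlarging $C$, so I assume throughout that $x$ is large. Let $\ulsigma=\ulsigma^{1,(0,0),[-s_1,s_1]^2}$ be the stopping point at which the DW-algorithm first leaves $[-s_1,s_1]^2$; on the event $A:=\{\cR(\ultau_{(N)})\not\subset[-s_1,s_1]^2\}$ one has $\cR(\ulsigma)=[-s_1,s_1]^2\cap R_{m+1}\subset[-s_1,s_1]^2$, where $m+1$ is the first stage whose rectangle is not contained in $[-s_1,s_1]^2$, and $A$ is measurable with respect to $\F_\ulsigma$ (whether an escape occurs depends only on the increments of $X$ over $\cR(\ulsigma)$). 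By Theorem~\ref{thm3} (with $a=s_1\ge1$), $P_1(A)\ge c\,s_1^{-\lambda_1/2}$. By the construction of the $B^{r,\ulT}_i$ following \eqref{startp6}, the four Brownian increments of $\tilde Z_1,\tilde Z_2$ \emph{beyond} the sides of $\cR(\ulsigma)$ are independent of $\F_\ulsigma$; each runs for a time $\le s_1$, so their joint supremum $\tilde W$ obeys $P(\tilde W\ge y\sqrt{s_1})\le Ce^{-y^2/2}$. Decomposing, for $t\in[-s_1,s_1]^2$, each coordinate of $\tilde Z_i$ as its value at the projection onto the corresponding side-interval of $\cR(\ulsigma)$ plus the increment beyond (and using $\tilde Z_1(u)=X(u,0)$, $\tilde Z_2(v)=X(0,v)$), one gets, on $A$, the deterministic bound $\max_{[-s_1,s_1]^2}|X|\le 2M+2\tilde W$ with $M:=\max_{\cR(\ulsigma)}|X|$. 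Hence
\[
 P\bigl(\{1+\max_{[-s_1,s_1]^2}|X|\ge x\sqrt{s_1}\}\cap A\bigr)\ \le\ P\bigl(\{M\ge \tfrac x8\sqrt{s_1}\}\cap A\bigr)\ +\ P\bigl(\tilde W\ge\tfrac x8\sqrt{s_1}\bigr)P(A),
\]
and the last term is $\le Ce^{-cx^2}P(A)$ by independence; it remains to control $P(\{M\ge z\}\cap A)$ with $z=\tfrac x8\sqrt{s_1}$.

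Second, I would reduce $\{M\ge z\}$ to a statement about the algorithm reaching a high level while confined to the box. Writing $M\le M_1+M_2$ with $M_i$ the supremum of $|\tilde Z_i|$ over the $i$-th side-interval of $\cR(\ulsigma)$, it suffices to bound $P(\{M_1\ge z/2\}\cap A)$ (the case of $M_2$ being symmetric). On that event there is a point $u_0$ with $|u_0|\le s_1$ and $|\tilde Z_1(u_0)|\ge z/2$ lying in the first-coordinate range of $R_{m+1}$; tracing back to the stage at which $u_0$ first entered that range shows that either (i) the running maximum of the algorithm reached a value $\ge z/4$ at a stage \emph{before} the escape, hence while the explored rectangle was still inside $[-s_1,s_1]^2$, or (ii) the cross-section of the escape stage climbed from the value at its initial corner to $\ge z/4$ at a point still inside the box --- and in case (ii), either that initial value was already $\ge z/8$, reducing to (i) with $z/8$ in place of $z/4$, or a fresh Brownian motion (independent of the past by the strong Markov property at the corner) travelled a distance $\ge z/8$ within a time $\le 2s_1$, which has conditional probability $\le Ce^{-c z^2/s_1}=Ce^{-cx^2}$ and contributes $\le Ce^{-cx^2}P(A)$. (Here one crucially uses that, since $z\gg\sqrt{s_1}$, a Brownian motion at height $\ge z/4$ cannot return to $0$ within time of order $s_1$, so attaining a large value \emph{inside} the box forces that value to have been acquired during a confined ascent rather than near the escape.)

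Thus everything reduces to the estimate
\[
 P_1\bigl(\mathcal{C}_M\bigr)\ \le\ C\,M^{-\lambda_1}\,e^{-cM^2/s_1}\qquad(M\ge\sqrt{s_1}),
\]
where $\mathcal{C}_M$ is the event that the DW-algorithm started with value $1$ reaches running maximum $\ge M$ while its explored rectangle is still contained in $[-s_1,s_1]^2$; this is the step I expect to be the main obstacle. The factor $M^{-\lambda_1}$ is just $P_1(\ultau^{M/2}<\ulinfty)$ from Theorem~\ref{prop1}. The extra Gaussian factor comes from the observation that, once level $M/2$ is reached at a corner inside the box, the \emph{next} perpendicular stage --- a Brownian cross-section started at height $\ge M/2$ --- must, for the exploration to stay in $[-s_1,s_1]^2$, have its excursion away from $0$ fit in a space-interval of length $\le 2s_1$; and a Brownian motion started at height $h\ge M/2$ hits $0$ within time $2s_1$ with probability $2\Phi(-h/\sqrt{2s_1})\le e^{-M^2/(16s_1)}$, which by the strong Markov property at that corner is independent of $\{\ultau^{M/2}<\ulinfty\}$, yielding the product. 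The remaining work is bookkeeping: the running maximum can overshoot $M/2$, or jump past $M$, in a single stage, and the stage at which level $M$ is reached may be exactly the one in which the rectangle first sticks out, so one must separate cases according to whether the relevant corner or excursion peak lies inside or outside $[-s_1,s_1]^2$, each time either iterating the dyadic bound at a slightly lower level or invoking a Brownian travel estimate $P(\sup_{[0,2s_1]}|B|\ge cM)\le Ce^{-cM^2/s_1}$. Collecting the pieces gives $P(\{M\ge z\}\cap A)\le Cx^{-\lambda_1}s_1^{-\lambda_1/2}e^{-cx^2}+Ce^{-cx^2}P(A)\le C'e^{-c'x}P(A)$ by the lower bound on $P(A)$, and combining with the first paragraph proves the lemma.
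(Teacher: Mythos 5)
Your proof takes a genuinely different route from the paper's, and it has a gap at the step you yourself identify as the main obstacle.

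The paper does not work with the bare escape time $\ulsigma^{[-s_1,s_1]^2}$. Instead it sets $\ulT = \ultau^{\sqrt{s_1}} \wedge \ulsigma^{[-s_1,s_1]^2}$, i.e.\ it caps the stopping point at the moment level $\sqrt{s_1}$ (the natural level for a box of side $s_1$) is first reached. This is the crucial device: since $\ulT \leq \ultau^{\sqrt{s_1}}$ one gets $1+\sup_{\cR(\ulT)} X \leq \sqrt{s_1}$ for free, and Lemma~\ref{lem5.3prime} gives $\sup_{\cR(\ulT)}(-X)\leq 1+\sqrt{s_1}$, so $\sup_{\cR(\ulT)}|X|\leq 1+\sqrt{s_1}$ automatically. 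No separate estimate on the running maximum inside the box is needed. The event ``$M\geq(x-1)\sqrt{s_1}$'' (the excess beyond $\cR(\ulT)$) is then independent of $\F_{\ulT}$, and the proof closes by Theorems~\ref{prop1} and~\ref{thm3}. Your plan forgoes this cap, so you must separately control $\max_{\cR(\ulsigma)}|X|$, leading you to the claimed estimate $P_1(\mathcal{C}_M)\leq CM^{-\lambda_1}e^{-cM^2/s_1}$.

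That estimate is plausible, but the argument you sketch for it does not hold as stated. You assert that once level $M/2$ is reached at a corner, ``the next perpendicular stage --- a Brownian cross-section started at height $\geq M/2$'' --- must hit $0$ in time $\leq 2s_1$, and that this cross-section is independent of $\{\ultau^{M/2}<\ulinfty\}$ by the strong Markov property at the corner. By \eqref{starrd1}, the fresh part of the perpendicular cross-section does \emph{not} start at $H_{2m-1}\geq M/2$; it starts at the increment $H_{2m-1}-H_{2m-2}$, which can be arbitrarily small. The portion of the cross-section that actually attains height $\geq M/2$ (around $T^{m-1}_2$) lies inside the already-explored rectangle and is therefore $\F$-measurable, not fresh. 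Thus the quantity you want to declare independent of the conditioning is in fact a mixture of conditioned and fresh increments, and the clean ``product of probabilities'' you invoke is not available without substantially more work. This is precisely the difficulty that the paper's capping at $\ultau^{\sqrt{s_1}}$ is designed to sidestep. Without a correct proof of the $\mathcal{C}_M$ estimate, your argument only yields the polynomial bound $O(x^{-\lambda_1})$ (from Theorem~\ref{prop1} alone), not the required $Ce^{-cx}$.
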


\proof Set $\ulT = \ultau^{\sqrt{s_1}} \wedge \ulsigma^{[-s_1,s_1]^2}$. Then $\ulT$ is a stopping point relative to $(\F_{\ulu})$. For 
$i = 1, \ldots, 4$, let $(B_i^{\ulT})$ be the four Brownian motions defined in 
(\ref{startp6}). Then
$$
   \sup_{t \in [-s_1,s_1]^2} \vert X(t)\vert 
   \leq \sup_{t \in \cR(\ulT)} \vert X(t) \vert +M,
$$
where
$$
   M = \sum_{i=1}^4 \ \sup_{0 \leq u \leq s_1} \vert 
    B_i^{\ulT}(u) - B_i^{\ulT}(0) \vert.
$$
By construction,
\begin{equation}\label{supX1}
   1+ \sup_{t \in \cR(\ulT)} X(t) \leq \sqrt{s_1},
\end{equation}
and by Lemma \ref{lem5.3prime},
\begin{equation}\label{supX2}
   \sup_{t \in \cR(\ulT)} (- X(t)) \leq 1+ \sqrt{s_1}.
\end{equation}

   By (\ref{supX1}) and (\ref{supX2}), $\sup_{t \in \cR(\ulT)} \vert X (t) \vert \leq 1+ \sqrt{s_1}$. It follows that
\begin{eqnarray*}
  && P_1\left\{1+\sup_{t \in [-s_1, s_1]^2} \vert X(t) \vert \geq x \sqrt{s_1}, 
  \ \cR(\ultau_{(N)}) \not\subset [-s_1, s_1]^2 \right\}\\
  &&\qquad \leq P_1\left\{1+ \sup_{t \in[-s_1, s_1]^2} \vert X(t)\vert 
      \geq x \sqrt{s_1},
    \  \cR(\ultau^{\sqrt{s_1}}) \subset [-s_1, s_1]^2\right\}\\
  &&\qquad \qquad + P_1\left\{1+ \sup_{t \in [-s_1, s_1]^2} \vert X(t) \vert 
    \geq x \sqrt{s_1},\ \ulT = \ulsigma^{[-s_1, s_1]^2},\ \cR(\ultau_{(N)}) 
    \not\subset [s_1, s_1]^2\right\}\\
  &&\qquad \leq P_1(\{M \geq (x-1) \sqrt{s_1}\} \cap \{\cR(\ultau^{\sqrt{s_1}})
         \subset [-s_1, s_1]^2\})\\
  &&\qquad\qquad + P_1(\{M \geq (x-1) \sqrt{s_1}\} \cap \{\ulT = 
    \ulsigma^{[-s_1, s_1]^2},\ \cR(\ultau_{(N)}) \not\subset [s_1, s_1]^2\}).
\end{eqnarray*}
In each term, the event $\{M \geq (x-1) \sqrt{s_1}\}$ is independent of the 
other event (which belongs to ${\cal{F}}_{\ulT})$, so this is
$$
   \leq P_1\{M \geq (x-1) \sqrt{s_1}\}\, (P\{\ultau^{\sqrt{s_1}} < \infty \} 
     + P\{\cR(\ultau_{(N)}) \not\subset [-s_1,s_1]^2\}).
$$
Standard bounds for Brownian motion show that for $x > 0$, the first factor is 
bounded by $ C e^{-cx}$ for universal $c$ and $C$, and therefore, the conditional probability in the statement 
of the lemma is
$$
   \leq C e^{-cx} \left(\frac{P_1\{\ultau^{\sqrt{s_1}} < \infty \}}
       {P_1\{\cR(\ultau_{(N)}) \not\subset [-s_1, s_1]^2\}} + 1\right).
$$
The conclusion now follows from Theorems \ref{prop1} and \ref{thm3}. 
\hfill $\Box$
\vskip 16pt

\noindent{\sc Proof of Proposition \ref{prop1lem16}.} Set $\G = \sigma(X(u,0)-X(s_1,0),\ 
s_1 \leq u \leq s_2)$. 
Let $A \ = \{\ultau^{2^\ell} < \ulinfty \} \cup \{\cR(\ultau_{(N)}) \not\subset [-2^{2\ell}, 2^{2\ell}]^2\}$. We distinguish two cases.
\vskip 12pt

  {\em Case 1}: $s_1 \geq 2^{2(\ell-1)}$. In this case, we shall get a better bound, which does not involve $V$:
\begin{eqnarray*}
   P_1(A \mid \G) &\leq& P_1(\cR(\ultau_{(N)}) 
       \not\subset [-2^{2(\ell-1)}, 2^{2(\ell-1)}]^2 \mid \G) \\
    &&+ P_1(\ultau^{2^\ell} < \ulinfty,\ \cR(\ultau^{2^\ell}) \subset 
      [-2^{2(\ell-1)}, 2^{2(\ell-1)}]^2 \mid \G).
\end{eqnarray*}
By the independence of increments of Brownian motion, the two conditional 
probabilities on the right-hand side are respectively equal to the 
unconditional probability of the same event. By Theorem \ref{thm3}, the first 
term is therefore bounded by $C 2^{-\ell \lambda_1}$, and by Theorem 
\ref{prop1}, the second term is bounded by $P_1\{\ultau^{2^\ell} < \ulinfty\} 
\leq C 2^{-\ell \lambda_1}$, which yields the desired upper bound.
\vskip 12pt

{\em Case 2}: $s_1 < 2^{2(\ell-1)}$. Set $F(s_1) = \{\cR(\ultau_{(N)}) 
\not\subset [-s_1, s_1]^2\}$. Then $F(s_1)$ and $\G$ are independent, and by 
Theorem \ref{thm3}, $P_1(F(s_1)) \leq C s_1^{-\lambda_1/2}$.

   Set
$$
   Y = \sup_{t \in[-s_1,s_1]^2} X(t) \qquad\mbox{ and }\qquad 
   Z = \sup_{t \in [-s_1,s_2] \times [-s_1,s_1]} X(t)
$$
(note the $s_2$ in the definition of $Z$), so that $Z \leq V + Y$. Let 
$\ulT = (s_2, s_1, s_1, s_1)$. Then
$$
   P_1(A \vert \G) = P_1(A \cap F(s_1) \vert \G) 
     + P_1(A \cap F(s_1)^c \vert \G).
$$
Notice that $A \cap F(s_1)^c$ is independent of $\G$, so the second term is equal to $P_1(A \cap F(s_1)^c) \leq P_1\{\ultau^{2^\ell} < \ulinfty \} \leq C 2^{-\ell \lambda_1}$ by Theorem \ref{prop1}. On the other hand,
$$
   P_1(A \cap F(s_1)\mid \G) = E_1(P_1(A \cap F(s_1) \mid {\F}_{\ulT}) 
      \mid \G) = E_1(1_{F(s_1)} P_1(A \mid {\F}_{\ulT}) \mid \G).
$$
By Lemma \ref{lem2lem16}, this is
$$
   \leq C E_1(1_{F(s_1)} ((1+Z) 2^{-\ell})^{\lambda_1} \mid \G).
$$
Because $Z \leq V + Y$, this is
$$
   \leq C E_1(1_{F(s_1)} (1+ V + Y)^{\lambda_1} 2^{-\ell \lambda_1} \mid \G) 
   \leq C 2^{-\ell \lambda_1} \sum_{i=0}^\infty E_1(1_{F(s_1)\cap F_i} 
   ((i+1) \sqrt{s_1} + V)^\lambda \mid \G),
$$
where $F_i = \{i \sqrt{s_1} \leq 1+ Y < (i+1) \sqrt{s_1} \}$. Since $V$ is 
$\G$-measurable, this is equal to
$$
   C 2^{-\ell \lambda_1} \sum_{i=0}^\infty P_1(F(s_1) \cap F_i \mid \G) 
   ((i + 1) \sqrt{s_1} + V)^{\lambda_1}.
$$
Because $F(s_1) \cap F_i$ is independent of $\G$,
$$
   P_1(F(s_1) \cap F_i \mid \G) = P_1(F(s_1) \cap F_i) 
   = P_1(F_i \vert F(s_1))\, P_1(F(s_1)).
$$
By Lemma \ref{lem3lem16} and Theorem \ref{thm3}, this is 
$\leq C e^{-ci} s_1^{-\lambda_1/2}$, and therefore
\begin{eqnarray*}
   P_1(A \mid \G) &\leq& C 2^{-\ell \lambda_1} + C^\prime 2^{-\ell \lambda_1} 
         \sum_{i=0}^\infty e^{-ci} s_1^{-\lambda_1/2} 
         (\sqrt{s_1} + V)^{\lambda_1} (i+1)^{\lambda_1}\\
   &\leq& C^{\prime \prime} 2^{-\ell \lambda_1} 
          \left(1 + \frac{V}{\sqrt{s_1}}\right)^{\lambda_1} 
          \sum_{i=0}^\infty e^{-ci} (i+1)^{\lambda_1}.
\end{eqnarray*}
The series converges, so Proposition \ref{prop1lem16} is proved. 
\hfill $\Box$
\vskip 16pt

   For $n \in \IN,$ let $\ID_n$\index{$\ID_n$} denote the set of elements of $[2, 3]^2$ with dyadic coordinates of order $2n$, and for $1 \leq k \leq \ell \leq n,$ let $\ID_n(k, \ell)$\index{$\ID_n(k, \ell)$} be the set of $(s, t) \in \ID_n \times \ID_n$ such that
$$
  2^{2(k-n-1)} \leq \min(\vert s_1-t_1\vert, \vert s_2-t_2\vert) \leq 2^{2(k-n)}
$$
and
$$
2^{2(\ell-n-1)} \leq \max(\vert s_1-t_1\vert, \vert s_2-t_2\vert) \leq 
  2^{2(\ell-n)}
$$
(the lower bound is replaced by $0$ if $k=1$ or $\ell = 1$). 

\begin{prop} For $t \in [2,3]^2$, let $F(t,\ep) = F_1(t,\ep) \cap
F_2(t,\ep)$ be as defined in Proposition \ref{univariateup}. There is $C < \infty$ such 
that for all large $n \in \IN$, all $1 \leq k \leq \ell \leq n$ and all 
$(s,t) \in \ID(k,\ell)$,
\begin{equation}\label{bivupper}
  P(F(s,2^{-n}) \cap F(t,2^{-n})) \leq C \,2^{-(1+\lambda_1)n -\ell-k\lambda_1}.
\end{equation}
\label{lembivariate}
\end{prop}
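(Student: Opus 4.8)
The plan is to bound the left side of \eqref{bivupper} by a constant times a product of three factors, one for each ``reason'' the event is small: a factor $2^{-n}$ from the one‑point small‑ball event $F_1(s,2^{-n})$; a factor $2^{-\ell}$ from $F_1(t,2^{-n})$ \emph{given} $F_1(s,2^{-n})$, governed by the variance of $\ti X(t)-\ti X(s)$, which is of order $2^{2(\ell-n)}$ (the \emph{larger} coordinate gap); and a factor $2^{-(n+k)\lambda_1}$ from the joint escape $F_2(s,2^{-n})\cap F_2(t,2^{-n})$, in which one of the two DW‑algorithms is made to escape at the full scale $\tfrac12$ (cost $2^{-n\lambda_1}$, by Theorems \ref{prop1} and \ref{thm3}) while the other, conditionally, is forced only up to the scale $2^{2(k-n)}$ of the \emph{smaller} coordinate gap (cost $2^{-k\lambda_1}$). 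First I reduce to a canonical configuration: since swapping the two coordinates and reflecting each coordinate are law‑preserving for the ABM (the coordinate swap only changes the parity of the first DW‑stage, which affects constants but not the estimates of Sections \ref{sec2} and \ref{sec3}), I may assume $s_1<t_1$, $s_2<t_2$, $t_1-s_1\asymp 2^{2(k-n)}$ and $t_2-s_2\asymp 2^{2(\ell-n)}$.

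For the small‑ball step I repeat verbatim the device in the proof of Proposition \ref{univariateup}. With $\G=\F^{(5/2,5/2)}_{(1,1,1,1)}$ one has $F_2(s,2^{-n}),F_2(t,2^{-n})\in\G$ (since $s,t\in[2,3]^2$) and $\ti X(s)=Y_s+\ti X(\tfrac32,\tfrac32)$, $\ti X(t)=Y_t+\ti X(\tfrac32,\tfrac32)$ with $Y_s,Y_t$ $\G$‑measurable and $\ti X(\tfrac32,\tfrac32)\sim N(0,3)$ independent of $\G$. Conditioning on $\G$, the event $F_1(s,2^{-n})\cap F_1(t,2^{-n})$ constrains $\ti X(\tfrac32,\tfrac32)$ to an interval of length $\le 4\cdot 2^{-n}$, nonempty only when $|Y_s-Y_t|=|\ti X(s)-\ti X(t)|\le 4\cdot 2^{-n}$; hence
\begin{align*}
  P\big(F(s,2^{-n})\cap F(t,2^{-n})\big)
  &\le C\,2^{-n}\,E\Big[1_{F_2(s,2^{-n})\cap F_2(t,2^{-n})}\,1_{\{|\ti X(s)-\ti X(t)|\le 4\cdot 2^{-n}\}}\Big].
\end{align*}
It remains to bound this expectation by $C\,2^{-\ell}\,2^{-(n+k)\lambda_1}$.

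For that last bound, note $F_2(t,2^{-n})$ forces the DW‑algorithm started at $t$ with value $2\cdot 2^{-n}$ to reach level $c\,2^{k-n}$ or escape $\cR_t(c^2 2^{2(k-n)})$, for a fixed small $c>0$; call this event $G_k(t)$. It is measurable with respect to the increments of $\ti X$ in $\cR_t(c^2 2^{2(k-n)})$, has probability $\le C\,2^{-k\lambda_1}$ by Theorems \ref{prop1} and \ref{thm3} (trivially so for bounded $k$, which is also where the constraint $a\ge x_0^2$ in Theorem \ref{thm3} would otherwise fail), and — since both coordinate gaps are $\ge 2^{2(k-n-1)}=\tfrac14 2^{2(k-n)}\gg c^2 2^{2(k-n)}$ — involves $\ti Z_1$‑ and $\ti Z_2$‑increments disjoint from those used by $G_k(s)$, so that $G_k(s)$ and $G_k(t)$ are independent. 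Keeping $F_2(s,2^{-n})$ at the full scale and replacing $F_2(t,2^{-n})$ by $G_k(t)$, I then condition on the ``inter‑point'' increments of $\ti Z_1$ on a short interval lying well inside $(s_1,t_1)$, at distance $\asymp 2^{2(k-n)}$ from both endpoints (and likewise of $\ti Z_2$ well inside $(s_2,t_2)$), together with the independent behaviour of the $t$‑algorithm up to its $G_k(t)$‑stopping point and with the reservoir $\ti X(\tfrac32,\tfrac32)$. Proposition \ref{prop1lem16} — applied to the $s$‑algorithm via the Brownian rescaling carrying value $2\cdot 2^{-n}$, target $1$ to value $1$, target $\asymp 2^n$, so that the conditioned interval sits at distance $\asymp 2^{2k}$ from the recentred origin, with the extra conditioning adjoined as allowed by Remark \ref{remprop1lem16} — bounds the conditional probability that the $s$‑algorithm escapes $\cR_s(\tfrac12)$ or reaches level $1$ by $C\,2^{-n\lambda_1}\big(1+V/2^{k-n}\big)^{\lambda_1}$, where $V$ is the supremum of the conditioned $\ti Z_1$‑increments, of order $2^{k-n}$ with Gaussian tails. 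Finally, under all of this conditioning $\ti X(s)-\ti X(t)$ remains a fixed number plus an independent mean‑zero Gaussian of variance $\asymp 2^{2(\ell-n)}$ (the unconditioned bulk of the $\ti Z_2$‑increments on $(s_2,t_2)$), so $\{|\ti X(s)-\ti X(t)|\le 4\cdot 2^{-n}\}$ has conditional probability $\le C\,2^{-n}/2^{\ell-n}=C\,2^{-\ell}$, uniformly; assembling the three bounds and summing over dyadic ranges of $V$ against Gaussian densities exactly as in the proof of Proposition \ref{prop1lem16} (the series converging since $\lambda_1<1$) gives $C\,2^{-\ell}2^{-(n+k)\lambda_1}$, hence \eqref{bivupper}.

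The delicate point — where essentially all the real work lies — is the decoupling of the two DW‑algorithms in the escape factor. A priori the explored rectangle of the $s$‑algorithm can engulf a neighbourhood of $t$ (and conversely), so the two are not run on independent Brownian data and neither can be handled by a direct appeal to the one‑point escape bound given the other. The mechanism that repairs this is (i) to stop the $t$‑algorithm the instant it reaches level $c\,2^{k-n}$ or its explored rectangle leaves $\cR_t(c^2 2^{2(k-n)})$, so that up to then it lives on $\ti Z_i$‑increments in neighbourhoods of $t_1,t_2$ so tiny that the $s$‑algorithm cannot yet have reached them, and (ii) to condition on the inter‑point increments and invoke Proposition \ref{prop1lem16}, whose error term $(1+V/\sqrt{s_1})^{\lambda_1}$ is tailored precisely to an algorithm that must first traverse a prescribed stretch of increments before escaping at the large scale. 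Making this rigorous requires a careful case analysis of the direction in which the $s$‑algorithm first reaches near $t$, the bookkeeping of the various stopping points, and a two‑component version of Proposition \ref{prop1lem16} (conditioning simultaneously on $\ti Z_1$‑ and $\ti Z_2$‑increments) obtained by rerunning its proof; the remaining Gaussian integrations are routine.
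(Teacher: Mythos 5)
Your factor decomposition $2^{-(1+\lambda_1)n-\ell-k\lambda_1}=2^{-n}\cdot 2^{-\ell}\cdot 2^{-n\lambda_1}\cdot 2^{-k\lambda_1}$ is exactly right, you correctly identify stopping the $t$-algorithm at the scale $2^{2(k-n)}$ of the smaller gap (this is the paper's event $\E(t,n,k)$, with explored region $\ulT^{t,k}$), and you correctly name the decoupling of the two DW-algorithms as the heart of the matter. But your choice to keep $F_2(s,2^{-n})$ at full scale is not merely ``more work''; it breaks the conditional structure in two places. First, the full-scale $s$-algorithm's explored rectangle can contain the $t$-algorithm's $\ti Z_1$-increments near $t_1$ \emph{and} its $\ti Z_2$-increments near $t_2$, so the ``two-component version of Proposition \ref{prop1lem16}'' you appeal to would actually have to condition on two blocks in each coordinate (the inter-point stretch and the $t$-data) and would have to cope with the $t$-data spanning both sides of $t_i$; this is a genuinely new lemma, not a rerun of the proof, and Remark \ref{remprop1lem16}'s ``augment by independent information'' clause does not cover it, because the $t$-data is not independent of the increments the full-scale $s$-algorithm uses. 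Second, and more seriously, the $2^{-\ell}$ factor: the ``unconditioned bulk of the $\ti Z_2$-increments on $(s_2,t_2)$'' that you want to treat as an independent $N(0,\asymp 2^{2(\ell-n)})$ is precisely the data the full-scale $s$-algorithm explores, so the small-ball event $\{|\ti X(s)-\ti X(t)|\le 4\cdot 2^{-n}\}$ and $F_2(s,2^{-n})$ are not conditionally independent given your $\G$, and the product of bounds you write is not justified. Adding those increments to $\G$ to restore measurability of $F_2(s)$ then makes $\ti X(s)-\ti X(t)$ a $\G$-measurable constant, and the Gaussian factor evaporates.

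The paper's resolution is to confine \emph{both} algorithms: $\E(s,n,\ell)$ restricts the $s$-algorithm to $R(s,n,\ell)=\cR_s(2^{2(\ell-n)}/9)$ and $\E(t,n,k)$ restricts the $t$-algorithm to $\cR_t(2^{2(k-n)}/9)$. Since the larger gap is along $\ti Z_1$ (after relabelling), these two rectangles have disjoint $\ti Z_1$-footprints, so the inter-algorithm increment $\Delta Z_1 = \ti Z_1(t_1-\ulT_3^{t,k})-\ti Z_1(s_1+\ulT_1^{s,\ell})$ is used by \emph{neither} confined algorithm nor by the fresh ABM $X^{\ulS}$ started from the joint stopping point $\ulS$. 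The $2^{-\ell}$ factor then comes from $\Delta Z_1$, whose conditional density given its running supremum $\zeta_1$ is bounded by Lemma \ref{supBM}. The full escape to level $1$ is handled by applying Lemma \ref{lem5.2prime} to $X^{\ulS}$ with starting value $\le 2^{1-n}+2^{\ell-n}+2^{1+k-n}+\zeta_1+\zeta_2$ — a cost of order $(2^{\ell-n})^{\lambda_1}$ by Theorem \ref{prop1} — and the residual escape cost $2^{-\ell\lambda_1}$ for $\E(s,n,\ell)$ given $\E(t,n,k)$ is supplied by Proposition \ref{prop1lem16}, which now sees only a single conditioned interval (the $t$-algorithm's $\ti Z_2$-footprint at distance $\asymp 2^{2(k-n)}$ from $s_2$), the $t$-algorithm's $\ti Z_1$-data being genuinely independent of $\E(s,n,\ell)$. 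The net escape exponent $2^{(\ell-n)\lambda_1}\cdot 2^{-\ell\lambda_1}\cdot 2^{-k\lambda_1}=2^{-(n+k)\lambda_1}$ matches what you wanted, but this intermediate scale $2^{2(\ell-n)}$ for the $s$-algorithm and the appeal to Lemma \ref{lem5.2prime} are precisely the devices that make all the conditioning rigorous, and they are missing from your argument.
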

 
\proof In the case where $1 \leq k \leq \ell \leq 3$, the trivial inequality
$$
   P(F(s, 2^{-n}) \cap F(t, 2^{-n})) \leq P(F(t, 2^{-n})),
$$
together with the bound from Proposition \ref{univariateup}, is sufficient for (\ref{bivupper}), since 
$$
   2^{-(1 + \lambda_1)n}  \leq 2^{3+3\lambda_1} 
      2^{-(1 + \lambda_1)n-\ell-k\lambda_1}
$$
in this case.

   We consider the case where $3 < k \leq \ell$, since the case $k \leq 3 < \ell$ is actually easier, as we will detail at the end of the proof. We introduce some notation.

   For $s \in [2,3]^2$ and $\ell \leq n$, set
\begin{eqnarray*}
   R(s, n, \ell) &=& \cR_s(2^{2(\ell-n)}/9), \\
   \E(s,n,\ell) &=& \{\cR_s(\ultau_{(N)}^{2^{-n},s}) \not\subset R(s,n,\ell)\} 
      \cup\{\ultau^{2^{-n}, s,2^{\ell-n}} \leq \ulsigma^{2^{-n},s,
      R(s,n,\ell)}\},
\end{eqnarray*}
and let
$$
   \ulT^{s,\ell} = \ultau^{2^{-n}, s, 2^{\ell-n}} \wedge \underline{\sigma}^{2^{-n},s,
    R(s,n,\ell)}.
$$
Note that $\E(s,n,\ell)$ is the event ``the DW-algorithm started at $s$ with 
value $2^{-n}$ escapes $R(s, n, \ell)$ or reaches level $2^{\ell-n}$ within 
this rectangle,'' and $\ulT^{s,\ell}$ represents the portion of $\IR^2$ that the 
algorithm explores up to escaping $R(s, n, \ell)$ or reaching level 
$2^{\ell-n}$.
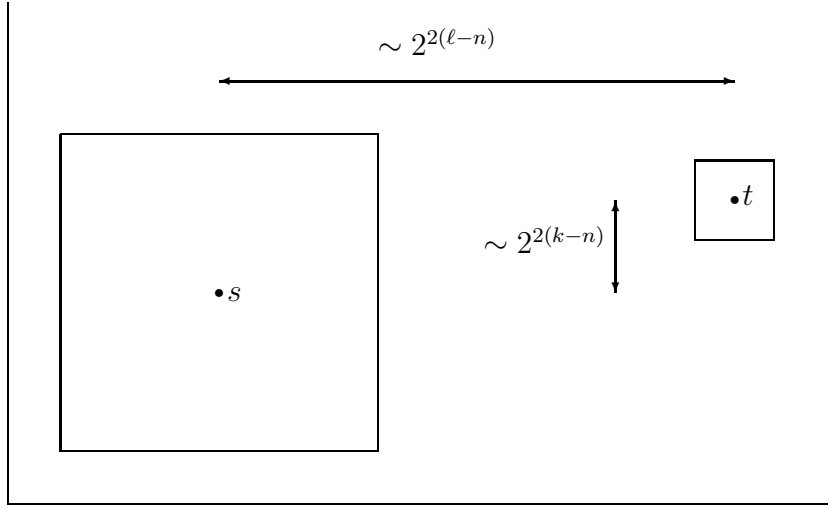
\begin{figure}
\begin{center}
\begin{picture}(320,220)
\put(10,10){\line(0,1){190}}
\put(10,10){\line(1,0){310}}

\put(150,180){$\sim 2^{2(\ell - n)}$}
\put(90,170){\vector(1,0){195}}
\put(90,170){\vector(-1,0){0}}

\put(90,90){\circle*{3}}
\put(93,87){$s$}

\put(30,30){\line(1,0){120}}
\put(30,30){\line(0,1){120}}
\put(30,150){\line(1,0){120}}
\put(150,30){\line(0,1){120}}

\put(285,125){\circle*{3}}
\put(288,123){$t$}

\put(270,110){\line(1,0){30}}
\put(270,110){\line(0,1){30}}
\put(270,140){\line(1,0){30}}
\put(300,110){\line(0,1){30}}

\put(240,90){\vector(0,1){35}}
\put(240,90){\vector(0,-1){0}}

\put(190,105){$\sim 2^{2(k - n)}$}

\end{picture}
\end{center}
\caption{The relative positions of $s$ and $t$. \label{figE1}} 
\end{figure}

   Now fix $(s,t) \in \ID_n(k, \ell)$ (see Figure \ref{figE1}). We assume without loss of generality that $s \leq t$ (for the Brownian sheet, we would have to treat separately this case and the case where neither $s \leq t$ nor $t \leq s$, but for ABM, this distinction is not necessary), and that $t_2-s_2 \leq t_1-s_1$. Notice that $t_1-\ulT_3^{t,k}-(s_1 + \ulT_1^{s,\ell}) >0$, and set
$$
   \zeta_1 = \sup_{0 \leq u_1 \leq t_1-\ulT_3^{t,k}-(s_1 
      + \ulT_1^{s,\ell})} (\tilde{Z}_1(s_1 + \ulT_1^{s,\ell} + u_1) - \tilde{Z}_1
         (s_1 + \ulT_1^{s,\ell})).
$$
We note that $\zeta_1$ is conditionally independent of 
${\cal{F}}_{\ulT^{s,\ell}}^s \vee {\F}^t_{\ulT^{t,k}}$ given $\ulT^{s,\ell}$ and $\ulT^{t,k}$, and 
is stochastically dominated by 
$\sup_{0\leq u_1 \leq 2^{2(\ell-n)}}(\tilde{Z}_1(s_1+u_1)-\tilde{Z}_1(s_1))$. 
For $m \in \IN$, define
$$
   L(m) = \{m2^{\ell-n} \leq \zeta_1 < (m+1)2^{\ell-n}\}.  
$$
We note that the event $\{s_2 + \ulT_2^{s,\ell} < t_2 - \ulT_4^{t,k}\}$ has positive 
probability, and we set
$$
   \zeta_2 = \displaystyle\sup_{0 \leq u_2 \leq t_2-\ulT_4^{t,k}-(s_2+ \ulT_2^{s,\ell})} 
     \left(\tilde{Z}_2(s_2+ \ulT_2^{s,\ell} + u_2) - \tilde{Z}_2(s_2 + \ulT_2^{s,\ell})\right)
$$
if $s_2 + \ulT_2^{s,\ell} < t_2-\ulT_4^{t,k}$, and $\zeta_2 = 0$ otherwise.
We note that $\zeta_2$ is conditionally independent of 
$\sigma(\zeta_1) \vee \F^t_{\ulT^{t,k}} \vee {\F}^s_{\ulT^{s,\ell}}$ given $\ulT^{s,\ell}$ and 
$\ulT^{t,k},$ and is stochastically dominated by $\sup_{0 \leq u_2 \leq 2^{2(k-n)}} 
(\tilde{Z}_2(s_2+u) - \tilde{Z}_2(s_2))$.

   Let $\tilde R(s,t,n)$ be the smallest rectangle that contains $\cR_s(\ulT^{s,\ell}) \cup 
\cR_t(\ulT^{t,k})$, and let $\ulS$ be the $\IR_+^4$-valued $(\F^s_{\ulu})$-stopping 
point such that $\cR_s(\ulS) = \tilde R(s,t,n)$. Clearly,
\begin{eqnarray*}
  2^{-n} + \sup_{r \in \cR_s(\ulS)} X^s(r) &\leq& 2^{-n} + \sup_{r \in \cR_s(\ulT^{s,\ell})} X^s(r) + \sup_{r \in \cR_t(\ulT^{t,k})} X^t(r)\\
  &&\qquad\qquad + \sup_{r \in \cR_t(\ulT^{t,k})} (-X^t(r)) + \zeta_1 + \zeta_2 \\
    \\
    & \leq & 2^{-n} + 2^{\ell-n} + 2^{1+k-n} + \zeta_1 + \zeta_2,
\end{eqnarray*}
where we have used Lemma \ref{lem5.3prime}. Let $(B_i^{s, \underline{S}})$,\index{$B_i^{s, \underline{S}}$} $i = 1,2,3,4$, be the Brownian motions defined 
in (\ref{startp6}), with $\ulT$ there replaced by $\ulS$. As in the lines following (\ref{startp6}), use these four Brownian 
motions to form an additive Brownian motion $X^{\ulS}$. Apply the 
DW-algorithm started at (0,0) with value $2^{1-n} + 2^{\ell-n} + 2^{1+k-n} + \zeta_1 
+ \zeta_2$ to $X^{\ulS}$, and let $G(s, t, n)$ be the event that this algorithm reaches level $1$ within the square $[-1,1]^2$.

   The remainder of the proof relies on the following central observation:
\begin{eqnarray}\nonumber
   &&F(s,2^{-n}) \cap F(t,2^{-n}) \\
   &&\qquad\qquad \subset F_1(t,2^{-n}) \cap F_1(s,2^{-n}) \cap 
      G(s,t,n) \cap \E(s,n,\ell) \cap \E(t,n,k). \label{5.3}
\end{eqnarray}
Indeed, when $F(s,2^{-n})\cap F(t,2^{-n})$ occurs, so do $F_1(t,2^{-n})$, 
$F_1(s,2^{-n})$, $\E(s,n,\ell)$ and $\E(t,n,k)$, by the definitions of these 
events, and $G(s,t,n)$ occurs by Lemma \ref{lem5.2prime}.

   The right-hand side of (\ref{5.3}) is equal to
\begin{equation}
   \cup_{m \in \IN} (F_1(t, 2^{-n}) \cap H_1(m)),
\end{equation}
where
$$
   H_1(m) = F_1(s,2^{-n}) \cap G(s,t,n) \cap L(m) \cap \E(s,n,\ell) 
            \cap \E(t,n,k).
$$
Consider the $\sigma$-field
$$
   \tilde{\G}_1 = \sigma(\ti X(s)) \vee {\F}^s_{\ulT^{s,\ell}} \vee {\F}^t_{\ulT^{t,k}} \vee 
   \sigma (\zeta_1) \vee \sigma (X^{\ulS}(u_1, u_2),\ (u_1, u_2) \in \IR^2).
$$
Then $H_1(m) \in \tilde{\G}_1$, and $\ti X(t) = \Delta Z_1 + \hat{X}(t)$, where $\Delta Z_1 = Z_1 (t_1-\ulT_3^{t,k}) - Z_1(s_1+\ulT_1^{s,\ell})$ and $\hat{X}(t)$ is $\tilde{\G}_1$-measurable. Further, $\Delta Z_1$ is conditionally independent of $\tilde{\G}_1$ given $\zeta_1$, $\ulT_1^{s,\ell}$ and $\ulT_3^{t,k}$. Let $g_{z_1, u_1, v_1}$ denote the conditional density of $\Delta Z_1$ given $\zeta_1 = z_1$, $\ulT_1^{s,\ell} = u_1$ and $\ulT_3^{t,k} = v_1$. Then
$$
   P(F_1(t,2^{-n}) \cap H_1(m)) = E(P\{-\hat{X}(t) - 2^{-n+1} 
      \leq \Delta Z_1 \leq \hat{X}(t) + 2^{-n+1} \vert \tilde{\G}_1 \}
         1_{H_1(m)})
$$
and the conditional probability is bounded above by
$$
   2^{-n+2} \sup_{b \in \IR} g_{\zeta_1, {\ulT_2^{s,\ell}}, \ulT_3^{t,k}}(b) 
   \leq 2^{-n+2} \sup_{b \in\IR,\, u_1 \leq 2^{2(\ell-n)},\, v_1 \leq 2^{2(k-n)}} 
        g_{\zeta_1, u_1, v_1}(b).
$$
Notice that $g_{z_1, u_1, v_1}(\cdot)$ is the conditional density of 
$B(t_1-v_1-(s_1+u_1))$ given $B^\ast(t_1-v_1 - (s_1+u_1)) = z_1$, where 
$B(\cdot)$ is a standard Brownian motion. By Lemma \ref{supBM} below and the fact 
that $t_1-v_1-(s_1+u_1) \geq 2^{2(\ell-n)} /3$, we see that
$$
   P(F_1(t, 2^{-n}) \vert \tilde{\G}) \leq \frac{2^{-n+3}}{2^{2(\ell-n)}/3} 
      \zeta_1 \leq 24\,(m+1)\, 2^{-\ell} \qquad\mbox{on } L(m).
$$
In particular,
\begin{equation}
   P(F_1(t,2^{-n}) \cap H_1(m)) \leq 24\, (m+1)\, 2^{-\ell}\, P(H_1(m)). 
\end{equation}

   We now set 
$$
   H_2(m) = G(s,t,n) \cap L(m) \cap \E(s,n,\ell) \cap \E(t,n,k).
$$
Let $\tilde{\G}_2 = {\F}_{(1,1,1,1)}^{(5/2, 5/2)}$. As in the proof of 
Proposition \ref{univariateup}, $H_2(m) \in \tilde{\G}_2$, and 
$\ti X(s) = \ti X(\frac{3}{2}, \frac{3}{2}) + 
\hat{X}(s)$, where $\hat{X}(s)$ is $\tilde{\G}_2$-measurable, and 
$\ti X(\frac{3}{2}, \frac{3}{2})$ is independent of $\tilde{\G}_2$. Therefore, 
reasoning as above, we see that for some universal constant $C$,
\begin{equation}\label{5.6}
  \begin{array}{lll}
     P(H_1(m)) &=& P(\{\vert \ti X(s)\vert \leq 2^{-n+1}\} \cap H_2(m))\\ 
               &\leq& C 2^{-n} P(H_2(m)).
  \end{array}
\end{equation}

   Let 
$$
      H_3(m) = L(m) \cap \E(s,n,\ell) \cap \E(t,n,k)
$$ 
and $\tilde{\G}_3 = \F^s_{\ulS}$. Then $G(s,t,n)$ is conditionally independent 
of $\tilde{\G}_3$ given $\zeta_1$ and $\zeta_2$, $H_3(m) \in \tilde{\G}_3$, and 
therefore by Theorem \ref{prop1},
\begin{equation}\label{5.7}
   P(H_2(m)) \leq C E((2^{1-n} + 2^{\ell-n} + 2^{1+k-n} + (m+1) 2^{\ell-n} + 
     \zeta_2)^{\lambda_1} 1_{H_3(m)}).
\end{equation}
Set $\hat{\zeta}_2 = 2^{-k+n} \zeta_2$. Then $\hat{\zeta}_2$ is conditionally independent of $\tilde{\G}_4 = \sigma(\zeta_1) \vee \F^s_{\ulT^{s,\ell}} \vee \F^t_{\ulT^{t,k}}$ given $\ulT^{s,\ell}$ and $\ulT^{t,k}$, and by the observation that follows 
the definition of $\zeta_2$, there are universal constants $c$ and $C$ such 
that
\begin{equation}\label{5.8}
   P(\hat{\zeta}_2 \geq z \vert \ulT^{s,\ell}, \ulT^{t,k}) \leq C e^{-cz^2}.
\end{equation}
Writing
\begin{eqnarray*}
  && 2^{1-n} + 2^{\ell-n} + 2^{1+k-n} + (m+1) 2^{\ell-n} + \zeta_2\\
  &&\qquad =
      (2^{1-n} + (m+2) 2^{\ell-n} + 2^{1+k-n}) \left(1 + \frac{2^{k-n}}{2^{1-n} + (m+2)2^{\ell-n}
         +2^{1+k-n}} \hat{\zeta}_2\right) \\
   &&\qquad \leq C (m+2) 2^{\ell-n} (1 + \hat{\zeta}_2),
\end{eqnarray*}
we see from (\ref{5.7}) and the fact that $H_3 \in \tilde{\G}_4$ that
\begin{eqnarray}\nonumber
  P(H_2(m)) &\leq& C(m+2)^{\lambda_1} 2^{(\ell-n)\lambda_1} 
  E(E((1+ \hat{\zeta}_2)^{\lambda_1} \mid \ulT^{s,\ell}, \ulT^{t,k}) 1_{H_3(m)})\\ 
  & \leq & C^\prime (m+2)^{\lambda_1} 2^{(\ell-n) \lambda_1} P(H_3(m)) 
  \label{5.8a}
\end{eqnarray}
by (\ref{5.8}).

   We now observe that $P(L(m)) \leq C e^{-cm^2}$, for some universal positive constants $c$ and $C$, by the observations that follow the definition of $\zeta_1$. Since $\E(s,n,\ell) \cap \E(t,n,k) \in \F^s_{\ulT^{s,\ell}} \vee \F^t_{\ulT^{t,k}}$,
\begin{equation}
   P(H_3(m)) \leq C e^{-cm^2} P(\E(s,n,\ell) \cap \E(t,n,k)).
\end{equation}
We now use a trivially extended form of Proposition \ref{prop1lem16} (see Remark \ref{remprop1lem16}) to see that
\begin{eqnarray}\nonumber
   P(\E(s,n,\ell) \cap \E(t,n,k)) &\leq& E\left(C 2^{-\ell\lambda_1} 
   \left(1 + \frac{2^{k-n}}{2^{k-n}/3}\right)^{\lambda_1} 1_{\E(t,n,k)}\right)\\
   \nonumber 
    &\leq& C^\prime 2^{-\ell \lambda_1} P(\E(t,n,k))\\   
&\leq& C^\prime 2^{-\ell \lambda_1} 2^{-k \lambda_1} 
\label{5.9a}
\end{eqnarray}
by Theorem \ref{thm3}.

   Putting together (\ref{5.3})--(\ref{5.6}) and (\ref{5.8a})--(\ref{5.9a}), we 
conclude that
\begin{eqnarray*}
   P(F(s, 2^{-n}) \cap F(t, 2^{-n})) &\leq& C 2^{-\ell} 2^{-n} 
   2^{(\ell-n)\lambda_1} 2^{-\ell \lambda_1} 2^{-k \lambda_1}
    \sum^\infty_{m=0} (m+1)(m+2)^{\lambda_1} e^{-cm^2}\\
&\leq& C^\prime 2^{-(1+ \lambda_1)n-\ell-k \lambda_1}
\end{eqnarray*}
since the series converges. This proves (\ref{bivupper}) in the case where 
$3 < k \leq \ell$.

   In the case where $k \leq 3 < \ell$, the set $\E(t,n,k)$ plays no role. Instead of (\ref{5.3}), we write
   \begin{eqnarray}\nonumber
   &&F(s,2^{-n}) \cap F(t,2^{-n}) \\
   &&\qquad\qquad \subset F_1(t,2^{-n}) \cap F_1(s,2^{-n}) \cap 
      G(s,t,n) \cap \E(s,n,\ell).
\end{eqnarray}
The remainder of the proof follows as above.
\hfill$\Box$
\vskip 16pt

The following lemma was used in the proof of Proposition \ref{lembivariate}.

\begin{lemma} Let $(B(u),\ u \geq 0)$ be a standard Brownian motion. For 
$u \geq 0$, set $B^\ast(u) = \sup_{0 \leq v \leq u} B(v)$. Then the conditional 
density of $B(u)$ given $B^\ast(u)$ is bounded above by $2 B^\ast(u)/u$.
\label{supBM}
\end{lemma}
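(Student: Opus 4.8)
The plan is to make the conditional density completely explicit via the reflection principle and then estimate it by hand.

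\emph{Step 1: the joint law of $(B(u),B^\ast(u))$.} Reflecting a Brownian path at its first passage time to a level $m\geq 0$ gives, for every $b\leq m$,
\[
   P\{B^\ast(u)\geq m,\ B(u)\leq b\}=P\{B(u)\geq 2m-b\}.
\]
Writing the right-hand side in terms of the $N(0,u)$ distribution function and differentiating first in $b$ and then in $m$ yields the joint density
\[
   f(b,m)=\frac{2(2m-b)}{u\sqrt{2\pi u}}\,\exp\!\Big(-\frac{(2m-b)^2}{2u}\Big),\qquad m\geq 0,\ b\leq m.
\]

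\emph{Step 2: the marginal of $B^\ast(u)$ and the conditional density.} Integrating $f(b,m)$ over $b\in(-\infty,m]$ (substitute $y=2m-b$) gives $\int_m^\infty\frac{2y}{u\sqrt{2\pi u}}\,e^{-y^2/(2u)}\,dy=\frac{2}{\sqrt{2\pi u}}\,e^{-m^2/(2u)}$, i.e.\ the familiar fact that $B^\ast(u)\stackrel{d}{=}|N(0,u)|$. Dividing, the conditional density of $B(u)$ given $B^\ast(u)=m$ is, for $b\leq m$,
\[
   h(b\mid m)=\frac{2m-b}{u}\,\exp\!\Big(\frac{m^2-(2m-b)^2}{2u}\Big).
\]

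\emph{Step 3: the bound.} On the admissible range $2m-b\geq m\geq 0$, so the exponent is nonpositive and already $h(b\mid m)\leq (2m-b)/u$. To sharpen this to $2m/u$ one exploits the Gaussian factor: setting $s=m-b\geq 0$,
\[
   h(b\mid m)=\frac{m+s}{u}\,\exp\!\Big(-\frac{2ms+s^2}{2u}\Big),
\]
and it remains to bound $s\mapsto (m+s)\exp(-(2ms+s^2)/(2u))$ on $s\geq 0$ — a one-variable optimization, to be settled by locating the unique critical point of this factor and comparing its value there with $2m$.

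The density computation in Steps 1--2 is entirely routine; the only place requiring genuine care is the elementary maximization in Step 3, so that is where I would concentrate the effort and where the main (such as it is) obstacle lies.
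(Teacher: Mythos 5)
Your Steps 1 and 2 are correct and follow the same route as the paper (the paper cites Karatzas--Shreve for the joint density rather than re-deriving it via the reflection principle, but the content is identical). The problem is Step 3, which you correctly flag as the only non-routine part but leave undone; carrying out the optimization shows that the claimed bound is in fact \emph{false}. Differentiating $\psi(s)=(m+s)\exp\!\bigl(-(2ms+s^2)/(2u)\bigr)$ in $s$, $\psi'(s)$ has the sign of $1-(m+s)^2/u$, so when $m<\sqrt u$ the maximum over $s\geq 0$ is attained at $s^\ast=\sqrt u-m$, and
\[
   \sup_{s\geq 0}\psi(s)=\psi(s^\ast)=\sqrt u\,\exp\!\Bigl(\tfrac{m^2-u}{2u}\Bigr)=\sqrt u\,e^{\,m^2/(2u)-1/2}.
\]
Setting $x=m/\sqrt u\in(0,1)$, the required inequality $\psi(s^\ast)\leq 2m$ becomes $e^{(x^2-1)/2}\leq 2x$, which fails for small $x$ (e.g.\ at $x=1/10$ the left side is $e^{-0.495}\approx 0.61$ and the right side is $0.2$). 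Concretely, for $u=1$ and $m=1/10$ the conditional density at $b=-4/5$ equals $1\cdot e^{-0.495}\approx 0.61$, exceeding $2m/u=0.2$. So the bound $2B^\ast(u)/u$ does not dominate the conditional density when $B^\ast(u)/\sqrt u$ is small, and your Step 3 cannot be completed as planned.

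The paper's own proof asserts the final inequality with no justification, so your more scrupulous treatment has uncovered a defect in the lemma as stated, not merely an incompleteness in your argument. The actual supremum of the conditional density is $B^\ast(u)/u$ when $B^\ast(u)\geq\sqrt u$, and $u^{-1/2}e^{B^\ast(u)^2/(2u)-1/2}$ otherwise; the stated bound $2B^\ast(u)/u$ therefore holds in the first regime but not in general. A repaired lemma should either record this two-case bound or add a hypothesis of the form $B^\ast(u)\geq c\sqrt u$ for an appropriate constant $c$, and the single application of the lemma (in the bivariate estimate) then needs to be re-examined against the corrected statement.
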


\proof According to \cite[Chapter 2.8]{KSh}, the joint density 
of $(B(u), B^\ast(u))$ is
$$
   f_u(a,b) = \frac{2 (2b-a)}{\sqrt{2 \pi u^3}}
   \exp\left(-\frac{(2b-a)^2}{2u}\right), 
   \qquad a \leq b,\quad b \geq 0.
$$
From the reflection principle, the density of $B^\ast(u)$ is 
$2(2 \pi u)^{-1/2} \exp(-b^2/(2u))$, so the conditional density of $B(u)$ given 
$B^\ast(u) = b$ is
$$
   \frac{1}{u} (2b-a) \exp\left(- \frac{(3b-a)(b-a)}{2u}\right) \leq \frac{2b}{u}.
$$
\hfill$\Box$
\vskip 16pt
 
\end{section}
\eject

\begin{section}{ABM: Lower bounds on certain escape probabilities}\label{sec6}

   The objective of this section is to establish the counterpart to Proposition \ref{univariateup}, namely a lower bound on escape probabilities. This is the remaining important ingredient needed for the second-moment argument. However, an additional requirement is needed for the ``escaping path." Indeed, since the argument in Section \ref{sec7} will use a sequence of paths, we need to ensure that the value of the standard ABM viewed along the limiting path grows sufficiently quickly as ones moves along the path away from its starting point. We will do this by showing that we can require a uniform rate of growth of the ABM along the escaping path without significantly changing the escape probability (see Proposition \ref{rdprop21}). 
	
	The required lower bound is stated and proved in Proposition \ref{lowerlem1}(a), after a sequence of preliminary results. The first lemma is concerned with the probability of reaching a level before exiting a square.

\begin{lemma} There is $c_0 > 0$ such that for all $a \geq 1$ and 
$r \in \IR^2$,
$$
   P_1\{\ultau^{1,r,a} \leq \ulsigma^{1,r,\cR_r(a^2)}\} 
      \geq c_0\, a^{-\lambda_1}.
$$
\label{lem6.18}
\end{lemma}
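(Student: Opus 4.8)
The plan is to reduce the lower bound to the escape probability estimate of Theorem \ref{thm3}, using the comparison between ``escaping a large square'' and ``reaching a high level'' that is already implicit in the heuristic given just before the proof of that theorem. First I would use the scaling property of Brownian motion and the translation-invariance of the increment process $X^r$ to reduce to the case $r = (0,0)$ (translation invariance holds because everything in the statement depends only on $X^r$, which is the increment process from $r$, and this has the same law for every $r$). So it suffices to prove $P_1\{\ultau^{1,a} \le \ulsigma^{1,\cR(a^2)}\} \ge c_0 a^{-\lambda_1}$.

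Next I would observe that the event we must bound below contains a concrete subevent built from the strong Markov property at the stopping point $\ultau^{1,a^2/4}$ (or more simply at $\ultau^a \wedge \ultau_{(N)}$, using the structure of the DW-algorithm). The natural approach is: first, with probability $\ge c\, a^{-\lambda_1}$ (by Theorem \ref{prop1}, after scaling, the probability that the DW-algorithm reaches level $a$ at all is of order $a^{-\lambda_1}$), the algorithm reaches level $a$; second, on the event that it does reach level $a$ during some stage, one of the six corner configurations $F_i$ as in the lower-bound proof of Theorem \ref{thm3} occurs, and the portion of $\IR^2$ explored up to the point $\ultau^a$ at which level $a$ is reached is controlled. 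The delicate point is that reaching level $a$ is not the same as reaching it \emph{before escaping} $\cR(a^2)$; I would handle this exactly as in the lower-bound proof of Theorem \ref{thm3}: condition on $\ultau^a < \ulinfty$ together with one of the favorable corner events $F_i$, then use the strong Markov property at $\ultau^a$ to show that with probability bounded below (uniformly in $a$, by Brownian scaling — the relevant probability is $P_{\sqrt a}\{B_1(u) > 0,\ 0 \le u \le a^2\}$ after rescaling, which is a fixed positive constant), the additive Brownian motion stays positive along a short horizontal segment long enough that the algorithm closes off within $\cR(a^2)$ rather than escaping it first, while still registering level $a$. Summing over $i = 1, \dots, 6$ and combining with the $a^{-\lambda_1}$ lower bound on $P_1\{\ultau^a < \ulinfty\}$ gives the claim with a suitable $c_0 > 0$.

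The main obstacle I anticipate is bookkeeping the geometry: showing that the favorable corner configuration plus positivity of the ABM along an appropriate short segment emanating from the point where level $a$ is first reached actually forces $\ultau^{1,a} \le \ulsigma^{1,\cR(a^2)}$, i.e., that level $a$ is genuinely reached \emph{within} the square $\cR(a^2)$ and not only after leaving it. This requires checking that the rectangle explored by the algorithm up to $\ultau^a$ has both side lengths of order $a^2$ (not larger), which follows from Lemma \ref{rdlem10} and Lemma \ref{rdlem9} applied between the dyadic levels below $a$ — on a set of probability bounded below, $\Vert \ultau^a \wedge \ultau_{(N)} \Vert$ is $O(a^2)$ — so that a minor enlargement of the explored region still fits inside $\cR(a^2)$. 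Once that containment is secured, the rest is a routine strong-Markov argument as above, and the constants are uniform in $a \ge 1$ by Brownian scaling.
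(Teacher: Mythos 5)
Your plan identifies the right general ingredients (scaling, strong Markov property at a stopping point, corner configurations $F_i$), but the core mechanism you propose is wrong, and the bookkeeping that makes the argument close is missing.

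The central problem is your strong-Markov step. You write that, after conditioning on $\ultau^a < \ulinfty$ and a favorable corner event, you will ``use the strong Markov property at $\ultau^a$ to show that \ldots the additive Brownian motion stays positive along a short horizontal segment long enough that the algorithm closes off within $\cR(a^2)$,'' quoting the probability $P_{\sqrt{a}}\{B_1(u)>0,\ 0\le u\le a^2\}$. This is the mechanism from the lower-bound proof of Theorem \ref{thm3}, but it points in the \emph{wrong direction}: forcing a component Brownian motion to stay positive for a long time is exactly what drives the DW-algorithm to explore far and \emph{escape}, not to stay inside $\cR(a^2)$. Moreover, by Brownian scaling $P_{\sqrt{a}}\{B_1(u)>0,\ 0\le u\le a^2\}=P_{1/\sqrt{a}}\{B(v)>0,\ 0\le v\le1\}\to 0$ as $a\to\infty$, so it is not a fixed positive constant. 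The paper's Markov step is different in kind: it uses the \emph{hitting} probability $P_{a/(2K)}\{B \text{ hits } a \text{ before } 0 \text{ and before time } 3a^2/4\}$, which by scaling is $P_{1/(2K)}\{B \text{ hits } 1 \text{ before } 0 \text{ before time } 3/4\}$, a genuine positive constant depending only on $K$. Reaching level $a$ by a direct run within time $3a^2/4$ keeps the exploration budget inside $\cR(a^2)$; merely staying positive does not.

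This reveals the structure you are missing. The paper first uses the subtraction
$$
P_1\{\ultau^{1,r,a}\le\ulsigma^{1,r,\cR_r(K^2a^2)}\}\ \ge\ P_1\{\ultau^{1,r,a}<\ulinfty\}\ -\ P_1\{\cR_r(\ultau^{1,r}_{(N)})\not\subset\cR_r(K^2a^2)\}
$$
with Theorems \ref{prop1} and \ref{thm3}: the first term is $\ge c_1 a^{-\lambda_1}$ and the second is $\le c_2 (Ka)^{-\lambda_1}$, and one \emph{must} take $K$ large (so that $c_2 K^{-\lambda_1}<c_1/2$) for the difference to be positive. Your route through Lemmas \ref{rdlem9} and \ref{rdlem10} never addresses this constant issue, and is also not what the paper uses: Theorem \ref{thm3} already packages the needed escape estimate. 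In any case the subtraction naturally yields a large box $\cR(K^2a^2)$, not $\cR(a^2)$; the paper then rescales (replacing $a$ by $a/(2K)$, so the algorithm reaches level $a/(2K)$ within $\cR(a^2/4)$) and finishes with the hitting-probability Markov step above, which extends from level $a/(2K)$ to level $a$ inside the remaining $3a^2/4$ budget. Your proposal has neither the choice of a large $K$, nor the rescaling, nor the correct Markov mechanism; with those three absent, the argument does not close.
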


\proof Notice that the event considered in the statement of the lemma is ``the DW-algorithm 
started at $r$ with value 1 reaches level $a$ within the square $\cR_r(a^2)$''.
By Theorem \ref{prop1}, there is $c_1 > 0$ such that for all $a \geq 1$ and 
$r \in \IR^2$, $P_1\{\ultau^{1,r,a} < \ulinfty\} \geq c_1 a^{-\lambda_1}$. By 
Theorem \ref{thm3}, there is $c_2 > 0$ such that for all $a \geq 1$ and $K \geq 
1$,
$$
   P_1\{\cR_r (\ultau^{1,r}_{(N)}) \not\subset \cR_r(K^2 a^2)\} \leq 
        c_2(Ka)^{-\lambda_1}.
$$

   Fix $K$ such that $K^{\lambda_1} > 2 c_2/c_1.$ Then the right-hand side above is $\leq a^{-\lambda_1} c_1/2$, and therefore,
\begin{eqnarray*}
   P_1\{\ultau^{1,r,a} \leq \ulsigma^{1,r,\cR_r(K^2a^2)} \}
     &\geq& P_1\{\ultau^{1,r,a} < \ulinfty,\ \cR_r (\ultau^{1,r}_{(N)}) 
        \subset \cR_r(K^2 a^2)\}\\ 
   &\geq& P_1\{\ultau^{1,r,a} < \ulinfty\} - P_1\{\cR_r(\ultau_{(N)}^{1,r}) 
       \not\subset \cR_r(K^2 a^2)\} \\ 
   &\geq& c_1 a^{-\lambda_1} - \frac{c_1}{2} a^{-\lambda_1}\\
   &=& \frac{c_1}{2} a^{-\lambda_1}.
\end{eqnarray*}
Replacing $a$ by $a/(2K)$ and writing $\ultau$ for $\ultau^{1,r,a/(2K)}$ and 
$\ulsigma$ for $\ulsigma^{1,r,\cR_r(a^2/4)}$, we see that for all $a \geq 2K$,
\begin{equation}\label{5.10}
   P_1\{\ultau \leq \ulsigma\} \geq \frac{c_1}{2} (2K)^{\lambda_1} a^{-\lambda_1}.
\end{equation}

   On the event $\{\ultau < \ulinfty\}$, which belongs to $\F_{\ultau}$, if level $a/(2K)$ is reached during an odd stage $2n+1$, then one of the events $F_1$, $F_2$ and $F_3$ occur, where
$$
  \begin{array}{ll}
    F_1 = \{1+ X^r(r_1+\ultau_1, T^n_2) = \frac{a}{2K},
     & 1+ X^r(r_1-\ultau_3, T^n_2) = \frac{a}{2K}\},\\
     \\
    F_2 = \{1+ X^r(r_1+\ultau_1, T^n_2) = \frac{a}{2K},
       & 1+ X^r(r_1-\ultau_3, T^n_2) = 0\},\\
     \\
    F_3 = \{1+ X^r(r_1+\ultau_1, T^n_2) = 0,
     & 1+ X^r(r_1-\ultau_3, T^n_2) = \frac{a}{2K}\}.
  \end{array}
$$
If level $a/(2K)$ is reached during an even stage, then one of $F_4$, $F_5$ or 
$F_6$ occurs, where these events are defined using the obvious analogy with $F_1$, $F_2$, $F_3$. For 
$i = 1,2,3, 4$, let $G_i$ be the event ``$\frac{a}{2K} + Z_i^{r,\ultau}$ hits level $a$ 
before level $0$ and before time $3a^2/4$.'' By Brownian scaling, 
$P_{a/(2K)} (G_i) \geq c_3 > 0$, where $c_3$ depends on $K$ but not on $a, r$ 
or $i$. Set
$$
  \tilde{F}_1 = F_1 \cap G_1 \cap G_3, \quad \tilde{F}_2 = F_2 \cap G_1, \quad
   \tilde{F}_3 = F_3 \cap G_3,
$$
and define $\tilde{F}_4$, $\tilde{F}_5$ and $\tilde{F}_6$ by analogy. 

   The key observation is that
\begin{equation}\label{5.11}
   P_1\{\ultau^{1,r,a} \leq \ulsigma^{1,r,\cR_r(a^2)}\} \geq 
   \sum_{i=1}^6 P(\{\ultau \leq \ulsigma\} \cap \tilde{F}_i).
\end{equation}
Indeed, the events on the right-hand side are disjoint, and each is contained in the event on the left-hand side. The idea behind \eqref{5.11} is that as soon as the DW-algorithm has reached level $a/(2K)$, it has probability at least $c_3$ of reaching level $a$ during the next step, so little is lost in inequality \eqref{5.11}.

   Since $G_1$ and $G_3$ are independent of $\F^r_{\ultau}$, it follows that the term in \eqref{5.11} with $i=1$ is 
bounded below by
$$
   c_3^2\, P_1(\{\ultau \leq \ulsigma\} \cap F_1),
$$
while the terms with $i \in \{2,3\}$ are bounded below by
$$
   c_3\, P_1(\{\ultau \leq \ulsigma\} \cap F_i),
$$
and similar inequalities hold for $i = 4,5,6$. Therefore, by (\ref{5.11}),
\begin{eqnarray*}
   P_1\{\ultau^{1,r,a} \leq \ulsigma^{1,r,\cR_r(a^2)}\} &\geq& 
      c_3^2\, \sum_{i=1}^6 P_1(\{\ultau \leq \ulsigma\} \cap F_i)\\
   &=& c_3^2\, P_1 \{\ultau \leq \ulsigma\}\\
   &\geq& c_3^2\, \frac{c_1}{2}(2K)^{\lambda_1}\, a^{-\lambda_1}
\end{eqnarray*}
by (\ref{5.10}).

   In order to handle the case where $1 \leq a \leq 2K$, we note that in this 
case,
$$
   P_1\{\ultau^{1,r,a} \leq \ulsigma^{1,r,\cR_r(a^2)}\} \geq 
     P_1\{\ultau^{1,r,a} \leq \ulsigma^{1,r,\cR_r(1)}\},
$$
and this probability is bounded below uniformly over $a \in [1,2K]$, since it 
is bounded below by the probability that level $2K$ is reached at the first stage 
of the DW-algorithm, within $1$ unit of $r$. This completes the proof. 
\hfill $\Box$
\vskip 16pt

   The next lemma is concerned with the probability of reaching a level before exiting a square, and at the same time, of reaching a geometric sequence of intermediate levels much later than is typical.

\begin{lemma} Let $c_0$ be the constant from Lemma \ref{lem6.18}. Then for $K$ 
sufficiently large, for all $r$ and $n\geq 2$,
\begin{equation}\label{eq6.3}
   P_{2^{-n}}\left(\{\ultau^{2^{-n},r,1} \leq \ulsigma^{2^{-n},r,\cR_r(1/2)}\} \cap
    \bigcup^{n-1}_{j=0} \{\Vert \ultau^{2^{-n},r,2^{j+1-n}} \Vert 
       > K(n-j)2^{2(j-n)}\}\right) \leq \frac{c_0}{10} 2^{-n \lambda_1}.
\end{equation}
\label{lem6.19}
\end{lemma}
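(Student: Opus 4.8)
The plan is to decompose the event in \eqref{eq6.3} over the index $j$ for which the growth constraint $\Vert \ultau^{2^{-n},r,2^{j+1-n}}\Vert > K(n-j)2^{2(j-n)}$ is violated, and to use the strong Markov property of additive Brownian motion at the stopping point $\ultau^{2^{-n},r,2^{j-n}}$ at which level $2^{j-n}$ is first reached. First I would reduce to the scale-normalized situation: by Brownian scaling, the probability in \eqref{eq6.3} equals the corresponding probability for the DW-algorithm started at $r'$ with value $1$, reaching level $2^{n}$ within $\cR_{r'}(2^{2n-1})$, while for some $j$ the hitting point $\ultau^{1,r',2^{j+1}}$ has norm exceeding $K(n-j)2^{2j}$. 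This puts everything in terms of the large-level asymptotics governed by Theorems \ref{prop1} and \ref{thm3}. By a union bound over $j$, it suffices to show that for each $j\in\{0,\dots,n-1\}$, the probability of reaching level $1$ within $\cR_r(1/2)$ \emph{and} $\Vert\ultau^{2^{-n},r,2^{j+1-n}}\Vert > K(n-j)2^{2(j-n)}$ is at most $(c_0/10) 2^{-n\lambda_1} \cdot a_{n,j}$ with $\sum_j a_{n,j}\le 1$; an exponentially small factor $e^{-c(n-j)}$ will do.

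The key step is to condition on $\F_{\ultau^{2^{-n},r,2^{j-n}}}$ — the information up to reaching level $2^{j-n}$ — and split the event of reaching level $1$ into the contribution before the next intermediate level $2^{j+1-n}$ is reached (where the large-norm condition on $\ultau^{2^{-n},r,2^{j+1-n}}$ is active) and afterwards. After reaching level $2^{j-n}$, Lemma \ref{rdlem10} (applied at scale $2^{j-n}$, i.e. with $\ell = j-n$ up to the obvious reindexing, or rather its scaled analogue: $P_1\{\Vert \ultau^{2^{\ell+1}}\wedge\ultau_{(N)} - \ultau^{2^\ell}\Vert \ge x2^{2\ell}\mid \F_{\ultau^{2^\ell}}\}\le Ce^{-cx}$ on $\{\ultau^{2^\ell}<\infty\}$) gives that $\Vert\ultau^{2^{-n},r,2^{j+1-n}} - \ultau^{2^{-n},r,2^{j-n}}\Vert$ exceeds a large multiple of $2^{2(j-n)}$ only with exponentially small conditional probability. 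Meanwhile, by Lemma \ref{lem5.2prime}/Lemma \ref{lem2lem16} applied to the standard ABM $X^{\ultau^{2^{-n},r,2^{j-n}}}$ defined after \eqref{startp6}, the conditional probability of subsequently reaching level $1$ is at most $C(Y\,2^{n})^{\lambda_1}$, where $Y = 2^{j-n} + \max_{\cR(\ultau^{2^{-n},r,2^{j-n}})}|X^r|$; after controlling $Y$ by Lemma \ref{lem5.3prime} (which bounds $\sup_{\cR(\ultau^a\wedge\ultau_{(N)})}(-X)$ by $1+a$ at the appropriate scale) and the tail bound from Lemma \ref{lem3lem16}, one gets that this conditional probability is of order $2^{(j-n)\lambda_1}$ times an integrable overshoot factor. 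Multiplying the $2^{(j-n)\lambda_1}$ from the conditional reach-level-$1$ probability by the $e^{-cK(n-j)}$ from the large-norm event, summing a geometric-type series in $j$, and finally choosing $K$ large enough that the total is at most $c_0/10$ times $2^{-n\lambda_1}$, completes the argument.

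A few structural points to be careful with: the stopping point $\ultau^{2^{-n},r,2^{j-n}}$ may or may not be finite, so all conditional estimates are stated on $\{\ultau^{2^{-n},r,2^{j-n}}<\ulinfty\}$, which is fine since the event in \eqref{eq6.3} forces all these intermediate levels to be reached (as level $1 \ge 2^{j+1-n}$ is reached within the square). Also one must match the ``within $\cR_r(1/2)$'' constraint with the scaled squares $\cR(2^{2(j-n)}\cdot\text{const})$ from Lemma \ref{rdlem10} and the size bounds; since $2^{2(j-n)}\le 2^{-2}$ for $j\le n-1$, every intermediate exploration rectangle comfortably fits inside $\cR_r(1/2)$ except possibly at the very last scales, and there the trivial bound $P(\cdot)\le P_{2^{-n}}\{\ultau^{2^{-n},r,1}\le\ulsigma^{2^{-n},r,\cR_r(1/2)}\}\le C2^{-n\lambda_1}$ (Theorem \ref{thm3}) absorbed into the choice of $K$ handles the finitely many borderline terms. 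The main obstacle I expect is bookkeeping the overshoots consistently: the value $Y$ entering Lemma \ref{lem2lem16} is not exactly $2^{j-n}$ but $2^{j-n}$ plus an overshoot plus a reflection term, and one has to verify that the product of the resulting overshoot distributions (one per scale $j$) is summable with room to spare so that the constant can genuinely be driven below $c_0/10$; this is exactly the kind of estimate carried out in the proof of Lemma \ref{rdlem10} using Cramér's theorem, and the same machinery applies here.
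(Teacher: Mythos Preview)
Your approach is essentially the paper's, but you are missing one clean reduction step that makes the conditioning work. The event you want to control for fixed $j$ is $\{\Vert\ultau^{2^{-n},r,2^{j+1-n}}\Vert > K(n-j)2^{2(j-n)}\}$, but after conditioning on $\F_{\ultau^{2^{-n},r,2^{j-n}}}$ you have no control over $\Vert\ultau^{2^{-n},r,2^{j-n}}\Vert$ itself, so a bound on the \emph{increment} $\Vert\ultau^{2^{j+1-n}} - \ultau^{2^{j-n}}\Vert$ from Lemma~\ref{rdlem10} does not by itself bound the total. The paper first observes the elementary telescoping inequality $\tfrac12\sum_{\ell=0}^{j-1}(n-\ell)2^{2(\ell-n)} \le (n-j)2^{2(j-n)}$, which implies
\[
   \bigcup_{j}\{\Vert\ultau^{2^{j+1-n}}\Vert > K(n-j)2^{2(j-n)}\}
   \ \subset\
   \bigcup_{j}\{\Vert\ultau^{2^{j+1-n}} - \ultau^{2^{j-n}}\Vert > \tfrac{K}{2}(n-j)2^{2(j-n)}\}.
\]
After this reduction the events match Lemma~\ref{rdlem10} exactly, and the iterated conditioning (Theorem~\ref{prop1} for $P\{\ultau^{2^{j-n}} < \ulinfty\}$, Lemma~\ref{rdlem10} for the increment given $\F_{\ultau^{2^{j-n}}}$, and Lemma~\ref{lem2lem16} for the continuation to level $1$ given $\F_{\ultau^{2^{j+1-n}}}$) yields $C\,2^{-n\lambda_1}e^{-cK(n-j)}$ per term; summing over $j$ and taking $K$ large finishes.

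Your concern about overshoots is unnecessary and makes the argument heavier than it needs to be. At the stopping point $\ultau^{2^{j+1-n}}$ where level $2^{j+1-n}$ is first attained, the quantity $Y = x_0 + \max_{\cR(\ultau^{2^{j+1-n}})}|X^r|$ is \emph{deterministically} bounded by a constant times $2^{j+1-n}$: the upward sup equals $2^{j+1-n}$ by definition of this stopping point, and the downward sup is controlled by Lemma~\ref{lem5.3prime}. Hence Lemma~\ref{lem2lem16} gives the conditional reach-level-$1$ probability $\le C\,(2^{j+1-n})^{\lambda_1}$ with no random overshoot factor, and no appeal to Lemma~\ref{lem3lem16} or Cram\'er is needed. (Incidentally, your formula $C(Y\,2^n)^{\lambda_1}$ is a slip: after rescaling, the bound from Lemma~\ref{lem2lem16} is $CY^{\lambda_1}$, which does give the order $2^{(j-n)\lambda_1}$ you state.)
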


\proof Because
$$
   \frac{1}{2} \sum_{\ell=0}^{j-1} (n-\ell)2^{2(\ell-n)} \leq (n-j)2^{2(j-n)}, 
   \qquad\mbox{ for all } 1 \leq j \leq n-1,
$$
the event on the left-hand side of (\ref{eq6.3}) is contained in
\begin{eqnarray*}
  && \{\ultau^{2^{-n},r,1} \leq \ulsigma^{2^{-n},r,\cR_r(1/2)} \} \cap 
     \bigcup^{n-1}_{j=0} \{ \Vert \ultau^{2^{-n},r,2^{j+1-n}} 
         - \ultau^{2^{-n}, r, 2^{j-n}} \Vert > \frac{K}{2} (n-j) 2^{2(j-n)}\}\\
   &&\qquad \subset \bigcup_{j=0}^{n-1} (A_1(j) \cap A_1(j+1) \cap A_2(j) \cap A_3),
\end{eqnarray*}
where
\begin{eqnarray*}
   A_1(j) &=& \{\ultau^{2^{-n},r,2^{j-n}} < \ulinfty\},\\
   A_2(j) &=& \{ \Vert \ultau^{2^{-n},r,2^{j+1-n}} - \ultau^{2^{-n}, r, 2^{j-n}} \Vert > \frac{K}{2} (n-j) 2^{2(j-n)}\},\\
  A_3 &=& \{\ultau^{2^{-n},r,1} < \ulinfty\}.
\end{eqnarray*}
By Lemma \ref{lem2lem16},
$$
   P_{2^{-n}}(A_3 \vert {\cal{F}}_{\ultau^{2^{j+1-n}}}^r) 
     \leq c (2^{j+1-n})^{\lambda_1}\qquad \mbox{on } A_1(j+1),
$$
and $A_1(j) \cap A_1(j+1) \cap A_2(j) \in \F_{\ultau^{2^{j+1-n}}}^r$. By Lemma \ref{rdlem10},
$$
   P_{2^{-n}}(A_1(j+1) \cap A_2(j) \vert {\cal{F}}_{\ultau^{2^{j-n}}}^r) \leq C e^{-cK(n-j)} \qquad \mbox{on } A_1(j),
$$
and therefore, by iterated conditioning and Theorem \ref{prop1},
\begin{align*}
   &P_{2^{-n}}(A_3 \cap A_1(j+1) \cap A_2(j) \cap A_1(j)) \\
   &\qquad \leq C (2^{j+1-n})^{\lambda_1} P(A_1(j+1) \cap A_2(j) \cap A_1(j)) \\
   &\qquad \leq C\,2^{(j+1-n) \lambda_1} e^{-cK(n-j)} 2^{-j \lambda_1}\\
   &\qquad = C\,2^{-n \lambda_1} e^{-cK(n-j)}.
\end{align*}
It follows that the left-hand side of (\ref{eq6.3}) is bounded above by
$$
   \sum_{j=0}^{n-1} C\,2^{-n \lambda_1} e^{-cK(n-j)} 
     \leq C\,2^{-n \lambda_1} \sum^\infty_{j=1} e^{-cKj}.
$$
By choosing $K$ large, the series can be made arbitrarily small, and this 
proves (\ref{eq6.3}). 
\hfill $\Box$
\vskip 16pt

   The lemma below is concerned with the probability that the DW-algorithm reaches a certain level and drops back far below an intermediate level after reaching this intermediate level, for a geometric sequence of intermediate levels.

\begin{lemma} Let $\Gamma^{\ast, 2^{-n},r}$\index{$\Gamma^{\ast, 2^{-n},r}$} be the path constructed by the DW-algorithm started at $r$ with value $2^{-n}$, and let 
$\gamma^{\ast, 2^{-n},r} : \IR_+ \to \IR^2$\index{$\gamma^{\ast, 2^{-n},r}$} be the one-to-one parametrization 
by arc-length of this path, such that $\gamma^{\ast,2^{-n},r}(0) = r$. Define 
$\alpha_n^{j,r} \in \IR_+$\index{$\alpha_n^{j,r}$} by $\alpha_n^{j,r} = \inf\{ u\geq 0: 2^{-n} + X^r(\gamma^{\ast,2^{-n},r}(u)) = 2^{j-n}\}$. 
Let $c_0$ be the constant from Lemma \ref{lem6.18}, 
and for $K > 0$ and $j \geq 1$, set $c_j = K^{-2}j^{-3}$. For $K$ sufficiently 
large, for all $r$ and $n$,
\begin{equation}\label{eq6.4}
   P_{2^{-n}}(\{\ultau^{2^{-n}, r, 1} < \ulinfty\} \cap 
     \bigcup^{n-1}_{j=0} \{2^{-n} + \inf_{\alpha^{j,r}_n \leq u < \alpha^{j+1,n}_n} 
       X^r(\gamma^{\ast, 2^{-n},r}(u)) \leq c_{n-j} 2^{j-n}\}) 
       \leq \frac{c_0}{10} 2^{-n\lambda_1}.
\end{equation}
\label{lem6.20}
\end{lemma}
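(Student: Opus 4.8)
The strategy mirrors the proof of Lemma~\ref{lem6.19}, replacing the ``time-to-reach'' events $A_2(j)$ there by events describing a deep downward fluctuation of $2^{-n}+X^r$ along the path between the levels $2^{j-n}$ and $2^{j+1-n}$. First I would set up the decomposition. For $0\le j\le n-1$, let
$$
A_1(j)=\{\ultau^{2^{-n},r,2^{j-n}}<\ulinfty\},\qquad
A_3=\{\ultau^{2^{-n},r,1}<\ulinfty\},
$$
and let $B(j)$ be the event that, along $\gamma^{\ast,2^{-n},r}$ between the first times the path reaches levels $2^{j-n}$ and $2^{j+1-n}$, the process $2^{-n}+X^r$ drops to or below $c_{n-j}2^{j-n}$. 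The event on the left-hand side of \eqref{eq6.4} is then contained in $\bigcup_{j=0}^{n-1}\bigl(A_1(j)\cap A_1(j+1)\cap B(j)\cap A_3\bigr)$, and the point is that $B(j)\in\F^r_{\ultau^{2^{j+1-n}}}$ while $A_1(j)\cap A_1(j+1)\cap B(j)\in\F^r_{\ultau^{2^{j+1-n}}}$ as well, so conditioning successively on $\F^r_{\ultau^{2^{j+1-n}}}$ and then on $\F^r_{\ultau^{2^{j-n}}}$ applies.

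Next I would estimate the three pieces. Conditioning on $\F^r_{\ultau^{2^{j+1-n}}}$: by Lemma~\ref{lem2lem16}, $P_{2^{-n}}(A_3\mid\F^r_{\ultau^{2^{j+1-n}}})\le c\,(2^{j+1-n})^{\lambda_1}$ on $A_1(j+1)$. For the middle factor I need: on $A_1(j)$, the conditional probability that $A_1(j+1)$ occurs \emph{and} $B(j)$ occurs is at most $C e^{-\tilde c K^2(n-j)^3}$ or at least something summable in $n-j$. This is the new ingredient. Between levels $2^{j-n}$ and $2^{j+1-n}$ the path is built by a sequence of DW-stages, and on each stage the process along the path is a Brownian excursion started at the previous maximum minus the rectangle-increment size; the event that $2^{-n}+X^r$ on the path drops below $c_{n-j}2^{j-n}$ while eventually still climbing to $2^{j+1-n}$ forces one of the ``gambler's ruin'' sub-excursions to behave atypically, namely to descend from a level of order $2^{j-n}$ down to order $c_{n-j}2^{j-n}$ and then back up to $2^{j+1-n}=2\cdot 2^{j-n}$ without being absorbed at $0$. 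By the gambler's-ruin formula for Brownian motion, the probability of such a round trip is of order $c_{n-j}$ (the probability that a BM started near $c_{n-j}2^{j-n}$ climbs to $2^{j+1-n}$ before $0$ is $\asymp c_{n-j}$). Since $c_{n-j}=K^{-2}(n-j)^{-3}$, and since by Lemma~\ref{rdlem9} the number $\nu_\ell$ of stages used to cross from $2^{j-n}$ to $2^{j+1-n}$ has geometric tails (so with overwhelming probability $O(\log)$-many stages), a union bound over stages gives $P_{2^{-n}}(A_1(j+1)\cap B(j)\mid\F^r_{\ultau^{2^{j-n}}})\le C\,c_{n-j}\cdot(\text{polylog})\le C'K^{-2}(n-j)^{-2}$ on $A_1(j)$, after absorbing logarithms into the power of $(n-j)$ (this is why the exponent $3$ rather than $1$ is built into $c_j$). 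Finally, conditioning on $\F^r_{\ultau^{2^{j-n}}}$ and using Theorem~\ref{prop1}, $P_{2^{-n}}(A_1(j))\le C\,2^{-j\lambda_1}$. Multiplying the three estimates:
$$
P_{2^{-n}}\bigl(A_3\cap A_1(j+1)\cap B(j)\cap A_1(j)\bigr)
\le C\,2^{(j+1-n)\lambda_1}\cdot K^{-2}(n-j)^{-2}\cdot 2^{-j\lambda_1}
= C\,2^{-n\lambda_1}K^{-2}(n-j)^{-2}.
$$
Summing over $0\le j\le n-1$ gives $C\,2^{-n\lambda_1}K^{-2}\sum_{m\ge1}m^{-2}$, which is $\le\frac{c_0}{10}2^{-n\lambda_1}$ once $K$ is chosen large enough.

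The main obstacle is the middle step: making precise the reduction of the event ``the path value drops below $c_{n-j}2^{j-n}$ before reaching level $2^{j+1-n}$'' to a gambler's-ruin estimate for the individual Brownian excursions comprising the relevant DW-stages, and controlling the number of such stages so that the union bound costs only a polylogarithmic factor (absorbed by the cube in $c_j=K^{-2}j^{-3}$). One must be careful that the ``drop'' is measured along the constructed path $\gamma^{\ast,2^{-n},r}$, not on the whole explored rectangle, so that within a single stage the relevant quantity is exactly a one-dimensional Brownian path on the active segment, whose minimum before absorption at $\{0,\text{current max}\}$ is governed by classical ruin probabilities; combining this with Lemma~\ref{rdlem9} (for the number of stages) and the strong Markov property at the stopping points $\ultau^{2^{j-n}}$ is the crux. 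Everything else is a routine iteration identical in structure to the proof of Lemma~\ref{lem6.19}.
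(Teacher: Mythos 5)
Your plan follows the same overall structure as the paper's proof: decompose the bad event over $j$, iterate the Markov property at the stopping points $\ultau^{2^{j-n}}$ and $\ultau^{2^{j+1-n}}$, get a ruin-type estimate of order $c_{n-j}$ for the middle factor, and close by summability in $j$ with $K$ large. Two details deserve attention.

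First, the truncation on the number of stages should be at $K(n-j)$ stages, not $O(\log(n-j))$. By Lemma~\ref{rdlem9} the failure probability of a truncation at $m$ stages is $(3/4)^{m-2}$, so a truncation at $O(\log(n-j))$ gives a failure term that is $\geq 1$ when $n-j=1$ and does not shrink with $K$ after summing over $j$. Truncating at $K(n-j)$ stages instead gives a total failure contribution $\sum_{m\geq 1}(3/4)^{Km-2}\to 0$ as $K\to\infty$, and the union bound then costs a factor $K(n-j)$, so the per-$j$ term is $K(n-j)\,c_{n-j}=K^{-1}(n-j)^{-2}$ (not $K^{-2}(n-j)^{-2}$); this is still summable with total $O(K^{-1})$, so the conclusion is unaffected. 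This is exactly why $c_j=K^{-2}j^{-3}$ carries the cube.

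Second, the ``drop'' event is more heterogeneous than a single ruin estimate. The paper splits it into five sub-events $F_{j,1},\dots,F_{j,5}$: some genuinely correspond to a Brownian excursion dipping below $c_{n-j}2^{j-n}$ on an active segment (and are handled by the ruin formula as you describe), but others correspond to the jump $H_m - H_{m-1}$ of the embedded Markov chain being small, so the value \emph{at a corner} of $\Gamma^*$ is already small. These latter cases require the density bound $0\le -f'_{x,y}(z)\le 2/z$ coming from Proposition~\ref{noSTOP}, not a pathwise ruin argument; conveniently they yield the same order $\lesssim c_{n-j}$ per stage. Your sketch correctly identifies this middle estimate as the crux, and once the decomposition is carried out the argument closes exactly as you indicate.
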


\begin{remark} On the stage $k$ where the value $2^{j-n}$ is first achieved (i.e.~so that $H_{k-1} < 2^{j-n} \leq H_k$, it may well be the case that this value is attained in both possible directions.  The point $\gamma^{\ast,2^{-n},r}(\alpha_n^{j,r})$ is the relevant position in the direction that later leads to the highest maximum before hitting zero (this position is a.s.~unique). In particular, $\gamma^{\ast,2^{-n},r}(\alpha_n^{j,r}) \in \cR_r(\ultau^{2^{-n}, r, 2^{j-n}})$.
\end{remark}

\proof For $0 \leq j \leq n-1,$ suppose level $2^{j-n}$ is reached during stage 
$N_j$, that is, $H_{N_j-1} < 2^{j-n} \leq H_{N_j}$, and let $\nu_j$ be the 
(random) number of stages needed to pass from level $2^{j-n}$ to level 
$2^{j+1-n}$.

   For  $N_j$ of the form $2m-1$, we define 
\begin{eqnarray*}
   F_{j,1} &=& \{2^{-n} + \inf_{\alpha^{j,r}_n < u < \alpha^{j+1,r} \wedge
   (\gamma^{\ast, 2^{-n},r})^{-1}(T_1^m, T_2^{m-1})} 
   X^r(\gamma^{\ast, 2^{-n},r}(u)) \leq c_{n-j} 2^{j-n}\},\\
   \\
   F_{j,2} &=& \{H_{N_{j+1}} - H_{N_{j+1}-1} \leq c_{n-j} 2^{j-n}\},\\
   \\
    F_{j,3} &=& \{\nu_j \geq K(n-j)\},
\end{eqnarray*}
with an analogous definition when $N_j$ is even, and for $i \in \{4, 5\}$,
$$
F_{j,i} = \bigcup_{k=N_j}^{N_{j+1}-1} F_{j, i, k},
$$
where
$$
F_{j, 4, k} = \{H_k - H_{k-1} \leq c_{n-j} 2^{j-n}\},
$$
\begin{eqnarray*}
   F_{j, 5, 2k-1} &=& \{T_1^k < U_{k-1},\ 2^{-n} + \inf_{[T^k_1, U_{k-1}]} 
       X^r(\cdot, T^{k-1}_2) \leq c_{n-j} 2^{j-n}\} \\
       &&\qquad \cup\, \{ T_1^k > U^\prime_{k-1},\ 2^{-n} +  \inf_{[U^\prime_{k-1}, T^k_1\}} 
       X(\cdot, T^{k-1}_2) \leq c_{n-j} 2^{j-n}\},\\
   && \\
   F_{j, 5, 2k} &=& \{T_2^k < V_{k-1},\ 2^{-n} + \inf_{[T^k_2, V_{k-1}]} 
   X^r(T^k_1, \cdot) \leq c_{n-j} 2^{j-n}\} \\
   &&\qquad  \cup\, \{T_2^k > V^\prime _{k-1},\ 2^{-n} + \inf_{[V^\prime_{k-1}, T_2^k \cdot]}
   X^r(T^k_1,\cdot)  \leq c_{n-j} 2^{j-n}\}
\end{eqnarray*} 
(notice that on the event $F_{j, 5, 2k-1}$, the ABM $X^r$ reaches the low level $c_{n-j} 2^{j-n}$ during stage $2k-1$ before reaching level $2^{j-n}$). Use these events to define, for $i \in \{ 1, 2, 3\}$,
$$
   G_{j, i} = \{\ultau^{2^{-n}, r, 1} < \ulinfty\} 
   \cap F_{j, i},
$$
and for $ i \in \{ 4, 5\}$,
$$
  G_{j, i} = \{\ultau^{2^{-n}, r, 1} < \ulinfty\} 
     \cap \{\nu_j < K(n-j)\} \cap F_{j,i}.
$$

   Let $F$ be the event on the left-hand side of (\ref{eq6.4}). The key 
observation is that
\begin{equation}\label{eq6.5}
   F \subset \bigcup_{j=0}^{n-1} \bigcup_{i=1}^5 G_{j, i}.
\end{equation}
Indeed, suppose $\ultau^{2^{-n}, r, 1} < \ulinfty$ and 
\begin{equation}\label{eq6.6}
   2^{-n} + \inf_{\alpha_n^{j, r} \leq u < \alpha_n^{j+1,r}} 
      X^r(\gamma^{\ast, 2^{-n}, r}(u)) \leq c_{n-j} 2^{j-n}.
\end{equation}
If $N_{j+1} = N_j$, then the infimum in (\ref{eq6.6}) is attained already during stage $N_j$, and so $F_{j,1}$, hence $G_{j,1}$, occurs.
If $N_{j+1} > N_j$ and this infimum is attained during stage $2m$ and $N_{j+1} = 2m$, then either it is attained on the segment with extremities $\{T^m_1\} \times [V_{m-1}, V_{m-1}^\prime]$, in which case $F_{j,2}$, hence $G_{j,2}$, occurs by (\ref{X_0}) and (\ref{starrd1}), or it is attained outside this segment, in which case $F_{j, 5, 2m}$, hence $G_{j, 3}$ or $G_{j, 5}$ occurs. If $\nu_j \geq K(n-j)$, then $F_{j,3}$ and $G_{j,3}$ occur. 
If $\nu_j < K(n-j)$ and the infimum in (\ref{eq6.6}) is attained during an odd 
stage $2k-1 \in [N_j, N_{j+1} [$, then either this infimum is attained on the 
segment $[U_{k-1}, U^\prime_{k-1}] \times \{T^{k-1}_2\}$, in which case 
$F_{j, 4, 2k-1}$ occurs by (\ref{X_0}) and (\ref{starrd1}), or it is attained outside this segment, in which case $F_{j, 5, 2k-1}$ occurs ($F_{j, 4, 2k}$ and $F_{j, 5, 2k}$ occur respectively if odd is replaced by even). This proves (\ref{eq6.5}).

   We now bound the probability of each $G_{j,i}$. For fixed $j$, set 
$\ulT = \ultau^{2^{j + 1-n}}$, let $X^{\ulT}$ be defined below (\ref{startp6}), and let $E_j$ be the event ``the DW-algorithm applied to 
$X^{\ulT}$, started at 0 with value $2^{j+1-n}$, reaches level 1''. Then 
$E_j$ is independent of $\F_{\ulT}^r$, and $P_{2^{j+1-n}} (E_j) \leq c 2^{(j-n) \lambda_1}$ by Theorem \ref{prop1}.

   Observe by Lemma \ref{lem5.2prime} that
$$
   G_{j,1} \subset \{\ultau^{2^{-n}, r, 2^{j-n}} < \ulinfty\} \cap F_{j,1} 
     \cap E_j,
$$
the first two events on the right-hand side are $\F_{\ulT}^r$-measurable, and 
$P(F_{j,1} \vert \F^r_{\ultau^{2^{-n}, r, 2^{j-n}}})$ is bounded above by the 
probability that a Brownian motion started at $c_{n-j}\, 2^{j-n}$ hits $2^{j-n}$ 
before 0. By iterated conditioning, we see that
\begin{equation}\label{eq6.7}
   P_{2^{-n}}(G_{j,1}) \leq C (2^{-j})^{\lambda_1} c_{n-j}\, (2^{(j-n)})^{\lambda_1} 
     = C \ c_{n-j} 2^{-n \lambda_1}.
\end{equation}

   Similarly,
$$
   G_{j,2} \subset \{\ultau^{2^{-n}, r, 2^{j-n}} < \ulinfty\} \cap 
     \{H_{N_{j+1}} - 2^{j-n} \leq c_{n-j}\, 2^{j-n}\} \cap E_j,
$$
and since the probability of the second event is bounded above by the 
probability that a Brownian motion started at $2^{j-n}$ hits 0 before 
$2^{j-n}(1 + c_{n-j}),$ the same arguments as above show that
\begin{equation}
   P_{2^{-n}}(G_{j,2}) \leq C (2^{-j})^{\lambda_1} c_{n-j} (2^{j-n})^{\lambda_1} 
     = C c_{n-j} 2^{-n \lambda_1}.
\end{equation}

  Observe that
$$
   G_{j,3} \subset \{\ultau^{2^{-n}, r, 2^{j-n}} < \ulinfty\} \cap 
     \{\nu_j > K(n-j)\} \cap E_j,
$$
so using Lemma \ref{rdlem9}, one finds that
\begin{equation}
   P_{2^{-n}}(G_{j, 3}) \leq C(2^{-j})^{\lambda_1} \left(\frac{3}{4}\right)^{K(n-j)-2} 
     \cdot (2^{j-n})^{\lambda_1} \leq C^\prime \left(\frac{3}{4}\right)^{K(n-j)} 
     \cdot 2^{-n \lambda_1}.
\end{equation}

   Turning to $G_{j,4}$, we observe that
$$
  G_{j, 4} \subset \bigcup_{k = N_j}^{N_j + K(n-j)-1} G_{j, 4, k},
$$
where
$$
   G_{j, 4, k} = \{\ultau^{2^{-n}, r, 2^{j-n}} < \ulinfty\} \cap 
    \{N_j \leq k < N_{j+1}\} \cap F_{j, 4, k} \cap E_j.
$$
In order to evaluate $P_{2^{-n}}(F_{j, 4, k} \vert \F_{\ultau_{(k-1)}})$, let 
$f_{x,y}(z)$ denote the right-hand side of (\ref{d2}). An elementary 
calculation (see \eqref{fZn}) shows that $0\leq -f^\prime_{x, y} (z) \leq 2/z$ (since $0<x<y<z$), and therefore, by 
Proposition \ref{noSTOP}, on $\{N_j \leq k < N_{j+1}\}$,
$$
   P_{2^{-n}}(F_{j, 4, k} \vert \F_{\ultau_{(k-1)}}) \leq 
   \frac{2}{2^{j-n}}\, c_{n-j}\, 2^{j-n} = 2\, c_{n-j}.
$$
It follows that
$$
   P_{2^{-n}}(G_{j, 4, k}) \leq C (2^{-j})^{\lambda_1} \cdot 2\,c_{n-j} \cdot 
     (2^{j-n})^{\lambda_1},
$$
and so
\begin{equation}
   P_{2^{-n}}(G_{j, 4}) \leq C^\prime K\,(n-j)\, c_{n-j}\, 2^{-n \lambda_1}.
\end{equation}

   Finally, we observe that
$$
   G_{j, 5} \subset \bigcup_{k = N_j}^{N_j + K(n-j)-1} G_{j, 5, k},
$$
where
$$
   G_{j, 5, k} = \{ \ultau^{2^{-n}, r, 2^{j-n}} < \ulinfty\} \cap 
     \{ N_j \leq k < N_{j+1}\} \cap F_{j, 5, k} \cap E_j,
$$
\\
and on $\{N_j \leq k < N_{j+1}\}$, $P_{2^{-n}}(F_{j, 5, k} \vert \F_{\ultau_{(k-1)}})$ 
is no greater than twice the probability that a Brownian motion started at 
$c_{n-j}\, 2^{j-n}$ hits $2^{j-n}$ before 0. Therefore,
$$
   P_{2^{-n}}(G_{j, 5, k}) \leq C (2^{-j})^{\lambda_1} \cdot 2 c_{n-j} \cdot (2^{j-n})^{\lambda_1},
$$
and so
\begin{equation}\label{eq6.11}
   P_{2^{-n}}(G_{j, 5}) \leq C^\prime K(n-j)\, c_{n-j}\, 2^{-n \lambda_1}.
\end{equation}
It now follows from (\ref{eq6.5}) and (\ref{eq6.7})--(\ref{eq6.11}) that
$$
   P_{2^{-n}}(F) \leq C \, 2^{-n \lambda_1} \sum_{j=0}^{n-1} 
   \left(2 c_{n-j} + \left( \frac{3}{4} \right)^{K(n-j)} + 2 K(n-j)
   c_{n-j}\right).
$$
The sum is bounded above by
$$
   \frac{2}{K^2} \sum^\infty_{j=1} \frac{1}{j^3} + 4 \left( \frac{3}{4} \right)^K + \frac{2}{K} \sum^\infty_{j=1} \frac{1}{j^2},
$$
which can be made as small as desired by choosing $K$ sufficiently large. This 
proves the lemma. 
\hfill $\Box$
\vskip 16pt

   The following proposition is concerned with the probability that the DW-algorithm reaches level $1$ within a fixed square, but drops below a low level after having moved significantly away from its starting position.

\begin{prop} Let $\gamma^{\ast, 2^{-n}, r}$, $\alpha_n^{j,r}$ and $c_j$ be as in Lemma \ref{lem6.20}. For $t=(t_1,t_2) \in \IR^2$, set $\vert t \vert = \vert t_1 \vert + \vert t_2\vert$. For $ K > 0$ and $0 \leq j < n$, set
$$
   I^r_{n,j} = \{u \in \IR_+ : u \leq \alpha_n^{n,r} \mbox{ and } 
   \vert \gamma^{\ast, 2^{-n}, r}(u) -r \vert \geq K\,(n-j)\, 2^{2(j-n)}\}.
$$
Let $c_0$ be the constant from Lemma \ref{lem6.18}. For $K$ sufficiently large, 
for all $r$ and $n$,
\begin{eqnarray*}
   &&P_{2^{-n}}\left(\{\ultau^{2^{-n}, r, 1} \leq \ulsigma^{2^{-n}, r, \cR_r(1/2)}\} \cap 
   \bigcap^{n-1}_{j=0}\left\{2^{-n}+ \inf_{u \in I^r_{n,j}} 
   X^r(\gamma^{\ast, 2^{-n}, r}(u)) \geq c_{n-j} 2^{j-n}\right\}\right)\\
   &&\qquad \geq \frac{4}{5} c_0 2^{-n \lambda_1}
\end{eqnarray*}
(we use the convention $\inf \emptyset = + \infty$, for the $n$ and $j$ such 
that $I^r_{n,j} = \emptyset$).
\label{rdprop21}
\end{prop}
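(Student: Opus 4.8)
The plan is to write the event in the statement as $A\setminus\mathcal B$, where
$$
   A=\{\ultau^{2^{-n},r,1}\le\ulsigma^{2^{-n},r,\cR_r(1/2)}\},\qquad
   \mathcal B=A\cap\bigcup_{j=0}^{n-1}\Bigl\{\exists\,u\in I^r_{n,j}:\ 2^{-n}+X^r(\gamma^{\ast,2^{-n},r}(u))<c_{n-j}2^{j-n}\Bigr\}
$$
(with the convention $\inf\emptyset=+\infty$, the complement within $A$ of the union over $j$ of the events defining $\mathcal B$ is exactly $\bigcap_j\{2^{-n}+\inf_{u\in I^r_{n,j}}X^r(\gamma^{\ast,2^{-n},r}(u))\ge c_{n-j}2^{j-n}\}$). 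Since $\mathcal B\subseteq A$, it then suffices to prove $P_{2^{-n}}(A)\ge c_0\,2^{-n\lambda_1}$ and $P_{2^{-n}}(\mathcal B)\le\tfrac{c_0}{5}\,2^{-n\lambda_1}$ and subtract. The first bound follows from Lemma~\ref{lem6.18} and Brownian scaling (the mild discrepancy between the square $\cR_r(1/2)$ here and the square $\cR_r(a^2)$ there is absorbed by rerunning the argument of Lemma~\ref{lem6.18}). Fix once and for all a $K$ large enough that the conclusions of both Lemma~\ref{lem6.19} and Lemma~\ref{lem6.20} hold, with this same $K$ entering $c_j=K^{-2}j^{-3}$ and $I^r_{n,j}$; all that remains is the second bound.

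The crux is the inclusion $\mathcal B\subseteq\mathcal B_1\cup\mathcal B_2$, where
$$
   \mathcal B_1=A\cap\bigcup_{j=0}^{n-1}\Bigl\{\Vert\ultau^{2^{-n},r,2^{j+1-n}}\Vert>K(n-j)2^{2(j-n)}\Bigr\},
$$
$$
   \mathcal B_2=\{\ultau^{2^{-n},r,1}<\ulinfty\}\cap\bigcup_{j=0}^{n-1}\Bigl\{2^{-n}+\inf_{\alpha_n^{j,r}\le u<\alpha_n^{j+1,r}}X^r(\gamma^{\ast,2^{-n},r}(u))\le c_{n-j}2^{j-n}\Bigr\}.
$$
To prove it, suppose $\mathcal B$ occurs and fix $j$ and $u\in I^r_{n,j}$ with $2^{-n}+X^r(\gamma^{\ast,2^{-n},r}(u))<c_{n-j}2^{j-n}$. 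If $\Vert\ultau^{2^{-n},r,2^{j+1-n}}\Vert>K(n-j)2^{2(j-n)}$, then $\mathcal B_1$ occurs. Otherwise $|\gamma^{\ast,2^{-n},r}(u)-r|\ge K(n-j)2^{2(j-n)}\ge\Vert\ultau^{2^{-n},r,2^{j+1-n}}\Vert$, and since every point of a rectangle $\cR_r(\ul v)$ lies (in the $\ell^1$ sense of $|\cdot|$) within $\Vert\ul v\Vert$ of $r$, we get a.s.\ that $\gamma^{\ast,2^{-n},r}(u)\notin\cR_r(\ultau^{2^{-n},r,2^{j+1-n}})$. By the construction of the DW-algorithm, $\cR_r(\ultau^{2^{-n},r,2^{j+1-n}})$ is precisely the part of $\IR^2$ explored before the path first attains the value $2^{j+1-n}$, so $u\ge\alpha_n^{j+1,r}$; combined with $u\le\alpha_n^{n,r}$ (and in fact $u<\alpha_n^{n,r}$, since the value at $\gamma^{\ast,2^{-n},r}(u)$ is $<c_{n-j}2^{j-n}<1$) and the monotonicity $\alpha_n^{0,r}\le\alpha_n^{1,r}\le\cdots$ (the value, being continuous and starting below $2^{j-n}$, hits $2^{j-n}$ before $2^{j+1-n}$), there is $j'\in\{j+1,\dots,n-1\}$ with $\alpha_n^{j',r}\le u<\alpha_n^{j'+1,r}$. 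Finally, since $j'\ge j$,
$$
   c_{n-j'}2^{j'-n}=K^{-2}(n-j')^{-3}2^{j'-n}\ \ge\ K^{-2}(n-j)^{-3}2^{j-n}=c_{n-j}2^{j-n},
$$
so $2^{-n}+X^r(\gamma^{\ast,2^{-n},r}(u))<c_{n-j'}2^{j'-n}$ and $\mathcal B_2$ occurs for the index $j'$. This establishes the inclusion.

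Granting it, Lemma~\ref{lem6.19} gives $P_{2^{-n}}(\mathcal B_1)\le\tfrac{c_0}{10}2^{-n\lambda_1}$ and Lemma~\ref{lem6.20} gives $P_{2^{-n}}(\mathcal B_2)\le\tfrac{c_0}{10}2^{-n\lambda_1}$ (note $\mathcal B_2$ is exactly the event bounded there), whence $P_{2^{-n}}(\mathcal B)\le\tfrac{c_0}{5}2^{-n\lambda_1}$, and then $P_{2^{-n}}(A\setminus\mathcal B)\ge c_0 2^{-n\lambda_1}-\tfrac{c_0}{5}2^{-n\lambda_1}=\tfrac{4}{5}c_0 2^{-n\lambda_1}$, as required. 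The one step I expect to be genuinely delicate is the structural claim invoked in the proof of the inclusion, namely that the DW-path up to the first time it attains the value $2^{j+1-n}$ lies in $\cR_r(\ultau^{2^{-n},r,2^{j+1-n}})$: making this precise means going back to the definitions of the stopping point $\ultau^{x_0,r,M}$ and of the arc-length parametrization $\gamma^{\ast,2^{-n},r}$, and checking that the boundary case $|\gamma^{\ast,2^{-n},r}(u)-r|=\Vert\ultau^{2^{-n},r,2^{j+1-n}}\Vert$ is null (it would force two of the four components of $\ultau^{2^{-n},r,2^{j+1-n}}$ to vanish, which is a.s.\ impossible since a component vanishes only if a Brownian motion started at $2^{-n}>0$ hits $0$ instantly). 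Everything else is bookkeeping with the three preliminary lemmas.
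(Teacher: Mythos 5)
Your proof is correct and follows essentially the same strategy as the paper's: both obtain the lower bound by subtracting from the Lemma~\ref{lem6.18} estimate on $P_{2^{-n}}(A)$ (where $A=\{\ultau^{2^{-n},r,1}\le\ulsigma^{2^{-n},r,\cR_r(1/2)}\}$) an upper bound on the probability of the bad set supplied by Lemmas~\ref{lem6.19} and~\ref{lem6.20}, the key step being the geometric observation that $u\in I^r_{n,j}$ together with $\Vert\ultau^{2^{-n},r,2^{j+1-n}}\Vert\le K(n-j)2^{2(j-n)}$ forces $u$ into some $[\alpha_n^{j',r},\alpha_n^{j'+1,r}[$ with $j'\ge j$, after which the monotonicity of $j'\mapsto c_{n-j'}2^{j'-n}$ closes the argument (you deduce $u\ge\alpha_n^{j+1,r}$; the paper records only the weaker $u\ge\alpha_n^{j,r}$, which also suffices). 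You are more explicit than the paper on two delicate points that it leaves implicit --- the mismatch between $\cR_r(1/2)$ here and $\cR_r(a^2)$ in Lemma~\ref{lem6.18}, and the structural fact that the DW-path up to $\alpha_n^{j+1,r}$ lies in $\cR_r(\ultau^{2^{-n},r,2^{j+1-n}})$ --- but the underlying reasoning and the use of all three auxiliary lemmas are identical.
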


\proof Fix $K$ large enough so that the inequalities of Lemmas \ref{lem6.19} and
\ref{lem6.20} hold. Then
$$
   P_{2^{-n}}(\{\ultau^{2^{-n},r,1} \leq \ulsigma^{2^{-n}, r, \cR_r(1/2)}\} 
   \cap F^c_n) \leq \frac{2}{10} c_0 2^{-n \lambda_1},
$$
where
\begin{eqnarray*}
  F_n &=& \left(\bigcap^{n-1}_{j=0} \{ \Vert \ultau^{2^{-n}, r, 2^{j+1-n}} \Vert 
    \leq K(n-j) 2^{2(j-n)}\}\right) \\
    &&\qquad \cap\, \left(\bigcap^{n-1}_{j=0} \left\{2^{-n}+ \inf_{\alpha^{j, r}_{n} \leq u < 
    \alpha^{j+1,r}_n} X^r(\gamma^{\ast, 2^{-n}, r}(u)) > c_{n-j}\,
    2^{j-n}\right\}\right).
\end{eqnarray*}
Therefore, by Lemma \ref{lem6.18},
\begin{equation}\label{en6.12}
   P_{2^{-n}}(\{\ultau^{2^{-n}, r, 1} \leq \ulsigma^{2^{-n}, r, \cR_r(1/2)}\} \cap F_n) 
   \geq \frac{4}{5} c_0 2^{-n \lambda_1}.
\end{equation}
On this event, $\Vert \ultau^{2^{-n}, r, 1} \Vert \leq 4 \cdot \frac{1}{2} = 2$,
so there is $n_0 > 1$ such that $I^r_{n,j} = \emptyset$ if
$n-n_0 \leq j < n$. For $j < n-n_0$, on the event on the left-hand side of \eqref{en6.12}, $\Vert \ultau^{2^{-n}, r, 2^{j+1-n}}\Vert \leq 
K (n-j) 2^{2(j-n)}$, so $I^r_{n,j} \subset [\alpha^{j,r}_n, \infty[$, and 
therefore
$$
  2^{-n}+ \inf_{u \in I^r_{n,j}} X^r(\gamma^{\ast, 2^{-n}, r}(u)) \geq c_{n-j}\, 2^{j-n}.
$$
This proves the proposition. 
\hfill $\Box$
\vskip 16pt

   We now introduce the notation needed for Proposition \ref{lowerlem1} below. This proposition contains all the ingredients needed for the second-moment argument that we will implement in Section \ref{sec7} (see Lemma \ref{lowerlem2}).

   For $r \in \IR^2$ and $q \in \IR$, let $\cC_r(q)$\index{$\cC_r(q)$} denote the connected component of $\{s \in \IR^2: \ti X(s) > q\}$ that contains $r$. Let $\cC_r^x$\index{$\cC_r^x$} denote the connected component of $\{s\in \IR^2: x + X^r(s) >0\}$ that contains $r$, and let $\partial \cC_r^{x,\alpha}$\index{$\partial \cC_r^{x,\alpha}$} denote the subset of points in $\partial \cC_r^x$ to which one can get arbitrarily close by following a curve starting at $r$ and contained in $\cC_r^x \cap ([r_1,r_1 + \alpha]\times [r_2,r_2 + \alpha])$.
  
   For $K > 0$ and $t \in \IR^2$, using the notation of Proposition \ref{rdprop21}, let
\begin{eqnarray*}
   A_0 &=& \{ 1+ \ti X(u_1,u_1) >0, \mbox{ for all } u_1 \in [0,1]\},\\
   A_1(t, n) &=& \{1+ \ti X(t) \in [-2^{1-n}, -2^{-n}]\}, \\
   A_2(t,n) &=& \left\{\ultau^{2^{-n},t,1} \leq \ulsigma^{2^{-n},t,\cR(1/2)}
   \right\}, \\
   A_3(K,t,n) &=& \bigcap_{j=0}^{n-1} \left\{2^{-n} +\inf_{u \in I^t_{n,j}}
   X^t(\gamma^{*,2^{-n},t}(u)) \geq c_{n-j}\, 2^{j-n} \right\},
\end{eqnarray*}
and let $A_4(t,n)$ be the event ``there is a path with extremities $(1,1)$ and $\alpha_n^{n,t}$ contained in $[1,4]^2$ along which $X^t(\cdot) \in [\frac{1}{2}, 10]$." Finally, set
$$
  A(K, t, n) = A_0 \cap A_1(t, n) \cap A_2(t, n) \cap A_3(K, t, n) \cap A_4(t, n).
$$
Observe from the definition of $I^t_{n,j}$ and $c_{n-j}$ that if $A(K, t, n)$ occurs, then $t$ is no more than 
$K^5n^72^{-2n}$ units away from a point in $\partial{\cC}_{(0,0)}^{1}$, and even from a point in $\partial \cC_{(0,0)}^{1,4}$.

\begin{prop} There are $K > 0$, $c > 0,$ and $C > 0$ such that:

  (a) for all large $n \in \IN$ and $t \in [2, 3]^2,$
$$
   c 2^{-(1+\lambda_1)n} \leq P(A(K,t,n)) \leq 
      \frac{1}{c} 2^{-(1+\lambda_1)n};
$$

  (b) for all large $n \in \IN$, $1 \leq k \leq \ell \leq n$ and 
$(s, t) \in \ID_n(k, \ell),$
$$
   P(A(K,t,n) \cap A(K,s,n)) \leq C\,2^{-(1+\lambda_1)n-\ell-k \lambda_1}.
$$
\label{lowerlem1}
\end{prop}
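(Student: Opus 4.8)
\medskip

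\textbf{Part (a): the lower bound.} The plan is to realize the event $A(K,t,n)$ as an intersection of events which are (nearly) independent, each carrying a factor that combines to $2^{-(1+\lambda_1)n}$. The event $A_1(t,n)$ forces $1+\ti X(t)$ into an interval of length $2^{-n}$ around a point of order $1$; since $\ti X(t)$ has a bounded density near any fixed point in $[2,3]^2$, this contributes a factor of order $2^{-n}$. Conditionally on the value of $\ti X(t)$ (which fixes the starting value $2^{-n}$ for the DW-algorithm at $t$, up to the sign constraint), the events $A_2(t,n)$ and $A_3(K,t,n)$ together are precisely the event estimated from below in Proposition \ref{rdprop21}, whose probability is $\geq \tfrac{4}{5}c_0\,2^{-n\lambda_1}$ once $K$ is chosen large enough. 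Thus these two contribute a factor of order $2^{-n\lambda_1}$. The events $A_0$ and $A_4(t,n)$ are ``geometric connectivity'' events for Brownian motion and ABM on compact regions: $A_0$ has a fixed positive probability by the gambler's ruin estimate for one-dimensional Brownian motion, and $A_4(t,n)$ — that there is a path in $[1,4]^2$ from $(1,1)$ to $\alpha_n^{n,t}$ along which $X^t$ stays in $[\tfrac12,10]$ — can be arranged to have probability bounded below by a positive constant not depending on $n$, using that on $A_2$ the endpoint $\alpha_n^{n,t}$ lies in $\cR_t(\tfrac12)\subset[1,4]^2$ and that the increments of $X^t$ over $[1,4]^2$ can be controlled by standard Brownian estimates independent of the (bounded) rest of the configuration. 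The independence structure is as in Proposition \ref{univariateup}: using the filtration $\G=\F^{(5/2,5/2)}_{(1,1,1,1)}$, the event constraining the algorithm at $t$ (hence $A_2,A_3,A_4$ — all measurable with respect to increments from $t$ inside a small rectangle, plus independent information) is essentially independent of the single Gaussian $\ti X(\tfrac32,\tfrac32)$ used to place $\ti X(t)$ in the right interval for $A_1$, and $A_0$ involves yet another batch of independent increments. Multiplying the three factors gives the lower bound $c\,2^{-(1+\lambda_1)n}$.

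\medskip

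\textbf{Part (a): the upper bound.} Drop $A_0$, $A_3$ and $A_4$, and bound $A(K,t,n)$ above by $A_1(t,n)\cap A_2(t,n)$. As in the proof of Proposition \ref{univariateup}, write $\ti X(t)=\ti X(\tfrac32,\tfrac32)+Y$ with $Y$ measurable with respect to $\G=\F^{(5/2,5/2)}_{(1,1,1,1)}$ and $\ti X(\tfrac32,\tfrac32)\sim N(0,3)$ independent of $\G$; since $A_2(t,n)\in\F^t_{(\frac12,\frac12,\frac12,\frac12)}\subset\G$, conditioning on $\G$ gives $P(A_1\cap A_2)\le P(A_2(t,n))\cdot\sup_y P\{|y+\ti X(\tfrac32,\tfrac32)|\le 2^{1-n}\}\le C\,2^{-n}\,P(A_2(t,n))$. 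Finally $A_2(t,n)$ is the event that the DW-algorithm started at $t$ with value $2^{-n}$ reaches level $1$ before escaping $\cR_t(\tfrac12)$, so $P(A_2(t,n))\le P_{2^{-n}}\{\ultau^{2^{-n},t,1}<\ulinfty\}\le C\,2^{-n\lambda_1}$ by Theorem \ref{prop1} (and Brownian scaling). This yields the upper bound $C\,2^{-(1+\lambda_1)n}$, completing (a).

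\medskip

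\textbf{Part (b).} This is the two-point estimate, and the main obstacle. The strategy is to dominate $A(K,t,n)\cap A(K,s,n)$ by a smaller event to which the bivariate machinery of Section \ref{sec5} already applies. The key observation is that the constraints defining $A_3(t,n)$ and $A_4(t,n)$ force the path $\gamma^{*,2^{-n},t}$ to a specific geometry: by the remark following the definition of $A$, the event $A(K,t,n)$ forces $t$ to lie within $K^5n^7 2^{-2n}$ of a point of $\partial\cC^1_{(0,0)}$, and in particular forces the DW-algorithm at $t$ to reach level $1$ either after escaping $\cR_t(\tfrac12)$ or within it — which is exactly the event $F_2(t,2^{-n})$ (with $\ep=2^{-n}$) of Proposition \ref{univariateup}. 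Moreover $A_1(t,n)\subset F_1(t,2^{-n})$ since $|1+\ti X(t)|\le 2^{1-n}$ forces $|\ti X(t)-(-1)|\le 2\cdot 2^{-n}$, i.e. $\ti X(t)$ is within $2^{1-n}$ of the level $q=-1$; applying Proposition \ref{lembivariate} with $q=-1$ then gives exactly
$$
   P\big(A(K,t,n)\cap A(K,s,n)\big)\le P\big(F(s,2^{-n})\cap F(t,2^{-n})\big)\le C\,2^{-(1+\lambda_1)n-\ell-k\lambda_1}
$$
for $(s,t)\in\ID_n(k,\ell)$, which is the claim. The thing to check carefully — and where the real work lies — is the set inclusion $A(K,t,n)\subset F_1(t,2^{-n})\cap F_2(t,2^{-n})$: that $A_1$ implies $F_1$ is immediate, but that $A_2(t,n)\cup$ (the escape forced by $A_3,A_4$) implies $F_2(t,2^{-n})$ requires unwinding the definitions of $\ultau^{2^{-n},t,1}$, $\ulsigma^{2^{-n},t,\cR_t(1/2)}$ and the escape event, using that $A_2(t,n)=\{\ultau^{2^{-n},t,1}\le\ulsigma^{2^{-n},t,\cR_t(1/2)}\}$ is literally the ``reaches level $1$ within $\cR_t(\tfrac12)$'' half of $F_2(t,2^{-n})$. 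So the proof of (b) is short once the inclusion is nailed down; essentially all the analytic content has been pushed into Proposition \ref{lembivariate}. I expect the only subtlety to be bookkeeping with the fixed level $q=-1$ versus the generic $q$ in Propositions \ref{univariateup} and \ref{lembivariate}, which is harmless since those statements hold for every fixed $q\in\IR$ with constants uniform in the relevant range.
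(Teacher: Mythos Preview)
Your overall architecture matches the paper's: part (b) and the upper bound in (a) are indeed immediate from Propositions \ref{lembivariate} and \ref{univariateup} via the inclusion $A(K,t,n)\subset F_1(t,2^{-n})\cap F_2(t,2^{-n})$ (with $q=-1$), and the lower bound in (a) is obtained by separating the density contribution $2^{-n}$ from $A_1$, the escape contribution $2^{-n\lambda_1}$ from $A_2\cap A_3$ via Proposition \ref{rdprop21}, and constant factors from $A_0$ and $A_4$.

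However, there is a genuine gap in your lower bound for (a), precisely at the point you dismiss as routine. The event $A_4(t,n)$ asks for a path in $[1,4]^2$ joining $(1,1)$ to the \emph{random} point $\gamma^{*,2^{-n},t}(\alpha_n^{n,t})$, along which $X^t\in[\tfrac12,10]$. This endpoint is determined by the DW-algorithm, so what you need is not $P(A_4)\ge c$ but $P(A_4\mid\cH)\ge c$ on $A_2(t,n)$, where $\cH=\F^t_{\ultau}$ with $\ultau=\ultau^{2^{-n},t,1}$. This is the bulk of the paper's proof: one splits according to which corner of $\cR_t(\ultau)$ first reaches level $1$ (events $H_1,\dots,H_4$), and for each case builds an explicit polygonal path from that corner out to $(1,1)$, checking via successive conditional Brownian estimates (events $G_1,G_2,G_3$) that $X^t$ stays in $[\tfrac12,10]$ along it. The construction uses Lemma \ref{lem5.3prime} to control $X^t$ on the already-explored rectangle and the strong Markov property at $\ultau$ to peel off fresh Brownian increments on each new segment. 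Your phrase ``standard Brownian estimates independent of the (bounded) rest of the configuration'' does not capture this.

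A smaller but related point: the $\sigma$-field $\G=\F^{(5/2,5/2)}_{(1,1,1,1)}$ you propose for the lower bound corresponds to increments in $[\tfrac32,\tfrac72]^2$, so it does \emph{not} contain $A_4(t,n)$ (which involves the point $(1,1)$) nor does it interact correctly with $A_0$. The paper instead takes $\G=\F^t_{(1,1,t_1-1,t_2-1)}$, covering $[1,t_1+1]\times[1,t_2+1]$, writes $\ti X(t)=\ti X(1,1)+Y$ with $Y\in\G$, and handles $A_0$ by noting that on $A_1\cap A_4$ one has $1+\ti X(1,1)\ge\tfrac14$, whence $P(A_0\mid\ti X(1,1))\ge\tilde c_0$.
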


\proof Part (b) and the upper bound in (a) are respectively a consequence of 
Propositions \ref{lembivariate} and \ref{univariateup}. We therefore proceed 
to prove the lower bound in part (a). 

   Let $\G = \F^t_{(1,1,t_1-1,t_2-1)}$. Then $A_2(t,n)$, $A_3(K, t, n)$ and 
$A_4(t,n)$ belong to $\G$, and $\ti X(t) = \ti X(1,1) + Y$, where $Y= X^t(1,1)$ is $\G$-measurable and $\ti X(1,1)$ is independent of $\G$. On $A_1(t,n) \cap A_4(t,n)$, $1+\ti X(1,1) \geq 1 + \ti X(t) + X^t(1,1) \geq -2^{1-n} + \half \geq \frac{1}{4}$ for large $n$. Notice that $A_0$ is conditionally independent of $\sigma(\ti X(1,1)) \vee\G$ given $\ti X(1,1)$, and there is $\ti c_0 >0$ such that $P(A_0 \mid \ti X(1,1)) \geq \ti c_0$ on $\{1+\ti X(1,1) \geq \frac{1}{4}\}$. In addition, on $A_4(t,n)$, $\vert Y \vert \leq 10,$ so there is $c_1 > 0$ such that on $A_4(t,n)$,
$$
  P(A_1(t,n) \mid \G) = P\{1+ \ti X(1,1) + Y \in [-2^{-n+1}, - 2^{-n}]\mid \G\} \geq c_1 2^{-n},
$$
and therefore
\begin{eqnarray}\nonumber
   P(A(K,t,n)) &\geq& E(P(A_0\mid \sigma(\ti X(1,1)) \vee \G) 1_{A_1(t,n) \cap A_2(t,n) \cap A_3(K,t,n)\cap A_4(t,n)}) \\
   &\geq& \ti c_0 c_1 2^{-n} P(A_4(t,n) \cap A_3(K,t,n) \cap A_2(t,n)).
\label{eq6.12}
\end{eqnarray}
Set $\ultau = \ultau^{2^{-n},t,1}$, and let $H_1$ (resp. $H_3$) be the event ``the DW-algorithm started at $t$ with value $2^{-n}$ hits 1 during some odd stage $2n+1,$ and $2^{-n}+ X^t(t_1 + \ultau_1, T^n_2) = 1$ (resp.~$2^{-n}+ X^t(t_1 - \ultau_3, T^n_2) = 1 > 2^{-n}+ X^t(t_1 + \ultau_1, T^n_2))$.'' Similarly, let $H_2$ (resp.~$H_4$) be the event ``the DW-algorithm started at $t$ with value $2^{-n}$ hits 1 during some even stage $2n,$ and $2^{-n}+ X^t(T_1^n, t_2 + \ultau_2) = 1$ (resp.~$2^{-n}+ X^t(T_1^n, t_2 - \ultau_4) = 1 > 2^{-n}+ X^t(T^n_1,t_2 +  \ultau_2))$.'' 

   Set $\cH = \F_{\ultau}^t$. Then $H_i$, $A_2(t,n)$ and $A_3(K,t,n)$ belong to $\cH$. By (\ref{eq6.12}), 
\begin{equation}\label{eq6.13}
   P(A(K,t,n)) \geq \ti c_0 c_1 2^{-n} \sum_{i=1}^4 E(P(A_4(t,n)\mid \cH)\,
      1_{H_i \cap A_2(t,n) \cap A_3(K,t,n)}).
\end{equation}
We claim that there is $c_2 > 0$ such that for all $n$ and $t \in [2,3]^2$,
\begin{equation}\label{eq6.14}
   P(A_4(t,n) \vert {\cal{H}}) \geq c_2\qquad \mbox{ on } H_i \cap A_2(t,n).
\end{equation}
Assuming (\ref{eq6.14}) for the moment, we complete the proof of the lower bound 
in (a). By (\ref{eq6.13}) and (\ref{eq6.14}),
\begin{eqnarray*}
   P(A(K,t,n)) &\geq& \ti c_0 c_1c_2 2^{-n} \sum_{i=1}^4 P(H_i \cap A_2(t,n) 
        \cap A_3(K,t,n))\\
   &=& \ti c_0 c_1c_2 2^{-n} P(A_2(t,n) \cap A_3(K,t,n)).
\end{eqnarray*}
By Proposition \ref{rdprop21}, the right-hand side is $\geq \frac{4}{5} c_0 
2^{-n \lambda_1}$, which establishes the lower bound in (a).

   We now prove (\ref{eq6.14}). We only consider the case where $i=1$, $\ultau$ occurs on a horizontal stage and $2^{-n} + X^t(t_1 + \ultau_1,T^n_2)=1$, since the other cases are similar (but simpler). Consider the events
\begin{eqnarray*}
   G_1 &=& \left\{X^t(t_1 + \ultau_1 + u_1, T^n_2) \in [\frac{1}{2}, \frac{7}{2}],\quad 
     \forall u_1 \in [0,1]\right\} \cap \{X^t(t_1 + \ultau_1+1, T_2^n) \geq 3\},\\
  G_2 &=& \left\{X^t(t_1 + \ultau_1 + 1, u_2) \in [\frac{1}{2}, \frac{13}{2}],\quad 
    \forall u_2 \in [1, V_n]\right\} \cap \{X^t(t_1 + \ultau_1+1,1) \geq 6\}, \\
  G_3 &=& \left\{X^t(1+u_1, 1) \in [\frac{1}{2}, 10],\quad \forall u_1 \in 
        [0, t_1 - \ultau_3 - 1]\right\}.
\end{eqnarray*}
We claim that
\begin{equation}\label{eq6.15}
   G_1 \cap G_2 \cap G_3 \cap H_1 \cap A_2(t,n) 
       \subset A_4(t,n) \cap H_1 \cap A_2(t,n).
\end{equation}
Indeed, this is a consequence of the fact that on $G=G_1 \cap G_2 \cap G_3 \cap 
H_1 \cap A_2(t,n)$, $X^t(\cdot) \in [\frac{1}{2}, 10]$ on the path
$$
  ([t_1 + \ultau_1, t_1 + \ultau_1 + 1] \times \{T^n_2\}) \cup (\{t_1 + \ultau_1 + 1\} 
   \times [1, T^n_2]) \cup ([1, t_1 + \ultau_1 + 1] \times \{1\}),
$$
(which has $(1,1)$ and $\alpha_n^{n,t}$ as extremities; see Figure \ref{figI_08}), as we now check.
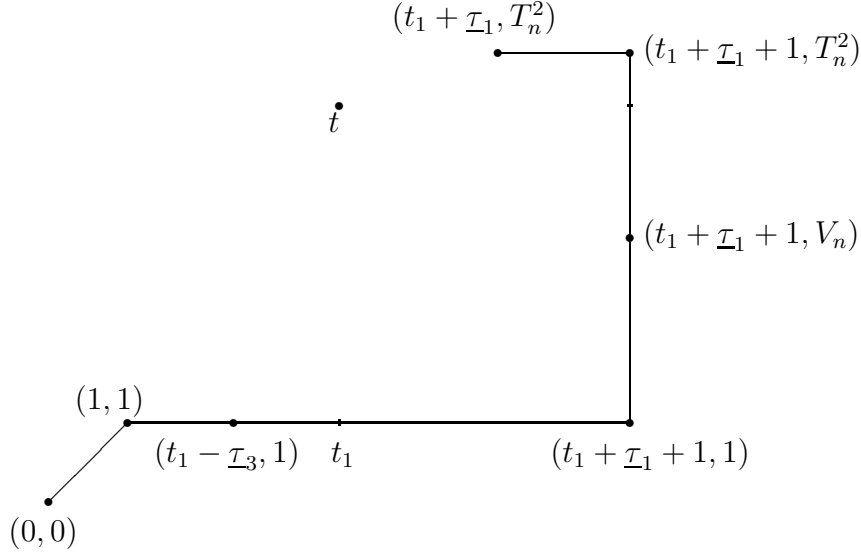
\begin{figure}
\begin{center}
\begin{picture}(350,200)
\put(20,20){\line(1,1){30}}
\put(20,20){\circle*{3}}
\put(5,5){$(0,0)$}

\put(50,50){\circle*{3}}
\put(30,55){$(1,1)$}

\put(50,50){\line(1,0){190}}

\put(90,50){\circle*{3}}
\put(60,35){$(t_1 - \ultau_3,1)$}

\put(130,49){\line(0,1){2}}
\put(127,35){$t_1$}

\put(240,50){\circle*{3}}
\put(210,35){$(t_1 + \ultau_1+1,1)$}

\put(240,50){\line(0,1){140}}
\put(240,120){\circle*{3}} \put(245,118){$(t_1 + \ultau_1+1, V_n)$}

\put(240,190){\circle*{3}} \put(245,188){$(t_1 + \ultau_1+1, T_n^2)$}

\put(240,190){\line(-1,0){50}}

\put(150,200){$(t_1 + \ultau_1, T_n^2)$}
\put(190,190){\circle*{3}}

\put(130,170){\circle*{3}}
\put(126,160){$t$}

\put(239,170){\line(1,0){2}}

\end{picture}
\end{center}

\caption{The path with extremities $(1,1)$ and $\alpha_n^{n,t}$.\label{figI_08}} 
\end{figure}

   Suppose $G$ occurs. Along the segment $[t_1 + \ultau_1, t_1 + \ultau_1 + 1] \times 
\{T^n_2\}$, $X^t(\cdot) \in [\frac{1}{2}, \frac{7}{2}] \subset [\half,10]$ by the definition of $G_1$. Along the segment $\{t_1 + \ultau_1+1\} \times [V_n, T^n_2]$,
$$
   X^t(\cdot) = X^t(\cdot) - X^t(t_1 + \ultau_1 + 1, T^n_2) 
        + X^t(t_1 + \ultau_1 + 1, T^n_2),
$$
and on $A_2(t,n)$, along this segment,
$$
   \vert X^t(\cdot) - X^t(t_1 + \ultau_1 + 1, T^n_2)\vert 
     = \vert X^t(\cdot-(1,0)) - X^t(t_1 + \ultau_1, T^n_2)\vert \leq 2
$$
by Lemma \ref{lem5.3prime}, so $X^t(\cdot) \in [1, \frac{11}{2}] \subset [\frac{1}{2},10],$ by the definition of $G_1.$

   Along the segment $\{t_1 + \ultau_1 + 1\} \times [1, V_n]$, $X^t(\cdot) \in 
[\frac{1}{2}, \frac{13}{2}] \subset [\frac{1}{2}, 10],$ by the definition of $G_2$. Along the segment $[t_1-\ultau_3, t_1 + \ultau_1 + 1] \times \{1\}$, $X^t(\cdot) = Y(\cdot) + Y_1 + Y_2$, where
$$
   Y(\cdot) = X^t(\cdot) - X^t(t_1 + \ultau_1, 1),
$$
$$
   Y_1 = X^t(t_1 + \ultau_1, 1) - X^t(t_1 + \ultau_1 + 1,1), \qquad 
   Y_2 = X^t(t_1 + \ultau_1 + 1,1).
$$
By Lemma \ref{lem5.3prime}, $\vert Y(\cdot)\vert \leq 2,$ and by definition of 
$G_1$ (resp.~$G_2$) and because $X^t$ is an ABM, $Y_1 \in [-\frac{5}{2}, -2]$ (resp.~$Y_2 \in 
[6, \frac{13}{2}]$). Therefore, $X^t(\cdot)\in [\frac{3}{2}, \frac{13}{2}] \subset 
[\frac{1}{2}, 10]$.

   Along the segment $[1, t_1 - \ultau_3] \times \{1\}$, $X^t(\cdot) \in [\frac{1}{2}, 10]$ 
by definition of $G_3$. This proves (\ref{eq6.15}).

   In order to prove (\ref{eq6.14}) with $i=1$, it suffices by (\ref{eq6.15}) 
to show that
\begin{equation}\label{eq6.16}
   P(G_3 \cap G_2 \cap G_1 \mid \cH ) \geq c_1\qquad \mbox{ on } 
      H_1 \cap A_2(t,n).
\end{equation}
Observe that $G_2 \cap G_1 \cap H_1 \cap A_2(t,n) \in \F^t_{(\ultau_1 +1,\ultau_2,\ultau_3, \ultau_4 \vee (t_2-1))}$, and on $A_2(t,n)$, the process $(X^t(t_1-\ultau_3-v_1, 1),\ v_1 \in [0, t_1-\ultau_3-1])$ is conditionally independent of this $\sigma$-field given $X^t(t_1-\ultau_3, 1),$ with conditional distribution equal to that of a Brownian motion started at 
$X^t(t_1-\ultau_3, 1) \in [\frac{3}{2}, \frac{13}{2}]$ on 
$G_2 \cap G_1 \cap H_1 \cap A_2(t,n).$ Therefore,
\begin{equation}\label{eq6.17}
   P(G_3 \cap G_2 \cap G_1 \cap H_1 \cap A_2(t,n) \mid \cH) \geq 
      c_2 P(G_2 \cap G_1 \cap H_1 \cap A_2 (t,n) \mid \cH).
\end{equation}
The process $(X^t(t_1 + \underline{\tau}_1 + 1, V_n -u_2),\ 0 \leq u_2 \leq V_n-1)$ 
is conditionally independent of $\F^t_{(\ultau_1+1,\ultau_2,\ultau_3, \ultau_4)}$ given $X^t(t_1 + \ultau_1 + 1, 
V_n)$ and $V_n$, and its conditional distribution is that of a Brownian motion 
started at $X^t(t_1 + \ultau_1 + 1, V_n) \in [1, \frac{11}{2}]$. Therefore,
\begin{equation}\label{eq6.18}
   P(G_2 \cap G_1 \cap H_1 \cap A_2(t,n) \mid \cH) \geq 
      c_3 P(G_1 \cap H_1 \cap A_2(t,n) \mid \cH).
\end{equation}
Finally, the process $(X^t(t_1 + \ultau_1 + u_1, T^n_2),\ 0 \leq u_1 \leq 1)$ is 
conditionally independent of $\cH$ given $H_1 \cap A_2(t,n),$ and its 
distribution is that of a Brownian motion started at 1. Therefore,
\begin{equation}\label{eq6.19}
   P(G_1 \cap H_1 \cap A_2(t,n) \mid \cH) \geq c_4\, 1_{H_1 \cap A_2(t,n)}.
\end{equation}
Inequalities (\ref{eq6.17}), (\ref{eq6.18}) and (\ref{eq6.19}) establish 
(\ref{eq6.16}). Together with (\ref{eq6.15}), this proves (\ref{eq6.14}) and 
completes the proof of the lemma.
\hfill $\Box$
\vskip 16pt

\end{section}
\eject

\begin{section}{ABM: Lower bound on the Hausdorff dimension}\label{sec7}

   In this section, we show that the Hausdorff dimension of the boundary of every $q$-bubble is $\geq (3 - \lambda_1)/2$ (the converse inequality was proved in Section \ref{sec4}). The proof requires several steps. The idea is to study first $\partial\cC^1_{(0,0)}$, by defining discrete measures whose supports are ``close'' to $\partial{\cal{C}}^1_{(0,0)}$, and then to pass to the limit. In Lemma \ref{lowerlem2} below, we use the second-moment argument and the estimates of Proposition \ref{lowerlem1} to get a lower bound on the probability that these discrete measures have bounded $\beta$-dimensional energy (in the sense of potential theory, see \cite[Appendix D]{davar}), for all $\beta \in \,]0,(3 - \lambda_1)/2[$. Then we study the Hausdorff dimension of $\partial \cC^{x,4x^2}_{(0,0)}$, obtaining in Proposition \ref{lbprop14} an estimate that is uniform in $x$. Finally, we establish the desired lower bound in Theorem \ref{thm7.3}, by using a convergent sequence of random points near the boundary of a given $q$-bubble of the ABM $\ti X$, and relating this boundary to the sets $\partial\cC^{x,4x^2}_{(0,0)}$ for a sequence of ABM's related to $\ti X$.

\begin{lemma} Let $A(K,t,n)$ (respectively $\ID_n$) be as defined just before Proposition \ref{lowerlem1} (respectively \ref{lembivariate}). Fix $K > 0$, $c > 0$ and $C > 0$ such that the conclusions of Proposition \ref{lowerlem1} hold. Let $\mu_n$ be the random measure on $[2, 3]^2$ defined by
$$
   \mu_n(E) = 2^{-(3-\lambda_1)n} \sum_{t \in \ID_n} \delta_t(E)\, 1_{A(K,t,n)},
$$
where $\delta_t(E) = 1$ if $t \in E$ and $\delta_t(E) = 0$ otherwise. 
For $0 < \beta < (3-\lambda_1)/2,$ there is $K_\beta < \infty$ such that for all large $n,$
$$
   P\left\{\mu_n([2, 3]^2) \in \left[\frac{c}{4}, \frac{2 C}{c}\right],\ Z_n \leq K_\beta\right\} 
        \geq \frac{c^2}{8C},
$$
where
$$
   Z_n = \int_{[2,3]^2} \int_{[2,3]^2} 
   \frac{1}{(\vert t-s \vert \vee 2^{-2n})^{\beta}}\, \mu_n(dt) \mu_n (ds).
$$
\label{lowerlem2}
\end{lemma}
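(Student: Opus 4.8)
The statement is a standard second-moment (Paley–Zygmund type) estimate for the random measures $\mu_n$, and the ingredients are precisely Proposition \ref{lowerlem1}(a) and (b). First I would compute the first moment: by Proposition \ref{lowerlem1}(a), for each $t \in \ID_n$, $P(A(K,t,n))$ is between $c\,2^{-(1+\lambda_1)n}$ and $c^{-1}2^{-(1+\lambda_1)n}$. Since $\#\ID_n = (2^{2n}+1)^2 \sim 2^{4n}$ and each atom has mass $2^{-(3-\lambda_1)n}$, we get
$$
   E(\mu_n([2,3]^2)) = 2^{-(3-\lambda_1)n}\sum_{t\in\ID_n}P(A(K,t,n)) \in [c/2,\,2/c]
$$
for all large $n$ (the factor $2$ absorbing the $+1$ in the count). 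I would also need a variance or second-moment bound for $\mu_n([2,3]^2)$ itself, which follows from the bivariate bound of Proposition \ref{lowerlem1}(b): $E(\mu_n([2,3]^2)^2)$ is $2^{-2(3-\lambda_1)n}$ times a double sum over pairs $(s,t)$, which one splits according to the dyadic scales $(k,\ell)$ with $(s,t)\in\ID_n(k,\ell)$; using $P(A(K,t,n)\cap A(K,s,n))\le C\,2^{-(1+\lambda_1)n-\ell-k\lambda_1}$ together with the count $\#\ID_n(k,\ell)\le C\,2^{4n}2^{2(k-n)}2^{2(\ell-n)} = C\,2^{2k+2\ell}$, the sum over $k\le\ell\le n$ converges and gives $E(\mu_n([2,3]^2)^2)\le C'$. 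By the Paley–Zygmund inequality applied to the nonnegative random variable $\mu_n([2,3]^2)$, this yields $P\{\mu_n([2,3]^2)\ge c/4\}\ge (c/2)^2/(4C') \ge$ some fixed positive constant; a matching upper tail $P\{\mu_n([2,3]^2) > 2C/c\}$ is small by Markov's inequality (or one can simply note $E(\mu_n) \le 2/c$ and use Markov to control the upper end). So $P\{\mu_n([2,3]^2)\in[c/4,\,2C/c]\}$ is bounded below by a constant of order $c^2/C$.

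Next I would estimate $E(Z_n)$. Writing out the definition,
$$
   E(Z_n) = 2^{-2(3-\lambda_1)n}\sum_{s,t\in\ID_n}\frac{P(A(K,t,n)\cap A(K,s,n))}{(|t-s|\vee 2^{-2n})^{\beta}}.
$$
Split the sum by the scales: for $(s,t)\in\ID_n(k,\ell)$ one has $|t-s|\asymp 2^{2(\ell-n)}$ (up to absolute constants, since $\max(|s_1-t_1|,|s_2-t_2|)\asymp 2^{2(\ell-n)}$), and for the diagonal-type terms with $\ell=1$ we use the cutoff $2^{-2n}$. Plugging in the bivariate bound and the count $\#\ID_n(k,\ell)\le C\,2^{2k+2\ell}$ gives a contribution from scale $(k,\ell)$ bounded by
$$
   2^{-2(3-\lambda_1)n}\cdot C\,2^{2k+2\ell}\cdot C\,2^{-(1+\lambda_1)n-\ell-k\lambda_1}\cdot\big(2^{2(\ell-n)}\big)^{-\beta}.
$$
Collecting powers of $2$, the exponent of $2$ in the $n$-dependence is $-2(3-\lambda_1)n-(1+\lambda_1)n-\beta\cdot(-2n)$ from the $n$-terms, i.e.\ $(-6+2\lambda_1-1-\lambda_1+2\beta)n=(\lambda_1-7+2\beta)n$, while the $(k,\ell)$-dependent exponent is $2k+2\ell-\ell-k\lambda_1-2\beta\ell = (2-\lambda_1)k+(1-2\beta)\ell$. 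Summing over $1\le k\le \ell\le n$: the $k$-sum converges since $2-\lambda_1>0$ means the dominant $k$ is $k=\ell$, and then the $\ell$-sum behaves like $\sum_\ell 2^{(3-\lambda_1-2\beta)\ell}$, which, evaluated at $\ell$ up to $n$, contributes $2^{(3-\lambda_1-2\beta)n}$ (this is where $\beta<(3-\lambda_1)/2$ is used: $3-\lambda_1-2\beta>0$ so the geometric sum is dominated by its top term). Multiplying, the total $n$-exponent is $(\lambda_1-7+2\beta)n + (3-\lambda_1-2\beta)n = -4n$, so after also tracking the $2^{2k}$ etc.\ constants carefully one finds $E(Z_n)\le K_\beta^{(0)}$, a finite constant depending on $\beta$ but not on $n$. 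The arithmetic of the exponents is the one point that needs to be carried out with care, but it is routine bookkeeping; the essential feature is that $\beta<(3-\lambda_1)/2$ makes every relevant geometric series either converge or be dominated by its last term, with the powers of $2$ conspiring to cancel the $2^{4n}$ coming from the two copies of $\#\ID_n$.

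Finally I would combine the two estimates. By Markov, $P\{Z_n > K_\beta\}\le E(Z_n)/K_\beta$, which is at most $1/2\cdot(c^2/(8C))$ once $K_\beta$ is chosen a sufficiently large (fixed) multiple of $E(Z_n)$'s bound — more precisely, pick $K_\beta$ large enough that $P\{Z_n>K_\beta\}$ is strictly less than $P\{\mu_n([2,3]^2)\in[c/4,\,2C/c]\}$ minus $c^2/(8C)$. Then
$$
   P\Big\{\mu_n([2,3]^2)\in\big[\tfrac{c}{4},\tfrac{2C}{c}\big],\ Z_n\le K_\beta\Big\}
   \ge P\big\{\mu_n([2,3]^2)\in[\tfrac{c}{4},\tfrac{2C}{c}]\big\} - P\{Z_n>K_\beta\} \ge \frac{c^2}{8C},
$$
which is the claimed bound. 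The main obstacle is the careful accounting of the exponents in the scale-decomposed double sum for $E(Z_n)$ — making sure the count $\#\ID_n(k,\ell)$, the distance $|t-s|\asymp 2^{2(\ell-n)}$, and the bivariate probability bound combine so that the $2^{4n}$ is exactly killed and the residual geometric sums over $k$ and $\ell$ are controlled using $0<\lambda_1$ and $\beta<(3-\lambda_1)/2$; everything else is a direct application of Proposition \ref{lowerlem1} and Paley–Zygmund.
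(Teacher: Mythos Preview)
Your proposal is correct and follows essentially the same route as the paper: bound $E(X_n)$ below and $E(X_n^2)$ above using Proposition \ref{lowerlem1}, apply Paley--Zygmund for the lower tail and Markov on $X_n^2$ for the upper tail, then bound $E(Z_n)$ by the identical scale decomposition and subtract $P\{Z_n>K_\beta\}$ via Markov.

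One arithmetic slip to flag: your count $\#\ID_n(k,\ell)\le C\,2^{4n}2^{2(k-n)}2^{2(\ell-n)}=C\,2^{2k+2\ell}$ is off by a factor of $2^{4n}$. For each of the $\sim 2^{4n}$ choices of $s$, the number of admissible $t$ is $\sim 2^{2k}\cdot 2^{2\ell}$ (the grid spacing is $2^{-2n}$, so an interval of length $2^{2(k-n)}$ contains $\sim 2^{2k}$ grid points, not $\sim 2^{2(k-n)}$), giving the paper's bound $\#\ID_n(k,\ell)\le \tfrac14\,2^{4n+2k+2\ell}$. With the correct count the $n$-exponent in $E(Z_n)$ cancels exactly to zero, yielding $E(Z_n)\le c_\beta$ uniformly in $n$, rather than the spurious $2^{-4n}$ your bookkeeping produces. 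The error is harmless here (a smaller count still gives a valid upper bound), but it would matter if you ever needed a matching lower bound or sharp constants.
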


\proof Set $X_n = \mu_n([2,3]^2)$. By Proposition \ref{lowerlem1}(a),
$$
   E(X_n) = 2^{-(3-\lambda_1)n} \sum_{t \in \ID_n} P(A(K,t,n)) 
     \geq c 2^{-(3-\lambda_1)n}2^{4n}2^{-(1+\lambda_1)n} = c.
$$
By Proposition \ref{lowerlem1}(b),
\begin{eqnarray*}
   E(X_n^2) &=& 2^{-2(3-\lambda_1)n} \sum_{s \in \ID_n} \sum_{t \in \ID_n} 
        P(A(K,t,n) \cap A(K,s,n)) \\
   &\leq& C\,2^{-2(3-\lambda_1)n} \sum^n_{\ell=1} \sum^\ell_{k=1} 
       \sum_{(s,t)\in \ID_n(k, \ell)} 2^{-(1+\lambda_1)n-\ell-k\lambda_1}.
\end{eqnarray*}
Use the bound card~${\ID_n}(k,\ell) \leq \frac{1}{4} 2^{4n+2\ell+ 2k}$ to see that this 
is bounded by
$$
  \frac{1}{4} C 2^{(-3+\lambda_1)n} \sum^n_{\ell=1} 2^\ell \sum^\ell_{k=1} 2^{(2-\lambda_1)k}.
$$
The sum over $k$ is bounded by $2^{(2-\lambda_1) \ell +1}$, and so 
$E(X_n^2) \leq C.$

   From the Paley-Zygmund Inequality $P\{X > \lambda E(X)\} \geq 
(1-\lambda)^2 E(X)^2/E(X^2),$ valid for non-negative random variables $X$ and 
$0 < \lambda < 1$ \cite[Section 1.6]{kahane}, we see that
$$
   P\left\{X_n > \frac{c}{4}\right\} \geq \frac{9}{16} \frac{c^2}{C} 
       \geq \frac{c^2}{2C}.
$$
By Markov's inequality,
$$
   P\left\{X_n > \frac{2C}{c}\right\} \leq \frac{E(X_n^2)}{(\frac{2C}{c})^2} 
      \leq \frac{c^2}{4C},
$$
so
$$
   P\left\{\frac{c}{4} < X_n \leq \frac{2C}{c}\right\} 
      = P\left\{X_n > \frac{c}{4}\right\}-P\left\{X_n > \frac{2C}{c}\right\} \geq \frac{c^2}{4C}.
$$
We now compute $E(Z_n)$, by proceeding as in the estimate for $E(X_n^2)$ above. Using Proposition \ref{lowerlem1}(b), this yields, for all $n$,
\begin{align*}
   E(Z_n) & \leq C \,2^{-2(3-\lambda_1)n} \sum^n_{\ell=1} \sum^\ell_{k=1} 
       \sum_{(s,t)\in \ID_n(k, \ell)} \frac{1}{2^{2(\ell - n)\beta}} 2^{-(1+\lambda_1)n-\ell-k\lambda_1} \\
       & = C 2^{(-3 + 2\beta+\lambda_1)n} \sum^n_{\ell=1} 2^{(1-2\beta)\ell} \sum^\ell_{k=1} 2^{(2-\lambda_1)k}\\
   &\leq c_\beta,
\end{align*}
for some constant $c_\beta < \infty$. 

   Take $K_\beta$ large 
enough so that $c_\beta/K_\beta < c^2/(8C).$ Then by Markov's inequality,
$P\{Z_n > K_\beta\} \leq c_\beta/K_\beta < c^2/(8C),$ and, as above,
$$
   P\left\{\frac{c}{4} < X_n < \frac{2C}{c},\ Z_n \leq K_\beta\right\} \geq P\left\{\frac{c}{2} < X_n < \frac{2C}{c}\right\} - P\{Z_n > K_\beta\} \geq \frac{c^2}{8C}.
$$
Lemma \ref{lowerlem2} is proved.
\hfill $\Box$
\vskip 16pt

   Recall the notation  $\partial{\cal{C}}_r^{x,\alpha}$ introduced before Proposition \ref{lowerlem1}.

\begin{prop} For $0 < \beta < (3-\lambda_1)/2,$ there is $c_0 > 0$ such that 
for all $x > 0,$
$$
   P\{\dim(\partial {\cal{C}}_{(0,0)}^{x,4x^2}) \geq \beta\} 
      \geq c_0.
$$
\label{lbprop14}
\end{prop}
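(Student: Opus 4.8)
\emph{Reduction to $x=1$ by scaling.} The plan is first to reduce to the case $x=1$, i.e.\ to bounding $P\{\dim(\partial\cC_{(0,0)}^{1,4}) \geq \beta\}$ below by a constant $c_0$. Indeed, $\hat X(s) := x^{-1} X^{(0,0)}(x^2 s)$ is again a standard ABM, and since $x + X^{(0,0)}(x^2 s) = x(1+\hat X(s))$, the component $\cC_{(0,0)}^{x}$ of $\{s: x + X^{(0,0)}(s)>0\}$ equals $x^2$ times the component $\hat\cC_{(0,0)}^1$ of $\{s: 1+\hat X(s)>0\}$; moreover $[0,4x^2]^2 = x^2[0,4]^2$, so the accessibility condition defining $\partial\cC_{(0,0)}^{x,4x^2}$ transforms into the one defining $\partial\hat\cC_{(0,0)}^{1,4}$. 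As Hausdorff dimension is invariant under $s\mapsto x^2 s$ and $\hat X$ is standard, $P\{\dim(\partial\cC_{(0,0)}^{x,4x^2})\geq\beta\} = P\{\dim(\partial\cC_{(0,0)}^{1,4})\geq\beta\}$, independently of $x$.

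\emph{A good event along a random subsequence.} Fix $\beta\in(0,(3-\lambda_1)/2)$ and let $c,C,K,K_\beta,\mu_n$ be as in Lemma \ref{lowerlem2}, so that for all large $n$ the event $E_n := \{\mu_n([2,3]^2)\in[c/4,2C/c]\}\cap\{Z_n\leq K_\beta\}$ has $P(E_n)\geq c^2/(8C)$. Since $1_{\limsup_n E_n} = \limsup_n 1_{E_n}$ and $1_{E_n}\leq 1$, the reverse Fatou lemma gives $P(\limsup_n E_n) \geq \limsup_n P(E_n) \geq c^2/(8C)$. I claim $\limsup_n E_n \subseteq \{\dim(\partial\cC_{(0,0)}^{1,4})\geq\beta\}$, which then finishes the proof with $c_0 = c^2/(8C)$.

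\emph{Passing to the limit.} Fix $\omega\in\limsup_n E_n$ and let $n_1<n_2<\cdots$ be the (random) indices with $E_{n_k}$ occurring. The $\mu_{n_k}$ are finite Borel measures on the compact metric space $[2,3]^2$ with total masses in $[c/4,2C/c]$, so along a further subsequence (not relabelled) $\mu_{n_k}\rightharpoonup\mu$ weakly, with $\mu([2,3]^2)\geq c/4>0$. For the $\beta$-energy, fix $M>0$: the kernel $(s,t)\mapsto\min(|t-s|^{-\beta},M)$ is bounded continuous on $[2,3]^2\times[2,3]^2$, and once $2^{2\beta n_k}\geq M$ it is dominated pointwise by $(|t-s|\vee 2^{-2n_k})^{-\beta}$; hence $\iint\min(|t-s|^{-\beta},M)\,\mu(dt)\mu(ds) = \lim_k \iint\min(|t-s|^{-\beta},M)\,\mu_{n_k}(dt)\mu_{n_k}(ds) \leq K_\beta$, and letting $M\uparrow\infty$ gives $\iint|t-s|^{-\beta}\mu(dt)\mu(ds)\leq K_\beta$. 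Next, every $t\in\mathrm{supp}(\mu_{n_k})$ satisfies $A(K,t,n_k)$, hence, by the observation recorded just before Proposition \ref{lowerlem1}, lies within $K^5 n_k^7 2^{-2n_k}$ of a point of $\partial\cC_{(0,0)}^{1,4}$. This distance tends to $0$, and the accessible boundary $\partial\cC_{(0,0)}^{1,4}$ is a \emph{closed} subset of $[2,3]^2$ (if $t_j\to t$ with each $t_j$ accessible and $\gamma_j$ a curve in $\cC_{(0,0)}^1\cap[0,4]^2$ from $(0,0)$ passing within $|t-t_j|$ of $t_j$, the same curves witness accessibility of $t$); therefore, for every $\delta>0$ all but finitely many $\mu_{n_k}$ are carried by the closed $\delta$-neighbourhood of $\partial\cC_{(0,0)}^{1,4}$, and comparing total masses forces $\mu$ to be carried by that neighbourhood too, hence by $\partial\cC_{(0,0)}^{1,4}$ itself. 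A nonzero finite measure of finite $\beta$-energy carried by a set forces that set to have Hausdorff dimension $\geq\beta$ (the energy method, \cite[Appendix D]{davar}), so $\dim(\partial\cC_{(0,0)}^{1,4})\geq\beta$ on $\limsup_n E_n$, as claimed.

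\emph{Main obstacle.} The second-moment bookkeeping is routine and already packaged in Lemma \ref{lowerlem2}. The two places deserving care are: (i) ensuring the $\beta$-energy bound survives the weak limit despite the diagonal singularity of $|t-s|^{-\beta}$ — handled by the truncation combined with the built-in $2^{-2n}$ cutoff in $Z_n$; and (ii) verifying that the weak limit is carried by the \emph{accessible} boundary and not merely its closure, which rests on the closedness of $\partial\cC_{(0,0)}^{1,4}$ together with the $K^5 n^7 2^{-2n}\to 0$ estimate preceding Proposition \ref{lowerlem1}. (One should also note measurability of $\{\dim(\partial\cC_{(0,0)}^{1,4})\geq\beta\}$, or simply use the measurable inclusion $\limsup_n E_n\subseteq\{\dim\geq\beta\}$.)
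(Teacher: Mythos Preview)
Your proof is correct and follows essentially the same route as the paper's: Brownian scaling reduces to $x=1$; Lemma \ref{lowerlem2} and Fatou give a limsup event of probability at least $c^2/(8C)$; along a subsequence, weak compactness and the truncation-plus-monotone-convergence trick yield a nonzero measure of finite $\beta$-energy supported on $\partial\cC_{(0,0)}^{1,4}$, and Frostman (the energy method) concludes. Your added justification that $\partial\cC_{(0,0)}^{1,4}$ is closed is a point the paper uses implicitly without comment, so that is a small improvement; note only that this set is closed in $\IR^2$ rather than literally a subset of $[2,3]^2$, but this does not affect the argument.
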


\proof By the scaling property of Brownian motion, it suffices to set
$$
   c_0 = P\{\dim(\partial {\cal{C}}_{(0,0)}^{1,4}) \geq \beta\}
$$
and to show that $c_0 > 0$. Fix $K > 0$, $c > 0$ and $C > 0$ such that the conclusion of Proposition \ref{lowerlem1} holds, and fix 
$0 < \beta < (3-\lambda_1)/2$. Let $\mu_n$, $Z_n$, $K_\beta$ and $X_n$ be as in Lemma \ref{lowerlem2} and its proof, and set
$$
   F_n = \left\{X_n \in \left[\frac{c}{4}, \frac{2C}{c}\right],\ Z_n \leq K_\beta\right\},
$$
and $F = \limsup_{n\to\infty} F_n$. By Fatou's lemma and Lemma \ref{lowerlem2}, $P(F) \geq c^2/(8C)$, so it suffices to show that on $F$, $\dim(\partial {\cal{C}}_{(0,0)}^{1,4}) \geq \beta$.

   Fix $\omega \in F.$ Let $(n_k)$ be such that $\omega \in F_{n_k}$, for all $k \in \IN$. Because the set of measures with support in $[2,3]^2$ and with total mass in $[c/4, 2C/c]$ is weakly compact, there is a subsequence of $(n_k)$, which we again denote $(n_k)$, that converges weakly to a measure $\mu$. In view of the definition of $\mu_n$, for any $t$ in the support of $\mu_n$, the event $A(K,t,n)$ occurs, and we observed just above Proposition \ref{lowerlem1} that the definition of this event implies that $t$ is no more than $K^5 n^7 2^{-2n}$ units away from a point in $\partial {\cal{C}}_{(0,0)}^{1,4}$. Therefore, for any $\varepsilon > 0$ and for all large $n$, the support of $\mu_n$ is contained in the $\varepsilon$-enlargement of $\partial {\cal{C}}_{(0,0)}^{1,4}$. Therefore, the support of $\mu$ is 
contained in $\partial {\cal{C}}_{(0,0)}^{1,4}$.

   Fix $M > 0.$ For large $k$, on $F_{n_k}$,
$$
   \int_{[2,3]^2} \int_{[2,3]^2} (\frac{1}{\vert t-s \vert^\beta} \wedge M)\,
       \mu_{n_k} (dt) \mu_{n_k}(ds) \leq K_\beta,
$$
so the same inequality holds if $\mu_{n_k}$ is replaced by $\mu$. Now let 
$M \uparrow \infty$ and use the monotone convergence theorem to see that
$$
\int_{[2,3]^2} \int_{[2,3]^2} \frac{1}{\vert t-s \vert^\beta}\, \mu (dt) \mu(ds) \leq K_\beta < \infty.
$$
Because $\mu([2,3]^2) \in [c/4, 2C/c]$, this shows that 
$\partial{\cal{C}}_{(0,0)}^{1,4}$ has positive $\beta$-capacity. 
By Frostman's theorem \cite{F}, \cite{landkof}, 
$\dim(\partial {\cal{C}}_{(0,0)}^{1,4}) \geq \beta$. This proves the 
proposition. 
\hfill $\Box$
\vskip 16pt

   Proposition \ref{lbprop14} shows that with positive probability, $\dim(\partial {\cal{C}}_{(0,0)}^{x,4x^2}) \geq \beta$, for $\beta \in\,]0,(3-\lambda_1)/2[$. The next theorem transforms this into a statement valid with probability one, by considering a convergent sequence of distinct locations on the boundary of a bubble.

\begin{thm} Fix $q \in \IR$. A.s., the Hausdorff dimension of the boundary of every $q$-bubble of the ABM $\ti X$ is $\geq (3-\lambda_1)/2$.
\label{thm7.3}
\end{thm}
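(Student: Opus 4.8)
\emph{Reductions.} Since reflecting a coordinate turns $\ti X$ into another standard ABM and preserves Hausdorff dimension, and since $-\ti X$ is, after a relabelling of the coordinates, again a standard ABM whose upwards $(-q)$-bubbles are the downwards $q$-bubbles of $\ti X$, it suffices to treat upwards $q$-bubbles and, after a reflection, to assume that a chosen point of the bubble lies in $]0,\infty[^2$. Every upwards $q$-bubble is open and nonempty, hence equals $B_p := \cC_p(q)$ for some $p \in \IQ^2\cap\,]0,\infty[^2$ with $\ti X(p)>q$; so it is enough to prove that for each fixed such $p$ and each rational $\beta\in\,]0,(3-\lambda_1)/2[$, a.s.\ on $\{\ti X(p)>q\}$ one has $\dim\partial B_p\geq\beta$. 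Fix $p$, $\beta$, work on $\{\ti X(p)>q\}$, and fix $c_0=c_0(\beta)>0$ as in Proposition \ref{lbprop14}. I would work throughout with the filtration $(\F_{\ul u}):=(\F^{(0,0)}_{\ul u})$, for which $X^{(0,0)}=\ti X$ and $\F_{\ul u}=\sigma(\ti X(s):s\in\cR_{(0,0)}(\ul u))$, so that $\ti X(s)$ is $\F_{\ul u}$-measurable for every $s\in\cR_{(0,0)}(\ul u)$.

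\emph{Two ingredients.} The first is an adapted version of Proposition \ref{lbprop14}: if $\ulT$ is any $(\F_{\ul u})$-stopping point, $X^\ulT:=X^{(0,0),\ulT}$ the standard ABM associated with $\ulT$ as in the lines following \eqref{startp6} (so $X^\ulT$ is independent of $\F_\ulT$), $x>0$ an $\F_\ulT$-measurable random variable, and $\hat\cC^x$ the connected component of $\{s:x+X^\ulT(s)>0\}$ containing the origin, then conditioning on $\F_\ulT$ and using the independence of $X^\ulT$ gives
$$
   P\bigl(\dim\partial\hat\cC^{x,4x^2}\geq\beta\mid\F_\ulT\bigr)\geq c_0\qquad\text{a.s.}
$$
The second is a geometric inclusion. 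Writing $\rho=(\ulT_1,\ulT_2)$ for the upper-right corner of $\cR_{(0,0)}(\ulT)$, one has $X^\ulT(\cdot)=\ti X(\rho+\cdot)-\ti X(\rho)$ on the quadrant $[0,\infty[^2$, which contains the closed box $[0,4x^2]^2$; hence if in addition $\rho\in B_p$ and $x=\ti X(\rho)-q$, then $x+X^\ulT=\ti X(\rho+\cdot)-q$ on that box, so that $\partial\hat\cC^{x,4x^2}$, translated to be based at $\rho$, consists of points of the closed box $\rho+[0,4x^2]^2$ that are limits of points of $B_p$ (reached by curves lying in $\{\ti X>q\}$ and joined to $\rho\in B_p$) and at which $\ti X\leq q$; consequently
$$
   \partial\hat\cC^{x,4x^2}\subset\partial B_p .
$$

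\emph{The renewal sequence.} The heart of the argument, which I would carry out next, is to build inductively $(\F_{\ul u})$-stopping points $\ulT^{(1)}\leq\ulT^{(2)}\leq\cdots$, with upper-right corners $\rho^{(k)}=(\ulT^{(k)}_1,\ulT^{(k)}_2)$ and excursion values $x_k:=\ti X(\rho^{(k)})-q$, such that for every $k$
$$
   \rho^{(k)}\in B_p,\qquad \ulT^{(k+1)}_1\geq\rho^{(k)}_1+4x_k^2,\qquad \ulT^{(k+1)}_2\geq\rho^{(k)}_2+4x_k^2 .
$$
One can start with a deterministic $\ulT^{(1)}$ whose rectangle has $p$ as upper-right corner, so $\rho^{(1)}=p$ and $x_1=\ti X(p)-q>0$. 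The displayed inequalities ensure that the box $\rho^{(k)}+[0,4x_k^2]^2$ lies in $\cR_{(0,0)}(\ulT^{(k+1)})$, while forcing the $\rho^{(k)}$ to be coordinatewise increasing and bounded (they remain near $B_p$), whence $\sum_k x_k^2<\infty$, $x_k\to0$, and $\rho^{(k)}$ converges to a point of $\partial B_p$. For the inductive step one exploits that $X^{\ulT^{(k)}}$ is a \emph{fresh} standard ABM on $\rho^{(k)}+[0,\infty[^2$, agreeing there with $\ti X(\rho^{(k)}+\cdot)-\ti X(\rho^{(k)})$: a further short exploration of this fresh ABM --- for instance a few stages of the DW-algorithm, or a direct study of the Brownian motions $u\mapsto\ti X(\rho^{(k)}_1+u,\rho^{(k)}_2)$ and $u\mapsto\ti X(\rho^{(k)}_1,\rho^{(k)}_2+u)$ combined with the escape estimates of Section \ref{sec3} --- is used to locate a new corner $\rho^{(k+1)}\in B_p$ above and to the right of the $k$-th box, with a controlled new excursion value. \emph{Showing that such a step can always be performed --- that one can always advance far enough up and to the right while staying inside $B_p$ and while the excursion values tend to $0$ --- is the main obstacle; it requires exploiting the fractal roughness of $\partial B_p$ (which has dimension $\geq 5/4$ by \cite{M}), and this is where the bulk of the technical work will lie.}

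\emph{Conclusion.} With $E_k:=\{\dim\partial\hat\cC_k^{x_k,4x_k^2}\geq\beta\}$, where $\hat\cC_k$ is formed from $X^{\ulT^{(k)}}$ as in the first ingredient, the first ingredient gives $P(E_k\mid\F_{\ulT^{(k)}})\geq c_0$; and since $E_k$ depends only on $x_k$ and on $\ti X$ over the box $\rho^{(k)}+[0,4x_k^2]^2\subset\cR_{(0,0)}(\ulT^{(k+1)})$, it is $\F_{\ulT^{(k+1)}}$-measurable. Iterating the conditional bound, $P(\bigcap_{j=k}^{N}E_j^c)\leq(1-c_0)\,P(\bigcap_{j=k}^{N-1}E_j^c)\leq(1-c_0)^{N-k+1}$, so $P(\bigcap_{j\geq k}E_j^c)=0$ and a.s.\ infinitely many $E_k$ occur. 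On each such $E_k$, the set $\partial\hat\cC_k^{x_k,4x_k^2}$, translated to base $\rho^{(k)}$, lies in $\partial B_p$ by the second ingredient and has dimension $\geq\beta$; hence $\dim\partial B_p\geq\beta$ a.s.\ on $\{\ti X(p)>q\}$. Taking intersections over $p\in\IQ^2\cap\,]0,\infty[^2$ and over rational $\beta\uparrow(3-\lambda_1)/2$, and using the reductions, this yields the theorem.
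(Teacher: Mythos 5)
Your preliminary reductions, the conditional restatement of Proposition \ref{lbprop14}, and the geometric inclusion $\partial\hat\cC^{x,4x^2}\subset\partial B_p$ (after translation) are all correct, and the key idea --- exhibit at interior points near the boundary a fresh copy of the ABM to which Proposition \ref{lbprop14} applies --- is the right one. However, the renewal sequence that forms the core of your argument has a genuine gap that you flag yourself, and which the devices you suggest do not remove. You require, from $\rho^{(k)}\in B_p$ with excursion value $x_k$, a new corner $\rho^{(k+1)}\in B_p$ with both coordinates exceeding those of $\rho^{(k)}$ by at least $4x_k^2$. But along the horizontal ray from $\rho^{(k)}$ the map $u\mapsto\ti X(\rho^{(k)}_1+u,\rho^{(k)}_2)-q$ is a Brownian motion started at $x_k$, whose first zero occurs before time $4x_k^2$ with a fixed positive probability independent of $k$ (by Brownian scaling), and on this event the bubble along that ray does not reach $\rho^{(k)}_1+4x_k^2$. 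There is then no evident way to certify a corner of $B_p$ above and to the right of the $k$-th box while keeping each $E_j$, $j\leq k$, measurable with respect to $\F_{\ulT^{(k+1)}}$; since the per-step failure probability is bounded below, the natural construction halts a.s.\ after finitely many steps, and the iterated bound $P(\bigcap_{j\geq k}E_j^c)\leq(1-c_0)^{N-k+1}$ is never reached. Invoking the $5/4$ lower bound on $\dim\partial B_p$ from \cite{M} does not help: Hausdorff dimension does not provide the directional control the inductive step needs.

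The paper's proof of Theorem \ref{thm7.3} takes a shorter route that avoids any infinite renewal construction. It fixes a single boundary point $(T_1,r_2)$ with $T_1=\inf\{t_1\geq r_1:\ti X(t_1,r_2)=q\}$, and for each $\varepsilon>0$ sets $\tau_\varepsilon=\inf\{t_2\geq r_2:\ti X(T_1,t_2)=q+\varepsilon\}$ and $S^\varepsilon=(T_1,\tau_\varepsilon)$. Because rectangular increments of $\ti X$ vanish, $\ti X>q$ along $[r_1,T_1]\times\{\tau_\varepsilon\}$ and, for small $\varepsilon$, along $\{r_1\}\times[r_2,\tau_\varepsilon]$ by continuity, so $S^\varepsilon\in\cC_r(q)$; the fresh ABM $Y^\varepsilon=X^{r,(S_1^\varepsilon,S_2^\varepsilon,0,0)}$ then makes the boundary piece $\partial\cC_{(0,0)}^{\varepsilon,4\varepsilon^2}(Y^\varepsilon)$ correspond to a subset of $\partial\cC_r(q)$. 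Each event $G_\varepsilon=\{\dim\partial\cC_{(0,0)}^{\varepsilon,4\varepsilon^2}(Y^\varepsilon)\geq\beta\}$ has probability $\geq c_0$ by Proposition \ref{lbprop14}, and, as $\varepsilon\downarrow 0$, $S^\varepsilon\to(T_1,r_2)$ so that $G_\varepsilon$ becomes measurable with respect to shrinking $\sigma$-fields $\G^\varepsilon$. A Blumenthal-type $0$-$1$ law for the ABM at $(T_1,r_2)$ forces $P(\limsup_n G_{1/n})\in\{0,1\}$, Fatou gives $P(\limsup_n G_{1/n})\geq\limsup_n P(G_{1/n})\geq c_0>0$, hence the probability is $1$, and on this event $\dim\partial\cC_r(q)\geq\beta$. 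This $0$-$1$ law is the missing mechanism: it upgrades a single positive-probability estimate to an almost-sure statement and thereby removes any need to run infinitely many nested trials.
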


\proof Fix $q \in \IR$. It suffices to consider upwards 
$q$-bubbles. Since each such bubble contains a point $r$ with rational 
coordinates, it suffices to fix $r = (r_1, r_2)$, assume $\ti X(r) > q$ and show that $P_x$-a.s., $\dim \partial{\cal{C}}_r(q) \geq (3-\lambda_1)/2$, where ${\cal{C}}_r(q)$ denotes the component of $\{s \in \IR^2: \ti X(s) > q \}$ that contains $r$.

   Set $T_1 = \inf \{t_1 \geq r_1: \ti X(t_1, r_2) = q\}$. Then 
$T_1 < \infty$ a.s. For $\varepsilon > 0$, set
$$
   \tau_\varepsilon = \inf \{t_2 \geq r_2: \ti X(T_1, t_2) =q + \varepsilon\},
$$
and $S^\varepsilon = (S_1^\varepsilon, S^\varepsilon_2)$, where 
$S_1^\varepsilon = T_1$ and $S^\varepsilon_2 = \tau_\varepsilon.$ Because planar increments of $\ti X$ vanish, for $t_1 \in [r_1,  S_1^\varepsilon]$,
$$
   \ti X(t_1,S_2^\varepsilon) = \ti X(t_1,r_2) + \ti X(S_1^\varepsilon, S_2^\varepsilon) - \ti X(S_1^\varepsilon,r_2) = \ti X(t_1,r_2) + \varepsilon > q + \varepsilon,
$$
therefore $\ti X > q$ on $[r_1, S_1^\varepsilon] \times 
\{S_2^\varepsilon\}$, and for small $\varepsilon > 0$, $\ti X > q$ on 
$\{r_1\} \times [r_2, S^\varepsilon_2]$ by continuity of $\ti X$, so 
$S^\varepsilon \in {\cal{C}}_r(q).$

   Using the notation introduced below (\ref{startp6}), let $Y^\varepsilon =(Y^\varepsilon(t) = X^{r,(S_1^\varepsilon, S_2^\varepsilon,0,0)}(t),\ t \in \IR^2)$: this is a standard ABM that is independent of ${\cal{F}}^r_{(S_1^\varepsilon, S_2^\varepsilon,0,0)}$.  Further, a path in $\rtoo$ starting at the origin along which $\varepsilon + Y^\varepsilon > 0$ corresponds to a path starting at $S^\varepsilon$ along which $\ti X >  q$. 
Therefore, $\partial\cC_{(0,0)}^{\varepsilon,4\varepsilon^2}(Y^\varepsilon)$ (component for the process $Y^\varepsilon$) corresponds to a subset of $\partial \cC_r(q,\ti X)$ (component for the process $\ti X$).

   Fix $0 < \beta < (3-\lambda_1)/2,$ and for $\varepsilon > 0,$ set 
$G_\varepsilon =
\{ \dim \partial {\cal{C}}_{(0,0)}^{\varepsilon,4\varepsilon^2}(Y^\varepsilon) \geq \beta\}$. Let $\G^\varepsilon$ be the sigma-field generated by $(X^{r,(S_1^\varepsilon,0,0,0)}(s_1,s_2),\ 0 \leq s_1 \leq 4 \varepsilon^2,\, 0 \leq s_2 \leq S_2^\varepsilon + 4 \varepsilon^2)$, 
and observe that $G_\varepsilon \in {\cal{G}}^\varepsilon$. By the 0-1 law for the additive Brownian motion $X^{r,(S_1^\varepsilon,0,0,0)}$, $P(\limsup_{n \to \infty} G_{1/n}) \in \{0, 1\}$. Further, by Fatou's lemma,
$$
   P(\limsup_{n \to \infty} G_{1/n}) \geq \limsup_{n \to \infty} P(G_{1/n}),
$$
and by Proposition \ref{lbprop14}, $P(G_{1/n}) \geq c_0$.
Therefore, $P(\limsup_{n \to \infty} G_{1/n}) = 1$, 
and on this event, $\dim \partial {\cal{C}}_r(q) \geq \beta$. The theorem is 
proved. 
\hfill $\Box$
\vskip 16pt

\noindent{\em Proof of Theorem \ref{thm2}.} This statement is an immediate consequence of Propositions \ref{thm4} and \ref{thm7.3}.
\hfill $\Box$
\vskip 16pt


\end{section} 
\eject

\begin{section}{Upper bound on the Hausdorff dimension of the boundaries of bubbles of the Brownian sheet}\label{sec8}

   The objective of this section is to prove the following result.

\begin{prop} Fix $q \in \IR$. With probability one, the Hausdorff dimension of the boundary of each $q$-bubble of the Brownian sheet is $\leq (3-\lambda_1)/2$.
\label{bs_ubprop1}
\end{prop}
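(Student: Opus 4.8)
The plan is to mimic the proof of Proposition \ref{thm4} (the ABM upper bound), replacing the DW-algorithm by the $\delta$-DW-algorithm adapted to the Brownian sheet. First I would recall the local decomposition of the Brownian sheet: in the neighborhood of a point $r\in\rtoo$, one has $W(s) = W(r) + X^r(s) + R^r(s)$, where $X^r$ is a standard ABM built from the one-parameter increments of $W$ along the two coordinate directions through $r$, and $R^r$ is an error term which is a product of two independent increments and hence is of order $|s_1-r_1|\,|s_2-r_2|$ near $r$ (see \cite{DW0}). On a dyadic square $D^n_{i,j}$ of side $2^{-2n}$ centered (approximately) at $r=r_{i,j,n}$, the error term $R^r$ is uniformly of order $2^{-4n}$, which is negligible compared with the scale $2^{-n}$ at which the ABM fluctuates over such a square. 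The only subtlety, flagged in the introduction, is that ``adding a small quantity to a process can substantially change the size of some bubbles'': even if the ABM $X^r$ is nonpositive on the boundary of the rectangle produced by the DW-algorithm, it may be only barely nonpositive, so the true bubble of $W$ could leak out of this rectangle. This is exactly what the $\delta$-DW-algorithm is designed to control, and by Lemma \ref{bs_ub_lem4} (the gambler's ruin and escape probabilities for the $\delta$-DW-algorithm agree, up to constants, with those of the plain DW-algorithm) the escape estimate of Theorem \ref{thm3} still holds for the $\delta$-DW-algorithm, and hence an analogue of Proposition \ref{univariateup} holds for the Brownian sheet with the $\delta$-DW-algorithm in place of the DW-algorithm.

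Granting that analogue of Proposition \ref{univariateup}, the covering argument is essentially identical to Section \ref{sec4}. By scaling it suffices to bound the dimension of $H_1\cap[1,\infty[^2$, and then of $H_1\cap D_r$ for each integer lattice square $D_r$; and then it suffices to show that for every $\alpha > (3-\lambda_1)/2$, $\dim(H_1\cap D_{(1,1)})\le\alpha$ a.s. One covers $D_{(1,1)}=[1,2]^2$ by the $2^{4n}$ dyadic squares $D^n_{i,j}$ of side $2^{-2n}$ as in \eqref{rdsquares} and takes as random covering those $D^n_{i,j}$ meeting $H_1$. Condition (a) (diameters tending to $0$) is automatic, and for condition (b) it suffices to prove
$$
   \lim_{n\to\infty} 2^{(4-2\alpha)n}\sup_{i,j} P\{D^n_{i,j}\cap H_1\ne\emptyset\} = 0 .
$$
On the good event $F_n(i,j) = \{\sup_{s\in D^n_{i,j}}|W(s)-W(r_{i,j,n})| < n2^{-n}\}$, whose complement has probability at most $2e^{-n^2/8}$ by the reflection principle applied to the one-parameter sections of $W$, the occurrence of $D^n_{i,j}\cap H_1\ne\emptyset$ forces $|W(r_{i,j,n})-q| < n2^{-n}$ and the existence of a path from a point near $r_{i,j,n}$ along which $W > q-n2^{-n}$ reaching distance at least $1/2$; using the $W = W(r)+X^r+R^r$ decomposition with $|R^r|\le C2^{-4n}$ on the relevant square, this in turn forces the $\delta$-DW-algorithm started at $r_{i,j,n}$ with value $x_0 = Cn2^{-n}$ (for a suitable absolute constant) to escape $\cR_{r_{i,j,n}}(1/2)$ — this is the Brownian-sheet analogue of \eqref{key1} and uses the $W$-version of Proposition \ref{prop2}(c). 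Then the $W$-analogue of Proposition \ref{univariateup} gives $P(D^n_{i,j}\cap H_1\ne\emptyset)\le C(n2^{-n})^{1+\lambda_1} + 2e^{-n^2/8}$, so $2^{(4-2\alpha)n}\sup_{i,j}P\{\cdots\}\le 2^{(4-2\alpha)n}(C(n2^{-n})^{1+\lambda_1}+2e^{-n^2/8})\to 0$ because $\alpha > (3-\lambda_1)/2$, i.e.\ $4-2\alpha < 1+\lambda_1$. By Fatou's lemma this gives $\dim(H_1\cap D_{(1,1)})\le\alpha$ a.s., and letting $\alpha\downarrow(3-\lambda_1)/2$ along a sequence and patching over all lattice squares $r$ completes the proof.

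The main obstacle, and where essentially all the real work lies, is establishing the Brownian-sheet analogue of Proposition \ref{univariateup}, i.e.\ that for $t\in[2,3]^2$ and small $\ep$, the event that $|W(t)-q|\le 2\ep$ and the $\delta$-DW-algorithm started at $t$ with value $\sim\ep$ escapes $\cR_t(1/2)$ (or reaches level $1$ inside it) has probability $\le C\ep^{1+\lambda_1}$. This requires (i) defining the $\delta$-DW-algorithm precisely and checking it still terminates and still has the property that the true $W$-bubble of $t$ is contained in the explored rectangle — which is the entire point of adding the $\delta$-continuation — and (ii) showing via Lemma \ref{bs_ub_lem4} that its escape probability obeys the same power law $\ep^{\lambda_1}$ as in Theorem \ref{thm3}; the extra factor $\ep$ comes, exactly as in Proposition \ref{univariateup}, from conditioning on the increments of $W$ inside a fixed window and using that $W(3/2,3/2)$ (or an analogous increment independent of that window) has a bounded density, so $P\{|W(t)-q|\le 2\ep\mid \text{window}\}\le C\ep$. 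I expect the delicate part to be the bookkeeping needed to verify that the barely-negative-boundary problem is genuinely cured by the $\delta$-continuation, uniformly over $t\in[2,3]^2$ and over all small $\ep$, and to choose the window and the independent increment correctly given the correlation structure of $W$; the covering argument itself is then routine.
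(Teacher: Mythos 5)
Your high-level plan (local decomposition $W = W(r)+X^r+\E^r$, $\delta$-DW-algorithm to get a \emph{strictly} negative margin on the boundary of the explored rectangle, then the Section~\ref{sec4} covering argument) matches the paper's strategy, and your covering computation is essentially correct once the key probability estimate is in hand. However, there is a genuine gap in the way you argue that the $\delta$-continuation cures the ``barely negative'' problem, and it is not mere bookkeeping.

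The issue is this. The $\delta$-DW-algorithm guarantees that $x_0 + \ti X \leq -\delta M_I$ on $\partial\cR(\ulT^{(I)})$, where $M_I$ is the maximum level reached. For the $W$-bubble to be trapped inside $\cR(\ulT^{(I)})$, you need the error term $\E$ to be $< \delta M_I$ on that boundary. You observe (correctly in spirit) that $\E$ is small at the scale of the starting square $D^n_{i,j}$, but the $\delta$-DW-algorithm started with value $x_0 \sim n^2 2^{-n}$ may explore a rectangle $\cR(\ulT^{(I)})$ that is \emph{far} larger than $D^n_{i,j}$ — up to side $2^{-n\varepsilon/10}$ in the paper's argument — and on such a rectangle $\E$ is of order comparable to, or far larger than, $n^2 2^{-n}$ or even $M_I$. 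So the $\delta$-margin is not automatically bigger than the error, and the $\delta$-continuation alone does \emph{not} cure the problem. What makes the argument go through is the extra observation — Lemma~\ref{bs_ub_lem5} / event $D_n$ in the paper — that, except on an exponentially small event, the maximum level $M(\delta)$ reached by the $\delta$-DW-algorithm is at least $\frac{1}{n}\max_j\sqrt{\ulT_j^{(I)}}$, i.e.\ comparable (up to $1/n$) to the square root of the side of the explored rectangle. Coupled with the modulus-of-continuity bound for $\E$ at all dyadic scales in the window (event $J_n$, which says $\sup_{|u_i|\le h}|\E| \le h n^2$ for all $h \in [2^{-2n},2^{-2\varepsilon n/10}]$, Lemma~\ref{bs_ub_lem6}), one gets that on the boundary of $\cR(\ulT^{(I)})$ the $\delta$-margin $\delta M_I \gtrsim \delta n^{-1}\sqrt{h}$ dominates the error $\lesssim h n^2$ for $h$ in the relevant range — this is the $\sqrt{h}$ vs.\ $h$ comparison you allude to when you say $\E$ is ``of order $|u_1|+|u_2|$'', but you never carry it out at the scale of the explored rectangle, only at the scale of the starting square. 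Your proposal has no analogue of $D_n$, and without it the inclusion ``$D^n_{i,j}\cap H_1\ne\emptyset$ and good events force escape'' simply does not hold: the $W$-bubble can leak across a boundary where $x_0 + \ti X$ is only $-\delta M_I$ with $M_I$ tiny.

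Two smaller points. First, Lemma~\ref{bs_ub_lem4} gives escape probability $\lesssim x^{-(\lambda_1-\varepsilon)/2}$, not $x^{-\lambda_1/2}$ up to constants as you state, so the analogue of Proposition~\ref{univariateup} one can hope for has exponent $1+\lambda_1-\varepsilon$; this is fine for the covering argument but should be stated carefully. Second, the paper does not in fact package the one-point estimate as a separate proposition analogous to Proposition~\ref{univariateup}; it directly decomposes $P(A_n)$ as $P(A_n\cap F_n\cap G_n) + P(A_n\cap F_n\cap G_n^c) + P(A_n\cap F_n^c)$, bounds the middle term via Lemma~\ref{bs_ub_lem4} and independence of $\ti X$ and $W(1,1)$, and shows $A_n\cap F_n\cap G_n\cap D_n \subset E_n^c\cup J_n^c$, with $P(G_n\cap D_n^c)$, $P(E_n^c)$, $P(J_n^c)$ all exponentially small. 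This is where the real content lives, and it is precisely the part your proposal leaves open.
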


   The fact that, locally in the neighborhood of a point $t \in \IR^2$, the Brownian sheet is well-approximated by an additive Brownian motion \cite{DW0} is the basis for having the same upper bound in Propositions \ref{bs_ubprop1} and \ref{thm4}. However, in order to handle the error in this approximation, we need a variant on the DW-algorithm which terminates only upon constructing a contour on which the ABM is ``significantly negative''.
\vskip 12pt

\noindent{\em The $\delta$-DW-algorithm started at the origin with value $x_0$}
\vskip 12pt

   Fix $\delta>0$, $x_0 > 0$ and let $(\tilde{X}(t),\ t \in \IR^2)$ be a standard ABM. Set $X(t) = x_0 + \tilde{X}(t)$, $M_0 = x_0$ and $\ulT^{(0)} = (0,0,0,0).$ Begin the algorithm at Stage $i=1$.
\vskip 12pt

\noindent{\em Stage $i$.} Run the DW-algorithm for the ABM $X^{\underline{T}^{(i-1)}}$ started at (0,0) with value $M_{i-1}$, until this algorithm terminates, at stage $N^{(i)}$, after having explored $\cR(\underline{\tau}^{(i)}_{(N^{(i)})})$ and $M_{i-1} + X^{\ulT^{(i-1)}}$ has reached the maximum level $M_i$.

   Set $\underline{S}^{(i)} = \underline{T}^{(i-1)} + \underline{\tau}^{(i)}_{(N^{(i)})}.$ For $j=1, \ldots, 4,$ set $\sigma_j^{(i),0} = 0$ and, using the notation from \eqref{startp6}, for $k = 1, 2, \ldots,$
$$
\sigma_j^{(i),k} = \inf\{u > \sigma_j^{(i),k-1}: B_j^{\underline{S}^{(i)}}(u) - B_j^{\underline{S}^{(i)}}(0) \in \{-2^{k} \delta M_i, 2^{k-1} \delta M_i\} \cup [\frac{M_i}{4}, \infty[\}.
$$ 
Let $k_0$ be the smallest integer $k \geq 1$ such that either 
\begin{equation}\label{bs_ub1}
   \mbox{for all } j \in \{1, \ldots, 4\},\  B_j^{\underline{S}^{(i)}}(\sigma_j^{(i), k}) -
      B_j^{\underline{S}^{(i)}}(0) = -2^{k} \delta M_i,
\end{equation}
or
\begin{equation}\label{bs_ub2}
   \mbox{for some } j \in \{1, \ldots, 4\},\ B_j^{\underline{S}^{(i)}} (\sigma_j^{(i), k}) -
       B_j^{\underline{S}^{(i)}}(0) \geq M_i/4.
\end{equation}
Set $\underline{\sigma}^{(i)} = (\sigma_1^{(i), k_0}, \sigma_2^{(i),k_0}, \sigma_3^{(i), k_0}, \sigma_4^{(i),k_0})$ and $\underline{T}^{(i)} = \underline{S}^{(i)} + \underline{\sigma}^{(i)}$. If (\ref{bs_ub1}) occurs, set $I=i$ and the $\delta$-DW-algorithm terminates. Otherwise, it proceeds to Stage $i+1$.
\vskip 12pt

   The next lemma shows in particular that the $\delta$-DW-algorithm terminates after a random finite number $I$ of stages.

\begin{lemma} (a) On $\partial {\cal{R}}(\underline{T}^{(I)})$, $x_0 + \tilde{X}(\cdot) \leq - \delta \sup_{t \in {\cal{R}}(\underline{T}^{(I)})} (x_0 + \tilde{X}(t))$.

   (b) The conditional probability $P\{I = i \mid {\cal{F}}_{\underline{T}^{(i-1)}}\}$ is equal to $c(\delta) \, 1_{\{I \geq i \}}$, where $c(\delta)$ is deterministic, does not depend on $i \geq 1$, and
$\lim_{\delta \downarrow 0} c(\delta) = 1.$
\label{bs_ub_lem2}
\end{lemma}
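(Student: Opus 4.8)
\textbf{Part (a).} The plan is to trace through the output of the $\delta$-DW-algorithm at termination. At Stage $I$, the inner DW-algorithm for $X^{\ulT^{(I-1)}}$ terminated having explored the rectangle $\cR(\ultau^{(I)}_{(N^{(I)})})$, and by Proposition \ref{prop2}(a) the value $M_{I-1} + X^{\ulT^{(I-1)}}$ is $\leq 0$ on the boundary of this rectangle; equivalently $x_0 + \ti X \leq 0$ on $\partial\cR(\ulS^{(I)})$ (since $x_0 + \ti X(\cdot) = M_{I-1} + X^{\ulT^{(I-1)}}(\cdot - \ulT^{(I-1)})$ up to the shift). Moreover, by the Remark following Proposition \ref{prop2}, $x_0 + \sup_{\cR(\ulS^{(I)})}\ti X = M_I$, and $M_I$ is attained at the highest corner. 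After Stage $I$, condition \eqref{bs_ub1} holds, so each of the four Brownian motions $B_j^{\ulS^{(I)}}$ has decreased by exactly $2^{k_0}\delta M_I$ before ever rising by $M_I/4$ from its starting value. Using the additive structure $\ti X(s_1,s_2) = \ti Z_1(s_1) - \ti Z_2(s_2)$ and the fact that $\Delta_R \ti X = 0$ for every rectangle, I would decompose $\partial\cR(\ulT^{(I)})$ into the pieces added during the extension phase and show, exactly as in the rectangle-subdivision argument in the proof of Proposition \ref{prop2}(a) and in Lemma \ref{lem5.2prime}, that on each of the four ``arms'' of $\partial\cR(\ulT^{(I)})$ one has $x_0 + \ti X(\cdot) \leq M_I - 2^{k_0}\delta M_I \cdot(\text{something})$; in fact on the extreme boundary the value is at most $-2^{k_0}\delta M_I \leq -\delta M_I$. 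Since $\sup_{\cR(\ulT^{(I)})}(x_0+\ti X)$ could exceed $M_I$ only via the increments $B_j^{\ulS^{(I)}}$, and $k_0$ was chosen as the \emph{first} $k$ at which \eqref{bs_ub1} or \eqref{bs_ub2} occurs, none of the $B_j$ has risen by as much as $M_I/4$, so $\sup_{\cR(\ulT^{(I)})}(x_0+\ti X) \leq M_I + M_I/4 \cdot 4 \leq 2M_I$; keeping track of the constants carefully gives $x_0+\ti X \leq -\delta\sup_{\cR(\ulT^{(I)})}(x_0+\ti X)$ on the boundary. (One should double-check whether the stated inequality needs $2^{k_0}$ versus the clean $\delta$; either way the argument is the same combinatorial decomposition of the boundary plus $\Delta_R\ti X=0$, and I expect the constant to come out as claimed.)

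\textbf{Part (b).} The key structural fact is the strong Markov property of the ABM at the stopping point $\ulS^{(i)}$. Given $\F_{\ulS^{(i)}}$, the four processes $(B_j^{\ulS^{(i)}}(u) - B_j^{\ulS^{(i)}}(0),\ u\geq 0)$, $j=1,\dots,4$, are four independent standard Brownian motions independent of $\F_{\ulS^{(i)}}$, and the whole ``extension phase'' of Stage $i$ — the definition of the $\sigma_j^{(i),k}$, the stopping index $k_0$, and whether \eqref{bs_ub1} or \eqref{bs_ub2} triggers — depends only on these four Brownian motions and on $M_i$, which is $\F_{\ulS^{(i)}}$-measurable. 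By Brownian scaling, replacing the barriers $\{-2^k\delta M_i,\ 2^{k-1}\delta M_i\}\cup[M_i/4,\infty)$ by $\{-2^k\delta,\ 2^{k-1}\delta\}\cup[1/4,\infty)$ changes nothing in law, so the event $\{$\eqref{bs_ub1} occurs$\}$ has a probability $c(\delta)$ that depends only on $\delta$, not on $i$ and not on $M_i$. Hence $P\{I=i \mid \F_{\ulT^{(i-1)}}\} = P\{I=i \mid \F_{\ulS^{(i)}}\}\cdot 1_{\{I\geq i\}} = c(\delta)\,1_{\{I\geq i\}}$, since on $\{I\geq i\}$ Stage $i$ is actually run. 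I would then spell out $c(\delta)$ as the probability that a certain nested gambler's-ruin procedure on four independent Brownian motions ends with all four hitting their lower barriers before any of them reaches level $1/4$: at each level $k$ one asks whether all four have just taken a ``down'' step of size $2^k\delta$ (rather than an ``up'' step of size $2^{k-1}\delta$ or an excursion to $1/4$). For $\delta$ small, a single Brownian motion started at $0$ hits $-2^k\delta$ before $2^{k-1}\delta$ with probability $\frac{2^{k-1}\delta}{2^k\delta+2^{k-1}\delta} = 1/3$ in the idealized two-barrier problem, but the presence of the far-away barrier $1/4$ only perturbs this by $O(\delta)$ uniformly over the relevant range of $k$ (since $k_0$ is small with high probability when $\delta$ is small, as the geometric growth $2^k\delta$ reaches $1/4$ quickly). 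A Borel–Cantelli / dominated-convergence estimate shows the chance that the procedure survives many levels is negligible as $\delta\downarrow 0$, and the chance of ``all four down at level $1$'' tends to $1$; assembling these gives $\lim_{\delta\downarrow 0} c(\delta) = 1$. As a by-product, since $c(\delta)>0$ for each $\delta>0$ and the Stages are, by the Markov property, conditionally i.i.d.\ in this respect, $I$ is stochastically dominated by a geometric random variable and is therefore a.s.\ finite.

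\textbf{Main obstacle.} The conceptual content is routine given Proposition \ref{prop2} and the strong Markov property; the real work is bookkeeping. For (a) it is the careful boundary decomposition and tracking of the constant $\delta$ versus $2^{k_0}\delta$ through the rectangle-additivity identities. For (b) it is verifying that the far barrier $M_i/4$ perturbs the nested two-barrier probabilities by a quantity that is both $o(1)$ as $\delta\to 0$ \emph{and} uniform in the stage index $i$ and in $M_i$; scaling kills the $M_i$-dependence outright, so the substantive estimate is the uniform-in-$k$ control of the three-barrier exit probabilities and the tail of $k_0$, which is where I would spend the most care.
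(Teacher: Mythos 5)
Your reduction of both parts to the strong Markov property at $\ulS^{(i)}$ and the $\Delta_R\ti X = 0$ identity is the right framework, and your scaling argument in (b) correctly shows $c(\delta)$ is deterministic and independent of $i$ and $M_i$. However there are genuine gaps in both parts.

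\textbf{Gap in (a).} Your bound $\sup_{\cR(\ulT^{(I)})}(x_0+\ti X)\le 2M_I$ rests on the misconception that the extension increments $B_j^{\ulS^{(I)}}$ could push the maximum above $M_I$. They cannot. The increments are added to values on the boundary of the pre-extension rectangle $\cR(\ulS^{(I)})$, where $x_0+\ti X\le 0$ (by Proposition \ref{prop2}(a)), not to the interior point where the maximum $M_I$ is attained. So on the single-extension strips the value is at most $0 + M_I/4$, and on the corner pieces (where the pre-extension value is $-M_I$ by rectangle-additivity) it is at most $-M_I + M_I/4 + M_I/4 = -M_I/2$. Hence $\sup_{\cR(\ulT^{(I)})}(x_0+\ti X) = M_I$ exactly, which is the paper's identity \eqref{rd_ub3}; this, together with the inductive bound $M_i \geq x_0 + \sup_{\cR(\ulT^{(i)})}\ti X(t)$, is the structural fact your proposal misses. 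With it the claim is clean. Without it, your boundary bound $\leq -\delta M_I$ (you discard the $2^{k_0}\ge 2$ factor) combined with a sup bound of $2M_I$ gives only $-\delta M_I$, which does \emph{not} dominate $-\delta\cdot 2M_I$, so the stated inequality does not follow.

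\textbf{Gap in (b).} The claim that ``the chance of `all four down at level $1$' tends to $1$'' is false --- that is the probability $k_0 = 1$, which is roughly $(1/3)^4$ and does not depend on $\delta$. What tends to $1$ is the probability that \eqref{bs_ub1} occurs at \emph{some} $k$ before any $B_j$ escapes to $M_i/4$. Moreover, for $k \geq 2$ the Brownian motion does not restart at $0$: at time $\sigma_j^{(i),k-1}$ it sits at $-2^{k-1}\delta M_i$ or $2^{k-2}\delta M_i$, and the correct per-BM lower bound for hitting $-2^k\delta M_i$ before $2^{k-1}\delta M_i$ is $1/6$, not $1/3$. The paper's quantitative estimate is that conditional on not having terminated by level $k-1$, the probability of not terminating at level $k$ is at most $1-6^{-4}$, giving $1-c(\delta) \leq (1-6^{-4})^{[\log_2(1/\delta)]} \to 0$. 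Your Borel--Cantelli intuition points the right direction but is not a proof, and the incorrect $1/3$ and ``level $1$'' statements would need to be replaced by this explicit geometric bound.
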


\begin{remark} {\rm Lemma \ref{bs_ub_lem2}(a) states that when the $\delta$-DW-algorithm terminates, it has constructed a rectangle on the boundary of which $x_0 + \tilde{X}$ is less than $(-\delta)$ times the maximum value of $x_0 + \tilde{X}$ in this rectangle.

   Lemma \ref{bs_ub_lem2}(b) states that the conditional probability that the $\delta$-DW-algorithm terminates during a particular stage, given that it has not previously terminated, does not depend on the stage and increases to 1 as $\delta \downarrow 0$. In other words, for $\delta$ small, the $\delta$-DW-algorithm is unlikely to run for more than one stage and hence is an approximation of the DW-algorithm.
}
\end{remark}

\noindent{\em Proof of Lemma \ref{bs_ub_lem2}.} (a) Fix $i \geq 1$. By construction, when the $\delta$-DW-algorithm terminates at stage $I=i$, (\ref{bs_ub1}) occurs for some $k_0 \geq 1$, and therefore for $j=1, \ldots, 4,$
$$
   B_j^{\underline{S}^{(i)}}(\sigma_j^{(i),k_0}) - B_j^{\underline{S}^{(i)}}(0) \leq - \delta M_i.
$$
By Proposition \ref{prop2}(a), 
$$
   M_{i-1} + X^{\ulT^{(i-1)}}(\cdot) \leq 0 \qquad\mbox{ \ on \ } \partial
       {\cal{R}}(\underline{\tau}^{(i)}_{(N^{(i)})} ).
$$
We want to show that
\begin{equation}\label{e8.3}
   M_{i-1} + X^{\ulT^{(i-1)}}(\cdot) \leq - \delta M_i \qquad\mbox{ \ on \ } \partial
       {\cal{R}}(\underline{\tau}^{(i)}_{(N^{(i)})} + \underline{\sigma}^{(i)}).
\end{equation}
Assume that $(\ti S_1^{(i)}, \ti S_2^{(i)})$ is the unique point in ${\cal{R}}(\ultau^{(i)}_{(N^{(i)})})$ at which $M_{i-1} + X^{\ulT^{(i-1)}}(\cdot)$ is equal to $M_i$, and suppose that ${\cal{R}}(\ultau^{(i)}_{(N^{(i)})}) = [U^{(i)}, U'^{(i)}] \times [V^{(i)}, V'^{(i)}]$. As explained in \eqref{rectincr}, $M_{i-1} + X^{\ulT^{(i-1)}}(\cdot)$ is positive on the union of the two segments $]U^{(i)}, U'^{(i)}[\times \{\ti S_2^{(i)}\}$ and $\{\ti S_1^{(i)}\} \times \, ]V^{(i)}, V'^{(i)}[$, and
\begin{align*}
  X^{\ulT^{(i-1)}}(U^{(i)},\ti S_2^{(i)}) &= X^{\ulT^{(i-1)}}(U'^{(i)},\ti S_2^{(i)}) \\
	&= X^{\ulT^{(i-1)}}(\ti S_1^{(i)},V^{(i)}) = X^{\ulT^{(i-1)}}(\ti S_1^{(i)},V'^{(i)}) = - M_{i-1}.
\end{align*}
The set $\partial{\cal{R}}(\underline{\tau}^{(i)}_{(N^{(i)})} + \underline{\sigma}^{(i)})$ contains the two segments  $\{U'^{(i)} + \ulsigma_1^{(i)} \} \times [\ti S_2^{(i)},V'^{(i)}]$ and $\{U'^{(i)} + \sigma_1^{(i)} \} \times [V'^{(i)}, V'^{(i)} + \ulsigma_2^{(i)}]$, as well as 14 other segments, each of which is handled similarly to one of these two. For $u_2 \in [\ti S_2^{(i)},V'^{(i)}]$,
\begin{align*}
   M_{i-1} + X^{\ulT^{(i-1)}}(U'^{(i)} + \ulsigma_1^{(i)}, u_2) &=  M_{i-1} + X^{\ulT^{(i-1)}}(U'^{(i)}, u_2) + B_1^{\ulS^{(i)}}(\ulsigma_1^{(i)}) - B_1^{\ulS^{(i)}}(0) \\
	&\leq 0 - 2^{k_0} \delta M_i \leq - \delta M_i.
\end{align*}
For $u_2 \in [V'^{(i)}, V'^{(i)} + \ulsigma_2^{(i)}]$,
\begin{align*}
   M_{i-1} + X^{\ulT^{(i-1)}}(U'^{(i)} + \ulsigma_1^{(i)}, u_2) &=  M_{i-1} + X^{\ulT^{(i-1)}}(U'^{(i)}, V'^{(i)}) \\
	  &\qquad + B_1^{\ulS^{(i)}}(\ulsigma_1^{(i)}) - B_1^{\ulS^{(i)}}(0) + B_2^{\ulS^{(i)}}(u_2 - V'^{(i)}) - B_2^{\ulS^{(i)}}(0) \\
		&\leq M_{i-1} + X^{\ulT^{(i-1)}}(U'^{(i)}, V'^{(i)}) - 2^{k_0} \delta M_i + 2^{k_0-1} \delta M_i \\
		&= M_{i-1} + X^{\ulT^{(i-1)}}(U'^{(i)}, \ti S_2^{(i)}) + X^{\ulT^{(i-1)}}(\ti S_1^{(i)} , V'^{(i)}) \\
		&\qquad - X^{\ulT^{(i-1)}}(\ti S_1^{(i)} ,\ti S_2^{(i)}) - 2^{k_0-1} \delta M_i \\
		&= M_{i-1} - M_{i-1} - M_{i-1} -(M_i - M_{i-1}) - 2^{k_0-1} \delta M_i \\
		&= - M_i - 2^{k_0-1} \delta M_i \leq - \delta M_i.
\end{align*}
This establishes \eqref{e8.3}.

We now check that
\begin{equation}\label{rd_ub3}
   M_i = \sup_{t \in {\cal{R}}(\underline{\tau}^{(i)}_{(N^{(i)})})} (M_{i-1} 
   + X^{\underline{T}^{(i-1)}}(t)) = M_{i-1} + \sup_{t \in {\cal{R}}(\tau^{(i)}_{(N^{(i)})} 
   + \underline{\sigma}^{(i)})} X^{\underline{T}^{(i-1)}}(t). 
\end{equation}
Indeed, for $ t \in {\cal{R}}(\tau^{(i)}_{(N^{(i)})} + \underline{\sigma}^{(i)}) \setminus {\cal{R}}(\tau^{(i)}_{(N^{(i)})})$, suppose that $t_1 \in [U'^{(i)}, U'^{(i)} + \ulsigma_1^{(i)}]$ and $t_2 \in [\ti S_2^{(i)}, V'^{(i)}]$. Then
\begin{align*}
  M_{i-1} + X^{\ulT^{(i-1)}}(t) &= M_{i-1} + X^{\ulT^{(i-1)}}(U'^{(i)}, t_2) + B_1^{\ulS^{(i)}}(t_1 - U'^{(i)}) - B_1^{\ulS^{(i)}}(0) \\
	&\leq M_{i-1} + X^{\ulT^{(i-1)}}(U'^{(i)}, t_2) + \frac{M_i}{4}\\
	&=  M_{i-1} + X^{\ulT^{(i-1)}}(U'^{(i)}, \ti S_2^{(i)}) \\
	&\qquad + X^{\ulT^{(i-1)}}(\ti S_1^{(i)}, t_2) - X^{\ulT^{(i-1)}}(\ti S_1^{(i)}, \ti S_2^{(i)}) + \frac{M_i}{4}\\
	&\leq M_{i-1} - M_{i-1} + 0 + \frac{M_i}{4}\\
	&= \frac{M_i}{4},
\end{align*}
and for $t_1 \in [U'^{(i)}, U'^{(i)} + \ulsigma_1^{(i)}]$ and $t_2 \in [V'^{(i)}, V'^{(i)} + \ulsigma_2^{(i)}]$,
\begin{align*}
  M_{i-1} + X^{\ulT^{(i-1)}}(t) &= M_{i-1} + X^{\ulT^{(i-1)}}(U'^{(i)}, V'^{(i)}) \\
	 & \qquad + B_1^{\ulS^{(i)}}(t_1 - U'^{(i)}) - B_1^{\ulS^{(i)}}(0) + B_2^{\ulS^{(i)}}(t_2 - V'^{(i)}) - B_2^{\ulS^{(i)}}(0) \\
	&\leq M_{i-1} + X^{\ulT^{(i-1)}}(U'^{(i)}, V'^{(i)}) + \frac{M_i}{4} + \frac{M_i}{4} \\
	&= - M_i + \frac{M_i}{2} = - \frac{M_i}{2}.
\end{align*}
All other possibilities for $t \in {\cal{R}}(\tau^{(i)}_{(N^{(i)})} + \underline{\sigma}^{(i)}) \setminus {\cal{R}}(\tau^{(i)}_{(N^{(i)})})$ are treated similarly. This proves \eqref{rd_ub3}.

   For $i \geq 1$, assume by induction that $M_{i-1} \geq x_0 + \sup_{t\in{\cal{R}}(\underline{T}^{(i-1)})} \tilde{X}(t)$ (this clearly holds for $i=1$). Then by (\ref{rd_ub3}),
\begin{eqnarray*} 
   M_i &\geq& x_0 + \sup_{t \in {\cal{R}}(\ulT^{(i-1)})} \tilde{X}(t) 
      + \sup_{t \in {\cal{R}}(\underline{\tau}_{(N^{(i)})}^{(i)}
      + \underline{\sigma}^{(i)})} X^{\underline{T}^{(i-1)}}(t)\\
    &\geq& x_0 + \sup_{t \in {\cal{R}}(\underline{T}^{(i)})} \tilde{X}(t).
\end{eqnarray*}
Together with \eqref{e8.3} above, this proves (a).

   (b) Notice that $2^{k-1} \delta M_i < \frac{M_i}{4}$ if and only if $k < \log_2(1/\delta) - 1$. Given that the $\delta$-DW-algorithm has not terminated before beginning Stage $i$ and given ${\cal{F}}_{\underline{T}^{(i-1)}}$, the conditional probability that it does not terminate during Stage $i$ is simply the probability that (\ref{bs_ub2}) occurs before (\ref{bs_ub1}), that is, for $k=1, \ldots, [\log_2(1/\delta)],$ (\ref{bs_ub1}) does not occur.
	
	For a standard Brownian motion $B$, the probability, starting from 0, of hitting $-2 \delta M_i$ before $\delta M_i$ is $\frac{1}{3} \geq \frac{1}{6}$. For $k \geq 2$, 
\begin{align*}
  & P_{- 2^{k-1} \delta M_i}\{B\mbox{ hits } -2^{k} \delta M_i \mbox{ before } 2^{k-1} \delta M_i \} \\
	&\qquad \geq P_{2^{k-2} \delta M_i}\{B\mbox{ hits } -2^{k} \delta M_i \mbox{ before } 2^{k-1} \delta M_i \} \\
	&\qquad = \frac{2^{k-1} - 2^{k-2}}{2^{k-1} + 2^{k}} = \frac{1}{6}.
\end{align*}
Therefore, given that (\ref{bs_ub1}) has not occured for $1, \ldots, k-1,$ the probability that it does not occur for $k$ is bounded above by $1-6^{-4}$, so (\ref{bs_ub2}) occurs before (\ref{bs_ub1}) with probability $\leq c (1-6^{-4})^{[\log_2(1/\delta)]}$, which converges to 0 as $\delta \downarrow 0$. 
\hfill $\Box$
\vskip 16pt

   In the next lemma, we examine the probability that the maximum level reached before the $\delta$-DW-algorithm terminates exceeds a given level.
	
\begin{lemma} For the $\delta$-DW-algorithm started at the origin with value $x_0 = 1$, set $M(\delta) = 1 + \sup_{t \in {\cal{R}}(\ulT^{(I)})} \tilde{X}(t)$. For all $\varepsilon > 0,$ there are $K < \infty$ and $\delta_0 > 0$ such that for $0 < \delta < \delta_0$ and $ x \geq 1$, $P\{M(\delta) > x\} \leq K x^{-(\lambda_1-\varepsilon)}.$
\label{bs_ublem3}
\end{lemma}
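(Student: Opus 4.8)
The plan is to identify $M(\delta)$ with a random geometric product of independent growth ratios and then bound its tail by a Chernoff estimate. First I would observe that, by the induction carried out in the proof of Lemma~\ref{bs_ub_lem2}(a), the terminal maximum $M_I$ of the $\delta$-DW-algorithm satisfies $M_I \geq 1 + \sup_{t\in\mathcal{R}(\underline{T}^{(I)})}\ti X(t) = M(\delta)$ (here $x_0 = 1$), so it suffices to bound $P\{M_I > x\}$. Setting $R_i = M_i/M_{i-1}$ with $M_0 = 1$, we have $R_i \geq 1$ by \eqref{rd_ub3} and $M_I = \prod_{i=1}^I R_i$.

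The second step is to pin down the joint law of $(R_i)_{i\geq 1}$ and $I$. On $\{I\geq i\}$, stage $i$ applies the DW-algorithm, with starting value $M_{i-1}$, to the standard ABM $X^{\underline{T}^{(i-1)}}$, which by the strong Markov property at the stopping point $\underline{T}^{(i-1)}$ is (conditionally on $\mathcal{F}_{\underline{T}^{(i-1)}}$) a standard ABM independent of $\mathcal{F}_{\underline{T}^{(i-1)}}$. By Brownian scaling, the conditional law of $R_i$ given $\mathcal{F}_{\underline{T}^{(i-1)}}$ is a fixed law, namely that of $1$ plus the supremum of a standard ABM over the excursion component of the origin; by Proposition~\ref{prop2} and Theorem~\ref{prop1} this law has survival function $t\mapsto\E(1/t)$ for $t\geq1$. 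Hence $R_i$ is independent of $\mathcal{F}_{\underline{T}^{(i-1)}}$, and in particular of $R_1,\dots,R_{i-1}$ and of $\{I\geq i\}$, with $P\{R_i\geq t\}=\E(1/t)$. Moreover $M_i$ is $\mathcal{F}_{\underline{S}^{(i)}}$-measurable, the probing Brownian motions $B_j^{\underline{S}^{(i)}}$ are (conditionally on $\mathcal{F}_{\underline{S}^{(i)}}$) independent standard Brownian motions, and the termination test in \eqref{bs_ub1} and \eqref{bs_ub2} only involves the order of level-crossings relative to the levels $\pm2^k\delta M_i$ and $M_i/4$, hence is invariant under rescaling by $M_i$; consequently $\{I=i\}$ given $\{I\geq i\}$ is independent of $(R_1,\dots,R_i)$ and, by Lemma~\ref{bs_ub_lem2}(b), has the deterministic probability $c(\delta)$. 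It follows that $M_I$ has the same law as $\prod_{i=1}^{\tilde I}\tilde R_i$, where the $\tilde R_i$ are i.i.d. with $P\{\tilde R_i\geq t\}=\E(1/t)$, $\tilde I$ is geometric with parameter $c(\delta)$, and $\tilde I$ is independent of $(\tilde R_i)$.

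For the third step, note that $\E$ takes values in $[0,1]$ and $\lambda_1$ is the smallest of the exponents $\lambda_1<\lambda_2<\lambda_3<\lambda_4$ with $\alpha_1>0$, so there is $C_0<\infty$ with $\E(e^{-s})\leq C_0 e^{-\lambda_1 s}$ for all $s\geq0$. Hence, for $Y_i=\log\tilde R_i\geq0$ and $0\leq\theta<\lambda_1$,
$$
   E\bigl[e^{\theta Y_i}\bigr]=1+\theta\int_0^\infty e^{\theta s}\,P\{Y_i>s\}\,ds\leq 1+\frac{\theta C_0}{\lambda_1-\theta}=:\phi(\theta)<\infty,
$$
and, conditioning on $\tilde I$ and summing the resulting geometric series,
$$
   E\Bigl[\exp\Bigl(\theta\sum_{i=1}^{\tilde I}Y_i\Bigr)\Bigr]\leq\frac{c(\delta)\,\phi(\theta)}{1-(1-c(\delta))\,\phi(\theta)}
$$
whenever $(1-c(\delta))\phi(\theta)<1$. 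Given $\varepsilon\in(0,\lambda_1)$ (the case $\varepsilon\geq\lambda_1$ being trivial), choose $\delta_0>0$ so that $c(\delta)>1-\tfrac12\phi(\lambda_1-\varepsilon)^{-1}$ for all $\delta<\delta_0$, which is possible since $c(\delta)\to1$ by Lemma~\ref{bs_ub_lem2}(b) and $\phi(\lambda_1-\varepsilon)<\infty$. Then the moment generating function above at $\theta=\lambda_1-\varepsilon$ is at most $K:=2\phi(\lambda_1-\varepsilon)$, and Markov's inequality gives, for $x\geq1$,
$$
   P\{M(\delta)>x\}\leq P\{M_I>x\}=P\Bigl\{\sum_{i=1}^{\tilde I}Y_i>\log x\Bigr\}\leq K\,x^{-(\lambda_1-\varepsilon)}.
$$

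The main obstacle is the second step: making rigorous that the stage-$i$ growth ratio $R_i$ is independent of the past and of the termination decision, and that $I$ is geometric independently of the $R_i$. This rests on applying the strong Markov property of the ABM at two nested stopping points --- $\underline{T}^{(i-1)}$ for the embedded DW-algorithm and $\underline{S}^{(i)}$ for the probing phase --- together with the exact scale-invariance of the test in \eqref{bs_ub1} and \eqref{bs_ub2}; the accompanying bookkeeping with the four-parameter filtrations and with the rescaling of Brownian motions is routine but must be carried out with care.
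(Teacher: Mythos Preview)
Your proof is correct and takes a genuinely different route from the paper's. You exploit the exact structure of the $\delta$-DW-algorithm: writing $M_I=\prod_{i=1}^I R_i$ with the ratios $R_i=M_i/M_{i-1}$ i.i.d.\ (with survival function $t\mapsto\E(1/t)$ by Theorem~\ref{prop1}) and $I$ an independent geometric variable with parameter $c(\delta)$, you then control the tail of $\log M_I$ by a Chernoff bound, choosing $\delta_0$ so that the series $\sum_k (1-c(\delta))^{k-1}\phi(\lambda_1-\varepsilon)^k$ converges. The paper instead runs a bootstrap on dyadic scales: it sets $K_n=\sup_{m\le n}2^{m(\lambda_1-\varepsilon)}P\{M(\delta)\ge2^m\}$, conditions on the first-stage maximum $M_1(\delta)$, uses scaling and Lemma~\ref{bs_ub_lem2}(b) to write a renewal inequality for $P\{M(\delta)\ge 2^n\}$ in terms of $P\{M(\delta)\ge 2^{n-m}\}$, and deduces $K_n\le\max(1,K_{n-1})$ for small $\delta$ and large $n$. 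Your approach is more structural and gives the constant $K=2\phi(\lambda_1-\varepsilon)$ explicitly; the paper's recursive method is a bit less transparent here but is reused verbatim in Lemma~\ref{bs_ub_lem4}, where the quantity of interest (an escape event rather than a level) does not factor cleanly as a product of i.i.d.\ ratios, so the bootstrap is more portable.
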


\proof Fix $\varepsilon > 0.$ It suffices to find $K < \infty$ and $\delta_0 > 0$ such that for $0 < \delta < \delta_0$ and all integers $n \geq 1,$
$$
   P\{M(\delta) > 2^n\} \leq K 2^{-n(\lambda_1-\varepsilon)}.
$$
Set
$$
  K_n = \sup_{1 \leq m \leq n} 2^{m(\lambda_1-\varepsilon)} P\{M(\delta) \geq 2^m\} \qquad (< + \infty).
$$
We shall show that there is $\delta_0 > 0$ such that for $n$ sufficiently large and for $0 < \delta < \delta_0$, $K_n \leq \max(1, K_{n-1}),$ which implies that $K = \sup_n K_n < \infty,$ and will prove the lemma.

   Let $M_1(\delta)$ be the maximum value attained by $X(\cdot)$ during Stage 1 of the $\delta$-DW-algorithm, which is simply the maximum value achieved by the DW-algorithm for $X$, started with value $1$, upon termination. Observe that by the scaling property of Brownian motion, for $n \geq m$,
$$
   P\{M(\delta) \geq 2^n \mid M_1(\delta) \in [2^{m-1}, 2^m[\} \leq (1-c(\delta)) P\{M(\delta) 
      \geq 2^{n-m}\},
$$
where $c(\delta)$ is the constant of Lemma \ref{bs_ub_lem2}(b). Therefore,
\begin{eqnarray*}
   P\{M(\delta) \geq 2^n\} &\leq& \sum_{m=1}^{n-1} P\{M_1(\delta) \in [2^{m-1}, 2^m[\}\,(1-c(\delta))\,
     P\{M(\delta) \geq 2^{n-m}\}\\
     &&\qquad +\, P\{M_1(\delta) \geq 2^{n-1}\}.
\end{eqnarray*}
By Theorem \ref{prop1}, this is bounded above by
$$
    \sum_{m=1}^{n-1}C \ 2^{-\lambda_1(m-1)} (1-c(\delta)) P\{M(\delta) \geq 2^{n-m}\} 
      + C \ 2^{-\lambda_1(n-1)}
$$
By the definition of $K_{n-1}$, this is bounded above by
\begin{eqnarray*}   
   && C(1-c(\delta)) K_{n-1} \displaystyle\sum_{m=1}^{n-1} 2^{-\lambda_1(m-1)}
      2^{-(n-m)(\lambda_1-\varepsilon)} + C \ 2^{-\lambda_1(n-1)}\\
   &&\qquad \leq C \ 2^{\lambda_1} 2^{-n(\lambda_1-\varepsilon)} \left((1-c(\delta)) K_{n-1}
       \sum_{m=1}^{n-1} 2^{-m \varepsilon} + 2^{-n \varepsilon}\right)\\
   &&\qquad \leq C \ 2^{\lambda_1} 2^{-n(\lambda_1-\varepsilon)}
       \left(\frac{1-c(\delta)}{1-2^{-\varepsilon}} K_{n-1} + 2^{-n \varepsilon}\right).
\end{eqnarray*}
This is bounded above by $2^{-n(\lambda_1-\varepsilon)} \max(K_{n-1}, 1)$ provided $\delta$ is sufficiently small and $n$ is large enough so that
$$
   C \ 2^{\lambda_1} \frac{1-c(\delta)}{1-2^{-\varepsilon}} \leq \frac{1}{2} 
   \qquad\mbox{ and } \qquad C \ 2^{\lambda_1} 2^{-n \varepsilon} \leq \frac{1}{2}.
$$
This proves the lemma. 
\hfill $\Box$
\vskip 16pt

   We now want to obtain bounds on escape probabilities for the $\delta$-DW-algorithm. We begin with the following lemma.

\begin{lemma} For the $\delta$-DW-algorithm started at $(0,0)$ with value $x_0 = 1$, let $\cR(\ulS^{(1)})$ be the rectangle explored by the DW-algorithm during Stage 1 of the $\delta$-DW-algorithm and let $M_1$ be the maximum level reached by $1 + X^{\ulT^{(0)}}$ in this rectangle. Define $\ulsigma^{(1)}$ as in \eqref{bs_ub1} and \eqref{bs_ub2}, with $i=1$. There are $c>0$ and $C < \infty$ such that, for all $m \geq 0$ and  $y\geq 0$, on the event $\{ M_1 \leq 2^{m+1}\}$,
$$
   P\{\Vert \ulsigma^{(1)} \Vert \geq y \mid \F_{\ulS^{(1)}} \} \leq C \exp(-cy2^{-2m}).
$$
\label{lem8.5}
\end{lemma}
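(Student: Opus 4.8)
The plan is to work conditionally on $\F_{\ulS^{(1)}}$. Since $\ulS^{(1)} = \ulT^{(0)} + \underline{\tau}^{(1)}_{(N^{(1)})} = \underline{\tau}^{(1)}_{(N^{(1)})}$ is a stopping point relative to the four-parameter filtration, the processes $B_j^{\ulS^{(1)}}$, $j=1,\dots,4$, introduced in \eqref{startp6} are, given $\F_{\ulS^{(1)}}$, independent standard Brownian motions independent of $\F_{\ulS^{(1)}}$, while $M_1$ is $\F_{\ulS^{(1)}}$-measurable. Hence, conditionally on $\F_{\ulS^{(1)}}$, each time $\sigma_j^{(1),k}$ is defined, via the recursion in the $\delta$-DW-algorithm, as an exit time of the fresh Brownian motion $B_j^{\ulS^{(1)}}(\cdot)-B_j^{\ulS^{(1)}}(0)$ from an interval with deterministic endpoints $-2^k\delta M_1$ and $\min(2^{k-1}\delta M_1,M_1/4)$.

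The first step is a deterministic bound on the number of sub-stages $k_0$. Let $K_\delta$ be the smallest integer $k\geq 1$ with $2^{k-1}\delta \geq 1/4$; then $K_\delta$ depends only on $\delta$, and $2^{k}\delta<1$ for every $k\leq K_\delta$. At sub-stage $k=K_\delta$ the set $\{2^{K_\delta-1}\delta M_1\}\cup[M_1/4,\infty[$ equals $[M_1/4,\infty[$, so each $B_j^{\ulS^{(1)}}$, continued from its value at time $\sigma_j^{(1),K_\delta-1}$, must exit either at $-2^{K_\delta}\delta M_1$ or at $M_1/4$; thus one of \eqref{bs_ub1}, \eqref{bs_ub2} holds at sub-stage $K_\delta$, and therefore $k_0\leq K_\delta$. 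Since $k\mapsto\sigma_j^{(1),k}$ is nondecreasing,
$$
   \sigma_j^{(1),k_0} \leq \sigma_j^{(1),K_\delta} = \sum_{k=1}^{K_\delta}\bigl(\sigma_j^{(1),k}-\sigma_j^{(1),k-1}\bigr),
$$
where the right-hand side is defined by continuing the recursion whether or not the $\delta$-DW-algorithm has already stopped.

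The second step estimates each increment. By the strong Markov property, conditionally on $\F_{\ulS^{(1)}}$ the increment $\sigma_j^{(1),k}-\sigma_j^{(1),k-1}$ is the first exit time of a standard Brownian motion, started at $0$, from an interval that (using $2^k\delta<1$ and $k\leq K_\delta$) contains $0$ and is contained in $(-2M_1,2M_1)$. By Brownian scaling it is therefore stochastically dominated by $(2M_1)^2$ times the exit time of a standard Brownian motion from $(-1,1)$ started at $0$, which has a finite positive exponential moment; hence there are absolute $c_0>0$, $C_0<\infty$ with
$$
   P\bigl\{\sigma_j^{(1),k}-\sigma_j^{(1),k-1}>t \mid \F_{\ulS^{(1)}}\bigr\} \leq C_0\exp\bigl(-c_0\,t\,M_1^{-2}\bigr)
$$
for all $j$ and all $1\leq k\leq K_\delta$, so on $\{M_1\leq 2^{m+1}\}$ this is at most $C_0\exp(-c_0'\,t\,2^{-2m})$ for an absolute $c_0'>0$.

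Finally, since $\Vert\ulsigma^{(1)}\Vert = \sum_{j=1}^4\sigma_j^{(1),k_0} \leq \sum_{j=1}^4\sum_{k=1}^{K_\delta}(\sigma_j^{(1),k}-\sigma_j^{(1),k-1})$ is a sum of $N:=4K_\delta$ nonnegative terms, a union bound gives, on $\{M_1\leq 2^{m+1}\}$,
$$
   P\bigl\{\Vert\ulsigma^{(1)}\Vert\geq y \mid \F_{\ulS^{(1)}}\bigr\} \leq N\,C_0\exp\Bigl(-c_0'\,\frac{y}{N}\,2^{-2m}\Bigr),
$$
which is the asserted bound with $C=NC_0$ and $c=c_0'/N$ (both depending on $\delta$ through $K_\delta$). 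The only mildly delicate points are the deterministic termination bound $k_0\leq K_\delta$ and keeping the conditioning straight so that, given $\F_{\ulS^{(1)}}$, $M_1$ and all the barriers are constants while the $B_j^{\ulS^{(1)}}$ are independent fresh Brownian motions; both follow from the stopping-point structure recalled after \eqref{startp6} together with the construction of the $\delta$-DW-algorithm.
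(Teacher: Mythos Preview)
Your proof is correct, but it proceeds differently from the paper's. You decompose $\sigma_j^{(1),k_0}$ into at most $K_\delta$ successive exit-time increments (after first arguing deterministically that $k_0\le K_\delta$), bound each increment by the exit time of a fresh Brownian motion from an interval of width at most $4M_1$, and apply a union bound over the $4K_\delta$ resulting terms. The paper instead makes a single observation: since the successive target levels $-2^k\delta M_1$ are all contained in $[-2^{k_1+1}\delta M_1,0]$ and the level $M_1/4$ is fixed, one has directly $\sigma_j^{(1),k_0}\le R_j$, where $R_j$ is the first exit time of $B_j^{\ulS^{(1)}}-B_j^{\ulS^{(1)}}(0)$ from $[-2^{k_1+1}\delta M_1,M_1/4]\subset[-M_1,M_1/4]$. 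This avoids the sum over sub-stages entirely and reduces $\Vert\ulsigma^{(1)}\Vert$ to a sum of four (not $4K_\delta$) scaled exit times.

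The practical difference is that the paper's route yields constants $c,C$ that are independent of $\delta$, whereas yours depend on $\delta$ through $K_\delta\sim\log_2(1/\delta)$. For the lemma as stated (with $\delta$ fixed) this is harmless; downstream, in Lemma~\ref{bs_ub_lem4}, uniformity over $\delta\in(0,\delta_0]$ is needed, and your logarithmic $\delta$-dependence would still be absorbed by the geometric factor $1-c(\delta)$, but the bookkeeping becomes less clean. One small point: your assertion that $2^k\delta<1$ for all $k\le K_\delta$ tacitly assumes $\delta<1/2$, but the paper's proof (with $k_1=[\log_2(1/\delta)]-1$) carries the same implicit restriction, so this is not a genuine gap.
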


\proof Set $k_1 = [\log_2(1/\delta)] -1$, and for $j=1,\dots,4$, set
$$
   R_j = \inf\{u>0: B_j^{\underline{S}^{(1)}}(u) - B_j^{\underline{S}^{(1)}}(0) \not\in [-2^{k_1+1}\delta M_1, M_1/4] \}.
$$
We observed in the proof of Lemma \ref{bs_ub_lem2}(b) that $2^{k_1-1} \delta M_1 < M_1 /4 \leq 2^{k_1} \delta M_1$, so $M_1 / 2 \leq 2^{k_1+1}\delta M_1  < M_1$. Let $k_0$ be defined as in \eqref{bs_ub1} and \eqref{bs_ub2} (with $i=1$). 

   We claim that for $j=1,\dots,4$, $\sigma_j^{(1),k_0} \leq R_j$. Indeed, if there is $j \in \{ 1,\dots,4\}$ such that $B_j^{\underline{S}^{(1)}}(u) - B_j^{\underline{S}^{(1)}}(0) = M_1 /4$, then $\sigma_j^{(1),k_0} \leq R_j$ by the definition of $\sigma_j^{(1),k_0}$. If, for all $j \in \{ 1,\dots,4\}$, $B_j^{\underline{S}^{(1)}}(u) - B_j^{\underline{S}^{(1)}}(0) = -2^{k_1+1}\delta M_1$, then \eqref{bs_ub1} occurs with $k= k_1 +1$, so $k_0 \leq k_1+1$ and $\sigma_j^{(1),k_0} \leq R_j$.
	
	Let
$$
   \ti R_j = \inf\{u>0: B_j^{\underline{S}^{(1)}}(u) - B_j^{\underline{S}^{(1)}}(0) \not\in [- M_1, M_1/4] \}.
$$
Then $\ti R_j \geq R_j \geq \sigma_j^{(1),k_0}$, so
$$
   P\{ \Vert \ulsigma^{(1)} \Vert \geq y \mid \F_{\ulS^{(1)}} \} \leq P\{\ti R_1 + \cdots + \ti R_4 \geq y \mid \F_{\ulS^{(1)}} \}.
$$
By the scaling property of Brownian motion, given $M_1 = x$, the conditional law of $\ti R_j$ is the same as that of $x^2 R_j'$, where $R_j'$ is independent of $\F_{\ulS^{(1)}}$ and has the same law as the first exit time of $[-1,1/4]$ by a standard Brownian motion. Therefore, on $\{M_1 \leq 2^{m+1} \}$,
$$
   P\{ \Vert \ulsigma^{(1)} \Vert \geq y \mid \F_{\ulS^{(1)}} \} \leq P\{\ti R_1' + \cdots \ti R_4' \geq y 2^{-2m-2}\}.
$$
By standard results on Brownian motion (see e.g.~\cite[Section 1]{LS}), this is no greater than $C \exp(-cy2^{-2m-2})$, and Lemma \ref{lem8.5} is proved.
\hfill $\Box$
\vskip 16pt

\begin{lemma} Under the same assumption as in Lemma \ref{lem8.5}, let $\ulT^{(1)} = \ulS^{(1)} + \ulsigma^{(1)}$. Then there exist $c>0$ and $C < \infty$ such that, for all non-negative integers $m \leq k$,
$$
   P\{M_1 \in \,]2^m,2^{m+1}], \ \cR(\ulT^{(1)}) \not\subset \cR(2^{2k})  \} \leq C 2^{-m\lambda_1} \exp(-c 2^{k-m}).
$$
\label{lem8.6}
\end{lemma}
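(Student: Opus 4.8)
The plan is to split the escape event $\{\cR(\ulT^{(1)})\not\subset\cR(2^{2k})\}$ according to whether the rectangle $\cR(\ulS^{(1)})$ explored in Stage $1$ is already large or the boundary extension $\ulsigma^{(1)}$ of that stage is large. Since $\ulT^{(1)}=\ulS^{(1)}+\ulsigma^{(1)}$ coordinatewise and $\cR(\ul u)$ is the rectangle around the origin with half-widths $u_1,\dots,u_4$, we have $\cR(\ulT^{(1)})\subset\cR(2^{2k})$ whenever $\ulS^{(1)}_j\le 2^{2k-1}$ and $\ulsigma^{(1)}_j\le 2^{2k-1}$ for all $j$; hence
$$
   \{\cR(\ulT^{(1)})\not\subset\cR(2^{2k})\}\ \subset\ \{\cR(\ulS^{(1)})\not\subset\cR(2^{2k-1})\}\ \cup\ \{\Vert\ulsigma^{(1)}\Vert\ge 2^{2k-1}\}.
$$
I would bound the probability of each of these two events intersected with $\{M_1\in\,]2^m,2^{m+1}]\}$ separately. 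The case $k=0$ forces $m=0$ and is covered by the trivial bound $P\{\cdots\}\le 1$ after enlarging $C$, so one may assume $k\ge 1$.

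For the first event: on $\{M_1\le 2^{m+1}\}$ the DW-algorithm run in Stage $1$ (started at $r=(0,0)$ with value $1$) reaches no level above $M_1$, so, since $M_1\ne 2^{m+1}$ almost surely, $\cR(\ulS^{(1)})=\cR(\ultau^{1,r,2^{m+1}}\wedge\ultau^{1,r}_{(N)})$ there. Consequently this event is contained in $\{\cR(\ultau^{1,r,2^{m+1}}\wedge\ultau^{1,r}_{(N)})\not\subset\cR(2^{2k-1})\}$, and Proposition \ref{bs_ubprop6} applied with $x^2=2^{2k-1}$ (so $x=2^{k-1/2}\ge 1$) and $y=2^{m+1}\ge 1$ bounds its probability by $K\,2^{-(k-1/2)\lambda_1}\exp(-c\,2^{k-1/2-m-1})$. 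Using $k\ge m$ to write $2^{-(k-1/2)\lambda_1}\le 2^{\lambda_1/2}2^{-m\lambda_1}$ and $2^{k-1/2-m-1}=2^{-3/2}2^{k-m}$, this is at most $C\,2^{-m\lambda_1}\exp(-c'\,2^{k-m})$.

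For the second event I would condition on $\F_{\ulS^{(1)}}$, with respect to which $M_1$ is measurable. By Lemma \ref{lem8.5}, on $\{M_1\le 2^{m+1}\}$,
$$
   P\{\Vert\ulsigma^{(1)}\Vert\ge 2^{2k-1}\mid\F_{\ulS^{(1)}}\}\ \le\ C\exp(-c\,2^{2k-1}2^{-2m})\ \le\ C\exp(-(c/2)\,2^{k-m}),
$$
the last inequality because $2^{2(k-m)}\ge 2^{k-m}$ when $k\ge m$. Taking expectations over $\{M_1\in\,]2^m,2^{m+1}]\}$ bounds the probability of the second event by $C\exp(-(c/2)2^{k-m})\,P\{M_1>2^m\}$; and $\{M_1>2^m\}$ is exactly the event that the DW-algorithm started at $(0,0)$ with value $1$ reaches level $2^m$, whose probability is $\le C\,2^{-m\lambda_1}$ by Theorem \ref{prop1} together with the scaling property of Brownian motion. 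Combining the two bounds yields the claimed estimate, with $c$ the smaller of the two exponential rates and $C$ suitably enlarged.

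I do not expect a serious obstacle: the lemma is a bookkeeping combination of Proposition \ref{bs_ubprop6}, Lemma \ref{lem8.5} and Theorem \ref{prop1}. The only two steps deserving care are the identification $\cR(\ulS^{(1)})=\cR(\ultau^{1,r,2^{m+1}}\wedge\ultau^{1,r}_{(N)})$ on $\{M_1\le 2^{m+1}\}$, which is what lets us invoke the sharp Proposition \ref{bs_ubprop6} rather than the weaker Theorem \ref{thm3}, and the elementary passage from the Gaussian-type tail $\exp(-c\,2^{2(k-m)})$ of Lemma \ref{lem8.5} to the asserted tail $\exp(-c\,2^{k-m})$, which is immediate since $k-m\ge 0$.
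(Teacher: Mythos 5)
Your proof is correct. The decomposition $\{\cR(\ulT^{(1)})\not\subset\cR(2^{2k})\}\subset\{\cR(\ulS^{(1)})\not\subset\cR(2^{2k-1})\}\cup\{\Vert\ulsigma^{(1)}\Vert\ge 2^{2k-1}\}$ is valid (since $\ulT^{(1)}_j=\ulS^{(1)}_j+\ulsigma^{(1)}_j$ and $\Vert\ulsigma^{(1)}\Vert<2^{2k-1}$ forces each $\ulsigma^{(1)}_j<2^{2k-1}$), the identification $\cR(\ulS^{(1)})=\cR(\ultau^{1,r,2^{m+1}}\wedge\ultau^{1,r}_{(N)})$ on $\{M_1<2^{m+1}\}$ is sound (there $\ultau^{1,r,2^{m+1}}=\ulinfty$), and your constants check out.

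The paper's proof is organized differently: instead of your two-event split, it decomposes according to the dyadic scale $\ell\in\{m,\dots,k-1\}$ at which $\cR(\ulS^{(1)})$ escapes, writing the bad event as $\cup_{\ell=m}^{k-1}G_{1,\ell}\cup G_2$. On each $G_{1,\ell}$ the bound $\Vert\ulsigma^{(1)}\Vert\ge 2^{2k}-2^{2\ell}\ge 2^{2k-1}$ again feeds into Lemma~\ref{lem8.5}, and the contributions $P\{M_1\in\,]2^m,2^{m+1}],\,\cR(\ulS^{(1)})\not\subset\cR(2^{2(\ell-1)})\}$ are each bounded via Proposition~\ref{bs_ubprop6}; the resulting geometric-in-$\ell$ series is summed. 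Your version collapses all the $G_{1,\ell}$'s into a single event and bounds its probability by $P\{M_1>2^m\}$ via Theorem~\ref{prop1} alone, while using Proposition~\ref{bs_ubprop6} only for the $G_2$-type piece. Both routes rely on the same three ingredients (Lemma~\ref{lem8.5}, Proposition~\ref{bs_ubprop6}, Theorem~\ref{prop1}) and exploit that $M_1$ is $\F_{\ulS^{(1)}}$-measurable; yours is shorter and avoids the summation, while the paper's finer split would give a marginally sharper constant (irrelevant for the stated conclusion). Your boundary-case handling ($k=0$ trivially, $k=m\ge1$ included in the main argument) is also slightly more uniform than the paper's, which treats $m=k$ separately.
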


\proof For $m=k$, the conclusion follows from Theorem \ref{prop1}, so we assume that $m < k$. We note that $M_1$ is $\F_{\ulS^{(1)}}$-measurable and is also the maximum of $1 + X^{\ulT^{(0)}}$ over $\cR(\ulT^{(1)})$. Observe that the event in the statement of the lemma is contained in 
$$
   \cup_{\ell = m}^{k-1} G_{1,\ell} \cup G_2 ,
$$
where
$$
   G_{1,m} = \{M_1 \in \,]2^m,2^{m+1}], \  \cR(\ulS^{(1)}) \subset \cR(2^{2m}),\ \cR(\ulT^{(1)}) \not\subset \cR(2^{2k}) \},
$$
for $m+1 \leq \ell \leq k-1$,
$$
   G_{1,\ell} = \{M_1 \in \,]2^m,2^{m+1}], \  \cR(\ulS^{(1)}) \not\subset \cR(2^{2(\ell - 1)}),\ \cR(\ulS^{(1)}) \subset \cR(2^{2\ell}),\ \cR(\ulT^{(1)}) \not\subset \cR(2^{2k}) \},
$$
and
$$
  G_2 = \{M_1 \in \,]2^m,2^{m+1}], \  \cR(\ulS^{(1)}) \not\subset \cR(2^{2(k - 1)}) \}.
$$
On $G_{1,\ell}$ ($m\leq \ell \leq k-1$), $\Vert \ulsigma^{(1)} \Vert \geq 2^{2k} - 2^{2\ell} \geq 2^{2k-1}$, so by Lemma \ref{lem8.5},
\begin{align*}
  & P(\cup_{\ell = m}^{k-1} G_{1,\ell} \cup G_2) \\
	&\qquad \leq P\{M_1 \in \,]2^m,2^{m+1}] \}\, C \exp(-c2^{2k} 2^{-2m}) \\
	&\qquad\qquad + \sum_{\ell = m+1}^{k-1} P\{M_1 \in \,]2^m,2^{m+1}],\ \cR(\ulS^{(1)}) \not\subset \cR(2^{2(\ell - 1)})\}\, C \exp(-c2^{2k} 2^{-2m}) \\
	&\qquad\qquad + P(G_2).
\end{align*}
By Theorem \ref{prop1}, $P\{M_1 \in \,]2^m,2^{m+1}] \} \leq c 2^{-m\lambda_1}$, by Proposition \ref{bs_ubprop6}, 
$$
   P(G_2) \leq K 2^{-(k-1)\lambda_1} \exp(-c2^{k-1-(m+1)}), 
$$
and for $m+1 \leq \ell \leq k-1$,
$$
   P\{M_1 \in \,]2^m,2^{m+1}],\ \cR(\ulS^{(1)}) \not\subset \cR(2^{2(\ell - 1)})\} \leq K 2^{-(\ell-1)\lambda_1} \exp(-c 2^{\ell-1-(m+1)}) .
$$
Therefore,
\begin{align*}
   & P\{M_1 \in \,]2^m,2^{m+1}], \ \cR(\ulT^{(1)}) \not\subset \cR(2^{2k})  \} \\
	& \qquad \leq C 2^{-m\lambda_1} \exp(-c2^{2k-2m}) + \sum_{\ell=m+1}^{k-1} C 2^{-m\lambda_1} \exp(-c 2^{\ell-m}) \exp(-c2^{2k-2m})\\
	& \qquad \qquad + K 2^{-m\lambda_1} \exp(-c2^{k-m}).
\end{align*}
Since $\sum_{\ell=m+1}^\infty \exp(-c 2^{\ell-m}) = \sum_{\ell=1}^\infty \exp(-c 2^{\ell}) < \infty$, Lemma \ref{lem8.6} is proved.
\hfill $\Box$
\vskip 16pt

The next lemma contains the results on escape probabilities of the $\delta$-DW-algorithm that we have been aiming for.

\begin{lemma} For the $\delta$-DW-algorithm started at $(0,0)$ with value $x_0 = 1$, let ${\cal{R}}(\ulT^{(I)})$ be the rectangle constructed during the terminal stage of the algorithm. For all $\varepsilon > 0,$ there are $K < \infty$ and $\delta_0 > 0$ such that for $ 0 < \delta \leq \delta_0$ and $x \geq 1,$
$$
   P\{{\cal{R}}(\underline{T}^{(I)}) \not\subset {\cal{R}}(x)\} 
      \leq K \ x^{-(\lambda_1-\varepsilon)/2}.
$$ 
\label{bs_ub_lem4}
\end{lemma}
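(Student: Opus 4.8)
The plan is to follow the template of the proof of Theorem~\ref{thm3}: since the asserted bound is a fixed power of $x$, it suffices to treat $x = 2^{2k}$ with $k$ a large integer and then pass to general $x \ge 1$ by monotonicity (choosing $k$ with $2^{2k} \le x < 2^{2k+2}$), and the heart of the argument is to slice the escape event according to the dyadic block containing the overall maximum $M(\delta) = 1 + \sup_{t \in \cR(\ulT^{(I)})} \ti X(t)$ of the $\delta$-DW-algorithm. Fixing $\ep$, I would carry out the whole argument with $\ep' := \ep/2$ in place of $\ep$: by Lemma~\ref{bs_ublem3} there exist $K_0 < \infty$ and $\delta_0 > 0$ such that $P\{M(\delta) > 2^m\} \le K_0\, 2^{-m(\lambda_1 - \ep')}$ for all $0 < \delta \le \delta_0$ and $m \ge 0$, and
\[
   P\{\cR(\ulT^{(I)}) \not\subset \cR(2^{2k})\}
     \le \sum_{m \ge 0} P\Big(\{M(\delta) \in \,]2^m, 2^{m+1}]\} \cap \{\cR(\ulT^{(I)}) \not\subset \cR(2^{2k})\}\Big).
\]
The goal is to bound each summand by the minimum of two quantities, one useful when $m$ is close to $k$ and one when $m$ is far below $k$.

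The first quantity is simply $P\{M(\delta) > 2^m\} \le K_0\, 2^{-m(\lambda_1 - \ep')}$. The second is a bound showing that, once the running maxima are forced to stay below $2^{m+1}$, it is very unlikely that the algorithm escapes $\cR(2^{2k})$. By the strong Markov property at the stopping points $\ulT^{(i-1)}$ (together with Brownian scaling), on $\{I \ge i\}$ the behaviour of stage $i$ — both the DW-exploration and the overshoot $\ulsigma^{(i)}$ — is that of the first stage of the $\delta$-DW-algorithm started with value $M_{i-1}$, applied to an ABM independent of $\F_{\ulT^{(i-1)}}$. Hence Lemma~\ref{lem8.6}, applied after rescaling by $M_{i-1}$ and summed over the dyadic blocks of $M_i / M_{i-1}$ (whose total contribution is finite since $\sum_a 2^{-a\lambda_1} < \infty$), yields a constant $c > 0$, uniform over $\delta \le \delta_0$, with
\[
   P\Big(\{\|\ulT^{(i)} - \ulT^{(i-1)}\| \ge s\} \cap \{M_i \le 2^{m+1}\} \,\Big|\, \F_{\ulT^{(i-1)}}\Big)
      \le C \exp\!\big(-c\, \sqrt{s}\, 2^{-m}\big), \qquad s \ge 2^{2m}.
\]

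Since $\|\ulT^{(I)}\| \le \sum_{i=1}^{I} \|\ulT^{(i)} - \ulT^{(i-1)}\|$ and, by Lemma~\ref{bs_ub_lem2}(b), $I$ is stochastically dominated (uniformly in $\delta \le \delta_0$) by a geometric variable with success probability bounded away from $0$, a routine union bound — escaping $\cR(2^{2k})$ forces either $I \ge 2^{k-m}$, or one of the first $2^{k-m}$ stages to have displacement at least $2^{k+m}$ — gives
\[
   P\Big(\{M(\delta) \le 2^{m+1}\} \cap \{\cR(\ulT^{(I)}) \not\subset \cR(2^{2k})\}\Big)
      \le C \exp\!\big(-c\, 2^{(k-m)/2}\big).
\]
Summing the minimum of the two bounds over $m \ge 0$ is then elementary: the crossover between the algebraic decay $2^{-m(\lambda_1-\ep')}$ and the doubly-exponential decay in $k - m$ occurs at $k - m$ of order $\log k$, so the sum is dominated by the $O(\log k)$ terms with $m$ closest to $k$ and equals $2^{-k(\lambda_1 - \ep')}$ up to a polynomial-in-$k$ factor; this factor, the passage to general $x$, and the finitely many small values of $k$ are all absorbed into the gap between $\ep'$ and $\ep$, which yields $P\{\cR(\ulT^{(I)}) \not\subset \cR(x)\} \le K\, x^{-(\lambda_1-\ep)/2}$. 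The step that requires the most care is the per-stage joint estimate displayed above — checking that conditioning on the running maximum staying below $2^{m+1}$ does not destroy the exponential spatial control provided by Lemma~\ref{lem8.6}, and that these estimates chain correctly through the strong Markov property at the $\ulT^{(i-1)}$; the ensuing large-deviation bookkeeping and the summation over $m$ are comparatively mechanical.
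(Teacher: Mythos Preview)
Your approach is correct but differs from the paper's in a notable way. The paper does not invoke Lemma~\ref{bs_ublem3} at all; instead it runs a second bootstrap, exactly parallel to the one used for Lemma~\ref{bs_ublem3}: setting $K_n = \sup_{1\le m\le n} 2^{m(\lambda_1-\varepsilon)} P\{\cR(\ulT^{(I)}) \not\subset \cR(2^{2m})\}$, it decomposes according to the \emph{first-stage} maximum $M_1(\delta)$ and the dyadic block containing $\cR(\ulT^{(1)})$ (via Lemma~\ref{lem8.6}), then uses the restart structure together with the factor $(1-c(\delta))$ from Lemma~\ref{bs_ub_lem2}(b) to close the recursion $K_n \le \max(1,K_{n-1})$. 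By contrast, you slice on the \emph{overall} maximum $M(\delta)$, feed in the tail bound of Lemma~\ref{bs_ublem3} as one ingredient, and pair it with a direct spatial estimate obtained by summing the per-stage control from Lemma~\ref{lem8.6} over all $I$ stages and exploiting the geometric tail of $I$. Your route is more modular (it treats Lemma~\ref{bs_ublem3} as a black box and avoids the inductive setup), while the paper's is more self-contained and makes the parallel with Lemma~\ref{bs_ublem3} explicit. The per-stage displacement bound you state does follow from Lemma~\ref{lem8.6} after rescaling by $M_{i-1}$ and summing over dyadic blocks of $M_i/M_{i-1}$ (note that on $\{M_i \le 2^{m+1}\}$ one has $M_{i-1} \le 2^{m+1}$, so the range of blocks is finite and the boundary case $a = \sigma - \mu$ in Lemma~\ref{lem8.6} is still covered); the remaining union-bound and summation steps are indeed routine.
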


\proof Fix $\varepsilon > 0$. As in the proof of Lemma \ref{bs_ublem3}, it suffices, by monotonicity in $x$, to find $K < \infty$ and $\delta_0 > 0$ such that for $0 < \delta < \delta_0$ and all integers $n \geq 1,$
$$
   P\{{\cal{R}}(\underline{T}^{(I)}) \not\subset {\cal{R}}(2^{2n})\} 
      \leq K \ 2^{-n(\lambda_1-\varepsilon)}.
$$
Set
$$
   K_n = \sup_{1 \leq m \leq n} 2^{m(\lambda_1-\varepsilon)} P\{{\cal{R}}(\underline{T}^{(I)}) 
      \not\subset {\cal{R}}(2^{2m})\} .
$$
As in the proof of Lemma \ref{bs_ublem3}, we shall show that there is $\delta_0 > 0$ such that for $n$ sufficiently large and for $0 < \delta < \delta_0$, $K_n \leq \max(1, K_{n-1}).$

   Let $M_1(\delta)$ be the maximum value attained by $1+X(\cdot)$ during stage 1 of the $\delta$-DW-algorithm. We decompose the event $\{{\cal{R}}(\underline{T}^{(I)}) \not\subset {\cal{R}}(2^{2m})\}$ according to the values of $M_1(\delta)$ and the position where this value is attained:
\begin{eqnarray*}
\lefteqn
   {\{{\cal{R}}(\underline{T}^{(I)}) \not\subset {\cal{R}}(2^{2n})\}}\\
   &&\subset \bigcup^{n-1}_{m=1} \left(\{M_1(\delta) \in\,]2^{m-1}, 2^{m}],
        \ \cR(\ulT^{(1)}) \subset \cR(2^{2m}),
       \  {\cal{R}}(\underline{T}^{(I)}) \not\subset {\cal{R}}(2^{2n})\}\right.\\
  \\
  && \qquad\qquad\qquad \cup \ \bigcup_{k=m}^{n-2}  \{M_1(\delta) \in\,]2^{m-1}, 2^{m}],
     \ \cR(\ulT^{(1)}) \not\subset \cR(2^{2k}),\\
  && \qquad\qquad\qquad\qquad\qquad
      \cR(\ulT^{(1)}) \subset {\cal{R}}(2^{2(k+1)}),
     \ {\cal{R}}(\underline{T}^{(I)}) \not\subset {\cal{R}}(2^{2n})\} \\
     \\
  && \qquad\qquad\qquad \cup\ \{M_1(\delta) \in\,]2^{m-1}, 2^{m}],
     \ \cR(\ulT^{(1)}) \not\subset {\cal{R}}(2^{2(n-1)}), \\
     && \qquad\qquad\qquad\qquad\qquad\qquad\qquad\qquad\qquad \qquad\qquad \ \left. \ {\cal{R}}(\underline{T}^{(I)}) \not\subset {\cal{R}}(2^{2n}) \}  \right)\\
   \\
   && \qquad \cup\ \{M_1(\delta) \geq 2^{n-1},\ {\cal{R}}(\underline{T}^{(I)}) 
       \not\subset {\cal{R}}(2^{2n})\}.
\end{eqnarray*}
By Lemma \ref{lem8.6}, Theorem \ref{prop1} and the definition of $K_{n-1}$, the probability of this event is bounded above by
\begin{eqnarray*}
  \lefteqn
     {\sum_{m=1}^{n-1} \Big(C \ 2^{-m\lambda_1}(1-c(\delta)) K_{n-1}
         \left(\frac{2^{2n}-2^{2m}}{2^{2m}}\right)^{-(\lambda_1-\varepsilon)/2}} \\
   \\
   &&\quad + \sum_{k=m}^{n-2} \left[C \ 2^{-m\lambda_1} e^{-c 2^{k-m}} (1-c(\delta)) K_{n-1}
       \left(\frac{2^{2n}-2^{2(k+1)}}{2^{2m}}\right)^{-(\lambda_1-\varepsilon)/2}\right] \\
   &&\quad +\, C \ 2^{-m\lambda_1} e^{-c2^{n-m}}\Big) +\, C \ 2^{-n\lambda_1}\\
   \\
   &\leq& 2^{-n(\lambda_1-\varepsilon)} \Big[K_{n-1} (1-c(\delta)) C 
      \sum_{m=1}^{n-1}\Big( 2^{-m \varepsilon}
       \left(1-2^{2(m-n)}\right)^{-(\lambda_1-\varepsilon)/2}  \\
   \\
   &&\qquad \qquad\qquad \qquad +\, 2^{-m \varepsilon} \sum^{n-2}_{k=m}   
       e^{-c 2^{k-m}}(1-2^{2(k+1-n)})^{-(\lambda_1-\varepsilon)/2} \Big) \\
	 && \qquad \qquad\qquad +\, C\, 2^{-n \varepsilon} \sum_{m=1}^{n-1} 2^{(n-m)\lambda_1} e^{-c2^{n-m}}  
	     + C \ 2^{-n \varepsilon}\Big].
\end{eqnarray*}
Notice that for $m \leq n-1$,
$$
  (1- 2^{2(m-n)})^{-(\lambda_1-\varepsilon)/2} \leq (3/4)^{-(\lambda_1-\varepsilon)/2}
$$
and for $ k \leq n-2$,
$$
   (1-2^{2(k+1-n)})^{-(\lambda_1-\varepsilon)/2} \leq (3/4)^{-(\lambda_1-\varepsilon)/2}.
$$ 
Since $\sum_{m=1}^\infty 2^{-m \varepsilon} = C_\varepsilon < \infty$, this expression is bounded above by
$$
   2^{-n(\lambda_1-\varepsilon)}\left(K_{n-1} (1-c(\delta)) \, \tilde C_\varepsilon
  + \tilde C\, 2^{-n \varepsilon}\right),
$$
and this is bounded above by $2^{-n(\lambda_1-\varepsilon)} \max(K_{n-1}, 1)$ provided $\delta$ is sufficiently small and $n$ is large enough so that 
$$
   \tilde C_\varepsilon (1-c(\delta))  \leq \frac{1}{2}
    \qquad \mbox{ and }\qquad
    \tilde C \ 2^{-n\varepsilon}  \leq \frac{1}{2}.
$$ 
This proves the lemma. 
\hfill $\Box$
\vskip 16pt

\begin{lemma} \label{bs_ub_lem5} Let $\cR(\underline{T}^{(I)})$ be as in Lemma \ref{bs_ub_lem4} and let
$M(\delta)$ be as in Lemma \ref{bs_ublem3}. Define
$$
  G_n = \{\cR(\underline{T}^{(I)}) \subset \cR(2^{-n \varepsilon/10})\}, \qquad
  D_n = \left\{M(\delta) > \frac{1}{n} \max_{1 \leq j \leq 4} \sqrt{\underline{T}_j^{(I)}}\right\}.
$$
Then there exist $K < \infty$ and $c > 0$ such that for large $n$,
$$
  P_{n^2 2^{-n}} (G_n \cap D_n^c ) \leq K \ n e^{-cn^2}
$$
  (in other words, for an ABM started at value $n^2 2^{-n},$ the probability that the $\delta$-DW-algorithm does not escape $\cR(2^{-n \varepsilon/10})$ and the maximum value attained during the $\delta$-DW-algorithm is small relative to the size of the rectangle explored is exponentially small).
\end{lemma}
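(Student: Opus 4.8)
The plan is to prove the stronger bound $P_{n^2 2^{-n}}(D_n^c)\le C\,e^{-cn^2}$ for all large $n$, which in particular gives $P_{n^2 2^{-n}}(G_n\cap D_n^c)\le K\,n\,e^{-cn^2}$ (so $G_n$ is not actually needed for this lemma; see below for the alternative route that does use it). The structural fact I would isolate is that, for the $\delta$-DW-algorithm started at any value $x_0>0$, the total explored region is controlled by the square of its terminal maximum times a tame random factor: there is a random variable $W$, whose law does not depend on $x_0$ and which satisfies $E(e^{\theta W})\le C_\theta<\infty$ for a fixed $\theta>0$ once $\delta$ is small, such that $\|\ulT^{(I)}\|\le C\,M(\delta)^2\,W$ almost surely. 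Granting this, the lemma is immediate: on $D_n^c$ one has $n^2M(\delta)^2\le\max_k\ulT_k^{(I)}\le\|\ulT^{(I)}\|\le C\,M(\delta)^2\,W$, whence $W\ge n^2/C$ and $P_{n^22^{-n}}(D_n^c)\le P\{W\ge n^2/C\}\le C_\theta\,e^{-\theta n^2/C}$.

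To produce $W$, recall the telescoping identity $\ulT^{(I)}=\sum_{i=1}^I(\ultau^{(i)}_{(N^{(i)})}+\ulsigma^{(i)})$, that $M_0,M_1,\dots$ is nondecreasing with $M_0=x_0$, and that $M(\delta)=M_I$. I would show each summand satisfies $\|\ultau^{(i)}_{(N^{(i)})}+\ulsigma^{(i)}\|\le C\,M_i^2\,V_i$, where $V_i$ is scale-free (its conditional law given $\F_{\ulT^{(i-1)}}$ does not depend on $M_{i-1}$, by Brownian scaling) and has a conditional exponential moment bounded by a fixed constant: the contribution of $\ultau^{(i)}_{(N^{(i)})}$ is bounded this way by summing the estimate of Lemma \ref{rdlem10} over the dyadic levels between $M_{i-1}$ and $M_i$ (the routine extension of Lemma \ref{rdlem10} from starting value $1$ to arbitrary starting value, and from ``reaches $2^{\ell+1}$'' to ``has terminal maximum $M_i$''; the bound is uniform since the heaviest weight in the sum sits on the last level), and the contribution of $\ulsigma^{(i)}$ by the equally routine extension of Lemma \ref{lem8.5} to a general stage. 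Since $M_i\le M_I=M(\delta)$ for all $i\le I$, this gives $\|\ulT^{(I)}\|\le C\,M(\delta)^2\sum_{i=1}^I V_i$, so we may take $W=\sum_{i=1}^I V_i$. That $W$ has an exponential moment follows from Lemma \ref{bs_ub_lem2}(b), which says $P\{I\ge i+1\mid\F_{\ulT^{(i-1)}}\}=(1-c(\delta))\,1_{\{I\ge i\}}$ with $c(\delta)\uparrow1$ as $\delta\downarrow0$: running the recursion $E\bigl(1_{\{I\ge i_0\}}\,e^{\theta\sum_{k\le i_0}V_k}\bigr)\le h(\theta)\,\beta(\delta,\theta)^{\,i_0-1}$, where at each stage the factor $\beta(\delta,\theta)$ comes from a Cauchy--Schwarz step decoupling $e^{\theta V_i}$ from the indicator that stage $i$ is not terminal (correlated only through $\ulsigma^{(i)}$, since the inner DW-algorithm of stage $i$ is completed before $\ulsigma^{(i)}$ is drawn), one gets $\beta(\delta,\theta)\le(\text{const})\sqrt{1-c(\delta)}<1$ for $\delta$ small; summing the geometric series over $i_0$ yields $E(e^{\theta W})\le h(\theta)/(1-\beta(\delta,\theta))<\infty$, with a bound independent of $n$ and $x_0$.

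Alternatively, one can keep $G_n$ in play: on $G_n\cap D_n^c$ one has $\max_j\sqrt{\ulT_j^{(I)}}\le 2^{-n\ep/20}$, hence $M(\delta)\le\tfrac1n 2^{-n\ep/20}$, and since also $M(\delta)\ge n^22^{-n}$ the integer $j$ with $M(\delta)\in[2^{j-1},2^j)$ lies in a set of at most $n$ consecutive values; one then bounds $P\{M(\delta)\in[2^{j-1},2^j),\,D_n^c\}\le C\,e^{-cn^2}$ for each such $j$ (conditioning on $\{M(\delta)<2^j\}$, which is exactly the event that the algorithm never reaches level $2^j$) and sums over $j$, the factor $n$ in the statement appearing precisely because of this sum. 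This route uses the same per-stage estimates but only the simpler fact that a compound geometric sum of uniformly-exponentially-integrable increments is exponentially integrable.

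The main obstacle is the construction of $W$, and within it the conditional-independence bookkeeping: the size of the region explored during a stage and the random decision of whether that stage is terminal both involve the same piece of Brownian information ($\ulsigma^{(i)}$), so the recursion for $E(e^{\theta W})$ must be arranged so that this correlation costs only a harmless constant while the per-stage gain $\sqrt{1-c(\delta)}$ is retained. A secondary but genuine technical point is to state and prove carefully the per-stage bound $\|\ultau^{(i)}_{(N^{(i)})}\|\le C\,M_i^2\,V_i$ --- that a DW-algorithm whose terminal maximum is $M$ explores a region of linear size $O(M^2)$ with Gaussian-type tails, uniformly in the starting value --- from Lemma \ref{rdlem10}, which as stated concerns only passage between consecutive dyadic levels for the algorithm started at value $1$.
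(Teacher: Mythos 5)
Your Plan A proves a genuinely stronger statement than the lemma requires and takes a route different from the paper's. The paper keeps $G_n$ and uses it together with the starting value to localize $M(\delta)$ into roughly $n$ dyadic bands $[2^{-k},2^{1-k})$; for each band it iterates the conditional lower bound $P(\cR(\ulT^{(I)})\subset R_{i+1,k}\mid \F_{\sigma_{i,k}})\ge\tilde c$ --- valid on the event that the algorithm has escaped $R_{i,k}=[-i2^{-2k},i2^{-2k}]^2$ without reaching level $2^{1-k}$, a scaling consequence of Lemma \ref{lem5.2prime} --- to get $P(V(k,n))\le(1-\tilde c)^{n^2-1}$, and then sums over the $O(n)$ bands, which is where the factor $n$ in the statement comes from. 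Your Plan A instead isolates the global structural fact that the terminal diameter of the $\delta$-DW-algorithm is $O(M(\delta)^2\,W)$ with $W$ scale-free and having a uniform exponential moment, so that $D_n^c$ alone forces $W\ge n^2/C$; this dispenses with $G_n$ entirely and even removes the $n$ factor. What it buys in modularity it costs in per-stage bookkeeping: the per-stage claim $\|\ultau^{(i)}_{(N^{(i)})}\|\le C M_i^2 V_i$ with $E(e^{2\theta V_i}\mid\F_{\ulT^{(i-1)}})\le C$ uniformly is \emph{not} literally Lemma \ref{rdlem10} --- you must sum the level-by-level estimate of Lemma \ref{rdlem10} over all dyadic levels up to the terminal one and control the result via a union bound in which the decay from Theorem \ref{prop1} offsets the geometric growth of $2^{2\ell}$ --- and the Cauchy--Schwarz step correctly decouples $e^{\theta V_i}$ from $1_{\{I>i\}}$ but needs the moment bound at scale $2\theta$. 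These extensions are tractable (I have convinced myself they go through) but are left implicit in your sketch and constitute the bulk of the real work. Your Plan B, by contrast, is essentially the paper's argument in slightly different language: the per-level estimate $P\{M(\delta)\in[2^{j-1},2^j),\,D_n^c\}\le Ce^{-cn^2}$ is exactly what the paper's iterated conditioning delivers, and the ``compound geometric sum'' phrasing elides precisely the iterated-conditional-probability step the paper writes out.
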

  
\proof The event $G_n \cap D_n^c$ is contained in 
$$
   \bigcup_{\log_2 n + n \epsilon/20 \leq k \leq n - 2 \log_2 n} V(k,n),
$$
where
$$
   V(k,n) = \{ 2^{-k} \leq M(\delta) < 2^{1-k}\} \cap \{ \cR(\ulT^{(I)}) \not\subset [-n^2 2^{-2k}, n^2 2^{-2k}]^2\}
$$
(note that for $k= \log_2 n + n \epsilon/20$, we have $n^2 2^{-2k} = 2^{-n \varepsilon/10}$, and for $k= n - 2 \log_2 n$, we have $2^{-k} = n^2 2^{-n}$). Accordingly, it suffices to show that for each such $k$, $P_{n^2 2^{-n}}(V(k,n)) \leq C e^{-cn^2}$ for constants $c$ and $C$ not depending on $n$ or $k$. Let $R_{i,k} = [-i2^{-2k}, i2^{-2k}]^2$ and $\sigma_{i,k} = \ulsigma^{n^2 2^{-n},0,R_{i,k}}$. The desired inequality follows directly from the easily established consequence of Lemma \ref{lem5.2prime}:
$$
   P(\cR(\ulT^{(I)}) \subset R_{i+1,k} \mid \F_{\sigma_{i,k}}) > \tilde c >0,
$$
on the set $\{\sigma_{i,k} < \ultau^{n^2 2^{-n},0,2^{1-k}}\}$, for some universal constant $c$.
%
\hfill $\Box$
\vskip 16pt

   We are going to describe a local decomposition of the Brownian sheet in terms of a standard ABM and an error term, following \cite{DW0}. Let
\begin{equation}\label{08_08_14_1}
  \begin{array}{rlll}
  B_1(u_1) &=& W(1+u_1, 1) - W(1,1), & u_1 \geq 0,\\
  B_2(u_2) &=& W(1,1+u_2) - W(1,1),  & u_2 \geq 0,\\
  B_3(-u_1) &=& (1-u_1) W\left( \frac{1}{1-u_1}, 1\right) - W(1,1),  & u_1 \leq 0\\
  B_4(-u_2) &=& (1-u_2) W\left(1, \frac{1}{1-u_2}\right) - W(1,1), & u_2 \leq 0.
  \end{array}
\end{equation}
  Then $B_i(\cdot), i = 1, 2, 3, 4,$ are independent standard Brownian motions that are independent of $W(1,1)$. Let
$$
  Q_1 = \IR_+ \times \IR_+, \quad Q_2 = \IR_- \times \IR_+, \quad Q_3 = \IR_- \times \IR_-, \quad Q_4 = \IR_+ \times \IR_-
$$
be the four quadrants in $\IR^2$, and, using the notation for rectangular increments introduced in \eqref{defdelta}, let
\begin{eqnarray*}
 \E_1(u_1, u_2) &=& \Delta_{]1, 1+u_1]\times ]1,1+u_2]} W, {\hskip 6.5 cm} (u_1, u_2) \in Q_1,\\
 \E_2(u_1, u_2) &=& \Delta_{](1-u_1)^{-1},1] \times ]1, 1+u_2]} W + u_1 W\left( \frac{1}{1-u_1}, 1\right), {\hskip 2.2 cm} (u_1, u_2) \in Q_2,\\
 \E_3(u_1, u_2) &=& \Delta_{](1-u_1)^{-1}, 1] \times ](1-u_2)^{-1}, 1]} W + u_1 W\left(\frac{1}{1-u_1}, 1\right) +  u_2 W\left(1, \frac{1}{1-u_2}\right),\\
    && {\hskip 9.9 cm} (u_1, u_2) \in Q_3,\\
 \E_4(u_1, u_2) &=& \Delta_{]1, 1+u_1] \times ](1-u_2)^{-1}, 1]} W + u_2 W\left(1, \frac{1}{1-u_2}\right),{\hskip 2.2 cm} (u_1, u_2) \in Q_4,
\end{eqnarray*}
and for $(u_1, u_2) \in \IR^2,$ set
$$
 \E(u_1, u_2) = \E_i(u_1, u_2) \quad \mbox{ if } \ (u_1, u_2) \in Q_i, \ i = 1, 2, 3, 4.
$$
Consider the transformation $S : \IR^2 \to \IR^2_+$ defined by
$$
 S(u_1, u_2) = \left\{\begin{array}{lll}
 (1+u_1, 1+u_2) & \mbox{if} & (u_1,u_2) \in Q_1,\\
 ((1-u_1)^{-1}, 1+u_2) & \mbox{if} & (u_1, u_2) \in Q_2,\\
 ((1-u_1)^{-1}, (1-u_2)^{-1}) & \mbox{if} & (u_1, u_2) \in Q_3,\\
 (1+u_1, (1-u_2)^{-1}) & \mbox{if} & (u_1, u_2) \in Q_4.
 \end{array}\right.
$$
 Let $(\tilde X(u_1, u_2),\ (u_1, u_2) \in \IR^2)$ be the additive Brownian motion derived from $B_1, \ldots, B_4$ above. Then the following local decomposition of $W$ is easily checked:
$$
 W(S(u_1, u_2)) = W(1,1) + \tilde X(u_1, u_2) + \E(u_1, u_2), \qquad (u_1, u_2) \in \IR^2.
$$
 Note that $X(u_1, u_2)$ is of order $\sqrt{\vert u_1\vert} + \sqrt{\vert u_2\vert},$ whereas $\E(u_1, u_2)$ is of order $\vert u_1\vert + \vert u_2\vert.$ Observe that for $u_i \leq 0$, $v_i \geq 0$, $i = 1,2,$
$$
 S(\partial([u_1, v_1] \times [u_2, v_2])) = \partial([(1-u_1)^{-1}, 1+v_1] \times[(1-u_2)^{-1}, 1+v_1]),
$$
 so a behavior of $X$ on the boundary of a rectangle containing (0,0) translates into an approximate behavior of $W$ on a rectangle containing (1,1), and vice-versa.
\vskip 12pt

\noindent{\em Proof of Proposition \ref{bs_ubprop1}.}  Fix $c>0$ and $\varepsilon >0.$ Let $H_c$ be the set of points in $\IR^2_+$ which are in the boundary of an upwards $q$-bubble of diameter $\geq c.$ We will show that $\dim(H_c \cap [1, 2]^2) \leq (3-\lambda_1)/2,$ and in fact, the same proof will show that $\dim(H_c \cap ([k,k+1]\times[\ell,\ell+1])) \leq (3-\lambda_1)/2$, for all $k,\ell \in \IN \setminus\{0\}$, which implies that a.s., $\dim(H_c \cap [1, \infty[^2) \leq (3-\lambda_1)/2$. Using the scaling property of the Brownian sheet, we deduce that $\dim(H_c \cap\, ]0, \infty[^2) \leq (3-\lambda_1)/2$.

  Let $D^n_{i,j}$ be as defined in (\ref{rdsquares}). It is sufficient to show that for all $\varepsilon > 0,$
$$
  E\left(\sum^{2^{2n}-1}_{i,j=1} (2^{-2n})^{(3-\lambda_1+\varepsilon)/2}1_{\{H_c \cap D^n_{i,j} \not= \emptyset\}}\right) \to 0 \qquad\mbox{as } {n\to\infty}.
$$
The expectation is bounded by
$$
  2^{n(1+\lambda_1-\varepsilon)} \sup_{i,j} P(H_c \cap D^n_{i,j} \not= \emptyset),
$$ 
so we need to bound $P(H_c \cap D^n_{i,j} \not= \emptyset)$. It turns out that the bound does not depend on $(i,j)$ ($1 \leq i,j \leq 2^{2n}-1)$, so  in order to simplify the notation, we only consider the case $i = j = 0,$ and we set
$$
  A_n = \{H_c \cap D^n_{0,0}\neq \emptyset\}, \qquad
  F_n = \left\{\sup_{s \in D^n_{0,0}} \vert W(s)-W(1,1)\vert < \frac{n}{2} 2^{-n}\right\}.
$$
Fix $\varepsilon > 0$, so that Lemma \ref{bs_ub_lem4} applies to $\varepsilon/2$, and set $\delta=\delta_0.$ For the additive Brownian motion $n^22^{-n} + \tilde X$, let $G_n$ be the event described in Lemma \ref{bs_ub_lem5}. Then
$$
  P(A_n) = P(A_n \cap F_n \cap G_n) + P(A_n \cap F_n \cap G^c_n) + P(A_n \cap F^c_n).
$$
Clearly,
$$
  P(A_n \cap F_n^c) \leq P(F_n^c) \leq K \ e^{-cn^2},
$$
and since, when $A_n$ occurs, there is $s \in D^n_{0,0}$ for which $W(s) = 0,$
\begin{eqnarray*}
  P(A_n \cap F_n \cap G^c_n) &\leq& P(G_n^c \cap \{\vert W(1,1)\vert \leq \frac{n}{2} 2^{-n}\})\\
  &=& P(G_n^c) P\{\vert W(1,1) \vert \leq \frac{n}{2} 2^{-n}\}
\end{eqnarray*}
because $\tilde X$ and $W(1,1)$ are independent. From Lemma \ref{bs_ub_lem4}, we conclude that
\begin{eqnarray*}
  P(A_n \cap F_n \cap G^c_n) &\leq& K \left(\frac{2^{-n \varepsilon/10}}{(n^2 2^{-n})^2} \right)^{-(\lambda_1-\varepsilon)/2} \cdot \frac{n}{2} 2^{-n}\\
  && \\
  &=& \tilde K n^{1+2(\lambda_1-\varepsilon)}\,  
      2^{-n(1+\lambda_1-\varepsilon+\varepsilon(\lambda_1-\varepsilon)/20)}.
\end{eqnarray*}
Therefore,
$$
  2^{n(1+\lambda_1-\varepsilon)} (P(A_n \cap F_n \cap G^c_n) + P(A_n \cap F^c_n)) \to 0 \quad \mbox{as } n \to \infty,
$$
and it remains to show that
\begin{equation}\label{bs_ub_eq5}
2^{n(1+\lambda_1-\varepsilon)} P(A_n \cap F_n \cap G_n) \to 0 \quad \mbox{as } n \to \infty.
\end{equation} 
 
    Let $D_n$ be as defined in Lemma \ref{bs_ub_lem5}, for the additive Brownian motion $n^2 2^{-n} + \tilde X,$ and set
\begin{equation}\label{bs_ub_eq6}
 E_n = \left\{\sup_{\vert u_1 \vert \leq 2^{-2n}} \vert \tilde X(u_1, 0)\vert \leq n^2 2^{-n-1},\ \sup_{\vert u_2 \vert \leq 2^{-2n}} \vert \tilde X (0, u_2) \vert \leq n^2 2^{-n-1}\right\}.
\end{equation}
Define
\begin{equation}\label{bs_ub_eq7}
 J_n = \{\forall \ h \in [2^{-2n}, 2^{-2\varepsilon n/10}] : \sup_{\vert u_1\vert \leq h,\ \vert u_2 \vert \leq h} \vert \E(u_1, u_2) \vert \leq h n^2 \}.
\end{equation}
Clearly,
$$
 P(A_n \cap F_n \cap G_n) \leq P(A_n \cap F_n \cap G_n \cap D_n) + P(G_n \cap D_n^c).
$$
 By Lemma \ref{bs_ub_lem5} and Lemma \ref{bs_ub_lem6} below, (\ref{bs_ub_eq5}) will be proved provided we show that for large $n$,
\begin{equation}\label{bs_ub_eq8}
 U_n \ \stackrel{\mbox{\footnotesize def}}{=} \ A_n \cap F_n \cap G_n \cap D_n \ \subset\  E^c_n \cup J_n^c,
\end{equation}
which we now proceed to do.
 
    Let $\cR(\underline{T}^{(I)})$ be the rectangle explored by the $\delta$-DW-algorithm applied to $n^22^{-n}+ \tilde X.$ For $n$ large enough so that $2^{-n\varepsilon/10} < c$, on $U_n$, there is $(u_1, u_2) \in \cR(2^{-n \varepsilon/10}) \cap \partial R(\underline{T}^{(I)})$ ($(u_1,u_2)$ is on a positive path starting near a point in $H_c$) such that $W(S(u_1, u_2)) > 0$. Because $\{\vert W(1,1)\vert \leq \frac{n}{2} 2^{-n}\}$ on $F_n$, we see from Lemma \ref{bs_ub_lem2}(a) that on $U_n$,
\begin{eqnarray*}
 0 < W(S(u_1, u_2)) &=& W(1,1) + X(u_1, u_2) + \E(u_1, u_2)\\
 &\leq& \frac{n}{2} 2^{-n} - \delta M_I + \E(u_1, u_2).
\end{eqnarray*}
For $n$ large, $n \delta \geq 1,$ so $\frac{n}{2} 2^{-n} \leq \delta \frac{n^2}{2} 2^{-n} \leq \frac{\delta}{2} M_I$ since the $\delta$-DW-algorithm starts with value $n^2 2^{-n}$. Therefore,
$$
 \E(u_1, u_2) > \frac{\delta}{2} M_I.
$$
Observe that if $J_n$ occurs and $2^{-2 \varepsilon n/10} \geq \vert \underline{T}_j^{(I)} \vert \geq 2^{-2n}$ for $j=1,\dots,4$, then because $D_n$ occurs,
$$
   n^2 \max_j (\underline{T}_j^{(I)}) \geq \E(u_1, u_2) > \frac{\delta}{2} M_I \geq \frac{\delta}{2n} \max_j \sqrt{\underline{T}^{(I)}j},
$$ 
therefore
$$
   \delta \leq 2 \ n^3 \max_j \sqrt{\underline{T}_j^{(I)}} \leq 2 \ n^3 2^{-\varepsilon n/20},
$$
and this cannot hold for large $n$. Therefore, on $U_n$, either $J^c_n$ occurs, or $\vert \underline{T}_j^{(I)} \vert \leq 2^{-2n}$ for some $1 \leq j \leq 4,$ in which case $E^c_n$ occurs (since the increments of $\tilde X$ would have to be sufficiently negative to compensate the starting value $n^2 2^{-n}$). This completes the proof of (\ref{bs_ub_eq8}), and therefore the proof of Proposition \ref{bs_ubprop1}.
\hfill $\Box$
\vskip 16pt

   The following lemma was used in the proof above.

\begin{lemma} \label{bs_ub_lem6} Let $E_n$ and $J_n$ be as defined in (\ref{bs_ub_eq6}) and (\ref{bs_ub_eq7}). Then there are $C < \infty$ and $c>0$ such that for all large $n$,
$$
  P(E^c_n \cup J_n^c) \leq C \ e^{-cn^4}.
$$
\end{lemma}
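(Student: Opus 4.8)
\noindent\textbf{Proof of Lemma \ref{bs_ub_lem6} (plan).}
The plan is to bound $P(E_n^c)$ and $P(J_n^c)$ separately and add, since $P(E_n^c\cup J_n^c)\le P(E_n^c)+P(J_n^c)$. For $P(E_n^c)$ I would use that $\tilde X(u_1,0)=\tilde Z_1(u_1)$ and $\tilde X(0,u_2)=-\tilde Z_2(u_2)$, and that on $[-2^{-2n},2^{-2n}]$ each $\tilde Z_i$ is built from two independent standard Brownian motions run for time $2^{-2n}$. The reflection principle gives $P\{\sup_{0\le u\le h}|B(u)|\ge x\sqrt h\}\le 4e^{-x^2/2}$; applying this with $h=2^{-2n}$ and $x=n^2/2$ (so that $x\sqrt h=n^2 2^{-n-1}$) and summing over the four Brownian motions involved yields $P(E_n^c)\le 16\,e^{-n^4/8}$. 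This step is routine.

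For $P(J_n^c)$ I would first discretize $h$. Since $h\mapsto \sup_{|u_1|,|u_2|\le h}|\E(u_1,u_2)|$ is nondecreasing, it suffices to control this supremum at the dyadic values $h=2^{-k}$ with the slightly sharper threshold $2^{-k-1}n^2$; indeed, for $2^{-k-1}\le h\le 2^{-k}$ one has $\sup_{|u_i|\le h}|\E|\le \sup_{|u_i|\le 2^{-k}}|\E|$ while $hn^2\ge 2^{-k-1}n^2$. The values of $k$ that matter run over an interval of length $O(n)$ (roughly $[2\varepsilon n/10,\,2n]$), so
$$
   J_n^c\ \subset\ \bigcup_{k}\Bigl\{\sup_{|u_1|,|u_2|\le 2^{-k}}|\E(u_1,u_2)|>2^{-k-1}n^2\Bigr\},
$$
a union of $O(n)$ events, and the polynomial factor $O(n)$ will be absorbed into the exponent at the end. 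Fixing such a $k$ and writing $h=2^{-k}\le 2^{-2\varepsilon n/10}\le \tfrac12$ for $n$ large, I would use that inside each quadrant $Q_i$ the error $\E$ is the sum of (i) a rectangular increment $\Delta_R W$ of $W$ over a rectangle $R$ with one corner at $(1,1)$ and sides $\le h$, and (ii) at most two terms of the form $u_j\,W\bigl(1/(1-u_j),1\bigr)$ or $u_j\,W\bigl(1,1/(1-u_j)\bigr)$ with $|u_j|\le h$. For (i), the rescaled process $(v_1,v_2)\mapsto\Delta_{]1,1+v_1]\times]1,1+v_2]}W$ on $[0,h]^2$ (and its coordinate reflections) is a standard Brownian sheet, equal in law to $h\,W'$ for a standard sheet $W'$ on $[0,1]^2$; by Gaussian concentration (Borell--TIS, see e.g.\ \cite{AT}) its supremum has a Gaussian tail, $P\{\sup_{[0,1]^2}|W'|>t\}\le Ce^{-ct^2}$ for $t$ above its (finite) mean, giving $P\{\sup|\Delta_R W|>\tfrac14 h n^2\}\le Ce^{-cn^4}$. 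For (ii), since $|u_j|\le h\le\tfrac12$ we have $1/(1-u_j)\in[1,2]$, so each such term is bounded by $h\,\Xi$ with $\Xi=\sup_{[1,2]^2}|W|$ a fixed random variable, again with $P\{\Xi>t\}\le Ce^{-ct^2}$ for large $t$ by Borell--TIS; hence $P\{h\,\Xi>\tfrac14 h n^2\}=P\{\Xi>\tfrac14 n^2\}\le Ce^{-cn^4}$. A union bound over the $\le 4$ quadrants, the $\le 3$ pieces in each, and the $O(n)$ values of $k$ then gives $P(J_n^c)\le C'e^{-c'n^4}$, and adding the two estimates proves the lemma.

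The main obstacle is more bookkeeping than genuine difficulty: it is step (i), namely correctly identifying each rectangular-increment piece of $\E$ (after the change of variables $S$ and the reflections built into the four quadrants) as a rescaled standard Brownian sheet on a square of side $h$, and then invoking Gaussian concentration in the right form to obtain the $e^{-cn^4}$ tail for its supremum. One must also check the (easy but essential) point that the range $h\in[2^{-2n},2^{-2\varepsilon n/10}]$ keeps $1/(1-u_j)$ inside the fixed compact set $[1,2]$, so that the linear terms in $\E$ are uniformly dominated by the single fixed, Gaussian-tailed random variable $\Xi$, uniformly in $k$.
\hfill $\Box$
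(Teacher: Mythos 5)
Your plan is correct and follows essentially the same route as the paper: bound $P(E_n^c)$ by the reflection principle, and for $P(J_n^c)$ discretize $h$, split $\E$ per quadrant into a rectangular-increment piece and at most two linear terms, and bound each with a Gaussian tail; the paper uses the Orey--Pruitt maximal inequality \cite[Lemma 1.2]{OP} plus a one-dimensional reflection argument where you invoke Borell--TIS, but both yield the required $e^{-cn^4}$. One small slip: the linear terms $u_j\,W\bigl(1/(1-u_j),1\bigr)$ (resp.\ $u_j\,W\bigl(1,1/(1-u_j)\bigr)$) appear only for $u_j\le 0$, so $1/(1-u_j)\in[\tfrac{2}{3},1]$ rather than $[1,2]$; any fixed compact interval works, so the argument is unaffected.
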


\proof
That $P(E^c_n  ) \leq C \ e^{-cn^4}$ is a simple consequence of basic properties of Brownian motion.  The probability $P( J_n^c)$ is bounded by
$$
\sum_{k= 1+ 2 \epsilon n /10} ^{2n} 
P\left\{\sup_{|u_1|,|u_2| \leq 2^{-k+1}} |\E(u_1,u_2)| \geq \frac{2^{-k}n^2}{2}\right\},
$$
so it will suffice to show that each of the terms in the sum can be bounded
by $Ce^{-cn^4}$ for universal $c,C$.  We fix a $k \in 1+[2\epsilon n/10, 2n]$. Then
$$
   P\left\{\sup_{|u_1|,|u_2| \leq 2^{-k+1}} |\E(u_1,u_2)| \geq \frac{2^{-k}n^2}{2}\right\} 
$$
is bounded by the sum of
$$
   P\left\{\sup_{0 \leq -u_1,u_2 \leq 2^{-k+1}} |\E(u_1,u_2)| \geq \frac{2^{-k}n^2}{2}\right\}  
$$
and three other similar terms.  We will explicitly bound the first term since similar arguments apply to the three remaining terms. Using the definition of $\E(u_1,u_2)$,
we see that the probability in question is bounded by 
\begin{eqnarray*}
  && P\left\{\sup _{0\leq u_1,u_2 \leq 2^{-k+1}}|\Delta_{](1-u_1)^{-1},1] \times ]1, 1+u_2]} W| \geq \frac{2^{-k}n^2}{6}\right\}\\
  &&\qquad\qquad +\, 2 P\left\{\sup_{0\leq u_1\leq 2^{-k+1}}\left|u_1 W\left( \frac{1}{1-u_1}, 1\right)\right| \geq \frac{2^{-k}n^2}{6}\right\}.
\end{eqnarray*}
By \cite[Lemma 1.2]{OP}, the first term is bounded by 
$$
   4P\left\{|\Delta_{](1-2^{-k+1})^{-1},1] \times ]1, 1+2^{-k+1}]} W| \geq \frac{2^{-k}n^2}{6}\right\}
$$ 
which by standard Gaussian tail estimates satisfies the desired bound.  For the second term, simply note that it is bounded by 
$$
   2 P\left\{\sup_{0\leq u_1\leq 2^{-k+1}}\left|W\left( \frac{1}{1-u_1}, 1\right)\right| \geq \frac{n^2}{6}\right\}.
$$  
Again the reflection principle (this time applied to standard Brownian motion) yields the desired bound. 
\hfill $\Box$
\vskip 12pt

\end{section}
\eject

\begin{section}{Robustness of the DW-algorithm}\label{sec9}
\vskip 12pt

   The remainder of this paper is devoted to proving that $(3-\lambda_1)/2$ is a lower bound for the Hausdorff dimension of the boundary of any $q$-bubble of the Brownian sheet. Together with Proposition \ref{bs_ubprop1}, this will complete the proof of Theorem \ref{thm3a1}.
   
   Since we will use the fact that the Brownian sheet can be approximated by an ABM (see \eqref{rd9.11a}) and this ABM can in turn be approximated by a standard ABM, we need to develop a notion of continuity, or {\em robustness,} of the DW-algorithm. Indeed, if an ABM $X$ is replaced by the ABM $X + \varepsilon,$ for small $\varepsilon > 0,$ it is possible that the DW-algorithm applied to $X$ and to $X+\varepsilon$ will produce substantially different numbers of stages before termination and will explore rectangles of substantially different sizes. However, this is not likely, and we want to quantify this statement, by imposing, among other conditions, that when the DW-algorithm for $X$ terminates, it not only constructs a rectangle along which $X \leq 0$, but on which $X$ is significantly negative, so that the DW-algorithm for ``small perturbations'' of $X$ also terminates. 
   
   We begin by introducing the notion of episodes.
\vskip 12pt

\noindent{\em Episodes}
\vskip 12pt

  Consider an ABM $\ti X$. When the DW-algorithm started at $r$ with value $x_0 \in\, ]0,1[$ terminates, say at an even stage $N = 2m$, it has explored a rectangle $\cR(\underline{\tau}_{(N)}) = [U_m, U^\prime_m] \times [V_m, V^\prime_m]$. The interval $[U_m, U^\prime_m[$ (resp. $[V_m, V^\prime_m[)$ is the disjoint union of the intervals $[U_{\ell}, U_{\ell-1}[$, $\ell = m, \ldots, 1,$ and $[U^\prime_{\ell-1}, U^\prime_\ell[$, $\ell = 1, \ldots, m$ (resp. $[V_{\ell}, V_{\ell-1}[$, $\ell = m, \ldots, 1$ and $[V^\prime_{\ell-1}, V^\prime_{\ell}[$, $\ell = 1, \ldots, m$). We are going to further refine this partition of $[U_m, U^\prime_m[$ (resp. $[V_m, V^\prime_m[)$ in order to take into account the magnitude of the ABM $X$ during each stage, using intervals that we will call {\em episodes} and that we now define.
  
  We first define episodes produced during an odd stage $2n-1$, for $n \in \{1, \ldots, m\}$. If $H_{2n-2} \in [2^{-k}, 2^{1-k}[$ for some $k \geq 1$, and $x_0 + \sup_{U^\prime_{n-1} < u < U^\prime_n} X^r(u, T^{n-1}_2) < 2^{1-k}$, then $[U_{n-1}',U_n']$ is a single {\em episode of order $k$}. If $H_{2n-2} \in [2^{-k}, 2^{1-k}[$ and 
$$
   x_0 + \sup_{U^\prime_{n-1} < u < U^\prime_n}
   X^r(u, T^{n-1}_2) \in [2^{j^\prime_n-k}, 2^{j^\prime_n+1-k}[,
$$ 
for some $j^\prime_n \geq 1$, then the interval $[U^\prime_{n-1}, U^\prime_n]$ will produce $j^\prime_n + 1$ episodes, defined as follows. Set $U^{\prime(0)}_n = U^\prime_{n-1},$ and for $\ell = 1,\ldots, j^\prime_n$, let
$$
  U^{\prime(\ell)}_n = \inf\{u > U^{\prime(\ell-1)}_n : x_0 + X^r(u, T^{n-1}_2) \geq 2^{\ell-k}\}.
$$
and\index{$U^{\prime(\ell)}_n$} $U^{\prime(j^\prime_n+1)}_n = U^\prime_n.$ For $\ell = 0, \ldots, j^\prime_n$, the interval $[U^{\prime(\ell)}_n, U^{\prime(\ell+1)}_n]$ is an {\em episode of order} $k-\ell.$ Note that these episodes form a partition of $[U^\prime_{n-1}, U^\prime_n]$, and the maximum of $X^r( \cdot, T^{n-1}_2)$ over an episode of order $k-\ell$ belongs to the interval $[2^{\ell-k},2^{1+\ell-k}].$ If $\ell < j^\prime_n$, then the episode is termed an {\em interior episode,} and otherwise an {\em extremity episode.}

   Similarly, if $H_{2n-2} \in [2^{-k}, 2^{1-k}[$ and $x_0 + \sup_{U_n < u <U_{n-1}} \ X^r(u, T^{n-1}_2) < 2^{1-k}$, then $[U_n, U_{n-1}]$ is a single (extremity) episode of order $k$. If
$$
   x_0 + \sup_{U_n < u <U_{n-1}} \ X^r(u, T^{n-1}_2) \in [2^{j_n-k}, 2^{j_n+1-k}[,
$$
for some $j_n \geq 1$, then $[U_n, U_{n-1}]$ will produce $j_n+1$ episodes, defined as follows. Set $U^{(0)}_n = U_{n-1}$, and for $\ell = 1, \ldots, j_n$, 
$$
   U_n^{(\ell)} = \sup \{u < U^{(\ell-1)}_n : x_0 + X^r(u, T_2^{u-1}) \geq 2^{\ell-k} \},
$$
and\index{$U^{(\ell)}_n$} $U_n^{(j_n+1)} = U_n$. For $\ell = 0, \ldots, j_n$, $[U^{(\ell+1)}_n, U^{(\ell)}_n]$ is an {\em episode of order} $k-\ell$. Note that these episodes partition $[U_n, U_{n-1}]$. 

   Episodes produced during an even stage are defined in a similar manner. In this case, the episodes are of the form $[V_n^{(\ell+1)}, V_n^{(\ell)}]$ and $[V_n^{\prime(\ell)}, V_n^{\prime(\ell+1)}],$ and form a partition of $[V_n, V_{n-1}] \cup [V^\prime_{n-1}, V^\prime_n]$. In addition, for $n$ even or odd and $k \in \ZZ$, stage $n$ produces an episode of order $k$ if and only if $H_{n-1} < 2^{1-k}$ and $H_n \geq 2^{-k},$ and stage $n$ produces at most two episodes of order $k$.
	
	There are four kinds of episodes of order $k$ in which the DW-algorithm does not STOP:
	
	{\em Type 1.} The episode arises during a stage $n$ for which $H_{n-1} \in [2^{-k}, 2^{1-k}[$, at the beginning of the episode, the ABM starts at the value $H_{n-1} - H_{n-2}$, reaches level $2^{-k}$ but does not reach level $2^{1-k}$ and the ABM has value $0$ at the end of the episode. For this type, $2^{-k} \leq H_{n-1} < H_n < 2^{1-k}$.
	
	{\em Type 2.} The episode arises during a stage $n$ for which $H_{n-1} \in [2^{-k}, 2^{1-k}[$, at the beginning of the episode, the ABM starts at the value $H_{n-1} - H_{n-2}$, and it reaches level $2^{1-k}$ at the end of the episode. For this type, $H_{n-1} - H_{n-2} < 2^{1-k} \leq H_n$.
	
	{\em Type 3.} The ABM has value $2^{-k}$ at the beginning of the episode and value $2^{1-k}$ at the end of the episode. For this type, $H_{n-1} - H_{n-2} < 2^{-k} < 2^{1-k} \leq H_n$.
	
	{\em Type 4.} The ABM has value $2^{-k}$ at the beginning of the episode, it does not reach level $2^{1-k}$ during the episode and has value $0$ at the end of the episode. For this type, $H_{n-1} - H_{n-2} < 2^{-k}< H_n < 2^{1-k}$.
	
\vskip 12pt
 
\noindent{\em Robustness}
\vskip 12pt
  
   We begin by defining a property of Brownian motion. Consider the functions\index{$f_n(\ell)$}
$$
  f_n(\ell) = \ell^{-n}\, 2^{-\ell}, \qquad n \in \IR,\ \ell > 0.
$$
Fix $v \geq 1$, $\ell > 0.$ Consider a Brownian motion $B = (B_u,\ u \geq 0)$ such that $B_0 = x > v^{-2} f_8(\ell)$. Define
  \begin{eqnarray*}
  \tau^1 &=& \inf\{u \geq 0: B_u = v^{-2} f_8(\ell)\},\\
  \tau^0 &=& \inf \{u \geq 0: B_u = 0\},\\
  \tau^2 &=& \inf\{u \geq 0: B_u = - v^{-2} f_8(\ell)\}.
  \end{eqnarray*}
Then we say that $B$ hits 0 $v$-{\em robustly for order $\ell$} if the following properties hold.
$$
  \tau^0 - v^{-1}\, \frac{2^{-2\ell}}{\ell^{10}} \leq \tau^1 \leq \tau^0 \leq \tau^2 \leq \tau^0 + v^{-1}\,  \frac{2^{-2\ell}}{\ell^{10}}
$$
and
$$
  \sup_{\tau^1 \leq u \leq \tau^2} B_u \leq v^{-1/4} \ f_4(\ell).
$$
More generally, for a Brownian motion $B = (B_u,\ u \geq 0)$ such that $B_0 = x <y- v^{-2} f_8(\ell)$, we say that {\em $B$ hits $y$ $v$-robustly for order $\ell$} if the Brownian motion $y-B$ hits 0 $v$-robustly for order $\ell$.

\begin{remark} This property states that as soon as $B$ gets near 0 (within $v^{-2}f_8(\ell)$), it becomes sufficiently negative (with value $- v^{-2}f_8(\ell)$) fairly quickly (taking no more than $v^{-1} \ell^{-10} 2^{1-2\ell}$ units of time) and before becoming too positive (it stays below $v^{-1/4}f_4(\ell)$). Therefore, if some other process $\bar B$ is very close to $B$ (within $v^{-2} f_8(\ell)$), then $\bar B$ will hit $y$ at about the same time as $B$.
\end{remark}

   The next lemma shows that for large $v$ and $\ell$, it is highly probable that a Brownian motion hits $0$ $v$-robustly for order $\ell$.

\begin{lemma} For $x > v^{-2} f_8(\ell)$, the probability that a Brownian motion $B$ starting at $x$ does not hit $0$ $v$-robustly for order $\ell$ is bounded above by $3 v^{-3/2} \ell^{-3}$.
\label{lem9.2}
\end{lemma}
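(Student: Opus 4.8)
The plan is to decompose the failure event into three sub-events, one for each of the three defining inequalities in the definition of ``$B$ hits $0$ $v$-robustly for order $\ell$,'' and to bound each probability separately using the strong Markov property and standard one-dimensional Brownian-motion estimates (gambler's ruin, reflection principle, and tail bounds on exit times of an interval).

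\noindent\emph{Step 1 (reduction to the chain of stopping times).} Recall $\tau^1 \le \tau^0 \le \tau^2$ automatically since $v^{-2}f_8(\ell) > 0$, so only the outer inequalities $\tau^0 - v^{-1}\ell^{-10}2^{-2\ell} \le \tau^1$, $\tau^2 \le \tau^0 + v^{-1}\ell^{-10}2^{-2\ell}$, and the height bound $\sup_{\tau^1\le u\le\tau^2}B_u \le v^{-1/4}f_4(\ell)$ can fail. First I would argue that by the strong Markov property at $\tau^1$, once $B$ reaches the level $v^{-2}f_8(\ell)$ the remaining behavior depends only on a Brownian motion started at $v^{-2}f_8(\ell)$. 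So it suffices to bound, for such a Brownian motion, the probabilities of (i) taking longer than $v^{-1}\ell^{-10}2^{-2\ell}$ to move from $v^{-2}f_8(\ell)$ to $-v^{-2}f_8(\ell)$, and (ii) exceeding $v^{-1/4}f_4(\ell)$ before hitting $-v^{-2}f_8(\ell)$; and separately, (iii) for the time to descend from $x$ to $v^{-2}f_8(\ell)$ versus to $0$, that $\tau^0 - \tau^1 > v^{-1}\ell^{-10}2^{-2\ell}$ (again by the strong Markov property at $\tau^1$, this is the time for a BM from $v^{-2}f_8(\ell)$ to hit $0$).

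\noindent\emph{Step 2 (bounding the three pieces).} For the height bound (ii), by the gambler's ruin formula a Brownian motion started at $\delta := v^{-2}f_8(\ell)$ hits $M := v^{-1/4}f_4(\ell)$ before $-\delta$ with probability $\frac{\delta+\delta}{M+\delta} = \frac{2\delta}{M+\delta} \le 2\delta/M = 2 v^{-2}\ell^{-8}2^{-\ell}/(v^{-1/4}\ell^{-4}2^{-\ell}) = 2v^{-7/4}\ell^{-4}$. For the two timing bounds (i) and (iii): a Brownian motion started at $\pm\delta$ exits $(-\delta,\delta)$ (respectively hits $0$ from $\delta$) in a time that is, after Brownian scaling by $\delta^2 = v^{-4}\ell^{-16}2^{-2\ell}$, a fixed random variable $T$ with all exponential moments, and we need $P\{\delta^2 T > v^{-1}\ell^{-10}2^{-2\ell}\} = P\{T > v^3\ell^6\}$, which by a Markov/Chernoff bound on $T$ is bounded by $Ce^{-cv^3\ell^6}$, vastly smaller than the claimed bound for all $v,\ell \ge 1$. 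Summing the three contributions and using that $2v^{-7/4}\ell^{-4} \le 2v^{-3/2}\ell^{-3}$ (since $v,\ell\ge 1$) plus the two negligible exponential terms, the total is at most $3v^{-3/2}\ell^{-3}$ for $v,\ell$ in the relevant range (and the cases of small $v$ or $\ell$ where the bound exceeds $1$ are trivially satisfied).

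\noindent\emph{Main obstacle.} The arithmetic is entirely routine; the only point requiring a little care is making the constant exactly $3$ rather than some unspecified universal constant, i.e.\ verifying that the dominant term $2v^{-7/4}\ell^{-4}$ really is below $3v^{-3/2}\ell^{-3}$ with room to absorb the two exit-time tail terms. Since $v^{-7/4}\ell^{-4} = v^{-3/2}\ell^{-3}\cdot v^{-1/4}\ell^{-1} \le v^{-3/2}\ell^{-3}$, we actually have the dominant term $\le 2v^{-3/2}\ell^{-3}$, leaving $v^{-3/2}\ell^{-3}$ of slack, which comfortably dominates $2Ce^{-cv^3\ell^6}$ once one notes that for $v^{-3/2}\ell^{-3}\ge 1$ the statement is vacuous and for $v^{-3/2}\ell^{-3} < 1$ we have $v^3\ell^6 > 1$ so the exponential is genuinely small; a short explicit check (e.g.\ $2Ce^{-c} < v^{-3/2}\ell^{-3}$ fails only when $v^{-3/2}\ell^{-3}$ is tiny, but then $v^3\ell^6$ is huge and the exponential is even tinier) closes the gap. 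Thus the proof is a clean application of scaling and elementary one-dimensional estimates with no substantive difficulty.
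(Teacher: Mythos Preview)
Your three-event decomposition is essentially the paper's, and your gambler's-ruin treatment of the height constraint (ii) is a valid alternative to the paper's reflection-principle bound on $G_3$ (and in fact gives the tighter $2v^{-7/4}\ell^{-4}$). However, Step~2 contains a genuine error in the treatment of the timing events. The first passage time of Brownian motion to a \emph{single} level does \emph{not} have exponential moments: the hitting time of $0$ by $B$ started at $\delta$ satisfies
\[
   P_\delta\{\tau^0 > T\}
   \;=\; P_0\Bigl\{\max_{[0,T]}B < \delta\Bigr\}
   \;=\; P\{|B_T|<\delta\}
   \;\sim\; \sqrt{\tfrac{2}{\pi}}\,\frac{\delta}{\sqrt{T}}\,,
\]
a $T^{-1/2}$ tail with infinite mean, not the exponential tail you asserted (and likewise for the passage from $\delta$ to $-\delta$). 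Your phrase ``exits $(-\delta,\delta)$'' is also off, since the process starts on the boundary of that interval. Substituting the actual values $\delta=v^{-2}\ell^{-8}2^{-\ell}$ and $T=v^{-1}\ell^{-10}2^{-2\ell}$ into $\delta/\sqrt{T}$ gives exactly $v^{-3/2}\ell^{-3}$: this is where the polynomial bound in the lemma comes from, and it is precisely how the paper handles its events $G_1$ and $G_2$.

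So your final bookkeeping is inverted relative to what actually happens: the two timing terms each contribute of order $v^{-3/2}\ell^{-3}$ and are the dominant ones, while it is the height term (by your own gambler's-ruin estimate) that is lower order. Once you replace the erroneous exponential bound by the correct $\lesssim v^{-3/2}\ell^{-3}$ for each timing event, the argument goes through and matches the paper's.
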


\proof The event ``$B$ does not hit $0$ $v$-robustly for order $\ell$" is contained in the union of the three events
\begin{align*}
   G_1 &= \left\{ \tau^0 > \tau^1 + v^{-1} \frac{2^{-2\ell}}{\ell^{10}}\right\}, \qquad
	 G_2 =  \left\{ \tau^2 > \tau^0 + v^{-1} \frac{2^{-2\ell}}{\ell^{10}}\right\}, \\
	 G_3 &=  \left\{\sup_{\tau^1 < u < \tau^2} B_u > v^{-1/4} f_4(\ell),\ \tau^2 - \tau^1 \leq 2 v^{-1}  \frac{2^{-2\ell}}{\ell^{10}} \right\}.
\end{align*}
The first two events have the same probability. In addition,
\begin{align*}
   P_x(G_1) &= P_0 \left\{\max_{0 \leq u \leq v^{-1} \ell^{-10} 2^{-2\ell}} B_u < v^{-2} f_8(\ell) \right\} \\
	   &= P_0 \left\{ \max_{0 \leq u \leq 1} B_u < v^{-3/2} \ell^{-3} \right\} \\
		 &\leq v^{-3/2} \ell^{-3}.
\end{align*}
Further,
\begin{align*}
   P_x(G_3) & \leq P_0 \left\{ \max_{0 \leq u \leq 2 v^{-1} \ell^{-10} 2^{-2\ell}} B_u > v^{-1/4} f_4(\ell) \right\} \\
	   &= P_0 \left\{ \max_{0 \leq u \leq 1} B_u > 2^{-1/2} v^{1/4} \ell \right\} \\
		 &\leq \exp\left[-\frac{1}{4} v^{1/2} \ell^2 \right].
\end{align*}
This proves the lemma.
\hfill $\Box$
\vskip 16pt

  The functions $f_n(\ell)$ have the following properties.
  
\begin{lemma} (a) For all $V > 0$ and $M > 0$, there exists $\ell_0>0$ such that for all $v \in [V^{-1}, V]$, for all $n,m \in [-M, M]$, for all $\ell \geq \ell_0$,
$$
  f_n (f_0^{-1}(v f_m(\ell))) \leq f_{n+m-1} (\ell).
$$


   (b) Fix $c>0$ and $\beta >0$. There is $C>0$ such that for all $k \geq 2$, 
$$
   \sum_{j=1}^{k-1} e^{-cj}\, (k-j)^{-\beta} \leq C k^{-\beta}.
$$

   (c) For $k$ sufficiently large, 
$$
   \sum_{\ell = k}^\infty \ell^2\, 2^{-2\ell} \leq 2\, k^2\, 2^{-2k}.
$$
\label{rdlem35}
\end{lemma}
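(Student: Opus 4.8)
The plan is to prove the three assertions separately, as they are elementary and essentially independent. Throughout I use $f_n(\ell)=\ell^{-n}2^{-\ell}$.

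\textbf{Part (a).} I would start from the fact that $f_0(\ell)=2^{-\ell}$ is a decreasing bijection of $(0,\infty)$ onto $(0,1)$, so $f_0^{-1}(x)=\log_2(1/x)$, and set
$$
   L \ := \ f_0^{-1}\bigl(v\,f_m(\ell)\bigr) \ = \ \ell + m\log_2\ell - \log_2 v .
$$
A direct computation gives $2^{-L}=v\,\ell^{-m}\,2^{-\ell}$, hence $f_n(L)=L^{-n}\,v\,\ell^{-m}\,2^{-\ell}$, so that the claimed inequality $f_n(L)\le f_{n+m-1}(\ell)=\ell^{\,1-n-m}\,2^{-\ell}$ is equivalent to
$$
   v \ \le \ (L/\ell)^{\,n}\,\ell .
$$
Now $L/\ell = 1 + (m\log_2\ell-\log_2 v)/\ell$, and since $|m|\le M$ and $|\log_2 v|\le\log_2 V$, the correction term tends to $0$ uniformly over $v\in[V^{-1},V]$, $n,m\in[-M,M]$ as $\ell\to\infty$. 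Thus there is $\ell_1=\ell_1(M,V)$ with $L/\ell\in[1/2,2]$ for $\ell\ge\ell_1$, which forces $(L/\ell)^n\ge 2^{-M}$ for all $n\in[-M,M]$, and then $(L/\ell)^n\ell\ge 2^{-M}\ell> V\ge v$ once $\ell> V2^M$. I would take $\ell_0$ to be the maximum of $\ell_1$, $V2^M+1$, and a threshold ensuring $v\,f_m(\ell)<1$ (so that $L>0$).

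\textbf{Part (b).} I would split the sum at $j=\lfloor k/2\rfloor$. For $1\le j\le k/2$ we have $(k-j)^{-\beta}\le (k/2)^{-\beta}$, so
$$
   \sum_{1\le j\le k/2} e^{-cj}(k-j)^{-\beta} \ \le \ 2^{\beta}k^{-\beta}\sum_{j\ge 1}e^{-cj} \ = \ \frac{2^{\beta}e^{-c}}{1-e^{-c}}\,k^{-\beta}.
$$
For $k/2<j\le k-1$ we have $e^{-cj}\le e^{-ck/2}$ and $1\le k-j$, hence $(k-j)^{-\beta}\le 1$, and there are at most $k/2$ such terms, so that part is $\le (k/2)e^{-ck/2}$; since $k^{1+\beta}e^{-ck/2}$ is bounded on $k\ge 2$, it is $\le C'k^{-\beta}$. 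Adding the two bounds gives the claim.

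\textbf{Part (c).} I would compare consecutive terms: for $\ell\ge k\ge 3$,
$$
   \frac{(\ell+1)^2 2^{-2(\ell+1)}}{\ell^2\,2^{-2\ell}} \ = \ \frac14\Bigl(1+\frac1\ell\Bigr)^2 \ \le \ \frac14\Bigl(\frac43\Bigr)^2 \ = \ \frac49 \ < \ \frac12,
$$
so $\sum_{\ell=k}^\infty \ell^2 2^{-2\ell}\le k^2 2^{-2k}\sum_{j\ge 0}2^{-j}=2\,k^2\,2^{-2k}$ for all $k\ge 3$. None of the three parts presents a genuine obstacle; the only point requiring a little care is the uniformity in part (a), which is handled by the explicit lower bound $(L/\ell)^n\ge 2^{-M}$, valid once $\ell$ exceeds a threshold depending only on $V$ and $M$.
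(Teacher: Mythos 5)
Your proof is correct. For part (a) your computation is essentially identical to the paper's: the paper records the equivalent identity
$$
  f_n\bigl(f_0^{-1}(v f_m(\ell))\bigr) = \Bigl(1 - \tfrac{\log_2 v}{\ell} + m\,\tfrac{\log_2 \ell}{\ell}\Bigr)^{-n}\frac{v}{\ell}\, f_{n+m-1}(\ell)
$$
and simply says ``(a) follows''; your discussion (forcing $L/\ell\in[1/2,2]$ so that $(L/\ell)^n\ge 2^{-M}$, and then $\ell> 2^M V$) supplies exactly the uniform control the paper is implicitly using. For part (b) you take a genuinely different route: the paper substitutes $\ell=k-j$ to rewrite the sum as $e^{-ck}\sum_{\ell=1}^{k-1} e^{c\ell}\ell^{-\beta}$, bounds this by $e^{-ck}\bigl(e^c+\int_2^k e^{cx}(x-1)^{-\beta}\,dx\bigr)$, and invokes l'H\^{o}pital to see the integral is $O(e^{ck}k^{-\beta})$, whereas your split of the sum at $j=\lfloor k/2\rfloor$ avoids the integral comparison and l'H\^{o}pital entirely and is arguably more elementary, at the small cost of carrying a factor $2^\beta$ and noting that $k^{1+\beta}e^{-ck/2}$ is bounded; both give the same quality of constant. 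The paper leaves part (c) to the reader, and your consecutive-ratio bound (ratio $\le 4/9<1/2$ for $\ell\ge 3$, then geometric series) is the natural argument and is correct.
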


\begin{proof} (a) One easily checks that
\begin{equation}\label{e9.0}
   f_n(f_0^{-1}(v f_m(\ell))) = \left(1 - \frac{\ln_2(v)}{\ell} + m \frac{\ln_2(\ell)}{\ell}\right)^{-n} \frac{v}{\ell} f_{n+m-1}(\ell),
\end{equation}
and (a) follows


   (b) Let $\ell = k-j$, to see that the sum is equal to
$$
   e^{-ck} \sum_{\ell=1}^{k-1} e^{c\ell}\, \ell^{-\beta} \leq e^{-ck} \left(e^c + \int_2^k e^{cx}\, (x-1)^{-\beta}\, dx \right).
$$
Using l'H\^opital's rule, one easily checks that the integral is bounded by $C e^{ck} k^{-\beta}$, and (b) is proved.
   
   (c) The proof of this elementary inequality is left to the reader.
\end{proof}
\vskip 16pt

  For $v \geq 1$, we say that the DW-algorithm started at $(0,0)$ with value $x_0 >0$ behaves {\em robustly with tolerance} $v$ {\em for orders greater than} $k_0$, or simply $v$-{\em robustly above order} $k_0$, if the following properties (R1) to (R7) hold.
\begin{itemize}
\item[(R1)] For $k \geq k_0$, let $V(k)$\index{$V(k)$} be the number of stages that produce an episode of order $k$, that is,
$$
  V(k) = \sum_{m \geq 1} 1_{\{H_{m-1} < 2^{1-k},\, H_m \geq 2^{-k}}\}.
$$
Then $V(k) \leq k \sqrt{v}.$
  
\item[(R2)] For $k \geq k_0,$ every episode of order $k$ has length $\leq v k 2^{-2k}.$
  
\item[(R3)] For $k \geq k_0$, every episode of order $k$ has length $\geq v^{-2} k^{-7} 2^{-2k}$. 
  
\item[(R4)] For $k \geq k_0$ and each stage $m$ such that $H_m \in [2^{-k}, 2^{1-k}[,$
$$
  H_m - H_{m-1} \geq \frac{H_m}{vk^3}.
$$
  
\item[(R5)](a) We use the notation introduced while defining episodes. For $k \geq k_0$, for each odd stage $2n-1$ such that $H_{2n-2} \in [2^{-k}, 2^{1-k}[$, for $1 \leq \ell \leq j_n$ such that $k-\ell \geq k_0$ (resp. $1 \leq \ell \leq j_n^\prime $ such that $k-\ell \geq k_0)$, the Brownian motion $B^{(\ell)} = (B^{(\ell)}_u = x_0 + X^r(U^{(\ell-1)}_n -u, T^{n-1}_2),\ u \geq 0)$
(resp. $B^{(\ell)} = (B^{(\ell)}_u = x_0 + X^r(U_n^{\prime(\ell-1)} + u, T^{n-1}_2),\ u \geq 0)$
does not hit $v^{-1} f_8(k-\ell)$ and hits $2^{\ell-k}$ $v$-robustly for order $k-\ell$.  For
$\ell =j_n +1$ (resp. $\ell = j^\prime_n +1)$), $B^{(\ell)}$ hits 0 $v$-robustly for order $k-j_n$ (resp. for order $k-j_n^\prime$).
  
\item[(R5)] (b) For the same values of parameters $k$, $n$, $j_n$ $j^\prime_n$ as in (R5)(a),
$$
  \left\vert\, \sup_{u \in [U_n, U_{n-1]}} X(u, T_2^{n-1}) - \sup_{u \in [U^\prime_{n-1}, U^\prime_n]} X(u, T^{n-1}_2) \right\vert > v^{-2} f_7(k-(j_n \vee j^\prime_n))
$$
  
\item[(R6)] (a) Similar to (R5)(a), but for even stages.
  
\item[(R6)] (b) Similar to (R5)(b), but for even stages.

\item[(R7)] For $k \geq k_0$ and each stage $m$ such that $H_m \in [2^{-k}, 2^{1-k}[, \ H_{m} \leq H_{m-1} vk^3$.  
\end{itemize}

  We now show that when the DW-algorithm behaves $v$-robustly for an ABM $X$, it also behaves $\tilde v$-robustly for a small perturbation $\tilde X$ of $X$, with $\tilde v = 9 v$. In fact, this proposition applies to additive processes which are not necessarily ABM's. Here, an {\em additive process} is a process $(X(s),\ s \in \IR_+^2)$ such that for all $s=(s_1,s_2)$,
$$
   X(s_1,s_2) = X(s_1,0) + X(0,s_2) - X(0,0).
$$

\begin{prop} Fix $v \geq 1.$ There is $L = L(v) > 0$ such that for all $n > \ell_0 \geq L$, the following property holds: let $X$ and $\tilde X$ be two additive processes (not necessarily ABM's) starting at $(0,0)$ with value $2^{-n}$ such that:

   (a) the DW-algorithm applied to $X$ reaches level $2^{-\ell_0}$ within $[-2^{-2\ell_0},2^{-2\ell_0}]^2$ or escapes $[-2^{-2 \ell_0},2^{-2 \ell_0}]^2$, and behaves $v$-robustly above order $\ell_0$,
   
   (b) letting $R_{2\ell_0} = \cR(2^{-2\ell_0})$ and $\ultau = \ultau^{2^{-\ell_0}} \wedge \ulsigma^{R_{2\ell_0}}$ (defined relative to $X$), for $\ell_0 \leq \ell \leq n$,
\begin{align} \nonumber
  &\max\left[ \sup_{-(\ultau_3 \wedge 2^{-2\ell})\leq u \leq \ultau_1 \wedge 2^{-2\ell}} \vert X(u, 0) - \tilde X(u, 0) \vert, \right.\\
  &\qquad \qquad
  \left. \sup_{-(\ultau_4 \wedge 2^{-2\ell})\leq u \leq \ultau_2 \wedge 2^{-2\ell}}
     \vert X(0, u) - \tilde X(0,u)\vert\right] \leq f_{15}(\ell).
\label{rd9.1}
\end{align}
Then the DW-algorithm applied to $\tilde X$ achieves value $2^{-\ell_0-1}$ within $[-2^{-2\ell_0-1}, 2^{-2\ell_0-1}]^2$ or escapes $[-2^{-2\ell_0-1}, 2^{-2\ell_0-1}]^2$,
and behaves $\tilde v$-robustly above order $\ell_0+1$, with $\tilde v = 9v$.
\label{rdprop37}
\end{prop}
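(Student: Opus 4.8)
The plan is to prove Proposition \ref{rdprop37} by a careful bootstrap/induction on the order of episodes, transferring each of the robustness properties (R1)--(R7) from $X$ to $\tilde X$, using the key fact that on the region where the DW-algorithm for $X$ has been run, the two additive processes are uniformly within $f_{15}(\ell)$ of each other at order $\ell$. The starting point is Lemma \ref{rdlem35}(a): since $v$ is fixed, for $\ell$ large the composition estimates $f_n(f_0^{-1}(v f_m(\ell))) \le f_{n+m-1}(\ell)$ hold, so quantities like $f_{15}(\ell)$ are dominated by $v^{-2} f_8(\ell)$ and $v^{-1} f_8(\ell)$ and similar robustness thresholds once $\ell \ge L(v)$. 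This is what makes the perturbation negligible relative to the scales at which robustness is measured: a Brownian motion $B$ that hits a level $v$-robustly for order $\ell$, and a process $\tilde B$ with $\sup |B - \tilde B| \le f_{15}(\ell) \ll v^{-2} f_8(\ell)$, will hit the corresponding level at a nearly identical time (within the time tolerance built into the definition of robustness, after inflating $v$ to $9v$), will not overshoot by more than $v^{-1/4} f_4(\ell)$ blown up slightly, and will not dip below the ``robustly negative'' threshold — so the episode structure of the DW-algorithm for $\tilde X$ is a small deformation of that for $X$.

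The key steps, in order. First I would set up the induction on episodes of decreasing order: process the stages of the DW-algorithm for $\tilde X$ in the order they occur, and show inductively that each produces an episode whose order differs by at most $1$ from the order of the corresponding episode for $X$, with nearly the same endpoints (within the time tolerance from (R2)/(R3) scaled by the robustness constants). Concretely, using the additive structure, the restriction of $\tilde X$ to a horizontal (resp.\ vertical) line through a corner is $\tilde X(\cdot,0)$ (resp.\ $\tilde X(0,\cdot)$) shifted by a constant, and by \eqref{rd9.1} this one-dimensional process is within $f_{15}(\ell)$ of the corresponding one-dimensional slice of $X$; then (R5)(a)/(R6)(a) for $X$ — the robust hitting of the relevant levels — together with the elementary ``robustness is stable under $f_{15}$-perturbations'' comparison (which one proves by hand from the definition just before Lemma \ref{lem9.2}, comparing hitting times of $0$, $v^{-2}f_8(\ell)$, $-v^{-2}f_8(\ell)$) give the corresponding robust hitting for $\tilde X$ with tolerance $\tilde v = 9v$. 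The factor $9 = 3^2$ (or similar) absorbs: the loss in the time-tolerance constants, the possible order shift by $1$ (so $f_m(k-\ell)$ becomes $f_m(k-\ell\pm1)$, a bounded factor for $k-\ell$ large), and the inflation of the supremum bound. Second, I would deduce (R1): since each episode of order $k$ for $\tilde X$ sits near an episode of order $k-1$, $k$, or $k+1$ for $X$ and episode endpoints are nearly preserved, $\tilde V(k) \le V(k-1)+V(k)+V(k+1) \le (k-1)\sqrt v + k\sqrt v + (k+1)\sqrt v \le k\sqrt{\tilde v}$ for $\ell_0$ large. Third, (R2)/(R3): episode lengths for $\tilde X$ are within a bounded multiplicative factor of those for $X$ (the perturbation moves endpoints by at most the time tolerance $v^{-1}\ell^{-10}2^{-2\ell}$ from robustness, which is much smaller than the episode length $\ge v^{-2}\ell^{-7}2^{-2\ell}$), so $v k 2^{-2k}$ and $v^{-2}k^{-7}2^{-2k}$ become $\tilde v k 2^{-2k}$ and $\tilde v^{-2}k^{-7}2^{-2k}$. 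Fourth, (R4)/(R7): these bound $H_m - H_{m-1}$ from below and $H_m$ from above in terms of $H_{m-1}$; since the new maxima $\tilde H_m$ differ from $H_m$ by at most $\sum$ of perturbations over the relevant episodes, which is $O(f_{14}(k))$ by Lemma \ref{rdlem35}(c) applied to the geometric-type sum, these ratios are preserved up to the factor $9$. Fifth, (R5)(b)/(R6)(b): the gap between the two directional suprema is $> v^{-2}f_7(\cdot)$ for $X$, hence $> \tilde v^{-2} f_7(\cdot)$ for $\tilde X$ after subtracting twice the perturbation $f_{15} \ll v^{-2}f_7$. Finally, I would check the ``escapes or reaches level'' dichotomy: if the algorithm for $X$ reaches $2^{-\ell_0}$ within $[-2^{-2\ell_0},2^{-2\ell_0}]^2$, robustness of the hitting plus \eqref{rd9.1} forces $\tilde X$ to reach $2^{-\ell_0-1}$ nearby (the order may drop by one, hence the $\ell_0+1$); and if $X$ escapes the square, the near-preservation of the explored rectangle forces $\tilde X$ to escape the slightly smaller square $[-2^{-2\ell_0-1},2^{-2\ell_0-1}]^2$ — here one uses that the DW-algorithm for $X$ constructs a contour on which $X$ is robustly negative (via (R5)(a)/(R6)(a) with $\ell = j_n+1$, the ``hits $0$ $v$-robustly'' clause), so on the corresponding contour $\tilde X \le -v^{-2}f_8(\cdot) + f_{15}(\cdot) < 0$, and by Proposition \ref{prop2}(a)-type reasoning the algorithm for $\tilde X$ cannot have explored beyond a neighborhood of $\cR(\ultau)$.

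The main obstacle I expect is the bookkeeping of the order shifts and the propagation of errors across many episodes of the same order within a single stage (the ``interior episodes'' $[U_n^{\prime(\ell)}, U_n^{\prime(\ell+1)}]$): I need to verify that when I chop a stage into $j_n+1$ episodes for $X$, the perturbed process $\tilde X$ gets chopped into essentially the same number of episodes with the same orders (no spurious extra level-crossings and no missed ones), and that the cumulative displacement of episode boundaries does not build up — this is where (R5)(a)'s robust-hitting clause and the estimate ``$B^{(\ell)}$ does not hit $v^{-1}f_8(k-\ell)$'' do the real work, since the latter guarantees a definite safety margin preventing the perturbation from creating a new crossing of the level $2^{\ell-k-1}$. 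A second, related technical point is keeping the constant at exactly $\tilde v = 9v$ rather than some larger multiple; this requires being economical, e.g.\ noting that the order shift costs only a factor $f_m(\ell-1)/f_m(\ell) = (1 + O(1/\ell))$ which is $\le 2$ (even $\le 3/2$) for $\ell \ge L(v)$, the time-tolerance comparison costs a factor $3$, and the square in the robustness supremum bound $v^{-1/4} \to (3v)^{-1/4}\cdot(\text{stuff})$ is handled by the definition absorbing $\tilde v^{-1/4} = (9v)^{-1/4}$. I would organize the write-up around a single lemma ``robust hitting is stable under $f_{15}$-perturbation with constant blow-up $3$'' proved from the one-dimensional definition, and then a mostly mechanical verification of (R1)--(R7), plus the dichotomy, invoking that lemma at each episode. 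Throughout, Lemma \ref{rdlem35}(a)--(c) supplies all the needed inequalities between the $f_n$'s, and Lemma \ref{lem9.2} is not itself needed here (it is for the companion Proposition \ref{rdprop38} showing robustness holds with high probability).
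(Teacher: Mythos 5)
Your plan follows essentially the same route as the paper: an inductive comparison of the DW-algorithm stages/episodes for $X$ and $\tilde X$, using the robust-hitting clause of (R5)(a)/(R6)(a) together with Lemma \ref{rdlem35}(a) to show the perturbation $f_{15}(\ell)$ is negligible relative to the robustness thresholds $v^{-2}f_8(\ell)$, and then a mostly mechanical verification of (R1)--(R7) for $\tilde X$ at tolerance $9v$. The paper's proof implements precisely your ``single lemma'' idea, but bakes it into an explicit inductive invariant it calls \emph{compatibility} between stages, with three clauses (c1)--(c3): (c1) traps the new endpoints $\tilde U_m, \tilde U'_m$ in the intervals $[\lambda^o_m,\gamma^o_m]$, $[\gamma^{o\prime}_m,\lambda^{o\prime}_m]$ built from the robust hitting for $X$; (c2) bounds $|X(T^m_1,0)-X(\tilde T^m_1,0)|\le f_{12}(k)$; and (c3) says the maximizer for $\tilde X$ lies in the segment corresponding to the one for $X$.

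The one place where your sketch glosses over something the paper treats with care is condition (c2). When you compare the slice $X(\cdot,T^{m-1}_2)$ against $\tilde X(\cdot,\tilde T^{m-1}_2)$, the additive structure turns each into a shifted copy of the one-dimensional slice through the origin, but the two shift constants are $X(0,T^{m-1}_2)-X(0,0)$ and $\tilde X(0,\tilde T^{m-1}_2)-\tilde X(0,0)$, which involve \emph{different} corner positions $T^{m-1}_2 \ne \tilde T^{m-1}_2$. Hypothesis \eqref{rd9.1} alone controls $|X(0,u)-\tilde X(0,u)|$ at the same $u$; to bridge $T^{m-1}_2$ and $\tilde T^{m-1}_2$ you need a bound like $|X(0,T^{m-1}_2)-X(0,\tilde T^{m-1}_2)|\le f_{12}(k)$, which is what (c2) for the previous stage delivers (see \eqref{rd9.6}). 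Your phrase ``cumulative displacement of episode boundaries does not build up'' is the right intuition, but the precise mechanism — (c2) being established for stage $2m-1$ from (c1) at that stage, then fed into (c1) at stage $2m$ — is where the induction actually closes; you should make that feedback loop explicit in a written-up version. Everything else in your outline (the order-shift by at most $1$, the factor-$9$ accounting, the negative-contour argument for the escape dichotomy, and the reliance on Lemma \ref{rdlem35}(a) but not Lemma \ref{lem9.2}) matches the paper's proof.
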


\begin{proof} For every odd stage $2m-1$, let $j$ (resp. $j^\prime$) be the order of the episode with left endpoint $U_m$ (resp. right endpoint $U^\prime_m)$. Define
\begin{eqnarray*}
  \gamma^o_m &=& \sup\{u < U_{m-1}: x_0 + X(u, T^{m-1}_2) \leq v^{-2} f_8(j)\},\\
  \lambda_m^o &=& \sup\{ u < U_m: x_0 + X(u, T^{m-1}_2) \leq - v^{-2} f_8(j)\}\\
  \gamma^{o\prime}_m &=& \inf\{u > U^\prime_{m-1}: x_0 + X(u, T^{m-1}_2) \leq v^{-2} f_8(j^\prime)\},\\
  \lambda^{o\prime}_m &=& \inf\{u > U^\prime_m: x_0 + X(u, T^{m-1}_2) \leq - v^{-2} f_8(j^\prime)\}.
\end{eqnarray*}
\index{$\gamma^o_m$}\index{$\lambda_m^o$}\index{$\gamma^{o\prime}_m$}\index{$\lambda^{o\prime}_m$}For even stages $2m$, we define the corresponding random variables $\gamma^e_m$, $\lambda_m^e$, $\gamma^{e\prime}_m$, $\lambda^{e\prime}_m$.\index{$\gamma^e_m$}\index{$\lambda_m^e$}\index{$\gamma^{e\prime}_m$}\index{$\lambda^{e\prime}_m$}

  Objects related to the DW-algorithm for $\tilde X$ will be denoted with a $\tilde{~~}$, e.g. $\tilde U_m$, $\tilde U^\prime_m$, etc.
  
  Assume that $X$ and $\tilde X$ satisfy (\ref{rd9.1}) for $\ell_0 \leq \ell \leq n,$ and that the DW-algorithm applied to $X$ reaches level $2^{-\ell_0}$ before exiting $\cR(2^{-2 \ell_0})$ or escapes $\cR(2^{-2 \ell_0})$, and behaves $v$-robustly above order $\ell_0$. We are going to show first that the DW-algorithm applied to $\tilde X$ is compatible with that for $X$, in the following sense.
  
  We say that an odd stage $2m-1$ for $\tilde X$ is {\em compatible} with stage $2m-1$ for $X$ if the following three conditions hold:
\begin{itemize}
\item[(c1)] $\tilde U_m \in [\lambda^o_m, \gamma_m^o]$ and $\tilde U_m^\prime \in [\gamma_m^{o\prime}, \lambda^{o\prime}_m],$
  
\item[(c2)] $\vert X(T^m_1, 0) - X(\tilde T^m_1, 0) \vert
\leq f_{12}(k),$ where $k$ is such that $H_{2m-1} \in [2^{-k}, 2^{1-k}[.$ 
  
\item[(c3)] When $(T^m_1, T^{m-1}_2)$ belongs to one of the two segments $[U_m, U_{m-1}] \times \{T^{m-1}_2\}$ or $[U^\prime_{m-1}, U^\prime_m] \times \{T^{m-1}_2\},$ then $(\tilde T^m_1, \tilde T^{m-1}_2)$ belongs to the corresponding segment for $\tilde X$.
\end{itemize}
For even stages, the notion of compatibility is defined analogously. The underlying idea is that as long as stages for $\tilde X$ are compatible with those of $X$, the DW-algorithms for $\tilde X$ and $X$ explore essentially the same rectangles and construct parallel paths, with a control over the discrepancies because of $v$-robustness. This is illustrated in Figure \ref{figAa}.
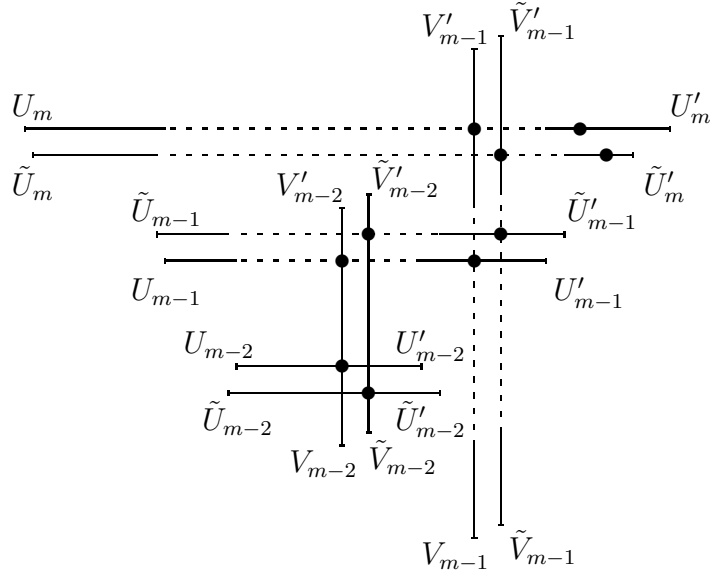
\begin{figure}
\begin{center}
\begin{picture}(280,200)

\put(20,170){\line(1,0){53}} \multiput(75,170)(6,0){24}{\line(1,0){2}} \put(217,170){\line(1,0){47}} 
\put(23,160){\line(1,0){47}} \multiput(75,160)(6,0){25}{\line(1,0){2}} \put(224,160){\line(1,0){26}} 

\put(70,130){\line(1,0){27}} \multiput(102,130)(6,0){13}{\line(1,0){2}} \put(177,130){\line(1,0){47}} 
\put(73,120){\line(1,0){27}} \multiput(102,120)(6,0){12}{\line(1,0){2}} \put(170,120){\line(1,0){47}} 

\put(100,80){\line(1,0){70}}
\put(97,70){\line(1,0){80}}

\put(140,50){\line(0,1){90}}
\put(150,55){\line(0,1){90}}

\put(190,15){\line(0,1){35}} \multiput(190,50)(0,6){15}{\line(0,1){2}} \put(190,140){\line(0,1){60}} 
\put(200,20){\line(0,1){35}} \multiput(200,55)(0,6){15}{\line(0,1){2}} \put(200,145){\line(0,1){60}} 

\put(140,80){\circle*{5}}
\put(150,70){\circle*{5}}

\put(140,120){\circle*{5}}
\put(150,130){\circle*{5}}

\put(200,130){\circle*{5}}
\put(190,120){\circle*{5}}

\put(200,160){\circle*{5}}
\put(190,170){\circle*{5}}

\put(240,160){\circle*{5}}
\put(230,170){\circle*{5}}

\put(20,169){\line(0,1){2}} \put(264,169){\line(0,1){2}} 
\put(23,159){\line(0,1){2}} \put(250,159){\line(0,1){2}}

\put(70,129){\line(0,1){2}} \put(224,129){\line(0,1){2}} 
\put(73,119){\line(0,1){2}} \put(217,119){\line(0,1){2}}

\put(100,79){\line(0,1){2}} \put(170,79){\line(0,1){2}}
\put(97,69){\line(0,1){2}} \put(177,69){\line(0,1){2}}

\put(139,50){\line(1,0){2}} \put(139,140){\line(1,0){2}} 
\put(149,55){\line(1,0){2}} \put(149,145){\line(1,0){2}}

\put(189,15){\line(1,0){2}} \put(189,200){\line(1,0){2}}
\put(199,20){\line(1,0){2}} \put(199,205){\line(1,0){2}}

\put(15,175){$U_m$} \put(264,175){$U_m'$}
\put(15,145){$\ti U_m$} \put(254,145){$\ti U_m'$}

\put(60,135){$\ti U_{m-1}$} \put(225,135){$\ti U_{m-1}'$}
\put(60,105){$U_{m-1}$} \put(220,105){$U_{m-1}'$}

\put(80,85){$U_{m-2}$} \put(160,85){$U_{m-2}'$}
\put(87,56){$\ti U_{m-2}$} \put(160,56){$\ti U_{m-2}'$}

\put(120,40){$V_{m-2}$}
\put(150,41){$\ti V_{m-2}$}

\put(115,145){$V_{m-2}'$}
\put(151,147){$\ti V_{m-2}'$}

\put(170,5){$V_{m-1}$}
\put(203,7){$\ti V_{m-1}$}

\put(170,205){$V_{m-1}'$}
\put(203,207){$\ti V_{m-1}'$}

\end{picture}
\end{center}
\caption{Examples of compatible stages. \label{figAa}} 
\end{figure}
 
 We now prove that under the previous assumptions, each stage for $\tilde X$, say an odd stage $2m-1$, is compatible with stage $2m-1$ for $X$, provided 
\begin{equation}\label{rd9.2}
   R_{2m-1} = [U_m, U^\prime_m] \times [V_{m-1}, V^\prime_{m-1}] \subset \cR(2^{-2\ell_0}) \qquad\mbox{and}\qquad \sup_{R_{2m-1}} X < 2^{-\ell_0}.
\end{equation}
 
 We do this by induction on $m$. For $m \geq 1,$ suppose compatibility holds up to stage $2m-2$, and stage $2m-1$ for $X$ satisfies (\ref{rd9.2}). We show that stage $2m-1$ for $\tilde X$ is compatible with stage $2m-1$ for $X$.
 
 Let $k$ be such that $H_{2m-1} \in [2^{-k}, 2^{1-k}[$. The first observation is that 
$$
   \max(\vert U_m\vert, U^\prime_m, \vert V_{m-1}\vert, V^\prime_{m-1}) \leq 2\,v^{3/2} k^2 2^{-2k}.
$$
Indeed, summing the length of all episodes of order greater than $k$, multiplied by the number of episodes of each order (properties (R2) and (R1) of $v$-robustness) gives the upper bound 
\begin{equation}\label{sum_ep}
 \sum^\infty_{\ell=k} (v \ell 2^{-2\ell}) (\ell \sqrt{v}) \leq 2\, v^{3/2} k^2 2^{-2k}
\end{equation}
by Lemma \ref{rdlem35}(c).
 Using property (R5)(a), we see that if $k$ is sufficiently large, then 
$$
  \max(\vert \lambda^o_m \vert, \lambda^{o\prime}_m, \vert\lambda_{m-1}^e\vert, \lambda^{e\prime}_{m-1}) \leq 3\, v^{3/2} k^2 2^{-2k}.
$$
Selecting the largest $\ell$ for which $2^{-2\ell} \geq 3\, v^{3/2} k^2 2^{-2k}$ which is  $f^{-1}_0(\sqrt{3}\, v^{3/4} f_{-1}(k))$, we use Lemma \ref{rdlem35}(a) and (\ref{rd9.1}) to conclude that
\begin{equation}\label{rd9.3}
  \sup_{s \in [\lambda^o_m, \lambda_m^{o\prime}] \times [\lambda^e_{m-1}, \lambda_{m-1}^{e\prime}]} \vert X(s) - \tilde X(s) \vert \leq f_{13} (k),
\end{equation}
provided $k$ is large enough.
  
    The next step in proving compatibility is to show that
\begin{equation}\label{rd9.4}
   U_m < \tilde U_{m-1},\quad             U^\prime_m > \tilde U_{m-1}^\prime,
   \quad    \tilde U_m < U_{m-1},\quad    \tilde U^\prime_m > U^\prime_{m-1},
\end{equation}
(that is, the horizontal projection of the rectangle explored during stage $2m-1$ for $X$ (resp. $\tilde X$) encompasses the horizontal projection of the rectangle explored up to stage $2m-2$ for $\tilde X$ (resp. $X$)). We only check the second inequality, since the others are checked similarly.
  
  By property (R4) and (\ref{starrd1}), 
\begin{equation}\label{rd9.5}
   x_0 + X(U^\prime_{m-1}, T^{m-1}_2) \geq v^{-1} f_3(k).
\end{equation}
There are two cases to distinguish, according as $U^\prime_{m-1} \leq \tilde U_{m-1}^\prime$ or $U^\prime_{m-1} > \tilde U^\prime_{m-1}$. We only consider the first case, since the other immediately gives $U^\prime_m >  U^\prime_{m-1} > \tilde U^\prime_{m-1}$.
  
  By property (c1) applied to stage $2m-3$ and (R5), absolute values of increments of $X(\cdot, T^{m-1}_2)$ over $[U^\prime_{m-1}, \tilde U_{m-1}^\prime]$ are bounded above by $2 v^{-1/4} f_4(k)$, so by (\ref{rd9.5}), $x_0 + X(\cdot, T_2^{m-1}) > 0$ over this interval provided $k$ is large enough so that $k > 2 v^{3/4}$. This proves (\ref{rd9.4}).
  
  We now check property (c1) for stage $2m-1$. In fact, we only check that $\tilde U^\prime_m > \gamma^{o\prime}_m$, since the other inequalities needed to establish (c1) are checked similarly. For this, it suffices to show that $x_0 + \tilde X(u, \tilde T^{m-1}_2) > 0$ for $u \in [\tilde U^\prime_{m-1}, \gamma^{o\prime}_m].$ To check this, note using (\ref{rd9.3}), the fact that rectangular increments of $X$ vanish and (R5), that for such $u$,
\begin{eqnarray*}
  x_0 + \tilde X(u, \tilde T^{m-1}_2) &\geq& x_0 + X(u, \tilde T^{m-1}_2) - f_{13}(k)\\
  &=& x_0 + X(u, T^{m-1}_2) + X(\tilde T^{m-1}_1, \tilde T^{m-1}_2) - X(\tilde T^{m-1}_1, T^{m-1}_2) - f_{13}(k)\\
  &\geq& v^{-2} f_8(k) + X(0, \tilde T^{m-1}_2) - X(0, T^{m-1}_2) - f_{13}(k).
\end{eqnarray*}
  We bound the remaining $X$-increment by using the bound from property (c2) for stage $2m-2$
to see that
\begin{equation}\label{rd9.6}
  \vert X(0, \tilde T^{m-1}_2) - X(0, T^{m-1}_2) \vert \leq f_{12}(k).
\end{equation}
Therefore,
$$
  x_0 + \tilde X(u, \tilde T^{m-1}_2) \geq v^{-2} f_8(k) - f_{12} (k)-f_{13}(k) > 0
$$
provided $k$ is large enough. Property (c1) for stage $2m-1$ is proved.
  
   We now prove property (c2) for stage $2m-1$. By the definition of an additive process, of $T^m_1$ and $\tilde T^m_1$, and property (c1) for stage $2m-1$,
$$
   X(T^m_1, 0) - X(\tilde T^m_1, 0) \geq 0
    \qquad\mbox{and}\qquad 
   \tilde X(T^m_1, 0) - \tilde X(\tilde T^m_1, 0) \leq 0.
$$
Using \eqref{rd9.3}, we deduce that $\vert X(T^m_1, 0) - X(\tilde T^m_1, 0) \vert \leq 2 f_{13}(k) \leq f_{12}(k)$, provided $k \geq 2$. This proves property (c2) for stage $2m-1$.
  
  We now prove property (c3) for stage $2m-1$. Suppose for instance that $(T_1^m, T_2^{m-1}) \in [U^\prime_{m-1}, U^\prime_m] \times \{T^{m-1}_2\}$. By property (R5)(b),
\begin{equation}\label{rd9.7}
  x_0 +   \sup_{u \in [U_m, U_{m-1}]} X(u, T^{m-1}_2) < H_{2m-1} - v^{-2} f_7(k).
\end{equation}
By (\ref{rd9.3}), then (\ref{rd9.6}), (c1) and (\ref{rd9.7}),
\begin{eqnarray*}
  x_0 + \sup_{u \in [\tilde U_m, \tilde U_{m-1}]} \tilde X(u, \tilde T^{m-1}_2) &\leq&  x_0 + \sup_{u \in [\tilde U_m, \tilde U_{m-1}]} X(u, \tilde T^{m-1}_2) + f_{13}(k)\\
  \\
  &\leq& x_0 + \sup_{u \in [\tilde U_m, \tilde U_{m-1}]} X(u, T^{m-1}_2) + f_{12}(k) + f_{13}(k)\\
  \\
  &=& x_0 + \sup_{u \in [U_m, U_{m-1}]} X(u, T^{m-1}_2) + f_{12} (k) + f_{13}(k)\\
  \\
  &<& H_{2m-1} - v^{-2} f_7(k) + 2 f_{12}(k) + f_{13} (k).
\end{eqnarray*}
Property (\ref{rd9.3}) and the definition of $H_{2m-1}$ and $\tilde H_{2m-1}$ imply that 
\begin{equation}\label{rd9.8a}
   \vert \tilde H_{2m-1} - H_{2m-1} \vert \leq f_{13}(k),
\end{equation}
so the last right-hand side is bounded above by
$$
 \tilde H_{2m-1} - v^{-2} f_7(k) + 2 f_{12}(k) + 2 f_{13}(k) < \tilde H_{2m-1}
$$
provided $k \geq \ell_0$ and $\ell_0$ is chosen large enough. 
   
  This completes the proof by induction of compatibility between stages of $\tilde X$ and $X$.

  We now check that the DW-algorithm applied to $\tilde X$ achieves value $2^{-\ell_0-1}$ within $\cR(2^{-2\ell_0-1})$ or escapes $\cR(2^{-2\ell_0-1})$, and behaves $9v$-robustly above order $\ell_0+1.$
  
  Observe first that if $\ti U^{(\ell)}$, $\ti U^{\prime(\ell)}$ (resp. $\ti V^{(\ell)}$, $\ti V^{\prime(\ell)}$) are endpoints of horizontal (resp. vertical) episodes of $\tilde X$ of order $k \geq \ell_0 +1$, such that $\ti U^{(\ell)} \leq 0 \leq \ti U^{\prime(\ell)}$ and $\ti V^{(\ell)} \leq 0 \leq \ti V^{\prime(\ell)}$, then just as in (\ref{rd9.3}), we have: 
\begin{equation}\label{rd9.8}
  \sup_{s \in [\ti U^{(\ell)}, \ti U^{\prime(\ell)}] \times [\ti V^{(\ell)}, \ti V^{\prime(\ell)}]} 
     \vert X(s) - \tilde X(s) \vert \leq f_{13}(k),
\end{equation}
even if the stage that produces any one of these episodes contains other episodes of order $\leq \ell_0$ or exits $\cR(2^{-2 \ell_0-1}).$
  
  We now check property (R1) for $\tilde X$. An order $k$ episode for $\tilde X$ occurs if for some stage $m$ of the DW-algorithm applied to $\tilde X$, the inequalities $\tilde H_{m-1} < 2^{1-k}$ and $\tilde H_m \geq 2^{-k}$ hold. If $k > \ell_0$, then by (\ref{rd9.3}), stage $m$ of the DW-algorithm applied to $X$ satisfies $H_{m-1} < 2^{2-k}$ and $H_m \geq 2^{-1-k}$, and therefore produces an episode of order $k-1$, $k$ or $k+1$. By (R1) for $X$, the number of such stages is bounded above by
$$
  ((k-1) + k + (k+1)) \sqrt{v} = k \sqrt{9v}.
$$
This establishes (R1) for $\tilde X$ with $\tilde v = 9v$.
  
  We now check property (R2) for $\tilde X$. By the closeness condition (\ref{rd9.8}), an interior episode for $\tilde X$ of order $k$ is no longer than three episodes for $X$, which are of order $k-1$, $k$ and $k+1$, whose cumulated length is, by (R2), no greater than
$$
  v(k-1)2^{-2(k-1)} + vk2^{-2k} + v (k+1)2^{-2(k+1)} \leq 6 v k 2^{-2k}.
$$
On the other hand, an extremity episode for $\tilde X$ of order $k$ is no longer than the union of one interior episode for $X$ and one extremity episode for $X$, plus $v^{-1} 2^{-2(k-1)}/(k-1)^{10}$, giving the upper bound
$$
  2vk 2^{-2k} + \frac{4}{v} \frac{2^{-2k}}{(k-1)^{10}} \leq 3 v k 2^{-2k}.
$$
This proves (R2) for $\tilde X$ with $\tilde v = 9v$.
  
  We now check property (R3) for $\tilde X$. We consider an interior episode for $\tilde X$ of order $k\geq \ell_0 +1$ occurring in $[U_{m-1}^\prime,U_{m}^\prime ]$ during the odd stage $2m-1$. Then $H_{2m-2} \leq 2^{1-k}$ and $H_{2m-1} \geq 2^{-k}$. By (R7), $H_{2m-1} \leq H_{2m-2} v k^3 \leq 2v f_{-3}(k)$. By (c2) and \eqref{e9.0},
\begin{equation}\label{e9.11}
   \vert X(0,T^{m-1}_2) - X(0,\tilde T^{m-1}_2) \vert \leq f_{12}(f_0^{-1}(2vf_{-3}(k))) \leq 4 v f_9(k).
\end{equation}
By the reasoning leading to \eqref{rd9.3}, setting 
$$
   G_{m,k,v}= \inf\left\{s > U_{m-1}^\prime: x_0 + X(s,T^{m-1}_2 ) = 2^{1-k}- v^{-2}\, f_8(k) \right\}
$$  
and 
$$
   G_{m,k,v}^\prime = \inf\left\{s > U_{m-1}^\prime: x_0 + X(s,T^{m-1}_2 ) = 2^{1-k} + v^{-2}\, f_8(k)\right\},
$$ 
we have, by \eqref{e9.11} and \eqref{rd9.3}, for $s \in [U_{m-1}^\prime, G_{m,k,v}^\prime]$, 
$$
 |X(s,T^{m-1}_2 )- \tilde X(s,\tilde T^{m-1}_2 ) | \leq 4v f_{9}(k) + f_{13}(k) \leq v^{-3} f_{8.5}(k).
$$
Consequently, for $\ell_0$ fixed sufficiently large, it follows that (quantities defined with $\tilde X$ instead of $X$ are denoted with a $ \tilde \ $) that 
$$
\tilde G_{m,k,9v} ^\prime \leq G_{m,k,v}^\prime 
$$
and
$$
\tilde G_{m,k,9v} \geq G_{m,k,v},
$$
hence the length of the associated interior episode of order $k$ for $\tilde X$ will be greater than
$ \tilde G_{m,k,9v} - \tilde G_{m,k+1,9v}^\prime \geq G_{m,k,v}-G_{m,k+1,v}^\prime$. 
By (R3) and the definition of $v$-robustness, this is $\geq 2^{-2k}/(v^2k^7)- 2^{1-2k}/(vk^{10})
\geq 2^{-2k}/((9v)^2k^7)$ for $ k\geq \ell_0$ provided $\ell_0 $ is sufficiently large.  Extremity episodes are treated similarly. 
  
  We now check property (R4) for $\tilde X$. Observe that if $\tilde H_m \in [2^{-k}, 2^{1-k}[$, then
$$
  \tilde H_m - \tilde H_{m-1} \geq H_m - f_{13}(k) - (H_{m-1} + f_{13}(k)) = H_m-H_{m-1} - 2f_{13}(k).
$$
By (R4) for $X$, this is greater than or equal to
$$
  \frac{H_m}{v k^3} - 2f_{13}(k) \geq \left( \tilde H_m - f_{13}(k)\right) \frac{1}{vk^3} - 2f_{13}(k).
$$
Provided $k \geq \ell_0$ and $\ell_0$ is sufficiently large, this is $\geq \tilde H_m/(9v k^3).$ This proves (R4) for $\tilde X$.
  
  We now check (R5)(a) for $\tilde X$. Consider an order $k$ episode for $\tilde X$. At worst, this corresponds to an order $k+1$ episode for $X$, so the minimum value of $\tilde X$ over the episode is bounded below by
$$
  \frac{1}{v^2} f_8(k+1) - f_{13}(k) \geq \frac{1}{(9v)^2} f_8(k)
$$
provided $k \geq \ell_0$ and $\ell_0$ is large enough. A similar argument checks (R5)(b) as well as (R6)(a) and (R6)(b) for $\tilde X$. 

   We now check (R7) for $\tilde X$. For $k \geq \ell_0+1$, suppose that for some $m$, $\tilde H_m \in [2^{-k}, 2^{1-k}[$. By \eqref{rd9.8a}, $H_m \in [2^{-(k+1)}, 2^{1-(k-1)}]$, so by property (R7) for $X$, $H_{m} \leq H_{m-1}\,  v\, (k+1)^3$. Therefore,
\begin{align*}
   \tilde H_{m} &\leq H_{m} + f_{13}(k-1) \leq H_{m-1}\,  v\,  (k+1)^3 + f_{13}(k-1) \\
   & \leq (\tilde H_{m-1} + f_{13}(k))\,  v\,  (k+1)^3 + f_{13}(k-1).
\end{align*}
Since $\tilde H_m \in [2^{-k}, 2^{1-k}[$, the right-hand side is easily seen to be $\leq \tilde H_{m-1}\,  9v\,  k^3$, provided $\ell_0$ is sufficiently large.

   Now suppose that on a stage $2m-1$, the DW-algorithm for $X$ reaches the level $2^{-\ell_0}$ within $[-2^{-2\ell_0},2^{-2\ell_0}]^2$. Then, by \eqref{rd9.3}, we have that on this stage, the algorithm for $\tilde X $ must attain at least 
$2^ {- \ell_0 } -f_{13}( \ell_0) $, which (providing $\ell_0$ has been fixed sufficiently large) will exceed
$2^{-\ell_0-1}$.  Similarly, suppose (without loss of generality) that the DW-algorithm for $X$ escapes the square
$[-2^{-2\ell_0}, 2^{-2\ell_0}]^2$ during  stage $2m-1$ without having reached level $2^{-\ell_0}$. Then, using (c1), we must have that
the DW-algorithm for $\tilde X$ escapes $[-2^{-2\ell_0}+ \frac{2^{-2k}}{vk^{10}}, 2^{-2\ell_0}- \frac{2^{-2k}}{vk^{10}} ]^2$ where $k\geq \ell_0$.
If $\ell_0$ is sufficiently large, then this square will contain $[-2^{-2\ell_0-1}, 2^{-2\ell_0-1}]^2$.
  
  The proof of Proposition \ref{rdprop37} is complete. 
\end{proof}

   We now show that for large $v$, the requirement that the DW-algorithm behave $v$-robustly does not significally change the gambler's ruin probabilities.
  
\begin{prop} For $v \geq 2$, there exists $c(v) < \infty$ with the following properties:

  (a) Let $X$ be a standard additive Brownian motion. For $n \geq 1$, the probability that the DW-algorithm for $X$, started at (0,0) with value $2^{-n}$, reaches level 1 and does not behave $v$-robustly above order 1, is $\leq c(v) 2^{-n \lambda_1}$.
  
  (b) $\lim_{v \to \infty} c(v) = 0.$
\label{rdprop38}
\end{prop}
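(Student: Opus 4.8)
The plan is to decompose the ``bad'' event---that the DW-algorithm reaches level $1$ but fails to behave $v$-robustly above order $1$---into a union, over the orders $k\ge 1$, of the events that the algorithm reaches level $1$ but some one of (R1)--(R7) fails at order $k$. Since by Theorem \ref{prop1} the probability of reaching level $1$ from value $2^{-n}$ is $O(2^{-n\lambda_1})$, and since by the Markov property (using the stopping point $\ultau^{2^{-k}}$ and the associated standard ABM $X^{\ultau^{2^{-k}}}$ below \eqref{startp6}) reaching level $1$ after having reached level $2^{-k}$ has conditional probability $O((2^{-k})^{\lambda_1})=O(2^{-k\lambda_1})$, the plan is to show that for each fixed $k$ the conditional probability, given $\F_{\ultau^{2^{-k}}}$ and on the event that level $2^{-k}$ is reached, that some property among (R1)--(R7) fails \emph{at that specific order $k$} before level $2^{-k+1}$ is attained, is bounded by $\epsilon(v,k)$ with $\sum_{k\ge 1}\epsilon(v,k)<\infty$ and $\sum_k\epsilon(v,k)\to 0$ as $v\to\infty$. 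Multiplying the two estimates and summing over $k$ (using that between consecutive attained levels the algorithm behaves like the DW-algorithm restarted at a value $<2^{1-k}$, so we may bound things by $P_{2^{-k}}\{\ultau^{2^{-k+1}}<\ulinfty\}$ times the conditional failure probability) then yields the bound $c(v)2^{-n\lambda_1}$ with $c(v)=C\sum_k\epsilon(v,k)$.

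First I would reduce each of (R1)--(R7) at a fixed order $k$ to a statement about a bounded number of Brownian-motion excursions, using the machinery already in place: the analysis of episodes at the start of Section \ref{sec9}, Proposition \ref{noSTOP} (for the law of the increment of $H_n$, hence for (R4) and (R7), which are gambler's-ruin-type one-sided estimates on $H_m-H_{m-1}$ relative to $H_m$), Lemma \ref{rdlem9} (for (R1), since $V(k)$ is dominated by a geometric count of stages, so $P\{V(k)>k\sqrt v\}$ is super-exponentially small in $k$ once $v$ is large), Lemma \ref{rdlem10} and its proof via the variable $\kappa=\inf\{u:B_u\le\sup_{v\le u}B_v-2\}$ (for (R2), the upper bound on episode lengths, by the scaling $2^{-2k}\kappa$ and Cram\'er's theorem), and, crucially, Lemma \ref{lem9.2} for (R3), (R5) and (R6): Lemma \ref{lem9.2} says a Brownian motion fails to hit its target $v$-robustly for order $\ell$ with probability $\le 3v^{-3/2}\ell^{-3}$. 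The number of episodes of order $k$ is $\le k\sqrt v$ on the good event of (R1), so a union bound gives a per-order failure probability for (R5)(a)/(R6)(a) of order $k\sqrt v\cdot v^{-3/2}k^{-3}=v^{-1}k^{-2}$; (R3), the lower bound on episode length, is built into the definition of $v$-robust hitting, so it comes along for free; and (R5)(b)/(R6)(b) (that the two competing suprema over $[U_n,U_{n-1}]$ and $[U_{n-1}',U_n']$ differ by more than $v^{-2}f_7(\cdot)$) is an anti-concentration estimate for the difference of two independent Brownian maxima, which is $O(v^{-2}f_7(k)/f_?(k))$ and again summable with the right $v$-dependence. Summing over $k$ and over the seven properties, and invoking Lemma \ref{rdlem35}(b),(c) to control the sums, produces $\epsilon(v,k)$ with $\sum_k\epsilon(v,k)=O(v^{-1/2})$ or so, which $\to 0$.

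The main obstacle I expect is bookkeeping rather than a single hard estimate: one must be careful that the events ``property (Rj) fails at order $k$'' are genuinely localized so that, conditionally on $\F_{\ultau^{2^{-k}}}$, they depend only on the increments of $X$ after reaching level $2^{-k}$ and before reaching level $2^{-k+1}$ (otherwise the Markov-property factorization $2^{-k\lambda_1}\times\epsilon(v,k)$ is not valid). The episode definitions are precisely tailored so that every episode of order $k$ is produced by a stage $m$ with $H_{m-1}<2^{1-k}$ and $H_m\ge 2^{-k}$, i.e.\ strictly between the times $\ultau^{2^{-k}}$ and $\ultau^{2^{-k+1}}$; so the localization works, but writing it out carefully---in particular handling the ``extremity'' episodes of order $k$, which straddle a level crossing and whose defining Brownian motion $B^{(j_n+1)}$ continues from value $2^{-k}$---and making sure the constant $C$ in $c(v)=C\sum_k\epsilon(v,k)$ does not depend on $v$ is the delicate part. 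A secondary technical point is that between level $2^{-k}$ and $2^{-k+1}$ the relevant ``starting value'' $H_{m-1}-H_{m-2}$ can be very small, which makes some of the one-sided ruin estimates for (R4)/(R7) only valid with the $v k^3$ cushion; this is exactly why those properties are phrased with $v$-dependent factors, and the estimate is routine given \eqref{d1}--\eqref{d2}. Part (b), $c(v)\to 0$, is then immediate from the explicit $v$-dependence of each $\epsilon(v,k)$ and dominated convergence on the sum.
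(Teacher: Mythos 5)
Your high-level plan --- decompose the bad event by property and by order $k$, use the Markov property at $\ultau^{2^{-k}}$ and Theorem~\ref{prop1} to factor out a $2^{(k-n)\lambda_1}\cdot 2^{-(k-1)\lambda_1}=2\cdot 2^{-n\lambda_1}$ contribution, and bound each per-order failure probability by a $v$-summable $\epsilon(v,k)$ --- is essentially the route the paper follows, and your assignments of lemmas to properties are mostly the ones the paper uses ((R1)~$\leftrightarrow$~Lemma~\ref{rdlem9}; (R4),(R7)~$\leftrightarrow$~\eqref{d1}--\eqref{d2}; (R5)(a),(R6)(a)~$\leftrightarrow$~Lemma~\ref{lem9.2}; (R5)(b),(R6)(b)~$\leftrightarrow$~an anti-concentration estimate). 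The paper organizes the decomposition a little differently (it conditions sequentially, bounding $P(\cdots\cap F_1(v)\cap\cdots\cap F_{i-1}(v)\cap F_i^c(v))$ so that (R1),\,(R2),\,(R3) may be assumed when estimating the later properties, and tracks stages via the events $U(k,i,\ell)$), but this is bookkeeping, as you say.

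However, there is one genuine gap: your claim that ``(R3), the lower bound on episode length, is built into the definition of $v$-robust hitting, so it comes along for free'' is false. The $v$-robust hitting property constrains what happens \emph{after} $\tau^1$, i.e.\ after the Brownian motion first enters the band of width $v^{-2}f_8(\ell)$ around its target; it gives an \emph{upper} bound $\tau^2-\tau^1\leq 2v^{-1}\ell^{-10}2^{-2\ell}$ and an upper bound on the overshoot, but imposes no lower bound whatever on $\tau^1$ itself, hence none on the total episode length. The failure mode it misses is exactly the one you identified as a ``secondary technical point'' for (R4)/(R7) but did not carry over to (R3): in a type-2 episode the starting value $H_{m-1}-H_{m-2}$ can lie anywhere in $]0,2^{1-k}[$, and if it is within, say, $2\,v^{-2}f_8(k)$ of the target level $2^{1-k}$, the episode can have length of order $(v^{-2}f_8(k))^2\ll v^{-2}k^{-7}2^{-2k}$ while still hitting $2^{1-k}$ $v$-robustly. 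The paper must therefore give (R3) a separate argument: for episodes of types 1, 3, 4 it is a direct Brownian time-to-cross estimate ($\le e^{-cv^2k^7}$), while for type-2 episodes it splits on whether $H_{m-1}-H_{m-2}<2^{1-k}-v^{-3/4}k^{-3}2^{-k}$, handling the far case by a crossing-time estimate and the near case by a gambler's-ruin estimate (which degrades the final bound from $v^{-1/2}$ to $v^{-1/4}$). Without this the proposal does not actually prove (R3), and since (R3) is used downstream (e.g.\ in the proof of Proposition~\ref{rdprop37}) it cannot simply be dropped.
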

  
\begin{proof} For $i = 1, \ldots, 7,$ let $F_i(v)$ be the event ``property (Ri) holds with $k_0 = 0$''. The event considered in part (a) is
$$
  \{\underline \tau^1 < \ulinfty \} \cap (F^c_1(v) \cup \cdots \cup F^c_7(v)),
$$
whose probability is bounded above by
\begin{align*}
  &P_{2^{-n}}(\{\underline \tau^1 < \ulinfty\} \cap F^c_1(v))
   + P_{2^{-n}} (\{\underline \tau^1 < \ulinfty\} \cap F_1(v) \cap F_2^c(v)) \\
	&\qquad + P_{2^{-n}} (\{\underline \tau^1 < \ulinfty\} \cap F_1(v) \cap F_2(v) \cap F_3^c(v)) \\
  &\qquad\qquad + \sum^7_{i=4} P_{2^{-n}} (\{\underline \tau^1 < \ulinfty\} \cap F_1(v) \cap F_2(v) \cap F_3(v) \cap F_i^c(v)).
\end{align*}
We treat each term separately, beginning with the first term.
  
  Observe that $V(\ell) \leq \nu_\ell + 2,$ where $\nu_\ell$ is defined in Lemma \ref{rdlem9}. Using the Markov property of the DW-algorithm, Lemma \ref{rdlem9} and Theorem \ref{prop1}, we see that for $v \geq 9$,
\begin{eqnarray}
  P_{2^{-n}}(\{\underline \tau^1 < \ulinfty\} \cap F_1^c(v)) &\leq& \sum^n_{\ell=1} P_{2^{-n}} \{\underline \tau^{2^{-\ell}} < \ulinfty,\, \nu_\ell \geq \ell \sqrt{v} - 2,\, \underline \tau^1 < \ulinfty\} \nonumber \\ 
  &\leq& \sum^n_{\ell=1} P_{2^{-n}} \{\underline \tau^{2^{-\ell}} < \ulinfty \} \left(\frac{3}{4}\right)^{\ell \sqrt{v}-4} P_{2^{1-\ell}}\{ \underline \tau^1 < \ulinfty\} \nonumber \\
  &\leq& C  \sum^\infty_{\ell=1} 2^{\lambda_1(\ell-n)} \left(\frac{3}{4}\right)^{\ell \sqrt{v}} 2^{-\lambda_1 (\ell-1)} \nonumber \\
  &=& C \ 2^{-n \lambda_1} \frac{(3/4)^{ \sqrt{v}}}{1-(3/4)^{\sqrt{v}}}.
\end{eqnarray} 
Clearly, the fraction tends to 0 as $v \to + \infty.$
 
   Before considering (R2) and (R3), we consider $i=4$. For $i = 4$, $P_{2^{-n }}(\{\underline \tau^1 < \ulinfty\} \cap F_1(v) \cap F_4^c(v))$ is bounded above by
\begin{align}\nonumber
 & P_{2^{-n}}\Big( \bigcup^n_{k=1} \Big[ \{ \underline \tau^{2^{-k}} < \ulinfty,\, V(k) \leq k \sqrt{v}\} \\ \nonumber
 &\qquad\qquad\qquad \cap \bigcup_{m \geq 1} \{ H_m \in [2^{-k}, 2^{1-k}[,\, H_{m-1} \leq H_m < H_{m-1} (1- \frac{1}{v k^3})^{-1}\}\Big]\\
 &\qquad\qquad  \cap \{\underline \tau^1 < \ulinfty\}\Big).
 \label{rd9.10a}
\end{align}

   Each stage $m$ such that $H_m \in [2^{-k}, 2^{1-k}[$ produces an episode of order $k$, so there are no more than $k \sqrt{v}$ such stages on $\{V(k) \leq k \sqrt{v}\}$. Let
$$
   U(k,i,\ell) = \{H_{i} < 2^{-k},\ 2^{-k} \leq H_{i+1} < \cdots < H_{i+\ell} < 2^{1-k}\}
$$
\index{$U(k,i,\ell)$}be the event ``the last stage before reaching level $2^{-k}$ is stage $i$, level $2^{-k}$ is reached during stage $i+1$ and the maximum remains in $[2^{-k}, 2^{1-k}[$ for at least the next $\ell$ stages." Then the event in \eqref{rd9.10a} can be written
\begin{align*}
    & \bigcup^n_{k=1} \Big [\{ \underline \tau^{2^{-k}} < \ulinfty,\ V(k) \leq k \sqrt{v}\}  \cap \Big( \bigcup_{i = 1}^{\infty} \bigcup_{\ell=1}^{k \sqrt{v}} \Big( U(k,i,\ell) \\
   &\qquad \cap\, \left\{
H_{i+\ell-1} \leq H_{i+\ell} < H_{i+\ell-1} (1- \frac{1}{v k^3})^{-1},\ \underline \tau^1 < \ulinfty\right\}\Big)\Big) \Big].
\end{align*}
Given that the DW-algorithm does not stop at stage $i+\ell$, the conditional probability that $H_{i+\ell} < H_{i+\ell-1} (1- \frac{1}{v k^3})^{-1}$ is bounded by the probability that a Brownian motion started at $H_{i+\ell-1}$ will hit $0$ before $H_{i+\ell-1}(1- \frac{1}{v k^3})^{-1}$. Using gambler's ruin probabilities for standard Brownian motion and the Markov property of the DW-algorithm, we see that for fixed $v \geq 2$ and $k,i,\ell$,
\begin{align}\nonumber
  & P\Big( \{ \underline \tau^{2^{-k}} < \ulinfty,\ V(k) \leq k \sqrt{v}\} \cap U(k,i,\ell)\\ \nonumber
  &\qquad\qquad  \cap \left\{H_{i+\ell-1} \leq H_{i+\ell} < H_{i+\ell-1} (1- \frac{1}{v k^3})^{-1}\right\}\cap \{ \underline \tau^1 < \ulinfty\}\Big) \\
    &\qquad \leq  P( \{ \underline \tau^{2^{-k}} < \ulinfty,\ V(k) \leq k \sqrt{v}\} \cap U(k,i,\ell -1))\, k^{-3} v^{-1}\, 2^{-\lambda_1(k-1)}.
    \label{rd9.10b}
\end{align}
For fixed $k$ and $\ell$, the events $U(k,i,\ell-1)$, $i \geq 1$, are disjoint, so the probability in \eqref{rd9.10a} is bounded above by
\begin{align} \nonumber
   & \sum_{k=1}^n \sum_{\ell=1}^{k \sqrt{v}} P\{\underline \tau^{2^{-k}} < \ulinfty,\ V(k) \leq k \sqrt{v}\} \, k^{-3} v^{-1}\, 2^{-\lambda_1(k-1)} \\ \nonumber
   &\qquad \leq \sum_{k=1}^\infty 2^{-\lambda_1 (n-k)}\, k^{-2} v^{-\half}\, 2^{-\lambda_1(k-1)}\\
   &\qquad \leq 2^{(1-n)\lambda_1} \left(v^{-\half}\sum_{k=1}^\infty k^{-2} \right).
\label{e9.14}
\end{align}
Clearly, the factor in parentheses tends to $0$ as $v \to \infty$.

   For (R2), one easily checks that the probability that a given episode of order $k$ fails this property is $\leq e^{-cvk}$.
Therefore, using a decomposition similar to the one used in \eqref{rd9.10a}, but with the event $\{H_{m-1} \leq H_m < H_{m-1} (1- \frac{1}{v k^3})^{-1}\}$ replaced by $\ti G_{m,k} =$ ``Stage $m$ produces an episode of order $k$ that fails (R2) or (R3)," one then proceeds as in the lines leading to \eqref{rd9.10b}, and one bounds the probability of $\ti G_{i+\ell,k} \cap \{ \ultau^1 < \ulinfty\}$ by $3 e^{-cvk}\, 2^{-\lambda_1 (k-1)}$, so
\begin{align*}
   & P(\{ \ultau^{2^{-k}} < \ulinfty,\ V(k) \leq k\sqrt{v}\} \cap U(k,i,\ell) \cap \ti G_{i+\ell,k} \cap \{\ultau^1 < \ulinfty\} \\
	 & \qquad \leq P(\{ \ultau^{2^{-k}} < \ulinfty,\ V(k) \leq k\sqrt{v}\} \cap U(k,i,\ell)) \, 3 e^{-cvk}\, 2^{-\lambda_1 (k-1)}.
\end{align*}
One then completes the proof as in the case of (R4).

   Before discussing (R3), we handle (R5) to (R7). For (R5)(a), we again use a decomposition similar to the one in \eqref{rd9.10a}, but with the event $\{H_{m-1} \leq H_m < H_{m-1} (1- \frac{1}{v k^3})^{-1}\}$ replaced by $G_{m,k}' = $ ``Stage $m$ produces an episode of order $k$ which fails the requirements of property (R5a)." One then proceeds as in the lines leading to \eqref{rd9.10b}. The event $G_{m,k}'$ can occur because $v^{-1} f_8(k)$ is hit when it should not have been, or because $2^{1-k}$ is not hit $v$-robustly for order $k$. For a given episode of order $k$, the first situation occurs with probability $\leq v^{-1} k^{-8}$, and by Lemma \ref{lem9.2}, the second situation occurs with probability $\leq 3 v^{-3/2} k^{-3}$. This leads to the bound $4 v^{-1} k^{-3} 2^{-\lambda_1(k-1)}$ for the probability of $G_{i+\ell,k}' \cap \{ \ultau^1 < \ulinfty\}$. One then completes the proof as in the case of (R4), obtaining the same bound as in \eqref{e9.14} (multiplied by $4$).
	
	For (R5b), we again use a decomposition similar to the one in \eqref{rd9.10a}, but with the event $\{H_{m-1} \leq H_m < H_{m-1} (1- \frac{1}{v k^3})^{-1}\}$ replaced by $\ti G_{m,k}' = $ ``the highest order of an episode produced by Stage $m$ is $k$, and the requirement of (R5b) fails," that is, the difference of two independent Brownian motions over intervals of length $\geq v^{-1} k^{-7} 2^{-2k}$ is no more that $v^{-2} k^{-7} 2^{-k}$. This probability is bounded above by $(v^{-1} k^{-7} 2^{-2k})^{-1/2} v^{-2} k^{-7} 2^{-k} = v^{-3/2} k^{-7/2}$. Proceeding as in the lines that lead to \eqref{rd9.10b}, we find that the probability of $\ti G_{i+\ell,k}' \cap \{ \ultau^1 < \ulinfty\}$ is bounded above by $v^{-3/2} k^{-7/2} 2^{-\lambda_1(k-1)}$, which leads to the bound \eqref{e9.14} replaced by
$$
    2^{(1-n)\lambda_1} \left(v^{-1} \sum_{k=1}^\infty k^{-5/2} \right).
$$
Clearly, the factor in parentheses tends to $0$ as $v \to \infty$.
   
   Property (R6) is handled in the same way as (R5). As for (R7), we use a decomposition similar to \eqref{rd9.10a}, but with the event $\{H_{m-1} \leq H_m < H_{m-1} (1- \frac{1}{v k^3})^{-1}\}$ replaced by $\{H_m > H_{m-1} v k^3\}$. The left-hand side of \eqref{rd9.10b} becomes
\begin{align*}
	 & P\Big( \{ \underline \tau^{2^{-k}} < \ulinfty,\ V(k) \leq k \sqrt{v}\} \cap U(k,i,\ell)
    \cap \left\{H_{i+\ell} \geq H_{i+\ell-1} vk^3\right\}\cap \{ \underline \tau^1 < \ulinfty\}\Big) \\
    &\qquad \leq  P\big( \{ \underline \tau^{2^{-k}} < \ulinfty,\ V(k) \leq k \sqrt{v}\} \cap U(k,i,\ell )) \\
		&\qquad\qquad\qquad\qquad \cap \left\{H_{i+\ell} \geq H_{i+\ell-1} vk^3\right\}\big)\, 2^{-\lambda_1(k-1)} \\
		&\qquad \leq P\big( \{ \underline \tau^{2^{-k}} < \ulinfty,\ V(k) \leq k \sqrt{v}\} \cap U(k,i,\ell -1)) \\
		&\qquad\qquad\qquad\qquad \cap \left\{H_{i+\ell} \geq H_{i+\ell-1} vk^3\right\}\big)\, 2^{-\lambda_1(k-1)}.
\end{align*}
Conditional on the behavior of the DW-algorithm up to stage $m-1$, the probability that $H_{i+\ell} \geq H_{i+\ell-1} vk^3$ is bounded above by the probability that a Brownian motion started at $H_{i+\ell -1}$ hits $H_{i+\ell -1}v k^3$ before $0$, and is therefore $\leq v^{-1} k^{-3}$. This is the same as the quantity $k^{-3} v^{-1}$ in \eqref{rd9.10b}. One then completes the proof for (R7) as in the case of (R4).

   Finally, we turn to (R3). Looking back to the four types of episodes of order $k$, we see that during such an episode of type 1, 3 or 4, $X$ moves between levels that are at least $2^{-k}$ units apart, and the probability that a Brownian motion would do this during less than $v^{-2} k^{-7} 2^{-2k}$ units of time is $\leq e^{-cv^2k^7}$. Therefore, we handle these three types of episodes in the same way as we did (R2), replacing the definition of $\ti G_{m,k}$ by ``Stage $m$ produces an episode of order $k$ of type 1, 3 or 4 that fails (R3)."
	
	In order to handle episodes of type 2, redefine  $\ti G_{m,k}$ to be the event ``Stage $m$ produces an episode of order $k$ of type 2 that fails (R3)." If $H_{m-1} - H_{m-2} < 2^{1-k} - v^{-3/4} k^{-3} 2^{-k}$, then $X$ moves between two levels that are $v^{-3/4} k^{-3} 2^{-k}$ units apart, and the probability that a Brownian motion would do this during less than $v^{-2} k^{-7} 2^{-2k}$ units of time is $\leq e^{-cv^{1/2} k}$, which replaces the factor $e^{-cv^2 k^7}$in the discussion of types 1, 3 and 4. We now consider the probability that $2^{1-k} - v^{-3/4} k^{-3} 2^{-k} \leq H_{m-1} - H_{m-2}$. In this case, $2^{1-k} - v^{-3/4} k^{-3} 2^{-k} \leq H_{m-1} < 2^{1-k}$. We replace the event $\{H_{m-1} \leq H_m < H_{m-1} (1- \frac{1}{v k^3})^{-1}\}$ in \eqref{rd9.10a} with $\{2^{1-k} - v^{-3/4} k^{-3} 2^{-k} \leq H_{m-1} < 2^{1-k}\}$, which replaces the factor $k^{-3}v^{-1}$ in \eqref{rd9.10b} with $k^{-3} v^{-3/4}$. We then obtain \eqref{e9.14} with $v^{-1/2}$ replaced by $v^{-1/4}$.
	
	This completes the proof of Proposition \ref{rdprop38}.
\end{proof}

\end{section} 
\eject

\begin{section}{Lower bound for the Brownian sheet: the one-point estimate}\label{sec10}

   In this section, we will establish the lower bound needed to implement the second-moment argument: this will be done in Proposition \ref{rdprop41}, which is the main result of this section.
	
   Fix $r \in [2,3]^2$. For $u \geq -2,$ define
$$
   Z_1^r(u) = W(r_1+u, r_2) - W(r), \qquad Z_2^r(u) = W(r_1, r_2 + u) - W(r).
$$
\index{$Z_1^r$} \index{$Z_2^r$}Fix $x_0 > 0$ and $(s_1, s_2) \in [-2, + \infty[^2$. Set 
\begin{equation}\label{rd9.11a}
   X^r(s_1, s_2) = Z_1^r(s_1) + Z_2^r(s_2).
\end{equation}
\index{$X^r$}This is an additive Brownian motion which is not standard, since $(Z_i^r(u),\ u \in [-2,0])$, $i=1,2$, are correlated. We are going to construct a standard additive Brownian motion
$$
   \tilde X^r(s_1, s_2) = \tilde Z^r_1(s_1) + \tilde Z^r_2(s_2), \qquad (s_1, s_2) \in [-2, + \infty[^2,
$$ 
\index{$\tilde X^r$}which is close to $X^r$ with high probability. In particular, by Proposition \ref{rdprop37}, the DW-algorithms for $\tilde X^r$ and $X^r$ started at $(0,0)$ with value $x_0$ will typically behave similarly.

   Since $r$ is fixed, we omit the superscript $r$, and we shall use a superscript with a different meaning in our construction. Let $W^\prime$ be a Brownian sheet that is independent of $W$. We recall \cite{walsh} that $W^\prime$ can be defined from its associated white noise $\dot W^\prime.$

   Set $W^1 = W$, $X^1 = X$, $Z^1_i = Z_i$, $i = 1,2,$ and run Stage 1 of the DW-algorithm started at $(0,0)$ with value $x_0$ for $X^1.$ This produces in particular the two random variables $U_1$ and $U^\prime_1$ with $U_1 < 0 < U^\prime_1$. 

  Define a new Brownian sheet $W^2$ by letting its associated white noise be
$$
  \dot W^2 = \left\{\begin{array}{ll}
               \dot W^\prime &\mbox{ on } [r_1+ U_1, r_1] \times [0, r_2],\\
               \dot W^1 &\mbox{ elsewhere}.
             \end{array}\right.
$$
We define two Brownian motions $Z^2_1$ and $Z_2^2$ by
$$
Z_1^2(u) = Z^1_1(u), \qquad Z_2^2(u) = W^2(r_1, r_2+u) - W^2(r), \qquad u \geq -2.
$$
Note that $Z_2^2(u) = Z^1_2(u)$ if $u \geq 0$, and that $(Z_1^2(u),\ u \geq U_1)$ and $(Z_2^2(u), u \geq -2)$ are independent. We define an ABM $X^2$ by setting
$$
   X^2(s_1, s_2) = Z_1^2(s_1) + Z_2^2(s_2), \qquad (s_1, s_2) \in [-2, + \infty[^2.
$$
We note that Stage 1 of the DW-algorithm started at $(0,0)$ with value $x_0$ is the same for $X^2$ and $X^1$ (see also Figure \ref{rdfig1}).
\begin{figure}
\begin{center}
\begin{picture}(200,200)
\put(20,20){\line(0,1){190}}
\put(20,20){\line(1,0){190}}

\put(200,200){\circle*{3}}

\put(20,200){\line(1,0){180}}  \put(6,198){$r_2$}
\put(20,170){\line(1,0){150}}  \put(-20,168){$r_2+V_1$}
\put(20,140){\line(1,0){120}}  \put(-20,138){$r_2+V_2$}
\put(20,110){\line(1,0){90}}   \put(-20,108){$r_2+V_3$}

\put(200,20){\line(0,1){180}}  \put(200,8){$r_1$}
\put(170,20){\line(0,1){180}}  \put(160,8){$r_1+U_1$}
\put(140,20){\line(0,1){150}}  \put(120,8){$r_1+U_2$}
\put(110,20){\line(0,1){120}}  \put(80,8){$r_1+U_3$}

\multiput(170,20)(0,6){26}{\line(1,1){30}} \put(170,176){\line(1,1){24}} \put(170,182){\line(1,1){18}} \put(170,188){\line(1,1){12}}
\put(176,20){\line(1,1){24}} \put(182,20){\line(1,1){18}} \put(188,20){\line(1,1){12}} 
 
\multiput(20,200)(6,0){21}{\line(1,-1){30}} \put(146,200){\line(1,-1){24}} \put(152,200){\line(1,-1){18}} \put(158,200){\line(1,-1){12}} 
\put(20,194){\line(1,-1){24}} \put(20,188){\line(1,-1){18}} \put(20,182){\line(1,-1){12}} 

\multiput(140,20)(0,6){25}{\line(1,0){30}}

\multiput(20,140)(6,0){20}{\line(0,1){30}}

\multiput(110,20)(0,6){16}{\line(1,1){30}} \put(110,116){\line(1,1){24}} \put(110,122){\line(1,1){18}} \put(110,128){\line(1,1){12}}
\put(116,20){\line(1,1){24}} \put(122,20){\line(1,1){18}} \put(128,20){\line(1,1){12}}

\multiput(20,140)(6,0){11}{\line(1,-1){30}} \put(86,140){\line(1,-1){24}} \put(92,140){\line(1,-1){18}} \put(98,140){\line(1,-1){12}} 
\put(20,134){\line(1,-1){24}} \put(20,128){\line(1,-1){18}} \put(20,122){\line(1,-1){12}} 
\end{picture}
\end{center}

\caption{Constructing a standard ABM from a Brownian sheet. \label{rdfig1}} 
\end{figure}
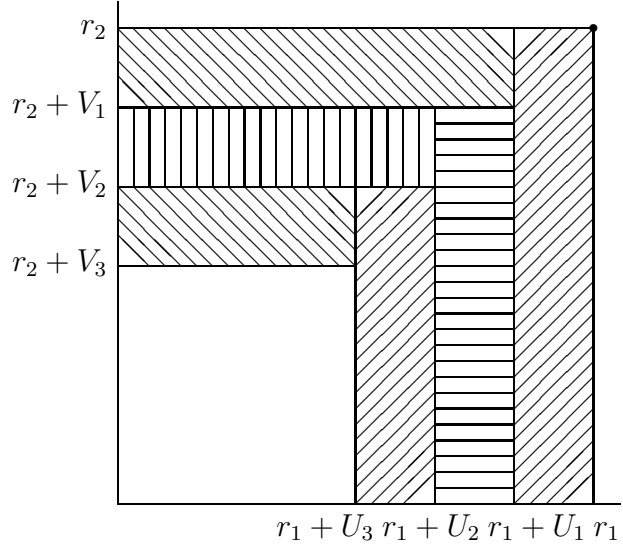

   We now run Stage 2 of the DW-algorithm for $X^2$. This produces in particular the two random variables $V_1 < 0 < V^\prime_1.$ We define a new Brownian sheet $W^3$ by letting its associated white noise be
$$
   \dot W^3 = \left\{\begin{array}{ll}
                      \dot W^\prime &\mbox{ on  } [0,r_1+U_1] \times [r_2+V_1, r_2], \\
                      \dot W^2 &\mbox{ elsewhere}.
                 \end{array}\right.
$$
We define two Brownian motions $Z_1^3$ and $Z_2^3$ by
$$
        Z_1^3(u) = 
        \left\{ \begin{array}{ll}
                   Z_1^2(u)& \mbox{ if  } u \geq U_1,\\
                                    \\
                   Z_1^2(U_1) + W^3 (r_1+u, r_2) - W^3(r_1+U_1,r_2) & \mbox{ if } -2 \leq u < U_1,
                \end{array}\right.
$$
and $Z_2^3(u) = Z_2^2(u)$, for $u \geq -2$.
We note that $(Z_1^3(u),\ u \geq -2)$ and $(Z_2^3(u),\ u \geq V_1)$ are independent. We define an ABM $X^3$ by setting
$$
   X^3(s_1, s_2) = Z_1^3(s_1) + Z^3_2(s_2), \qquad (s_1,s_2) \in [-2, + \infty[^2,
$$
and we note that Stages 1 and 2 for the DW-algorithm started at $(0,0)$ with value $x_0$ are the same for $X^2$ and $X^3$.

   We now proceed by induction, assuming that for some $n > 1,$ we have continued with the previous construction up to stage $2n-2$. We have constructed a Brownian sheet $W^{2n-1}$ and an ABM $X^{2n-1}(s_1, s_2) = Z^{2n-1}_1(s_1) + Z^{2n-1}_2(s_2)$ such that $(Z_1^{2n-1}(u),\ u \geq -2)$ and $(Z^{2n-1}_2(u),\ u \geq V_{n-1})$ are independent.

  We then run Stage $2n-1$ of the DW-algorithm for $X^{2n-1}$. This produces in particular the two random variables $U_n$ and $U^\prime_n$ such that $U_n < U_{n-1} < 0 < U^\prime_{n-1} < U^\prime_n$. We define a new Brownian sheet $W^{2n}$ by letting its associated white noise be
$$
   \dot W^{2n} = 
      \left\{ \begin{array}{ll}
                   \dot W^\prime &\mbox{ on } [r_1+U_n, r_1+U_{n-1}] \times [0, r_2+V_{n-1}],\\
                   \dot W^{2n-1} &\mbox{ elsewhere}.
              \end{array}\right.
$$
We define two Brownian motions $Z^{2n}_1$ and $Z_2^{2n}$ by $Z^{2n}_1(u) = Z_1^{2n-1}(u),\ u \geq -2$, 
$$
   Z_2^{2n}(u) = 
       \left\{ \begin{array}{ll}
                Z^{2n-1}_2(u) & \mbox{ if } u \geq V_{n-1},\\
                         \\
                  Z_2^{2n-1}(V_{n+1})+ W^{2n}(r_1, r_1 + u) - W^{2n}(r_1,r_2 + V_{n+1})& \mbox{ if } -2 \leq u < V_{n-1}.
                 \end{array}\right.
$$
Note that $(Z_1^{2n}(u),\ u \geq U_{n-1})$ and $(Z_2^{2n}(u),\ u \geq -2)$ are independent. We define an ABM $X^{2n}$ by setting
$$
   X^{2n}(s_1, s_2) = Z_1^{2n}(s_1) + Z_2^{2n}(s_2), \qquad (s_1, s_2) \in [-2, + \infty[^2.
$$
We note that Stages 1 to $2n-1$ of the DW-algorithm started at $(0,0)$ with value $x_0$ are the same for $X^{2n}$ and $X^{2n-1}.$

   We now run Stage $2n$ of the DW--algorithm for $X^{2n}$. This produces in particular the two random variables $V_n$ and $V^\prime_n$ such that $V_n < V_{n-1} < 0 < V^\prime_{n-1} < V^\prime_n.$ We define a new Brownian sheet $W^{2n+1}$ by letting its associated white noise be
$$
   \dot W^{2n+1} = \left\{\begin{array}{ll}
     \dot W^\prime &\mbox{ on } [0, r_1+U_n] \times [r_2+V_n, r_2+V_{n-1}],\\
     \dot W^{2n} &\mbox{ elsewhere}.
     \end{array}\right.
$$
We define two Brownian motions $Z_1^{2n+1}$ and $Z^{2n+1}_2$ by
$$
   Z_1^{2n+1}(u) = \left\{ \begin{array}{ll}
              Z_1^{2n}(u) & \mbox{ if } u \geq U_n,\\
                      \\
              Z_1^{2n}(U_n) + W^{2n+1}(r_1+u,r_2)-W^{2n+1}(r_1+U_n, r_2) & \mbox{ if } -2 \leq u < U_n,
          \end{array}\right.
$$
and $Z_2^{2n+1}(u) = Z_2^{2n}(u)$, $u \geq -2.$ Note that $(Z_1^{2n+1}(u),\ u \geq -2)$ and $(Z_2^{2n+1}(u),\ u \geq V_n)$ are independent. We define an ABM $X^{2n+1}$ by setting
$$
   X^{2n+1}(s_1, s_2) = Z_1^{2n+1}(s_1) + Z_2^{2n+1}(s_2), \qquad (s_1, s_2) \in [-2, + \infty[^2.
$$
This completes the inductive construction, which continues until the DW-algorithm terminates at Stage $2N$, say. At this point, we set
$$
   \tilde Z_1^r(u) = \left\{\begin{array}{ll}
               Z_1^{2N}(u) & \mbox{ if } u \geq U_N,\\
              \\
          Z_1^{2N}(U_N) + W^\prime(r_1+u, r_2) - W^\prime(r_1+U_N, r_2) & \mbox{ if } -2 \leq u \leq U_N,
              \end{array}\right.
$$
$$
   \tilde Z_2^r(u) = \left\{\begin{array}{ll}
              Z^{2N}_2(u) & \mbox{ if } u \geq V_N,\\
                            \\
           Z_2^{2N}(V_N) + W^{\prime \prime}(r_1, r_2+u) - W^{\prime \prime}(r_1, r_2 + V_N) & \mbox{ if } -2  \leq u \leq V_n,
\end{array}\right.
$$
where $W^{\prime \prime}$ is a Brownian sheet independent of all previously considered processes. We define an ABM $\tilde X^r$ by
\begin{equation}\label{08_08_sabm}
   \tilde X^r(s_1, s_2) = \tilde Z^r_1(s_1) + \tilde Z_2^r(s_2), \qquad (s_1, s_2) \in [-2, + \infty[^2.
\end{equation}
This ABM is standard (up to a deterministic time change) and Stages 1 to $2N$ of the DW-algorithms for $X^{2N}$ and $\tilde X^r$ are the same.

\begin{lemma} Let $W$ be a Brownian sheet. Fix $t_2 > 0.$ For $ 0 \leq v \leq t_2,$ let
$$
\F_v = \sigma(W(1, t_2-u) - W(1, t_2), \ 0 \leq u \leq v) \vee \G,
$$
where $\G$ is a $\sigma$-field that is independent of $W$. Let $V$ be a stopping time relative to $(\F_v)$ and let $0 \leq S_1 < S_2 \leq 1$ be random times that are $\cH \vee \F_V$-measurable, where $\cH$ is independent of $\sigma (\dot W\vert_{[0,1]\times [t_2 - V, t_2]})$.
Then for $x > 0$, $a \in [0, 1]$ and $v_0 > 0$,
\begin{eqnarray}
 &&  P\{\sup_{S_1 \leq u \leq S_2 \atop{0 \leq v \leq V}} \vert W([u, S_2] \times [t_2-v, t_2]) \vert \geq x \sqrt{v_0 a} + a \sup_{0 \leq v \leq V} \vert W(1, t_2-v)- W(1, t_2)\vert, \nonumber\\
 &&{\hskip 2in} S_2-S_1 \leq a,\ V \leq v_0 \mid \F_V \}
 \label{rd10.1}
\end{eqnarray}
is bounded above by $8 e^{-x^2/2}$.
\label{rdlem39}
\end{lemma}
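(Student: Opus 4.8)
\textbf{Proof plan for Lemma \ref{rdlem39}.}

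The plan is to reduce the rectangular increment of $W$ appearing in \eqref{rd10.1} to a combination of one-parameter Brownian motions and then control each piece by the reflection principle. First I would write, for $S_1 \leq u \leq S_2$ and $0 \leq v \leq V$, the algebraic identity
$$
   W([u,S_2]\times[t_2-v,t_2]) = \bigl(W(S_2,t_2) - W(S_2,t_2-v)\bigr) - \bigl(W(u,t_2) - W(u,t_2-v)\bigr),
$$
and further decompose $W(u,t_2)-W(u,t_2-v)$ by adding and subtracting $W(1,t_2)-W(1,t_2-v)$, so that
$$
   W(u,t_2)-W(u,t_2-v) = \bigl(W(1,t_2)-W(1,t_2-v)\bigr) - \Delta_{[u,1]\times[t_2-v,t_2]} W .
$$
The first term on the right is, as a function of $v$, exactly $-(W(1,t_2-v)-W(1,t_2))$, whose supremum over $0\le v\le V$ is the quantity multiplying $a$ on the left-hand side of \eqref{rd10.1}; since $a\le 1$ and two such contributions appear (one from $u$, one from $S_2$), these terms are absorbed by the $a\,\sup_{0\le v\le V}|W(1,t_2-v)-W(1,t_2)|$ allowance up to a factor, and so it suffices to bound the probability that $\sup$ of the remaining rectangular increments $\Delta_{[u,1]\times[t_2-v,t_2]}W$ and $\Delta_{[S_2,1]\times[t_2-v,t_2]}W$ exceeds $\tfrac12 x\sqrt{v_0 a}$ (each), on the event $\{S_2-S_1\le a,\ V\le v_0\}$, conditionally on $\F_V$.

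Next I would exploit the measurability hypotheses. On $\{V\le v_0\}$ the rectangle of interest, $[0,1]\times[t_2-V,t_2]$, lies inside $[0,1]\times[t_2-v_0,t_2]$, and by hypothesis $\cH$ is independent of $\sigma(\dot W|_{[0,1]\times[t_2-V,t_2]})$, while $S_1,S_2$ are $\cH\vee\F_V$-measurable. Conditioning on $\F_V$ and then on $\cH$, the times $S_1 < S_2 \le 1$ become deterministic, and the process $\bigl(\Delta_{[1-w,1]\times[t_2-V,t_2]}W,\ 0\le w\le 1\bigr)$ is, given $\F_V$ and $V=v$, a Brownian motion run for time $v \le v_0$ (its increments in the $u$-direction are independent of $\F_V$ because the white noise on the strip $[0,1]\times[t_2-v,t_2]$ is independent of $\F_v$ by the definition of $\F_v$, up to the part already measured, which contributes only the $W(1,\cdot)$ boundary term already extracted). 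Hence $\sup_{S_1\le u\le S_2}\Delta_{[u,1]\times[t_2-v,t_2]}W$ is dominated by the maximum over an interval of length $S_2-S_1\le a$ of a Brownian motion (shifted by its value at $u=S_2$, which is itself absorbed or handled symmetrically), i.e.\ by $\sup_{0\le w\le a}|B(w)|$ for a standard Brownian motion $B$, possibly after a further time change by $v_0$. By the reflection principle, $P\{\sup_{0\le w\le a}|B(w)|\ge \tfrac12 x\sqrt{v_0 a}\}\le 4 P\{B(a v_0)\ge \tfrac12 x\sqrt{v_0 a}\} = 4 P\{N(0,1)\ge \tfrac12 x\} \le 2 e^{-x^2/8}$; collecting the two rectangular-increment pieces gives a bound of the form $C e^{-cx^2}$, and a careful bookkeeping of constants (splitting $x\sqrt{v_0 a}$ into the appropriate fractions for the two corners and the boundary term) yields the stated $8 e^{-x^2/2}$.

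The main obstacle is the bookkeeping of the two-parameter structure: one must be careful that, after conditioning on $\F_V$, the relevant slice of white noise used to form the rectangular increments $\Delta_{[u,1]\times[t_2-v,t_2]}W$ is genuinely independent of $\F_V$ and of $\cH$ (so that $S_1,S_2$ can be frozen and the process treated as a one-parameter Brownian motion), and that the dependence on the upper $v$-variable is handled correctly — here the point is that for fixed $u$, $v\mapsto \Delta_{[u,1]\times[t_2-v,t_2]}W$ is a martingale in $v$ (Brownian in a suitable clock) independent of the already-observed $W(1,\cdot)$ curve, so one can first take the sup over $v\le V\le v_0$ using Doob/reflection and then over $u\in[S_1,S_2]$, or do both at once via a two-parameter maximal inequality. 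The extraction of the boundary term $W(1,t_2-v)-W(1,t_2)$ and its absorption into the $a\,\sup(\cdots)$ term on the left is what makes the constant come out clean; getting the factor of $a$ (rather than $\sqrt a$) there correct is the one genuinely delicate point, and it works precisely because that term is a pure function of $v$ and does not pick up any $\sqrt{S_2-S_1}$ fluctuation.
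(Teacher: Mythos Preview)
Your decomposition is circular. When you write
\[
W([u,S_2]\times[t_2-v,t_2]) = \bigl(W(S_2,t_2)-W(S_2,t_2-v)\bigr) - \bigl(W(u,t_2)-W(u,t_2-v)\bigr)
\]
and then add and subtract $W(1,t_2)-W(1,t_2-v)$ in each term, the two copies of $W(1,t_2)-W(1,t_2-v)$ cancel exactly, and what remains is $\Delta_{[u,1]\times[t_2-v,t_2]}W - \Delta_{[S_2,1]\times[t_2-v,t_2]}W = \Delta_{[u,S_2]\times[t_2-v,t_2]}W$, i.e.\ precisely what you started with. No boundary term has actually been extracted, so there is nothing to absorb into the $a\sup_{0\le v\le V}|W(1,t_2-v)-W(1,t_2)|$ allowance. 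Moreover, each of the ``remaining'' increments $\Delta_{[u,1]\times[t_2-v,t_2]}W$ lives on a rectangle of width up to $1$, not $a$, so you cannot get the factor $\sqrt a$ in the Gaussian bound; and these increments are \emph{not} independent of $\F_V$, since their white-noise region overlaps with that of $W(1,t_2-\cdot)-W(1,t_2)$.

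The paper's fix is exactly the refinement you are groping for: set $Y(s_1,s_2) = W(s_1,s_2) - s_1\,W(1,s_2)$. A covariance computation shows $Y$ is independent of $\sigma(W(1,\cdot))$, hence of $\F_{t_2}\supset\F_V$. The rectangular increment then splits as
\[
W([u,S_2]\times[t_2-v,t_2]) = \Delta_{[u,S_2]\times[t_2-v,t_2]}Y \; - \; (S_2-u)\bigl(W(1,t_2-v)-W(1,t_2)\bigr),
\]
and the coefficient $(S_2-u)\le a$ is what produces the factor $a$ (rather than $\sqrt a$) you correctly flagged as the delicate point. After this subtraction one is reduced to $P\{\sup_{0\le u\le a,\,0\le v\le v_0}|\Delta_{[u,a]\times[t_2-v,t_2]}Y|\ge x\sqrt{v_0 a}\}$, which is handled directly by the Orey--Pruitt two-parameter maximal inequality \cite{OP}, giving $8e^{-x^2/2}$ in one stroke (no need to split into two one-parameter suprema).
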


\begin{figure}
\begin{center}
\begin{picture}(200,200)
\put(20,20){\line(0,1){190}}
\put(20,20){\line(1,0){190}}

\put(5,170){$t_2$}

\put(-15,150){$t_2 - v$}
\put(19,150){\line(1,0){2}}

\put(-15,130){$t_2 - V$}
\put(20,130){\line(1,0){150}}

\put(20,170){\line(1,0){150}}
\put(170,20){\line(0,1){150}}

\put(168,9){$1$}

\put(117,7){$S_2$}
\put(120,19){\line(0,1){2}}

\put(100,9){$u$}
\put(100,19){\line(0,1){2}}

\put(77,7){$S_1$}
\put(80,19){\line(0,1){2}}

\put(100,150){\line(1,0){20}}
\put(100,150){\line(0,1){20}}
\put(120,150){\line(0,1){20}}

\end{picture}
\end{center}
\caption{Illustration of Lemma \ref{rdlem39}. \label{figrdlem39}} 
\end{figure}
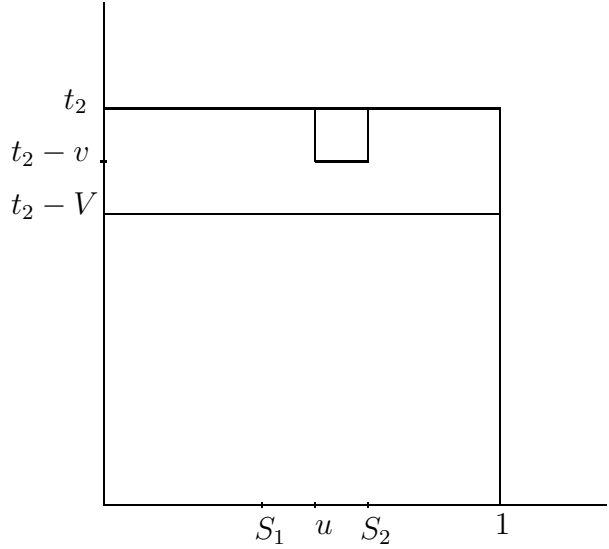

\begin{remark} As becomes clear from Figure \ref{figrdlem39}, this lemma applies with the roles of the coordinates exchanged.  Also it can be applied to the case where the first coordinate of $W$ is fixed at a value other than $1$, after trivial scaling. 
\label{remrdlem39} 
\end{remark}

\begin{proof} Observe by computing covariances that the process $Y(s_1, s_2) = W(s_1,s_2)-s_1 W(1, s_2)$ is independent of $\F_{t_2},$ and that
\begin{eqnarray*}
W([u, S_2] \times [t_2-v, t_2]) &=& Y (u, t_2-v) - Y(u,t_2) - Y (S_2, t_2-v) + Y(S_2, t_2)\\
&&\qquad\qquad - \, (S_2-u)(W(1, t_2-v)-W(1,t_2)).
\end{eqnarray*}
Therefore, the conditional probability in (\ref{rd10.1}) is no greater than
$$
  P\{ \sup_{0 \leq u \leq a \atop{0 \leq v \leq v_0}} \vert Y([u, a] \times [t_2-v, t_2]) \vert \geq x \sqrt{v_0a} \},
$$
so the lemma follows from \cite[Lemma 1.2]{OP}. 
\end{proof}
\vskip 12pt

\noindent{\em Some random partitions}
\vskip 12pt

The DW-algorithm for the standard ABM $\tilde X^r$ defined in \eqref{08_08_sabm} constructs a random partition
\begin{equation}\label{rd10.2}
  [\tilde U_N, \tilde U_{N-1}], \ldots, [\tilde U_1, \tilde U_0],\quad [\tilde U_0^\prime, \tilde U_1^\prime], \ldots, [\tilde U_{N-1}, \tilde U_N^\prime]
\end{equation}
of $[\tilde U_N, \tilde U^\prime_N]$. By using episodes, we have seen that we obtain a refinement of this partition.

   Enumerate the endpoints of the horizontal episodes for $\tilde X^r$ in $[-2, 0]$ in decreasing order: $\cdots < \tilde U^{(k)} < \tilde U^{(k-1)} < \cdots < \tilde U^{(1)} < \tilde U^{(0)} = 0,$ and the endpoints of the horizontal episodes for $\tilde X^r$ in $[0, + \infty[$ in increasing order:
$$
0 = \tilde U^{\prime(0)} < \tilde U^{\prime(1)} < \cdots < \tilde U^{\prime(k-1)} < \tilde U^{\prime(k)} < \cdots,
$$
so that every horizontal episode for $\tilde X^r$ is of the form $[\tilde U^{(k)}, \tilde U^{(k-1)}]$ or $[\tilde U^{\prime(k-1)}, \tilde U^{\prime(k)}]$. These episodes form a partition of $[\tilde U_N, \tilde U^\prime_N]$ that is a refinement of the partition in (\ref{rd10.2}).

   Similarly, we write the vertical episodes $[\tilde V^{(k)}, \tilde V^{(k-1)}]$ and $[\tilde V^{\prime(k-1)}, \tilde V^{\prime(k)}]$, with $\tilde V^{(k)} < \tilde V^{(k-1)} < 0 < \tilde V^{\prime(k-1)} < \tilde V^{\prime(k)}$. These intervals form a partition of $[\tilde V_N, \tilde V^\prime_N]$ that is a refinement of
$$
[\tilde V_N, \tilde V_{N-1}], \ldots, [\tilde V_1, \tilde V_0],\quad [\tilde V_0^\prime, \tilde V^\prime_1], \ldots, [\tilde V^\prime_{N-1}, \tilde V^\prime_N].
$$
The order of an episode $[\tilde U^{(k)}, \tilde U^{(k-1)}]$ (resp. $[\tilde U^{\prime(k-1)}, \tilde U^{\prime(k)}])$ is denoted $p(k)$\index{$p(k)$} (resp. $p^\prime(k)$\index{$p^\prime(k)$}), and that of $[\tilde V^{(k)}, \tilde V^{(k-1)}]$ (resp.~$[\tilde V^{\prime(k-1)}, \tilde V^{\prime(k)}]$) is denoted $q(k)$\index{$q(k)$} (resp.~$q^\prime(k)$\index{$q^\prime(k)$}). Note that the episode $[\tilde U^{(k)}, \tilde U^{(k-1)}]$ was produced when $\tilde X^r$ was no greater than $2^{-q(k)}$, and $k \mapsto p(k)$ and $k \mapsto q(k)$ are decreasing functions of $k$.

   The set of all episodes, vertical and horizontal, can be ordered according to when they occur during the DW-algorithm. We write $I \prec J$ to say that episode $I$ either occurs in an earlier stage than episode $J$, or occurs during the same stage but before episode $J$.
\vskip 12pt

\noindent{\em Increments of $W$ and $W^\prime$ over products of episodes}
\vskip 12pt

   Define
$$
   W_r(u_1,u_2) = W(r_1+u_1,r_2+u_2), \qquad W^\prime_r(u_1,u_2) = W^\prime(r_1+u_1,r_2+u_2),
$$
and let
$$
   E^1_{k, \ell} = \sup_{\tilde U^{\prime(k-1)} < u < \tilde U^{\prime(k)} \atop{\tilde V^{\prime(\ell-1)} < v < \tilde V^{\prime(\ell)}}} \vert W_r([\tilde U^{\prime(k-1)}, u] \times [\tilde V^{\prime(\ell-1)}, v]) \vert \vee \vert W^\prime_r([\tilde U^{\prime(k-1)}, u] \times [\tilde V^{\prime(\ell-1)}, v])\vert,
$$
$$
  E^2_{k,\ell} = \sup_{\tilde U^{(k)} < u < \tilde U^{(k-1)}\atop{\tilde V^{\prime(\ell-1)} < v < \tilde V^{\prime(\ell)}}} \vert W_r([u,\tilde U^{(k-1)}] \times [\tilde V^{\prime(\ell-1)}, v]) \vert \vee \vert W^\prime_r([u, \tilde U^{(k-1)}] \times [\tilde V^{\prime(\ell-1)}, v]) \vert .
$$
The random variables $E^3_{k, \ell}$ and $E^4_{k, \ell}$ are defined similarly, relative to quadrants 3 and 4, respectively.

   Let
$$
a^1_{k \ell} = 2^{-\frac{3}{4}(p^\prime(k)+q^\prime(\ell))}, \ a^2_{k, \ell} = 2^{- \frac{3}{4}(p(k)+q^\prime(\ell))}, \ a^3_{k,\ell} = 2^{- \frac{3}{4}(p(k)+q(\ell))}, \ a^4_{k \ell} = 2^{-\frac{3}{4}(p^\prime(k) + q(\ell))},
$$
\begin{eqnarray} \label{rd10.2b}
K^1 &=& \inf \{k : \exists \ell \ \mbox{ with } \ q^\prime(\ell) \geq p^\prime(k) \ \mbox{ and } \ E^1_{k, \ell} \geq a^1_{k, \ell},\\ \nonumber
&&\qquad\qquad \mbox{ or } \ \exists \ell \ \mbox{ with } q(\ell) \geq p^\prime(k) \ \mbox{ and } \ E^4_{k, \ell} \geq a^4_{k, \ell} \},\\ \nonumber
\\ \nonumber
K^2 &=& \inf\{ \ell: \exists k \ \mbox{ with } p^\prime(k) \geq q^\prime(\ell) \ \mbox{ and } E^1_{k, \ell} \geq a^1_{k, \ell},\\ \nonumber
&&\qquad\qquad \mbox{ or } \ \exists k \ \mbox{ with } \ p(k) \geq q^\prime(\ell) \ \mbox{ and } E^2_{k, \ell} \geq a^2_{l, \ell}\},\\ \nonumber
\\ \nonumber
K^3 &=& \inf \{k: \exists \ell \ \mbox{ with } \ q^\prime(\ell) \geq p(k) \ \mbox{ and } E^2_{k, \ell} \geq a^2_{k,\ell},\\ \nonumber
&&\qquad\qquad \mbox{ or } \ \exists \ell \ \mbox{ with } \  q(\ell) \geq p(k) \ \mbox{ and } \ E^3_{k, \ell} \geq a^3_{k, \ell} \},\\ \nonumber
\\ \nonumber
K^4 &=& \inf \{ \ell: \exists k \ \mbox{ with } \ p(k) \geq q(\ell) \ \mbox{ and } \ E^3_{k, \ell} \geq a^3_{k, \ell},\\ \nonumber
&&\qquad\qquad \mbox{ or } \ \exists k \ \mbox{ with } \ p^\prime(k) \geq q(\ell) \mbox{ and } \ E^4_{k, \ell} \geq a^4_{k, \ell} \}.
\end{eqnarray}
In particular, $K^1$ is the smallest $k$ such that there is a vertical episode $I^{(\ell)}$ that occurs earlier in the DW-algorithm such that $W_r$ or $W^\prime_r$ has an unusually large rectangular increment in $[\tilde U'^{(k-1)},\tilde U'^{(k)}] \times I^{(\ell)}$.

   We denote
\begin{equation}\label{10.6rd}
   \cO(\tilde X^r, 2^{-n}, m,v)
\end{equation}
the event ``the DW-algorithm for $\tilde X^r$ started at level $2^{-n}$ is $v$-robust above order $m$." 
  
\begin{lemma} Set $\tilde Q = p^\prime(K^1) \vee q^\prime(K^2) \vee p(K^3) \vee q(K^4)$.\index{$\tilde Q$} For all $v \geq 1$, for all $m \geq 1$, there exists $c(v,m)$ such that for all $n \geq m$,
$$
  P_{2^{-n}} (\{\tilde \ultau^{2^{-m}}  < \infty,\ \tilde Q \geq m\} \cap \cO(\tilde X^r, 2^{-n}, m,v)) \leq c(v,m) 2^{(m-n) \lambda_1}
$$
and $\lim_{m \to \infty} c(v,m)= 0.$
\label{rdlem40}
\end{lemma}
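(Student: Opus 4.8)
\noindent\emph{Proof strategy for Lemma \ref{rdlem40}.} The scheme is parallel to that of Proposition \ref{rdprop38}. First I would unwind what $\{\tilde Q\ge m\}$ says: if $\tilde Q\ge m$, then one of $p'(K^1),q'(K^2),p(K^3),q(K^4)$ is $\ge m$, and by the very definition of the $K^i$ and the $a^i_{k,\ell}$ this produces, in one of the four quadrants $i\in\{1,2,3,4\}$, a horizontal episode $I$ and a vertical episode $J$ of the DW‑algorithm for $\tilde X^r$, both of order $\ge m$, such that the rectangular increment of $W_r$ or of $W'_r$ over some subrectangle of $I\times J$ sharing a corner with it has absolute value at least $a^i_{k,\ell}=2^{-\frac34(p+q)}$, where $p$ and $q$ are the orders of $I$ and $J$. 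So $\{\tilde\ultau^{2^{-m}}<\ulinfty,\ \tilde Q\ge m\}\cap\cO(\tilde X^r,2^{-n},m,v)$ is contained in the union, over the four quadrants and over all such pairs of episodes, of the corresponding ``large rectangular increment'' events. (Here one notes, as in Proposition \ref{rdprop38}, that the orders occurring are automatically $\le n$, since the algorithm starts at value $2^{-n}$.)

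The key point is that a rectangular increment of $W_r$ (or of $W'_r$) over a product of two episodes of the DW‑algorithm for $\tilde X^r$ is, \emph{conditionally on the trajectory of that algorithm}, a centered Gaussian whose variance is the product of the lengths of the two episodes, up to a negligible linear correction term; this is exactly the content of Lemma \ref{rdlem39} (and Remark \ref{remrdlem39}, applied in both coordinate orders and, after trivial scaling, with the frozen coordinate equal to $r_1$ or $r_2$), the relevant ``extra'' white noise being supplied fresh by the construction. On the event $\cO(\tilde X^r,2^{-n},m,v)$, property (R1) of $v$‑robustness bounds the number of horizontal (resp.\ vertical) episodes of order $k\ge m$ by $2k\sqrt v$, and property (R2) bounds the length of such an episode by $v k 2^{-2k}$. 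Hence, for a pair of episodes of orders $p,q\ge m$, the conditional probability that the corresponding rectangular increment of $W_r$ or $W'_r$ exceeds $2^{-\frac34(p+q)}$ is, by Lemma \ref{rdlem39} with the two rectangle dimensions of order $v p 2^{-2p}$ and $v q 2^{-2q}$ and with the threshold compared to $2^{-\frac34(p+q)}$ (so $a^2/\mathrm{var}$ is of order $2^{(p+q)/2}/(v^2pq)$, with the linear correction absorbed at the cost of one more term of the same type when $m$, hence $p+q$, is large), bounded by $C\exp\!\bigl(-c\,2^{(p+q)/2}/(v^2pq)\bigr)$. Summing over the at most $4pqv$ pairs of episodes of orders $(p,q)$, the conditional probability that some bad increment involving orders $(p,q)$ occurs is at most $\varepsilon_{p,q}(v):=C\,pqv\,\exp\!\bigl(-c\,2^{(p+q)/2}/(v^2pq)\bigr)$.

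To finish, I would condition on the $\sigma$‑field generated by the whole trajectory of the DW‑algorithm for $\tilde X^r$, with respect to which $\{\tilde\ultau^{2^{-m}}<\ulinfty\}$, the event $\cO(\tilde X^r,2^{-n},m,v)$ and all episode data are measurable. On $\cO(\tilde X^r,2^{-n},m,v)$ the conditional probability of $\{\tilde Q\ge m\}$ is then at most $\eta(v,m):=\sum_{p\ge m}\sum_{q\ge m}\varepsilon_{p,q}(v)$, a deterministic quantity, so
\[
 P_{2^{-n}}\!\bigl(\{\tilde\ultau^{2^{-m}}<\ulinfty,\ \tilde Q\ge m\}\cap\cO(\tilde X^r,2^{-n},m,v)\bigr)\ \le\ \eta(v,m)\; P_{2^{-n}}\{\tilde\ultau^{2^{-m}}<\ulinfty\}\ \le\ C\,\eta(v,m)\,2^{(m-n)\lambda_1},
\]
the last inequality being Theorem \ref{prop1} applied to the standard ABM $\tilde X^r$. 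For fixed $v$, the series $\eta(v,m)$ converges (the summand decays super‑exponentially in $p+q$; Lemma \ref{rdlem35}(c) handles the bookkeeping) and its tail tends to $0$ as $m\to\infty$. Taking $c(v,m):=C\,\eta(v,m)$ gives the assertion.

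The main obstacle is the rigorous justification of the conditioning step. The standard ABM $\tilde X^r$ is built by an inductive interleaving of the white noises $\dot W$, $\dot W'$ and $\dot W''$, so the $\sigma$‑field generated by the trajectory of its DW‑algorithm is entangled with $W_r$ and $W'_r$; one must therefore expose the white noise stage by stage along the construction, and for each product $I\times J$ of episodes identify the correct frozen coordinate ($r_1$ or $r_2$), the correct stopping time $V$ (the depth of the ``earlier'' of the two episodes) and the correct auxiliary $\sigma$‑field independent of the exposed strip, so that Lemma \ref{rdlem39} (or its symmetric form) applies and yields the centered‑Gaussian conditional law with variance $|I|\,|J|$. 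The same must be carried out for $W'_r$, and a little extra care is needed in the borderline case $p=q$, where the two episodes can occur in either order and one conditions up through whichever occurs later. Everything else — the union bound, the use of $v$‑robustness, and the elementary summations — is routine.
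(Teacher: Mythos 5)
Your proposal follows essentially the same route as the paper: unwind $\{\tilde Q\geq m\}$ into ``large rectangular increment'' events indexed by pairs of episodes, control them via Lemma \ref{rdlem39}, count episodes and their lengths via (R1)--(R2) of $v$-robustness, take a union bound, and insert Theorem \ref{prop1} to get the factor $2^{(m-n)\lambda_1}$. The paper's bookkeeping is a bit different---it first proves the sharper statement \eqref{rd10.2a} for $\{\tilde Q=m\}$ and then sums over $m$---but this is cosmetic, and your direct double sum over $(p,q)$, with the slightly better exponent $2^{(p+q)/2}/(v^2pq)$ versus the paper's weakened $2^{2(m+q)/5}$, is just as good.

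One caveat about the conditioning step, which you rightly flag as the main obstacle: ``condition on the $\sigma$-field generated by the whole trajectory'' is not the conditioning under which Lemma \ref{rdlem39} is directly applicable, and the conditional mean (the linear correction) of a rectangular increment given the \emph{entire} set of line increments is not of the tidy form that Lemma \ref{rdlem39} controls. What the paper actually does is condition only on the algorithm's increments up through the \emph{later} of the two episodes involved in the given pair (this is the $\sigma$-field $\G^{**}\vee\F^{**}_k$, respectively $\G^{*}\vee\F^{*}_\ell$), and the choice of which episode plays the role of the stopping time $V$ in Lemma \ref{rdlem39} depends on whether the offending increment is from $W$ or from $W'$ --- not simply on which of the two episodes occurs earlier, as you suggest. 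This limited conditioning is precisely why the interleaved-white-noise construction of $\tilde X^r$ in Section \ref{sec10} was set up as it was. Your second-pass description (``expose the white noise stage by stage... identify the correct frozen coordinate, the correct stopping time $V$...'') is the right repair, so the proposal is sound once that sentence replaces the first-pass ``condition on the whole trajectory.''
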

   
  \begin{proof} We first seek to show that 
\begin{equation}\label{rd10.2a}
   P_{2^{-n}}(\{\tilde \ultau^{2^{-m}} < \infty,\ \tilde Q=m\} \cap \cO(\tilde X^r, 2^{-n}, m,v)) \leq C 2^{(m-n)\lambda_1} \sqrt{v}\, m \, \exp(-2^{4m/5}).
\end{equation}
When $\tilde Q = m$, one of the four quantities in the formula for $\tilde Q$ is equal to $m$ and the others are no greater than $m$. If, for instance, $p(K^3) = m$, then there is $\ell$ such that either $q^\prime(\ell) \geq m$ and $E^2_{k, \ell} \geq a^2_{k, \ell}$, or $q(\ell) \geq m$ and $E^3_{k, \ell} \geq a^3_{k, \ell}$, where $k = K^3$. We can therefore decompose the event $\{\tilde \ultau^{2^{-m}} < \infty,\ \tilde Q=m\} \cap \cO(\tilde X^r, 2^{-n}, m,v)$ into the union of eight events and separately bound the probability of each. We treat explicitly only one of these events, namely
$$
  F = \bigcup_{\ell:\, q(\ell) \geq m} F_\ell,
$$
where
\begin{align*}
   F_\ell &= \{\tilde \ultau^{2^{-m}} < \infty,\ p(K^3) = m,\  p^\prime(K^1) \vee q^\prime (K^2) \vee q(K^4) \leq m,\ E^3_{K^3, \ell} \geq a^3_{K^3, \ell}\} \\
	&\qquad \cap \cO(\tilde X^r, 2^{-n}, m,v).
\end{align*}
On this event, the DW-algorithm for $\tilde X^r$ is in particular $v$-robust above order $m$. On $F_\ell$, there are two further cases to consider: either $[\tilde U^{(k)}, \tilde U^{(k-1)}] \prec [\tilde V^{(\ell)}, \tilde V^{(\ell-1)}]$ (only possible if $q(\ell) = m$, too) or the opposite relation holds. We only consider the latter case, so we bound
$$
  P_{2^{-n}}\left(\bigcup_{\ell:\, q(\ell) \geq m} (F_\ell \cap \{[\tilde V^{(\ell)}, \tilde V^{(\ell-1)}] \prec [\tilde U^{(k)}, \tilde U^{(k-1)}]\}) \right).
$$

  Let 
$$
  E^{3, \ast}_{k, \ell} = \sup_{{\tilde U^{(k)}< u < \tilde U^{(k-1)}}\atop{\tilde V^{(\ell)} < v < \tilde V^{(\ell-1)}}} \vert W_r([u, \tilde U^{(k-1)}] \times [v, \tilde V^{(\ell -1)}])\vert,
$$
and let $E^{3, \ast \ast}_{k, \ell}$ be defined in the same way but with $W$ replaced by $W^\prime$. Then 
$$
  \{E^3_{k, \ell} \geq a^3_{k, \ell}\} = \{E^{3,\ast}_{k, \ell} \geq a^3_{k, \ell}\} \cup \{E^{3, \ast \ast}_{k, \ell} \geq a^3_{k, \ell}\}.
$$
Therefore, we can further split $F_\ell$ into the union of two events $F^\ast_\ell$ and $F^{\ast \ast}_\ell$, where $E^3_{k \ell}$ is replaced by $E^{3\ast}_{k, \ell}$ and $E^{3 \ast \ast}_{k, \ell}$, respectively.
  
  Let $H$ be the event ``the DW-algorithm for $\tilde X^r$ started at $2^{-n}$ reaches level $2^{-m}$ and is $v$-robust above order $m$", and let $G^\ast_\ell$ be the intersection of $H$ and the event ``$p(K^3) = m,$ there exists $\ell$ such that $q(\ell) \geq m$ and $[\tilde V^{(\ell)}, \tilde V^{(\ell-1)}] \prec [\tilde U^{(k)}, \tilde U^{(k-1)}],$ and $E^{3, \ast}_{k, \ell} \geq a^3_{k, \ell}$'' (where $k=K^3$). Let $G^{\ast \ast}_\ell$ be the event defined in the same way as $G^\ast_\ell$ but with $E^{3, \ast}_{k, \ell}$ replaced by $E^{3, \ast \ast}_{k, \ell}$. Then $F^\ast_\ell \subset G^\ast_\ell$ and $F^{\ast \ast}_\ell \subset G^{\ast \ast}_\ell.$
  
  Using property (R2) of $v$-robustness, we observe that $G^{\ast \ast}_\ell$ is contained in the event
\begin{eqnarray*}
 && H \cap \{E^{3, \ast \ast}_{k, \ell} \geq a^3_{k, \ell},\ \sup_{\tilde U^{(k)} < u < \tilde U^{(k-1)}} \vert \tilde X^r(u, r_2)-\tilde X^r(\tilde U^{(k-1)}, r_2)\vert \leq 2^{1-m},\\
 &&\qquad\qquad 0 \leq \tilde U^{(k-1)}-\tilde U^{(k)} \leq v m 2^{-2m},\ 0 \leq \tilde V^{(\ell-1)} - \tilde V^{(\ell)} \leq v q(\ell)2^{-q(\ell)}\},
\end{eqnarray*}
and that this is contained in
\begin{eqnarray}\nonumber
  &&H \cap \{E^{3, \ast \ast}_{k, \ell} \geq a^3_{k, \ell} - v q(\ell) 2^{-2 q(\ell)}2^{1-m}\\
  &&\qquad\qquad\qquad\qquad\qquad +\, v q(\ell) 2^{-2q(\ell)} \sup_{\tilde U^{(k)}< u < \tilde U^{(k-1)}} \vert \tilde X^r(u, r_2) - \tilde X^r(\tilde U^{k-1}, r_2) \vert, \nonumber\\
 &&\qquad\qquad 0 \leq \tilde U^{(k-1)} - \tilde U^{(k)} \leq v m 2^{-2m},\ 0 \leq \tilde V^{(\ell-1)}-\tilde V^{(\ell)} \leq v q(\ell) 2^{-2q(\ell)}\}.\qquad
\label{rd10.3}
\end{eqnarray}
Let $\G^{\ast \ast}$ be the $\sigma$-field generated by increments of $\tilde X^r$ in the DW-algorithm for $\tilde X^r$ up to episode $k$ not included, and let $\F^{\ast \ast}_k$ be the $\sigma$-field generated by increments of $\tilde X^r$ over $[\tilde U^{(k)}, \tilde U^{(k-1)}] \times \{r_2\}$. We note that $H \in \G^{\ast \ast} \vee \F_k^{\ast \ast}$ and therefore we can use Lemma \ref{rdlem39} to bound the probability of the event in (\ref{rd10.3}) (see also Remark \ref{remrdlem39}).

 Indeed, refering to the notations of Lemma \ref{rdlem39} (with the roles of the axes exchanged), $\F^{\ast \ast}_k$, $\G^{\ast \ast}$, $\tilde U^{(k-1)} - \tilde U^{(k)}$, $\tilde V^{\ell-1)} - \tilde V^{(\ell)}$, $v q(\ell) 2^{-2 q(\ell)}$, $v m 2^{-2m}$ play respectively the role of $\F_v$, $\G$, $V$, $S_2-S_1$, $a$ and $v_0$.
  
  Choose $x$ so that
$$
  x \sqrt{v q(\ell) 2^{-2q(\ell)} v m 2^{-2m}} = a^3_{k,\ell}-v q(\ell) 2^{-2q(\ell)} 2^{1-m},
$$
that is
$$
  x = \frac{1}{v \sqrt{m q(\ell)}} \left(2^{(m+q(\ell))/4} - v q(\ell)2^{1-q(\ell)}\right) \geq c_0 2^{(m+q(\ell))/5}.
$$
Then Lemma \ref{rdlem39} and Theorem \ref{prop1} imply that
$$
  P_{2^{-n}} (G^{\ast \ast}_\ell) \leq C E_{2^{-n}}( 1_H \exp(-2^{2(m+q(\ell))/5})).
$$

   Turning to $G^\ast_\ell$, observe that $G^\ast_\ell$ is contained in the event 
\begin{eqnarray*}
 && H \cap \{E^{3, \ast}_{k, \ell} \geq a^3_{k, \ell},\ \sup_{\tilde V^{(\ell)} < \tilde v < \tilde V^{(\ell-1)}} \vert \tilde X^r(r_1, \tilde v) - \tilde X^r(r_1, \tilde V^{(\ell-1)})\vert \leq 2^{1-q(\ell)}, \\
 &&\qquad\qquad\qquad 0 \leq \tilde U^{(k-1)}-\tilde U^{(k)} \leq v m 2^{-2m},\ 0 \leq \tilde V^{(\ell-1)}-\tilde V^{(\ell)} \leq v q(\ell) 2^{-2q(\ell)}\},
\end{eqnarray*}
and that this is contained in
\begin{eqnarray}\nonumber
 && H \cap \{E^{3, \ast}_{k, \ell} \geq a^3_{k, \ell} - v m 2^{-2m} 2^{1-q(\ell)}\\
 &&\qquad\qquad\qquad\qquad  +\, v m 2^{-2m} \sup_{\tilde V^{(\ell)} < \tilde v < \tilde V^{(\ell-1)}} \vert \tilde X^r(r_1, \tilde v) - \tilde X^r(r_1, \tilde V^{(\ell-1)})\vert, \nonumber\\
 &&\qquad\qquad 0 \leq \tilde U^{(k-1)}-\tilde U^{(k)} \leq v m 2^{-2m},\ 0 \leq \tilde V^{(\ell-1)}- \tilde V^{(\ell)} \leq v q(\ell) 2^{-2 q(\ell)}\}.\qquad
\label{rd10.4}
\end{eqnarray}
Let $\G^\ast$ be (somewhat informally) the $\sigma$-field generated by increments of $\tilde X^r$ used by the DW-algorithm up to episode $k$ included, {\em except} for those of $\tilde X^r$ over $\{r_1\} \times [\tilde V^{(\ell)}, \tilde V^{(\ell-1)}],$ and let $\F^\ast_\ell$ be the $\sigma$-field generated by increments of $\tilde X^r$ over this segment. We note that $H \in \G^\ast \vee \F^\ast_\ell$ and therefore we can use Lemma \ref{rdlem39} to bound the probability of the event in (\ref{rd10.4}). Referring again to the notations of Lemma \ref{rdlem39}, $\F^\ast_\ell$, $\G^\ast$, $\tilde V^{\ell-1)}-\tilde V^{(\ell)}$, $\tilde U^{(k-1)}-\tilde U^{(k)}$, $v m 2^{-2m}$, $v q(\ell) 2^{-2q(\ell)}$ play respectively the role of $\F_V$, $\G$, $V$, $S_2-S_1$, $a$ and $v_0$.
  
  Choose $x$ so that
$$
  x \sqrt{v m 2^{-2m}\, v q(\ell) 2^{-2q(\ell)}} = a^3_{k, \ell} - vm 2^{-2m} 2^{1-q(\ell)},
$$
that is
$$
  x = \frac{1}{v \sqrt{m q(\ell)}} \left(2^{(m+q(\ell))/4} - v m 2^{1-m}\right) \geq c \ 2^{(m+q(\ell))/5}.
$$
Then Lemma \ref{rdlem39} implies that
\begin{equation}\label{rd10.4cp}
  P_{2^{-n}}(G^\ast_\ell) \leq C E_{2^{-n}}(1_H \exp(-2^{2(m+q(\ell))/5})).
\end{equation}
Taking into account property (R1) of $v$-robustness, we conclude from Theorem \ref{prop1} that
\begin{equation}\label{rd10.4bp}
  P_{2^{-n}}(F) \leq C \ 2^{(m-n)\lambda_1} \sum^\infty_{q=m} q \sqrt{v} \exp(-2^{2(m+q)/5}).
\end{equation}
The series is bounded by a constant times $\sqrt{v} \ m \ \exp(-2^{4m/5})$, establishing \eqref{rd10.2a}, and therefore
\begin{align}\nonumber
  &P_{2^{-n}} (\{\tilde\ultau^{2^{-m}}< \infty,\ \tilde Q \geq m\} \cap \cO(\tilde X^r, 2^{-n}, m,v)) \\
	&\qquad\leq \sum^n_{k=m} P_{2^{-n}} (\{\tilde\ultau^{2^{-m}}< \infty,\ \tilde Q = k\} \cap \cO(\tilde X^r, 2^{-n}, k,v)) \\
	&\qquad\leq 2^{(m-n)\lambda_1} \sum^\infty_{k=m} C \sqrt{v} \ k \exp(-2^{4k/5}),
\label{rd10.4ap}
\end{align}
and this series is the desired constant $c(v,m).$
\end{proof}

    We let $\cC_r$ denote the connected component of $\{s \in \rtoo: W(s) >0 \}$ that contains $r$, and we let $\partial \cC_r^\alpha$ denote the subset of points in $\partial \cC_r$ that can be reached from $r$ by a curve contained in $\cC_r \cap ([r_1,r_1+\alpha[\,\times [r_2,r_2 + \alpha[)$.
    
    Fix $r \in [2,3]^2,$  consider the additive Brownian motion $X^r$ defined in (\ref{rd9.11a}), and let $x_0 = 2^{-n}$. Define
$$
  A_1(r,n) = \{W(r) \in [-2^{1-n}, -2^{-n}[\}.
$$
\index{$A_1(r,n)$}For $k_0 \in \IN$ and $n \geq k_0$, let $A_2(r,n,k_0,v)$\index{$A_2(t,n,k_0,v)$} be the intersection of the event ``the DW-algorithm applied to $X^r$ started at level $2^{-n}$, reaches level $2^{-k_0}$ before escaping $\cR(2^{-2k_0})$ or escapes $\cR(2^{-2k_0})$, and is $v$-robust above order $k_0$'' and $\{Q \leq k_0\}$, where $Q$ is defined in the same way as $\ti Q$ in Lemma \ref{rdlem40}, but after removing the terms involving $W'$ in the definition of the $E^j_{k,\ell}\,$, using episodes for $X^r$ instead of episodes for $\ti X^r$, and only episodes ``seen" by the DW-algorithm up to $\ultau^{2^{-n},r,2^{-k_0 }}$.

  As in Lemma \ref{lem6.20}, let $\gamma^\ast$ be the one-to-one parameterization by arc-length, such that $\gamma^\ast(0) = r$, of the path $\Gamma^\ast$ constructed by the DW-algorithm applied to $X^r$. Let
$$
   \rho = 3
$$
(this is the exponent of $k$ that appears in property (R7) of Section \ref{sec9}), let $d(u)$ be the distance (in $\ell^1$-norm) between $\gamma^\ast(u)$ and $r$, and set
\begin{align*}
 & A_3(r,n,k_0,v) \\
 &\qquad = \bigcap_{k=k_0}^n \Big\{W(\gamma^\ast(u))-W(r) \geq \frac{\tilde c\, \sqrt{d(u)}}{v^{7/4} \log_2^9(\frac{1}{d(u)})},  
  \quad\frac{\tilde c\, 2^{-2k}}{v^{10}\, k^{14+8\rho}} \leq d(u) \leq 48 v^{3/2} k^2 2^{-2k},\\
  &\qquad\qquad\qquad\qquad\qquad \mbox{ for all } u \in [\alpha_n^{n-k,r}, \alpha_n^{n-k+1,r}]\Big\},
\end{align*}
where $\alpha_n^{n-k,r}$ is defined in Lemma \ref{lem6.20} and $\tilde c$ is a constant specified in the proof of Lemma \ref{rdlem42} below.

   Finally, let $A_4(r,n,k_0)$ be the event ``there is a path with extremities (1,1) and $\alpha_n^{n-k_0+1,r}$ contained in $[1,4]^2$ along which $W(\cdot) - W(r) \in [\frac{2^{-k_0-1}}{2}, 11]$" $(\alpha^{j,r}_n$ is defined as in Lemma \ref{lem6.20}). Finally, set
\begin{equation}\label{rd10.4dp}
  A(r,n,k_0,v) = A_1(r,n) \cap A_2(r,n,k_0,v) \cap A_3(r,n,k_0,v) \cap A_4(r,n,k_0).
\end{equation}

   Notice that if $A(r,n,k_0,v)$ occurs, then $r$ is no more that $\tilde{c}^{-1}\, 2^{18}\, v^{7/2}\, n^{18}\, 2^{-2n}$ units away from a point in $\partial \cC_{(1,1)}^4$.
   
   The following proposition is the main result of this section.

\begin{prop} Fix a compact subset $I$ of $]0,+\infty[$. There are $v \geq 1$, $k_0 \in \IN \setminus \{0\}$ and $c = c_{I,v,k_0}>0$ such that for all $r \in [2,3]^2$, all sufficiently large $n$, and all $y \in I$,
$$
  P(A(r,n,k_0,v) \mid W(1,1) = y) \geq c \ 2^{-n(1+\lambda_1)}.
$$
\label{rdprop41}
\end{prop}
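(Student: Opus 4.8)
The plan is to establish Proposition \ref{rdprop41} by transferring the lower bound of Proposition \ref{lowerlem1}(a), which holds for a \emph{standard} ABM, to the non-standard ABM $X^r$ associated with the Brownian sheet, and then to $W$ itself. The central tool is the coupling of $X^r$ with the standard ABM $\tilde X^r$ constructed in \eqref{08_08_sabm}, together with the robustness transfer of Proposition \ref{rdprop37}. First I would fix $v$ large (so that the constant $c(v)$ of Proposition \ref{rdprop38} is small) and then $k_0$ large (so that $c(v,k_0)$ in Lemma \ref{rdlem40} is small and so that the constant $L(v)$ of Proposition \ref{rdprop37} satisfies $L(v) \le k_0$). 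By Proposition \ref{rdprop38} applied to $\tilde X^r$, the event ``the DW-algorithm for $\tilde X^r$ started at $2^{-n}$ reaches level $1$ (or $2^{-k_0}$ within the appropriate square) and is $v$-robust above order $k_0$'' has probability $\ge c' 2^{-(1+\lambda_1)n}$ by combining the robust analogue of Proposition \ref{lowerlem1}(a) with the robustness estimate; I would also intersect with $\{\tilde Q \le k_0\}$ using Lemma \ref{rdlem40}, which costs only a small fraction of that probability since $c(v,k_0)$ is small. On this event, the increments $E^j_{k,\ell}$ of both $W_r$ and $W'_r$ over products of episodes are controlled by $a^j_{k,\ell} = 2^{-\frac34(\cdot)}$, which is exactly the smallness \eqref{rd9.1} needed (after translating episode orders into $2^{-2\ell}$-scales via Lemma \ref{rdlem35}(a)) to invoke Proposition \ref{rdprop37} and conclude that the DW-algorithm for $X^r$ is $9v$-robust above order $k_0$; this gives $A_2(r,n,k_0,9v)$ up to renaming the tolerance. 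Similarly, the control on the $W$-increments gives, via the decomposition $W(S(u))=W(1,1)+X^r(u)+\mathcal E(u)$ and the fact that $\mathcal E$ is of smaller order than $X^r$, the value-growth estimate $A_3(r,n,k_0,v)$: along the path $\gamma^\ast$ the ABM value is $\ge c_{n-j}2^{j-n}$ by $A_3$ in the standard-ABM sense (Proposition \ref{lowerlem1}), and the $W$-error $\mathcal E$ only perturbs this slightly since $d(u)$ is between two powers of $2^{-2k}$.

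The second block of work is the one-point value estimate at $(1,1)$. Exactly as in the proof of Proposition \ref{lowerlem1}(a), I would condition on a $\sigma$-field $\mathcal G$ that makes $A_2, A_3, A_4$ measurable while leaving $W(1,1)$ (or rather a suitable Gaussian increment) free, and bound $P(A_1(r,n)\mid\mathcal G) \ge c_1 2^{-n}$ on $A_4$, using that on $A_4$ the value $W(\cdot)-W(r)$ along the connecting path lies in a bounded interval, so that forcing $W(r)\in[-2^{1-n},-2^{-n})$ costs a factor of order $2^{-n}$; here the conditioning on $W(1,1)=y$ with $y$ ranging over the compact set $I$ only shifts the relevant Gaussian by a bounded amount, uniformly in $y$, which is why the final bound is uniform over $y\in I$. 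Then, as in the passage from \eqref{eq6.12} to \eqref{eq6.14}, I would condition at the stopping point $\ultau^{2^{-n},r,\cdot}$ reached by the DW-algorithm and build the escaping path to $(1,1)$ through a fixed sequence of boxes (the analogue of $G_1,G_2,G_3$ and Figure \ref{figI_08}), each step of positive conditional probability by the strong Markov property, thereby producing $A_4$ with positive conditional probability. Finally, the robust analogue of Proposition \ref{rdprop21} (reaching level $2^{-k_0}$ within $\cR(2^{-2k_0})$ with slow-enough descent, intersected with robustness) supplies the factor $c_0 2^{-n\lambda_1}$, and multiplying the three contributions yields $P(A(r,n,k_0,v)\mid W(1,1)=y)\ge c\,2^{-(1+\lambda_1)n}$.

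The hardest part, I expect, is the bookkeeping in the transfer step: one must verify that the coupling events in Lemma \ref{rdlem40} (which control $E^j_{k,\ell}$ only for episodes ``seen up to $\ultau^{2^{-k_0}}$'', and only when the relevant order exceeds $k_0$) provide \emph{exactly} the hypothesis \eqref{rd9.1} of Proposition \ref{rdprop37}, including the conversion between the episode-order indexing ($2^{-p(k)}$, $2^{-q(\ell)}$) and the dyadic-scale indexing ($2^{-\ell}$, $f_{15}(\ell)$) used there. This requires carefully composing the estimates $a^j_{k,\ell}=2^{-\frac34(p+q)}$ with the episode-length bounds $v\,k\,2^{-2k}$ from (R2) and with Lemma \ref{rdlem35}(a), and checking that the exponent $\frac34$ (rather than, say, $1$) is enough slack to absorb the error term $\mathcal E$ and the discrepancy between $X^r$ and $\tilde X^r$ simultaneously. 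A secondary subtlety is that $X^r$ is only an ABM ``up to a deterministic time change'' and that $\tilde X^r$ is standard only up to such a change; but since all events in question are invariant under smooth reparametrization of each axis, and Propositions \ref{rdprop37}, \ref{rdprop38} and \ref{lowerlem1} are stated for processes started at $(0,0)$ with value $2^{-n}$, this causes no essential difficulty — one simply applies the deterministic change of variables $S$ at the end to read off the conclusion about $\partial\cC_{(1,1)}^4$.
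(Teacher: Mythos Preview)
Your proposal is correct and follows essentially the same route as the paper: the coupling with $\tilde X^r$, together with Propositions \ref{rdprop37}--\ref{rdprop38} and Lemma \ref{rdlem40}, yields the lower bound $P(A_2\cap A_3)\ge c\,2^{(k_0-n)\lambda_1}$ (this is exactly Lemma \ref{rdlem42}, including the episode-order to dyadic-scale bookkeeping you single out as the hardest part), and then $A_1$ and $A_4$ are handled by iterated conditioning along a path from the DW-stopping point to $(1,1)$, the conditioning on $W(1,1)=y$ entering only through a Brownian-bridge positivity probability that is uniform over $y\in I$. The one point you understate is the $A_4$ construction: for the sheet the connecting path needs seven segments $\Gamma_1,\dots,\Gamma_7$ (Figure \ref{08_08_figJ}) rather than three, and several of the conditional-law steps require the time-reversed Brownian sheet distributions of \cite{DW2002} rather than mere Brownian-motion Markov properties, so the analogue of \eqref{eq6.14} is appreciably more work than in the ABM case.
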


     A key ingredient in the proof of this proposition is the following lemma.
     
\begin{lemma} There exists $v \geq 1$, $k_0 \in \IN \setminus \{0\}$ and $c > 0$ such that for all $r \in [2,3]^2$ and all sufficiently large $n$,
\begin{equation}\label{rd10.5}
  P(A_2(r,n,k_0,v) \cap A_3(r,n,k_0,v)) \geq c \ 2^{(k_0-n)\lambda_1}.
\end{equation}
\label{rdlem42}
\end{lemma}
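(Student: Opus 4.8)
\emph{Proof proposal.} The plan is to transfer the estimate to the standard ABM $\tilde X^r$, run on it the lower‑bound machinery of Section \ref{sec6} together with the robustness estimate of Section \ref{sec9}, and then control, conditionally on the DW‑algorithm, the gap between $W$ and $X^r$ along the path $\gamma^\ast := \gamma^{\ast,2^{-n},r}$.

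\textbf{Reduction to $\tilde X^r$.} By construction the DW‑algorithm applied to $\tilde X^r$ started at $(0,0)$ with value $2^{-n}$ coincides, stage by stage up to termination, with the DW‑algorithm applied to $X^r$: the two produce the same path $\gamma^\ast$, the same sequence $(H_m)$, the same explored rectangles, and the same increments thereon. Hence the event ``the DW‑algorithm reaches level $2^{-k_0}$ before escaping $\cR(2^{-2k_0})$ and is $v$‑robust above order $k_0$'' — call it $E^{(1)}$ — may equivalently be read off $\tilde X^r$, and $\tilde X^r$ is a standard ABM after the deterministic time change $u\mapsto r_i u$ (harmless for the scaling arguments below). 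By Lemma \ref{lem6.18} (rescaled, $a=2^{n-k_0}$) the event that the algorithm reaches level $2^{-k_0}$ before escaping $\cR(2^{-2k_0})$ has probability $\ge c_0\,2^{(k_0-n)\lambda_1}$; by Proposition \ref{rdprop38} (rescaled) the probability that it reaches level $2^{-k_0}$ without being $v$‑robust above order $k_0$ is $\le c(v)\,2^{(k_0-n)\lambda_1}$ with $c(v)\downarrow 0$. Fixing $v$ large gives $P(E^{(1)})\ge \tfrac12 c_0\,2^{(k_0-n)\lambda_1}$.

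\textbf{The error term along $\gamma^\ast$.} For a point $\gamma^\ast(u)=(r_1+a,r_2+b)$ reached by the algorithm, write $W(\gamma^\ast(u))-W(r)=X^r(a,b)+\hat W_r(a,b)$, where $\hat W_r(a,b)=\Delta_{R}W$ is the rectangular increment of $W$ over the rectangle $R$ with corners $r$ and $\gamma^\ast(u)$; on each of the four quadrants $\hat W_r$ is a Brownian sheet independent of $(Z_1^r,Z_2^r)$, hence independent of the whole DW‑algorithm for $X^r$. On $E^{(1)}$, $v$‑robustness (R1)–(R3) bounds $d(u)=|a|+|b|$ above by $48v^{3/2}k^22^{-2k}$ for $u\in[\alpha_n^{n-k,r},\alpha_n^{n-k+1,r}]$ (this is \eqref{sum_ep} with a factor $2$ of slack at level $2^{1-k}$), while (R3), (R5a)–(R6b) and Lemma \ref{rdlem35}, together with the fact that $\gamma^\ast$ runs from argmax to argmax, are calibrated to force $2^{-n}+X^r(a,b)\ge v^{-1}f_8(k)$ at every such $u$ with $d(u)\ge \tilde c\,2^{-2k}v^{-10}k^{-14-8\rho}$; this is the ABM argument of Proposition \ref{rdprop21} redone inside the $v$‑robust regime, and $\tilde c$, $14+8\rho$ are chosen so that before $\gamma^\ast$ can reach a portion of an episode on which $X^r$ has decayed, $d(u)$ has already exceeded that lower threshold. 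Since $2^{-n}<2^{-k-1}$, for $\tilde c$ small this gives $X^r(a,b)\ge 2\tilde c\,\sqrt{d(u)}\,\big(v^{7/4}\log_2^9(1/d(u))\big)^{-1}$ for all relevant $u$.

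\textbf{Conditioning and conclusion.} Conditionally on $(Z_1^r,Z_2^r)$ — equivalently on the whole DW‑algorithm for $X^r$, in particular on $E^{(1)}$ and the episode orders $p(\cdot),q(\cdot)$ — the events $\{Q>k_0\}$ and ``$A_3(r,n,k_0,v)$ fails'' involve only the independent Brownian sheets $\hat W_r$ on the four quadrants, and on $E^{(1)}$ all rectangles explored at scale $k$ have side lengths $\le 48v^{3/2}k^22^{-2k}$. The maximal inequality for rectangular increments of a Brownian sheet (\cite[Lemma 1.2]{OP}; cf.\ Lemma \ref{rdlem39}) then bounds, on $E^{(1)}$,
$$
   P\Big(\sup_{R\subset\cR(48v^{3/2}k^22^{-2k}),\ r\in\partial R}|\Delta_R W|\ge \tfrac14 v^{-1}f_8(k)\ \Big|\ Z_1^r,Z_2^r\Big)\le C\exp\!\big(-c\,v^{-5}k^{-20}2^{2k}\big),
$$
and likewise, comparing each $E^j_{k,\ell}$ with $a^j_{k,\ell}$, bounds the conditional probability of $\{Q>k_0\}$. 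Summing over $k\ge k_0$ shows that, for $k_0$ fixed large enough, on $E^{(1)}$ one has $P(\{Q\le k_0\}\cap A_3(r,n,k_0,v)\mid Z_1^r,Z_2^r)\ge\tfrac12$ — here we used the previous display with $|\hat W_r(a,b)|\le\tfrac14 v^{-1}f_8(k)\le\tfrac14 X^r(a,b)$ to get $W(\gamma^\ast(u))-W(r)\ge\tilde c\sqrt{d(u)}/(v^{7/4}\log_2^9(1/d(u)))$. Since $E^{(1)}$ is $\sigma(Z_1^r,Z_2^r)$‑measurable and $E^{(1)}\cap\{Q\le k_0\}\subset A_2(r,n,k_0,v)$, integrating over $E^{(1)}$ gives $P(A_2(r,n,k_0,v)\cap A_3(r,n,k_0,v))\ge\tfrac12 P(E^{(1)})\ge\tfrac14 c_0\,2^{(k_0-n)\lambda_1}$, which is \eqref{rd10.5}. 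The main obstacle is the calibration in the second step: verifying, from the seven robustness properties and Lemma \ref{rdlem35}, that the $\sqrt{d(u)}/\mathrm{polylog}$ threshold in $A_3$ is genuinely dominated by $X^r$ along $\gamma^\ast$, which requires pinning down exactly which episodes $\gamma^\ast$ traverses and fixing the exponents $\tilde c$, $14+8\rho$ so that the lower bound on $d(u)$ takes effect before $X^r$ becomes small.
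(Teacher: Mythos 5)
Your opening reduction rests on a claim that is false: the DW\mbox{-}algorithms applied to $X^r$ and to $\tilde X^r$ do \emph{not} coincide stage by stage. The construction of $\tilde X^r$ in Section~\ref{sec10} replaces the white noise $\dot W$ by $\dot W'$ on rectangles such as $[r_1+U_1,r_1]\times[0,r_2]$ immediately after Stage~1, and these regions are revisited at later stages of the algorithm (e.g.~at Stage~2, $X^2(T^1_1,v)$ for $v<0$ integrates white noise over $[r_1+U_1,r_1]\times[r_2+v,r_2]$, where $\dot W$ has already been replaced by $\dot W'$). Consequently $Z_2^2(v)\ne Z_2^r(v)$ for $v<0$, and the two algorithms produce different $V_1,V_1'$. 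The chain of processes $X^1,X^2,\dots$ guarantees only that $\tilde X^r$ agrees with $X^{2N}$ at all stages, not with $X^r$; the paper's proof is careful never to assert the equivalence you claim, and instead invokes Proposition~\ref{rdprop37} to \emph{transfer} robustness from the DW\mbox{-}algorithm for $\tilde X^r$ (to which Lemma~\ref{lem6.18} and Proposition~\ref{rdprop38} legitimately apply, since $\tilde X^r$ is a standard ABM) to the DW\mbox{-}algorithm for $X^r$ (which is what $A_2$ and $A_3$ are actually about). Your argument never performs this transfer.

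The second gap is in the conditioning step. You assert that, given $(Z_1^r,Z_2^r)$, the increments $\hat W_r(a,b)$ over rectangles with one corner at $r$ are independent Brownian sheets on the four quadrants. This holds only on quadrant~1. On quadrant~3, $\hat W_r(a,b)=\Delta_{[r_1+a,r_1]\times[r_2+b,r_2]}W$ integrates white noise over a region contained in $[r_1+a,r_1]\times[0,r_2]$ and in $[0,r_1]\times[r_2+b,r_2]$, which are precisely the regions defining $Z_1^r(a)$ and $Z_2^r(b)$ for negative arguments; so $\operatorname{Cov}(\hat W_r(a,b),Z_1^r(a))=-|a|\,|b|\ne0$. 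Similar overlaps occur on quadrants 2 and 4. This is exactly why the paper observes that $X^r$ is a \emph{non}-standard ABM (its components are correlated on $[-2,0]^2$) and why the whole of Section~\ref{sec10} is devoted to constructing the auxiliary standard ABM $\tilde X^r$ and controlling the discrepancy through the random variable $\tilde Q$ and Lemma~\ref{rdlem40}. Your proposal never invokes Lemma~\ref{rdlem40} (to bound $P(\tilde Q>k_0)$ relative to the escape probability), nor Proposition~\ref{rdprop37} (to pass from $v/9$-robustness of $\tilde X^r$ to $v$-robustness of $X^r$). Without them the conditional Orey--Pruitt estimates you quote do not apply, and the step from $E^{(1)}$ to $A_2\cap A_3$ does not go through.

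More broadly, both gaps come from the same source: you have treated $X^r$ as if it were a standard ABM with an independent Brownian-sheet error, whereas the defining difficulty of this lemma is that $X^r$ is \emph{not} standard and its error term is correlated with it. The paper's choice of event
$\tilde F = \{\underline{\tilde\tau}^{2^{-k_0}} < \underline{\tilde\sigma}^{\cR(2^{-2k_0})}\}\cap\cO(\tilde X^r,2^{-n},k_0,v/9)\cap\{\tilde Q\le k_0\}$,
the lower bound $P(\tilde F)\ge (c_0/2)2^{(k_0-n)\lambda_1}$ via Lemma~\ref{lem6.18}, Proposition~\ref{rdprop38}, and Lemma~\ref{rdlem40}, and the inclusion $\tilde F\subset A_2(r,n,k_0+1,v)\cap A_3(r,n,k_0+1,v)$ established via Proposition~\ref{rdprop37}, are the essential structure of the argument and are not circumvented by the shortcuts you propose.
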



\noindent{\em Proof of Lemma \ref{rdlem42}}. Let $\tilde Q$ be the random variable defined in Lemma \ref{rdlem40}. Let $\ti X^r$ be the ABM defined in \eqref{08_08_sabm} (with $x_0 = 2^{-n}$). Given $k_0 \geq 1$, let $\tilde F_1 := \cO(\tilde X^r,2^{-n},k_0,\frac{v}{9})$ 
and set
\begin{equation}\label{08_08_tiF}
  \tilde F = \{\underline{\tilde \tau}^{2^{-k_0}} < \underline{\tilde {\sigma}}^{\cR(2^{-2k_0})}\}\cap \tilde F_1 \cap \{\tilde Q \leq k_0\}.
\end{equation}
We note that
\begin{eqnarray*}
  P_{2^{-n}}(\tilde F) &\geq& P_{2^{-n}}\{\underline{\tilde \tau}^{2^{-k_0}} < \underline{\tilde \sigma}^{\cR(2^{-2k_0})}\} - P_{2^{-n}}(\{\underline{\tilde \tau}^{2^{-k_0}} < \infty \} \cap \tilde F_1^c)\\
  &&{\hskip 0.5in} - P_{2^{-n}}(\{\underline{\tilde \tau}^{2^{-k_0}} < \infty,\ \tilde Q > k_0\} \cap \tilde F_1 ).
\end{eqnarray*}
Apply Lemma \ref{lem6.18}, Proposition \ref{rdprop38} and Lemma \ref{rdlem40}, to see that for all large $n$, this is bounded below by
$$
  c_0 2^{(k_0-n)\lambda_1} - c\left(\frac{v}{9}\right) 2^{(k_0-n)\lambda_1} - c\left(\frac{v}{9}, k_0\right) 2^{(k_0-n)\lambda_1}.
$$
Fix $v$ large enough so that $c(\frac{v}{9}) < \frac{c_0}{4}$, then $k_0$ large enough,  so that $c(\frac{v}{9},k_0) < \frac{c_0}{4}$, to conclude that for all large $n$,
\begin{equation}\label{08_08PtiF}
  P_{2^{-n}}(\tilde F) \geq \frac{c_0}{2} \ 2^{(k_0-n) \lambda_1}.
\end{equation}

   With $v$ and $k_0$ fixed as above, 
we are going to show that
\begin{equation}\label{10.16rd}
  \tilde F \subset A_2(r,n,k_0 +1,v) \cap A_3(r, n, k_0 +1,v).
\end{equation}
This will establish (\ref{rd10.5}), with $c = c_0/2.$ 

   Let $[\tilde U^{(k)}, \tilde U^{(k-1)}]$ be a horizontal episode with $p(k) \geq k_0$. For $u\in [\tilde U^{(k)}, \tilde U^{(k-1)}]$, the construction of $\tilde X^r$ implies that on $\tilde F_1 \cap \{\tilde Q \leq k_0\}$,
$$
  \vert X^r(u,0) - \tilde X^r(u, 0)\vert \leq \sum E^3_{m, q},
$$
where the sum is over all horizontal episodes with $m \geq k$ and all vertical episodes $[\tilde V^{(q)}, \tilde V^{(q-1)}]$ with $[\tilde V^{(q)}, \tilde V^{(q-1)}] \prec [\tilde U^{(k)}, \tilde U^{(k-1)}]$. Since $\tilde Q \leq k_0$ on $\tilde F$, we deduce from properties (R1) and (R2) of $\frac{v}{9}$-robustness that
\begin{eqnarray}
  \sum E^3_{m, q} &\leq& \sum^\infty_{p=p(k)} \sqrt{\frac{v}{9}} \ p \sum^\infty_{q=p(k)} \sqrt{\frac{v}{9}} \ q\, 2^{-\frac{3}{4}(p+q)} \nonumber \\
  &\leq& C \ \frac{v}{9}\, p(k)^2\, 2^{-\frac{3}{2} p(k)},
  \label{rd10.8}
\end{eqnarray}
where $C$ is a universal constant. Therefore, for $u\in [\tilde U^{(k)}, \tilde U^{(k-1)}]$ with $p(k) \geq k_0$,
\begin{equation}\label{10_10.19}
   \vert X^r(u,0) - \tilde X^r(u, 0)\vert \leq  C \ \frac{v}{9}\, p(k)^2\, 2^{-\frac{3}{2} p(k)}.
\end{equation}

   We are now going to check the assumptions of Proposition \ref{rdprop37}, with $\tilde X^r$ (resp. $X^r$) playing the role of $X$ (resp. $\tilde X$) there, and $\ell_0$ there replaced by $k_0$.
   
   Let $\ultau$ be defined as in Proposition \ref{rdprop37} (relative to $\tilde X^r$). Since, for $u\geq 0$, $X^r(u,0) = \tilde X^r(u,0)$ and $X^r(0,u) = \tilde X^r(0,u)$, \eqref{rd9.1} will be checked if we establish that for $k_0 \leq \ell \leq n$,
\begin{align}\nonumber
  & \max\Big[\sup_{-(\ultau_3 \wedge 2^{-2\ell}) \leq u \leq 0} \vert X^r(u,0) - \tilde X^r(u, 0)\vert,\\
   &\qquad\qquad \sup_{-(\ultau_4 \wedge 2^{-2\ell}) \leq u \leq 0} \vert X^r(0,u) - \tilde X^r(0,u)\vert\Big] \leq f_{15}(\ell).
\label{10_10.20}	
\end{align}

   Let $u \in [-\ultau_3,0]$ and let $[\tilde U^{(k)}, \tilde U^{(k-1)}]$ be the horizontal episode containing $u$, which must be such that $p(k) \geq k_0$. Suppose that $u \in [-2^{-2 \ell}, -2^{-2(\ell+1)}]$. We are going to show that because of $\frac{v}{9}$-robustness, $\ell$ is related to $p(k)$ by the following inequalities:
\begin{equation}\label{10_10.21}
   \ell - (7+4\rho) \log_2 \ell - 5 \log_2 \frac{v}{9} - c \leq p(k) \leq \ell + \log_2 \ell + \frac{3}{4} \log_2 \frac{v}{9} +c,
\end{equation}
where $c$ is a universal constant.

   Indeed, as in \eqref{sum_ep}, the sum of all lengths of episodes of order greater than $p(k)$ is at most $12 (\frac{v}{9})^{3/2} p(k)^2\, 2^{-2p(k)}$, so 
\begin{equation}\label{10_10.22}
   2^{-2(\ell+1)} \leq 12 \left(\frac{v}{9}\right)^{3/2} p(k)^2\, 2^{-2p(k)},
\end{equation}
implying
\begin{equation}\label{10_10.23}
   p(k) \leq \ell + \log_2 p(k) + \half \log_2\left(12 \left(\frac{v}{9}\right)^{3/2}\right) +1
\end{equation}

   It is slightly more subtle to establish a lower bound on $p(k)$, since it could be possible that $[\tilde U^{(k)}, \tilde U^{(k-1)}]$ is the first episode in a stage and that previous horizontal episodes were of substantially higher order. However, this order is controlled by property (R7). Indeed, suppose that $[\tilde U^{(k)}, \tilde U^{(k-1)}]$ occurs in stage $m$, and $H_{m-1} \in [2^{-k_1}, 2^{1-k_1}]$, $H_{m-2} \in [2^{-k_2}, 2^{1-k_2}]$, with $p(k) \leq k_1 \leq k_2$. By (R7),
\begin{equation}\label{10_10.24}
   2^{-p(k)} \leq 2^{1-k_1} \frac{v}{9}\, k_1^\rho \leq 2^{2-k_2} \left(\frac{v}{9}\right)^2 (k_1k_2)^\rho \leq 2^{2-k_2} \left(\frac{v}{9}\right)^2 k_2^{2\rho}.
\end{equation}
In addition, the horizontal episode preceding $[\tilde U^{(k)}, \tilde U^{(k-1)}]$ has order at most $k_2$ and did not reach $- 2^{-2\ell}$, so by (R3),
\begin{align*}
   2^{-2\ell} \geq \frac{2^{-2k_2}}{(\frac{v}{9})^2\, k_2^7} \geq \frac{2^{-4-2p(k)}}{(\frac{v}{9})^2\, k_2^7\, (\frac{v}{9})^4\, k_2^{4\rho}} = \frac{2^{-4}}{(\frac{v}{9})^6\, k_2^{7 + 4\rho}}\, 2^{-2p(k)}.
\end{align*}
Provided $k_0$ is large enough, inequality \eqref{10_10.24} implies that $p(k) \geq k_2 / 2$, so we deduce that
$$
   2^{-2\ell} \geq  \frac{2^{-4}}{(\frac{v}{9})^6\, (2p(k))^{7 + 4\rho}}\, 2^{-2p(k)},
$$
which is equivalent to
\begin{equation}\label{10a_10.23}
   p(k) \geq \ell + \frac{1}{2}\, \log_2\left(2^{-11-4\rho} \left(\frac{9}{v}\right)^6\right) - \frac{1}{2}(7+4\rho) \log_2 p(k).
\end{equation}
Since \eqref{10_10.23} implies that $p(k) \leq 2\ell$ and \eqref{10a_10.23} implies $p(k) \geq \ell/2$, we get \eqref{10_10.21} from \eqref{10a_10.23} and \eqref{10_10.23}, for some universal constant $c$.

   We now combine \eqref{10_10.19} and \eqref{10_10.21} to see that for $u \in [-(2^{-2\ell} \wedge \ultau_3),-(2^{-2(\ell+1)} \wedge \ultau_3)[$ with $\ell \geq k_0$,
\begin{align*}
   \vert X^r(u,0) - \tilde X^r(u, 0)\vert &\leq C\, \frac{v}{9}\, (2\ell)^2\, 2^{-\frac{3}{2}(\ell - (7+4\rho)\log_2 \ell - 5 \log_2 \frac{v}{9})} = \tilde C\, \left(\frac{v}{9}\right)^{17/2}\, \ell^{13 + 6\rho}\, 2^{-\frac{3}{2}\ell}\\
   &\leq f_{15}(\ell)
\end{align*}
provided $\ell \geq k_0$ and $k_0$ is large enough.

   Proceeding similarly for vertical episodes, this verifies the assumption of Proposition \ref{rdprop37}, and we conclude that on $\tilde F$, the DW-algorithm applied to $X^r$ reaches level $2^{-k_0-1}$ before exiting $\cR(2^{-2k_0-1})$ or escapes $\cR(2^{-2k_0-1})$, and behaves $v$-robustly above order $k_0+1$. Since $\{\tilde Q \leq k_0\} \subset \{ Q \leq k_0\}$, it follows that $\tilde F \subset A_2(r,n,k_0+1,v)$.
   
   We now show that $\tilde F \subset A_3(r, n, k_0+1,v)$. Suppose that the DW-algorithm for $X^r$ reaches level $2^{-k}$ ($k \geq k_0+1)$ during an odd stage $2m-1,$ at point $\gamma^{\ast} (\alpha_n^{n-k,r})$. Suppose without loss of generality that $T^m_1 \in [U^\prime_{m-1}, U^\prime_m].$ By property (R5)(a) of $v$-robustness, $X^r \geq \frac{1}{v} f_8(k)$ on the segment from $\gamma^{\ast}(\alpha_n^{n-k,r})$ to $(T^m_1, T^{m-1}_2).$ Suppose without loss of generality that $T^m_2 \in [V^\prime_{m-1}, V^\prime_m].$ On the segment $\{T^m_1\} \times [T_2^{m-1}, V^\prime_{m-1}],$ we have (see (\ref{X_0})):
$$
   X^r \geq H_{2m-1} - H_{2m-2} \geq \frac{1}{v} f_3(k)
$$ 
by property (R4) of $v$-robustness. Then on $\{T_1^m\} \times [V^\prime_{m-1}, T^m_2]$, by property (R6) of $v$-robustness, $X^r \geq \frac{1}{v} f_8(k).$ In particular, along the path $\Gamma^\ast$, after reaching level $2^{-k}$ and until reaching level $2^{1-k},$ the inequality $X^r \geq \frac{1}{v} f_8(k)$ holds. At those positions along the path $\Gamma^\ast$,
$$
   W(\cdot) - W(r) \geq X^r - \sum^4_{i=1} \sum E^i_{m,\ell},
$$
where the inner sum is over all horizontal and vertical episodes of order at least $k$. As in (\ref{rd10.8}), this sum is bounded by $C v k^2 2^{-\frac{3}{2} k},$ so for $u \in [\alpha_n^{n-k,r}, \alpha_n^{n-k+1,r}]$ and $k$ large enough,
\begin{equation}\label{10_10.25}
   W(u) - W(r) \geq \frac{1}{9v} f_8 (k) - C  v k^2 2^{-3k/2 } \geq \frac{2^{-k}}{10 v k^8}.
\end{equation}

   Suppose that $d(u) \in [2^{-2(\ell+1)},2^{-2\ell}]$. The arguments that led to \eqref{10_10.21} show that
\begin{equation}\label{10_10.26}
   \ell - (7+4\rho)\log_2 \ell - 5 \log_2 v - c \leq k \leq \ell + \log_2 \ell + \frac{3}{4} \log_2 v +c,
\end{equation}
so 
$$
   2^{-k} \geq \frac{2^{-\ell}}{\ell\, v^{3/4}\, 2^c}\qquad
   \mbox{and}\qquad k \leq 2 \ell.
$$
From \eqref{10_10.25}, we see that for $u \in [\alpha_n^{n-k,r}, \alpha_n^{n-k+1,r}]$, 
\begin{equation}\label{10_10.27}
   W(u) - W(r) \geq \frac{2^{-\ell}}{\ell\, v^{3/4}\, 2^c\, 10\, v\, (2\ell)^8} = \frac{2^{-\ell}}{2^{8+c}\, 10\,v^{7/4}\,\ell^9}
   \geq \frac{\tilde c}{v^{7/4}}\, \frac{\sqrt{d(u)}}{\log_2^9\left(\frac{1}{d(u)}\right)}\, ,
\end{equation}
for $\tilde c$ small enough. We note that 
$$
   d(u) \geq 2^{-2(\ell + 1)},
$$
and by \eqref{10_10.26}, 
$$
  2^{-2(\ell + 1)} \geq 2^{-2k}\, 2^{-2}\, 2^{-2(7+4\rho)\log_2 \ell - 10 \log_2 v - 2c} = \frac{2^{-2k}\, 2^{-2-2c}}{v^{10}\, \ell^{14+8\rho}}\, .
$$
Since \eqref{10_10.26} also implies that $\ell \leq 2k$, we conclude that
\begin{equation}\label{10_10.28}
   d(u) \geq \frac{\tilde c}{v^{10}}\, \frac{2^{-2k}}{k^{14+8\rho}}\, ,
\end{equation}
for $\tilde c$ small enough. Finally, as in \eqref{10_10.22}, we have
\begin{equation}\label{10_10.29}
d(u) \leq 2^2\, 2^{-2(\ell+1)} \leq 2^2\, 12 \, v^{3/2}\, k^2\, 2^{-2k},
\end{equation}
and we conclude from \eqref{10_10.27}, \eqref{10_10.28} and \eqref{10_10.29} that $\tilde F \subset A_3(r,n,k_0+1,v)$. This completes the proof.
\hfill $\Box$
\vskip 16pt

\noindent{\em Proof of Proposition \ref{rdprop41}}. Let $v$ and $k_0$ be such that Lemma \ref{rdlem42} holds. Let $\G = \sigma(W(t) - W(r),\ t \in [\frac{3}{2},4] \times [1,4])$. Clearly, $A_i(r,n,k_0,v) \in \G$, $i = 2,3$.


   Set $\ultau = \ultau^{2^{-n},r,2^{-k_0-1}}$ (relative to the DW-algorithm for $X^r$), and let $H_1,\dots,H_4$ be defined as following \eqref{eq6.12}, but with $X^t$ replaced there by $X^r$ and level $1$ replaced by $2^{-k_0-1}$.
   
   Set $\cH = \sigma(W(t)-W(r),\ t \in \cR_r(\ultau))$. Let $\hat A = A_3(r,n,k_0,v) \cap A_2(r,n,k_0,v)$. Then $H_i \cap \hat A \in \cH$. We claim that there is $c >0$ such that for all large $n$, $r \in [2,3]^2$ and $y \in I$,
\begin{equation}\label{08_08_20_1}
   P(A_1(r,n) \cap A_4(r,n,k_0) \mid \cH, W(1,1) = y) \geq c\, 2^{-n} \qquad\mbox{on } H_i \cap \hat A.
\end{equation}
Using Lemma \ref{rdlem42}, this will complete the proof of Proposition \ref{rdprop41}, just as \eqref{eq6.14} quickly led to the proof of the lower bound in Proposition \ref{lowerlem1}(a).

   We now prove \eqref{08_08_20_1}. We only consider the case where $i=1$, $\ultau$ occurs on a horizontal stage and $2^{-n} + X^r(r_1 + \ultau_1,T_2^n) = 2^{-k_0 - 1}$, since the other cases are similar. Consider the following points (see Figure \ref{08_08_figJ}):
$$
  \rho_1 = (r_1 + \ultau_1,T_2^n),\quad \rho_2 = (4,T_2^n), \quad \rho_3 = (4, r_2 - \ultau_4), \quad \rho_4 = (4, 1),
$$
$$
    \quad \rho_5 = (r_1 + \ultau_1, 1),\quad \rho_6 = (r_1 - \ultau_3,  1),\quad \rho_7 = (3/2,1),\quad \rho_8 = (1, 1),
$$
and the following straight line segments (which are also shown in Figure \ref{08_08_figJ}):
$$
   \Gamma_i = \langle \rho_i,\rho_{i+1} \rangle,\qquad i=1,\dots,7.
$$
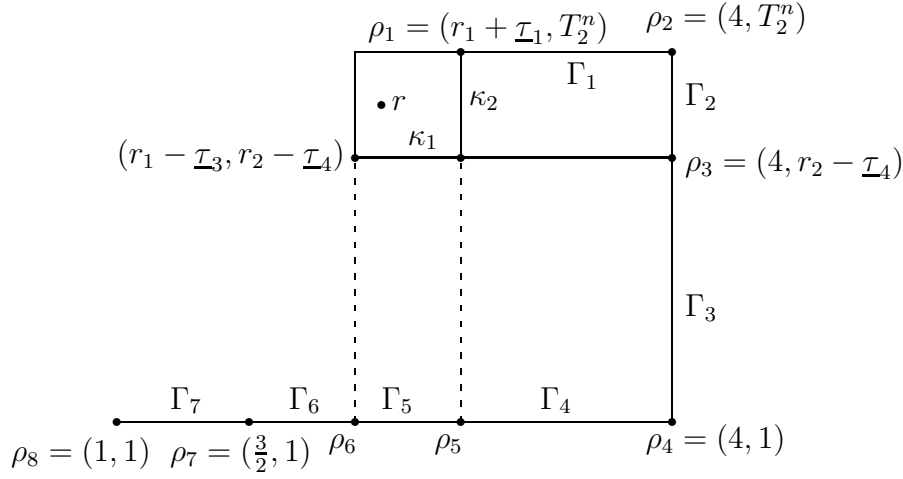
\begin{figure}
\begin{center}
\begin{picture}(320,200)

\put(20,20){\line(1,0){210}}

\put(40,25){$\Gamma_7$}
\put(20,20){\circle*{3}}
\put(-20,5){$\rho_8 = (1,1)$}

\put(85,25){$\Gamma_6$}
\put(70,20){\circle*{3}}
\put(40,5){$\rho_7=(\frac{3}{2},1)$}

\put(120,25){$\Gamma_5$}
\put(110,20){\circle*{3}}
\put(100,10){$\rho_6$}

\put(180,25){$\Gamma_4$}
\put(150,20){\circle*{3}}
\put(140,10){$\rho_5$}

\put(230,20){\circle*{3}}
\put(220,10){$\rho_4=(4,1)$}

\put(230,20){\line(0,1){140}}

\put(235,60){$\Gamma_3$}

\put(230,120){\circle*{3}} \put(235,115){$\rho_3 = (4,r_2 - \ultau_4)$}

\put(235,140){$\Gamma_2$}

\put(230,160){\circle*{3}} \put(220,170){$\rho_2 = (4,T_2^n)$}

\put(110,120){\line(1,0){120}}
\put(110,160){\line(1,0){120}}
\put(110,120){\line(0,1){40}}
\put(150,120){\line(0,1){40}}

\put(150,120){\circle*{3}}

\put(110,120){\circle*{3}}
\put(20,118){$(r_1 - \ultau_3,r_2 - \ultau_4)$}

\put(115,166){$\rho_1 = (r_1 + \ultau_1,T_2^n)$}    \put(190,148){$\Gamma_1$}
\put(150,160){\circle*{3}}

\multiput(110,20)(0,6){17}{\line(0,1){2}}
\multiput(150,20)(0,6){17}{\line(0,1){2}}

\put(120,140){\circle*{3}}
\put(124,138){$r$}

\put(130,125){$\kappa_1$}   \put(153,140){$\kappa_2$}

\end{picture}
\end{center}
\caption{The points $\rho_1,\dots,\rho_8$. \label{08_08_figJ}} 
\end{figure}

   Define 
\begin{eqnarray*}
   G_1 &=& \left\{ W(\cdot) - W(\rho_1) \in \left[-\frac{2^{-k_0-1}}{2},5 \right]\quad\mbox{on } \Gamma_1\right\} \\
   &&\qquad \cap \left\{W(\rho_2) - W(\rho_1) \geq 4 \right\},
\end{eqnarray*}
and for $i = 2,\dots,6$, define the following events:
$$
   G_i = \left\{ W(\cdot) - W(\rho_i) \in \left[-\frac{1}{2},1 \right] \quad\mbox{on } \Gamma_i\right\},
$$
and, finally, set $G_7 = \{W(\cdot) > 0$ on $\Gamma_7\}$.
We note that 
$$
   \cap_{i=1}^7 G_i \cap H_1 \cap \hat A \subset A_4(r,n,k_0).
$$

   The event $\cap_{i=1}^6 G_i \cap H_1 \cap \hat A$ is $\G$-measurable, and on this event, because $W(\rho_2) \geq 4$, the definitions of the $G_i$ imply that $W(\frac{3}{2},1) \geq \frac{3}{2}$.
	
	 Define $Z = W(\frac{3}{2},1) - W(1,1)$, so that $W(r) = W(1,1) + Z + Y$, where $Y$ is $\G$-measurable and $(W(1,1),Z)$ is independent of $\G$. Let $\cH_8 = \G \vee \sigma(W(1,1),Z)$. Then
$$
	 \cap_{i=1}^6 G_i \cap H_1 \cap \hat A \cap A_1(r,n) \in \cH_8,
$$
and on this event, $P(G_7 \mid \cH_8,\, W(1,1) = y)$ is bounded below by the probability that a Brownian bridge from $y$ to $\frac{3}{2}$ on the time interval $[0,\frac{1}{2}]$ stays positive. Therefore, there is $c_0 >0$ such that on this event, for all $y \in I$, 
$$
   P(G_7 \mid \cH_8,\, W(1,1) = y) \geq c_0.
$$
It follows that 
\begin{align*}
   & P(A_1(r,n \cap A_4(r,n,k_0) \mid \cH,\, W(1,1) = y) \\
	 & \qquad \geq c_0 P(\cap_{i=1}^6 G_i \cap H_1 \cap \hat A \cap A_1(r,n) \mid \cH,\, W(1,1) = y).
\end{align*}
Now let $\cH_7 = \G \vee \sigma(W(1,1))$. Then $\cap_{i=1}^6 G_i \cap H_1 \cap \hat A \in \cH_7$, $Z$ is independent of $\cH_7$ and
\begin{align}\label{e10.33}
    & P(A_1(r,n) \mid \cH_7,\, W(1,1) = y) \\ \nonumber
		&\qquad = P\{Z \in [-2^{1-n} - W(1,1) - Y,\, -2^{-n} - W(1,1) - Y[\, \mid  \cH_7,\, W(1,1) = y\}.
\end{align}
On $A_1(r,n) \cap \cap_{i=1}^6 G_i \cap H_1 \cap \hat A$, by definition of the $G_i$, $\vert Y \vert \leq 10$ on this event. Therefore, the conditional probability in \eqref{e10.33} is bounded below by $c_1 2^{-n}$. Therefore,
\begin{align*}
   & P(A_1(r,n \cap A_4(r,n,k_0) \mid \cH_7,\, W(1,1) = y) \\
	 &\qquad \geq c_0 c_1 2^{-n} P(\cap_{i=1}^6 G_i \cap H_1 \cap \hat A \mid \cH_7,\, W(1,1) = y) \\
	  &\qquad = c_0 c_1 2^{-n} P(\cap_{i=1}^6 G_i \cap H_1 \cap \hat A\mid \cH_7),
\end{align*}
since the event on the right-hand side is independent of $W(1,1)$.

Set $\cH_6 = \sigma(W(t) - W(r),\ t \geq (r_1 - \ultau_3, 1))$. We note that $G_1 \cap \cdots \cap G_5 \in \cH_6$. By a (strong) Markov property of $W$, given $\cH_6$, $W\vert_{\Gamma_6} - W(\rho_6)$ (moving towards $(1,1)$) is a Brownian motion, and so there is $c_8 >0$ (not depending on $r \in [2,3]^2$ or $n$) such that
$$
   P(G_6 \mid \cH_6) \geq c_8\qquad \mbox{on } \cap_{i=1}^5 G_i \cap H_1 \cap \hat A.
$$

   Consider the two segments (shown in Figure \ref{08_08_figJ})
\begin{eqnarray*}
   \kappa_1 &=& \langle (r_1 - \ultau_3, r_2 - \ultau_4), (r_1 + \ultau_1, r_2 - \ultau_4)\rangle,\\
    \kappa_2 &=& \langle (r_1 + \ultau_1, r_2 - \ultau_4), \rho_1\rangle.
\end{eqnarray*}
For $j=2,\dots,5$, set
$$
   \cH_j = \cH \vee \sigma(W\vert_{\Gamma_i} - W(\rho_i),\ i = 1,\dots,j-1).
$$
Then $W\vert_{\Gamma_5} - W(\rho_5)$ is conditionally independent of $\cH_5$ given $W\vert_{\kappa_1} - W(r_1 - \ultau_3,r_2 - \ultau_4)$, and using in particular Lemma \ref{lem5.3prime} and the fact that $Q \leq k_0+1$ on $A_2(r,n,k_0,v)$, we have $2^{1-k_0} \geq W\vert_{\kappa_1} - W(r_1 - \ultau_3,r_2 - \ultau_4) \geq - 2^{-k_0}$ on $H_1 \cap \hat A$. The law of $W\vert_{\Gamma_5} - W(\rho_5)$ given $W\vert_{\kappa_1} - W(r_1 - \ultau_3,r_2 - \ultau_4)$ is that of a time-reversed Brownian sheet (see\cite[Section6]{DW2002}, in particular, Theorems 6.1 to 6.7 in this reference), and because the distance between $\Gamma_5$ and $\kappa_1$ is $\geq 1$, there is $c_7>0$ (not depending on $r$ or $n$) such that
$$
   P(G_5 \mid \cH_5) \geq c_7 \qquad\mbox{on } G_1 \cap \cdots\cap G_4 \cap H_1 \cap \hat A.
$$

   Notice now that $W\vert_{\Gamma_4} - W(\rho_4)$ is conditionally independent of $\cH_4$ given $W\vert_{\Gamma_1} - W(\rho_1)$, and $5 \geq W\vert_{\Gamma_1} - W(\rho_1) \geq -2^{-k_0-2}$ on $G_1 \cap H_1 \cap \hat A$. Using again properties of a time-reversed Brownian sheet, we see that there is $c_6 >0$ (not depending on $r$ or $n$) such that
$$
   P(G_4 \mid \cH_4) \geq c_6 \qquad\mbox{on } G_1 \cap G_2 \cap G_3 \cap H_1 \cap \hat A.
$$

   Similarly, $W\vert_{\Gamma_3} - W(\rho_3)$ is conditionally independent of $\cH_3$ given $W\vert_{\Gamma_1} - W(\rho_1)$ and $W\vert_{\kappa_1} - W(r_1 - \ultau_3, r_2 - \ultau_4)$. Given these two processes, $W\vert_{\Gamma_3} - W(\rho_3)$ is the sum of a Brownian bridge and an independent Brownian motion, so there is $c_5 >0$ (not depending on $r$ or $n$) such that
$$
   P(G_3 \mid \cH_3) \geq c_5 \qquad\mbox{on } G_1 \cap G_2 \cap H_1 \cap \hat A.
$$
In the same way, $W\vert_{\Gamma_2} - W(\rho_2)$ is conditionally independent of $\cH_2$ given $\sigma(W\vert_{\kappa_2} - W(\rho_1)) \vee \sigma(W\vert_{\Gamma_1} - W(\rho_1))$. Note that on $H_1 \cap \hat A$, $2^{1-k_0} \geq W\vert_{\kappa_2} - W(\rho_1) \geq - 2^{-k_0}$, and for $t=(t_1,t_2) \in \Gamma_2$,
$$
   W(t) - W(\rho_2) = Y(t) + W(r_1 + \ultau_1, t_2) - W(\rho_1),
$$
where
$$
   Y(t) = \Delta_{[r_1 + \ultau_1, 4]\times [t_2,T_2^n]} W.
$$  
We note that the conditional law given $\cH_2$ of $Y(\cdot)$ is that of a Brownian bridge with speed in $[\half,2]$. Therefore, there is $c_4 >0$ (not depending on $r$ or $n$) such that
$$
   P(G_2 \mid \cH_2) \geq c_4 \qquad\mbox{on } G_1 \cap H_1 \cap \hat A.
$$

   We note that $G_1$ is independent of $\cH$, and $W\vert_{\Gamma_1}$ is a Brownian motion with speed in $[2,4]$, so there is $c_3 >0$ (not depending on $r$ or $n$) such that
$$
   P(G_1 \mid \cH) = P(G_1) \geq c_3 \qquad\mbox{on } H_1 \cap \hat A.
$$ 
We also note that all the constants $c_3,\dots,c_8$ depend on $k_0$, but $k_0$ is fixed.  
   
By iteration of conditional probabilities, the above considerations establish \eqref{08_08_20_1}. Proposition \ref{rdprop41} is proved.
\hfill $\Box$
\vskip 16pt

\end{section}
\eject

\begin{section}{Lower bound for the Brownian sheet: the two-point estimate}\label{sec11}

   In this section, we establish the second key ingredient needed to implement the second-moment argument, which is the upper bound in Proposition \ref{rdprop42} below.


  Let $A_2(t, n, k_0,v)$\index{$A_2(t,n,k_0,v)$} be the event ``the DW-algorithm for $X^t$ started at level $2^{-n}$, reaches level $2^{-k_0}$ before escaping the square with sides of length $2^{-2k_0}$ or escapes this square, and is $v$-robust above order $k_0$" (this is not quite the same definition as in Lemma \ref{rdlem42}, because there is no condition on the variable $Q$, but defines a larger event. Since we are seeking an upper bound, this will be sufficient).

   The following proposition is the principal objective of this section.
\vskip 12pt
 
\begin{prop} Let $A(t,n,k_0,v)$ be defined as in \eqref{rd10.4dp} (but using $A_2(t, n, k_0,v)$ as above). For all $v \geq 1,$ there is $k_0 \in \IN$ and $C=C_{k_0,v} > 0$ such that for all large $n \in \IN$, $1 \leq k \leq \ell \leq n-k_0$ and $(r,t) \in \ID_n(k,\ell),$
\begin{equation}\label{eq11.1}
 P(A(t, n,k_0, v) \cap A(r, n,k_0, v)) \leq C\ 2^{-n-\ell-2 k\lambda_1-(n-\ell)\lambda_1}.
\end{equation}
\label{rdprop42}
\end{prop}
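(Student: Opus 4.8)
The overall strategy mirrors the proof of Proposition \ref{lembivariate} (the two-point upper bound for ABM), but now carried out for the Brownian sheet by approximating the sheet locally near each of the two points $r$ and $t$ by standard ABM's, as in Section \ref{sec10}. The plan is to fix $(r,t)\in \ID_n(k,\ell)$ with, without loss of generality, $t_2-r_2 \le t_1-r_1$, and to build two {\em almost-independent} standard ABM's, one adapted to a neighborhood of $r$ and one adapted to a neighborhood of $t$, using the construction from Section \ref{sec10} (the white-noise surgery producing $\tilde X^r$ from $W$, see \eqref{08_08_sabm}), but stopping each DW-algorithm when it either reaches level $2^{-(\ell-n)}$ (for the one at $r$, where the episodes are ``large'') respectively level $2^{-(k-n)}$ (for the one at $t$, where they are ``small'') or escapes a rectangle of the appropriate size, so that the two explored regions are disjoint with high probability. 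Because $r$ and $t$ differ by at least $\sim 2^{2(k-n)}$ in one coordinate and at least $\sim 2^{2(\ell-n)}$ in the other, the two DW-algorithms, run up to these intermediate stopping points, explore rectangles that do not overlap, and the increments used by one are (conditionally on the stopping points) independent of those used by the other. This requires a third variant of the DW-algorithm and the ``boosted DW-algorithm'' alluded to in the introduction, to handle the fact that some rectangular increments of $W$ near $r$ and near $t$ are correlated through the ``large'' rectangle $[r_1,t_1]\times[r_2,t_2]$.

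The counting of powers of $2$ that produces the exponent in \eqref{eq11.1} is then assembled factor by factor, exactly as in Proposition \ref{lembivariate}: (i) a factor $2^{-(n-\ell)\lambda_1}$ from the cost, via Theorem \ref{prop1} (and its robust version, Proposition \ref{rdprop38}), of the DW-algorithm at $r$ reaching level $2^{-(\ell-n)}$ and behaving $v$-robustly; (ii) a factor $2^{-k\lambda_1}$ from the analogous cost at $t$ of reaching level $2^{-(k-n)}$; (iii) a further factor $2^{-k\lambda_1}$ from the cost, after level $2^{-(k-n)}$ has been reached near $t$, of continuing on to level $1$ (using the independent ``extra'' Brownian motion and Theorem \ref{prop1} once more, as in the passage from \eqref{5.7} to \eqref{5.8a}); (iv) a factor $2^{-n}$ from the event $A_1(r,n)$, i.e.\ that $W(r)\in[-2^{1-n},-2^{-n}[$, obtained by conditioning on the $\sigma$-field $\G=\sigma(W(s)-W(\tfrac32,\tfrac32),\ \dots)$ as in Proposition \ref{univariateup} and using that $W(\tfrac32,\tfrac32)$ is an independent centered Gaussian; and (v) a factor $2^{-\ell}$ from the event $A_1(t,n)$, which is handled the way $F_1(t,2^{-n})$ was handled in Proposition \ref{lembivariate}: one conditions on the increment $W(t)-W(\rho)$ where $\rho$ is a corner of the rectangle explored near $r$, and bounds the conditional density of this increment by Lemma \ref{supBM}, the gain being $2^{-n}/2^{2(\ell-n)}=2^{-\ell}$ because the relevant time-increment is $\gtrsim 2^{2(\ell-n)}$. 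Multiplying these gives $2^{-n}\cdot 2^{-\ell}\cdot 2^{-2k\lambda_1}\cdot 2^{-(n-\ell)\lambda_1}$, which is the right-hand side of \eqref{eq11.1}; the various auxiliary random quantities ($\zeta_1,\zeta_2$-type suprema, the number of stages $V(\cdot)$, the sizes of the explored rectangles) are controlled by exponentially decaying tails, as in Lemmas \ref{rdlem9}, \ref{rdlem10} and \ref{lem8.5}, so the corresponding series converge and only absorb constants.

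The boundary cases $1\le k\le\ell\le 3$ and $k\le 3<\ell$ are dealt with exactly as in Proposition \ref{lembivariate}: when both $k$ and $\ell$ are bounded one simply uses the one-point bound $P(A(t,n,k_0,v))\le C\,2^{-n(1+\lambda_1)}$ (which follows from Propositions \ref{univariateup} and \ref{bs_ubprop1}, since $A(t,n,k_0,v)$ forces $t$ to be within $O(n^{18}2^{-2n})$ of $\partial\cC^4_{(1,1)}$, hence is contained up to negligible error in the ``escape'' event of Proposition \ref{univariateup}); and when only $k$ is bounded the role of the small rectangle near $t$ is trivial and one argues as in the case $3<k\le\ell$ but with the $t$-side simplified.

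The main obstacle will be step (iii) together with the independence bookkeeping: unlike the ABM setting of Section \ref{sec5}, here the two local ABM's are {\em approximations} of $W$, so one must carry along the robustness machinery of Section \ref{sec9} (Proposition \ref{rdprop37} and the error bounds $f_{15}(\ell)$, the $E^j_{k,\ell}$ quantities and $\tilde Q\le k_0$ of Lemma \ref{rdlem40}) simultaneously at both scales, and verify that the correlation between the ``large'' intermediate increments near $r$ and the small-scale behavior near $t$ is harmless—this is precisely where the boosted DW-algorithm and Lemma \ref{08.lem5} enter. Concretely, the hard estimate is to show that, conditionally on everything explored near $r$ up to level $2^{-(\ell-n)}$ and on everything explored near $t$ up to level $2^{-(k-n)}$, the residual DW-algorithm driving $W$ from $2^{-(k-n)}$ up to level $1$ near $t$ still costs $2^{-k\lambda_1}$ with the right constant, uniformly in all the conditioning; this is the analogue of \eqref{5.7}–\eqref{5.8a} but must now be obtained through the sheet-to-ABM comparison rather than directly from Theorem \ref{prop1}. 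Once that estimate is in place, the assembly of the five factors and the convergence of the tail series are routine, following the template of Proposition \ref{lembivariate} line by line.
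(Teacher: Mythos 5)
Your plan correctly identifies the general architecture (white-noise surgery to produce two nearly-independent standard ABM's, a third/fourth DW-variant, assembly of factors as in Proposition \ref{lembivariate}), but the factor bookkeeping in steps (i)--(iii) and the choice of stopping scales are both wrong in a way that would sink the argument. Step (i) claims a cost $2^{-(n-\ell)\lambda_1}$ for the DW-algorithm at $r$ reaching level $2^{\ell-n}$ from $2^{-n}$; by Theorem \ref{prop1}, that event costs $(2^{-n}/2^{\ell-n})^{\lambda_1}=2^{-\ell\lambda_1}$, not $2^{-(n-\ell)\lambda_1}$. Step (iii) claims a cost $2^{-k\lambda_1}$ for continuing from level $2^{k-n}$ near $t$ to level $1$; by Theorem \ref{prop1} that costs $2^{(k-n)\lambda_1}$. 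Your five factors happen to multiply to the stated bound, but the sub-events you describe have different probabilities, so substituting the correct gambler's ruin exponents yields $2^{-n\lambda_1-k\lambda_1-n-\ell}$ rather than $2^{-2k\lambda_1-(n-\ell)\lambda_1-n-\ell}$; these agree only when $k=\ell$. (Notice that the target here is strictly \emph{weaker} than the ABM bound of Proposition \ref{lembivariate}, by a factor $2^{(\ell-k)\lambda_1}$; your plan implicitly tries to prove the sharper ABM bound, which cannot be done by the methods available for the sheet.)

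The deeper issue is the scale allocation ``$\ell$ for $r$, $k$ for $t$.'' This mimics the proof of Proposition \ref{lembivariate}, where it is harmless because for ABM one simply uses the strong Markov property; no independent approximating processes are needed. For the sheet the situation is different: the construction of $\hat X^r$ proceeds by replacing white noise in horizontal and vertical strips (see Figure \ref{rdfig1}) reaching back to the axes, and if the $r$-algorithm is allowed to explore out to side $\sim 2^{2(\ell-n)}$, these strips cross the $\sim 2^{2(k-n)}$-wide gap between $r_1$ and $t_1$ and overlap the region on which $X^t$ (and hence $A_2(t,n,k_0,v)$) depends. This destroys the link between the event $A_2(t,n,k_0,v)$, which concerns the actual sheet $W$, and whatever the constructed process $\hat X^t$ does. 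The actual proof therefore stops \emph{both} DW-algorithms at the $k$-scale (so each escape contributes $2^{-k\lambda_1}$ and the surgeries never collide), and recovers the $2^{(\ell-n)\lambda_1}$ factor from a different source: the random variable $\bar X^{r,t}$, the supremum of $X^t$ over the joint information rectangle $[x^t_1,y^r_1]\times[x^r_2,y^t_2]$, which Lemma \ref{lem11.4} shows is of order $2^{\ell-n}$ with Gaussian tails. On $G_M$ (where $\bar X^{r,t}\approx M\,2^{\ell-n}$), the boosted DW-algorithm is started at value $2^{-k_{M,n}}=3M\,2^{\ell-n}$, its ``boosting'' absorbs the discrepancy between the additive process $I$ and a genuine standard ABM, and Lemma \ref{08.lem5}(c) gives a cost $\sim(M\,2^{\ell-n})^{\lambda_1}$. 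Summing over $M$ with the Gaussian tail of Lemma \ref{08.lem2} produces exactly the missing factor $2^{(\ell-n)\lambda_1}$. Your plan gestures at the boosted algorithm as a technicality for handling correlations, but in fact it \emph{is} where the crucial $\ell$-dependent exponent is generated; identifying $\bar X^{r,t}$ as the pivotal quantity and starting the boosted algorithm from it is the missing idea.
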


\noindent{\em The two-point DW-algorithm}
\vskip 12pt

   We now work towards the proof of Proposition \ref{rdprop42}. Since the events on the left-hand side of \eqref{eq11.1} are statements about the values of $W(t)$, $W(r)$ and of $DW$-algorithms applied to $X^t$ and $X^r$ (with $x_0 = 2^{-n}$), (\ref{eq11.1}) can be proved without requiring the actual growth of the Brownian sheet everywhere along this path, which is a substantial simplification. To be precise, we note that in view of the definition of $A(r,n,k_0,v),$ it suffices to prove that
\begin{equation}\label{rd10.9}
   P(A_1(t,n) \cap A_2(t,n,k_0,v) \cap A_1(r,n) \cap A_2(r,n,k_0,v)) \leq C 2^{-n-\ell-2k\lambda_1-(n-\ell)\lambda_1}.
\end{equation}

   Fix $ \ell \geq k$ and $(r,t) \in \ID_n(k,\ell)$. If $r \leq t$ or $t \leq r,$ then $X^t$ and $X^r$ restricted to $[-2^{2(k-n-2)}, 2^{2(k-n-2)}]$ are independent. However, if neither $r \leq t$ nor $t \leq r$, then this independence property no longer holds, and we will construct independent standard ABM's $\hat X^t$ and $\hat X^r$ that are close to $X^t$ and $X^r$, respectively (when $r \leq t$ or $t \leq r$, the $\hat X$'s can simply be taken equal to the $\tilde X$'s defined in Section \ref{sec10}). Without loss of generality, we shall only discuss the case where
\begin{equation}\label{rd10.10}
   t_2-r_2 \in\, ]2^{2(\ell-n-1)}, 2^{2(\ell-n)}] 
   \qquad \mbox{and} \qquad r_1-t_1 \in\, ]2^{2(k-n-1)}, 2^{2(k-n)}].
\end{equation}


   The construction of the $\hat X$'s uses the ideas developped in the construction of $\tilde X^r$ at the beginning of Section \ref{sec10}, but in addition, accounts for the dependence between $X^r$ and $X^t$.

   Construct $\hat X^r$ in the same way as $\tilde X^r$, using an independent Brownian sheet $W^\prime$ to make $\hat X^r$ standard. Do this until the DW-algorithm for $\hat X^r$ terminates or achieves level $2^{1-n},$ having explored the rectangle
$$
\R_r(\underline{ \hat \tau}^{2^{-n}, r, 2^{1-n}}) := \ I^{r,1} \times J^{r,1}. 
$$
Assume
$$
I^{r,1} = [u^{r,1}_1, v_1^{r,1}],\qquad J^{r,1} = [u^{r,1}_2, v_2^{r,1}]
$$
(see Figure \ref{fig10.1}).
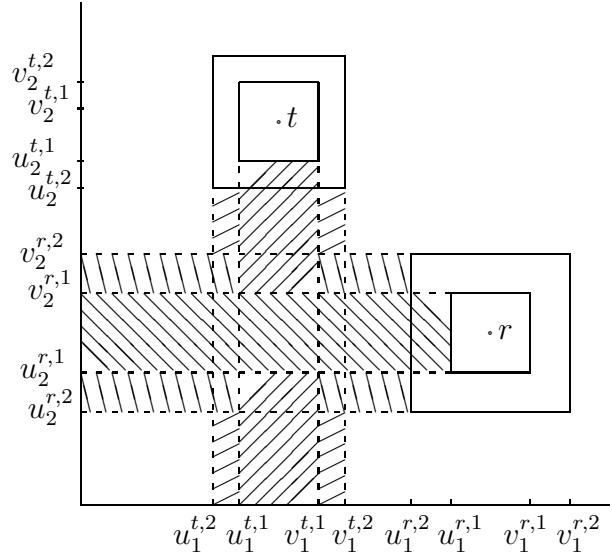
\begin{figure}[h]
\begin{center}
\begin{picture}(200,200)
\put(20,20){\line(0,1){190}}
\put(20,20){\line(1,0){200}}

\put(95,165){\circle{1}}
\put(98,163){$t$}

\put(80,150){\line(1,0){30}}
\put(80,150){\line(0,1){30}}
\put(80,180){\line(1,0){30}}
\put(110,150){\line(0,1){30}}

\put(70,140){\line(1,0){50}}
\put(70,140){\line(0,1){50}}
\put(70,190){\line(1,0){50}}
\put(120,140){\line(0,1){50}}

\put(175,85){\circle{1}}
\put(178,83){$r$}

\put(160,70){\line(1,0){30}}
\put(160,70){\line(0,1){30}}
\put(160,100){\line(1,0){30}}
\put(190,70){\line(0,1){30}}

\put(145,55){\line(1,0){60}}
\put(145,55){\line(0,1){60}}
\put(145,115){\line(1,0){60}}
\put(205,55){\line(0,1){60}}


\multiput(20,115)(6,0){21}{\line(1,0){2}}
\multiput(20,100)(6,0){24}{\line(1,0){2}}
\multiput(20,70)(6,0){24}{\line(1,0){2}}
\multiput(20,55)(6,0){21}{\line(1,0){2}}

\multiput(70,20)(0,6){21}{\line(0,1){2}}
\multiput(80,20)(0,6){23}{\line(0,1){2}}
\multiput(110,20)(0,6){23}{\line(0,1){2}}
\multiput(120,20)(0,6){21}{\line(0,1){2}}

\put(19,180){\line(1,0){2}}
\put(-6,180){$v_2^{t,2}$}
\put(19,170){\line(1,0){2}}
\put(0,168){$v_2^{t,1}$}
\put(19,150){\line(1,0){2}}
\put(-6,148){$u_2^{t,1}$}
\put(19,140){\line(1,0){2}}
\put(0,136){$u_2^{t,2}$}

\put(-3,112){$v_2^{r,2}$}
\put(0,98){$v_2^{r,1}$}
\put(-3,68){$u_2^{r,1}$}
\put(0,53){$u_2^{r,2}$}

\put(55,5){$u_1^{t,2}$}
\put(75,5){$u_1^{t,1}$}
\put(97,5){$v_1^{t,1}$}
\put(115,5){$v_1^{t,2}$}

\put(145,20){\line(0,1){2}}
\put(135,5){$u_1^{r,2}$}
\put(160,20){\line(0,1){2}}
\put(155,5){$u_1^{r,1}$}
\put(190,20){\line(0,1){2}}
\put(180,5){$v_1^{r,1}$}
\put(205,20){\line(0,1){2}}
\put(200,5){$v_1^{r,2}$}

\multiput(20,100)(6,0){19}{\line(1,-1){29}} \put(134,100){\line(1,-1){26}} \put(140,100){\line(1,-1){20}} \put(146,100){\line(1,-1){14}}
\put(20,94){\line(1,-1){23}} \put(20,88){\line(1,-1){17}} \put(20,82){\line(1,-1){11}} 

\put(80,136){\line(1,1){14}}
\put(80,130){\line(1,1){20}}
\put(80,124){\line(1,1){26}}
\multiput(80,100)(0,6){4}{\line(1,1){30}}
\put(86,100){\line(1,1){24}} \put(92,100){\line(1,1){18}} \put(98,100){\line(1,1){12}}

\put(80,56){\line(1,1){14}}
\put(80,50){\line(1,1){20}}
\put(80,44){\line(1,1){26}}
\multiput(80,20)(0,6){4}{\line(1,1){30}}  \put(86,20){\line(1,1){24}} \put(92,20){\line(1,1){18}} \put(98,20){\line(1,1){12}}

\multiput(20,115)(6,0){10}{\line(1,-4){3.7}} 
\multiput(110,115)(6,0){6}{\line(1,-4){3.7}} 

\multiput(20,70)(6,0){10}{\line(1,-4){3.7}} 
\multiput(110,70)(6,0){6}{\line(1,-4){3.7}} 

\multiput(70,115)(0,6){4}{\line(2,1){10}} 
\multiput(70,20)(0,6){6}{\line(2,1){10}}

\multiput(110,115)(0,6){4}{\line(2,1){10}}
\multiput(110,20)(0,6){6}{\line(2,1){10}}

\end{picture}
\end{center}
\caption{Construction of the $\hat X$'s. \label{fig10.1}} 
\end{figure}

   Then, on the rectangle $[0, u^{r,1}_1] \times J^{r,1}$, replace all remaining white noise $\dot{W}$ by $\dot W^\prime$. Using this new white noise, $X^t$ is replaced by $X^{t,1}$, an ABM that is initially independent of $X^r$. With this ABM, we begin to construct the standard ABM $\hat X^t$, proceeding as described in Section \ref{sec10}, until either the DW-algorithm for $\hat X^t$ terminates, or it achieves level $2^{1-n}$, having explored the rectangle
$$
   I^{t,1} \times J^{t,1}  := \R_t (\underline{\hat \tau}^{2^{-n}, t, 2^{1-n}}).
$$
Assume that
$$
I^{t,1}= [u^{t,1}_1, v^{t,1}_1], \qquad J^{t,1}= [u^{t,1}_2, v_2^{t,1}].
$$
We then replace the white noise $\dot W$ by $\dot W^\prime$ on the set
$$
I^{t,1} \times ([0, u^{t,1}_2] \setminus J^{r,1}).
$$
Using this new white noise, we return to the construction of $\hat X^r$ from where we left off; we use the method of Section \ref{sec10} until the DW-algorithm for this new $\hat X^r$ terminates or achieves level $2^{2-n}$, having explored the rectangle
$$
   I^{r,2} \times J^{r,2}  := \R_r(\underline{\hat \tau}^{2^{-n}, r, 2^{2-n}}).
$$
Assume
$$
I^{r,2} = [u^{r,2}_1, v^{r,2}_1],\qquad J^{r,2} = [u^{r,2}_2, v^{r,2}_2].
$$
We then replace $\dot W$ by $\dot W^\prime$ on the set
$$
([0, u^{r,2}_1] \setminus I^{t,1}) \times (J^{r,2} \setminus J^{r,1}).
$$
Using this new white noise, we return to the construction of $\hat X^t$ from where we left off: we use the method of Section \ref{sec10} until the DW-algorithm for this new $\hat X^t$ terminates or achieves level $2^{2-n}$, having explored the rectangle
$$
  I^{t,2} \times J^{t,2}  := \R_t(\hat \ultau^{2^{-n}, t, 2^{2-n}}).
$$
Assume
$$
I^{t,2} = [u^{t,2}_1, v^{t,2}_1], \qquad J^{t,2} = [u^{t,2}_2, v^{t,2}_2].
$$
We then replace the white noise $\dot W$ by $\dot W^\prime$ on the set 
$$
(I^{t,2} \setminus I^{t,1}) \times ([0, u_2^{t,2}] \setminus J^{r,2}),
$$
and return to the construction of $\hat X^r$ where we left off.

   This construction continues until either of the two DW-algorithms terminates or until one of paths escapes the square with sides of length $2^{2(k-n)-2}$ centered around its starting point. If the latter occurs, we say that this DW-algorithm has escaped this square, and we no longer continue with this algorithm. However, we continue with the other DW-algorithm, either until it terminates, or until it escapes the square with sides of length $2^{2(k-n)-2}$ centered around its own starting point. If this last occurs, we say that both DW-algorithms have escaped to a distance of $2^{2(k-n)-2}$ (see Figure \ref{fig10.2}). 
\begin{figure}[h]
\begin{center}
\begin{picture}(200,200)
\put(0,20){\line(0,1){190}}
\put(0,20){\line(1,0){190}}

\put(30,180){\vector(1,0){20}}
\put(30,180){\vector(-1,0){0}}

\put(30,150){\line(1,0){20}}
\put(30,170){\line(1,0){20}}
\put(30,150){\line(0,1){20}}
\put(50,150){\line(0,1){20}}

\put(40,160){\circle{.6}}
\put(42,156){$t$}

\put(75,85){$2^{2(k-n)-1}$}
\put(70,80){\vector(1,0){20}}
\put(70,80){\vector(-1,0){0}}

\put(70,50){\line(1,0){20}}
\put(70,70){\line(1,0){20}}
\put(70,50){\line(0,1){20}}
\put(90,50){\line(0,1){20}}

\put(80,60){\circle{.6}}
\put(82,56){$r$}

\put(40,13){\vector(1,0){40}}
\put(40,13){\vector(-1,0){0}}

\put(40,0){$\sim 2^{2(k-n)}$}

\put(5,60){\vector(0,1){100}}
\put(5,60){\vector(0,-1){0}}

\put(8,110){$\sim 2^{2(\ell-n)}$}

\end{picture}
\end{center}
\caption{The two DW-algorithms. \label{fig10.2}} 
\end{figure}
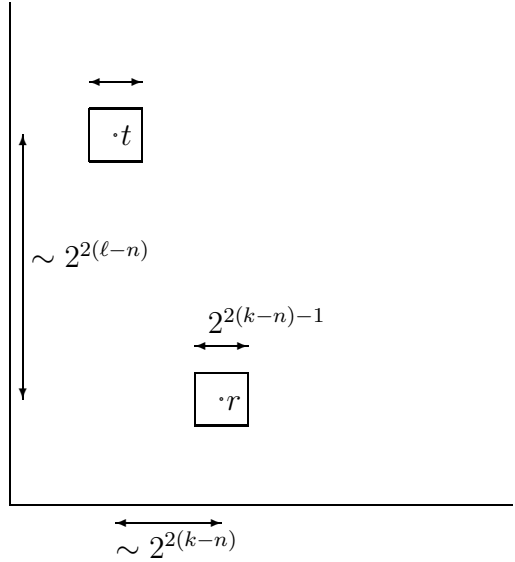
   
   Observe that any piece of white noise is used no more than once, which guarantees independence of $\hat X^r$ and $\hat X^t$, and that these two ABM's are standard (up to a deterministic rescaling of time).
\vskip 12pt

\noindent {\em The probability that the DW-algorithm for $\hat X^t$ and $\hat X^r$ both escape to $2^{2(k-n)-2}$}
\vskip 12pt

We will see that on $A_2(t,n,k_0,v) \cap A_2(r,n,k_0,v)$, unless certain white noise increments of $\dot W$ or $\dot W'$ are unusually large, then both DW-algorithms for $\hat X^t$ and $\hat X^r$ escape to $2^{2(k-n)-3}$. This will lead to an upper bound on the probability of $A_2(t,n,k_0,v) \cap A_2(r,n,k_0,v)$. We use ideas similar to those used for the proof of Lemma \ref{rdlem40}.

   The DW-algorithm's for $\hat X^t$ and $\hat X^r$ construct respectively partitions into horizontal and vertical episodes
$$
\ldots, [\hat U^{(\ti k),t}, \hat U^{(\ti k-1),t}], \ldots, [\hat U^{(1),t}, \hat U^{(0),t}], [\hat U^{\prime(0),t}, \hat U^{\prime(1),t}], \ldots, [\hat U^{\prime(\ti k-1),t}, \hat U^{\prime(\ti k),t}] \ldots,
$$
$$
\ldots, [\hat V^{(\ti k),t}, \hat V^{(\ti k-1),t}], \ldots, [\hat V^{(1), t}, \hat V^{(0),t}], [\hat V^{\prime(0),t}, \hat V^{\prime(1),t}], \ldots, [\hat V^{\prime(\ti k-1),t}, \hat V^{\prime(\ti k),t}], \ldots,
$$\index{$\hat U^{(\ti k),t}$}\index{$\hat U^{\prime(\ti k),t}$}\index{$\hat V^{(\ti k),t}$}\index{$\hat V^{\prime(\ti k),t}$}
and
$$
\ldots, [\hat U^{(\ti k),r}, \hat U^{(\ti k-1),r}], \ldots, [\hat U^{(1),r}, \hat U^{(0),r}], [\hat U^{\prime(0),r}, \hat U^{\prime(1),r}], \ldots, [\hat U^{\prime(\ti k-1),r}, \hat U^{\prime(\ti k),r}], \ldots,
$$
$$
\ldots, [\hat V^{(\ti k),r}, \hat V^{(\ti k-1),r}], \ldots, [\hat V^{(1),r}, \hat V^{(0),r}], [\hat V^{\prime(0),r}, \hat V^{\prime(1),r}], \ldots, [\hat V^{\prime(\ti k-1),r}, \hat V^{\prime(\ti k),r}], \ldots \,.
$$
which we shift to Brownian sheet coordinates by adding $t_1$, $t_2$, $r_1$, or $r_2$ as appropriate, so that $\hat U^{(0),t} = t_1 = \hat U^{\prime(0),t}$, $\hat V^{(0),t} = t_2 = \hat V^{\prime(0),t}$, $\hat U^{(0),r} = r_1 = \hat U^{\prime(0),r}$, $\hat V^{(0),r} = r_2 = \hat V^{\prime(0),r}$. The order of an episode is denoted using the $p(\ti k)$, $q(\ti k)$, $p^\prime(\ti k)$ and $q^\prime(\ti k)$ introduced below (\ref{rd10.2}), except that we write $p^t(\ti k)$\index{$p^t(\ti k)$}, $q^t(\ti k)$\index{$q^t(\ti k)$}, $p^{\prime,t}(\ti k)$\index{$p^{\prime,t}(\ti k)$} and $q^{\prime,t}(\ti k)$\index{$q^{\prime,t}(\ti k)$} (resp. $p^r(\ti k)$, $q^r(\ti k)$, $p^{\prime,r}(\ti k)$ and $q^{\prime,r}(\ti k)$) for episodes relative to $\hat X^t$ (resp.~$\hat X^r$). For $j=1, \ldots, 4,$ we define $E^{j,t}_{\ti k, \ti \ell}$ and $E^{j,r}_{\ti k, \ti \ell}$ as we did for $E^j_{k, \ell}$ below (\ref{rd10.2}). However, since the two DW-algorithms interact, we also have to consider cartesian products of episodes from the DW-algorithm for $\hat X^t$ with episodes from the DW-algorithm for $\hat X^r$. These can be of four kinds: let
$$
   E^{1,1,t,r}_{\ti k, \ti \ell} = \sup_{\underset{\hat V^{\prime(\ti \ell-1),r}< v < \hat V^{\prime(\ti \ell),r}}{\hat U^{\prime(\ti k-1),t} < u < \hat U^{\prime(\ti k),t}}} \vert W([\hat U^{\prime(\ti k-1),t}, u] \times [\hat V^{\prime(\ti \ell-1),r}, v]) \vert \vee \vert W^\prime([\hat U^{\prime(\ti k-1),t},u] \times [\hat V^{\prime(\ti \ell-1),r},v]\vert.
$$
\index{$E^{1,1,t,r}_{\ti k, \ti \ell}$}We define $E_{\ti k, \ti \ell}^{1, 2, t, r}$,\index{$E_{\ti k, \ti \ell}^{1, 2, t, r}$} $E_{\ti k, \ti \ell}^{2,1,t,r}$,\index{$E_{\ti k, \ti \ell}^{2,1,t,r}$} $E_{\ti k, \ti \ell}^{2,2,t,r}$\index{$E_{\ti k, \ti \ell}^{2,2,t,r}$} analogously (see Figure \ref{fig10.3}).
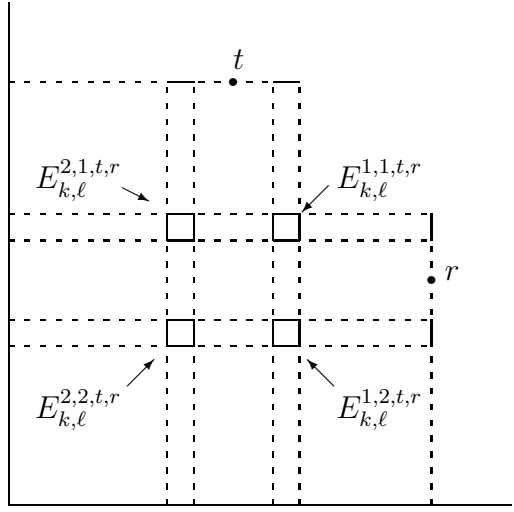
\begin{figure}[h]
\begin{center}
\begin{picture}(200,200)
\put(20,20){\line(0,1){190}}
\put(20,20){\line(1,0){190}}

\put(105,185){$t$}
\put(80,180){\line(1,0){10}}
\put(105,180){\circle*{3}}
\put(120,180){\line(1,0){10}}

\put(185,105){$r$}
\put(180,120){\line(0,1){10}}
\put(180,105){\circle*{3}}
\put(180,80){\line(0,1){10}}

\put(30,140){$E_{k,\ell}^{2,1,t,r}$}
\put(63,140){\vector(2,-1){10}}

\put(30,53){$E_{k,\ell}^{2,2,t,r}$}
\put(65,65){\vector(1,1){10}}

\put(144,53){$E_{k,\ell}^{1,2,t,r}$}
\put(143,65){\vector(-1,1){10}}

\put(144,140){$E_{k,\ell}^{1,1,t,r}$}
\put(141,141){\vector(-1,-1){10}}

\put(80,80){\line(1,0){10}}
\put(80,80){\line(0,1){10}}
\put(80,90){\line(1,0){10}}
\put(90,80){\line(0,1){10}}

\put(80,120){\line(1,0){10}}
\put(80,120){\line(0,1){10}}
\put(80,130){\line(1,0){10}}
\put(90,120){\line(0,1){10}}

\put(120,120){\line(1,0){10}}
\put(120,120){\line(0,1){10}}
\put(120,130){\line(1,0){10}}
\put(130,120){\line(0,1){10}}

\put(120,80){\line(1,0){10}}
\put(120,80){\line(0,1){10}}
\put(120,90){\line(1,0){10}}
\put(130,80){\line(0,1){10}}

\multiput(20,80)(6,0){27}{\line(1,0){2}}
\multiput(20,90)(6,0){27}{\line(1,0){2}}
\multiput(20,120)(6,0){27}{\line(1,0){2}}
\multiput(20,130)(6,0){27}{\line(1,0){2}}
\multiput(20,180)(6,0){18}{\line(1,0){2}}

\multiput(80,20)(0,6){27}{\line(0,1){2}}
\multiput(90,20)(0,6){27}{\line(0,1){2}}
\multiput(120,20)(0,6){27}{\line(0,1){2}}
\multiput(130,20)(0,6){27}{\line(0,1){2}}
\multiput(180,20)(0,6){18}{\line(0,1){2}}

\end{picture}
\end{center}
\caption{Increments over products of episodes. \label{fig10.3}} 
\end{figure}

   We define $a^{j,t}_{\ti k,\ti\ell}$ (resp.~$a^{j,r}_{\ti k,\ti\ell}$) as for the $A^j_{k,\ell}$ above \eqref{rd10.2b}, but with $p'(k)$, etc., replaced by $p^{\prime,t}(\ti k)$ (resp.~$p^{\prime,r}(\ti k)$), etc. We also define
$$
   a^{1,t,r}_{\ti k,\ti\ell} = 2^{-\frac34 (p^{\prime,t}(\ti k) + q^{\prime,r}(\ti \ell))}
$$
and similarly, $a^{2,t,r}_{\ti k,\ti\ell}$, $a^{3,t,r}_{\ti k,\ti\ell}$, $a^{4,t,r}_{\ti k,\ti\ell}$.
	
   As in (\ref{rd10.2b}), we define
\begin{eqnarray*}
K^{1,t} &=& \inf \{\ti k :  \exists \ti \ell \ \mbox{with} \ q^{\prime,t}(\ti \ell) \geq p^{\prime,t}(\ti k) \ \mbox{and} \ E^{1,t}_{\ti k, \ti \ell} \geq a_{\ti k, \ti \ell}^{1,t},\\
&&\qquad\qquad \mbox{or} \ \exists \ti \ell \ \mbox{with} \ q^t(\ti \ell) \geq p^{\prime,t}(\ti k) \ \mbox{and} \ E^{4,t}_{\ti k, \ti \ell} \geq a^{4,t}_{\ti k, \ti \ell},\\
&&\qquad\qquad  \mbox{or} \ \exists \ \ti \ell \ \mbox{with} \ q^{\prime,r}(\ti \ell) \geq p^{\prime,t}(\ti k) \ \mbox{and} \ E_{\ti k, \ti \ell}^{1,1,t,r} \geq a^{1,t,r}_{\ti k, \ti \ell},\\
&&\qquad\qquad \mbox{or} \ \exists \ti \ell \ \mbox{with} \ q^r(\ti \ell) \geq p^{\prime,t}(\ti k) \ \mbox{and} \ E_{\ti k, \ti \ell}^{1,2,t,r} \geq a^{4,t,r}_{\ti k, \ti \ell}\}.
\end{eqnarray*}
Similarly, we define $K^{3,t}$, while $K^{2,t}$ and $K^{4,t}$ only involve episodes for $\hat X^t$ as in (\ref{rd10.2b}). Relative to $r$, we define the analogous random variables $K^{j,r}$, $j = 1, \ldots, 4.$

\begin{prop} (a) Let $\cO(\hat X^t,2^{-n},m,v)$ be defined as in \eqref{10.6rd} (with $\ti X^r$ there replace by $\hat X^t$), and let
\begin{align*}
   \hat Q &= \max(p^{\prime,t}(K^{1,t}), q^{\prime,t}(K^{2,t}), p^t(K^{3,t}), q^t(K^{4,t}),\\
	   &\qquad\qquad p^{\prime,r}(K^{1,r}), q^{\prime,r}(K^{2,r}), p^r(K^{3,r}), q^r(K^{4,r}) ).
\end{align*}
Then for all $v > 1$ and for all large $ m \geq 1$, there exists $C< \infty$ such that for all $n \geq m$ with $m \geq n-k,$ 
\begin{align}\nonumber
   &P_{2^{-n}}\big(\{ \mbox{both the DW-algorithms for } \hat X^t \mbox{ and } \hat X^r \mbox{ escape the square with sides} \\ \nonumber
	 & \qquad\qquad \mbox { of length }  2^{-2m} \mbox { or reach level } 2^{-m} \mbox{ before escaping this square } \}   \\  \nonumber
	 & \qquad\qquad  \cap \{ \hat Q = m\} \cap \cO(\hat X^t,2^{-n},m,v)) \cap \cO(\hat X^r,2^{-n},m,v)\big) \\
   &\qquad \leq C 2^{2(m-n)\lambda_1} \exp (-2^{4m/5}).
\label{rd10.11}
\end{align}

%

   (b) 
	Letting $\hat Q$ be as in (a), for $k_0$ large and $n-k \geq k_0$,
\begin{align}\nonumber
  &P_{2^{-n}}(\{\hat Q \geq n-k\} \cap A_1(t,n) \cap A_1(r,n) \cap A_2(t,n,k_0,v) \cap A_2(r,n,k_0,v)) \\
	&\qquad \leq C 2^{-n-\ell-2k\lambda_1} \exp (-2^{4(n-k)/5}).
	\label{rd10.12a}
\end{align}
\label{08.prop11.3}
\end{prop}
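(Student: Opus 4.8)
The plan is to establish part (a) by following closely the argument of Lemma \ref{rdlem40}, and then to deduce part (b) from part (a) by summing over the value of $\hat Q$ and incorporating the events $A_1(t,n)$ and $A_1(r,n)$ through the same conditioning scheme used in the proofs of Propositions \ref{lembivariate} and \ref{rdprop41}.

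For part (a), on $\{\hat Q=m\}$ exactly one of the eight quantities $p^{\prime,t}(K^{1,t}),q^{\prime,t}(K^{2,t}),p^{t}(K^{3,t}),q^{t}(K^{4,t}),p^{\prime,r}(K^{1,r}),q^{\prime,r}(K^{2,r}),p^{r}(K^{3,r}),q^{r}(K^{4,r})$ equals $m$ and the others are $\le m$; decomposing accordingly, it suffices to bound the probability in each case, and we treat the representative case $p^{t}(K^{3,t})=m$. By the definition of $K^{3,t}$ there is then an episode index $\ti\ell$, ranging over episodes of order $\ge m$, such that one of the increments $E^{2,t}_{K^{3,t},\ti\ell}$, $E^{3,t}_{K^{3,t},\ti\ell}$ or a cross increment $E^{\cdot,\cdot,t,r}_{K^{3,t},\ti\ell}$ appearing there exceeds the corresponding threshold; we split further according to whether the responsible increment is an increment of $W$ or of $W'$ and according to the $\prec$-order of the order-$m$ horizontal episode of $\hat X^t$ relative to the second episode involved, obtaining finitely many configurations. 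For a fixed configuration, let $H$ denote the event that both DW-algorithms are $v$-robust above order $m$ and that each escapes the square of side $2^{-2m}$ or reaches level $2^{-m}$ before escaping it; by Theorems \ref{prop1} and \ref{thm3} together with the independence of $\hat X^t$ and $\hat X^r$ (which holds because, in the construction of Section \ref{sec10} extended to two points in Section \ref{sec11}, every piece of white noise is used at most once), $P_{2^{-n}}(H)\le C\,2^{2(m-n)\lambda_1}$. Condition on the $\sigma$-field generated by all increments of $\hat X^t$ and $\hat X^r$ used by the two algorithms strictly before the episode (or, in the cross cases, product of episodes) under consideration; because no white noise is reused, the remaining piece of $W$ (resp.\ $W'$) that determines the critical rectangular increment is, conditionally, a genuine Brownian-sheet rectangular increment independent of that $\sigma$-field, over a rectangle whose side lengths are controlled by $v$-robustness: by (R2) and (R3) the order-$m$ episode has length in $[v^{-2}m^{-7}2^{-2m},\,vm\,2^{-2m}]$, an episode of order $q\ge m$ crossed with it has length $\le vq\,2^{-2q}$, and the starting value of the relevant Brownian motion is at most $2^{1-m}$ (resp.\ $2^{1-q}$). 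We then apply Lemma \ref{rdlem39} (in the form of Remark \ref{remrdlem39}, after exchanging the roles of the coordinates and rescaling the frozen coordinate as needed), with $x$ chosen so that $x$ times the square root of the area of the product of episodes equals the relevant threshold minus the $v$-robustness correction; exactly as in Lemma \ref{rdlem40} this forces $x\ge c\,2^{(m+q)/5}$, so that the conditional probability of the critical increment is $\le 8\exp(-2^{2(m+q)/5})$. Taking expectations against $1_H$, summing over $\ti\ell$ (at most $q\sqrt v$ episodes of each order $q$ by (R1)) and over $q\ge m$, and summing the finitely many configurations, we obtain a bound of the form $C\,2^{2(m-n)\lambda_1}\sqrt v\,m\,\exp(-2^{4m/5})$, which after absorbing $\sqrt v\,m$ into the constant and a slightly smaller exponent yields \eqref{rd10.11}.

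For part (b), write $\{\hat Q\ge n-k\}=\bigcup_{m=n-k}^{n}\{\hat Q=m\}$ and take $n$ large enough that $n-k\ge k_0+1$. Paralleling Lemma \ref{rdlem42}, on $A_2(t,n,k_0,v)\cap A_2(r,n,k_0,v)$ the DW-algorithms for $X^t$ and $X^r$ are $v$-robust above order $k_0$ and reach level $2^{-k_0}$ before escaping their squares of side $2^{-2k_0}$ (or escape them), and by the construction of Section \ref{sec11} together with Proposition \ref{rdprop37} the DW-algorithms for $\hat X^t$ and $\hat X^r$ inherit this behaviour (with $v$ replaced by $9v$, which we absorb): for each $m\in[n-k,n]$ they are robust above order $m$ and each escapes the square of side $2^{-2m}$ or reaches level $2^{-m}$ before escaping it. Hence $\{\hat Q=m\}\cap A_2(t,n,k_0,v)\cap A_2(r,n,k_0,v)$ is contained, for every $m\in[n-k,n]$, in the event estimated in part (a), whose probability is at most $C\,2^{2(m-n)\lambda_1}\exp(-2^{4m/5})$. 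It remains to incorporate $A_1(t,n)\cap A_1(r,n)$: that event is measurable with respect to a $\sigma$-field $\cG$ of increments of $W$, $W'$ and $W''$ which does not determine $W(t)$, so writing $W(t)=W(1,1)+\zeta_t$ with $\zeta_t$ independent of $\cG$ gives $P(A_1(t,n)\mid\cG)\le C\,2^{-n}$ (the density of $W(1,1)$ being bounded); conditioning further on $W(t)$, the variable $W(r)-W(t)$ has a Gaussian component independent of the conditioning, arising from the white noise over the gap of length $\ge c\,2^{2(\ell-n)}$ that separates $t$ from $r$ in the larger coordinate and is untouched by either DW-algorithm, of variance $\ge c\,2^{2(\ell-n)}$, so that $W(r)$ has conditional density $\le C\,2^{n-\ell}$ and hence $P(A_1(r,n)\mid\cdot)\le C\,2^{-n}\cdot2^{n-\ell}=C\,2^{-\ell}$. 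Therefore $P(\{\hat Q=m\}\cap A_1(t,n)\cap A_1(r,n)\cap A_2(t,n,k_0,v)\cap A_2(r,n,k_0,v))\le C\,2^{-n-\ell}\,2^{2(m-n)\lambda_1}\exp(-2^{4m/5})$, and summing over $m\in[n-k,n]$ (the terms decreasing super-exponentially fast in $m$, so that the sum is of the order of its first term $m=n-k$) gives the bound $C\,2^{-n-\ell-2k\lambda_1}\exp(-2^{4(n-k)/5})$ of \eqref{rd10.12a}.

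The main difficulty is the bookkeeping in part (a): because the two DW-algorithms interleave, $\hat Q$ involves not only the increments $E^{j,t}_{\ti k,\ti\ell}$ and $E^{j,r}_{\ti k,\ti\ell}$ attached to a single algorithm but also the four families of cross increments $E^{j_1,j_2,t,r}_{\ti k,\ti\ell}$ over a product of a $\hat X^t$-episode with a $\hat X^r$-episode, and for each of these one must isolate the correct $\sigma$-field of already-explored increments and verify, using precisely the ``each piece of white noise used at most once'' feature of the construction, that the critical rectangular increment is conditionally an independent Brownian-sheet increment over a rectangle whose dimensions are pinned down up to polynomial factors by (R1)--(R3), so that Lemma \ref{rdlem39} applies with the right parameters. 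Once this is in place, the remaining estimates are the routine Gaussian tail bounds of Lemma \ref{rdlem40}.
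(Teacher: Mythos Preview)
Your approach to both parts mirrors the paper's proof. Part (a) is correct: the decomposition according to which of the eight order-functions realizes $\hat Q=m$, the further splitting by the $\prec$-ordering and by $W$ versus $W'$, the application of Lemma~\ref{rdlem39} after conditioning on the already-explored increments, and the final summation using (R1) all follow the paper's argument (which refers back to the proof of \eqref{rd10.2a}). The only cosmetic slip is ``exactly one of the eight quantities equals $m$'': it is \emph{at least} one, but since you treat each case separately this is harmless.

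In part (b), your overall strategy --- sum over $m\in\{n-k,\dots,n\}$, transfer robustness from $X^t,X^r$ to $\hat X^t,\hat X^r$ via Proposition~\ref{rdprop37} so that part (a) applies, and extract a factor $2^{-n-\ell}$ from $A_1(t,n)\cap A_1(r,n)$ --- is exactly the paper's. However, your conditioning argument for the factor $2^{-n-\ell}$ is garbled. You write $W(t)=W(1,1)+\zeta_t$ with ``$\zeta_t$ independent of $\G$''; this is false, since $\zeta_t=W(t)-W(1,1)$ is an increment and $\G$ is generated by increments, some of which overlap with $\zeta_t$. What is true (and what your parenthetical ``the density of $W(1,1)$ being bounded'' suggests you intend) is that $W(1,1)$ is independent of $\G$. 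But then your second step breaks: once you condition on the full $\sigma$-field of increments and on $W(t)$, you have determined $W(1,1)$ and hence $W(r)$, so there is no residual randomness for $A_1(r,n)$.

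The paper resolves this by reversing the roles of $t$ and $r$. It first isolates the specific increment $X=W([0,1]\times[r_2+2^{2(k-n)-4},\,t_2-2^{2(k-n)-4}])$, which lies in the ``gap'' untouched by either algorithm and contributes to $W(t)$ but \emph{not} to $W(r)$ (the gap sits above $r_2$); since $\mathrm{Var}\,X\asymp 2^{2(\ell-n)}$ this gives the factor $2^{-\ell}$ for $A_1(t,n)$. Then, with $X$ integrated out, $W(r)=W(1,1)+(W(r)-W(1,1))$ and the independence of $W(1,1)$ from the remaining increments yields the factor $2^{-n}$ for $A_1(r,n)$. Your identification of the gap as the source of the $2^{-\ell}$ factor is correct; it just has to be attached to $A_1(t,n)$ rather than $A_1(r,n)$. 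With that correction your argument coincides with the paper's and the final bound follows.
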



\begin{proof} (a) The proof of (\ref{rd10.11}) is similar to that of (\ref{rd10.2a}), so we only explain the main differences with that proof. 

   The event $\{\hat Q = m\}$ occurs either because 
$$
   \max(p^{\prime,t}(K^{1,t}), q^{\prime,t}(K^{2,t}), p^t(K^{3,t}), q^t(K^{4,t})) = m
$$ 
or because $\max(p^{\prime,r}(K^{1,r}), q^{\prime,r}(K^{2,r}), p^r(K^{3,r}), q^r(K^{4,r})) = m$. Therefore, instead of the eight events that decomposed $\{\hat Q = m\}$ in the proof of (\ref{rd10.2a}), there are now twice as many: eight for each of the DW-algorithms. However, there are additional events to put in the decomposition of $\{\hat Q = m\}$, corresponding to the cases where it is one of the four increments $E_{\ti k,\ti \ell}^{i,j,t,r}$ which is excessively large. So in fact, $\{\hat Q = m\}$ is decomposed into 32 events which correspond to the $F_\ell$ in the proof of (\ref{rd10.2a}), and each of these is decomposed into the union of two events which correspond to the $F^\ast_\ell$ and $F^{\ast \ast}_\ell$ of that proof. The event $H$ of that proof becomes $\hat H^t \cap \hat H^r$, where $\hat H^t$ is the event ``the DW-algorithm for $\hat X^t$ escapes the square with sides of length $2^{-2m}$ and is $9v$-robust above order $m$."
The events $G^\ast_\ell$ and $G^{\ast \ast}_\ell$ are similarly replaced by $G^{\ast, t}_{\ti \ell}$ and $G^{\ast \ast, t}_{\ti \ell}$, or $G^{\ast, r}_{\ti \ell}$ and $G^{\ast \ast,r}_{\ti \ell}$, according to which DW-algorithm first ``sees" the excessively large white noise increment. Lemma \ref{rdlem39} is used as before to get the following analogue of (\ref{rd10.4cp}): 
$$
   P_{2^{-n}}(G^{\ast, t}_{\ti \ell}) \leq C \ E_{2^{-n}}(1_{\hat H^t \cap \hat H^r} \exp(-2^{2(m+q^t(\ti \ell))/5}).
$$
Using the bounds on the number of episodes of each order under $9v$-robustness, the independence of $\hat H^t$ and $\hat H^r$, and Theorem \ref{thm3}, we obtain (\ref{rd10.11}) as the analogue of (\ref{rd10.4bp}).

   We now turn to the proof of (\ref{rd10.12a}). We observe that for $n \in \{m, \ldots, m+k-1\}$,
\begin{eqnarray*}
  && \{\hat Q = m\} \cap A_1(t,n) \cap A_1(r,n) \cap A_2(t,n,k_0,v) \cap A_2(r,n,k_0,v)\\
  && \qquad =  F_1 \cap \{W(r) \in [2^{-n}, 2^{1-n}]\} \cap \{X + Y \in [2^{-n}, 2^{1-n}]\},
\end{eqnarray*}
where
$$
   F_1 = \{\hat Q = m\} \cap A_2(t,n,k_0,v) \cap A_2(r,n,k_0,v)
$$
and
\begin{eqnarray*}
X &=& W([0,1] \times [r_2 + 2^{2(k-n)-4}, t_2 - 2^{2(k-n)-4}]),\\
Y &=& W(t)-X.
\end{eqnarray*}
By \eqref{rd10.10}, $t_2 - r_2 - 2^{2(k-n)-3} \geq 2^{2(\ell-n)-3}$, and $X$ is independent of $\sigma(Y)$ and $F_1 \cap \{W(r) \in [2^{-n}, 2^{1-n}]\}.$ Therefore, since $\{ X+Y \in [2^{-n},2^{1-n}] \} = \{Y - 2^{-n} \leq X \leq Y - 2^{1-n}\}$ and Var~$X \geq 2^{2(\ell - n)-3}$,
\begin{eqnarray*}
   &&P(F_1 \cap \{W(r) \cap [2^{-n},2^{1-n}]\} \cap \{X + Y \in [2^{-n}, 2^{1-n}]\})\\
   &&\qquad \leq c_0 2^{-\ell} P(F_1 \cap \{W(r) \in [2^{-n}, 2^{1-n}]\}).
\end{eqnarray*}

   Let $Z = W(r) - W(1,1)$. Since $W(1,1)$ is independent of $Z$ and $F_1$, the right-hand side is bounded above by
\begin{equation}\label{11.6rd}
c_1 2^{-\ell} 2^{-n} P_{2^{-n}}(F_1).
\end{equation}
	
   We now examine $P(F_1)$. Observe first that for $m > n-k\geq k_0$, $F_1 \subset F_2$, where 
\begin{align*}
   F_2 = \{\hat Q = m\} \cap A_2(t,n,m,v)  \cap A_2(r,n,m,v) .
\end{align*}
Indeed, on $A_2(t,n,k_0,v)$, either the DW-algorithm for $X^t$ escapes to $2^{-2m}$, or it does not escape to $2^{-2m}$ but then level $2^{-m} \leq 2^{-k_0}$ is reached within this rectangle.

   On $A_2(t,n,m,v)  \cap A_2(r,n,m,v)$, $\cO(X^t,2^{-n},m,v)$ occurs. Therefore, 
using the same argument as used to prove \eqref{10_10.20},
we see via Proposition \ref{rdprop37} that $\hat X^t$ and $\hat X^r$ are $9v$-robust above order $m$.
Therefore,
\begin{align*}
   F_1 \subset \{\hat Q = m\} 
			\cap \hat A_2(t,n,m,9v) \cap \hat A_2(r,n,m,9v),
\end{align*}
where $\hat A_2(t,n,m,9v)$ is defined in the same way as $A_2(t,n,m,9v)$, but relative to $\hat X^t$ instead of $X^t$.
Therefore, using \eqref{rd10.11},
we see that 
\begin{align*}
   P_{2^{-n}}(F_1) &\leq P_{2^{-n}}(\{\hat Q = m \} \cap \hat A_2(t,n,m,9v) \cap \hat A_2(r,n,m,9v)) \\
	 &\leq C  2^{2(m-n) \lambda_1} \exp(-2^{4m/5}).
\end{align*}
Combining this with \eqref{11.6rd}, we see that for $m \geq n-k \geq k_0$,
\begin{eqnarray*}
 && P_{2^{-n}}(\{\hat Q = m\} \cap A_1(t,n) \cap A_1(r,n) \cap A_2(t,n,k_0,v) \cap A_2(r,n,k_0,v))\\
 &&\qquad\qquad \leq C  2^{-\ell-n} 2^{2(m-n) \lambda_1} \exp(-2^{4m/5}).
\end{eqnarray*}
Sum this over $m \in \{n-k, \ldots, n\}$ to obtain (\ref{rd10.12a}).
\end{proof}

\vskip 12pt

\noindent{\em The maximum of $X^t$}
\vskip 12pt

   Set $R_k = [-2^{2(k-n)-4}, 2^{2(k-n)-4}]^2,$ and let
\begin{eqnarray*}
\R_t  (\underline{\hat \tau}^{2^{-n}, t, 2^{k-n-2}} \wedge \underline {\hat \sigma}^{2^{-n}, t, R_k}) &=& [x_1^t, y^t_1] \times [x_2^t, y_2^t],\\
\R_r  (\underline{\hat \tau}^{2^{-n},r,2^{k-n-2}} \wedge \underline {\hat \sigma}^{2^{-n}, r, R_k}) &=& [x^r_1, y^r_1] \times [x^r_2, y^r_2]
\end{eqnarray*}
be the rectangles (in coordinates for $W$) explored by the two DW-algorithms for $\hat X^t$ and $\hat X^r$ until either reaching level $2^{k-n}$ or escaping $R_k$. 
We are interested in the random variable\index{$\bar X^{r,t}$}
$$
   \bar X^{r,t} = \sup_{\underset{x_2^r-t_2 \leq u_2 \leq y_2^t-t_2}{x_1^t-t_1 \leq u_1 \leq y_1^r-t_1}}(2^{-n}+ X^t(u_1,u_2))
$$
(see Figure \ref{fig10.4}). Indeed, the rectangle $[x_1^t, y_1^r] \times [x_2^r, y_2^t]$ corresponds to a region in which the two DW-algorithms for $X^t$ and $X^r$ (no ``hats") overlap substantially, and we will not say anything about their behaviors there, except to bound the maximum height achieved by $X^t$ there. This idea has already been used in the proof of Proposition \ref{lembivariate}.
\begin{figure}
\begin{center}
\begin{picture}(200,200)
\put(20,20){\line(0,1){220}}
\put(20,20){\line(1,0){220}}

\put(90,210){$\sim 2^{2(k-n)}$}  \put(175,130){$\sim 2^{2(\ell-n)}$}
\put(70,200){\vector(1,0){80}}  \put(170,87){\vector(0,1){88}}
\put(70,200){\vector(-1,0){0}}  \put(170,87){\vector(0,-1){0}}

\put(70,175){\circle*{3}}
\put(70,178){$t$}

\put(60,160){\line(1,0){30}}
\put(60,160){\line(0,1){30}}
\put(60,190){\line(1,0){30}}
\put(90,160){\line(0,1){30}}

\put(20,190){\line(1,0){40}}
\put(5,188){$y_2^t$}

\put(20,175){\line(1,0){50}}
\put(5,173){$t_2$}

\put(20,160){\line(1,0){40}}
\put(5,158){$x_2^t$}

\put(60,19){\line(0,1){2}}
\put(50,5){$x_1^t$}

\put(70,20){\line(0,1){155}}
\put(68,5){$t_1$}

\put(90,20){\line(0,1){140}}
\put(85,5){$y_1^t$}

\put(150,87){\circle*{3}}
\put(150,90){$r$}

\put(130,70){\line(1,0){30}}
\put(130,70){\line(0,1){30}}
\put(130,100){\line(1,0){30}}
\put(160,70){\line(0,1){30}}

\put(20,100){\line(1,0){110}}
\put(5,98){$y_2^r$}

\put(19,87){\line(1,0){2}}
\put(5,85){$r_2$}

\put(20,70){\line(1,0){110}}
\put(5,68){$x_2^r$}

\put(130,20){\line(0,1){70}}
\put(125,5){$x_1^r$}

\put(150,19){\line(0,1){2}}
\put(146,5){$r_1$}

\put(160,19){\line(0,1){2}}
\put(158,5){$y_1^r$}

\multiput(60,70)(0,6){15}{\line(0,1){2}}
\multiput(160,100)(0,6){13}{\line(0,1){2}}
\multiput(70,175)(6,0){15}{\line(1,0){2}} \put(159,175){\line(1,0){1}}

\put(35,166){$_{(1')}$}
\put(35,130){$_{(2')}$} \put(60,130){$_{(3')}$} \put(72,110){$_{(1)}$} 
\put(35,80){$_{(4')}$}

\put(120,140){$_{(5)}$}
\put(100,80){$_{(4)}$}
\put(100,50){$_{(3)}$}

\put(132,80){$_{(2)}$}

\end{picture}
\end{center}
\caption{The various regions. \label{fig10.4}} 
\end{figure}
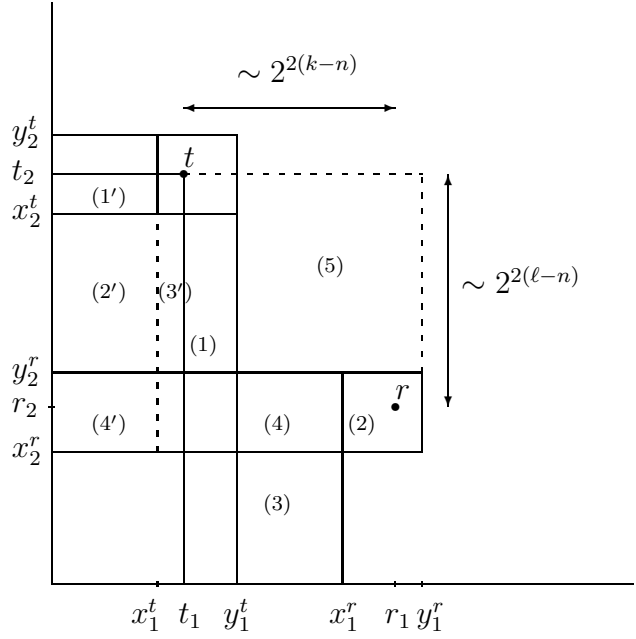

   For $v$, $t$, $r$ fixed, set
$$
   \hat \cO_n(m) = \cO(\hat X^t,2^{-n},m,9v) \cap \cO(\hat X^r,2^{-n},m,9v).
$$

\begin{lemma} Let $\cH$ be the $\sigma$-field generated by the white noise increments (of $\dot W$ and $\dot W'$) used by the two DW-algorithms for $\hat X^t$ and $\hat X^r$ up to escaping the rectangles $[x^t_1 - t_1,y^t_1 - t_1] \times [x^t_2 - t_2, y^t_2 - t_2]$ and $[x^r_1 - r_1, y^r_1 - r_1] \times [x^r_2 - r_2, y^r_2 - r_2]$, respectively. For $k_0$ large and $n - k_0 \geq \ell \geq k \geq 1$, on $\{\hat Q < n-k \} \cap \hat \cO_n(n - k)$,
$$
   P\{\bar X^{r,t} \geq x 2^{\ell-n} \mid \cH \} \leq c \, e^{-x^2/7}.
$$
\label{lem11.4}
\end{lemma}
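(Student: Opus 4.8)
The plan is to exploit the additive structure $X^t(u_1,u_2)=Z_1^t(u_1)+Z_2^t(u_2)$, which gives the exact identity
\[
  \bar X^{r,t}=2^{-n}+\sup_{u_1\in I_1}Z_1^t(u_1)+\sup_{u_2\in I_2}Z_2^t(u_2),
\]
with $I_1=[x_1^t-t_1,\,y_1^r-t_1]$ and $I_2=[x_2^r-t_2,\,y_2^t-t_2]$, so that it suffices to bound the two one-dimensional Brownian suprema separately and combine. First I would record the deterministic geometry: the DW-algorithms for $\hat X^t$ and $\hat X^r$ are stopped upon reaching level $2^{k-n-2}$ or escaping the squares $R_k$ of half-width $2^{2(k-n)-4}$, so the explored rectangles lie inside $R_k$ around $t$, resp.\ around $r$; since moreover $(r,t)\in\ID_n(k,\ell)$ forces $r_1-t_1\le 2^{2(k-n)}$ and $t_2-r_2\le 2^{2(\ell-n)}$, one gets $|I_1|\le C2^{2(k-n)}$ and $|I_2|\le C2^{2(\ell-n)}$ for a universal $C$, with $k\le\ell$ putting the $Z_1^t$-interval at the smaller scale.

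Next, on $\{\hat Q<n-k\}\cap\hat\cO_n(n-k)$, I would split each supremum into its part over the already-explored cross-section and its part over the fresh remainder. On the explored rectangle around $t$ the process $\hat X^t$, and hence — by the closeness of $X^t$ and $\hat X^t$ that holds on the robustness event (as in the proof of Lemma~\ref{rdlem42}, via Proposition~\ref{rdprop37}) — also $X^t$, satisfies $\sup X^t\le C2^{k-n}$ and, by the rescaled Lemma~\ref{lem5.3prime} (which applies since $\cR(\underline{\hat\tau}\wedge\underline{\hat\sigma})$ is contained in the rectangle explored up to termination), $\sup(-X^t)\le 2^{-n}+C2^{k-n}$; in particular $|Z_1^t|,|Z_2^t|\le C2^{k-n}$ on the explored cross-sections. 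For the fresh remainders — the increments of $Z_2^t$ below $x_2^t$ along the vertical line through $t$, and of $Z_1^t$ to the right of $y_1^t$ along the horizontal line through $t$ — the relevant increments of $W$ are white-noise integrals over strips which, by the ``each piece of white noise is used at most once'' property of the construction, are independent of $\cH$ (the $Z_2^t$-strip meets neither explored rectangle in positive measure; the $Z_1^t$-strip meets only the rectangle explored around $r$, handled below). Each such increment is, up to the fixed variance rate $t_2\in[2,3]$, a Brownian motion, so its supremum over $I_1$ (resp.\ $I_2$) is stochastically dominated by $C2^{k-n}M_1$ (resp.\ $C2^{\ell-n}M_2$), where $M_1,M_2$ are suprema over $[0,1]$ of standard Brownian motions, conditionally independent given $\cH$ since the two strips lie along distinct lines through $t$.

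The one delicate point is the $Z_1^t$-increment over $[y_1^t-t_1,\,y_1^r-t_1]$, whose defining strip overlaps the rectangle explored by the DW-algorithm for $\hat X^r$ and is therefore partly $\cH$-measurable. Decomposing that $\cH$-measurable part into rectangular increments of $W$ over products of episodes of $\hat X^r$ (and, where both algorithms' episodes intervene, over a horizontal episode of one with a vertical episode of the other), each term is controlled on $\{\hat Q<n-k\}$ by the corresponding quantity $E^{j,r}$ or $E^{i,j,t,r}$, hence by $2^{-\frac34(\text{sum of two orders})}$ with every order $\ge n-k$; summing over episodes using the count in property (R1) and the argument of \eqref{sum_ep} gives a total of order $(n-k)^2v\,2^{-\frac32(n-k)}=o(2^{k-n})$ since $n-k\ge k_0$ is large, so this part is absorbed into $C2^{k-n}$. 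Combining everything yields, on the conditioning event, the pointwise bound $\bar X^{r,t}\le 2^{\ell-n}(c_0+CM_1+CM_2)$ with $c_0,C$ universal (using $2^{-n}\le 2^{k-n}\le 2^{\ell-n}$), whence $P\{\bar X^{r,t}\ge x2^{\ell-n}\mid\cH\}\le P\{C(M_1+M_2)\ge x-c_0\}$, and a standard large-deviation estimate for the sum of two independent Brownian maxima (each with a half-Gaussian tail) gives the claimed $c\,e^{-x^2/7}$; the constant $7$ leaves room to spare, since an optimized split of $x-c_0$ between $M_1$ and $M_2$, together with the fact that for $k<\ell$ the $M_1$-term really lives at scale $2^{k-n}$, absorbs the losses.

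The main obstacle is exactly this last bookkeeping: in contrast to the ABM two-point estimate of Section~\ref{sec5}, the Brownian-sheet increments along the horizontal line through $t$, taken to the right of $t$, genuinely interact with the rectangle explored around $r$, so one cannot declare the fresh part of $Z_1^t$ independent of $\cH$ outright; controlling this interaction is precisely why the construction carries the cross-region quantities $E^{i,j,t,r}$ and why the hypothesis $\{\hat Q<n-k\}$ is imposed, and the proof must invoke those bounds and the one-use property of the white noise with care. Everything else is routine Brownian estimation.
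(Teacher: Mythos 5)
Your overall architecture — split $\bar X^{r,t}$ via additivity into suprema of $Z_1^t$ and $Z_2^t$, then divide each into an explored part (bounded deterministically on the robustness event by closeness of $X^t$ to $\hat X^t$ and Lemma~\ref{lem5.3prime}) and a fresh part (Gaussian tail) — matches the spirit of the paper's decomposition \eqref{rd10.13}--\eqref{rd10.14} into the $Y_i$ and $Y_i'$. The recognition that the $Z_1^t$-increment over $[y_1^t-t_1, y_1^r-t_1]$ interacts with the region explored by $\hat X^r$, and that this interaction is controlled by the cross quantities $E^{i,j,t,r}$ on $\{\hat Q < n-k\}$, is exactly what the paper's $Y_4$ does.

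The gap is your claim that the fresh increments of $Z_2^t$ over $[x_2^r - t_2,\, x_2^t - t_2]$ are independent of $\cH$, justified by the $Z_2^t$-strip ``meeting neither explored rectangle in positive measure.'' This is false. The $Z_2^t$-increment $Z_2^t(u_2) - Z_2^t(x_2^t - t_2) = W(t_1, t_2+u_2) - W(t_1,x_2^t)$ is a $\dot W$-integral over the strip $[0,t_1]\times[t_2+u_2,\,x_2^t]$; meanwhile, the construction of $\hat Z_2^r$ uses, during its even stages, increments of the modified sheets $W^m$ along $\{r_1\}\times[\cdot,r_2]$, which involve $\dot W$-integrals over strips of the form $[0, u_1^{r,j}]\times[\cdot,r_2]$ with $u_1^{r,j} > t_1$. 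For $u_2$ with $t_2+u_2 \in [x_2^r, y_2^r]$ these two strips intersect in a set of positive area, so the two white-noise integrals have nonzero covariance and your fresh $Z_2^t$ piece is genuinely correlated with $\cH$. This correlation is precisely why the paper, in place of a raw Brownian motion along $\{t_1\}\times\cdot$, works with $Y_2'$ (the increments of $W(x_1^t,\cdot)$ only above $y_2^r$, which \emph{is} conditionally independent because the strip $[0,x_1^t]\times[y_2^r,x_2^t]$ is unexplored), introduces the error term $Y_3'$ to pass from the line $\{x_1^t\}$ to $\{t_1\}$, and introduces the correction term $Y_4'$, expressed via an increment $A_1$ of $X^r(0,\cdot)$ plus small contributions $A_2, A_3, A_4$, to handle the overlapping band $[x_2^r,y_2^r]$. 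Without analogues of $Y_3'$ and $Y_4'$ your argument does not go through: you cannot condition on $\cH$ and treat your $M_2$ as a fresh Brownian supremum. Your treatment of $Z_1^t$ already contains the right ideas for these corrections, but they need to be applied symmetrically to $Z_2^t$.
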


\proof Clearly, the maximum of $X^t$ over $[x_1^t - t_1, y_1^r - t_1]\times [x^r_2 - t_2, y_2^t - t_2]$ is bounded by
\begin{equation}\label{rd10.13}
\sup_{0 \leq u_1 \leq y_1^r-t_1} X^t(u_1,0) + \sup_{0 \leq u_2 \leq y_2^t-t_2} X^t(0, u_2) + \sup_{0 \leq u_1 \leq t_1-x^t_1} X^t(-u_1,0) + \sup_{0 \leq u_2 \leq t_2-x^r_2} X^t(0,-u_2).
\end{equation}
Further, looking at the third term, we see that
$$
\sup_{0 \leq u_1 \leq t_1-x_1^t} X^t(-u_1, 0) \leq 2^{k-n} + \sup_{0 \leq u_1 \leq t_1-x^t_1} \vert X^t(-u_1,0) - \hat X^t(-u_1,0)\vert,
$$
and as in (\ref{10_10.20}) (with $\ell$ there replaced by $k$), on $\{\hat Q < n-k\} \cap \hat \cO_n(n - k)$, the second term on the right-hand side is bounded by $2^{k-n}.$ Therefore, on $\{\hat Q < n-k\}  \cap \hat \cO_n(n - k)$,
$$
\sup_{0 \leq u_1 \leq t_1-x^t_1} X^t (-u_1, 0) \leq 2^{k-n+1},
$$
and similarly,
$$
\sup_{0 \leq u_2 \leq y^t_2 - t_2} X^t(0,u_2) \leq 2^{k-n+1}.
$$
Let\index{$Y_i$} 
$$
\begin{array}{ll}\displaystyle
   Y_1 = \max_{0 \leq u_1 \leq y_1^t-t_1}  X^t(u_1,0), & \displaystyle Y_2 = \max_{x_1^r \leq u_1 \leq y^r_1}\vert W(u_1, y_2^r) - W(x_1^r, y_2^r)\vert,\\
   \\
  \displaystyle Y_3 = \max_{y_1^t \leq u_1 \leq x^r_1} \vert W([y_1^t, u_1] \times [0, x^r_2])\vert,& \displaystyle Y_4 = \max_{y_1^t \leq u_1 \leq x_1^r} \vert W([y_1^t,u_1] \times [x^r_2, y_2^r])\vert, \\ 
   \\
  \displaystyle Y_5 = \max_{y^t_1 \leq u_1 \leq y_1^r} \vert W([y_1^t, u_1] \times [y_2^r, t_2])\vert. & 
\end{array}
$$
These random variables correspond to certain maximal increments of the sheet over regions labelled $(1), \ldots, (5)$, respectively, in Figure \ref{fig10.4}. Clearly,
\begin{equation}\label{rd10.14}
   \sup_{0 \leq u_1 \leq y_1^r -t_1} X^t(u_1, 0) \leq Y_1 + \cdots + Y_5.
\end{equation}
In the same way as for the second and third terms in (\ref{rd10.13}), on $\{\hat Q < n-k\} \cap \hat \cO_n(n - k)$, $Y_1 \leq 2^{k-n+1}$ and $Y_2 \leq 2^{k-n+1}$.

   The random variable $Y_3$ is conditionally independent of $\cH$ given $y_1^t$, $x^r_1$ and $x_2^r$. 
The conditional distribution of $Y_3$ is that of the maximum of a Brownian motion with speed $x^r_2 \leq 3,$ over a time-interval of length at most $2^{2(k-n)}$. Therefore, on $\{\hat Q < n-k\} \cap \hat \cO_n(n - k)$,
\begin{equation}\label{08rd10.15}
   P(Y_3 \geq x 2^{k-n} \vert \cH) \leq C e^{-x^2/6}.
\end{equation}
Observe that
\begin{align}\nonumber
   Y_4 &\leq \sum \sup_{y_1^t \leq u_1 \leq x^r_1} \vert W([y_1^t, u_1] \times [\hat V^{(n),r}, \hat V^{(n-1),r}])\vert\\
 &\qquad\qquad\qquad + \sum \sup_{y_1^t \leq u_1 \leq x^r_1} \vert W([y_1^t, u_1] \times [\hat V^{\prime(m-1),r}, \hat V^{\prime(m),r}])\vert,
 \label{11.6a}
\end{align}
where the sums are over all vertical episodes for $\hat X^r$ of order above $n-k$.

   If each term in these sums that comes from an order $m$ episode is no greater than $m^{-3} v^{-1/2} 2^{k-n}$, then since on $\{\hat Q < n-k\} \cap \hat \cO_n(n - k)$, there are no more than $m \sqrt{v}$ episodes of order $m$, we would have
$$
   Y_4 \leq 2 \sum^n_{m=n-k} m \sqrt{v}\, m^{-3} v^{-1/2} 2^{k-n} < 2^{k-n}.
$$
The area of a rectangle that appears in these sums and that comes from an order $m$ episode is no greater than $2^{2(k-n)} v\, m \ 2^{-2m},$ so by Lemma \ref{rdlem39},
\begin{equation}\label{11.6aa}
   P(Y_4 \geq 2^{k-n} \vert \cH) \leq \sum^n_{m=n-k} m \sqrt{v} \exp(-\frac{2^{2m}}{m^8v^3}) \leq C \exp(-2^{n-k})
\end{equation}
for $n-k$ sufficiently large. Finally, given $y_1^t, y_1^r$ and $y_2^r$, the conditional distribution of $Y_5$ is that of the maximum of a Brownian motion with speed $t_2-y_2^r \simeq 2^{2(\ell-n)}$ over a time interval of length $y^r_1-y_1^t \leq 2^{2(k-n+2)}$. The maximal variance is $2^{2(\ell-n)} \cdot 2^{2(k-n+2)}$, so 
$$
   P(Y_5 \geq 2^{\ell-n} \vert \cH) \leq C \exp(-2^{n-k}).
$$
We now have bounds on conditional tail probabilities for all of the terms in (\ref{rd10.14}). 

   It remains to do something similar for the fourth term in (\ref{rd10.13}). Let\index{$Y^\prime_i$}
$$
\begin{array}{ll}\displaystyle
   Y^\prime_1 = \sup_{0 \leq u_2 \leq t_2-x_2^t} X^t(0,-u_2), &\displaystyle Y^\prime_2 = \sup_{y^r_2 \leq u_2 \leq x_2^t} (W(x^t_1, u_2) - W(x_1^t, x_2^t)),\\
\\
\displaystyle Y^\prime_3 = \sup_{y_2^r \leq u_2 \leq x^t_2} \vert W([x^t_1, t_1] \times [u_2, x_2^t])\vert,&\displaystyle Y_4^\prime = \sup_{t_2-y_2^r \leq u_2 \leq t_2 - x^r_2} (X^t(0,-u_2)-X^t(0,y_2^r - t_2)).
\end{array}
$$
These random variables correspond to maximal increments of the sheet over regions labelled $(1^\prime), \ldots, (4^\prime)$ respectively, in Figure \ref{fig10.4}. Note the absence of absolute values in the definition of $Y^\prime_2$ and $Y^\prime_4$. Clearly, 
$$
\sup_{0 \leq u_2 \leq t_2-x_2^r} X^t(0, -u_2) \leq Y^\prime_1 + \cdots + Y^\prime_4.
$$
On $\{\hat Q < n-k\} \cap \hat \cO_n(n - k)$, as for $Y_1$, we have $Y^\prime_1 \leq 2^{k-n+1}.$ The random variable $Y^\prime_2$ is conditionally independent of $\cH$ and $Y^\prime_3$ given $x_1^t, x_2^t, x_2^r$ and $y_2^r$, with conditional distribution equal to that of the maximum of a Brownian motion with speed $x_1^t \leq 3,$ over a time interval of length $x_2^t-y_2^r \leq 2^{2(\ell-n)}$. Therefore,
\begin{equation}\label{08rd11.8}
   P(Y^\prime_2 \geq x \, 2^{\ell-n} \vert \cH) \leq C \, e^{-x^2/6}.
\end{equation}
Arguing as for $Y_4$, we find that on $\{\hat Q < n-k\} \cap \hat \cO_n(n - k)$,
\begin{equation}\label{08rd11.9}
P(Y^\prime_3 \geq 2^{\ell-n} \vert \cH) \leq C \, \exp(-2^{n-k}).
\end{equation}

   We now turn to the term $Y^\prime_4$. Each increment appearing in the definition of $Y^\prime_4$ is equal to an increment $A_1$ of $X^r(0,\cdot)$ minus an increment $A_2$ that contributes to the term $Y_2$ in \eqref{rd10.14}, minus an increment $A_4$ that contributes to the term $Y_4$ in \eqref{rd10.14}, minus an increment $A_3 = W([t_1,y_1^t] \times [u_2,y_2^r])$.
	
	 On the event $\{\hat Q < n-k\} \cap \hat \cO_n(n - k)$, using the bounds on the variables $E_{\ti k,\ti \ell}^{i,j,t,r}$, we see that $A_3$ is bounded by $(n-k)^2\, 2^{\frac{3}{2} (k-n)} \ll 2^{k-n}$ for $n-k \geq k_0$. Further, $\vert A_1 \vert \leq 2^{k-n}$ and we have seen that $\vert A_2 \vert \leq 2^{k-n+1}$. Therefore, 
$$
   P(Y^\prime_4 \geq 2^{2+k-n} \mid \cH) \leq P\{Y_4 \geq 2^{k-n} \mid \cH \} \leq C \, \exp(-2^{n-k})
$$
by \eqref{11.6aa}.

  
  To summarize, on $\{\hat Q < n-k\} \cap \hat \cO_n(n - k)$,
$$
  \Xbar^{r,t} \leq c \ 2^{k-n} + Y_3 + Y^\prime_2 + Z,
$$
where $P\{Z \geq 2^{2+\ell-n} \vert {\cal{H}}\} \leq \exp(-2^{n-k}) \leq \exp(-2^{n-\ell})$, and $Y^\prime_2$ (resp. $Y_3$) satisfies \eqref{08rd11.8} (resp.~\eqref{08rd10.15}). This establishes Lemma \ref{lem11.4}. 
\hfill $\Box$
\vskip 16pt
  
  Using the notation from the proof of Lemma \ref{lem11.4}, let 
$$
    \Ybar^{r,t} = \max (Y_1, Y_2, Y_4, Y_5, Y^\prime_1, Y^\prime_3, Y^\prime_4),
$$ 
so that
\begin{equation}\label{11.14rd}
  \Xbar^{r,t} \leq 2^{-n} + 7\, \Ybar^{r,t} + Y_3 + Y^\prime_2,
\end{equation}
and 
\begin{equation}\label{08.0.1}
  P\{\Ybar^{r,t} \geq 2^{2+\ell-n} \mid {\cal{H}}\} \leq \exp(-2^{n-k}) \qquad \mbox{ on } \{\hat Q < n-k\} \cap \hat \cO_n(n - k).
\end{equation}
Notice that $Y_3$ and $Y^\prime_2$ are conditionally independent and conditionally independent of ${\cal{H}} \vee \sigma(\Ybar^{r,t})$ given $x^t_1$, $y^t_1$, $x^r_1$, $x^r_2$, $y^r_2$, $x^t_2$.

\begin{lemma} The following inequality holds:
  \begin{eqnarray*}
  && P(\{\Ybar^{r,t} \geq 2^{2+\ell-n}\} \cap A_1(t,n) \cap A_1(r,n)\\
  &&\qquad\qquad\qquad \cap\, A_2(t,n,n-k,v) \cap A_2(r,n,n-k,v) \cap\{\hat Q < n-k\} )\\
  &&\qquad\qquad \leq C \ 2^{-n-\ell-2k\lambda_1} \exp(-2^{n-k}).
  \end{eqnarray*}
\label{08.lem1}
\end{lemma}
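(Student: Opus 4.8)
The plan is to bound the probability of the event appearing in the lemma — call it $B$ — by peeling off, one at a time, the four factors $2^{-n}$, $2^{-\ell}$, $2^{-2k\lambda_1}$ and $\exp(-2^{n-k})$, at each stage conditioning on a suitable $\sigma$-field and integrating out a piece of white noise that is independent of everything that still has to be controlled. First, a reduction to the hatted processes. On $A_2(t,n,n-k,v)\cap A_2(r,n,n-k,v)$ both $\cO(X^t,2^{-n},n-k,v)$ and $\cO(X^r,2^{-n},n-k,v)$ hold, and since $\{\hat Q<n-k\}$ guarantees the closeness bound \eqref{rd9.1} between $X^t$ and $\hat X^t$, and between $X^r$ and $\hat X^r$, at the scale $f_{15}(\cdot)$ — exactly as in the passage \eqref{10_10.19}--\eqref{10_10.20} of the proof of Lemma \ref{rdlem42} — Proposition \ref{rdprop37} shows that the two-point DW-algorithms for $\hat X^t$ and $\hat X^r$ satisfy $\hat\cO_n(n-k)$, and that each of them reaches level $2^{k-n-2}$ before escaping the square of side of order $2^{2(k-n)}$ about its centre, or escapes that square. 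Call these last two events $E^t$ and $E^r$; they are measurable with respect to the $\sigma$-field $\cH$ of Lemma \ref{lem11.4}. Thus
$$B\subset\{\Ybar^{r,t}\ge 2^{2+\ell-n}\}\cap A_1(t,n)\cap A_1(r,n)\cap E^t\cap E^r\cap\{\hat Q<n-k\}\cap\hat\cO_n(n-k).$$
This is where the hypothesis $n-k\ge k_0$ with $k_0$ large is used.

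Next we peel off $2^{-n}$ and $2^{-\ell}$. Put $Z=W(r)-W(1,1)$; then $W(1,1)\sim N(0,1)$ and $W(1,1)$ is independent of $Z$ and of all the other events above, since — by the geometry forced by \eqref{rd10.10} — the DW-squares around $r$ and $t$, as well as the regions $(1)$--$(5)$, $(1')$--$(4')$ entering $\Ybar^{r,t}$, all lie at first coordinate $>1$ or second coordinate $>1$, whereas $W(1,1)$ uses white noise only on $[0,1]^2$. Since $A_1(r,n)=\{W(1,1)+Z\in[-2^{1-n},-2^{-n})\}$, integrating out $W(1,1)$ yields a factor $\le c\,2^{-n}$. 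Then set $X=W([0,1]\times[r_2+2^{2(k-n)-4},\,t_2-2^{2(k-n)-4}])$ and $Y=W(t)-X$; by \eqref{rd10.10} and $\ell\ge k$ one has $\Var X=t_2-r_2-2^{2(k-n)-3}\ge 2^{2(\ell-n)-3}$, and $X$ is independent of $Y$ and of $\{\Ybar^{r,t}\ge 2^{2+\ell-n}\}\cap E^t\cap E^r\cap\{\hat Q<n-k\}\cap\hat\cO_n(n-k)$, because $X$ uses white noise over $[0,1]\times$(strip) while the relevant DW-squares lie below, resp. above, that strip — their boundaries being exactly at $r_2+2^{2(k-n)-4}$, resp. $t_2-2^{2(k-n)-4}$ — and the $\Ybar^{r,t}$-regions lie at first coordinate $>1$. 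Since $A_1(t,n)=\{X+Y\in[-2^{1-n},-2^{-n})\}$, integrating out $X$ yields a factor $\le 2^{-n}/\sqrt{2\pi\Var X}\le c\,2^{-\ell}$.

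It remains to show that $P(\{\Ybar^{r,t}\ge 2^{2+\ell-n}\}\cap E^t\cap E^r\cap\{\hat Q<n-k\}\cap\hat\cO_n(n-k))\le C\,2^{-2k\lambda_1}\exp(-2^{n-k})$. Condition on $\cH$: the events $E^t$, $E^r$, $\{\hat Q<n-k\}$ and $\hat\cO_n(n-k)$ are $\cH$-measurable, while by \eqref{08.0.1} (which is Lemma \ref{lem11.4}), $P\{\Ybar^{r,t}\ge 2^{2+\ell-n}\mid\cH\}\le\exp(-2^{n-k})$ on $\{\hat Q<n-k\}\cap\hat\cO_n(n-k)$. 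Hence this probability is $\le\exp(-2^{n-k})\,P(E^t\cap E^r)$. Finally, $\hat X^t$ and $\hat X^r$ are independent standard ABM's (in the two-point construction each piece of white noise is used at most once), and for a standard ABM started at value $2^{-n}$, reaching level $2^{k-n-2}$, or escaping a square of side of order $2^{2(k-n)}$, each has probability $\le C\,2^{-k\lambda_1}$ by Theorems \ref{prop1} and \ref{thm3}; therefore $P(E^t\cap E^r)=P(E^t)\,P(E^r)\le C\,2^{-2k\lambda_1}$. Multiplying the four factors gives the stated bound.

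The conceptual content is short; the real work, and the only genuine obstacle, is the independence/measurability bookkeeping of the second paragraph — checking, from the geometry imposed by \eqref{rd10.10}, that $W(1,1)$, the strip increment $X$, the two-point DW-algorithm white noise (carrying $E^t$, $E^r$, $\hat Q$, $\hat\cO_n(n-k)$), and the extra Gaussian increments carrying $\Ybar^{r,t}$ really form a nested-independent family in the order used, and in particular that the regions entering $\Ybar^{r,t}$ are disjoint from the $[0,1]\times$(strip) rectangle and from $[0,1]^2$. The robustness transfer of the first paragraph is a direct re-run of the argument already carried out in the proof of Lemma \ref{rdlem42}.
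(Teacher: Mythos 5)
Your overall strategy is the same as the paper's: reduce, via the robustness transfer of Proposition~\ref{rdprop37}, to events about $\hat X^t$ and $\hat X^r$ (this gives the $2^{-2k\lambda_1}$ and, after conditioning on $\cH$ and invoking Lemma~\ref{lem11.4}, the $\exp(-2^{n-k})$), and peel off $2^{-n}$ and $2^{-\ell}$ by conditioning out, respectively, $W(1,1)$ (for $A_1(r,n)$) and a white-noise strip of variance $\gtrsim 2^{2(\ell-n)}$ (for $A_1(t,n)$). Your deterministic strip $X = W([0,1]\times[r_2+2^{2(k-n)-4}, t_2-2^{2(k-n)-4}])$ is a mild simplification of the paper's random strip $A_0^t = W([0,x^t_1]\times[y^r_2,x^t_2])$; both have the right variance and the right disjointness from the DW-regions and the $\Ybar^{r,t}$-regions.

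However, there is a genuine gap in the middle paragraph. You assert that ``$W(1,1)$ is independent of $Z$ and of all the other events above.'' This is false for $A_1(t,n)$: indeed $W(t) = W([0,t_1]\times[0,t_2])$ contains $W([0,1]^2) = W(1,1)$, so $A_1(t,n) = \{W(t)\in[-2^{1-n},-2^{-n})\}$ depends on $W(1,1)$ just as $A_1(r,n)$ does. The geometric reasoning you give (DW-squares and $\Ybar^{r,t}$-regions avoid $[0,1]^2$) is correct for those pieces but simply does not address $A_1(t,n)$. As a consequence, the two-step peeling as written does not factorize: conditioning on $\sigma(\mbox{everything except }W(1,1))$, the integrand involving $1_{A_1(r,n)}1_{A_1(t,n)}$ cannot be reduced to $1_{A_1(t,n)}\,P(A_1(r,n)\mid\cdot)$ because $1_{A_1(t,n)}$ is not measurable with respect to that $\sigma$-field; integrating out $W(1,1)$ ``uses up'' both $A_1(t,n)$ and $A_1(r,n)$ at once and leaves no constraint on $X$, so the factor $2^{-\ell}$ is lost.

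The fix is to reverse the order, which is precisely what the paper does. Condition first on $\sigma(\mbox{everything except }X)$: this $\sigma$-field contains $W(1,1)$, hence $A_1(r,n)$, as well as the $\cH$-events and $\Ybar^{r,t}$, and $A_1(t,n)=\{X+Y\in[-2^{1-n},-2^{-n})\}$ with $Y$ measurable there; integrating out $X$ yields the factor $c\,2^{-\ell}$. Then condition on $\sigma(\mbox{everything except }W(1,1) \mbox{ and }X)$; since $W(r)=W(1,1)+Z$ with $Z$ measurable there (note $W(r)=W([0,r_1]\times[0,r_2])$ is disjoint from the strip, as the strip's lower edge is $r_2+2^{2(k-n)-4}>r_2$), integrating out $W(1,1)$ yields $c\,2^{-n}$. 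Alternatively, one can condition on $\sigma(\mbox{everything except }W(1,1)\mbox{ and }X)$ in a single step and bound the joint conditional probability $P(A_1(t,n)\cap A_1(r,n)\mid\cdot)$ by $c\,2^{-n-\ell}$ using the fact that $W(1,1)$ and $X$ are then independent Gaussians. Either way the remaining pieces of your argument go through; only the stated independence claim and the order of peeling need to be corrected.
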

  
\proof We write
$$
  W(t) = A^t_0 + A_1^t, 
$$
where $A_0^t = W([0, x^t_1] \times [y^r_2,x^t_2])$, so $A_0^t$ is conditionally independent of $\IH_1 := \IH \vee \sigma(A_1^t, W(r), \Ybar^{r,t})$ given $(x^t_1, y^r_2, x^t_2)$. Let
$$
   F_1 = \{\Ybar^{r,t} \geq 2^{2+\ell-n}\} \cap A_1(r,n) \cap A_2(t,n, n-k,v) \cap A_2(r,n, n-k,v) \cap \{\hat Q < n-k\}.
$$
Then $F_1$ is $\IH_1$-measurable, so the probability in the statement of the lemma can be written
$$
    E(1_{F_1} P\{A_0^t \in [2^{-n} - A_1^t,\ 2^{1-n} - A_1^t [\, \mid x^t_1, y^r_2, x^t_2\}).
$$
Further, the conditional law of $A^t_0$ given $x^t_1, y^r_2, x^t_2$ is Normal with mean $0$ and variance 
\begin{equation}\label{11.15a}
   x^t_1 (x^t_2 - y^r_2) \geq t_2 - 2^{2(k-n)-4} - (r_2 + 2^{2(k-n)-4}) \geq \frac{1}{4} 2^{2(\ell-n)}.
\end{equation}
Therefore, letting $Z$ be a standard Normal random variable, 
\begin{align*}
   & P\{A_0^t \in [2^{-n} - A_1^t,\ 2^{1-n} - A_1^t [\, \mid x^t_1, y^r_2, x^t_2\} \\
	  &\qquad \leq \sup_{x \in \IR} P\{ 2^{\ell - n - 1} Z \in  [-2^{1-n} + x, -2^{-n} + x [\} 
		\leq c 2^{-\ell}.
\end{align*}

%

   We now write $W(r) = W(1,1) + A^r$, where $A^r$ is a sum of Brownian sheet increments that are independent of $W(1,1)$ but not necessarily of $\IH$. Then
$$
   P(F_1) = P(W(1,1) \in [-2^{1-n}-A^r,\, -2^{-n} -A^r[\, \mid F_2) P(F_2),
$$
where $F_2 = \{\Ybar^{r,t} \geq 2^{2+\ell-n}\} \cap A_2(t,n,n-k,v) \cap A_2(r,n,n-k,v) \cap \{\hat Q < n-k\}$, and the conditional probability is no greater than
$$
   \sup_{x \in \IR} P\{W(1,1) \in [-2^{1-n} + x, -2^{-n} + x[\} \leq 2^{-n} .
$$
   
   It remains only to show that $P(F_2) \leq c \ 2^{-2k \lambda_1} \exp(-2^{n-k})$. 
The key point is now that on $\{\hat Q < n-k\} \cap A_2(t,n,n-k,v) \cap A_2(r,n,n-k,v)$, the DW-algorithms for $\hat X^t$ and $\hat X^r$are $9v$-robust above order $n-k$, so as in the proof of \eqref{10.16rd}, $\hat \cO_n(n - k) \cap \hat A_2(r,n,n-k,9v) \cap \hat A_2(t,n,n-k,9v)$ occurs.
Since $\hat X^t$ and $\hat X^r$ are independent, we deduce that
\begin{align*}
   P(F_2) &\leq P(\{\Ybar^{r,t} \geq 2^{2+\ell-n}\} \cap \{\hat Q < n-k\}\\
	 & \qquad \qquad \cap \hat \cO_n(n - k) \cap \hat A_2(t,n,n-k,9v) \cap \hat A_2(r,n,n-k,9v)).
\end{align*}
By \eqref{08.0.1},
\begin{align*}
   P(F_2) &\leq \exp(-2^{n-k}) P(\hat A_2(r,n,n-k,9v) \cap \hat A_2(t,n,n-k,9v)) \\
	  & \leq \exp(-2^{n-k}) c (2^{-k\lambda_1})^2,
\end{align*}
by independence of $\hat X^t$ and $\hat X^r$ and  Theorem \ref{thm3}. This completes the proof of Lemma \ref{08.lem1}. 
\hfill $\Box$
\vskip 16pt
   
   Set
$$
   F^* = A_1(t,n) \cap A_2(t,n,n-k,v) \cap A_1(r,n) \cap A_2(r,n,n-k,v).
$$
   
\begin{lemma} (a) For some universal constants $c$ and $c'$, for all $M_1 \geq 0$ and $M_2 \geq 0$,
\begin{eqnarray*}
 && P(F^* \cap \{\hat Q < n-k\} \cap \{\Ybar^{r,t} < 2^{2+\ell-n}\} \cap \{Y_3 \geq M_1 2^{\ell-n}\} 
\cap\, \{Y^\prime_2 \geq M_2 2^{\ell-n}\} )\\
 && \qquad
   \leq c' \ 2^{-n-\ell-2k \lambda_1} \,  e^{-c(M^2_1+M_2^2)}.
\end{eqnarray*}

(b) For all $M \geq 0$,
\begin{align}\nonumber
 & P(F^* \cap \{\hat Q \leq n-k\} \cap \{\Ybar^{r,t} < 2^{2+\ell-n}\} \cap \{\bar X^{r,t} \geq M 2^{\ell-n}\} )\\
   & \qquad
   \leq c' \ 2^{-n-\ell-2k \lambda_1}   e^{-cM^2}.
\label{11.15rd}
\end{align}

\label{08.lem2}
\end{lemma}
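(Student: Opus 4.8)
The plan for part (a) is to regard it as Lemma~\ref{08.lem1} with the two extra constraints $\{Y_3\ge M_1 2^{\ell-n}\}$ and $\{Y^\prime_2\ge M_2 2^{\ell-n}\}$ grafted on; since neither $Y_3$ nor $Y^\prime_2$ enters $\bar Y^{r,t}=\max(Y_1,Y_2,Y_4,Y_5,Y^\prime_1,Y^\prime_3,Y^\prime_4)$, the target is exactly the base bound $2^{-n-\ell-2k\lambda_1}$ of that lemma multiplied by two Gaussian factors $e^{-cM_1^2}$, $e^{-cM_2^2}$. I would produce the bound by a sequence of conditionings, each stripping off one factor. First strip $A_1(r,n)$: writing $W(r)=W(1,1)+A^r$, the variable $W(1,1)$ is independent of $A^r$ and of every other event in play (all of which involve only increments of $W$ away from the origin, increments of the auxiliary sheet $W'$, and the DW-algorithms these drive), so $P(A_1(r,n)\mid\cdots)\le 2^{-n}$ uniformly. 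Next strip $A_1(t,n)$ together with $\{Y^\prime_2\ge M_2 2^{\ell-n}\}$: as in the proof of Lemma~\ref{08.lem1}, write $W(t)=A^t_0+A^t_1$ with $A^t_0=W([0,x^t_1]\times[y^r_2,x^t_2])$, and observe (Figure~\ref{fig10.4}) that $A^t_0$ and $Y^\prime_2=\sup_{y^r_2\le u_2\le x^t_2}\big(W(x^t_1,u_2)-W(x^t_1,x^t_2)\big)$ are both functionals of the \emph{single} Brownian motion $u_2\mapsto W(x^t_1,u_2)$ over $[y^r_2,x^t_2]$; after time reversal $A^t_0$ is (minus) its terminal value and $Y^\prime_2$ its running maximum over a time of length $x^t_2-y^r_2\gtrsim 2^{2(\ell-n)}$, so the reflection-principle joint law used in Lemma~\ref{supBM} gives
\[
   P\big(A_1(t,n)\cap\{Y^\prime_2\ge M_2 2^{\ell-n}\}\mid\cdots\big)\ \le\ c\,2^{-\ell}\,e^{-cM_2^2},
\]
uniformly (the interval defining $A_1(t,n)$ has length $2^{-n}$; the part of $W(t)$ coming from the region of $Y_3$, which overlaps $[0,t_1]\times[0,t_2]$ only in a strip of width $\lesssim 2^{2(k-n)}$, has already been frozen by the conditioning and merely shifts this interval).

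There remains $P\big(\{Y_3\ge M_1 2^{\ell-n}\}\cap A_2(t,n,n-k,v)\cap A_2(r,n,n-k,v)\cap\{\hat Q<n-k\}\cap\{\bar Y^{r,t}<2^{2+\ell-n}\}\big)$. Here $Y_3$ is the maximum of a fresh Brownian motion of speed $\le 3$ over an interval of length $\le r_1-t_1\lesssim 2^{2(k-n)}$, supported on a region disjoint from that of $A^t_0$, so conditioning on everything but that region gives $P(Y_3\ge M_1 2^{\ell-n}\mid\cdots)\le C\exp(-cM_1^2 2^{2(\ell-k)})\le C e^{-cM_1^2}$. Finally, on $A_2(t,n,n-k,v)\cap A_2(r,n,n-k,v)\cap\{\hat Q<n-k\}$, Proposition~\ref{rdprop37} forces the standard, \emph{independent} ABM's $\hat X^t$, $\hat X^r$ to be $9v$-robust above order $n-k$ and to satisfy $\hat\cO_n(n-k)\cap\hat A_2(t,n,n-k,9v)\cap\hat A_2(r,n,n-k,9v)$ (just as in the passage proving \eqref{10.16rd}), so by independence and Theorem~\ref{thm3} this probability is $\le c\,2^{-2k\lambda_1}$. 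Multiplying the four factors gives part (a).

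For part (b), the decomposition \eqref{11.14rd}, $\bar X^{r,t}\le 2^{-n}+7\bar Y^{r,t}+Y_3+Y^\prime_2$, together with $\bar Y^{r,t}<2^{2+\ell-n}$, forces $Y_3+Y^\prime_2\ge (M-c_0)2^{\ell-n}$ for a universal $c_0$ on the event in \eqref{11.15rd}, hence $Y_3\ge\tfrac{M-c_0}{2}2^{\ell-n}$ or $Y^\prime_2\ge\tfrac{M-c_0}{2}2^{\ell-n}$. Split $\{\hat Q\le n-k\}=\{\hat Q<n-k\}\cup\{\hat Q=n-k\}$. On the first piece, apply part (a) with $(M_1,M_2)=(\tfrac{M-c_0}{2},0)$ and $(0,\tfrac{M-c_0}{2})$ and add; on the second, rerun the same conditioning argument (strip $A_1(r,n)$, strip $A_1(t,n)$ against $A^t_0$, strip the relevant one of $\{Y_3\ge\tfrac{M-c_0}{2}2^{\ell-n}\}$, $\{Y^\prime_2\ge\tfrac{M-c_0}{2}2^{\ell-n}\}$ as a fresh Gaussian) but replace the last, robustness-transfer step by the estimate $P\big(A_2(t,n,n-k,v)\cap A_2(r,n,n-k,v)\cap\{\hat Q=n-k\}\big)\le C\,2^{-2k\lambda_1}\exp(-2^{4(n-k)/5})$ furnished by the proof of \eqref{rd10.12a} (Proposition~\ref{08.prop11.3}). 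Both pieces are $\le C\,2^{-n-\ell-2k\lambda_1}e^{-c'M^2}$ after adjusting constants (for $M$ so small that $Y_3,Y^\prime_2$ cannot be forced, the claimed bound is just the base estimate $\lesssim 2^{-n-\ell-2k\lambda_1}$ already contained in the above with $M_1=M_2=0$); adding them proves part (b).

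The main obstacle is the conditioning bookkeeping of part (a): one must pin down the white-noise regions (in the notation of Figure~\ref{fig10.4}, the strip governing $Y^\prime_2$ and the region governing $Y_3$), verify that each is ``fresh'' given the relevant conditioning $\sigma$-field, and --- the one genuinely new point beyond Lemma~\ref{08.lem1} --- handle the fact that $Y^\prime_2$ lives on the \emph{same} strip as the increment $A^t_0$ of $W(t)$ used to absorb $A_1(t,n)$, which is why the two-dimensional estimate cannot be replaced by a plain Gaussian tail bound and the reflection-principle joint density of Lemma~\ref{supBM} is needed to produce the $2^{-\ell}$ and $e^{-cM_2^2}$ factors at once. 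Everything else is a reshuffling of the conditioning arguments already used in Lemmas~\ref{lem11.4} and~\ref{08.lem1} and Proposition~\ref{08.prop11.3}.
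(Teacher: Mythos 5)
Your proposal is correct and follows essentially the same route as the paper: the two-dimensional estimate via Lemma~\ref{supBM} applied to the running maximum $Y'_2$ and terminal value $A^t_0$ of the single Brownian motion $u_2\mapsto W(x^t_1,u_2)$ on $[y^r_2,x^t_2]$ is exactly the paper's key step, as is stripping $A_1(r,n)$ against the independent $W(1,1)$, stripping $Y_3$ as a fresh Gaussian of speed $O(2^{2(k-n)})$, and finally transferring $9v$-robustness to the independent $\hat X^t$, $\hat X^r$ via Proposition~\ref{rdprop37} to land on the $2^{-2k\lambda_1}$ bound. The only cosmetic differences are the order in which you peel off the factors (the paper handles $Y'_2$ and $A_1(t,n)$ first, then $A_1(r,n)$, then $Y_3$) and the decomposition in part (b): the paper sums $\{Y_3\ge i\,2^{\ell-n},\,Y'_2\ge (M-30-i)2^{\ell-n}\}$ over $i$ rather than splitting into two halves, and it applies part (a) directly to $\{\hat Q\le n-k\}$ without the $\{\hat Q<n-k\}\cup\{\hat Q=n-k\}$ split you make --- your explicit treatment of the boundary case $\hat Q=n-k$ via Proposition~\ref{08.prop11.3} is, if anything, slightly more careful than the paper's.
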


   
\proof (a) Recall that
$$
   W(t) = A^t_0 + A^t_1, 
$$
where $A_0^t$ and $A_1^t$ are defined in the proof of Lemma \ref{08.lem1}. Set
\begin{eqnarray*}
 \tilde F_1 &=& \{\Ybar^{r,t} < 2^{2+\ell-n}\} \cap A_1(r,n) \cap A_2(r,n,n-k,v) \cap A_2(t,n,n-k,v) \\
 &&\qquad \cap \ \{\hat Q < n-k\} \cap \{Y_3 \geq M_1 2^{\ell-n}\}. 
\end{eqnarray*} 
Observe that $(A^t_0, Y'_2)$ is conditionally independent of $\IH_1 := \IH \vee \sigma(Y_3, \Ybar^{r,t}, A^t_1)$ given $x^t_1, x^t_2, y^r_2$, so that the probability in the statement of the lemma is equal to
\begin{equation}\label{11.18a}
   E(1_{ \tilde F_1} P\{ Y'_2 \geq M_2 2^{\ell - n}, A^t_0 \in [-2^{1-n} - A^t_1, -2^{-n} - A^t_1] \mid x^t_1, x^t_2, y^r_2 \}),
\end{equation}
and the conditional probability above is bounded above by
\begin{align}\nonumber
   &\sup _{x \in \IR} P\{Y'_2 \geq M_2 2^{\ell - n}, A^t_0 \in [-2^{1-n} + x, -2^{-n} + x] \mid x^t_1, x^t_2, y^r_2 \} \\ \label{11.19a}
	 &\qquad = \sup _{x \in \IR} \sum_{m=0}^\infty P\{Y'_2 \in [ (M_2+m) 2^{\ell - n},(M_2+m+1) 2^{\ell - n} [, \\
	 &\qquad\qquad\qquad\qquad A^t_0 \in [-2^{1-n} + x, -2^{-n} + x] \mid x^t_1, x^t_2, y^r_2 \}.
	\nonumber
\end{align}
The conditional law of $u_2 \mapsto W(x^t_1, u_2) - W(x^t_1,x^t_2)$, as $u_2$ decreases from $x^t_2$ to $y^r_2$, given $x^t_1, x^t_2, y^r_2$, is that of a Brownian motion with speed $x^t_1$, so we can apply Lemma \ref{supBM} to obtain
\begin{align*}
   P\{A^t_0 \in [-2^{1-n} + x, -2^{-n} + x] \mid Y'_2, x^t_1, x^t_2, y^r_2 \} \leq \frac{2^{-n}}{\sqrt{x^t_1}} \frac{2 Y'_2/\sqrt{x^t_1}}{x^t_2 - y^r_2} \leq \frac{8\, 2^{-n} Y'_2}{2^{2(\ell - n)}},
\end{align*}
where we have used \eqref{11.15a}. Notice that this bound no longer depends on $x$. Therefore, \eqref{11.19a} is bounded above by
\begin{align*}
   &\sum_{m=0}^\infty  \frac{8\, 2^{-n}}{2^{2(\ell - n)}} (M_2 + m + 1) 2^{\ell - n} P\{Y'_2 \geq (M_2+m) 2^{\ell - n} \mid x^t_1, x^t_2, y^r_2 \} \\
	&\qquad 8\, 2^{-\ell} \sum_{m=0}^\infty (M_2 + m + 1) \exp(-c (M_2 + m)^2),
\end{align*}
where we have used \eqref{08rd11.8}. Clearly, for some universal constant $c'>0$, and slightly smaller $c>0$, this is $\leq c' 2^{-\ell } e^{-cM_2^2}$. Therefore, \eqref{11.18a} is bounded above by $c' 2^{-\ell } e^{-cM_2^2} P(\tilde F_1)$.



   We now write
\begin{equation}\label{08.0.3}
 P(\tilde F_1) \leq \sup_{x \in \IR} P(W(1,1) \in [-2^{1-n} + x, -2^{-n} + x[ \ \vert \ \tilde F_2) P(\tilde F_2),
\end{equation}
where
\begin{eqnarray*}
 \tilde F_2 &=& \{\Ybar^{r,t} < 2^{2+\ell-n}\} \cap A_2(r,n,n-k,v) \cap A_2(t,n,n-k,v)) \cap \{\hat Q \leq n-k\}\\
 && \qquad\cap\, \{Y_3 \geq M_1 2^{\ell-n}\}.
\end{eqnarray*}
 Since $\tilde F_2$ is independent of $W(1,1)$, the conditional probability in (\ref{08.0.3}) is bounded by $2^{-n}$. The 
event $\{\Ybar^{r,t} < 2^{2+\ell-n}\}$ can be omitted, and we obtain using (\ref{08rd10.15}) and the $9v$-robustness property that 
$$
 P(\tilde F_2) \leq c' \ \exp(-c M_1^2) P(\{\hat Q \leq n-k\} \cap A_2(r,n,n-k,v) \cap A_2(t,n,n-k,v)).
$$
With the argument used in the end of the proof of Lemma \ref{08.lem1}, we conclude that
$$
 P(\tilde F_2) \leq C \exp(- c M_1^2) \, 2^{-2k\lambda_1}.
$$
 This proves (a).

   (b) Observe from \eqref{11.14rd} that the event on the left-hand side of \eqref{11.15rd} is contained in
$$
   P(F \cap \{\hat Q \leq n-k \} \cap \{\Ybar^{r,t} < 2^{2+\ell -n} \}\cap \{Y_3 + Y^\prime_2 \geq M 2^{\ell -n} - 2^{-n} - 7\cdot 2^{2 + \ell -n} \}
$$
and the last event is included in 
$$
   \{Y_3 + Y^\prime_2 \geq (M - 30) 2^{\ell -n} \vee 0 \} \subset \cup_{i=0}^{M - 30} \{Y_3 \geq i 2^{\ell - n},\, Y^\prime_2 \geq (M - 30-i)  2^{\ell - n} \}.
$$	
By (a), the left-hand side of \eqref{11.15rd} is bounded above by
$$
	\sum_{i=0}^{M - 30} c' 2^{-n-\ell -2k \lambda_1} \exp(-c(i^2 + (M - 30 - i)^2)).
$$
Use the inequality $\frac12 (a+b)^2 \leq a^2 + b^2$ to see that this is bounded above by
\begin{align*}
   \sum_{i=0}^{M-30} c' 2^{-n-\ell -2k \lambda_1} \exp(-\frac{c}{2} (M-30)^2)) 
	   &\leq \tilde c' 2^{-n-\ell -2k \lambda_1} M \exp(-\hat c M^2)\\
		 &\leq \tilde C'' 2^{-n-\ell -2k \lambda_1} \exp(-\hat c' M^2).
\end{align*}
This proves Lemma \ref{08.lem2}. 
\hfill $\Box$
\vskip 16pt

   Set
\begin{eqnarray}\label{defG_M}
 G_M &=& A_2(t,n,k_0,v) \cap A_2(r,n,k_0,v)\cap \{\hat Q < n-k\} \\
    &&\qquad  \cap\, \{\Ybar^{r,t} < 2^{\ell-n}\} \cap \{\Xbar^{r,t} \in [ (M-1)2^{\ell-n}, M 2^{\ell-n}[\}.
\nonumber
\end{eqnarray}
On $G_M$,\index{$G_M$}  within $[x^t_1 - t_1, y^r_1 - t_1] \times [x^r_2 - t_2, y^t_2 - t_2 ]$, the DW-algorithm for $X^t$ goes no higher than the level $M2^{\ell - n}$. In coordinates for $X^t$, we denote the ``information rectangle" explored by $X^t$ up to escaping this rectangle by
$$
  [U_m, U^\prime_m] \times [V_m, V^\prime_m], \qquad \mbox{for some } m \in \IN.
$$
Since ``information rectangles" increase, the behavior of the DW-algorithm for $X^t$ {\em after} escaping $[x^t_1 - t_1, y^t_1 - t_1] \times [x^r_2 - t_2, y^t_2 - t_2 ]$ is mainly determined by the increments\index{$I_1(u_1)$}\index{$I_2(u_2)$}
\begin{eqnarray*}
  I_1(u_1) &=& X^t(\tilde U_m' + u_1, 0) - X^t(\tilde U^\prime_m, 0), \qquad u_1 \geq 0,\\
  I_1(u_1) &=& X^t (\tilde U_m + u_1, 0) - X^t(\tilde U_m, 0), \qquad u_1 \leq 0,\\
  I_2(u_2) &=& X^t(0, \tilde V^\prime_m +  u_2) - X^t(0, \tilde V^\prime_m), \qquad u_2 \geq 0,\\
  I_2(u_2) &=& X^t(0, \tilde V_m+u_2) - X^t(0, \tilde V_m), \qquad u_2 \leq 0,
\end{eqnarray*}
where $\tilde U^\prime_m = U^\prime_m \vee (y^r_1-t_1)$, $\tilde U_m = U_m \wedge (x^t_1-t_1)$, $\tilde V^\prime _m = V^\prime _m \vee (y^t_2-t_2)$, $\tilde V_m = V_m \wedge (x^r_2 - t_2)$, and certain other increments (for instance, if $U^\prime_m < \tilde U^\prime_m$, then increments from $U^\prime_m$ to $y^r_1 - t_1$ play a role, but these are bounded by $M 2^{\ell - n}$).

  Further, for $(v_1, v_2) \not\in ([\tilde U_m, \tilde U^\prime_m] \times \IR_+) \cup (\IR_+ \times [\tilde V_m, \tilde V^\prime_m])$, say for instance that $v_1 > \tilde U^\prime_m$ and $v_2 > \tilde V^\prime_m$, then
$$
  X^t(v_1, v_2) = X^t(\tilde U^\prime_m, \tilde V^\prime_m) + I_1(v_1-\tilde U^\prime_m) + I_2(v_2-\tilde V^\prime_m),
$$
so on $G_M$, $X^t(v_1, v_2) > 0$ implies that
\begin{equation}\label{08.0.6}
  3 M 2^{\ell - n} + I_1(v_1- \ti U^\prime_m) + I_2(v_2- \ti V^\prime_m) > 0.
\end{equation}
This suggests to consider a new additive process\index{$I(u_1, u_2)$}
$$
  I(u_1, u_2) = I_1(u_1) + I_2(u_2)
$$
and to start it at value $2^{-k_{M,n}}$, where $k_{M,n}$ is chosen so that
$$
   2^{-k_{M,n}} = 3 M 2^{\ell - n}.
$$
Indeed, on the event $G_M$, the DW-algorithm applied to $I(\cdot, \cdot)$ started at value $3 M 2^{\ell - n}$ will escape the square with side of length $2^{-2k_0-1} - 2^{2(\ell - n)}$.
  
  Given $\IH$ and $\Xbar^{r,t}$, $I_1$ and $I_2$ are not independent and are not Brownian motions. We observe how these Brownian sheet increments interact with previously used increments by examining Figure \ref{figureF}. In particular, the white noise increments that will be used by $I_1(\cdot)$ up to escaping to $2^{-2 k_0 -1}$ do not involve subsets of the vertical strip $[0, \frac{3}{2}] \times \IR_+$.
\begin{figure}
\begin{center}
\begin{picture}(400,420)
\put(20,20){\line(0,1){5}} \multiput(20,25)(0,6){2}{\line(0,1){2}} \put(20,38){\line(0,1){25}} \multiput(20,63)(0,6){2}{\line(0,1){2}} \put(20,75){\line(0,1){65}} 
\put(20,140){\line(0,1){240}}

\put(20,20){\line(1,0){5}} \multiput(25,20)(6,0){2}{\line(1,0){2}} \put(38,20){\line(1,0){25}} \multiput(63,20)(6,0){2}{\line(1,0){2}} \put(75,20){\line(1,0){65}} 
\put(140,20){\line(1,0){240}}

\put(215,340){\vector(0,1){35}}  \put(200,370){$I_2$}
\put(215,180){\vector(0,-1){35}} \put(200,145){$I_2$}
\put(300,315){\vector(1,0){35}}  \put(330,300){$I_1$}
\put(170,315){\vector(-1,0){35}} \put(135,300){$I_1$}

\put(170,180){\line(1,0){130}}
\put(170,180){\line(0,1){160}}
\put(170,340){\line(1,0){130}}
\put(300,180){\line(0,1){160}}

\put(20,330){\line(1,0){180}}

\put(215,315){\circle*{3}}
\put(219,312){$t$}

\put(20,240){\line(1,0){180}}

\put(200,300){\line(1,0){30}}
\put(200,300){\line(0,1){30}}
\put(200,330){\line(1,0){30}}
\put(230,300){\line(0,1){30}}

\put(265,225){\circle*{3}}
\put(269,222){$r$}

\put(250,210){\line(1,0){30}}
\put(250,210){\line(0,1){30}}
\put(250,240){\line(1,0){30}}
\put(280,210){\line(0,1){30}}

\multiput(200,210)(6,0){9}{\line(1,0){2}}
\multiput(200,210)(0,6){16}{\line(0,1){2}}
\multiput(230,300)(6,0){9}{\line(1,0){2}}
\multiput(280,240)(0,6){10}{\line(0,1){2}}

\put(50,19){\line(0,1){2}}
\put(48,7){$1$}

\put(100,19){\line(0,1){2}}
\put(98,6){$\frac{3}{2}$}

\put(160,19){\line(0,1){2}}
\put(157,7){$2$}

\put(170,19){\line(0,1){2}}
\put(160,-5){$t_1+ \ti U_m$} \put(174,5){\vector(-1,3){4}}

\put(200,19){\line(0,1){2}}
\put(197,7){$x^t_1$}

\put(280,19){\line(0,1){2}}
\put(277,7){$y^r_1$}

\put(300,19){\line(0,1){2}}
\put(290,-5){$t_1+ \ti U_m'$} \put(304,5){\vector(-1,3){4}}

\put(350,19){\line(0,1){2}}
\put(347,7){$3$}

\multiput(160,160)(6,0){32}{\line(1,0){2}}
\multiput(160,160)(0,6){32}{\line(0,1){2}}
\multiput(160,350)(6,0){32}{\line(1,0){2}}
\multiput(350,160)(0,6){32}{\line(0,1){2}}

\put(19,350){\line(1,0){2}}
\put(10,347){$3$}

\put(19,340){\line(1,0){2}}
\put(-30,337){$t_2 + \ti V_m'$}

\put(19,330){\line(1,0){2}}
\put(7,327){$y_2^t$}

\put(19,300){\line(1,0){2}}
\put(7,297){$x_2^t$} 

\put(19,240){\line(1,0){2}}
\put(7,237){$y_2^r$}

\put(19,210){\line(1,0){2}}
\put(7,207){$x_2^r$}

\put(19,180){\line(1,0){2}}
\put(-20,177){$t_2 + \ti V_m$}

\put(19,160){\line(1,0){2}}
\put(10,157){$2$}

\put(19,100){\line(1,0){2}}
\put(10,97){$\frac{3}{2}$}

\put(19,50){\line(1,0){2}}
\put(10,47){$1$}

\end{picture}
\end{center}
\caption{Relationship between the additive process $I(\cdot, \cdot)$ and previously used increments.\label{figureF}} 
\end{figure}
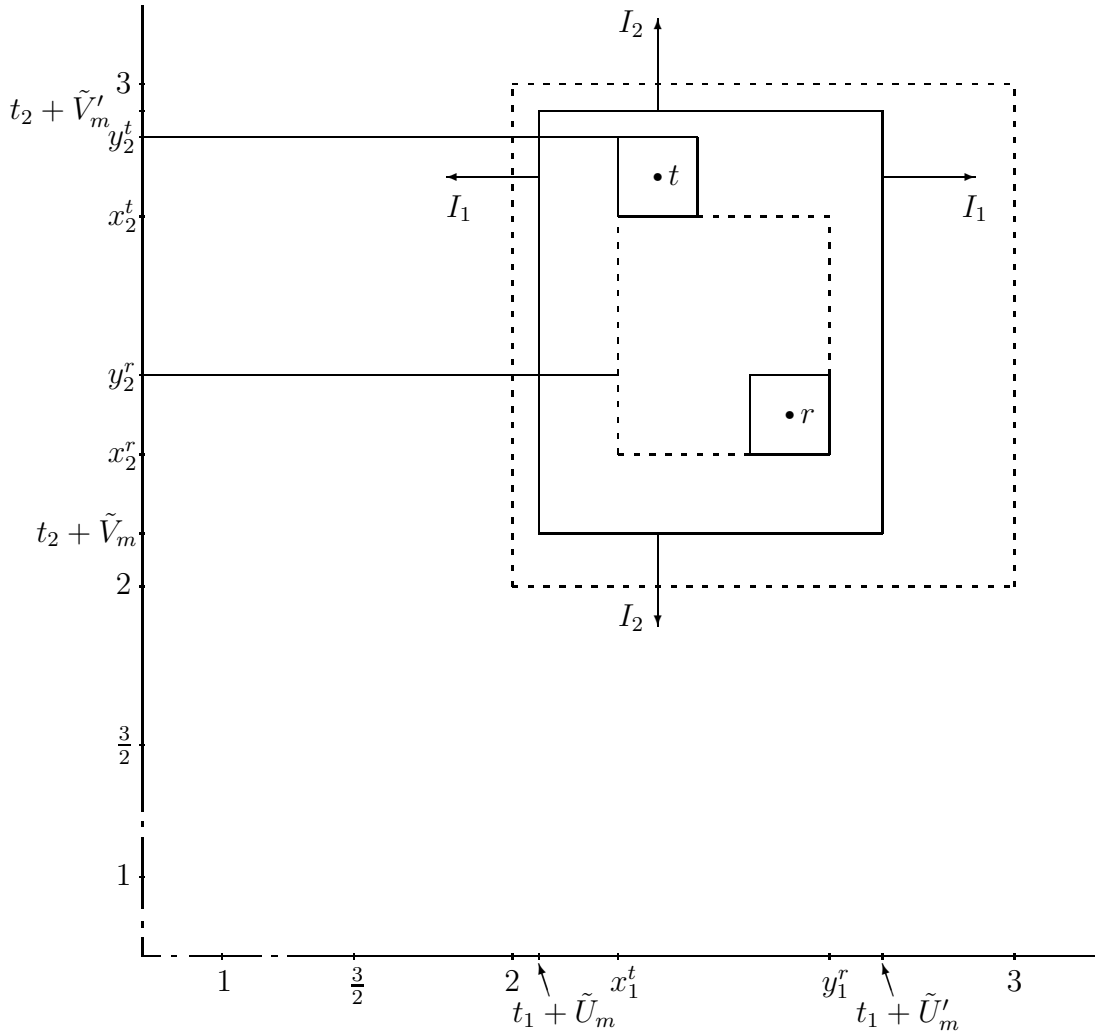
  
\begin{remark} On $G_M$,
$$
\max(-\tilde U_m, \tilde U^\prime_m, - \tilde V_m, \tilde V^\prime_m) \leq 2^{8} v^{3/2} M^2\, (n-\ell)^2\, 2^{2(\ell-n)}.
$$
Indeed, due to $v$-robustness, the length of an order $j$ episode is no more than $ v j 2^{-2j}$ and there are no more than $j \sqrt{v}$ episodes of order $j$, so the maximal distance from $t$ before reaching level $3 M 2^{\ell - n}$ is bounded (using Lemma \ref{rdlem35}(c)) by 
\begin{eqnarray*}
\sum^n_{j=k_{M,n}} v \ j \ 2^{-2j} j\sqrt{v} &\leq& v^{3/2} \sum^\infty_{j=k_{M,n}} j^2 2^{-2j} \leq 2 \ v^{3/2} \ k_{M,n}^2 \ 2^{-2k_{M,n}}\\
&\leq& 2^{5} v^{3/2} M^2 (n-\ell)^2 2^{2(\ell-n)}.
\end{eqnarray*}
\label{rem11.8}
\end{remark}
\vskip 12pt

\noindent{\em The {\em boosted} DW-algorithm}
\vskip 12pt

   We are going to use the additive process $(I(u_1, u_2))$ to construct a standard ABM $(\tilde X(u_1, u_2))$, and an associated path $\tilde \Gamma,$ such that on $G_M$, $2^{-k_{M,n}} + \tilde X(\cdot, \cdot)$ essentially escapes to $2^{-2 k_0}$. This ABM $\tilde X(\cdot, \cdot)$ will be independent of previously used white noise increments. However, since the DW-algorithm for $\tilde X$ started at level $2^{-k_{M,n}}$ is not {\em a priori} $v$-robust, we will guarantee that on $A_2(t,n,k_0,v)$, $2^{-k_{M,n}} + \tilde X(\cdot, \cdot)$ escapes to $2^{-2 k_0}$ (or reaches level $2^{- k_0}$) with high probability, by progressively increasing (``boosting") the value $2^{-k_{M,n}}$ during the construction of $\tilde \Gamma$, yet without significantly changing the gambler's ruin or escape probabilities. This boosting compensates for differences coming from the increments of the different white noise that will be used by $\ti X(\cdot, \cdot)$ but not by $I(\cdot, \cdot)$. The precise construction is as follows.

   Let $\tilde W$ be a Brownian sheet that is independent of $W$ and $W^\prime$, and let $\dot{\tilde W}$ be its associated white noise.

\begin{figure}
\begin{center}
\begin{picture}(200,200)
\put(30,20){\line(0,1){190}}
\put(30,20){\line(1,0){190}}

\put(90,140){\line(1,0){30}}
\put(90,170){\line(1,0){30}}
\put(90,140){\line(0,1){30}}
\put(120,140){\line(0,1){30}}

\put(105,155){\circle*{3}}
\put(108,152){$t$}

\put(-10,178){$t_2 + \ti V_m'$}
\put(29,180){\line(1,0){2}}

\put(18,152){$t_2$}
\put(29,155){\line(1,0){2}}

\put(-10,58){$t_2 + \ti V_m$}
\put(29,60){\line(1,0){2}}

\put(105,19){\line(0,1){2}}
\put(104,5){$t_1$}

\put(150,70){\line(1,0){30}}
\put(150,100){\line(1,0){30}}
\put(150,70){\line(0,1){30}}
\put(180,70){\line(0,1){30}}

\put(165,85){\circle*{3}}
\put(168,82){$r$}

\put(80,60){\line(1,0){110}}
\put(80,180){\line(1,0){110}}
\put(80,60){\line(0,1){120}}
\put(190,60){\line(0,1){120}}

\put(80,19){\line(0,1){2}}
\put(55,5){$t_1 + \ti U_m$}

\put(190,19){\line(0,1){2}}
\put(174,5){$t_1 + \ti U_m'$}

\end{picture}
\end{center}
\caption{Situation for the boosted DW-algorithm.\label{figureG}} 
\end{figure}
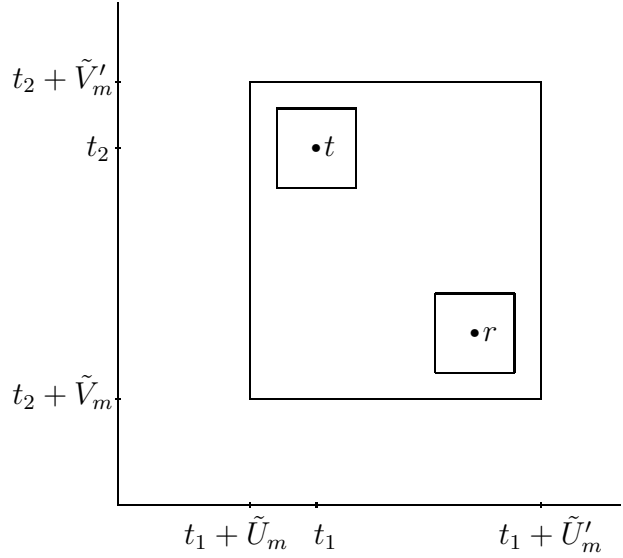
   In the rectangle $[0, t_1 + \tilde U_m] \times [t_2 + \tilde V_m, t_2]$ (see Figure \ref{figureG}), we immediately replace the white noises $\dot W$ or $\dot W^\prime$ by $\dot{\tilde W}$ and use this modified white noise to construct a Brownian sheet $W^1$. We set
$$
Z^1_1(u_1) = \left\lbrace \begin{array}{lll} W^1(t_1 + \tilde U_m + u_1,t_2) - W^1(t_1 + \tilde U_m, t_2) &\mbox{ if }& u_1 \leq 0,\\
I_1(u_1) &\mbox{ if }& u_1 \geq 0,
\end{array}
\right.
$$
and $Z^1_2(u_2) = I_2(u_2)$, $u_2 \geq-2.$ We run Stage 1 of the DW-algorithm started at $(0,0)$ with value $2^{-k_{M,n}}$ for $X^1(u_1, u_2) = 2^{-k_{M,n}} + Z^1_1(u_1) + Z^1_2(u_2).$ This produces two random variables $U_1$ and $U^\prime_1$ with $U_1 < 0 < U^\prime_1$.\\
We now define a new Brownian sheet $W^2$ by letting its associated white noise be
$$
\dot W^2 = \left\{\begin{array}{ll}
\dot{\tilde W} & \mbox{ on } [t_1 + \tilde U_m + U_1, t_1 + \tilde U_m] \times [0, t_2 + \tilde V_m],\\
\dot W^1 & \mbox{elsewhere}
\end{array}
\right.
$$
(recall that the white noise in $[t_1 + \tilde U_m, t_1] \times [0, t_2 + \tilde V_m]$ which was used during the construction of $\tilde U_m$, $\tilde V_m$ has already been replaced by unused independent white noise). We define two Brownian motions $Z^2_1$ and $Z^2_2$ by
$$
  Z^2_1 (u) = Z^1_1(u),
  \qquad Z^2_2(u) = \left\{ \begin{array}{lll}
W^2(t_1, t_2 + \tilde V_m + u) - W^2(t_1, t_2 +\tilde V_m) & \mbox{ if } & u \leq 0,\\
I_2(u) &\mbox{ if }  &u \geq 0.
\end{array}
\right.
$$
We define an ABM $X^2$ by setting
$$
X^2(s_1, s_2) = 2^{-k_{M,n}} + Z^2_1(s_1) + Z^2_2(s_2), \qquad (s_1, s_2) \in [-2, + \infty[^2.
$$
We note that Stage 1 of the DW-algorithm is the same for $X^2$ and $X^1$.

   We now run Stage 2 of the DW-algorithm for $X^2$. This produces in particular two random variables $V_1 < 0 < V_1'$. We define a new Brownian sheet $W^3$ by letting its associated white noise be
$$
\dot W^3 = \left\{\begin{array}{ll}
\dot{\tilde W} & \mbox{ on } [0, t_1 + \tilde U_m + U_1] \times [t_2 + \tilde V_m +V_1, t_2 + \tilde V_m],\\
\dot W^2 & \mbox{elsewhere.}
\end{array}
\right.
$$
We define two Brownian motions $Z^3_1$ and $Z^3_2$ by
$$
 Z^3_1(u) = \left\{ \begin{array}{lll}
              Z^2_1(u) & \mbox{ if } & u \geq U_1,\\
    Z^2_1(U_1) + W^3(t_1 +\tilde U_m +u, t_2) - W^3(t_1 +\tilde U_m +U_1, t_2)
           &\mbox{ if }  &u \leq U_1,
\end{array}
\right.
$$
and $Z^3_2(u) = Z^2_2(u)$ for $u \geq -2$. We note that $(Z^3_1(u),\ u \geq -2)$ and $(Z^3_2(u),\ u \geq V_1)$ are independent. We define an ABM $X^3$ by setting
$$
   X^3(s_1,s_2) = 2^{-k_{M,n}} + Z^3_1(s_1) + Z^3_2(s_2), \qquad (s_1,s_2) \in [-2,+\infty[^2,
$$
and we note that Stages 1 and 2 for the DW-algorithm are the same for $X^2$ and $X^3$.

   The construction now proceeds by induction, similar to the construction in Section \ref{sec10}, with the following significant change. The first time that the ABM that we are currently using reaches level $2^{1-k_{M,n}}$, we continue the DW-algorithm and the construction with the starting value $x_0 = 2^{-k_{M,n}}$ replaced by $x_1 = 2^{-k_{M,n}} + (k_{M,n}-1)^{-2} \ 2^{1-k_{M,n}}$ until we reach level $2^{2-k_{M,n}}$ (with the new starting value), at which time we replace the starting value $x_1$ by $x_2 = x_1 + (k_{M,n}-2)^{-2} \ 2^{2-k_{M,n}}$ and so forth : the first time we reach level $2^{-j} (j < k_{M,n})$, we replace the starting value $x_{k_{M,n}-1-j}$ by $x_{k_{M,n}-j} = x_{k_{M,n}-1-j} + j^{-2} \ 2^{-j}$ and we continue the algorithm with this starting value until we reach level $2^{1-j}$, etc. By the time we reach level $2^{-j}$, we are using the starting value
\begin{equation}\label{08.0.11.17}
   x_{k_{M,n}-j} = 2^{-k_{M,n}} + \sum^{k_{M,n}-1}_{h=j} h^{-2} \, 2^{-h}
\end{equation}
(equivalently, $x_{j} = 2^{-k_{M,n}} + \sum_{i=1}^{j} (k_{M,n}- i)^{-2}\, 2^{i - k_{M,n}}$). We refer to this modified DW-algorithm as the {\em boosted DW-algorithm.} 

   It will be important to be slightly more precise about exactly what is done when each ``boost" occurs. If we are using the ABM $X^i$ at the first time that the algorithm reaches level $2^{1-k_{M,n}}$, and if this level is reached during an odd stage $2\ti n+1$, then there are three cases:
	
	{\em Case 1.} If $x_0 + X^i(\ultau_1^{2^{1-k_{M,n}}}, T^{\ti n}_2) = 2^{1-k_{M,n}}$ and $x_0 + X^i(\ultau_3^{2^{1-k_{M,n}}}, T^{\ti n}_2) = 0$, so that $\ultau_3^{2^{1-k_{M,n}}} = U_{\ti n + 1}$, then we keep this value of $U_{\ti n + 1}$, and we replace $x_0$ by $x_1$ before constructing $U^\prime_{\ti n+1}$.
	
	{\em Case 2.} If $x_0 + X^i(\ultau_1^{2^{1-k_{M,n}}}, T^{\ti n}_2) = 0$ and $x_0 + X^i(\ultau_3^{2^{1-k_{M,n}}}, T^{\ti n}_2) = 2^{1-k_{M,n}}$, so that $\ultau_1^{2^{1-k_{M,n}}} = U^\prime_{\ti n + 1}$, then we keep this value of $U^\prime_{\ti n + 1}$, and we replace $x_0$ by $x_1$ before constructing $U_{\ti n+1}$.
	
	{\em Case 3.} If $x_0 + X^i(\ultau_1^{2^{1-k_{M,n}}}, T^{\ti n}_2) = 2^{1-k_{M,n}}$ and $x_0 + X^i(\ultau_3^{2^{1-k_{M,n}}}, T^{\ti n}_2) = 2^{1-k_{M,n}}$, then we replace $x_0$ by $x_1$ before constructing $U^\prime_{\ti n+1}$ and $U_{\ti n+1}$.
\vskip 12pt

\noindent If the level $2^{1-k_{M,n}}$ is reached during an even stage, then we proceed analogously, and, similarly, at each other level where ``boosting" occurs.

  When this boosted DW-algorithm STOPS, we will have constructed a standard ABM 
$$
   \tilde X(s_1, s_2) = \tilde Z_1(s_1) + \tilde Z_2(s_2)
$$ 
and a path $\tilde \Gamma$ along which this ABM, to which we add the appropriate ``boosted" starting value as we move along the path, remains positive. We have also constructed the variables $U_{\ti n}$, $U^\prime_{\ti n}$, $V_{\ti n}$, $V^\prime_{\ti n}$, $H_{\ti n}$, $T^{\ti n}_1$ and $T^{\ti n}_2$ that are produced at each stage of the algorithm.

  Before examining escape probabilites for this boosted DW-algorithm, we introduce some events on which the standard ABM $\tilde X$ may differ too much from $I$. For $j \geq 1,$ define 
\begin{equation}\label{08.0.11.19}
S_j = S_{j,1} \cup S_{j,2} \cup S_{j,3}, 
\end{equation}
where
\begin{description}
\item[$S_{j,1}$ =]  ``the boosted DW-algorithm has an episode of order $j$ of length $\geq j^2 2^{-2j}$ or has at least $j^2$ episodes of order $j$;"
\item[$S_{j,2}$ =] ``for some horizontal episode $[\tilde U^{(\ti \ell)}, \tilde U^{(\ti \ell-1)}]$ for $\tilde X$ of order $j$,
$$
\sup_{\tilde U^{(\ti \ell)}< u_1 < \tilde U^{(\ti \ell-1)}} \vert I_1(u_1) -I_1(\tilde U^{(\ti \ell-1)}) - (\tilde Z_1(u_1)-\tilde Z_1(\tilde U^{(\ti \ell-1)})) \vert \geq 2^{-3 j/2};"
$$
\item[$S_{j,3}$ =] ``for some vertical episode $[\tilde V^{(\ti \ell)}, \tilde V^{(\ti \ell-1)}]$ for $\tilde X$ of order $j$,
$$
\sup_{\tilde V^{(\ti \ell)} < u_2 < \tilde V^{(\ti \ell-1)}} \vert I_2(u_2) - I_2(\tilde V^{(\ti \ell-1)}) - (\tilde Z_2(u_2) - \tilde Z_2 (\tilde V^{(\ti \ell-1)})) \vert \geq 2^{-3j/2}."
$$
\end{description}
The Brownian sheet increments that appear in the definitions of $S_{j,2}$ and $S_{j,3}$ involve areas of order $2^{-2j}$, so the increments $I_i$ and $\tilde Z_i$ are typically of order $2^{-j}$. However, their difference is due to using different white noises in a region with area of order $(2^{-2j})^2$, so $S_{j,2}$ and $S_{j,3}$ are events with low probability.

\begin{lemma}
Suppose that $k_{M,n} \geq k_0$.
Let $h_0 \in \{k_0,\dots,k_{M,n}\}$, let $\tilde G_{h_0}$ be the event ``the boosted DW-algorithm (for $\tilde X$) started at $2^{-k_{M,n}}$ reaches level $2^{-h_0}$ or escapes $\R(2^{-2h_0}).$"
 
   (a) $\tilde G_{h_0}$ is independent of $\IH \vee \sigma (\bar X^{r,t})$;
   
   (b) for $k_0$ sufficiently large, $G_M \cap (\cup_{h_0 \leq j \leq k_{M,n}} S_j)^c \subset \tilde G_{h_0},$ that is, on $G_M$, if none of the $S_j$ occur, then the boosted DW-algorithm started at level $2^{-k_{M,n}}$ reaches level $2^{-h_0}$;
   
   (c) for $k_0$ sufficiently large, $P(\tilde G_{h_0}) \leq c \ 2^{(h_0-k_{M,n})\lambda_1}$.
\label{08.lem5}
\end{lemma}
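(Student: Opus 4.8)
The plan is to establish the three assertions in turn, and most of the work lies in (b).

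\textbf{Part (a).} This is a matter of inspecting the construction of the boosted DW-algorithm given just above the statement. By design, whenever that algorithm (run from value $2^{-k_{M,n}}$ until it reaches level $2^{-h_0}$ or escapes $\R(2^{-2h_0})$) would call on a piece of white noise already used by either of the two DW-algorithms for $\hat X^t$ and $\hat X^r$, that piece is replaced by a piece of the fresh sheet $\dot{\tilde W}$, which is independent of $W$ and $W'$. The only increments of $W$ that enter are $I_1(\cdot)$ and $I_2(\cdot)$; by their very definition the endpoints $\tilde U_m,\tilde U'_m,\tilde V_m,\tilde V'_m$ are chosen so that $[\tilde U_m,\tilde U'_m]\times[\tilde V_m,\tilde V'_m]$ contains the information rectangle $[x_1^t-t_1,y_1^r-t_1]\times[x_2^r-t_2,y_2^t-t_2]$, so that (see Figure \ref{figureF}) these increments involve only white noise in a region of $\IR_+^2$ disjoint from the supports of the white noise generating $\IH$ and from the region determining $\bar X^{r,t}$. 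Hence every random variable used to build the relevant portion of $\tilde X$, and therefore the indicator of $\tilde G_{h_0}$, is independent of $\IH\vee\sigma(\bar X^{r,t})$, proving (a).

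\textbf{Part (b).} The argument is a comparison in the spirit of Proposition \ref{rdprop37}. On $G_M$ we have $A_2(t,n,k_0,v)$, so the DW-algorithm for $X^t$ started at $2^{-n}$ reaches level $2^{-k_0}$ before escaping $\R(2^{-2k_0})$, or escapes $\R(2^{-2k_0})$, and is $v$-robust above order $k_0$. First I would record, as in \eqref{08.0.6}, the domination $2^{-k_{M,n}}+I\ge 2^{-n}+X^t$ at every point of the two far quadrants lying outside the information rectangle (on $G_M$ the anchor value $2^{-n}+X^t(\tilde U'_m,\tilde V'_m)$ is nonnegative and at most $\bar X^{r,t}\le M2^{\ell-n}\le 2^{-k_{M,n}}$), together with the analogous statements in the two horizontal and vertical strips. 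Next, using Remark \ref{rem11.8} together with the standing hypotheses $k_{M,n}\ge k_0$ and $\ell\le n-k_0$, I would compare the scale $2^{-2h_0}$ with the $\ell^1$-radius of the information rectangle: when the former dominates the latter, the point at which $X^t$ attains level $2^{-k_0}$ (which is impossible inside the information rectangle, since $X^t\le M2^{\ell-n}<2^{-k_0}$ there) or the point at which $X^t$ exits $\R(2^{-2k_0})$ lies in a far region, so the domination just recorded shows that the additive process $2^{-k_{M,n}}+I$ reaches level $2^{-h_0}$ or escapes $\R(2^{-2h_0})$; in the remaining range of $h_0$ — which can occur only when $M$ is small, so that $2^{-h_0}$ and the starting value $2^{-k_{M,n}}$ are within a controlled factor of one another — the same conclusion follows directly from the non-triviality of $X^t$'s bubble on $G_M$ and the boosts. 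Finally, on $(\cup_{h_0\le j\le k_{M,n}}S_j)^c$ the discrepancy between an order-$j$ episode of the algorithm for $\tilde X$ and the corresponding episode of the algorithm for $I$ is at most $2^{-3j/2}$ (by $S_{j,2},S_{j,3}$) with at most $j^2$ episodes of each order (by $S_{j,1}$); since the boost added each time level $2^{-j}$ is reached is of exact order $j^{-2}2^{-j}$, which dominates $2^{-3j/2}$ and whose cumulative effect is finite, the boosted DW-algorithm for $\tilde X$ stays above $I$ along the parallel path it constructs, and so it too reaches level $2^{-h_0}$ or escapes $\R(2^{-2h_0})$. This gives (b).

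\textbf{Part (c).} Here one uses only that $\tilde X$ is, up to a deterministic time change, a standard ABM, so Theorems \ref{prop1} and \ref{thm3} apply, and the sole new feature is the boosting. Writing $\tilde G_{h_0}$ as the union over whether the algorithm escapes or reaches the level, and conditioning successively on the crossings of the dyadic levels $2^{-(k_{M,n}-1)},\dots,2^{-h_0}$, each such crossing increases the effective starting value by a relative amount at most $1+cj^{-2}$, where the reset value is $x_{k_{M,n}-j}=2^{-k_{M,n}}+\sum_{h=j}^{k_{M,n}-1}h^{-2}2^{-h}$; since $\prod_{j\ge k_0}(1+cj^{-2})<\infty$, the probability of reaching $2^{-h_0}$ or escaping $\R(2^{-2h_0})$ for the boosted algorithm is at most a universal constant times the corresponding probability for the un-boosted DW-algorithm started at $2^{-k_{M,n}}$. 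By Brownian scaling and Theorem \ref{prop1}, the latter reaches level $2^{-h_0}$ with probability $\E(2^{h_0-k_{M,n}})\le C\,2^{(h_0-k_{M,n})\lambda_1}$ (using $h_0\le k_{M,n}$), and by Theorem \ref{thm3} it escapes $\R(2^{-2h_0})$ with probability at most $C\,2^{(h_0-k_{M,n})\lambda_1}$; summing the two yields (c).

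\textbf{Main obstacle.} The delicate point is part (b): one must run the robustness comparison of Proposition \ref{rdprop37} simultaneously with the geometric bookkeeping that locates, via Remark \ref{rem11.8} and the constraints $k_{M,n}\ge k_0$, $\ell\le n-k_0$, the point realizing $A_2(t,n,k_0,v)$ relative to the (small, but $M$- and $(n-\ell)$-dependent) information rectangle, and one must verify that the boosts of size $j^{-2}2^{-j}$ genuinely absorb all the accumulated discrepancies between $I$ and $\tilde X$ recorded by $(\cup_j S_j)^c$; the borderline regime in which $2^{-h_0}$ is comparable to the information-rectangle scale requires a separate, and somewhat fiddly, direct argument.
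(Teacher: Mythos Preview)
Your treatment of (a) matches the paper's and is fine.

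For (b), your direct approach has a real gap. You try to transport the positive path of $X^t$ first to $I$ (via the pointwise domination $2^{-k_{M,n}}+I\ge 2^{-n}+X^t$ in the far regions) and then to the boosted algorithm for $\tilde X$ (via $S_j^c$ and the boosts). The trouble is the last step: the boosted DW-algorithm for $\tilde X$ builds its \emph{own} path $\tilde\Gamma$, and there is no ``parallel path'' handed to it by $I$ or by $X^t$. Your sentence ``the boosted DW-algorithm for $\tilde X$ stays above $I$ along the parallel path it constructs'' has no content unless you explain which path is being compared to which, and you would in any case need a path for $\tilde X$ that starts at the $\tilde X$-origin. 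The paper avoids this entirely by arguing the contrapositive \eqref{08.0.8}: on $\tilde G_{h_0}^c$, the boosted algorithm terminates and produces a \emph{rectangle} $\R(\tilde\tau_{(N)})\subset\R(2^{-2h_0})$ with $x_{k_{M,n}-j}+\tilde X<0$ on its boundary; the accumulated $\tilde X$--$I$ discrepancy on $(\cup_j S_j)^c$ is $\le cj^2 2^{-3j/2}$ (your estimate), which is dominated by the boost $\sum_{h\ge j}h^{-2}2^{-h}$, so $2^{-k_{M,n}}+I<0$ on $\partial\R(\tilde\tau_{(N)})$; then the pointwise relation $X^t=X^t(\text{corner})+I$ gives $2^{-n}+X^t<0$ on the shifted rectangle $[\tilde U_m-a_3,\tilde U'_m+a_1]\times[\tilde V_m-a_4,\tilde V'_m+a_2]$ (the paper checks the 16 boundary pieces), trapping the $X^t$-bubble and contradicting $A_2(t,n,k_0,v)$. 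Working with the blocking rectangle rather than a specific path is what makes the argument go through cleanly, and it also dispenses with your separate ``borderline regime'' case.

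For (c), your telescoping/perturbation idea is different from the paper's and is not rigorous as written. The delicate point is the escape event: boosting makes the algorithm survive longer and therefore explore \emph{more}, so it is not clear that the boosted escape probability is controlled by the un-boosted one via a product of factors $1+cj^{-2}$; moreover, the DW-algorithm's Markov state at $\ultau^{2^{-j}}$ is the pair $(H_{m-1},H_m)$, not just the current level, so ``conditioning successively on the crossings'' needs substance. The paper instead builds a \emph{coupling}: starting from a standard ABM $Y$, it defines an event $A$ with $P(A)\ge\tfrac12$ (namely, that at each level $2^{-m}$ the relevant component Brownian motion hits $(1+m^{-2})2^{-m}$ before $0$) and, on $A$, deletes from each of the four driving Brownian motions the short interval over which this extra climb occurs. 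The resulting ABM $\bar Y$ is again standard, independent of $A$, and on $A$ the boosted algorithm for $\bar Y$ coincides exactly with the ordinary algorithm for $Y$ (same $H_k$'s, with $\bar U'_k,\bar V'_k,\dots$ obtained via the time-change); since deletion only shrinks the explored region, $A\cap\bar G_{h_0}\subset A\cap G_{h_0}$, whence $P(\tilde G_{h_0})=P(\bar G_{h_0})\le 2P(G_{h_0})\le c\,2^{(h_0-k_{M,n})\lambda_1}$ by Theorems~\ref{prop1} and~\ref{thm3}. This coupling is the missing idea in your (c).
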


\proof (a) This is a consequence of the way that $\tilde X$ is constructed during the boosted DW-algorithm: increments of the Brownian sheet $W$ that appear in the definition of $\IH$ and $\Xbar^{r,t}$ either do not overlap with those used to construct $\tilde X$ or, when they do overlap, are replaced by increments from an independent white noise.

(b) It suffices to show that
\begin{align}\nonumber
   \{\hat Q < n-k \} \cap \{\Ybar^{r,t} < 2^{\ell-n} \} &\cap \{\Xbar^{r,t} \in [ (M-1)2^{\ell-n}, M 2^{\ell-n}[ \} \\
	& \cap \tilde G^c_{h_0} \cap (\cup_{h_0 \leq j \leq k_{M,n}} S_j )^c  \subset (A_2(t,n,k_0,v))^c.
\label{08.0.8}
\end{align}
Observe that on $\tilde G_{h_0}^c$, the boosted DW-algorithm constructs a rectangle $\R(\tilde \ultau_{(N)})$ with the following properties :

   (i) $\R(\tilde \tau_{(N)}) \subset \R(2^{-2h_0}) \subset \R(\frac{1}{4})$;

   (ii) if the maximum height achieved by the boosted DW-algorithm is in the interval $[2^{-j},2^{1-j}[$, with $h_0 < j \leq k_{M,n}$, then
\begin{equation}\label{08.0.9}
x_{k_{M,n}-j} + \tilde X < 0\quad \mbox{ on }\quad \partial \R(\tilde \ultau_{(N)}).
\end{equation}
On $(\cup_{1 \leq i \leq k_0} S_i)^c$, within $\R(\tilde \ultau_{(N)})$, the accumulated difference between $\tilde X$ and $I$ cannot be too large. Indeed, for each episode of order $i$, the difference between an increment of $I$ and an increment of $\tilde X$ is at most $2^{-3i/2}$, by definition of $S_{i,2}$ and $S_{i,3}$. There are no more than $i^2$ such episodes on $S_{i,1}^c$. Therefore, at any point within $\R(\tilde \ultau_{(N)})$,
\begin{equation}\label{11.25rd}
\vert \tilde X - I \vert \leq \sum^{k_{M,n}}_{i=j} i^2 2^{-3i/2} \leq c \ j^2\, 2^{-3j/2}.
\end{equation}
Therefore, by (\ref{08.0.9}),
$$
x_{k_{M,n}-j} + I < c \, j^2 \, 2^{-3j/2} \quad \mbox{ on } \partial \R(\tilde \ultau_{(N)}),
$$
or, equivalently, by (\ref{08.0.11.17}),
$$
   2^{-k_{M,n}} + I < c \ j^2 \ 2^{-3j/2} - \sum^{k_{M,n}-1}_{h = j} h^{-2} 2^{-h}.
$$

For $j \in\, ]h_0, k_{M,n}]$ and $h_0$ sufficiently large, this right-hand side is negative, so
\begin{equation}\label{11.24}
    2^{-k_{M,n}} + I < 0  \quad \mbox{ on } \partial \R(\tilde \ultau_{(N)}).
\end{equation}
Suppose that $\tilde \ultau_{(N)} = (a_1,a_2,a_3,a_4)$. We want to deduce that $2^{-n} + X^t < 0$ on $C_1 \cup C_2 \cup C_3 \cup C_4$, where
\begin{align*}
   C_1 &= \{\ti U^\prime_m + a_1 \} \times [\ti V_m - a_4 , \ti V^\prime_m + a_2], \\
	 C_2 &= [\ti U_m - a_3 , \ti U^\prime_m + a_1] \times \{\ti V^\prime_m + a_2 \}, \\
   C_3 &= \{\ti U_m - a_3 \} \times [\ti V_m - a_4 , \ti V^\prime_m + a_2],\\
   C_4 &= [\ti U_m - a_3 , \ti U^\prime_m + a_1] \times \{\ti V_m - a_4 \}.
\end{align*}
We consider first the case where $s = (s_1,s_2) \in C_1$ with $s_1 = \ti U^\prime_m + a_1$ and then there are four cases for $s_2$: $\ti V_m - a_4 \leq s_2 \leq \ti V_m$, $ \ti V_m \leq s_2 \leq 0$, $0\leq s_2 \leq \ti V^\prime_m$, $\ti V^\prime_m \leq s_2 \leq \ti V^\prime_m + a_2$. We only consider the first two cases, since all other cases are similar to these two.
\vskip 12pt

   {\em Case 1:} $\ti V_m - a_4 \leq s_2 \leq \ti V_m$. Then
\begin{align*}
   X^t(s_1, s_2) &= X^t(\ti U^\prime_m + a_1, s_2) = Z^t_1(\ti U^\prime_m + a_1) + Z^t_2(s_2) \\
	   &= Z^t_1(\ti U^\prime_m) + I_1(a_1) + Z^t_2(\ti V_m) + I_2 (s_2 - \ti V_m) \\
		 & = X^t(\ti U^\prime_m, \ti V_m) + I(a_1, s_2 - \ti V_m).
\end{align*}
Now $X^t(\ti U^\prime_m, \ti V_m) \leq M 2^{\ell - n}$, and since $(a_1, s_2 - \ti V_m) \in \partial \R(\tilde \ultau_{(N)})$,
$$
   3M 2^{\ell - n} + I(a_1, s_2 - \ti V_m) < 0
$$
by \eqref{11.24}. Therefore,
\begin{align*}
   2^{-n} + X^t(s_1,s_2) \leq 2^{-n} + M 2^{\ell - n} + I(a_1, s_2 - \ti V_m) \leq 3M 2^{\ell - n} + I(a_1, s_2 - \ti V_m) < 0,
\end{align*}
as was to be proved.
\vskip 12pt

    {\em Case 2:} $ \ti V_m \leq s_2 \leq 0$. Then
\begin{align*}
   X^t(s_1, s_2) &= X^t(\ti U^\prime_m + a_1, s_2) = Z^t_1(\ti U^\prime_m + a_1) + Z^t_2(s_2) \\
	   &= Z^t_1(\ti U^\prime_m) + I_1(a_1) + Z^t_2(s_2) \\
		 &= X^t(\ti U^\prime_m, s_2) + I(a_1,0).
\end{align*}
Because $\ti V_m \leq s_2 \leq 0$, $X^t(\ti U^\prime_m, s_2) \leq M 2^{\ell - n}$ on $G_M$, therefore, since $(a_1,0) \in \partial \R(\tilde \ultau_{(N)})$, 
$$
   2^{-n} + X^t(s_1,s_2) \leq 2^{-n} + M 2^{\ell - n} + I(a_1, 0) < 0,
$$
as was to be proved.
\vskip 12pt

   Handling the remaining 14 cases in the same way, we conclude that $2^{-n} + X^t < 0$ on $C_1 \cup C_2 \cup C_3 \cup C_4$. This means that the DW-algorithm for $X^t$ does not escape the rectangle $[\ti U_m - a_3, \ti U^\prime_m + a_1] \times [\ti V_m - a_4, \ti V^\prime_m + a_2]$.
	
	We now check that $2^{-n} + X^t < 2^{-k_0}$ in $[\ti U_m - a_3, \ti U^\prime_m + a_1] \times [\ti V_m - a_4, \ti V^\prime_m + a_2]$. Indeed, by (ii) above \eqref{08.0.9}, and \eqref{11.25rd}, on this rectangle,
\begin{align*}
   2^{-n} + X^t < 2^{-n} + 2^{1-j} + c j^2 2^{-3j/2} < 3 \cdot 2^{-j} \leq 3\cdot 2^{-(h_0+1)},
\end{align*}
and this is $\leq 2^{-k_0}$ provided $h_0 \geq k_0 + \log_2 \frac32$. This inequality holds for $h_0 > k_0$.
	
	The above considerations show that $A_2(t,n,k_0,v)$ does not occur (by Remark \ref{rem11.8}). This proves \eqref{08.0.8} and completes the proof of (b).




   (c) Let $\tilde G_{h_0,1}$ (resp. $\tilde G_{h_0,2})$ be the event ``the boosted DW-algorithm (for $\ti X$) started at level $2^{-k_{M,n}}$ reaches level $2^{-h_0}$ (resp. escapes $\R(2^{-2h_0})$)," so that $\tilde G_{h_0} = \tilde G_{h_0,1} \cup \tilde G_{h_0,2}$.
	
	   We will consider a standard ABM $Y = (Y(u_1,u_2),\, (u_1,u_2) \in \IR^2)$. From this ABM $Y$, we will construct below an event $A$ with $P(A) \geq 1/2$ and another standard ABM $\bar Y = (\bar Y(u_1,u_2),\, (u_1,u_2) \in \IR^2)$. Let $\bar G_{h_0}$, $\bar G_{h_0,1}$, $\bar G_{h_0,2}$ be defined in the same as were $\tilde G_{h_0}$, $\tilde G_{h_0,1}$, $\tilde G_{h_0,2}$, respectively, but with $\ti X$ replaced by $\bar Y$. Let $G_{h_0,1}$ (resp.~$G_{h_0,2}$) be the event ``the (ordinary) DW-algorithm (of Section \ref{sec2}) for $Y$ started at level $2^{-k_{M,n}}$ reaches level $2^{-h_0}$ (resp.~escapes $ $)," and let $G_{h_0} = G_{h_0,1} \cup G_{h_0,2}$. We will establish the following properties.
		
		(i) $A$ and $\bar Y$ are independent;
		
		(ii) $A \cap \bar G_{h_0,1} = A \cap G_{h_0,1}$;
		
		(iii) $A \cap \bar G_{h_0,2} \subset A \cap G_{h_0,2}$.
		
\noindent With these three properties, we see that
\begin{align*}
   P(\tilde G_{h_0}) = P(\bar G_{h_0}) = \frac{P(\bar G_{h_0} \cap A)}{P(A)} \leq \frac{P(G_{h_0} \cap A)}{P(A)} \leq 2\, P(G_{h_0}) \leq c 2^{(h_0-k_{M,n})\lambda_1},
\end{align*}
where we have used Theorems \ref{prop1} and \ref{thm3}, and this establishes property (c).

   It remains, given $Y$, to construct the event $A$ and the ABM $\bar Y$ so that the properties (i), (ii) and (iii) hold. We use the notations from Section \ref{sec2} in relation to the DW-algorithm for the ABM $Y$. We set
$$
   A = \bigcap_{m=k_0}^{k_{M,n}-1} A_m,
$$
where
$$
   A_m = \{\ultau^{2^{-m}} = \ulinfty \} \cup (\{\ultau^{2^{-m}} <  \ulinfty\} \cap \ti A_m),
$$
Here, $ \ti A_m = \cup_{k=1}^\infty \ti A_{m,k}$, and for $k$ odd,
$$
   \ti A_{m,k} = \{H_{k-1} < 2^{-m},\ H_k \geq 2^{-m} \} \cap (\ti A_{m,k,1} \cup \ti A_{m,k,3}),
$$
and
\begin{align*}
   \ti A_{m,k,1} &= \{Y(\ultau_1^{2^{-m}}, T_2^{k-1}) = 2^{-m}, \, \tau_+^{k,m,1} < \tau_-^{k,m,1} \},\\
	 \ti A_{m,k,3} &= \{Y(-\ultau_3^{2^{-m}}, T_2^{k-1}) = 2^{-m}, \, \tau_+^{k,m,3} < \tau_-^{k,m,3} \},
\end{align*}
and
\begin{align*}
   \tau_+^{k,m,1} &= \inf\{v_1 >\ultau_1^{2^{-m}}: Y(v_1,T_2^{k-1}) = (1+ m^{-2}) 2^{-m}  \},\\
	\tau_-^{k,m,1} &= \inf\{ v_1 >\ultau_1^{2^{-m}}: Y(v_1,T_2^{k-1}) = 0\},
\end{align*}
and similarly,
\begin{align*}
   \tau_+^{k,m,3} &= \inf\{v_1 >\ultau_3^{2^{-m}}: Y(-v_1,T_2^{k-1}) = (1+ m^{-2}) 2^{-m}  \},\\
	\tau_-^{k,m,3} &= \inf\{ v_1 >\ultau_3^{2^{-m}}: Y(-v_1,T_2^{k-1}) = 0\}.
\end{align*}
For $k$ even, $\ti A_{m,k}$ is defined analogously, using $\ultau_2^{2^{-m}}$ and $\ultau_4^{2^{-m}}$.

   In order to get a lower bound on $P(A)$, we notice that $\ti A_m$ is the event ``upon reaching level $2^{-m}$, the relevant component Brownian motion of $Y$, which is at level $2^{-m}$, hits $(1+ m^{-2}) 2^{-m}$ before $0$." By the Markov property and gambler's ruin probabilities, the probability of the complement of $A_m$ is
$$
   P(A_m^c) = P(\{\ultau^{2^{-m}} < \ulinfty \} \cap \ti A_m^c) \leq 2\, \frac{m^{-2} 2^{-m}}{(1+m^{-2})2^{-m}} = \frac{2}{1+m^2}.
$$
Therefore, 
$$
   P(A^c) \leq \sum_{m= k_{0}}^\infty  \frac{2}{1+m^2} \leq \frac{1}{2}
$$
for $k_0$ large enough. Therefore, $P(A) \geq 1/2$.

   Before constructing the process $\bar Y$, we first define a process obtained from a standard Brownian motion $B = (B(u),\, u \in \IR_+)$ by deleting certain intervals. Let $J_1, J_2, \dots$ be a sequence of (random, possibly empty) intervals in $\IR_+$ with measurable endpoints, ordered so that $\sup J_m \leq \inf J_{m+1}$, for all $m \geq 1$. We suppose that the Lebesgue measure of $\IR_+ \setminus \cup_{m=1}^\infty J_m$ is $+\infty$, and we define the time-change
$$
	 D(v) = \inf\{u\geq 0: u - \vert [0,u] \cap \cup_{m=1}^\infty J_m \vert = v \},
$$
where $\vert \cdot \vert$ denotes Lebesgue measure. We then define the ``deleted process" 
$$
   B_D(v) = \int_0^{D(v)} 1_{(\cup_{m=1}^\infty J_m)^c}(u)\, dB_u.
$$

   Since the ABM $Y$ is given by four independent standard Brownian motions $B^1$, $B^2$, $B^3$, $B^4$ (as in \eqref{startp6}), we construct the ``deleted process" associated with each of them and respectively the intervals
\begin{align*}
   J_m^1 &= \left \{\begin{array}{ll}
	    [\ultau_1^{2^{-m}}, \tau_+^{k,m,1}] & \mbox{ if } \{\ultau^{2^{-m}} < \ulinfty \} \cap \ti A_{m,k,1}  \mbox{ occurs,}\\
			\emptyset & \mbox{ otherwise,}
	     \end{array}\right. \\
		J_m^3 &= \left \{\begin{array}{ll}
	    [\ultau_3^{2^{-m}}, \tau_+^{k,m,3}] & \mbox{ if } \{\ultau^{2^{-m}} < \ulinfty \} \cap \ti A_{m,k,3}  \mbox{ occurs,}\\
			\emptyset & \mbox{ otherwise,}
	     \end{array}\right. 
\end{align*}
The intervals $J_m^2$ and $J_m^4$ are defined analogously. The corresponding time changes are denoted $D^i(v)$, $i=1,2,3,4$, and the corresponding ``deleted processes" are $B_D^i = (B_D^i(v_i),\, v_i \in \IR_+)$.

   The ABM $\bar Y$ is now determined by the four Brownian motions $B_D^1,\dots,B_D^4$. By the Markov property of the DW algorithm, it is easy to check that these are independent standard Brownian motions, and they are independent of the event $A$, since both they and $A$ are determined by increments of the Brownian motions $B^1,\dots,B^4$ over disjoint intervals, and the intervals themselves are determined by such increments. Therefore, property (i) holds.
	
	In order to check (ii) and (iii), we note that on the event $A$, the sequence of intervals $[\bar U_k, \bar U^\prime_k]$ and $[\bar V_k, \bar V^\prime_k]$ constructed by the boosted DW algorithm for $\bar Y$ satisfy
$$
   \bar U^\prime_k = D^1(U^\prime_k),\qquad \bar U_k = - D^2(- U_k),
$$
and similarly for $\bar V^\prime_k$, $\bar V_k$, where $[U_k, U^\prime_k]$ and $[V_k, V^\prime_k]$ are the intervals constructed by the DW-algorithm for $Y$, and there is a similar relation for the points $\bar T$ where the successive maxima $\bar H_k$ are attained, and also $\bar H_k = H_k$. Therefore, (ii) and (iii) hold (but note that the ``$\subset$" in (iii) cannot be replaced by ``$=$", which, fortunately, is not needed).
This completes the proof of (c) and of Lemma \ref{08.lem5}. 
\hfill $\Box$
\vskip 16pt

   Let $\IH^t$ be the sigma-field generated by $\IH \vee \sigma(\Ybar^{r,t}, Y_3,Y^\prime_2)$ and white noise increments used by the DW-algorithm for $X^t$ up to escaping the rectangle $[x^t_1 - t_1, y^r_1 - t_1] \times [x^r_2 - t_2, y^t_2 - t_2 ]$.

\begin{lemma}
For $h_0 \leq h_1 \leq k_{M,n},$ let $\tilde G_{h_1}$\index{$\tilde G_{h_1}$} be the event defined in Lemma \ref{08.lem5}, with $h_0$ there replaced by $h_1$. Then:

(a) $G_M \cap S^c_{k_{M,n}} \cap \cdots \cap S^c_{h_1+1} \subset \tilde G_{h_1+1}$;

   (b) On $G_M,$
$$
P(S^c_{k_{M,n}} \cap \cdots \cap S^c_{h_1+1} \cap S_{h_1} \mid \IH^t) \leq c 2^{(h_1-k_{M,n})\lambda_1} e^{-h_1^2}.
$$ 
\label{08.lem6}
\end{lemma}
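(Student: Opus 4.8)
The plan is to prove the two parts by an induction-style argument that mirrors the proof of Lemma \ref{08.lem5}(b) and (c), but now tracking the ``boost'' levels more carefully.

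\textbf{Part (a).} First I would recall that on $G_M$, the boosted DW-algorithm starts at $2^{-k_{M,n}}$ and by Lemma \ref{08.lem5}(b) (applied with $h_0$ there replaced by any level $\geq k_0$), the algorithm reaches the corresponding level provided none of the relevant $S_j$ occur. The key point is that reaching level $2^{-h_1-1}$ is exactly what $\tilde G_{h_1+1}$ asserts. So I would argue as follows: suppose $G_M$ occurs and $S_{k_{M,n}}^c \cap \cdots \cap S_{h_1+1}^c$ occurs; run the boosted DW-algorithm and suppose, for contradiction, that $\tilde G_{h_1+1}$ fails, i.e.\ the algorithm constructs a terminal rectangle $\R(\tilde\ultau_{(N)}) \subset \R(2^{-2(h_1+1)})$ without reaching level $2^{-h_1-1}$. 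Then the maximum height achieved lies in $[2^{-j},2^{1-j}[$ for some $j > h_1+1$, hence $j \geq h_1+2$, so $j \leq k_{M,n}$ and (by the boost bookkeeping \eqref{08.0.11.17}) the accumulated discrepancy $|\tilde X - I|$ over the explored rectangle is $\leq \sum_{i=j}^{k_{M,n}} i^2 2^{-3i/2} \leq c\, j^2 2^{-3j/2}$ on $S_{k_{M,n}}^c \cap \cdots \cap S_{j}^c$ (which is implied by our hypothesis since $j \geq h_1+2 > h_1+1$). Exactly as in the derivation of \eqref{11.24} and the case analysis that follows it in the proof of Lemma \ref{08.lem5}(b), this forces $2^{-n}+X^t < 0$ on the boundary of the rectangle $[\ti U_m - a_3, \ti U_m' + a_1]\times[\ti V_m - a_4, \ti V_m' + a_2]$ and $2^{-n}+X^t < 2^{-k_0}$ inside it, contradicting $A_2(t,n,k_0,v)$ (which holds on $G_M \subset A_2(t,n,k_0,v)$). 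Hence $\tilde G_{h_1+1}$ must hold, proving (a).

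\textbf{Part (b).} Here I would combine the independence from Lemma \ref{08.lem5}(a) with a union bound over episodes. By Lemma \ref{08.lem5}(a) and the construction of $\ti X$, the event $S_{k_{M,n}}^c \cap \cdots \cap S_{h_1+1}^c \cap S_{h_1}$ together with the boosted algorithm's behaviour above order $h_1$ is conditionally independent of $\IH^t$ once we condition on the starting level $2^{-k_{M,n}}$ (the relevant white noise increments either do not overlap or have been replaced by independent ones). By part (a), on $G_M$ we have $S_{k_{M,n}}^c \cap \cdots \cap S_{h_1+1}^c \subset \tilde G_{h_1+1}$, so
$$
  P(S^c_{k_{M,n}} \cap \cdots \cap S^c_{h_1+1} \cap S_{h_1} \mid \IH^t)
  \leq P(\tilde G_{h_1+1} \cap S_{h_1} \mid \IH^t).
$$
Now I would use a decomposition of $\tilde G_{h_1+1}$ according to the stage at which level $2^{-h_1-1}$ is reached (or $\R(2^{-2(h_1+1)})$ is escaped), in the spirit of the arguments in Section \ref{sec9} and the proof of Lemma \ref{rdlem40}: on $\{\tilde\ultau^{2^{-h_1-1}} < \infty\}$, the conditional probability of reaching level $2^{-h_1-1}$ is $\leq c\,2^{(h_1+1-k_{M,n})\lambda_1}$ by Theorem \ref{prop1} and the Markov property of the (boosted) DW-algorithm — the boosts only increase the starting value by a bounded factor $\prod(1+j^{-2}) < \infty$, so gambler's-ruin probabilities change by at most a constant (this is precisely the design of the boosting). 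Then, conditionally on having reached order $h_1$, the probability that $S_{h_1} = S_{h_1,1}\cup S_{h_1,2}\cup S_{h_1,3}$ occurs is bounded by $e^{-ch_1^2}$ (say $e^{-h_1^2}$ after absorbing constants): $S_{h_1,1}$ (an episode of order $h_1$ being too long, or there being $\geq h_1^2$ of them) has probability $\leq e^{-c h_1}$ per episode times $\leq h_1\sqrt v$ episodes, and can be sharpened to $e^{-h_1^2}$ for $h_1$ large; $S_{h_1,2}$ and $S_{h_1,3}$ compare $I_i$ and $\tilde Z_i$ over a region of area $\sim (2^{-2h_1})^2$ and ask for a discrepancy $\geq 2^{-3h_1/2}$, which by the rectangular-increment estimate (\cite[Lemma 1.2]{OP} combined with Gaussian tail bounds, as in Lemma \ref{rdlem39}) has probability $\leq \exp(-c\,2^{h_1})$, again well below $e^{-h_1^2}$. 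Multiplying the $2^{(h_1+1-k_{M,n})\lambda_1}$ factor by the $e^{-h_1^2}$ factor and absorbing $2^{\lambda_1}$ into $c$ gives the claimed bound $c\,2^{(h_1-k_{M,n})\lambda_1}e^{-h_1^2}$.

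\textbf{Main obstacle.} The delicate point is Part (b): one must verify carefully that the boosting does not spoil the gambler's-ruin estimate $2^{(h_1+1-k_{M,n})\lambda_1}$, i.e.\ that summing the incremental boosts $j^{-2}2^{-j}$ keeps the effective starting value within a bounded multiple of $2^{-k_{M,n}}$ at every level, and that the conditional independence structure (which white-noise increments the boosted algorithm uses versus those generating $\IH^t$) is genuinely clean enough to apply Lemma \ref{08.lem5}(a) at the intermediate level $h_1+1$ rather than only at the top. Handling the interaction between the stopping rule defining $\tilde G_{h_1+1}$ and the episode-by-episode control needed for $S_{h_1}$ — so that one can legitimately condition first on reaching order $h_1$ and then bound $S_{h_1}$ — will require the same kind of stagewise decomposition used repeatedly in Sections \ref{sec9} and \ref{sec10}, and is where the bulk of the technical care lies.
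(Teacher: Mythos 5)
Part (a) of your proposal is correct and takes the same route as the paper: the paper simply observes that the argument of Lemma \ref{08.lem5}(b) applies verbatim with $h_1+1$ in place of $h_0$, and your explicit contradiction argument is an unpacking of that.

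Part (b) contains a genuine gap. You assert that ``$S_{k_{M,n}}^c \cap \cdots \cap S_{h_1+1}^c \cap S_{h_1}$ together with the boosted algorithm's behaviour above order $h_1$ is conditionally independent of $\IH^t$.'' This is not true in general, and the paper explicitly flags the failure. Lemma \ref{08.lem5}(a) gives independence only for the event $\tilde G_{h_0}$, which is determined by the boosted ABM $\ti X$ alone; but $S_{h_1,2}$ and $S_{h_1,3}$ compare $I_i$ with $\ti Z_i$, and the increments of $I_i$ are built from the \emph{original} white noise $\dot W$ (and $\dot W'$) used by the DW-algorithms for $X^t$ and $X^r$. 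When the relevant episode lies in $[0,\ti U_m]\times[\ti V_m,\ti V_m']$, these increments overlap with those generating $\IH^t$, with $Y_2'$, $Y_4'$, and with increments used by the DW-algorithm for $\hat X^r$. The paper handles this by splitting $S_{h_1}$: for $S_{h_1,1}$ (which concerns only episode lengths and counts of the boosted algorithm, hence is built from fresh white noise) one gets the clean factorization $P(\tilde G_{h_1+1}\cap S_{h_1,1}\mid\IH^t)=P(\tilde G_{h_1+1})\,P(S_{h_1,1})$, bounding the factors via Lemma \ref{08.lem5}(c), Lemma \ref{rdlem9} and Gaussian tails; for $S_{h_1,2}\cup S_{h_1,3}$ one must instead rerun the argument of \eqref{rd10.2a} and invoke Lemma \ref{rdlem39} (which tolerates extra correlated conditioning) to absorb the overlap. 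Your proof collapses these two very different cases into one and leans on an independence that does not hold for the second.

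A smaller point: your heuristic for $P(S_{h_1,1})$ (``$e^{-ch_1}$ per episode times $\leq h_1\sqrt v$ episodes, sharpened to $e^{-h_1^2}$'') misidentifies the scaling. An episode of order $h_1$ of length $\geq h_1^2\,2^{-2h_1}$ is a Brownian-motion event whose probability, after scaling by $2^{-2h_1}$, is $\leq Ce^{-ch_1^2}$ directly (exponential moments of the traversal time as in Lemma \ref{rdlem10}); and the event of having $\geq h_1^2$ order-$h_1$ episodes has probability $\leq (3/4)^{h_1^2-2}$ by Lemma \ref{rdlem9}. No ``sharpening'' from a linear exponent is needed — the Gaussian-type decay $e^{-ch_1^2}$ arises at the single-episode level.
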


\proof Using the same proof as in Lemma \ref{08.lem5}(b), one checks that $G_M \cap S_{k_{M,n}}^c \cap \cdots \cap S^c_{h_1+1} \subset \tilde G_{h_1+1}$, which establishes (a). In order to show (b), it suffices to show that on $G_M,$
\begin{equation}\label{08.0.11}
P(\tilde G_{h_1+1} \cap S^c_{k_{M,n}} \cap \cdots \cap S^c_{h_1+1} \cap S_{h_1} \vert \IH^t) \leq c 2^{(h_1-k_{M,n})\lambda_1} e^{-h_1^2}.
\end{equation}

 If $S_{h_1}$ occurs because $S_{h_1,1}$ occurs, then we notice that
$$
   \tilde G_{h_1+1} \cap S^c_{k_{M,n}} \cap \cdots \cap S^c_{h_1+1} \cap S_{h_1,1} \subset \tilde G_{h_1+1} \cap S_{h_1,1}
$$
and these last two events are independent and are independent of $\IH^t$. So
\begin{eqnarray}\nonumber
   P(\tilde G_{h_1+1} \cap S_{h_1,1} \mid \IH^t) &=& P(\tilde G_{h_1+1}) P(S_{h_1,1})\\
   &\leq& c 2^{(h_1-k_{M,n})\lambda_1} e^{-h_1^2},
\label{e11.30a}
\end{eqnarray} 
by Lemma \ref{08.lem5}(c) for the first factor, and Lemma \ref{rdlem9} and standard bounds for Brownian motion for the second factor.
 
   It remains to show that
\begin{equation}\label{08.0.12}
 P(\tilde G_{h_1+1} \cap S_{k_{M,n}}^c \cap \cdots \cap S^c_{h_1+1} \cap S^c_{h_1,1} \cap (S_{h_1,2} \cup S_{h_1,3}) \mid \IH^t ) \leq C \, 2^{(h_1-k_{M,n})\lambda_1} e^{-h_1^2}.
 \end{equation}
The proof of (\ref{08.0.12}) uses ideas similar to those used to prove (\ref{rd10.2a}). However, there is an additional difficulty: when considering variables such as the $E^3_{k, \ell}$ defined before Lemma \ref{rdlem40}, the rectangle involved may be contained in $[0, \tilde U_m] \times [\tilde V_m, \tilde V_m^\prime]$. In this case, it may be correlated with increments used by the DW-algorithm for $X^t$ before reaching level $2^{-k_{M,n}}$, but also with $Y^\prime_2$, $Y^\prime_4$ (used in Lemma \ref{08.lem2} and defined above (\ref{08rd11.8})) and with increments used by the DW-algorithm for $Y^r$ before it exits $\R_r(2^{2(k-n)-2}).$ However, in all these cases, Lemma \ref{rdlem39} applies as it did in the proof of \eqref{rd10.2a}.
This is sufficient to establish (\ref{08.0.12}) and completes the proof of Lemma \ref{08.lem6}. 
\hfill $\Box$
\vskip 16pt

\noindent{\em Proof of Proposition \ref{rdprop42}.} We have already observed that it suffices to prove (\ref{rd10.9}). Set
$$
   F = A_1(t, n) \cap A_2(t, n, k_0, v) \cap A_1( r, n) \cap A_2(r, n, k_0, v).
$$
Let $\hat Q$ be defined as in Proposition \ref{08.prop11.3}, $G_M$ be as defined in \eqref{defG_M}, and $S_j$ as in \eqref{08.0.11.19}. Let $k_0$ be fixed sufficiently large so that the conclusions of Lemma \ref{08.lem5} hold. 
We observe that $F$ is contained in the union of the following four events:
\begin{eqnarray*}
   F_1 &=& F \cap \{\hat Q \geq n-k\},\\
	 F_{2} &=& F \cap \{\hat Q < n-k\} \cap \{ \Ybar^{r,t} \geq 2^{\ell-n }\},\\
   F_3 &=& \cup_{M \geq 1} (A_1(t,n) \cap A_1(r,n) \cap G_M \cap  (\cup_{k_0 \leq j \leq k_{M,n}} S_j )),\\
   F_4 &=& \cup_{M \geq 1} (A_1(t,n) \cap A_1(r,n) \cap G_M \cap  (\cup_{k_0 \leq j \leq k_{M,n}} S_j)^c),
\end{eqnarray*}
so we bound each $P(F_i)$ separately.

By Proposition \ref{08.prop11.3}(b),
\begin{equation}\label{08_11-22}
   P(F_1) \leq c \ 2^{-n-\ell-2k\lambda_1} \exp (-2^{4(n-k)/5})),
\end{equation}
and since $\exp(-2^{4(n-k)/5}) \leq \exp(-2^{4(n-\ell)/5}) \leq c \ 2^{-(n-\ell)\lambda_1},$ we obtain
$$
   P(F_1) \leq c \ 2^{-n-\ell-2k\lambda_1 -(n-\ell)\lambda_1}.
$$

   As we observed below \eqref{11.6rd}, for $n-k \geq k_0$,
$$
   A_2(t, n, k_0, v) \cap A_2(r, n, k_0, v) \subset A_2(t, n, n - k, v) \cap A_2(r, n, n - k, v),
$$
so
\begin{align*}
   P(F_2) & \leq P(A_1(t, n) \cap A_2(t, n, n - k, v) \cap A_1( r, n) \cap A_2(r, n, n - k, v) \\
	 & \qquad \qquad\cap \{\hat Q < n-k\} \cap  \{ \Ybar^{r,t} \geq 2^{\ell-n }\}) \\
	 &\leq C \ 2^{-n-\ell-2k\lambda_1} \exp (-2^{4(n-k)/5}))
\end{align*}
by Lemma \ref{08.lem1}, and this is $\leq c \ 2^{-n-\ell-2k\lambda_1 -(n-\ell)\lambda_1}$ as for $P(F_1)$. 

For $P(F_3)$, we observe using Lemma \ref{08.lem6}(a), that
$$
   F_3 = \cup_{M \geq 1} \cup^{k_{M,n}}_{h_1=k_0} (A_1(t_,n) \cap A_1(r,n) \cap G_M \cap \tilde G_{h_1+1} \cap S^c_{k_{M,n}} \cap \cdots \cap S^c_{h_1+1} \cap S_{h_1}).
$$
We are going to show that
\begin{eqnarray}
P(A_1(t,n) \cap A_1(r,n) \cap G_M \cap \tilde G_{h_1+1} \cap S^c_{k_{M,n}} \cap \cdots \cap S^c_{h_1+1} \cap S_{h_1}) \nonumber\\
 \leq C \ M2^{-n-\ell} \ 2^{-2k \lambda_1} e^{-cM^2} \ 2^{(h_1-k_{M,n})\lambda_1} e^{-h_1^2} .
\label{08.0.13}
\end{eqnarray}
This will give
$$
   P(F_3) \leq c2^{-n- \ell} \sum_{M=1}^{+\infty} M \sum_{h_1=k_0}^{k_{M,n}} 2^{(h_1-k_{M,n})\lambda_1} e^{-h_1^2} 2^{-2k\lambda_1}e^{-c M^2}.
$$
The right-hand side is bounded above by
\begin{eqnarray*}
 &&c  \, 2^{-n-\ell} 2^{-2k \lambda_1} \sum^\infty_{M=1} M \ e^{-c M^2}(3 M 2^{\ell-n})^{\lambda_1} \sum^{k_{M,n}}_{h_1=k_0} 2^{h_1 \lambda_1} e^{-h_1^2}\\
 &&\qquad \leq  c 3^{\lambda_1} 2^{-n-\ell} 2^{-2k \lambda_1} 2^{(\ell-n)\lambda_1} \sum^\infty_{M=1} e^{-cM^2} M^{\lambda_1+1} \sum^{k_{M,n}}_{h_1=k_0} 2^{h_1 \lambda_1} e^{-h_1^2}.
\end{eqnarray*}
The sum over $h_1$ is bounded by a constant times $2^{k_0 \lambda_1}$, so we find that
$$
   P(F_3) \leq C_{k_0,v} \, 2^{-n-\ell-2k \lambda_1-(n-\ell)\lambda_1} \sum_{M=1}^{\infty} e^{-cM^2} M^{\lambda_1+1}
$$
The sum over $M$ converges, so we obtain
$$
   P(F_3) \leq C_{k_0,v} \, 2^{-n-\ell-2k \lambda_1-(n-\ell)\lambda_1}.
$$

   Before proving (\ref{08.0.13}), we consider $P(F_4)$. By Lemma \ref{08.lem5}(b),
$$
   F_4 = \cup_{M \geq 1} (A_1(t,n) \cap A_1(r,n) \cap G_M \cap \tilde G_{k_0} \cap (\bigcup^{k_{M,n}}_{j=k_0} S_j)^c).
$$
We are going to show that
\begin{equation}\label{08.0.14}
   P(A_1(t,n) \cap A_1(r,n) \cap G_M \cap \tilde G_{k_0} \cap (\cup^{k_{M,n}}_{j=k_0} S_j)^c) \leq C 2^{-n-\ell} 2^{(k_0-k_{M,n})\lambda_1} 2^{-2k\lambda_1} e^{-cM^2}.
\end{equation}
This will give
$$
   P(F_4) \leq C 2^{-n-\ell} 2^{-2k\lambda_1} \sum^\infty_{M=1} 2^{(k_0-k_{M,n})\lambda_1} e^{-cM^2}.
$$
The right-hand side is bounded by
\begin{eqnarray*}
   && C 2^{-n-\ell-2k\lambda_1}2^{k_0\lambda_1} \sum^\infty_{M=1} (3 M 2^{\ell-n})^{\lambda_1}e^{-cM^2}\\
   &&\qquad \leq C \  2^{-n-\ell-2k \lambda_1-(n-\ell)\lambda_1} \sum^\infty_{M=1} M^{\lambda_1} e^{-c M^2}.
\end{eqnarray*}
Since the series converges, we obtain
$$
P(F_4) \leq C_{k_0,v}\, 2^{-n-\ell-2k \lambda_1-(n-\ell)\lambda_1}.
$$
Adding up the bounds on $P(F_1), \ldots, P(F_4)$ establishes Proposition \ref{rdprop42}.

   It remains only to prove (\ref{08.0.13}) and (\ref{08.0.14}). We begin with (\ref{08.0.14}). The event on the left-hand side of (\ref{08.0.14}) is contained in
\begin{eqnarray}\nonumber
  && \cup_{i=0}^{M-30} ( A_1(t,n) \cap A_1(r,n) \cap A_2(t,n,n-k,v) \cap A_2(r,n,n-k,v) \cap \{\hat Q < n-k\}\\
&& \qquad \qquad \cap\ \{\Ybar^{r,t} < 2^{\ell-n},\, Y'_2 \geq (M-30-i) 2^{\ell-n},\, Y_3 \geq i \ 2^{\ell-n}\}
	\cap \tilde G_{k_0}).
	\label{11.35a}
\end{eqnarray}
Set
\begin{align}\nonumber
   \tilde F_{1,i} &= A_1(r,n) \cap \{Y_3 \geq i \ 2^{\ell-n} \} \cap A_2(t,n,n-k,v) \cap A_2(r,n,n-k,v) \\
	   &\qquad  \cap \{\hat Q < n-k\}\cap \{\Ybar^{r,t} < 2^{\ell-n}\} \cap \tilde G_{k_0}.
\label{e11.35a}
\end{align}
We write $W(t) = A^t_0 + A^t_1$, where $A^t_0$ and $A^t_1$ are defined in the proof of Lemma \ref{08.lem1}. Looking back to Figure \ref{fig10.4}, we see that $\sigma(A^t_0, Y'_2)$ is conditionally independent of $\IH_1 := \IH \vee \sigma(Y_3,\Ybar^{r,t}, A^t_1,W(r), \tilde G_{k_0})$ given $(x^t_1,y^r_2, x^t_2)$, and $A_1(t,n) = \{A^t_0 \in [-2^{1-n} - A^t_1, -2^{-n} - A^t_1] \}$. Proceeding as when we bounded \eqref{11.18a}, we find that the probability of the event in \eqref{11.35a} is bounded above by
\begin{equation}\label{11.36a}
   \sum_{i=0}^{M-30} c' 2^{-\ell} \exp(-c(M-30-i)^2)\, P(\tilde F_{1,i}).
\end{equation}
We now write $W(r) = W(1,1) + A^r$, so that $A_1(r,n) = \{W(1,1) \in [-2^{1-n} - A^r, -2^{-n} - A^r] \}$, and we observe that $W(1,1)$ is independent of $\sigma(A^r, \tilde F_{2,i})$, where 
\begin{align}\nonumber
   \tilde F_{2,i} &= \{ Y_3 \geq i \ 2^{\ell-n}\} \cap  A_2(t,n,n-k,v) \cap A_2(r,n,n-k,v) \\
	  & \qquad \cap \{\hat Q < n-k\} \cap \{\Ybar^{r,t} < 2^{\ell-n}\} \cap \tilde G_{k_0}.
\label{e11.36aa}
\end{align}
Therefore,
\begin{align}\label{11.37a}
   P(\tilde F_{1,i}) \leq \sup_{x \in \IR} P\{W(1,1) \in [-2^{1-n} +x, -2^{-n} +x] \} \, P(\tilde F_{2,i}) 
	\leq 2^{-n} \, P(\tilde F_{2,i}).
\end{align}
Since $\tilde G_{k_0}$ is independent of the other events that appear in the definition of $\tilde F_{2,i}$, we see from Lemma \ref{08.lem5}(c) that
\begin{equation}\label{11.38a}
   P(\tilde F_{2,i}) \leq c 2^{(k_0 - k_{M,n})\lambda_1} P(\tilde F_{3,i}),
\end{equation}
where
$$
   \tilde F_{3,i} = \{ Y_3 \geq i \ 2^{\ell-n}\} \cap  A_2(t,n,n-k,v) \cap A_2(r,n,n-k,v) \cap \{\hat Q < n-k\}.
$$
Using \eqref{08rd10.15} and the $9v$-robustness property (as in the proof of Lemma \ref{08.lem2}), we see that 
\begin{equation}\label{11.39a}
   P(\tilde F_{3,i}) \leq c' \exp(-ci^2) 2^{-2k\lambda_1}.
\end{equation}
Combining \eqref{11.36a}--\eqref{11.39a}, we conclude that the left-hand side of \eqref{08.0.14} is bounded above by
\begin{align}\nonumber
    &\sum_{i=0}^{M-30} c' 2^{-n-\ell -2k\lambda_1}\, 2^{(k_0 - k_{M,n})\lambda_1} \exp(-c((M-30-i)^2+ i^2)) \\
		&\qquad\qquad \leq \tilde c' 2^{-n-\ell -2k\lambda_1}\, 2^{(k_0 - k_{M,n})\lambda_1} e^{-\tilde c M^2},
\label{e11.39aa}
\end{align}
and this establishes \eqref{08.0.14}. 


   We now turn to the proof of \eqref{08.0.13}. Observe that the event on the left-hand side of (\ref{08.0.13}) is contained in
$$
   \cup_{p=1}^3 \hat G_p,
$$
where
\begin{align*}
   \hat G_p & = A_1(t,n) \cap A_1(r,n) \cap \tilde G_{h_1+1} \cap S^c_{k_{M,n}} \cap \cdots \cap S^c_{h_1+1} \cap \ti S_{h_1,p}\\
	  & \qquad \cap \{\Xbar^{r,t} \in [ (M-1)2^{\ell-n}, M 2^{\ell-n}[\}  \cap \{\Ybar^{r,t} < 2^{\ell-n}\}\\
	    &\qquad \cap \{\hat Q < n-k\} \cap A_2(t,n,n-k,v) \cap A_2(r,n,n-k,v)
\end{align*}
and
$$
   \ti S_{h_1,1} = S_{h_1,1},\qquad \ti S_{h_1,2} = S_{h_1,1}^c \cap S_{h_1,2},\qquad \ti S_{h_1,3} = S_{h_1,1}^c \cap S_{h_1,3}.
$$

   For $\hat G_1$, we write $W(t) = A^t_0 + A^t_1$, where $A^t_0$ and $A^t_1$ are defined in the proof of Lemma \ref{08.lem1}, and we use, as in the proof of Lemma \ref{08.lem2}(b), the fact that
\begin{align*}
   &\{\Xbar^{r,t} \in [ (M-1)2^{\ell-n}, M 2^{\ell-n}[\} \cap \{\Ybar^{r,t} < 2^{\ell-n}\} \\
	&\qquad \subset \cup_{i=0}^{M - 30} \{Y^\prime_2 \geq (M - 30-i)  2^{\ell - n},\,  Y_3 \geq i 2^{\ell - n}\},
\end{align*}
to see that
\begin{align}\nonumber
   P(\hat G_1) &\leq \sum_{i=0}^{M-30} P(\{A^t_0 \in [-2^{1-n} - A^t_1, -2^{-n} - A^t_1] \}\\
	   &\qquad\qquad\qquad \cap  \{Y^\prime_2 \geq (M - 30-i)  2^{\ell - n}\} \cap \hat G_{1,1,i}),
\label{e11.43}
\end{align}
where
\begin{align*}
   \hat G_{1,1,i} &= A_1(r,n) \cap \tilde G_{h_1+1} \cap S^c_{k_{M,n}} \cap \cdots \cap S^c_{h_1+1} \cap S_{h_1,1}\\
	  & \qquad \cap \{Y_3 \geq i 2^{\ell-n}\}  \cap \{\Ybar^{r,t} < 2^{\ell-n}\}\\
	    &\qquad \cap \{\hat Q < n-k\} \cap A_2(t,n,n-k,v) \cap A_2(r,n,n-k,v).
\end{align*}
Proceeding as in \eqref{e11.35a}, we see that 
\begin{equation}\label{e11.44}
   P(\hat G_{1}) \leq \sum_{i=0}^{M-30} c' \, 2^{-\ell} \exp(-c(M-30-i)^2) P(\hat G_{1,1,i}).
\end{equation}
We now proceed as in \eqref{e11.36aa} to see that
\begin{equation}\label{e11.45}
   P(\hat G_{1,1,i}) \leq c 2^{-n} P(\hat G_{1,2,i}),
\end{equation}
where 
\begin{align*}
   \hat G_{1,2,i} &= \tilde G_{h_1+1} \cap S^c_{k_{M,n}} \cap \cdots \cap S^c_{h_1+1} \cap \ti S_{h_1,p}\\
	  & \qquad \cap \{Y_3 \geq i 2^{\ell-n}\}  \cap \{\Ybar^{r,t} < 2^{\ell-n}\}\\
	    &\qquad \cap \{\hat Q < n-k\} \cap A_2(t,n,n-k,v) \cap A_2(r,n,n-k,v).
\end{align*}
We now use \eqref{e11.30a} in the proof of Lemma \ref{08.lem6} to see that
\begin{equation}\label{e11.46}
   P(\hat G_{1,2,i}) \leq c 2^{(h_1-k_{M,n})\lambda_1} e^{-h_1^2} P(\hat G_{1,3,i}),
\end{equation}
where
\begin{align*}
   \hat G_{1,3,i} &=  \{Y_3 \geq i 2^{\ell-n}\}  \cap \{\Ybar^{r,t} < 2^{\ell-n}\}\\
	    &\qquad \cap \{\hat Q < n-k\} \cap A_2(t,n,n-k,v) \cap A_2(r,n,n-k,v).
\end{align*}
Since $\hat G_{1,3,i} \subset \ti F_{3,i}$, we can use the bound in \eqref{11.39a} to conclude that
\begin{align}\label{e11.47}
   P(\hat G_{1,3,i}) \leq c e^{-ci^2}\, 2^{-2k\lambda_1}.
\end{align}
Combining \eqref{e11.44}--\eqref{e11.47} gives
$$
   P(\hat G_1) \leq c \sum_{i=0}^{M-30} 2^{-n-\ell}\, e^{-c(M-30-i)^2-c i^2}\, 2^{(h_1-k_{M,n})\lambda_1} e^{-h_1^2}\, 2^{-2k\lambda_1}.
$$
As in \eqref{e11.39aa}, we conclude that
\begin{equation}\label{e11.49}
   P(\hat G_1) \leq \ti c M \, 2^{-n-\ell}\, 2^{-2k\lambda_1}\, e^{-c M^2}\, 2^{(h_1-k_{M,n})\lambda_1} e^{-h_1^2}.
\end{equation}

   We now consider $\hat G_2$. Supose that the first episode of order $h_1$ that satisfies the condition for $S_{h_1,2}$ is $[\ti U^{(\ti \ell)}, \ti U^{(\ti \ell-1)}]$; it has length $\leq h_1^2 2^{-2 h_1}$ because $S_{h_1,1}^2$ occurs.
	
	   Define
$$
   \hat A_0^t = W([0,t_1 + \ti U^{(\ti \ell)}] \times [y_2^r, x_2^t]),
$$
and let $\hat A_1^t$ be such that $W(t) = \hat A_0^t + \hat A_1^t$. Then, as in \eqref{e11.43},
\begin{align}\nonumber
    P(\hat G_2) &\leq \sum_{i=0}^{M-30} P(\{\hat A^t_0 \in [-2^{1-n} - \hat A^t_1, -2^{-n} - \hat A^t_1] \}\\
	   &\qquad\qquad\qquad \cap  \{Y^\prime_2 \geq (M - 30-i)  2^{\ell - n}\} \cap \hat G_{2,1,i}),
\label{e11.50}
\end{align}
where 
\begin{align*}
   \hat G_{2,1,i} &= A_1(r,n) \cap \tilde G_{h_1+1} \cap S^c_{k_{M,n}} \cap \cdots \cap S^c_{h_1+1} \cap  S_{h_1,1}^c \cap S_{h_1,2}\\
	  & \qquad \cap \{Y_3 \geq i 2^{\ell-n}\}  \cap \{\Ybar^{r,t} < 2^{\ell-n}\}\\
	    &\qquad \cap \{\hat Q < n-k\} \cap A_2(t,n,n-k,v) \cap A_2(r,n,n-k,v).
\end{align*}
Define
\begin{align*}
   Y^{\prime\prime}_2 & = \sup_{y_2^r \leq u_2 \leq x_2^t}  (W(t_1 + \ti U^{(\ti \ell)}, u_2) - W(t_1 + \ti U^{(\ti \ell)}, x_2^t)),\\
	 Y^{\prime\prime\prime}_2 & = \sup_{y_2^r \leq u_2 \leq x_2^t}  (W(x_1^t, u_2) - W(t_1 + \ti U^{(\ti \ell)}, u_2) - W(x_1^t,x_2^t) + W(t_1 + \ti U^{(\ti \ell)}, x_2^t)).
\end{align*}
Then $Y_2^\prime \leq Y^{\prime\prime}_2 + Y^{\prime\prime\prime}_2$ and 
$$
   \{Y^\prime_2 \geq (M - 30-i)  2^{\ell - n}\} \subset \left\{Y^{\prime\prime}_2 \geq \frac{M - 30-i}{2}  2^{\ell - n}\right\} \cup \left\{Y^{\prime\prime\prime}_2 \geq \frac{M - 30-i}{2}  2^{\ell - n}\right\}.
$$
With this decomposition, the probability on the right-hand side of \eqref{e11.50} is bounded by the sum of two probabilities. Proceeding as in \eqref{e11.35a}, we see that
\begin{align*}
   & P(\{\hat A^t_0 \in [-2^{1-n} - \hat A^t_1, -2^{-n} - \hat A^t_1] \}
	    \cap  \{Y^{\prime\prime}_2 \geq \frac{M - 30-i}{2}\,  2^{\ell - n}\} \cap \hat G_{2,1,i})\\
		&\qquad\qquad \leq c\, 2^{-\ell} \exp(-c (M - 30-i)^2) P(\hat G_{2,1,i}) .
\end{align*}
For the second term, there is conditional independence between $\hat A^t_0$ and $Y^{\prime\prime\prime}_2$, so we also have
\begin{align*}
   & P(\{\hat A^t_0 \in [-2^{1-n} - \hat A^t_1, -2^{-n} - \hat A^t_1] \}
	    \cap  \{Y^{\prime\prime\prime}_2 \geq \frac{M - 30-i}{2}\,  2^{\ell - n}\} \cap \hat G_{2,1,i})\\
		&\qquad\qquad \leq c\, 2^{-\ell} \exp(-c (M - 30-i)^2) P(\hat G_{2,1,i}).
\end{align*}
We conclude using \eqref{e11.50} that
\begin{equation}\label{e11.51}
   P(\hat G_2) \leq c \sum_{i=0}^{M-30} 2^{-\ell} \exp(-c (M - 30-i)^2) P(\hat G_{2,1,i}).
\end{equation}
We now proceed as in \eqref{e11.36aa} to see that
\begin{equation}\label{e11.52}
   P(\hat G_{2,1,i}) \leq c 2^{-n} P(\hat G_{2,2,i}),
\end{equation}
where
\begin{align*}
   \hat G_{2,2,i} &= \tilde G_{h_1+1} \cap S^c_{k_{M,n}} \cap \cdots \cap S^c_{h_1+1} \cap  S_{h_1,1}^c \cap S_{h_1,2}\\
	  & \qquad \cap \{Y_3 \geq i 2^{\ell-n}\}  \cap \{\Ybar^{r,t} < 2^{\ell-n}\}\\
	    &\qquad \cap \{\hat Q < n-k\} \cap A_2(t,n,n-k,v) \cap A_2(r,n,n-k,v).
\end{align*}
We now use \eqref{08.0.12} in the proof of Lemma \ref{08.lem6} to see that
\begin{equation}\label{e11.53}
   P(\hat G_{2,2,i}) \leq c 2^{(h_1-k_{M,n})\lambda_1} e^{-h_1^2} P(\hat G_{2,3,i}),
\end{equation}
where $\hat G_{2,3,i}$ is the same event as $\hat G_{1,3,i}$, the probability of which is bounded in \eqref{e11.47}. We conclude from \eqref{e11.50}--\eqref{e11.53} and \eqref{e11.47} that
$$
    P(\hat G_2) \leq c \sum_{i=0}^{M-30} 2^{-n-\ell} \exp(-c (M - 30-i)^2)\, 2^{(h_1-k_{M,n})\lambda_1} e^{-h_1^2} e^{-c i^2}\, 2^{-2k\lambda_1}.
$$
As in \eqref{e11.39aa}, we conclude that
\begin{equation}\label{e11.54}
   P(\hat G_2) \leq c M \, 2^{-n-\ell}\, 2^{-2k\lambda_1}\, e^{-c M^2}\, 2^{(h_1-k_{M,n})\lambda_1} e^{-h_1^2}.
\end{equation}

   We now consider $\hat G_3$. Here, as for $\hat G_1$, we use the decomposition $W(t) = A_0^t + A_1^t$, then we proceed as for $\hat G_1$, but we quote \eqref{08.0.12} instead of \eqref{e11.30a} in the step that corresponds to \eqref{e11.46}, and we obtain, as in \eqref{e11.49},
\begin{equation}\label{e11.55}
   P(\hat G_3) \leq c M \, 2^{-n-\ell}\, 2^{-2k\lambda_1}\, e^{-c M^2}\, 2^{(h_1-k_{M,n})\lambda_1} e^{-h_1^2}.
\end{equation}
Putting together \eqref{e11.49}, \eqref{e11.54} and \eqref{e11.55} proves (\ref{08.0.13}). The proof of Proposition \ref{rdprop42} is complete.
\hfill $\Box$
\vskip 16pt

\end{section}

\eject

\begin{section}{Lower bound on the Hausdorff dimension of the boundaries of bubbles of the Brownian sheet}
\label{sec12}

  In this section we complete the proof of Theorem \ref{thm3a1}, by showing that the Hausdorff dimension of the boundary of every bubble is bounded below by $(3-\lambda_1)/2$. We begin by extending Proposition \ref{rdprop41} to a family of processes.

\begin{prop} Fix $q \in \IR$ and $x \in \,]0,1]$. Let $T=(T_1,T_2)$ be a stopping point with values in $[1,\infty[^2$ such that $W(T) = q+x$. Define a process $W^{(x)} = (W^{(x)}(u_1,u_2),\, (u_1,u_2) \in [1,\infty[^2)$ by
$$
   W^{(x)}(u_1,u_2) = \frac{1}{x} \left[W\left(T_1 + \frac{x^2}{T_2}(u_1-1), T_2 + \frac{x^2}{T_1}(u_2-1)\right) - W(T) \right].
$$ 
Define events $A_1^{(x)}(r,n)$, $A_2^{(x)}(r,n,k_0,v)$, $A_3^{(x)}(r,n,k_0,v)$, $A_4^{(x)}(r,n,k_0)$ and $A^{(x)}(r,n,k_0,v)$ in the same way as $A_1(r,n)$, $A_2(r,n,k_0,v)$, $A_3(r,n,k_0,v)$, $A_4(r,n,k_0)$ and $A(r,n,k_0,v)$, but with $W$ replaced by $1 + W^{(x)}$. 
There are $v \geq 1$, $k_0 \in \IN \setminus \{0\}$ and $c >0$ such that, for all $r \in [2,3]^2$, all sufficiently large $n$, for all $x \in \,]0,1]$ and all stopping points $T$ as above,
$$
   P(A^{(x)}(r,n,k_0,v) \mid \F_T) \geq c\, 2^{-n(1+\lambda_1)}.
$$
\label{prop12.1}
\end{prop}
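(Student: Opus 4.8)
The plan is to reduce the statement, conditionally on $\F_T$, to a process of exactly the kind already treated in Section \ref{sec10}, and then to rerun the proof of Proposition \ref{rdprop41} while checking that all the constants involved can be chosen uniformly. First one records the conditional law of $W^{(x)}$. By the strong Markov property of the Brownian sheet at the stopping point $T$ (in the form used in \eqref{startp6}), conditionally on $\F_T$ the process $\big(W(T_1+s_1,T_2+s_2)-W(T),\ (s_1,s_2)\in\rtoo\big)$ has the law of $\sqrt{T_2}\,B_1(s_1)+\sqrt{T_1}\,B_2(s_2)+W_0(s_1,s_2)$ with $B_1,B_2$ independent standard Brownian motions and $W_0$ an independent standard Brownian sheet. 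Substituting the definition of $W^{(x)}$ and rescaling, one obtains that, conditionally on $\F_T$,
$$
   1+W^{(x)}(u_1,u_2)\ \stackrel{d}{=}\ 1+\widehat X(u_1-1,u_2-1)+c_x\,\widehat W_0(u_1-1,u_2-1),\qquad (u_1,u_2)\in[1,\infty[^2,
$$
where $\widehat X$ is a standard additive Brownian motion, $\widehat W_0$ an independent standard Brownian sheet, and $c_x=x/\sqrt{T_1T_2}\in\,]0,1]$ is $\F_T$-measurable; the covariance of the right-hand side can be checked directly against that of $W^{(x)}$ (the extra term is $c_x^2\min(u_1-1,u_1'-1)\min(u_2-1,u_2'-1)$). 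In particular $(1+W^{(x)})(1,1)=1$ is deterministic given $\F_T$, so one is exactly in the situation of Proposition \ref{rdprop41} with compact set $I=\{1\}$ and ``$W(1,1)=1$'', the only new feature being the parameter $c_x$.

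Next one observes that $1+W^{(x)}$, viewed near $r$, is of exactly the form handled in Section \ref{sec10}. Writing $\widehat X=\zeta_1+\zeta_2$ and using that rectangular increments of $\widehat X$ vanish, one has
$$
   (1+W^{(x)})(r_1+s_1,r_2+s_2)=(1+W^{(x)})(r)+X^{r,(x)}(s_1,s_2)+\mathcal{E}^{(x)}(s_1,s_2),
$$
where $X^{r,(x)}(s_1,s_2)=Z_1^{r,(x)}(s_1)+Z_2^{r,(x)}(s_2)$ is the additive Brownian motion built from $1+W^{(x)}$ as in Section \ref{sec10} and $\mathcal{E}^{(x)}$ is the rectangular-increment error. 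By the representation above, $X^{r,(x)}$ is a standard additive Brownian motion whose axes are perturbed by $c_x$ times axis increments of $\widehat W_0$ (the two components being correlated only in the third quadrant, exactly as for $X^r$ in Section \ref{sec10}, but scaled by $c_x$), while $\mathcal{E}^{(x)}=c_x\cdot(\text{rectangular increments of }\widehat W_0)$. Since $c_x\le 1$, every contribution of $\mathcal{E}^{(x)}$ and of the axis perturbation is bounded above by the corresponding quantity for $W$ itself in Section \ref{sec10}: the closeness hypothesis \eqref{rd9.1} needed for Proposition \ref{rdprop37} holds with the same function $f_{15}$, and the maximal-increment estimates underlying Lemma \ref{rdlem40} hold with the same (in fact smaller) thresholds. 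The auxiliary independent Brownian sheets used to build the standard approximation $\tilde X^{r,(x)}$ are taken to be genuine standard sheets, exactly as in Section \ref{sec10}.

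The remaining work is to rerun the proof of Proposition \ref{rdprop41} conditionally on $\F_T$, with $W$ replaced by $1+W^{(x)}$, $X^r$ by $X^{r,(x)}$, and $\tilde X^r$ by $\tilde X^{r,(x)}$. Lemmas \ref{rdlem40}, \ref{rdlem42} and Propositions \ref{rdprop37}, \ref{rdprop38}, together with their prerequisites from Sections \ref{sec2}--\ref{sec6} (which concern standard additive Brownian motion and are untouched), carry over, producing constants that do not depend on $c_x$, hence not on $x$ or $T$: the lower bound $P(A_2^{(x)}\cap A_3^{(x)}\mid\F_T)\ge c\,2^{(k_0-n)\lambda_1}$ comes from the standard-ABM part and ignores $c_x$, whereas every error contribution that must be bounded from above is, as noted, no larger than in the baseline case. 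In the concluding conditioning step that inserts $A_1^{(x)}$ and $A_4^{(x)}$, one uses the decomposition of $(1+W^{(x)})(r)$ from the representation above (the independent Gaussian fluctuation being now a fixed-variance increment of $\widehat X$, yielding the factor $c\,2^{-n}$) and the series-of-bridges construction of the path in $A_4^{(x)}$, which succeeds with a uniformly positive conditional probability precisely because the ``value at $(1,1)$'' is the fixed point $1\in\,]0,\infty[$ --- this is the ``$y=1$'' instance of the argument of Proposition \ref{rdprop41}. Taking $v$ and $k_0$ as in Lemma \ref{rdlem42} then gives the claim, uniformly over $r\in[2,3]^2$, $x\in\,]0,1]$ and all admissible stopping points $T$.

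I expect the main obstacle to be purely organizational: confirming that the long chain of estimates of Sections \ref{sec9}--\ref{sec10} genuinely produces constants uniform in $c_x\in\,]0,1]$, which throughout rests on the single observation that $c_x\le 1$, so that the $W^{(x)}$-error is dominated by the Brownian-sheet error already controlled in Section \ref{sec10}. The one genuinely new input, the conditional distributional representation of $W^{(x)}$ given $\F_T$, is a routine consequence of the strong Markov property of the sheet together with scaling.
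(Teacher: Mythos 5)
Your proof is correct and takes essentially the same route as the paper. Your strong-Markov/scaling representation of the conditional law of $W^{(x)}$ given $\F_T$ is exactly the decomposition the paper writes down (with the Brownian-motion components of speed $1+c_x^2(r_i-1)\in[1,3]$ and the error-sheet variance $c_x^2\le 1$), and your conclusion — that the entire chain of estimates behind Proposition \ref{rdprop41} produces constants uniform in $c_x\in\,]0,1]$ because the $W^{(x)}$-error is dominated by the baseline Brownian-sheet error of Section \ref{sec10} — is precisely the uniformity claim (over speeds in $[1,3]$ and error variance $\sigma^2\le 1$) that the paper invokes for Lemmas \ref{lem6.18} and \ref{rdlem40} and Proposition \ref{rdprop38}.
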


\begin{remark} For fixed $x_0>0$, if we want a similar statement for stopping points with values in $[x_0,\infty[^2$, instead of $[1,\infty[^2$, then we could simply use the above statement with the Brownian sheet $(t_1,t_2) \mapsto x_0 W(t_1/x_0, t_2/x_0)$.
\end{remark}

\noindent{\em Proof of Proposition \ref{prop12.1}.} Fix $x \in \,]0,1]$. We are interested in the process $1+ W^{(x)}$ in particular because of the following. Consider the one-to-one transformation $\Phi: [1,\infty[^2 \to [T_1,\infty[\, \times [T_2,\infty[$ defined by 
\begin{equation}\label{e12.1.1}
   \Phi (u_1,u_2) = \left(T_1 + \frac{x^2}{T_2} (u_1 - 1), T_2 + \frac{x^2}{T_1} (u_2 - 1) \right).
\end{equation}
Then
$$
   \Phi([1,4]^2) = [T_1, T_1 + \frac{3x^2}{T_2}] \times [T_2, T_2 + \frac{3x^2}{T_1}],
$$
and, a.s., if $(s_1,s_2) = \Phi(u_1,u_2)$, then since $W(T_1,T_2) = q+x$,
\begin{align}\nonumber
   W(s_1,s_2) = q &\Longleftrightarrow q + x + W(s_1,s_2) - W(T_1,T_2) = q \\
	  & \Longleftrightarrow 1+ W^{(x)}(u_1,u_2) = 0.
\label{e12.1.2}
\end{align}
Further, the conditional law of $1+W^{(x)}$ given $\F_T$ is not very different from the conditional law of $1+W(\cdot) - W(1,1)$ given $W(1,1) = 1$, as we shall now make precise.

   One quickly checks that the processes $(X_1^{(x)}(u_1) = W^{(x)}(u_1,1),\, u_1 \in [1,4])$ and $(X_2^{(x)}(u_2) = W^{(x)}(1,u_2),\, u_2 \in [1,4])$ are standard Brownian motions that are conditionally independent given $\F_T$, and
$$
   W^{(x)}(u_1,u_2) = X_1^{(x)}(u_1) + X_2^{(x)}(u_2) + \E^{(x)}(u_1,u_2),
$$
where $\E^{(x)}(u_1,u_2)$ is a Brownian sheet with conditional variance $\frac{x^2}{T_1 T_2} (u_1-1)(u_2-1)$ (given $\F_T$). The variance of a rectangular increment of $\E^{(x)}$ is therefore smaller than that of a standard Brownian sheet (by a factor of $\frac{x^2}{T_1 T_2}$).

   More generally, for $(r_1,r_2) \in [2,3]^2$, we have a local decomposition of $(W^{(x)}(r_1+u_1,r_2+u_2),\, (u_1,u_2) \in [-1,1]^2)$ in the neighborhood of $(r_1,r_2)$ as follows:
\begin{equation}\label{e12.1}
   W^{(x)}(r_1+u_1,r_2+u_2) - W^{(x)}(r_1,r_2) = X_1^{(x),r}(u_1) + X_2^{(x),r}(u_2) + \E^{(x),r}(u_1,u_2),
\end{equation}
where
\begin{align*}
   X_1^{(x),r}(u_1) &=  W^{(x)}(r_1+u_1, r_2) - W^{(x)}(r_1, r_2), \\
	 X_2^{(x),r}(u_1) &=  W^{(x)}(r_1, r_2 +u_2) - W^{(x)}(r_1, r_2), \\
	\E^{(x),r}(u_1,u_2) &= \Delta _{]r_1,r_1+u_1]\times \, ]r_2,r_2+u_2} W^{(x)}.
\end{align*}
We note that $(X_1^{(x),r}(u_1), u_1 \in [-1,1])$ is a (two-sided) Brownian motion with speed $1+ \frac{x^2}{T_1 T_2} (r_2 -1) \in [1,3]$, $(X_2^{(x),r}(u_2), u_2 \in [-1,1])$is a Brownian motion with speed $1+ \frac{x^2}{T_1 T_2} (r_1 -1) \in [1,3]$, and $(\E^{(x),r}(u_1,u_2),\, (u_1,u_1) \in [-1,1]^2)$ is a Brownian sheet whose variance over a rectangle is $\frac{x^2}{T_1 T_2}$ times that area of the rectangle, and this fraction belongs to $]0,1]$. Hence, \eqref{e12.1} provides a better local approximation than one would obtain for $x=1=T_1=T_2$, which would be \eqref{e12.1} for the standard Brownian sheet.

   In order to establish Proposition \ref{prop12.1}, we simply follow the proof of Proposition \ref{rdprop41}, and check that the constant $c$ there can be chosen to work simultaneously for all the processes $1+ W^{(x)}(\cdot) $, since the ABM's $X_i^{(x),r}$ have a speed contained in $[1,3]$ and $\E^{(x),r}$ is a smaller local perturbation of $X_1^{(x),r} + X_2^{(x),r}$ than one has for the Brownian sheet itself.

   Indeed, going first through the proof of Lemma \ref{rdlem42}, we see that in Lemma \ref{lem6.18}, the constant $c_0$ can be chosen so that the conclusion of Lemma \ref{lem6.18} holds for all ABM's which are sums of two independent Brownian motions with speeds in $[1,3]$. Similarly, in Proposition \ref{rdprop38}, the constant $c(v)$ can be chosen so that the conclusion of Proposition \ref{rdprop38} also holds for all ABM's which are sums of two independent Brownian motions with speeds in $[1,3]$. And in Lemma \ref{rdlem40}, the constant $c(v,m)$ can also be chosen so that the conclusion of Lemma \ref{rdlem40} holds for all ABM's which are sums of two independent Brownian motions with speeds in $[1,3]$, and all ``error terms" $E^i_{k,\ell}$ in \eqref{rd10.2b} with variances of increments over a rectangle of area $u_1 u_2$ bounded by $\sigma^2 u_1 u_2$, with $\sigma^2 \leq 1$.
	
	With this variant of Lemma \ref{rdlem40}, the proof of Proposition \ref{rdprop41} carries over to $W^{(x)}$, with a constant $c$ that does not depend on $T \geq (1,1)$ or $x \in \, ]0,1]$. This proves Proposition \ref{prop12.1}.
\hfill $\Box$
\vskip 16pt

   We now extend Proposition \ref{rdprop42} to the family of processes $1+ W^{(x)}$.

\begin{prop} Fix $q \in \IR$ and $x \in \,]0,1]$. Let $T=(T_1,T_2)$ be a stopping point with values in $[1,\infty[^2$ such that $W(T) = q+x$. Define a process $W^{(x)}$ as in Proposition \ref{prop12.1}. For $v \geq 1$, there is $C = C_{v,k_0}$ such that for all large $n \in \IN$, $1\leq k \leq \ell \leq n-k_0$, $(r,t) \in \D_n(k,\ell)$, for all $x \in \,]0,1]$ and all stopping points $T$ as above,
$$
    P(A^{(x)}(t,n,k_0,v) \cap A^{(x)}(r,n,k_0,v) \mid \F_T) \leq C\, 2^{-n - \ell - 2k\lambda_1-(n-\ell)\lambda_1}.
$$
\label{prop12.2}
\end{prop}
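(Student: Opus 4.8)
The plan is to re-run the proof of Proposition \ref{rdprop42} essentially verbatim, with the standard Brownian sheet $W$ replaced throughout by the process $1 + W^{(x)}$ and every unconditional probability replaced by a probability conditioned on $\F_T$. This is legitimate for two reasons. First, by the strong Markov property of the Brownian sheet at the stopping point $T$, the increments of $W$ in $[T_1,\infty[\,\times[T_2,\infty[$ that determine $W^{(x)}$ are, conditionally on $\F_T$, distributed exactly as those of a Brownian sheet up to the deterministic affine rescaling $\Phi$ of \eqref{e12.1.1}; in particular the conditional law of $1 + W^{(x)}$ given $\F_T$ is that of an \emph{admissible} process in the sense described below. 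Second, the fresh independent white noises ($W'$, $W''$, $\tilde W$) used in the two-point DW-algorithm of Section \ref{sec11} and in the boosted DW-algorithm may be chosen independent of $\F_T$, so that all the independence splittings of Section \ref{sec11} remain valid after conditioning.

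As in the proof of Proposition \ref{prop12.1}, the structural input is the local decomposition \eqref{e12.1}: around any $r\in[2,3]^2$, the increment process $W^{(x)}(r+\cdot)-W^{(x)}(r)$ is the sum of two independent two-sided Brownian motions $X_1^{(x),r}$, $X_2^{(x),r}$ with speeds in $[1,3]$ and an error term $\E^{(x),r}$ which is a Brownian sheet whose variance over a rectangle of area $A$ equals $\tfrac{x^2}{T_1T_2}A\le A$. I would call such a process \emph{admissible} and carry out the argument of Section \ref{sec11} for an arbitrary admissible process, tracking that no constant depends on the speeds (so long as they lie in $[1,3]$) or on the error-variance fraction (so long as it lies in $\,]0,1]$). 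The ingredients already have this uniformity or acquire it exactly as in Proposition \ref{prop12.1}: Lemma \ref{rdlem42} together with Lemma \ref{lem6.18}, Proposition \ref{rdprop38} and Lemma \ref{rdlem40} was already stated there to hold uniformly for admissible processes; the robustness transfer Proposition \ref{rdprop37} is purely pathwise and applies to the rescaled increments unchanged; the Gaussian estimates used repeatedly --- Lemma \ref{rdlem39} (via \cite[Lemma 1.2]{OP}), Lemma \ref{supBM}, and the reflection-principle tail bounds --- are insensitive to a bounded speed factor and only improve when the error variance is smaller than that of the standard sheet; and the gambler's-ruin and escape-probability bounds of Theorems \ref{prop1} and \ref{thm3} concern the ABM components directly and are unaffected.

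Concretely, I would re-run, with $W$ replaced by $1+W^{(x)}$ and all probabilities conditioned on $\F_T$, the two-point DW-algorithm construction (producing independent standard ABM's $\hat X^{(x),t}$ and $\hat X^{(x),r}$ close to $X^{(x),t}$ and $X^{(x),r}$), Proposition \ref{08.prop11.3}, Lemma \ref{lem11.4}, Lemmas \ref{08.lem1} and \ref{08.lem2}, the boosted DW-algorithm with Lemmas \ref{08.lem5} and \ref{08.lem6}, and finally the concluding summation of Section \ref{sec11}. At each step the estimate keeps its form, with $W^{(x)}(1,1)=0$ playing the role that the value of $W$ at the base point plays in Section \ref{sec11}; the only new verification needed is that, conditionally on $\F_T$, the quantities written $W(1,1)$, $W(r)$, and the various increments $A_0^t,A_1^t,A^r,Y_3,Y_2',\dots$ split into independent Gaussian pieces just as before, which holds because $1+W^{(x)}$ is, conditionally on $\F_T$, an affinely rescaled Brownian sheet.

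The hard part is bookkeeping rather than a new idea: one must check that no constant in the chain Lemma \ref{08.prop11.3}--Lemma \ref{08.lem6} secretly depends on the absolute scale of the rescaling or on the ratio $x^2/T_1$ versus $x^2/T_2$. The point deserving explicit care is the two-point construction of Section \ref{sec11} and Lemma \ref{lem11.4}: the variances of the overlap increments (the $E^{i,j,t,r}_{\tilde k,\tilde\ell}$ and the terms $Y_1,\dots,Y_5$, $Y_1',\dots,Y_4'$) are products of a horizontal extent and a vertical extent, and in the rescaled picture each extent carries a bounded speed factor for ABM contributions, while for genuine sheet increments the variance carries the factor $\tfrac{x^2}{T_1T_2}\le1$; in every case the variance stays within a universal multiple of its value for the standard sheet, so the exponential tail bounds of Lemmas \ref{lem11.4}, \ref{08.lem1} and \ref{08.lem2} hold with constants independent of $x$ and $T$. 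Feeding these uniform bounds into the final summation yields the claimed bound $C\,2^{-n-\ell-2k\lambda_1-(n-\ell)\lambda_1}$ with $C=C_{v,k_0}$ not depending on $x\in\,]0,1]$ or on the stopping point $T$.
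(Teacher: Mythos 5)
Your proposal is correct and follows essentially the same route as the paper: both arguments simply re-run the proof of Proposition \ref{rdprop42} for the rescaled process $1+W^{(x)}$, checking that the constants in Proposition \ref{08.prop11.3} and Lemmas \ref{lem11.4}--\ref{08.lem6} are uniform because the ABM components now merely have speeds in $[1,3]$ and the error sheet has variance a factor $x^2/(T_1T_2)\le 1$ smaller than standard. Your extra observations (conditioning on $\F_T$ via the strong Markov property, and choosing the auxiliary white noises independent of $\F_T$) make explicit what the paper leaves implicit, but do not change the substance of the argument.
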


\proof As was the case for Proposition \ref{prop12.1}, the idea here is to follow the proof of Proposition \ref{rdprop42} and to check that the constant $C$ there can be chosen so that the conclusion holds simultaneously for all the processes $1+W^{(x)}$. This involves checking that the same is true for the constants that appear in Proposition \ref{08.prop11.3} and in Lemmas \ref{lem11.4} to \ref{08.lem6}. This is indeed the case, since we are simply replacing ABM's with speed $1$ with ABM's with speed in $[1,3]$, and standard Brownian sheet increments with increments of a Brownian sheet with smaller variance. This establishes Proposition \ref{prop12.2}.
\hfill $\Box$
\vskip 16pt

	We continue with a lemma similar to Lemma \ref{lowerlem2}.

\begin{lemma} Fix $q \in \IR$ and $x \in \,]0,1]$. Let $T=(T_1,T_2)$ be a stopping point with values in $[1,\infty[^2$ such that $W(T) = q+x$. Define a process $W^{(x)}$ as in Proposition \ref{prop12.1}. Fix $v \geq 1$, $k_0 \in \IN \setminus \{0\}$, $c > 0$ and $C > 0$ such that the conclusions of Propositions \ref{prop12.1} and \ref{prop12.2} hold. Let $\mu_n^{(x)}$ be the random measure on $[2,3]^2$ defined by
$$
   \mu_n^{(x)}(E) = 2^{-(3-\lambda_1)n} \sum_{t \in \ID_n} \delta_t (E) 1_{A^{(x)}(t,n,k_0,v)},
$$
where $\delta_t(E)$ is defined in Lemma \ref{lowerlem2}. For $0 < \beta < (3-\lambda_1)/2,$ there is $K_\beta < \infty$ such that for all large $n$ and all $x \in \,]0,1]$, 
$$
   P\left\{\mu_n^{(x)}([2, 2+2^{-k_0}]^2) \in \left[\frac{c\, 2^{-2k_0}}{4}, \frac{2 C}{c}\right],\ Z_n^{(x)} \leq K_\beta \, \Big\vert \, \F_T \right\} 
        \geq \frac{c^2\, 2^{-2k_0}}{8C},
$$
where
$$
   Z_n^{(x)} = \int_{[2,2+2^{-k_0}]^2} \int_{[2,2+2^{-k_0}]^2} 
   \frac{1}{(\vert t-s \vert \vee 2^{-2n})^{\beta}}\, \mu_n^{(x)}(dt) \mu_n^{(x)}(ds).
$$
\label{lem12.3}
\end{lemma}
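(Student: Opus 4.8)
The proof of this lemma follows the pattern of the proof of Lemma \ref{lowerlem2}, with the unconditional probabilities and expectations there replaced by conditional versions given $\F_T$, and the key inputs Proposition \ref{lowerlem1}(a),(b) replaced by the conditional bounds of Propositions \ref{prop12.1} and \ref{prop12.2}. The plan is to work on the sub-square $[2,2+2^{-k_0}]^2$ (rather than all of $[2,3]^2$), because Propositions \ref{prop12.1} and \ref{prop12.2} only assert the one-point lower bound for $r$ in a compact set and the dyadic decomposition $\ID_n$ is indexed by $[2,3]^2$; restricting to a smaller square does not change the exponents and keeps the argument uniform in $x$ and $T$. Set $X_n^{(x)} = \mu_n^{(x)}([2,2+2^{-k_0}]^2)$.

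First I would compute the conditional first moment. By Proposition \ref{prop12.1}, $P(A^{(x)}(t,n,k_0,v)\mid \F_T) \geq c\,2^{-n(1+\lambda_1)}$ for each $t$, and there are of order $2^{-2k_0}\,2^{4n}$ dyadic points $t \in \ID_n \cap [2,2+2^{-k_0}]^2$, so
$$
   E(X_n^{(x)} \mid \F_T) \geq c\, 2^{-2k_0}\, 2^{-(3-\lambda_1)n}\, 2^{4n}\, 2^{-(1+\lambda_1)n} = c\, 2^{-2k_0}.
$$
Next, the conditional second moment: using Proposition \ref{prop12.2} and the bound $\mathrm{card}\,\ID_n(k,\ell) \leq \tfrac14 2^{4n+2\ell+2k}$ exactly as in the proof of Lemma \ref{lowerlem2}, one gets
$$
   E((X_n^{(x)})^2 \mid \F_T) \leq C\, 2^{-2(3-\lambda_1)n} \sum_{\ell=1}^n 2^\ell \sum_{k=1}^\ell 2^{(2-\lambda_1)k}\, 2^{4n}\, 2^{-(1+\lambda_1)n-\ell-k\lambda_1} \cdot 2^{-(n-\ell)\lambda_1},
$$
and the geometric sums collapse (the $k$-sum is $O(2^{(2-\lambda_1)\ell})$, then the $\ell$-sum is controlled) to give $E((X_n^{(x)})^2 \mid \F_T) \leq C$, with $C$ not depending on $n$, $x$ or $T$. (Here the extra factor $2^{-(n-\ell)\lambda_1}$ in the two-point bound of Proposition \ref{prop12.2}, absent in Proposition \ref{lowerlem1}(b), only helps.) Then apply the conditional Paley--Zygmund inequality and conditional Markov's inequality, exactly as in Lemma \ref{lowerlem2}: Paley--Zygmund gives $P\{X_n^{(x)} > \tfrac14 E(X_n^{(x)}\mid\F_T) \mid \F_T\} \geq \tfrac{9}{16}\, c^2 2^{-4k_0}/C$, hence $P\{X_n^{(x)} > \tfrac{c\,2^{-2k_0}}{4}\mid\F_T\} \geq c^2 2^{-2k_0}/(2C)$ (absorbing $2^{-2k_0}$ into constants so that the lower bound on $X_n^{(x)}$ matches the desired form), while Markov gives $P\{X_n^{(x)} > 2C/c \mid \F_T\} \leq c^2\,2^{-2k_0}/(4C)$, so the intersection event has conditional probability at least $c^2 2^{-2k_0}/(4C)$.

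Finally I would bound the conditional energy. Proceeding as for $E((X_n^{(x)})^2\mid\F_T)$ but inserting the factor $(\vert t-s\vert \vee 2^{-2n})^{-\beta}$, which on $\ID_n(k,\ell)$ is at most $2^{2(n-\ell)\beta}$, Proposition \ref{prop12.2} yields
$$
   E(Z_n^{(x)} \mid \F_T) \leq C\, 2^{(-3+2\beta+\lambda_1)n} \sum_{\ell=1}^n 2^{(1-2\beta)\ell} \sum_{k=1}^\ell 2^{(2-\lambda_1)k} \leq c_\beta < \infty,
$$
uniformly in $n$, $x$, $T$, precisely because $\beta < (3-\lambda_1)/2$ makes the relevant exponent negative (this is where the constraint on $\beta$ enters). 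Choosing $K_\beta$ with $c_\beta/K_\beta < c^2 2^{-2k_0}/(8C)$, conditional Markov gives $P\{Z_n^{(x)} > K_\beta \mid \F_T\} < c^2 2^{-2k_0}/(8C)$, and subtracting this from the previous bound proves the lemma. The main obstacle, such as it is, is purely bookkeeping: one must verify that every constant produced along the way ($c$, $C$, $c_\beta$, $K_\beta$) is genuinely uniform in $x \in\,]0,1]$ and in the stopping point $T \geq (1,1)$ — but this uniformity is exactly what Propositions \ref{prop12.1} and \ref{prop12.2} were stated to provide, so no new estimate is needed.
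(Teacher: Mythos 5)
Your overall strategy is correct and matches the paper's: condition on $\F_T$, restrict to $[2,2+2^{-k_0}]^2$, feed the one-point and two-point bounds of Propositions \ref{prop12.1} and \ref{prop12.2} into the Paley--Zygmund/Markov machinery of Lemma \ref{lowerlem2}. But there is a genuine gap in the second-moment step. You use the cardinality bound $\mathrm{card}\, \ID_n(k,\ell) \leq \tfrac14 2^{4n+2\ell+2k}$ and conclude $E((X_n^{(x)})^2 \mid \F_T) \leq C$, with $C$ independent of $k_0$. Feeding this into Paley--Zygmund you correctly obtain
$$
P\{X_n^{(x)} > \tfrac14 E(X_n^{(x)}\mid\F_T) \mid \F_T\} \geq \tfrac{9}{16}\, c^2\, 2^{-4k_0}/C,
$$
but this is \emph{smaller}, by a factor of about $2^{-2k_0}$, than the claimed $c^2 2^{-2k_0}/(2C)$, and the parenthetical ``absorbing $2^{-2k_0}$ into constants'' is a non-sequitur: $c$, $C$ and $k_0$ are all fixed by the statement, so there is no freedom to relabel, and $\tfrac{9}{16} 2^{-4k_0} < \tfrac12 2^{-2k_0}$ for all $k_0 \geq 1$. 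The same problem reappears in your Markov step: $E((X_n^{(x)})^2\mid\F_T) \leq C$ only yields $P\{X_n^{(x)} > 2C/c \mid \F_T\} \leq c^2/(4C)$, not the stated $c^2 2^{-2k_0}/(4C)$.

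The fix is to keep track of the factor coming from restricting pairs to $[2,2+2^{-k_0}]^2$, namely
$$
  \mathrm{card}\bigl(\ID_n(k,\ell) \cap [2,2+2^{-k_0}]^4\bigr) \leq \tfrac14\, 2^{4n+2\ell+2k-2k_0}, \qquad \ell \leq n-k_0.
$$
With this, the second moment and the energy both pick up a factor $2^{-2k_0}$, i.e.\ $E((X_n^{(x)})^2\mid\F_T) \leq C\,2^{-2k_0}$ and $E(Z_n^{(x)}\mid\F_T) \leq C\,2^{-2k_0}$, and then Paley--Zygmund gives $\geq \tfrac{9}{16} c^2 2^{-4k_0}/(C 2^{-2k_0}) \geq c^2 2^{-2k_0}/(2C)$, Markov gives $\leq c^2 2^{-2k_0}/(4C)$, and the argument closes as you intended. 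The paper is slightly more economical here: rather than estimating $E((X_n^{(x)})^2\mid\F_T)$ separately, it only bounds $E(Z_n^{(x)}\mid\F_T)$ and then uses the observation that on $[2,2+2^{-k_0}]^2$ one has $\vert t-s\vert \leq \sqrt{2}\,2^{-k_0}$, hence $Z_n^{(x)} \geq 2^{-(3-\lambda_1)/4}(X_n^{(x)})^2$, so the second-moment bound comes for free; your direct computation is fine, but only once the $2^{-2k_0}$ factor is retained. (Also, your displayed exponents in the second-moment and energy estimates are not quite self-consistent — for instance the two-point bound of Proposition \ref{prop12.2} contributes $2^{-2k\lambda_1}$, so the $k$-sum should have exponent $2(1-\lambda_1)$, not $2-\lambda_1$ — but these slips happen to cancel and do not change the conclusion.)
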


\begin{proof} Let $X_n := X_n^{(x)} = \mu_n^{(x)}([2,2+2^{-k_0}]^2).$ The lower bound on $E(X_n \mid \F_T)$ follows exactly as in the proof of Lemma \ref{lowerlem2}, except that we appeal to Proposition \ref{prop12.1}  instead of Proposition \ref{lowerlem1}(a). 

  The desired upper bound for $E(X_n^2 \mid \F_T)$ will come from an estimate of $E[(Z_n^{(x)})^2 \mid \F_T]$ (that is uniform in $\beta \in \, ]0,(3-\lambda_1)/2]$). Indeed, since $\vert t-s \vert \leq \sqrt{2}\, 2^{-k_0} \leq \sqrt{2}$, an upper bound for this quantity will give an upper bound for $2^{(3-\lambda_1)/4}\, E(X_n^2 \mid \F_T)$. Now, $E[(Z_n^{(x)})^2 \mid \F_T]$ is equal to
$$
   2^{-2(3-\lambda_1)n} \sum_{s,t \in \ID_n \cap [2, 2+2^{-k_0}]^2} P(A^{(x)}(s,n,k_0,v) \cap A^{(x)}(t,n,k_0,v) \mid \F_T) \frac{1}{(\vert t-s \vert \vee 2^{-2n})^{\beta}}\, .
$$
In view of Proposition \ref{prop12.2}, this is bounded above by 
$$
    C \ 2^{-2(3-\lambda_1)n} \sum^{n-k_0}_{\ell=1} \sum^\ell_{k = 1} \sum_{(s,t)\in \ID_n(k, \ell) \cap [2,2+2^{-k_0}]^4} 2^{-n-\ell-2k\lambda_1-(n-\ell)\lambda_1} \frac{1}{2^{-2(n-\ell)\beta}}\, .
$$
Use the inequality card $\ID_n(k, \ell) \cap [2,2+2^{-k_0}]^4 \leq \frac{1}{4} 2^{4n+2\ell+2k-2k_0}$ to see that this is bounded by
$$
\frac{1}{4} C\, 2^{-2k_0} \ 2^{(\lambda_1-3)n}\, 2^{2 \beta n} \sum^{n-k_0}_{\ell=1} 2^{(1+\lambda_1- 2 \beta)\ell} \sum^\ell_{k=1} 2^{2(1-\lambda_1)k}.
$$
The sum over $k$ is bounded by $2^{2(1-\lambda_1)\ell + 1}$ and so we get the uniform (in $n$, $x$ and $\beta$) bound $E[(Z_n^{(x)})^2 \mid \F_T] \leq C \, 2^{-2k_0}$. The remainder of the proof follows that of Lemma \ref{lowerlem2}: the references to the lower bound in Proposition \ref{lowerlem1}(a) are replaced by references to Proposition \ref{prop12.1}. The references to Proposition \ref{lowerlem1}(b) are replaced by references to Proposition \ref{prop12.2}.
\end{proof}

   Let $q$, $x$ and $T=(T_1,T_2)$ be as in Proposition \ref{prop12.2}. Let $\cC_T(q)$ be the component of $\{s \in \rtoo: W(s) > q \}$ that contains $T$. Define $\partial \cC_T^\alpha(q)$ to be the subset of points in $\partial \cC_T(q)$ to which one can get arbitrarily close by following a curve starting at $T$ which is contained in $[T_1, T_1 + 3\alpha/T_2]\times [T_2, T_2 + 3\alpha/T_1]$ along which $W > q$. Let $\cC_{T}^{(x)}$ be the component of $\{u \in [1,\infty[^2: 1+W^{(x)}(u) >0 \}$ that contains $(1,1)$. Define $\partial \cC_{T}^{(x),\alpha}$ to be the subset of $ \partial \cC_{T}^{(x)}$  to which one can get arbitrarily close by following a curve contained in $[1,1+\alpha]^2$ starting at $(1,1)$ along which $1+ W^{(x)} >0$.

\begin{prop} For $0< \beta < (3-\lambda_1)/2$, there exists $c_0 >0$ such that, for all $x \in \,]0,1]$, if $T$ is a stopping point with values in $[1,\infty[^2$ such that $W(T) = q+x$, then
$$
   P\{\mbox{\rm dim} (\partial \cC_T^{x^2}(q)) \geq \beta \mid \F_T \} \geq c_0.
$$
\label{prop12.4}
\end{prop}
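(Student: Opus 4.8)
The plan is to run the second-moment argument of Section \ref{sec7} once more, with the Brownian-sheet ingredients of the present section in place of their ABM counterparts: Lemma \ref{lem12.3} will play the role of Lemma \ref{lowerlem2}, and Propositions \ref{prop12.1} and \ref{prop12.2} the role of Proposition \ref{lowerlem1}. The first step is a reduction to a statement about the approximating process $1+W^{(x)}$. Let $\Phi$ be the affine bijection of \eqref{e12.1.1}; since $\Phi(1,1)=T$ one has $\Phi([1,4]^2)=[T_1,T_1+3x^2/T_2]\times[T_2,T_2+3x^2/T_1]$, and by \eqref{e12.1.2} $\Phi$ maps a curve in $[1,4]^2$ starting at $(1,1)$ along which $1+W^{(x)}>0$ bijectively onto a curve in that rectangle starting at $T$ along which $W>q$. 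Consequently $\partial\cC_T^{x^2}(q)=\Phi\big(\partial\cC_T^{(x),3}\big)$, and, as $\Phi$ is a linear isomorphism and hence preserves Hausdorff dimension, $\dim(\partial\cC_T^{x^2}(q))=\dim(\partial\cC_T^{(x),3})$. It therefore suffices to produce $c_0>0$, independent of $x\in\,]0,1]$ and of the stopping point $T$, such that $P\{\dim(\partial\cC_T^{(x),3})\ge\beta\mid\F_T\}\ge c_0$.

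To do this I would fix $v\ge 1$, $k_0\in\IN\setminus\{0\}$, $c>0$ and $C>0$ so that the conclusions of Propositions \ref{prop12.1}, \ref{prop12.2} and Lemma \ref{lem12.3} all hold, let $\mu_n^{(x)}$, $Z_n^{(x)}$, $K_\beta$ and $X_n^{(x)}$ be as in Lemma \ref{lem12.3}, and set
$$
   F_n^{(x)} = \left\{X_n^{(x)}\in\left[\frac{c\,2^{-2k_0}}{4},\ \frac{2C}{c}\right],\ Z_n^{(x)}\le K_\beta\right\},\qquad F^{(x)}=\limsup_{n\to\infty}F_n^{(x)}.
$$
Applying Fatou's lemma conditionally on $\F_T$, exactly as in the proof of Proposition \ref{lbprop14}, together with Lemma \ref{lem12.3}, gives $P(F^{(x)}\mid\F_T)\ge \frac{c^2\,2^{-2k_0}}{8C}=:c_0$, and this $c_0$ depends on neither $x$ nor $T$. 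It then remains to check that on $F^{(x)}$ one has $\dim(\partial\cC_T^{(x),3})\ge\beta$. On $F^{(x)}$, fixing $\omega$ and selecting a sequence $(n_j)$ with $\omega\in F_{n_j}^{(x)}$, I would pass to a weakly convergent subsequence of the restrictions of the $\mu_{n_j}^{(x)}$ to the compact square $[2,2+2^{-k_0}]^2$ (the total masses lie in the fixed compact interval $[c\,2^{-2k_0}/4,\ 2C/c]$, so the family is weakly precompact), call the limit $\mu$, and argue as in the proof of Proposition \ref{lbprop14}: each atom $t$ of $\mu_n^{(x)}$ satisfies the event $A^{(x)}(t,n,k_0,v)$, hence by the observation recorded after \eqref{rd10.4dp} — transcribed to the process $1+W^{(x)}$ — lies within $\tilde c^{-1}2^{18}v^{7/2}n^{18}2^{-2n}$ of a point of $\partial\cC_T^{(x),3}$, so that the support of $\mu$ is contained in $\partial\cC_T^{(x),3}$; and the truncated $\beta$-energy bound $Z_n^{(x)}\le K_\beta$ valid on $F_{n_j}^{(x)}$ passes to $\mu$ (truncate the kernel at level $M$, pass to the weak limit, then let $M\uparrow\infty$ by monotone convergence), giving that $\mu$ has finite $\beta$-energy. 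Since $\mu$ is a nonzero measure supported in $\partial\cC_T^{(x),3}$ with finite $\beta$-energy, Frostman's theorem \cite{F}, \cite{landkof} yields $\dim(\partial\cC_T^{(x),3})\ge\beta$, and the first step then gives $\dim(\partial\cC_T^{x^2}(q))\ge\beta$ on $F^{(x)}$, completing the proof.

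I do not expect a genuinely new difficulty: the proposition is essentially a repackaging of the Section \ref{sec7} argument. The one point that must be handled with care is the uniformity of every constant in $x\in\,]0,1]$ and in the stopping point $T$; but this is precisely what Propositions \ref{prop12.1} and \ref{prop12.2} (and, through them, Lemma \ref{lem12.3}) were designed to provide, their proofs reducing the computations for $W^{(x)}$ to additive Brownian motions whose two component speeds lie in $[1,3]$ and to Brownian-sheet error terms whose rectangular increments have variance no larger than in the standard case. A secondary, purely bookkeeping matter is to match the box $[1,4]^2$ occurring in the definition of $\partial\cC_T^{(x),3}$ with the box in which the connecting path furnished by $A^{(x)}(t,n,k_0,v)$ lives, and with $\Phi([1,4]^2)$; this is immediate from \eqref{e12.1.1} and the construction of the events $A^{(x)}$ in Sections \ref{sec10} and \ref{sec11}.
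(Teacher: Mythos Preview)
Your proposal is correct and follows essentially the same route as the paper's own proof: reduce via the affine map $\Phi$ to a statement about $\partial\cC_T^{(x),3}$, set $c_0$ to be the lower bound coming from Lemma \ref{lem12.3}, define $F_n^{(x)}$ and $F^{(x)}=\limsup_n F_n^{(x)}$, apply Fatou conditionally on $\F_T$, and then repeat verbatim the weak-limit/Frostman argument of Proposition \ref{lbprop14}. In fact you supply more detail on the support-and-energy passage than the paper does (the paper simply writes ``This is done exactly as in the proof of Proposition \ref{lbprop14}'').
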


\proof Fix $x \in \, ]0,1]$. Using \eqref{e12.1.2}, we see that the map $\Phi$ defined in \eqref{e12.1.1} maps $\partial \cC_T^{(x),3}$ onto $\partial \cC_T^{x^2}(q)$, and clearly, dim$\,\partial \cC_T^{(x),3} =$ dim$\,\partial \cC_T^{x^2}(q)$.

   Let $v$, $k_0$, $c$ and $C$ be as in Lemma \ref{lem12.3} and set $c_0 = c^2\, 2^{-2k_0} / (8C^2) > 0$. We are going to show that
$$
   P\{\mbox{\rm dim} (\partial \cC_T^{(x),3}) \geq \beta \mid \F_T\} \geq c_0.
$$
Let $\mu_n^{(x)}$, $Z_n^{(x)}$, $K_\beta$ and $X_n^{(x)}$ be as in Lemma \ref{lem12.3} and its proof. Let
$$
   F_n^{(x)} = \left\{X_n^{(x)} \in \left[\frac{c\, 2^{-2k_0}}{4}, \frac{2C}{c} \right],\ Z_n^{(x)} \leq K_\beta \right\},
$$
and $F^{(x)}  = \limsup_{n \to \infty} F_n^{(x)}$. By Fatou's Lemma and Lemma \ref{lem12.3}, $P(F^{(x)} \mid \F_T) \geq c_0$, so it suffices to show that on $F^{(x)}$, dim$\, \partial \cC_T^{(x),3}  \geq \beta$. This is done exactly as in the proof of Proposition \ref{lbprop14}. Proposition \ref{prop12.4} is proved.
\hfill $\Box$
\vskip 16pt

\noindent{\em Proof of Theorem \ref{thm3a1}.} Recall that the upper bound on the Hausdorff dimension of $q$-bubbles is established in Proposition \ref{bs_ubprop1}, so it remains to establish the corresponding lower bound.

   It suffices to consider upwards $q$-bubbles. Since each such bubble contains a point $r \in \,]0,\infty[^2$ with rational coordinates, it suffices to fix $r=(r_1,r_2)$, assume that $W(r) > q$ and show that a.s., dim~$\partial \cC_r(q) \geq (3-\lambda_1)/2$, where $\cC_r(q)$ denotes the component of $\{s \in \IR_+^2: W(s) > q\}$ that contains $r$. For simplicity, we only consider the case where $r=(1,1)$.
	
	As in the proof of Theorem \ref{thm7.3},  for $x\in\, ]0,1]$, define 
\begin{align*}
   T_1 &= \inf\{s_1 \geq 1: W(s_1,1) = q \}, \\
	  T_2^{(x)} &= \inf\{s_2 \geq 1: W(T_1,s_2) = q+x \}, \\
		S_1^{(x)} &= \sup\{s_1 < T_1: W(s_1,1) = q+x \}.
\end{align*}
In contrast with Theorem \ref{thm7.3}, it is no longer true that $(1,1)$ and $(T_1, T_2^{(x)})$ are always in the same $q$-bubble. However, this occurs with a probability that is uniformly (in $x$) bounded away from $0$. Indeed, $(1,1)$ and $(S_1^{(x)},1)$ are in the same $q$-bubble, and it is not difficult to check, using the method in the proof of \cite[Theorem 2.1]{DW0}, that with probability uniformly (in $x$) bounded away from $0$, $W > q$ along the concatenation of the two segments $\{S_1^{(x)}\} \times [1,T_2^{(x)}] $ and $[S_1^{(x)}, T_1] \times \{T_2^{(x)}\}$.
   
   We are now back on track with the proof of Theorem \ref{thm7.3}: fix $0 < \beta < (3-\lambda_1)/2$ and let $G^{(x)}$ be the intersection of the events ``$W > q$ along the concatenation of the two segments $\{S_1^{(x)}\} \times [1,T_2^{(x)}] $ and $[S_1^{(x)}, T_1] \times \{T_2^{(x)}\}$" (which is $\F_{(T_1,T_2^{(x)})}$-measurable) and $\{$dim$(\partial \cC_{(T_1,T_2^{(x)})}^{x^2}(q)) \geq \beta\}$ (which is conditionally independent of $\F_{(T_1,T_2^{(x)})}$ given $(T_1,T_2^{(x)})$). By the above considerations and Proposition \ref{prop12.4}, there is $\ti c >0$ such that, for all $x \in \,]0,1]$, $P(G^{(x)}) \geq \ti c$.
	
	Using an easily proved $0$-$1$ law for the stopping point $(T_1,1)$, we conclude that 
$$
   P(\limsup_{n \to \infty} G^{(1/n)}) = 1,
$$
and on this event, $\partial \cC_{(1,1)}(q) \supset \partial \cC_{(T_1,T_2^{(1/n)})}^{(1/n)^2}(q)$ for infinitely many $n$, hence dim$\, \partial \cC_{(1,1)}(q)$  $\geq \beta$. This proves Theorem \ref{thm3a1}.
\hfill $\Box$
\vskip 16pt

\end{section}

\noindent{\sc Acknowledgement.} The research reported in this article began while the first author was visiting the University of California at Los Angeles in Spring 1999. The authors thank Davar Khoshnevisan for several stimulating discussions.

\newpage
\addcontentsline{toc}{section}{Index of notation}
\printindex

\eject
\addcontentsline{toc}{section}{References}

\end{document}